\definecolor{light-gray}{gray}{0.60}
\definecolor{light-gray}{gray}{0.60}
\theoremstyle{plain}
\newtheorem{theorem}{Theorem}
\newtheorem{proposition}[theorem]{Proposition}
\newtheorem{corollary}[theorem]{Corollary}
\newtheorem{lemma}[theorem]{Lemma}
\newtheorem{fact}[theorem]{Fact}
\newtheoremstyle{theoremwithref}{}{}{\itshape}{}{\bfseries}{.}{.5em}{#1 #2 #3}
\theoremstyle{theoremwithref}
\theoremstyle{definition}
\newtheorem{definition}[theorem]{Definition}
\newtheorem{example}[theorem]{Example}
\newtheorem{examples}[theorem]{Examples}
\newtheorem{remark}[theorem]{Remark}
\newtheorem{question}[theorem]{Question}
\newtheorem{notation}[theorem]{Notation}
\numberwithin{theorem}{section}
\numberwithin{equation}{section}
\newcommand{\C}{\mathcal C}
\newcommand{\NN}{\mathbb{N}}
\newcommand{\ZZ}{\mathbb{Z}}
\newcommand{\RR}{\mathbb{R}}
\newcommand{\CC}{\mathbb{C}}
\newcommand{\HH}{\mathbb{H}}
\newcommand{\PP}{\mathbb{P}}
\newcommand{\RP}{\mathbb{RP}}
\renewcommand{\SS}{\mathbb{S}}
\newcommand{\SL}{\mathrm{SL}}
\newcommand{\GL}{\mathrm{GL}}
\newcommand{\SO}{\mathrm{SO}}
\newcommand{\PO}{\mathrm{PO}}
\newcommand{\PSL}{\mathrm{PSL}}
\newcommand{\PGL}{\mathrm{PGL}}
\newcommand{\Sp}{\mathrm{Sp}}
\newcommand{\SU}{\mathrm{SU}}
\newcommand{\g}{\mathfrak{g}}
\newcommand{\ssl}{\mathfrak{sl}}
\newcommand{\uu}{\mathfrak{u}}
\newcommand{\aaa}{\mathfrak{a}}
\newcommand{\Ad}{\operatorname{Ad}}
\newcommand{\ie}{i.e.\ }
\newcommand{\eg}{e.g.\ }
\newcommand{\resp}{resp.\ }
\newcommand{\Fr}{\mathrm{Fr}}
\newcommand{\Int}[1]{{#1^\circ}}
\newcommand{\Conv}[1]{\langle \! \langle #1 \rangle \! \rangle}
\newcommand{\partiali}{\partial_{\boldsymbol{\mathrm{i}}}}
\newcommand{\partialn}{\partial_{\boldsymbol{\mathrm{n}}}}
\newcommand{\Lambdao}{\Lambda^{\mathsf{orb}}}
\newcommand{\Ccore}{\C^{\mathsf{cor}}}
\newcommand{\cro}[4]{[#1 \!:\! #2 \!:\! #3 \!:\! #4]}
\title{Combination theorems in convex projective geometry}
\author{Jeffrey Danciger}
\address{Department of Mathematics, The University of Texas at Austin, 1 University Station C1200, Austin, TX 78712, USA}
\email{jdanciger@math.utexas.edu}
\author{Fran\c{c}ois Gu\'eritaud}
\address{CNRS and IRMA, Universit\'e de Strasbourg, 7 rue Ren\'e Descartes, 67084 Strasbourg Cedex, France}
\email{francois.gueritaud@unistra.fr}
\author{Fanny Kassel}
\address{CNRS and Laboratoire Alexander Grothendieck, Institut des Hautes \'Etudes Scientifiques, Universit\'e Paris-Saclay, 35 route de Chartres, 91440 Bures-sur-Yvette, France}
\email{kassel@ihes.fr}
\thanks{This project received funding from the European Research Council (ERC) under the European Union's Horizon 2020 research and innovation programme (ERC starting grant DiGGeS, grant agreement No 715982). J.D. was partially supported by an Alfred P. Sloan Foundation fellowship and by the National Science Foundation under grants DMS 1812216 and DMS 1945493. Part of this work was completed while F.K. was in residence at the Institute for Advanced Study in Princeton, supported by the National Science Foundation under grant DMS 1926686.}
\begin{document}

\begin{abstract}
We prove a general combination theorem for discrete subgroups of $\PGL(n,\RR)$ preserving properly convex open subsets in the projective space $\PP(\RR^n)$, in the spirit of Klein and Maskit.
We use it in particular to prove that a free product of two ($\ZZ$-)linear groups is again ($\ZZ$-)linear, and to construct Zariski-dense discrete subgroups of $\PGL(n,\RR)$ which are not lattices but contain a lattice of a smaller higher-rank simple Lie group.
We also establish a version of our combination theorem for discrete groups that are convex cocompact in $\PP(\RR^n)$ in the sense of \cite{dgk-proj-cc}.
In particular, we prove that a free product of two convex cocompact groups is convex cocompact, which implies that the free product of two Anosov groups is Anosov.
We also prove a virtual amalgamation theorem over convex cocompact subgroups generalizing work of Baker--Cooper.
\end{abstract}

\maketitle
\tableofcontents

\section{Introduction}

Combination theorems in group theory have a long history.
In pioneering work, Klein~\cite{kle83} gave sufficient conditions for when the group generated by two discrete subgroups of the Lie group $\PSL(2,\CC)$ is again discrete and isomorphic to the free product of the initial groups. The underlying idea of Klein's technique, nowadays referred to as ``ping-pong dynamics'', is profound yet simple. 
Considering the action of $\PSL(2,\CC)$ on $\mathbb{CP}^1$, each of the two discrete groups will attract points that are far away from its limit set into a neighborhood of its limit set. Hence if the limit sets of the two groups are, in a sense, far from each other, then there are two disjoint neighborhoods in $\mathbb{CP}^1$ such that the first group maps the second neighborhood into the first and vice versa, and one may argue on set-theoretic grounds that the group generated by the initial two groups has no new relations and is discrete.

Maskit (see~\cite{mas88, mas93}) generalized Klein's techniques to the case of amalgamated free products and HNN extensions.
He also gave sufficient conditions for the resulting discrete group to inherit certain geometric conditions, such as convex cocompactness or geometrical finiteness.
The main ideas of Klein and Maskit generalize well in the setting of discrete subgroups of any real semisimple Lie group of real rank one, and more generally in the context of groups acting on Gromov hyperbolic spaces. See \cite{mm22} for a survey of some generalizations, and also the recent paper~\cite{tw23}. 

On the other hand, Baker--Cooper~\cite{bc08} took a more geometric viewpoint on combination theorems. 
They gave a condition for when the gluing of two convex hyperbolic manifolds along isometric submanifolds may be thickened to a convex hyperbolic manifold.
Since convex hyperbolic manifolds have discrete and faithful holonomy representations, this convex gluing theorem gives a way to combine discrete subgroups of the isometry group of hyperbolic space (in any dimension).
The geometric notion of convexity can be checked locally along the boundary of a hyperbolic manifold.
Baker--Cooper also proved a virtual amalgamation theorem, showing that under separability assumptions, the hypotheses of their combination theorem are often satisfied after passing to finite-index subgroups of the initial groups.

In this paper, we take this circle of ideas into a broader setting.
Let $V$ be a real vector space of finite dimension $\dim V =:d\geq 3$. 
We study discrete subgroups of the projective general linear group $\PGL(V)$ that preserve some nonempty properly convex open subset in the projective space $\PP(V)$.
Here, an open subset of $\PP(V)$ is called \emph{properly convex} if it is contained, convex, and bounded in some affine chart.
We establish a general combination theorem for this class of discrete groups in the spirit of Klein and Maskit. We also establish combination theorems for discrete subgroups of $\PGL(V)$ satisfying several notions of convex cocompactness, studied in \cite{dgk-proj-cc}.
By considering the quotient convex real projective manifolds, our combination theorems may be viewed, in the same spirit as Baker--Cooper, as conditions for obtaining a new convex real projective manifold from old convex real projective manifolds glued together along isomorphic submanifolds. Following Baker--Cooper, we also give a virtual amalgamation theorem in this setting.

We emphasize that for the higher-rank semisimple Lie group $\PGL(V)$, the dynamics of the action on its various flag manifolds, such as real projective space $\PP(V)$, is in many ways more complicated than in the rank-one setting of Klein and Maskit's original arguments. 
Nonetheless, our approach generalizes the original idea of ping-pong dynamics, with the set containment arguments from Klein and Maskit replaced by a notion of properly convex set occultation: one properly convex set blocking others from viewing each other.
Crucially, this notion inducts well, leading to a local-to-global approach to convexity in projective space that replaces the local arguments for convexity of hyperbolic manifolds used by Baker--Cooper.

For comparison, recent work of Dey--Kapovich--Leeb \cite{dkl19} and Dey--Kapovich \cite{dk23,dk25} also establishes generalizations of the Klein--Maskit combination theorems in the setting of discrete subgroups of higher-rank semisimple Lie groups.
They work in the more general setting of any semisimple Lie group (not just $\PGL(V)$), but restrict to special classes of discrete subgroups, namely Anosov subgroups, which are Gromov hyperbolic.
In some cases a (projective) Anosov subgroup of $\PGL(V)$ does preserve a properly convex open set, so there is some overlap with our results.
However, we note that our main results apply to many discrete subgroups which are not Anosov, and in fact not necessarily Gromov hyperbolic nor relatively hyperbolic.

\subsection{Amalgams and HNN extensions of discrete groups} \label{sec:amalhnn}

Our main result, Theorem~\ref{thm:general-gog}, is a general combination theorem in the context of a graph of groups for which the vertex groups act on properly convex open sets.
The statement assumes knowledge of graphs of groups and the construction of their fundamental groups, so we do not give it until later in the paper.
Here we state two important special cases, that of amalgamated free products and of HNN extensions.
These cases may be the most familiar to the reader, although their proofs are not significantly simpler than the general case.
We use the following notation.

\begin{notation} \label{not:intro}
For any subset $X$ of $\PP(V)$, we denote by $\overline{X}$ its closure in $\PP(V)$ and by $X^\circ$ its interior.
If $X$ is connected and contained in an affine chart of $\PP(V)$, we denote by $\Conv{X}$ the convex hull of $X$ in this affine chart (it does not depend on the choice of affine chart containing~$X$).

For any properly convex open subset $\Omega$ of $\PP(V)$, we denote its dual by
$$\Omega^* := \left\{ H \in \PP(V^*) ~|~ H \cap \overline{\Omega} = \varnothing \right\},$$
where we view the dual projective space $\PP(V^*)$ as the space of projective hyperplanes of $\PP(V)$.
The set $\Omega^*$ is properly convex and open in $\PP(V^*)$.

Finally, inside any group~$\Gamma$, we denote by $\langle t\rangle$ the cyclic subgroup generated by an element~$t$.
\end{notation}

To state our theorems, let us first introduce a key notion of the paper.

\begin{definition} \label{def:occult}
A triple $(A, B, C)$ of properly convex open subsets of $\PP(V)$ is \emph{in occultation position} if the following three conditions are satisfied:
\begin{itemize}
  \item $A \not\subset B$ and $C \not\subset B$,
  \item $A \cap B \neq \varnothing$ and $B \cap C \neq \varnothing$, but $A \cap C = \varnothing$,
  \item $\overline{B^*} \subset A^* \cup C^*$.
\end{itemize}
\end{definition}

By Lemma~\ref{lem:occult} below, the last condition is equivalent to requiring that any projective line of $\PP(V)$ passing through both $\overline{A}$ and $\overline{C}$ must pass through $B$; in other words $B$ blocks the sets $A$, $C$ (and even their closures) from ``seeing'' one another: see Figure~\ref{fig:invisible-configuration}.
This optical analogy justifies the term \emph{occultation position}, which we resisted renaming \emph{Mask-it} position.

\begin{figure}[h]
\labellist
\small\hair 2pt
\pinlabel {$A$} at 	1.2	3 
\pinlabel {$B$} at 	4.5	3 
\pinlabel {$C$} at 	8.5	3 
\endlabellist
\includegraphics[width = 0.33\textwidth]{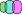}
\caption{\label{fig:invisible-configuration} Three properly convex open sets in occultation position}
\end{figure}

Theorem~\ref{thm:general-gog} interpreted in the case of amalgamated free products is the following.

\begin{theorem}[Amalgamated Free Products] \label{thm:amalgam}
Suppose $\dim \PP(V) \geq 2$. 
For $i=0,1$, let $\Gamma_i$ be a nontrivial discrete subgroup of $\PGL(V)$ preserving a properly convex open subset $\Omega_i$ of $\PP(V)$.
Set $\Delta := \Gamma_0 \cap \Gamma_1$ and $\Gamma := \Gamma_0 *_\Delta \Gamma_1$.
Suppose that for each $i\in\{ 0,1\}$ and each $\gamma \in \Gamma_i \smallsetminus \Delta$, the triple of properly convex open sets $(\Omega_{1-i}, \ \Omega_i, \ \gamma \cdot \Omega_{1-i})$ is in occultation position, and that there is at least one such triple for some~$i$ (\ie $\Gamma_0 \neq \Gamma_1$).
Then the representation $\rho:  \Gamma \to \PGL(V)$ induced by the inclusions $\Gamma_0, \Gamma_1 \hookrightarrow \PGL(V)$ is discrete and faithful, and the $\rho(\Gamma)$-invariant open set 
$$\Omega := \bigcup_{\gamma \in \Gamma} \rho(\gamma) \cdot \Conv{\Omega_0 \cup \Omega_1}$$
is well defined and properly convex.
\end{theorem}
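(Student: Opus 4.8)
The plan is to prove Theorem~\ref{thm:amalgam} by a ping-pong argument, using the occultation hypothesis to simultaneously establish (a) that the natural surjection $\Gamma_0 *_\Delta \Gamma_1 \twoheadrightarrow \rho(\Gamma)$ is injective and $\rho$ is discrete, and (b) that the candidate set $\Omega = \bigcup_{\gamma \in \Gamma} \rho(\gamma) \cdot \Conv{\Omega_0 \cup \Omega_1}$ is properly convex. The key conceptual point is that these two facts are not proved separately: the geometry of the occultation position is exactly what controls the dynamics. I would first set up the combinatorics of normal forms in $\Gamma = \Gamma_0 *_\Delta \Gamma_1$: every element is represented by an alternating word $\gamma = s_1 s_2 \cdots s_k$ with $s_j \in \Gamma_{i_j} \smallsetminus \Delta$ and consecutive indices $i_j$ differing, plus possibly a leftover element of $\Delta$. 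The goal is to show that for a reduced such word, $\rho(\gamma) \neq \id$ and, more strongly, that $\rho(\gamma)$ moves $\Conv{\Omega_0 \cup \Omega_1}$ into a controlled region.

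The heart of the argument is an inductive "local-to-global" statement about convexity, as the introduction advertises. I would prove by induction on word length $k$ that the finite union $\bigcup_{|\gamma| \leq k} \rho(\gamma) \cdot \Conv{\Omega_0 \cup \Omega_1}$ is properly convex, and simultaneously track which translates abut which. The crucial geometric input is the following consequence of occultation position, which I would isolate as the main lemma: if $(A, B, C)$ is in occultation position, then $\Conv{A \cup B}$ and $\Conv{B \cup C}$ intersect in a convex set whose union $\Conv{A \cup B} \cup \Conv{B \cup C}$ is again properly convex — because $B$ blocks $A$ and $C$ from seeing each other (by the dual characterization $\overline{B^*} \subset A^* \cup C^*$ and Lemma~\ref{lem:occult}), no projective line through $\overline{A}$ and $\overline{C}$ can escape $B$, so there is no "bad" line segment witnessing non-convexity of the union. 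Applying this with $A = \Omega_{1-i}$, $B = \Omega_i$, $C = \gamma \cdot \Omega_{1-i}$ and translating around by elements of $\Gamma$, one sees that when a new translate $\rho(\gamma s) \cdot \Conv{\Omega_0 \cup \Omega_1}$ is adjoined along the "face" coming from $\Omega_{i}$, the occultation property guarantees the union stays convex; and it stays \emph{properly} convex because everything remains inside the convex hull, hence bounded in a common affine chart (one should check the nested convex hulls do not accumulate to fill a line, again using the dual/blocking picture). Faithfulness and discreteness then fall out: a reduced word acting trivially would force two distinct translates to coincide, contradicting the disjointness $\Omega_{1-i} \cap \gamma\cdot\Omega_{1-i}$-type conditions propagated by the induction, and discreteness follows since any sequence $\rho(\gamma_n) \to \id$ with $\gamma_n$ of unbounded length would violate the separation of distinct chambers.

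More carefully, the inductive scheme I would run is: define the Bass--Serre tree $T$ of the amalgam, with $\Gamma$ acting on it; associate to each vertex $v$ (a coset $\gamma \Gamma_i$) the translated convex set $\rho(\gamma) \cdot \Omega_i$, and to each edge the intersection piece. Because $T$ is a tree, any two chambers are joined by a unique reduced edge-path, and the occultation hypothesis applied edge-by-edge along that path shows the union of chambers along the path is properly convex — the point being that occultation is exactly the two-step condition that makes "convex along each edge" glue to "convex along the whole path". Taking the union over the whole tree gives $\Omega$; proper convexity of the infinite union follows because every finite sub-union is properly convex and contained in a fixed affine chart determined by a supporting hyperplane one produces at the start (here one uses that $\Omega_0, \Omega_1$ and all their relevant translates share a common "dual point", i.e. a common non-intersecting hyperplane, which the occultation condition $\overline{B^*} \subset A^* \cup C^*$ helps propagate).

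The main obstacle I anticipate is the proper convexity at infinity — i.e., ruling out that the growing union of chambers, though convex at every finite stage, limits onto a set containing a full projective line (which would make $\Omega$ convex but not properly convex, or even not convex). In the rank-one / hyperbolic-manifold setting this is handled by a straightforward local convexity-of-boundary check à la Baker--Cooper; here one must instead use the dual occultation condition globally, showing that the dual sets $\rho(\gamma)\cdot\overline{\Omega_i^*}$ have a common interior point (equivalently, a hyperplane disjoint from all chambers), which certifies $\Omega$ lies in a bounded region of an affine chart. Establishing that such a common dual point persists under the ping-pong — essentially a dual ping-pong argument, showing the images $\rho(\gamma)^{-1}$ of a fixed hyperplane stay on the correct side — is the delicate step, and is presumably where the bulk of the real work lies; I would expect the paper to factor this through the general graph-of-groups Theorem~\ref{thm:general-gog} and a careful study of the dual action, exploiting that occultation position is self-dual in the appropriate sense.
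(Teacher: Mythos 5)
Your overall route is the same as the paper's: realize the Bass--Serre tree of $\Gamma_0 *_\Delta \Gamma_1$ equivariantly as a tree of properly convex sets $\rho(g)\cdot\Omega_i$, prove that the union of the edge hulls is properly convex by an induction over the tree driven by the blocking property of occultation (the paper's Invisibility Lemma~\ref{lem:invisible}), and deduce faithfulness and discreteness from the fact that two chambers meet exactly when the corresponding vertices are adjacent (Theorems~\ref{thm:tree} and~\ref{thm:tree-with-group-actions}). But the two steps you treat as routine are precisely where the work lies, and as written they are gaps. First, your induction needs more than occultation of the triples $(\Omega_{1-i},\Omega_i,\gamma\cdot\Omega_{1-i})$ attached to consecutive vertices: to adjoin a new chamber to a union of several chambers, or to see that the union along a long path is convex, you must know that occultation \emph{propagates} under taking convex hulls, i.e.\ that triples such as $(A,\Conv{B\cup C},D)$ and $(A,B,\Conv{C\cup D})$ are again in occultation position when $(A,B,C)$ and $(B,C,D)$ are (and similarly in the valence-three configuration where one vertex has several neighbors). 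This is the content of the paper's Lemmas~\ref{lem:line-up} and~\ref{lem:valence-3}, fed into the induction of Lemma~\ref{lem:induction}; it is exactly where the closed condition $\overline{B^*}\subset A^*\cup C^*$, as opposed to the weak condition $B^*\subset A^*\cup C^*$ of Lemma~\ref{lem:weak-occult}, is indispensable (the paper's Figure~2 shows the weak condition does not propagate). Your assertion that ``occultation is exactly the two-step condition that makes convex-along-each-edge glue to convex-along-the-path'' is this unproved propagation statement, not a consequence of the Invisibility Lemma alone.

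Second, your plan for proper convexity of the full (infinite) union --- produce by a ``dual ping-pong'' a single hyperplane disjoint from all chambers --- is both incomplete and, as stated, insufficient: a hyperplane missing $\Omega$ only places $\Omega$ in an affine chart, whereas proper convexity requires a hyperplane missing $\overline{\Omega}$, and producing such a hyperplane a priori is essentially equivalent to the conclusion (the finite stages give hyperplanes $X_n$ missing $\overline{\Omega_n}$, but their limit may touch $\overline{\Omega}$). The paper closes this step differently: assuming $\overline{\Omega}$ contains a projective subspace, every supporting hyperplane of $\Omega$ passes through a common point $x$; since any hyperplane supporting two adjacent chambers is shown (via Lemma~\ref{lem:induction} applied along the path to any third chamber) to support all of $\Omega$, the common tangencies $\partial\Omega(u)^*\cap\partial\Omega(v)^*$ for adjacent $u,v$ are forced into $x^\perp$, which yields a ``tube'' ruled by lines through $x$ containing $\Omega$, and ultimately a hyperplane supporting three consecutive chambers, contradicting occultation. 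Some genuine argument of this kind at the ideal boundary is needed before your proof is complete; the tree combinatorics and the disjointness of non-adjacent chambers (which also underlie your discreteness claim) only become available after this point.
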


Theorem~\ref{thm:general-gog} interpreted in the case of HNN extensions is the following.

\begin{theorem}[HNN extensions] \label{thm:HNN}
Suppose $\dim \PP(V) \geq 2$. 
Let $\Gamma_0$ be a nontrivial discrete subgroup of $\PGL(V)$ preserving a properly convex open subset $\Omega_0$ of $\PP(V)$.
Let $t \in \PGL(V)$, let $\Delta := \Gamma_0 \cap t \Gamma_0 t^{-1}$, and let $\Gamma := (\Gamma_0) *_t$ be the HNN extension of~$\Gamma_0$ determined by the conjugation $c_t: t^{-1}\Delta t \to \Delta$.
Suppose that 
\begin{enumerate}
  \item for every $\gamma \in \Gamma_0$, the triple of properly convex open sets $(t \cdot \Omega_0, \Omega_0, \gamma t^{-1} \cdot \Omega_0)$ is in occultation position, and
  \item for every $\varepsilon \in \{1,-1\}$ and every $\gamma' \in \Gamma_0 \smallsetminus t^{\varepsilon} \Gamma_0 t^{-\varepsilon}$, the triple  of properly convex open sets $(t^{\varepsilon} \cdot \Omega_0, \ \Omega_0, \gamma' t^{\varepsilon} \cdot \Omega_0)$ is in occultation position.
\end{enumerate}
Then the representation $\rho: \Gamma \to \PGL(V)$ induced by the inclusions $\Gamma_0, \langle t \rangle \hookrightarrow \PGL(V)$ is discrete and faithful, and the $\rho(\Gamma)$-invariant open set
$$\Omega := \bigcup_{\gamma \in \Gamma} \rho(\gamma) \cdot \Conv{\Omega_0 \cup t \cdot \Omega_0}$$
is well defined and properly convex.
\end{theorem}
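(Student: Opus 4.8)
The plan is to recognize Theorem~\ref{thm:HNN} as the special case of the general graph-of-groups combination theorem (Theorem~\ref{thm:general-gog}) in which the underlying graph of groups has a single vertex, carrying $\Gamma_0$, and a single loop, carrying $\Delta$, and to run the ping-pong argument directly on the Bass--Serre tree $T$ of $\Gamma=(\Gamma_0)*_t$. Here $T$ has one $\Gamma$-orbit of vertices, identified with $\Gamma/\Gamma_0$, and one orbit of edges, identified with $\Gamma/\Delta$, the edge $\gamma\Delta$ joining $\gamma\Gamma_0$ to $\gamma t\Gamma_0$. First I would decorate $T$ with projective data: to the vertex $\gamma\Gamma_0$ attach the properly convex open set $\Omega_{\gamma\Gamma_0}:=\rho(\gamma)\cdot\Omega_0$, well defined because $\Gamma_0$ preserves $\Omega_0$; to the edge $\gamma\Delta$ attach $C_{\gamma\Delta}:=\rho(\gamma)\cdot\Conv{\Omega_0\cup t\cdot\Omega_0}$ (an open set, being the convex hull of an open set in an affine chart), well defined because $\Delta=\Gamma_0\cap t\Gamma_0t^{-1}$ preserves both $\Omega_0$ (it lies in $\Gamma_0$) and $t\cdot\Omega_0$ (it lies in $t\Gamma_0t^{-1}$), hence their convex hull. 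By construction $\rho$ is equivariant, $\rho(\gamma)\cdot\Omega_v=\Omega_{\gamma v}$ and $\rho(\gamma)\cdot C_e=C_{\gamma e}$, each $C_e$ contains the sets $\Omega_v,\Omega_w$ attached to its two endpoints, and the open set of the theorem is $\Omega=\bigcup_{e\in T}C_e=\bigcup_{\gamma\in\Gamma}\rho(\gamma)\cdot\Conv{\Omega_0\cup t\cdot\Omega_0}$, which is manifestly well defined.

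The second step is to repackage hypotheses (1)--(2) into the single uniform statement: \emph{for every vertex $w$ of $T$ and every pair of distinct neighbours $v,v'$ of $w$, the triple $(\Omega_v,\Omega_w,\Omega_{v'})$ is in occultation position}. Indeed, the neighbours of the base vertex $\Gamma_0$ are exactly the vertices $\gamma t\Gamma_0$ and $\gamma t^{-1}\Gamma_0$ for $\gamma\in\Gamma_0$, the two $\Gamma_0$-orbits in the link corresponding to the two ends of the loop; hypothesis (1) covers a pair of neighbours on opposite sides, hypothesis (2) with $\varepsilon=\pm1$ covers a pair on the same side, and applying $\rho(\gamma)$ propagates this from $\Gamma_0$ to every vertex. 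By Lemma~\ref{lem:occult} this says precisely that, for any such $v,v',w$, every projective line meeting both $\overline{\Omega_v}$ and $\overline{\Omega_{v'}}$ passes through $\Omega_w$: the set $\Omega_w$ occults from one another the sets attached to any two of its neighbours.

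The heart of the matter, and the step I expect to be the main obstacle, is to deduce that $\Omega$ is properly convex. I would argue by induction over finite subtrees of $T$, exploiting that occultation position ``inducts well''. For a single edge $e=\{v,w\}$, the set $C_e=\Conv{\Omega_v\cup\Omega_w}$ is properly convex since the occultation data yields a projective hyperplane disjoint from $\overline{\Omega_v}\cup\overline{\Omega_w}$, hence an affine chart in which $\Omega_v\cup\Omega_w$ is bounded and its convex hull is a bounded convex set. For the inductive step, one attaches to a finite subtree $T'$ a new edge $e$ through a vertex $w\in T'$ already lying on an edge $e'\subset T'$: the occultation of the new neighbour from the old ones through $\Omega_w$ forces any segment from a point of $C_e$ to a point of $\bigcup_{f\subset T'}C_f$ to cross $\Omega_w$, hence to split into two sub-segments each lying in a convex piece, and one checks that a certifying hyperplane and the occultation relations persist for the enlarged union so that the induction continues. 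Exhausting $T$ by finite subtrees, with a nested choice of certifying hyperplanes, gives that $\Omega$ is convex, bounded, and convex in a fixed affine chart, hence properly convex; in particular $\Omega\neq\PP(V)$. This is exactly the local-to-global content the occultation formalism is built for, and where the higher-rank subtlety of the $\PGL(V)$-dynamics, absent in the rank-one Klein--Maskit setting, really enters.

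Finally, $\rho$ is discrete and faithful. The group $\Gamma$ acts on the properly convex open set $\Omega$ by projective automorphisms, $\mathrm{Aut}(\Omega)$ is closed in $\PGL(V)$ and acts properly on $\Omega$, so $\rho(\Gamma)$ is discrete in $\PGL(V)$ if and only if it is faithful and acts properly discontinuously on $\Omega$, and both of these I would extract from the occultation structure. For faithfulness: an element $w\in\Gamma$ that fixes a vertex of $T$ is conjugate into $\Gamma_0$, on which $\rho$ is the given inclusion, so $\rho(w)=1$ forces $w=1$; if $w$ fixes no vertex, then choosing a base vertex $v_0$ moved by $w$ and using the occultation hypotheses along the tree geodesic from $v_0$ to $wv_0$ shows that $\rho(w)\cdot\Omega_{v_0}=\Omega_{wv_0}$ is a different open set from $\Omega_{v_0}$, so $\rho(w)\neq1$. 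Proper discontinuity follows from the fact, established alongside proper convexity, that $\{C_e\}_{e\in T}$ is a suitably locally finite cover of $\Omega$ whose nerve is controlled by $T$, so that the translates $\rho(\gamma)\cdot\overline{\Omega_0}$ cannot accumulate in $\Omega$; equivalently, if $\rho(\gamma_n)\to1$ with $\gamma_n\neq1$ then the $\gamma_n$ can neither all fix the base vertex ($\Gamma_0$ being discrete) nor move it (the occultation relations keeping $\rho(\gamma_n)\cdot\overline{\Omega_0}$ off $\overline{\Omega_0}$), a contradiction. Hence $\rho$ is discrete and faithful and $\Omega$ is the asserted invariant properly convex open set.
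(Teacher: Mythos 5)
Your overall route is the same as the paper's: decorate the Bass--Serre tree of $(\Gamma_0)*_t$ with the translates $\rho(g)\cdot\Omega_0$ at vertices, check that hypotheses (1)--(2) give occultation position for every consecutive triple (your repackaging in terms of pairs of distinct neighbours of a vertex is correct and is exactly the paper's case analysis of triples $(gt\Gamma_0,g\Gamma_0,g\gamma t^{-1}\Gamma_0)$ and $(gt^{\varepsilon}\Gamma_0,g\Gamma_0,g\gamma' t^{\varepsilon}\Gamma_0)$), and then feed this into the tree-of-convex-sets machinery (Theorems~\ref{thm:tree} and~\ref{thm:tree-with-group-actions}, of which Theorem~\ref{thm:general-gog} is the group-decorated packaging). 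If you were simply citing those results, your proof would be complete and identical in structure to the paper's. The problems are in the two places where you instead sketch the machinery yourself.

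First, proper convexity of the infinite union. Your claim that exhausting $T$ by finite subtrees ``with a nested choice of certifying hyperplanes'' yields that $\Omega$ is \emph{bounded} in a fixed affine chart is not justified: an increasing union of properly convex open sets, all avoiding hyperplanes $X_n$ converging to some $X_\infty$, is only convex in the chart $\PP(V)\smallsetminus X_\infty$ and may fail to be properly convex. This is exactly where the paper's hardest argument lives (end of the proof of Theorem~\ref{thm:tree}): one assumes $\overline{\Omega}$ contains a projective subspace, shows every hyperplane in $\partial\Omega(v)^*\cap\partial\Omega(w)^*$ for adjacent $v,w$ supports all of $\Omega$, and derives a contradiction with occultation via the dual ``tube'' analysis; moreover the finite-subtree induction itself needs the precise propagation statements (Lemmas~\ref{lem:line-up} and~\ref{lem:valence-3}), which require the full occultation condition $\overline{B^*}\subset A^*\cup C^*$ and not its weak form. ``The occultation relations persist'' is the theorem to be proved, not an observation. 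Second, your discreteness argument contains a false step: if $\gamma_n$ moves the base vertex $v_0$ to an \emph{adjacent} vertex, then $\rho(\gamma_n)\cdot\Omega_0$ meets $\Omega_0$ by construction, so the occultation relations do not ``keep $\rho(\gamma_n)\cdot\overline{\Omega_0}$ off $\overline{\Omega_0}$,'' and your dichotomy (fix $v_0$ or be pushed off $\Omega_0$) does not cover this case. The correct argument, as in Theorem~\ref{thm:tree-with-group-actions}, is to note that $\rho(\gamma_n)\to 1$ forces the vertices $\gamma_n\cdot v_0$ to be pairwise equal or adjacent for large $n$, that a tree has no triangles, hence the tail takes at most two values, and then a constant subsequence puts $\gamma_n\gamma_{n_0}^{-1}$ in a single vertex stabilizer, contradicting discreteness of (a conjugate of) $\Gamma_0$. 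Both gaps are repairable exactly along the paper's lines, but as written they are the missing substance of the proof.
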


Theorems~\ref{thm:amalgam} and~\ref{thm:HNN} are special cases of Theorem~\ref{thm:general-gog}, which concerns general graphs of groups.
Let us give a sense of the proof strategy.

An important object in the theory of graphs of groups is the \emph{Bass--Serre tree}.  
 For a free product with amalgamation  $\Gamma =\Gamma_0 *_\Delta \Gamma_1 $ (resp. an HNN extension $\Gamma := (\Gamma_0) *_t$) the Bass--Serre tree is a certain simplicial tree $T_\Gamma$ endowed with a $\Gamma$-action such that the vertex stabilizers are conjugate to $\Gamma_0$ and $\Gamma_1$ (\resp to $\Gamma_0$). 
From the assumptions of Theorem~\ref{thm:amalgam} (resp. Theorem~\ref{thm:HNN}), we build a $\Gamma$-equivariant realization of $T_\Gamma$ as a \emph{tree of properly convex open sets}, namely as the assignment, to each vertex of $T_\Gamma$, of a properly convex open subset of $\PP(V)$ which is a $\Gamma$-translate of $\Omega_0$ or $\Omega_1$ (\resp of~$\Omega_0$), in such a way that any triple of adjacent vertices in the tree maps to a triple of properly convex open sets in occultation position.
Edges of the tree correspond to the pair $(\Omega_0, \Omega_1)$ (\resp the pair $(\Omega_0, t \Omega_0)$) and its $\Gamma$-translates.

A key fact about triples $(A,B,C)$ in occultation position is Lemma~\ref{lem:invisible}, which says that the convex hull $\Conv{A \cup B \cup C}$ of $A,B,C$ is well defined and equal to the union $\Conv{A \cup B} \cup \Conv{B \cup C}$.
Therefore, for any two-edge path connecting three vertices in the Bass--Serre tree, the convex hull of the three associated convex sets may be taken one edge at a time. 
The next key idea is that occultation position propagates through the tree: namely, if an edge of the tree is contracted, replacing the two convex sets associated to the vertices with their convex hull, the result is a new tree of convex sets which again has the occultation position property for adjacent triples.
We then argue inductively that the convex hull of the union of all of the properly convex sets of the entire tree may be taken locally, one edge at a time.
The result is a $\Gamma$-invariant properly convex set $\Omega$ containing all of the $\Gamma$-translates of $\Omega_0$ and $\Omega_1$ (\resp of $\Omega_0)$, organized in such a way that the adjacency graph precisely recovers the Bass--Serre tree.
From this we deduce that the action of $\Gamma$ is properly discontinuous and faithful.

\subsection{Applications: free products of discrete groups} \label{subsec:intro-applic-free-prod}

In the sequel, we use the following basic definitions from \cite{dgk-proj-cc}.

\begin{definition} \label{def:limcore}
Given a subgroup $\Gamma$ of $\PGL(V)$ preserving a properly convex open subset $\Omega$ of $\PP(V)$, the \emph{full orbital limit set} of $\Gamma$ in~$\Omega$ is the set $\Lambdao_{\Omega}(\Gamma) \subset \partial\Omega$ of all accumulation points in $\partial\Omega$ of all $\Gamma$-orbits of~$\Omega$; it is $\Gamma$-invariant.
The \emph{convex core} $\Ccore_{\Omega}(\Gamma)$ of $\Gamma$ in~$\Omega$ is the intersection of $\Omega$ with the convex hull of $\overline{\Lambdao_{\Omega}(\Gamma)}$ in~$\overline{\Omega}$; it is a $\Gamma$-invariant closed convex subset of~$\Omega$.
\end{definition}

In the case of free products (without amalgamation), we use a variant of Theorem~\ref{thm:amalgam} with trivial~$\Delta$ to establish the following (see Section~\ref{subsec:remind-prox} for the notion of proximal limit set at the end of the statement).

\begin{theorem} \label{thm:free-product-fd}
For $i = 0,1$, let $\Gamma_i$ be a discrete subgroup of $\PGL(V)$ preserving a nonempty properly convex open subset $\Omega_i$ of $\PP(V)$.
Suppose that $\Omega_i \neq \Ccore_{\Omega_i}(\Gamma_i)$.
Then there exists $g\in\PGL(V)$ such that the representation $\rho : \Gamma_0 * g\Gamma_1 g^{-1} \to \PGL(V)$ induced by the inclusions $\Gamma_0, g\Gamma_1 g^{-1} \hookrightarrow \PGL(V)$ is discrete and faithful, and such that the image of this representation preserves a nonempty properly convex open subset of $\PP(V)$.
Moreover, we can take $g$ in any given lattice of $\PGL(V)$ (causing $\rho$ to take values in that lattice if it contains $\Gamma_0, \Gamma_1$), or more generally in any given Zariski-dense subgroup of $\PGL(V)$ whose proximal limit sets in $\PP(V)$ and in $\PP(V^*)$ are everything.
\end{theorem}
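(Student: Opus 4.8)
The plan is to reduce the statement to the amalgamated-free-product combination theorem (Theorem~\ref{thm:amalgam}) with trivial amalgamated subgroup $\Delta = \{1\}$, by choosing $g$ so as to put all the relevant triples in occultation position. The key point is that the hypothesis $\Omega_i \neq \Ccore_{\Omega_i}(\Gamma_i)$ means that each $\Gamma_i$ fails to act cocompactly on $\Omega_i$ in a strong sense: there is a point of $\overline{\Omega_i}$ — indeed a point of $\partial\Omega_i$ away from $\overline{\Lambdao_{\Omega_i}(\Gamma_i)}$, or more usefully a supporting hyperplane — that is not "filled in" by the group. Concretely, I would first shrink each $\Omega_i$ to a smaller $\Gamma_i$-invariant properly convex open set $\Omega_i'$, still containing the convex core, whose closure is "small": its dual $\Omega_i'^*$ is large, in particular contains an open set of hyperplanes disjoint from $\overline{\Omega_i'}$ in a controlled region. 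This is where one uses that the convex core is a proper subset. The technical content is to arrange that $\overline{\Omega_0'}$ and $\overline{\Omega_1'}$ can be made, after applying $g$ to the second, to lie in "general position" with respect to the blocking condition.

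Next I would set up the ping-pong data. Put $A = \Omega_0'$, $B = g\cdot\Omega_1'$ — no wait; the triples in Theorem~\ref{thm:amalgam} are $(\Omega_{1-i}, \Omega_i, \gamma\cdot\Omega_{1-i})$ with $\gamma \in \Gamma_i\smallsetminus\{1\}$. So with $\Omega_0$ playing the role of one factor and $g\Omega_1$ the other, I need: for every $\gamma\in\Gamma_0\smallsetminus\{1\}$, the triple $(g\Omega_1, \ \Omega_0, \ \gamma g\Omega_1)$ is in occultation position, and symmetrically for every $\gamma\in g\Gamma_1 g^{-1}\smallsetminus\{1\}$, the triple $(\Omega_0, \ g\Omega_1, \ \gamma\Omega_0)$ is in occultation position. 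By Lemma~\ref{lem:occult}, the dual condition $\overline{\Omega_0^*}\subset (g\Omega_1)^*\cup(\gamma g\Omega_1)^*$ is what must be checked, plus the disjointness $g\Omega_1\cap\gamma g\Omega_1 = \varnothing$ and the non-containments. The idea is to choose $g$ so that $g\overline{\Omega_1}$ is squeezed into a tiny neighborhood of a single point $p\notin\overline{\Omega_0}$, chosen generically; then for $\gamma\neq 1$ the sets $g\Omega_1$ and $\gamma g\Omega_1$ are near two distinct points $p$ and $\gamma p$, hence disjoint, and since every line through $\overline{\Omega_0}$ and $g\overline{\Omega_1}$ (near $p$) can be arranged to avoid... hmm — actually the blocking set in the triple $(g\Omega_1,\Omega_0,\gamma g\Omega_1)$ is $\Omega_0$, so I need $\Omega_0$ to block $g\Omega_1$ from $\gamma g\Omega_1$, i.e.\ every line through (neighborhoods of) $p$ and $\gamma p$ meets $\Omega_0$. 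So $g$ must send $\overline{\Omega_1}$ very close to a point $p$ lying \emph{inside} $\Omega_0$ — but that contradicts disjointness with $\Omega_0$ being needed? No: occultation position requires $A\cap C = \varnothing$ and $A\cap B\neq\varnothing$, $B\cap C\neq\varnothing$; here $A=g\Omega_1$, $B=\Omega_0$, $C=\gamma g\Omega_1$, so I need $g\Omega_1\cap\Omega_0\neq\varnothing$ and $g\Omega_1\cap\gamma g\Omega_1 = \varnothing$. So $g\overline{\Omega_1}$ should be a tiny set meeting $\Omega_0$, positioned so that the cone of lines from it spreading out is blocked by $\Omega_0$ — this forces $g\overline{\Omega_1}$ to sit deep inside $\Omega_0$ near, but not reaching, a supporting hyperplane, so that lines exiting it in "most" directions must cross $\Omega_0$. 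Symmetrically, shrinking $\Omega_0$ appropriately handles the other family of triples. The two shrinking operations must be done compatibly, which is why we are allowed to first pass to the smaller cores; the hypothesis $\Omega_i\neq\Ccore_{\Omega_i}(\Gamma_i)$ gives the room.

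For the "moreover" clause, the choice of $g$ only needs to lie in an open (nonempty, by the above) subset of $\PGL(V)$ — more precisely, I expect the admissible $g$ to form a dense open set, or at least to have nonempty interior intersected with any subset whose forward/backward dynamics on $\PP(V)$ and $\PP(V^*)$ is minimal. Given a Zariski-dense subgroup $\Lambda$ whose proximal limit set in $\PP(V)$ is all of $\PP(V)$ (and likewise in $\PP(V^*)$), its proximal elements have attracting fixed points dense in $\PP(V)$ and attracting hyperplanes dense in $\PP(V^*)$; applying a high power of such an element contracts any fixed compact set (away from its repelling hyperplane) into an arbitrarily small neighborhood of its attracting point, with the image hyperplane data also controlled. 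So one can realize the required "squeezing" of $\overline{\Omega_1}$ near a chosen generic point using an element of $\Lambda$, after possibly first moving $\overline{\Omega_1}$ off the relevant repelling hyperplane by another element of $\Lambda$. If $\Lambda$ contains both $\Gamma_0$ and $\Gamma_1$ (\eg a lattice), then $\rho$ lands in $\Lambda$ as claimed.

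The main obstacle I anticipate is the \emph{simultaneous} verification of \emph{both} families of occultation triples after a single choice of $g$: squeezing $g\overline{\Omega_1}$ small makes the first family easy but one must still ensure, for the second family, that $g\Omega_1$ genuinely blocks $\Omega_0$ from $\gamma\Omega_0$ for all $\gamma\in g\Gamma_1 g^{-1}\smallsetminus\{1\}$ — and a tiny set is a poor blocker. The resolution is that $g\Gamma_1 g^{-1}$ also moves $\Omega_0$ only a tiny amount near the point $p$ (since $g\Gamma_1 g^{-1}$ is conjugate into a small neighborhood), so the relevant translates $\gamma\Omega_0$ all agree with $\Omega_0$ outside a small region and the blocking again reduces to a local, generic-position statement near $p$. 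Making this precise — keeping track of how much each group perturbs the other near the squeezing point, and choosing the generic point $p$ and the smaller cores $\Omega_i'$ to make all the non-containment and disjointness conditions hold at once — is the crux, and is exactly the place where the full strength of Lemma~\ref{lem:occult} and the properness of the convex cores is used.
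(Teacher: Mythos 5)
Your overall strategy---use $\Omega_i\neq\Ccore_{\Omega_i}(\Gamma_i)$ to carve out a region of $\Omega_i$ disjoint from its nontrivial $\Gamma_i$-translates, squeeze the other group's domain into it by a strongly contracting element of the given lattice or Zariski-dense subgroup (available thanks to the full proximal limit sets, as in Facts~\ref{fact:double-lim-set}--\ref{fact:lattices}), then invoke a combination theorem, and note the condition on $g$ is open---matches the paper in spirit. The genuine gap is precisely the point you flag at the end and then wave away: the family of triples $(\Omega_0,\,g\Omega_1,\,\gamma'\Omega_0)$ with $\gamma'\in g\Gamma_1 g^{-1}\smallsetminus\{1\}$, in which the squeezed set $g\Omega_1$ must occult the large set $\Omega_0$ from its own translates. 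Your proposed resolution---that the translates $\gamma'\Omega_0$ ``agree with $\Omega_0$ outside a small region'' because $g\Gamma_1 g^{-1}$ only moves things near $p$---is false: an element of $\PGL(V)$ preserving a tiny properly convex set is in no sense close to the identity away from that set; and if $\gamma'\Omega_0$ did agree with $\Omega_0$ outside a small region, the disjointness $\Omega_0\cap\gamma'\Omega_0=\varnothing$, which is part of the very same occultation condition, would fail. Moreover a tiny convex set can only occult $\overline{\Omega_0}$ from sets lying in a thin ``shadow'' cone behind it, so you would need every translate $\gamma'\Omega_0$, for every nontrivial $\gamma'$, to be crushed into that sliver---nothing in your choice of $g$ guarantees this, and it is not a routine fix.

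The paper avoids the problem structurally rather than solving it: it never verifies the two-vertex hypotheses of Theorem~\ref{thm:amalgam} for the pair $(\Omega_0,g\Omega_1)$. Instead (Section~\ref{subsec:proof-free-product-fd}) it shrinks each $\Omega_i$ to an invariant set with an isolated extremal point (this is where $\Omega_i\neq\Ccore_{\Omega_i}(\Gamma_i)$ is used), cuts off a small pyramid there and re-points its tip into a small auxiliary convex set $\Omega$ invariant under a biproximal element $\beta$ of the lattice, producing modified domains $\Omega''_i$; it then applies the graph-of-groups Theorem~\ref{thm:general-gog} to a three-vertex graph whose middle vertex carries the trivial group and the buffer $\Omega$, with $g=g_0\beta^{2N}g_1^{-1}$. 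The only occultations needed are then: each big set $\Omega''_i$ blocking two translates of the small buffer (which the re-pointed pyramid construction delivers), and the buffer blocking $\beta^{-N}g_0^{-1}\cdot\Omega''_0$ from $\beta^{N}g_1^{-1}\cdot\Omega''_1$---and these two sets are made small by the dynamics of $\beta^{\pm N}$, shrinking toward opposite endpoints of the axis of $\beta$, so a neighborhood of the axis blocks them. At no point does a small set have to occult a big set from its own translates; salvaging your direct two-vertex reduction would essentially force you to reinvent this buffer/tripod device, so as written the proposal does not go through.
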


\begin{remark}
It follows from the proof (see Section~\ref{subsec:proof-free-product-fd}) that if $g\in\PGL(V)$ satisfies the conclusion of Theorem~\ref{thm:free-product-fd}, then there is a neighborhood $\mathcal{U}$ of $g$ in $\PGL(V)$ such that any element of~$\mathcal{U}$ still satisfies this conclusion.
\end{remark}

Theorem~\ref{thm:free-product-fd} allows us to give a negative answer to the following question.

\begin{question} \label{question-Nori}
Let $G$ be a real simple algebraic group, and $H$ a simple algebraic subgroup of~$G$ of real rank at least two.
Let $\Gamma$ be a Zariski-dense discrete subgroup of~$G$ such that $\Gamma \cap H$ is a lattice in~$H$.
Is $\Gamma$ a lattice in~$G$?
\end{question}

Question~\ref{question-Nori} was asked by Chatterji--Venkataramana \cite[Q.\,2]{cv-Nori}, as a special case of a more general problem posed by Nori in 1983 and asking, for a real algebraic subgroup $H$ of a real semisimple algebraic group~$G$, to find sufficient conditions on $H$ and~$G$ such that any Zariski-dense discrete subgroup of~$G$ which intersects $H$ in a lattice of~$H$, is itself a lattice in~$G$ (see \cite[Prob.\,1]{cv-Nori} and \cite[Q.\,8.3]{fis-survey-margulis}).

Nori's problem was settled completely in the case that $G$ has real rank $\geq 2$ and $H$ is a horospherical subgroup of~$G$ by Oh, Benoist--Oh, and Benoist--Miquel (see \cite[\S\,8]{fis-survey-margulis}): in that case, a Zariski-dense discrete subgroup of~$G$ which intersects $H$ in a lattice of~$H$, is always a lattice in~$G$.

For simple~$H$ of real rank $\geq 2$, using Margulis's superrigidity theorem, Chatterji--Venkatara\-mana \cite{cv-Nori} showed that Question~\ref{question-Nori} has a positive answer in many cases, for instance for all pairs $(G,H) = (\SL(n,\RR),\SL(m,\RR))$ with $2<m<n$ (where $\SL(m,\RR)$ is embedded in the upper left corner of $\SL(n,\RR)$).
More precisely, they proved that Question~\ref{question-Nori} has a positive answer as soon as there is no $H$-orbit with compact stabilizer in a flag manifold of~$G$.

In contrast to these positive answers, here we prove that the answer to Question~\ref{question-Nori} is negative in general, by constructing explicit counterexamples.
For any $d\geq 2$, we set
$$n_d := \frac{d(d+1)}{2}.$$
We denote by $\tau_d : \SL(d,\RR)\to\SL(S^2\RR^d)\simeq\SL(n_d,\RR)$ the second symmetric power of the standard representation of $\SL(d,\RR)$, and by $\tau'_d := \tau_d \oplus \mathbf{1} : \SL(d,\RR) \to \SL(S^2\RR^d \oplus \RR) \simeq \SL(n_d+1,\RR)$ the sum of $\tau_d$ and of the one-dimensional trivial representation of $\SL(d,\RR)$.

\begin{corollary} \label{cor:counterex-nori}
For any $d\geq 2$, there exists a Zariski-dense discrete subgroup of $G =\linebreak \SL(S^2\RR^d\oplus\RR) \simeq \SL(n_d+1,\RR)$ which is \emph{not} a lattice of~$G$, but which contains a lattice of $H := \tau'_d(\SL(d,\RR))$.
This lattice of~$H$ can be taken to be uniform or not.
\end{corollary}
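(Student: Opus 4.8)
The plan is to deduce Corollary~\ref{cor:counterex-nori} from Theorem~\ref{thm:free-product-fd} by taking a free product of a suitable copy of a lattice in $H = \tau'_d(\SL(d,\RR))$ with a second, auxiliary group, in a way that keeps the resulting group Zariski-dense in $G = \SL(S^2\RR^d \oplus \RR)$ but destroys the lattice property. First I would check that $\tau'_d(\SL(d,\RR))$ preserves a nonempty properly convex open subset of $\PP(S^2\RR^d \oplus \RR)$: the symmetric square $\tau_d$ preserves the properly convex open cone of positive definite symmetric bilinear forms in $S^2\RR^d$ (projectivizing to a copy of the symmetric space of $\SL(d,\RR)$), and adjoining the trivial summand $\RR$ one still finds, in $\PP(S^2\RR^d\oplus\RR)$, a properly convex open set $\Omega_H$ invariant under $\tau'_d(\SL(d,\RR))$ — e.g.\ the open cone of forms of signature making the ambient quadratic form positive, or more simply the cone over the symmetric-space cone suspended with the $\RR$ coordinate. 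Let $\Gamma_H$ be a lattice (uniform or nonuniform) in $\SL(d,\RR)$, and $\Gamma_0 := \tau'_d(\Gamma_H)$, which is discrete in $G$ and preserves $\Omega_H$.

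Next I would arrange that $\Omega_0 := \Omega_H \neq \Ccore_{\Omega_0}(\Gamma_0)$: since $\Gamma_H$ is a lattice in $\SL(d,\RR)$ but $\tau'_d$ is \emph{not} an isomorphism onto its image inside $\SL(n_d+1,\RR)$ and in particular $\Gamma_0$ is far from being a lattice in $G$, its full orbital limit set $\Lambdao_{\Omega_0}(\Gamma_0)$ is a proper subset of $\partial\Omega_0$ — concretely, the orbit closure of $\Gamma_0$ inside $\overline{\Omega_0}$ accumulates only on the boundary of the sub-symmetric-space copy, not on the extra directions coming from the $\RR$ summand, so the convex core is a proper closed convex subset of $\Omega_0$. (If needed one can enlarge $\Omega_0$ slightly within the $\tau'_d(\SL(d,\RR))$-invariant convex sets to guarantee strict inequality.) For the second factor I would take $\Gamma_1$ to be, say, a small virtually free or virtually cyclic discrete subgroup of $\PGL(S^2\RR^d\oplus\RR)$ preserving a properly convex open set $\Omega_1$ with $\Omega_1 \neq \Ccore_{\Omega_1}(\Gamma_1)$ (any nonelementary or even infinite-cyclic group preserving a properly convex set and not cocompact on it works), chosen so that $\Gamma_0$ and $\Gamma_1$ together generate a Zariski-dense subgroup of $G$; concretely, pick $\Gamma_1$ Zariski-dense in $G$, or pick a single generic proximal element $g_1 \in G$ and let $\Gamma_1 = \langle g_1\rangle$, which together with the already-large $\Gamma_0$ Zariski-generates $G$. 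Theorem~\ref{thm:free-product-fd} then furnishes $g \in \PGL(V)$ — which we may take in $G$ itself, viewed as a Zariski-dense subgroup whose proximal limit sets in $\PP(V)$ and $\PP(V^*)$ are everything — such that $\Gamma := \Gamma_0 * g\Gamma_1 g^{-1}$ embeds discretely and faithfully in $G$ and preserves a nonempty properly convex open subset of $\PP(V)$.

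It remains to verify the three required properties of $\Gamma$: (i) $\Gamma$ contains the lattice $\Gamma_0 \cong \Gamma_H$ of $H$, which is immediate from the construction; (ii) $\Gamma$ is Zariski-dense in $G$, because $\Gamma_0$ alone is Zariski-dense in $\tau'_d(\SL(d,\RR))$, so $\Gamma \supset \langle \Gamma_0, g\Gamma_1 g^{-1}\rangle$ is Zariski-dense once $g\Gamma_1 g^{-1}$ (equivalently $\Gamma_1$, since $g\in G$) contributes enough directions — here I invoke that a generic conjugate of a proximal element together with a higher-rank subgroup generates a Zariski-dense subgroup (if $\Gamma_1$ itself was taken Zariski-dense in $G$ there is nothing to prove); and (iii) $\Gamma$ is not a lattice in $G$, which follows because $\Gamma$ is a nontrivial free product and hence acts on its Bass--Serre tree with no global fixed point, so it has a nonabelian free quotient, in particular infinite abelianization or at least it is not isomorphic to any irreducible lattice in the higher-rank group $G$ (higher-rank lattices have property (T), hence finite abelianization, and more to the point are not nontrivial free products by Margulis's normal subgroup theorem / Stallings' theorem). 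The main obstacle I anticipate is step (ii)–(iii) interplay for the \emph{nonuniform} case and the precise choice of $\Gamma_1$: one must choose $\Gamma_1$ and its invariant convex set $\Omega_1$ so that $\Omega_1 \neq \Ccore_{\Omega_1}(\Gamma_1)$ \emph{and} Zariski-density of the free product is guaranteed, while the flexibility of Theorem~\ref{thm:free-product-fd} (the conjugating element $g$ may be taken in a Zariski-dense subgroup with full proximal limit sets, e.g.\ all of $G$) is exactly what makes both constraints simultaneously satisfiable; checking that $\Gamma_0$'s convex core is proper — i.e.\ that $\Gamma_0$ acts on $\Omega_0$ with a convex core strictly smaller than $\Omega_0$, which is where the representation $\tau'_d$ being non-surjective is used — is the key geometric input and should be verified carefully, though it is essentially the observation that the $\SL(d,\RR)$-symmetric space, suspended with an extra line, has boundary directions never approached by the orbit.
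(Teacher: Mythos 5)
Your construction is essentially the paper's: thicken the $\tau_d$-invariant symmetric-space domain $\Omega_d\subset\PP(S^2\RR^d)$ to a $\tau'_d(\SL(d,\RR))$-invariant properly convex open set in $\PP(S^2\RR^d\oplus\RR)$ (this is exactly Proposition~\ref{prop:thicken-add-dim}, which also gives the key fact you flag, namely that the full orbital limit set of $\Gamma_0=\tau'_d(\Gamma_H)$ stays in $\PP(S^2\RR^d)$, so the convex core is proper), take an auxiliary Zariski-dense discrete $\Gamma_1$ preserving a properly convex open set with proper convex core (the paper supplies this via Lemma~\ref{lem:free-groups}; your ``virtually cyclic $\langle g_1\rangle$'' alternative would leave Zariski-density of the combined group unjustified, so only the Zariski-dense choice is safe), and apply Theorem~\ref{thm:free-product-fd}.

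The one step where you genuinely diverge is the non-lattice conclusion. The paper argues geometrically: the combined group preserves a properly convex open set, so its proximal limit set in $\PP(V)$ lies in the boundary of that set and is not all of $\PP(V)$, contradicting Fact~\ref{fact:lattices} for lattices. You argue abstractly: a nontrivial free product acts on its Bass--Serre tree without a global fixed point, hence fails property (FA) and so property (T), while lattices in $\SL(n_d+1,\RR)$ ($n_d+1\geq 4$) have (T). That argument is also valid and arguably more elementary, but your intermediate parenthetical is not: a nontrivial free product need not have a nonabelian free quotient or infinite abelianization (e.g.\ when both factors are perfect), so you should drop that clause and rely on the (T)/(FA) or Margulis normal subgroup statement alone. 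With that clause removed and the Zariski-dense choice of $\Gamma_1$ made explicit (together with its existence, as in Lemma~\ref{lem:free-groups}), your proof is complete.
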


The construction (see Section~\ref{subsec:nori}) starts from a lattice of~$H$, seen as acting on the projective model of the symmetric space of $\SL(d,\RR)$; we can then adjoin elements of $G$ in a manner analogous to the ping-pong phenomenon in Kleinian groups: see Section~\ref{subsec:nori}.
A similar construction works over the complex numbers and over the quaternions, see Remark~\ref{rem:Nori-over-C-H}.

Theorem~\ref{thm:free-product-fd} also allows us to construct free products of linear groups.
More precisely, it is known \cite{nis40,weh73,sha79} that the free product of two subgroups of $\SL(n,\RR)$ can be realized as a subgroup of some $\SL(N,\RR)$.
However, the question \cite[Question\,D]{bn22} of whether the free product of two \emph{discrete} subgroups of $\SL(n,\RR)$ can be realized as a \emph{discrete} subgroup of some $\SL(N,\RR)$ seems to have been open until now, and it seems to have been unknown whether the free product of $\SL(n,\ZZ)$ with itself, for $n\geq 3$, can be realized inside some $\SL(N,\ZZ)$.
We give affirmative answers to both these questions, by establishing the following as a consequence of Theorem~\ref{thm:free-product-fd}.

\begin{corollary} \label{cor:combine-discrete-groups}
Let $\mathbb{A}$ be a subring of $\RR$ (\eg $\mathbb A = \mathbb Z$ or $\mathbb A = \mathbb R$).
For any $d\geq 2$ and any discrete subgroups $\Gamma_0$ and~$\Gamma_1$ of $\SL(d,\mathbb{A})$, the free product $\Gamma_0*\Gamma_1$ embeds as a discrete subgroup of $\SL(n_d+1,\mathbb{A})$.
In particular, a free product of two $\ZZ$-linear groups is $\ZZ$-linear.
\end{corollary}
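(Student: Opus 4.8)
The plan is to use the representation $\tau'_d$ to bring $\Gamma_0$ and $\Gamma_1$ into $\PGL(\RR^{n_d+1})$ so that they preserve a common properly convex open set to which Theorem~\ref{thm:free-product-fd} applies, and to run that theorem with the gluing element $g$ taken in $\SL(n_d+1,\mathbb{A})$. Set $N:=n_d+1$ and $V:=S^2\RR^d\oplus\RR\simeq\RR^N$, identify $S^2\RR^d$ with the space of symmetric $d\times d$ matrices and $\mathcal{P}_d\subset S^2\RR^d$ with its open cone of positive-definite matrices, and let $\Omega\subset\PP(V)$ be the set which in the affine chart $\{[w:s]:s\neq0\}\simeq S^2\RR^d$ equals $\mathcal{P}_d$; equivalently $\Omega$ is the open cone whose base is the projective model $\PP(\mathcal{P}_d)$ of the symmetric space of $\SL(d,\RR)$, lying in the hyperplane $H_\infty:=\PP(S^2\RR^d\times\{0\})$, and whose apex is $p:=[0:1]$. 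Then $\Omega$ is properly convex and open; the map $\tau'_d(\gamma):[w:s]\mapsto[\gamma w\gamma^\top:s]$ fixes $p$ and acts on the chart by $x\mapsto\gamma x\gamma^\top$, hence preserves $\mathcal{P}_d$ and $\Omega$; and $\tau'_d(\SL(d,\mathbb{A}))\subset\SL(N,\mathbb{A})$ because the entries of $\tau_d(\gamma)$ are integral polynomials in those of $\gamma$ and $\det\tau'_d(\gamma)=1$. (Passing from $\tau_d$ to $\tau'_d$ — coning the symmetric space off by the fixed point $p$ — is exactly what makes the hypothesis below hold.)

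The only non-routine hypothesis of Theorem~\ref{thm:free-product-fd} to check is $\Omega\neq\Ccore_\Omega(\tau'_d(\Gamma_i))$ for $i=0,1$. Here the point is that every $\tau_d$-orbit $\{\gamma x_0\gamma^\top:\gamma\in\Gamma_i\}$ of a point $x_0\in\mathcal{P}_d$ has constant determinant $\det x_0>0$, so by the arithmetic--geometric mean inequality its trace stays bounded below; hence a point of such an orbit can converge to $\partial\Omega$ only by having its norm go to infinity, which forces any accumulation point in $\PP(V)$ into $H_\infty$. Thus $\Lambdao_\Omega(\tau'_d(\Gamma_i))$ lies in the boundary face $\overline{\PP(\mathcal{P}_d)}$ of $\overline\Omega$, its closed convex hull lies in the hyperplane $H_\infty$, and $\Ccore_\Omega(\tau'_d(\Gamma_i))=\Omega\cap H_\infty=\varnothing\neq\Omega$.

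Next I would apply Theorem~\ref{thm:free-product-fd} to $V=\RR^N$, the discrete subgroups $\tau'_d(\Gamma_0),\tau'_d(\Gamma_1)$ of $\PGL(V)$, and $\Omega_0=\Omega_1=\Omega$, with $\SL(N,\mathbb{A})$ in the role of the distinguished subgroup: if $\mathbb{A}=\ZZ$ it is a lattice containing $\tau'_d(\Gamma_0),\tau'_d(\Gamma_1)$, while if $\mathbb{A}\supsetneq\ZZ$ then $\mathbb{A}$ is dense in $\RR$ (every subring of $\RR$ strictly containing $\ZZ$ is), so $\SL(N,\mathbb{A})$ is dense in $\SL(N,\RR)$, hence Zariski-dense with proximal limit sets in $\PP(V)$ and $\PP(V^*)$ equal to everything. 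The theorem then produces $g\in\SL(N,\mathbb{A})$ for which the homomorphism $\tau'_d(\Gamma_0)\ast g\,\tau'_d(\Gamma_1)\,g^{-1}\to\PGL(V)$ induced by inclusions is discrete and faithful; since the two factors are subgroups of $\SL(N,\mathbb{A})$ and $\SL(N,\RR)\to\PGL(V)$ has finite kernel, the corresponding homomorphism into $\SL(N,\mathbb{A})$ is again faithful and has discrete image, and conjugating the second factor by $g^{-1}$ exhibits $\tau'_d(\Gamma_0)\ast\tau'_d(\Gamma_1)$ as a discrete subgroup of $\SL(N,\mathbb{A})$. When $\tau'_d$ is injective on $\Gamma_0$ and $\Gamma_1$ — automatic for $d$ odd, since then $-I\notin\SL(d,\RR)$, and holding in general as soon as $-I\notin\Gamma_0\cup\Gamma_1$ — this is precisely an embedding of $\Gamma_0\ast\Gamma_1$ as a discrete subgroup of $\SL(n_d+1,\mathbb{A})$; and for the ``$\ZZ$-linear'' assertion one may always embed $\Gamma_0$ and $\Gamma_1$ first into $\SL(d,\ZZ)$ with $d$ odd.

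The step I expect to be the genuine obstacle is the remaining case, $d$ even with $-I\in\Gamma_i$: there $\tau'_d$ is two-to-one, and I do not see a single $(n_d+1)$-dimensional representation of $\SL(d,\RR)$ that is both faithful and preserves a properly convex open set (the small faithful representations involve the standard representation, which preserves none). One must therefore either exhibit, for each such $\Gamma_i$, a faithful $\mathbb{A}$-representation into $\SL(n_d+1,\mathbb{A})$ with the required convexity properties, or — as already noted — enlarge $d$ to $d+1$, which still yields the main conclusion that a free product of $\ZZ$-linear (resp.\ $\mathbb{A}$-linear) groups is $\ZZ$-linear (resp.\ $\mathbb{A}$-linear).
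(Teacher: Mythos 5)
Your proposal follows essentially the same route as the paper's proof (Section~\ref{subsec:proof-combine-discrete-groups}): compose with $\tau'_d=\tau_d\oplus\mathbf{1}$, note that $\tau'_d(\SL(d,\mathbb{A}))\subset\SL(n_d+1,\mathbb{A})$, thicken the projective model of the symmetric space of $\SL(d,\RR)$ into a $\tau'_d(\Gamma_i)$-invariant properly convex open subset of $\PP(\RR^{n_d+1})$ whose convex core lies in the hyperplane $\PP(S^2\RR^d)$, and apply the free-product theorem with the gluing element taken in an arithmetic subgroup. The differences are cosmetic: the paper packages the thickening and the verification of $\Omega\neq\Ccore_{\Omega}(\tau'_d(\Gamma_i))$ into Proposition~\ref{prop:thicken-add-dim} and Corollary~\ref{cor:free-product-fd-subspaces} instead of checking them by hand on your explicit cone with apex $[0:1]$, and it takes $g$ in the lattice $\PGL(n_d+1,\ZZ)\subset\PGL(n_d+1,\mathbb{A})$ for every subring $\mathbb{A}$, so your density-of-$\mathbb{A}$ case distinction is unnecessary. (One local slip in your limit-set argument: bounding the trace below only excludes accumulation at the apex $[0:1]$; what excludes the other boundary points inside the affine chart is that the determinant is constant and positive along the orbit while it vanishes on the boundary of the positive-definite cone --- a fact you state anyway, so this is immediately repaired.)

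Concerning the case you single out as the genuine obstacle, namely $d$ even with $-I\in\Gamma_i$: the paper does nothing cleverer there. It simply asserts that $\tau'_d$ induces an embedding of $\SL(d,\RR)$ into $\PSL(n_d+1,\RR)$ and identifies $\Gamma_i$ with $\tau'_d(\Gamma_i)$, which is legitimate only when $-I\notin\Gamma_i$ (automatic for $d$ odd), since the kernel of $\tau'_d$ is $\{\pm I\}\cap\SL(d,\RR)$. So what you flagged is an imprecision in the paper's own write-up rather than an idea you failed to find; your fallback of first padding into $\SL(d+1,\mathbb{A})$ recovers the qualitative ($\ZZ$-)linearity statement, at the cost of the stated dimension $n_d+1$ in that residual case, exactly as the paper's argument as written would have to do.
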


We also prove a variant of Theorem~\ref{thm:free-product-fd} for semisimple subgroups of $\PGL(V)$ (Theorem~\ref{thm:free-product-in-G}), which implies the following.
We say that a point of $G/P$ is \emph{uniformly transverse} to a subset of~$G/P$ if it is transverse to all points in the closure of the subset (and therefore also to all points in a neighborhood of that closure).

\begin{theorem} \label{thm:free-product-G/P}
Let $G$ be a noncompact connected real linear semisimple Lie group, $P$ a self-opposite parabolic subgroup of~$G$, and $\Gamma_0$ and~$\Gamma_1$ discrete subgroups of~$G$.
Suppose that for each $i\in\{0,1\}$ there is a point $x_i \in G/P$ which is uniformly transverse to $(\Gamma_i\smallsetminus\{1\}) \cdot x_i$.
Then there exists $g \in G$ such that the representation $\rho : \Gamma_0 * g\Gamma_1 g^{-1} \to G$ induced by the inclusions $\Gamma_0, g\Gamma_1 g^{-1} \hookrightarrow G$ is discrete and faithful.

Moreover, for any Zariski-dense subgroup $\Gamma$ of~$G$, if we can choose the points $x_0, x_1 \in G/P$ inside~$\Lambda_{\Gamma}^{G/P}$, then we can choose $g$ inside~$\Gamma$.
\end{theorem}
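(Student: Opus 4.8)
I would deduce the theorem from Theorem~\ref{thm:amalgam}, applied to the free product $\Gamma_0 * g\Gamma_1 g^{-1}$ (that is, with trivial amalgamated subgroup), after transporting the whole configuration into a projective space; this is the same overall strategy as for Theorem~\ref{thm:free-product-fd}, the essentially new feature being that the conjugator $g$ must be produced inside $G$, and for the last assertion inside $\Gamma$. The first step is a standard linearization. Since $P$ is self-opposite, one can fix a faithful representation $\phi : G \to \SL(V)$ together with $\phi(G)$-equivariant embeddings $\iota : G/P \hookrightarrow \PP(V)$ and $\iota^* : G/P \hookrightarrow \PP(V^*)$ such that two points $y, y' \in G/P$ are transverse if and only if $\iota(y) \notin \iota^*(y')$ (equivalently $\iota(y') \notin \iota^*(y)$), and such that $\phi(G)$ preserves a properly convex open set $\Omega^{\mathrm{sym}} \subset \PP(V)$. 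One concrete choice: take a faithful self-adjoint embedding $G \hookrightarrow \SL(N,\RR)$ and set $V := S^2\RR^N$, so that $\Omega^{\mathrm{sym}}$ is the projective model of the symmetric space of $\SL(N,\RR)$ and $\iota, \iota^*$ arise from highest-weight orbits in a suitable summand of $S^2\RR^N$ (a little bookkeeping keeps $\phi$ faithful). Writing $\xi_i := \iota(x_i)$ and letting $H_i \in \PP(V^*)$ be the hyperplane $\iota^*(x_i)$, the hypothesis on $x_i$ becomes: the compact set $\overline{\phi(\Gamma_i\smallsetminus\{1\})\cdot\xi_i}$ is disjoint from $H_i$, and dually $\xi_i$ lies on no hyperplane of the compact set $\iota^*(\overline{(\Gamma_i\smallsetminus\{1\})\cdot x_i})$; in particular the $\phi(\Gamma_i)$-orbit of $\xi_i$ is pairwise transverse to itself.

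The technical heart is then to manufacture, from the single transverse point $x_i$, a $\phi(\Gamma_i)$-invariant properly convex open set $\Omega_i \subseteq \PP(V)$ with three features: its boundary contains $\xi_i$ and the whole $\phi(\Gamma_i)$-orbit of $\xi_i$; it is small enough that, after an appropriate conjugation, it becomes disjoint from all of its nontrivial $\phi(\Gamma_i)$-translates; and its full orbital limit set $\Lambdao_{\Omega_i}(\phi(\Gamma_i))$ — hence its convex core $\Ccore_{\Omega_i}(\phi(\Gamma_i))$ — avoids a whole neighborhood of $\xi_i$ in $\partial\Omega_i$. The uniform transversality is what makes this possible: the hyperplanes $\iota^*(\gamma x_i)$ for $\gamma \in \Gamma_i\smallsetminus\{1\}$, and their limits, stay uniformly away from $\xi_i$, so there is room to carve out of an ambient invariant properly convex set (such as $\Omega^{\mathrm{sym}}$) a $\phi(\Gamma_i)$-equivariant family of properly convex open pieces pinched toward $\xi_i$; when the Zariski closure of $\phi(\Gamma_i)$ is small one may instead take $\Omega_i$ to be (a fattening by $\Omega^{\mathrm{sym}}$ of) the dual of the convex hull of $\iota^*(\overline{\Gamma_i\cdot x_i})$.

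With these sets in hand I would choose the conjugator. Because $G$ acts transitively on ordered pairs of transverse points of $G/P$, and the "general position" requirements on $g$ are open conditions, one can pick $g \in G$ so that $\phi(g) x_1$ is transverse to $x_0$ and, more strongly, so that $\overline{\Omega_0}$ and $\phi(g)\cdot\overline{\Omega_1}$ are disjoint and each blocks the other's nontrivial translates. Concretely, for $\gamma \in \phi(\Gamma_0)\smallsetminus\{1\}$ one checks that the triple $(\phi(g)\cdot\Omega_1,\ \Omega_0,\ \gamma\phi(g)\cdot\Omega_1)$ is in occultation position, and for $\gamma \in \phi(g\Gamma_1 g^{-1})\smallsetminus\{1\}$ that $(\Omega_0,\ \phi(g)\cdot\Omega_1,\ \gamma\cdot\Omega_0)$ is. The first two clauses of Definition~\ref{def:occult} come from the three features of the $\Omega_i$ and the choice of $g$; the third clause is where transversality enters decisively, since by Lemma~\ref{lem:occult} it asserts that any projective line meeting $\overline{\phi(g)\cdot\Omega_1}$ and a far-away $\phi(\Gamma_0)$-translate $\gamma\cdot\overline{\phi(g)\cdot\Omega_1}$ must cross $\Omega_0$: one verifies this using that $\Omega_0$ is pinched along the (pairwise transverse) $\phi(\Gamma_0)$-orbit of $\xi_0$ while $\phi(g) x_1$ is transverse to $x_0$, so that such a line is confined between the two translates and hence passes through $\Omega_0$. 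Theorem~\ref{thm:amalgam} then gives that the representation $\Gamma_0 * g\Gamma_1 g^{-1} \to \SL(V)$ induced by $\phi$ is discrete and faithful, whence $\rho : \Gamma_0 * g\Gamma_1 g^{-1} \to G$ is discrete and faithful as well since $\phi$ is faithful. For the final assertion I would rerun this last step with $g$ required to lie in $\Gamma$: as $\Lambda_\Gamma^{G/P}$ is the unique minimal closed $\Gamma$-invariant subset of $G/P$ and is Zariski-dense there, and it contains $x_0$ and $x_1$, some $g \in \Gamma$ moves $x_1$ to a point transverse to $x_0$ that meets all the required open conditions.

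The main obstacle is the construction in the second paragraph — producing, from one transverse point, a $\phi(\Gamma_i)$-invariant properly convex open set with all three features at once and with enough uniformity for the occultation check — together with the requirement that $g$ be found inside $G$ (then inside $\Gamma$) rather than in all of $\PGL(V)$. Were $g$ allowed to range over $\PGL(V)$, one could instead invoke Theorem~\ref{thm:free-product-fd} directly once the convex-core condition of Step~2 is established; it is exactly the constraint $g \in G$, and the passage to $g\in\Gamma$, that forces one to use the transversality hypothesis in $G/P$ and to verify Definition~\ref{def:occult} by hand.
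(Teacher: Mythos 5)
Your high-level plan (linearize so that transversality in $G/P$ becomes transversality of flags in $\PP(V)\times\PP(V^*)$ with $\phi(G)$ preserving a properly convex set, build $\Gamma_i$-invariant convex sets pinched along the orbit of $x_i$, then choose $g$ and apply a combination theorem) is indeed the paper's strategy, which proves this statement via Theorem~\ref{thm:free-product-in-G} using the $P$-adapted representation of \cite{ggkw17} composed with a symmetric square. But your execution has a genuine gap, and in fact an internal contradiction, precisely at the steps you flag as the main obstacle. You ask for $g$ with $\overline{\Omega_0}$ and $\phi(g)\cdot\overline{\Omega_1}$ \emph{disjoint}, and then propose to verify that $(\phi(g)\cdot\Omega_1,\ \Omega_0,\ \gamma\phi(g)\cdot\Omega_1)$ is in occultation position; condition (O2) of Definition~\ref{def:occult} (hence the hypothesis of Theorem~\ref{thm:amalgam}) requires $\Omega_0\cap\phi(g)\cdot\Omega_1\neq\varnothing$, so the configuration you describe can never satisfy the combination theorem you invoke. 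The paper instead makes the invariant convex sets overlap massively (each $\Omega_i$ contains the $G$-invariant set $\Omega\cup\{x_i\}$, obtained inside the complement of the accumulation hyperplanes of $\Gamma_i\cdot X_i$ via Proposition~\ref{prop:thicken-not-proper}) and runs the combination theorem not as a two-vertex amalgam but as the three-vertex graph of groups $\big[(\Gamma_0)\leftarrow(1)\rightarrow(\Gamma_1)\big]$, the trivial middle vertex carrying a buffer convex set $B$.

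Moreover, the two constructions you leave as assertions are the actual content. The ``pinched'' set is produced by a concrete surgery: uniform transversality makes $x_i$ an extremal, conical point of $\C_i=\Conv{\Gamma_i\cdot x_i}$, a hyperplane $H_i$ chops a hat $\Delta_i$ off a thickening $\C_i^\varepsilon$, and all $\Gamma_i$-translates of the hat are replaced by pyramids coned to one point $z$, giving $\Omega_i^z$ as in \eqref{eq:pyrchir}; only then does the blocking statement you gesture at (``the line is confined between the two translates'') have a proof, namely that any segment from $\Delta_i$ to $\gamma\cdot\Delta_i$ must cross the core of $\C_i^\varepsilon$, which lies in $\Omega_i^z$. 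As for $g$: transitivity of $G$ on transverse pairs controls only the point $\phi(g)x_1$, not the position of the whole body $\phi(g)\cdot\Omega_1$, and it is unavailable for the ``moreover'' clause. The paper instead uses Benoist's density of attracting/repelling pairs of proximal elements (Fact~\ref{fact:double-lim-set-G/P}) to find a biproximal $h\in\Gamma$ with both fixed flags in the two hat neighborhoods, takes $B$ an $h$-invariant properly convex neighborhood of its axis (Proposition~\ref{prop:thicken-add-dim-general}), and sets $g=h^{-2N}$: it is the contraction by $h^{\pm N}$ that places $h^{N}\cdot\Omega_0^z$ and $h^{-N}\cdot\Omega_1^z$ on opposite sides of $B$ in occultation position. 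A smaller issue: an arbitrary self-adjoint embedding $G\hookrightarrow\SL(N,\RR)$ with $V=S^2\RR^N$ does not in general yield an embedding of $G/P$ with the transversality correspondence for your given $P$; one needs the $P$-adapted irreducible representation of \cite{ggkw17} first, and only afterwards the symmetric-square trick to gain the invariant properly convex set.
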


Here $\Lambda_{\Gamma}^{G/P}$ denotes the proximal limit set of $\Gamma$ in $G/P$, \ie the closure in $G/P$ of the set of all attracting fixed points of elements of~$\Gamma$ (see Section~\ref{subsec:remind-prox}).

As an application, we improve a result of Soifer \cite{soi12} for $d=3$ by showing that for any~$d$, the group $G = \SL(d,\RR)$ contains a discrete subgroup isomorphic to the free product $\ZZ^{d-1} * \ZZ^{d-1}$ (Corollary~\ref{cor:Soifer}).
This provides examples of Zariski-dense discrete subgroups of $\SL(d,\RR)$ which are not lattices but contain diagonalizable subgroups isomorphic to~$\ZZ^{d-1}$ --- another open case of Nori's problem discussed in Fisher's survey \cite[\S\,8]{fis-survey-margulis}.

\subsection{Combination theorems for (naively) convex cocompact actions}

We prove that our combination results behave well with respect to the following notions of convex cocompactness introduced in~\cite{dgk-proj-cc} and subsequently studied, for example, in~\cite{gm21, iz23, wei23, bf24} (see also~\cite{kas-notes} for more on this notion and its role in the larger study of discrete subgroups of Lie groups).

\begin{definition} \label{def:cc-group}
Let $\Gamma$ be an infinite discrete subgroup of $\PGL(V)$ preserving a properly convex open subset $\Omega$ of $\PP(V)$.
The action of $\Gamma$ on~$\Omega$~is
\begin{enumerate}[label=(\roman*)]
  \item\label{item:def-naiv-cc} \emph{naively convex cocompact} if $\Gamma$ preserves and acts with compact quotient on some non\-empty $\Gamma$-invariant closed convex subset $\C$ of~$\Omega$;
  \item\label{item:def-cc} \emph{convex cocompact} if it is naively convex cocompact as in \ref{item:def-naiv-cc} and if the set $\C$ can be taken ``large enough'', in the sense that the closure of $\C$ in $\PP(V)$ contains the full orbital limit set $\Lambdao_{\Omega}(\Gamma)$.
\end{enumerate}
We say that the group $\Gamma$ is \emph{naively convex cocompact in $\PP(V)$} (\resp is \emph{convex cocompact in $\PP(V)$}) if it preserves and acts naively convex cocompactly (\resp convex cocompactly) on some properly convex open subset of $\PP(V)$.
\end{definition}

Given two properly convex sets $\C \subset \Omega \subset \PP(V)$ where $\Omega$ is open and $\C$ is closed in $\Omega$ with nonempty interior $\Int{\C}$, we say that $\C$ has \emph{strictly convex nonideal boundary in~$\Omega$} if any nontrivial projective segment contained in $\overline{\C} \smallsetminus \Int{\C}$ (see Notation~\ref{not:intro}) is in fact contained in $\partial\Omega$.

The two theorems below say, informally, that if the convex open sets $\Omega_0, \Omega_1$ of Theorems~\ref{thm:amalgam} and~\ref{thm:HNN} come equipped with $\Gamma_i$-invariant cocompact subsets $\C_i$ whose interiors satisfy the same assumptions as the $\Omega_i$, then under natural cocompactness assumptions we can combine the $\C_i$ into a cocompact $\Gamma$-invariant convex subset $\C$ of~$\Omega$.

\begin{theorem} \label{thm:amalgam-cc}
In the setting of Theorem~\ref{thm:amalgam}, suppose that there exist $\Gamma_i$-invariant closed convex subsets $\C_i$ of~$\Omega_i$ such that for any $i\in\{ 0,1\}$ and any $\gamma \in \Gamma_i \smallsetminus \Delta$, the triple of properly convex open sets $(\Int{\C_{1-i}}, \ \Int{\C_i}, \ \gamma \cdot \Int{\C_{1-i}})$ is in occultation position.
Suppose in addition that $\C_i$ has compact quotient by~$\Gamma_i$ for both~$i$, and that the closure of $\Conv{\C_0\cup\C_1} \smallsetminus (\C_0\cup\C_1)$ in $\Conv{\Omega_0\cup\Omega_1}$ has compact quotient by~$\Delta$.
Then the $\Gamma$-invariant set
$$\C := \bigcup_{\gamma \in \Gamma} \rho(\gamma) \cdot \Conv{\C_0 \cup \C_1}$$
is convex and closed in~$\Omega$, and has compact quotient by $\Gamma = \Gamma_0 *_{\Delta} \Gamma_1$.
Moreover, if for each $i\in\{0,1\}$ the convex set $\C_i$ has strictly convex nonideal boundary in~$\Omega$, then the action of $\Gamma$ on~$\Omega$ is convex cocompact.
\end{theorem}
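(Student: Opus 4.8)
The plan is to run the Bass--Serre tree machinery from the proof of Theorem~\ref{thm:amalgam} twice in parallel: once for the convex open sets $\Omega_i$ (which we have already invoked, giving the properly convex $\Gamma$-invariant open set $\Omega$) and once for the closed convex cocompact sets $\C_i$. First I would set $D := \Conv{\C_0 \cup \C_1}$. Since the triples $(\Int{\C_{1-i}}, \Int{\C_i}, \gamma\cdot\Int{\C_{1-i}})$ are in occultation position, Lemma~\ref{lem:invisible} applies to the interiors, and by passing to closures (using that $\C_i = \overline{\Int{\C_i}}$ because $\C_i$ has nonempty interior and is convex) we get that $D$ is well defined and decomposes edge-by-edge along two-edge paths in $T_\Gamma$. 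The induction in the proof of Theorem~\ref{thm:amalgam} then shows that $\C := \bigcup_{\gamma\in\Gamma}\rho(\gamma)\cdot D$ is convex, with the adjacency pattern of the translates $\rho(\gamma)\cdot\C_0$, $\rho(\gamma)\cdot\C_1$ recovering the Bass--Serre tree; moreover $\C \subset \Omega$ since each $\rho(\gamma)\cdot D \subset \rho(\gamma)\cdot\Conv{\Omega_0\cup\Omega_1} \subset \Omega$, and $\C$ is closed in $\Omega$ because locally (in the tree) it is a locally finite union of closed convex pieces, the local finiteness being exactly the statement that the adjacency graph is the tree $T_\Gamma$ (so only finitely many translates meet any compact subset of $\Omega$).

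Next I would establish cocompactness. Topologically, $\Gamma$ acts on the Bass--Serre tree $T_\Gamma$ with fundamental domain a single edge $e$ (with endpoints $v_0, v_1$), and the quotient $\Gamma\backslash\C$ is covered by the images of the pieces indexed by $v_0$, $v_1$, and $e$. The pieces over $v_i$ contribute $\Gamma_i\backslash\C_i$, which is compact by hypothesis. The ``edge'' contribution is precisely the set $\overline{D \smallsetminus(\C_0\cup\C_1)}$ taken inside $\Conv{\Omega_0\cup\Omega_1}$, whose quotient by $\Delta$ is compact by hypothesis. A standard argument with graphs of groups --- choosing a compact fundamental domain for $\Gamma$ on $T_\Gamma$ and lifting --- shows that $\Gamma\backslash\C$ is a union of continuous images of these three compact sets, hence compact. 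I would take care here that $\C$ is nonempty with nonempty interior in $\Omega$, so that the action of $\Gamma$ on $\C$ is genuinely an instance of naive convex cocompactness in the sense of Definition~\ref{def:cc-group}\ref{item:def-naiv-cc}.

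For the final (convex cocompact, not merely naive) conclusion, I would upgrade to the statement that $\overline{\C} \supset \Lambdao_\Omega(\Gamma)$ under the strict-convexity hypothesis. The point is that an orbit $\rho(\gamma_k)\cdot x$ ($x\in\Omega$) escaping to $\partial\Omega$ corresponds to an unbounded sequence of vertices in $T_\Gamma$; one shows that the accumulation points in $\partial\Omega$ lie in $\overline{\C}$ by a telescoping/nestedness argument along the geodesic ray in $T_\Gamma$ that the sequence follows, using the occultation (blocking) property to see that consecutive translated pieces $\rho(\gamma)\cdot D$ nest into smaller and smaller ``visual'' regions. Here the strictly convex nonideal boundary hypothesis on each $\C_i$ is what forbids a limit point from sitting on a segment in $\overline{\C}\smallsetminus\Int{\C}$ that is not already in $\partial\Omega$ --- exactly the condition needed to conclude $\overline{\C}\supset\Lambdao_\Omega(\Gamma)$ and hence that $\C$ is ``large enough'' in the sense of Definition~\ref{def:cc-group}\ref{item:def-cc}.

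\textbf{Main obstacle.} I expect the genuinely delicate step to be the last one: controlling where the orbital limit points land. Proving $\overline{\C}\supset\Lambdao_\Omega(\Gamma)$ requires understanding how points of $\Omega$ are pushed around by long words in $\Gamma$ in terms of the tree-of-convex-sets structure, and in particular requires a quantitative version of occultation position that survives the inductive contraction of edges (so that the nesting of visual regions is uniform enough to force accumulation on $\overline{\C}$). The earlier steps --- that $\C$ is convex, closed, and has compact quotient --- are essentially bookkeeping on top of the already-established tree-of-convex-sets technology of Theorem~\ref{thm:amalgam}; the interaction between the strict-convexity hypothesis and the blocking dynamics is where the real work lies.
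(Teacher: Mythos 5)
Your first two steps (convexity and closedness of $\C$, and cocompactness of the $\Gamma$-action on it) follow essentially the paper's route: the paper proves Theorem~\ref{thm:amalgam-cc} as the two-vertex case of Theorem~\ref{thm:gog-cc}, where closedness is Lemma~\ref{lem:C-closed} and cocompactness is exactly your three-piece covering (two vertex pieces $\Gamma_i\backslash\C_i$ plus the $\Delta$-cocompact ``putty'', Lemma~\ref{lem:cocompact}). One caveat: your justification of closedness via ``only finitely many translates meet any compact subset of $\Omega$'' is false as stated, since all translates $\rho(\gamma)\cdot\Conv{\C_0\cup\C_1}$ with $\gamma\in\Gamma_0$ contain $\C_0$; the correct local argument is that each piece is closed inside its open chart $\rho(\gamma)\cdot\Conv{\Omega_0\cup\Omega_1}$ and that these charts overlap only according to the tree adjacency (Theorem~\ref{thm:tree}.\eqref{treethm:3}).

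The genuine gap is in your last step. You propose to verify Definition~\ref{def:cc-group}\ref{item:def-cc} directly, by showing $\Lambdao_\Omega(\Gamma)\subset\overline{\C}$ through a nesting/contraction argument along geodesic rays of the Bass--Serre tree, resting on an unproved ``quantitative occultation''. Two problems. First, no uniform contraction is available: occultation position is a per-triple condition over the infinitely many $\gamma\in\Gamma_i\smallsetminus\Delta$, and when $\Delta$ is infinite the translates $\gamma\cdot\Int{\C_{1-i}}$ accumulate on $\Lambdao(\Delta)$, so consecutive blocking sets can be nearly tangent and the ``visual regions'' need not shrink. Second, and more structurally, tree-geodesic nesting can only account for limit points of orbit sequences whose tree displacement diverges; the full orbital limit set also contains limit points of sequences lying in (conjugates of) $\Gamma_0$, $\Gamma_1$ or $\Delta$, applied to arbitrary points of $\Omega\supsetneq\Omega_i$. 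For those you need $\Lambdao_\Omega(\Gamma_i)\subset\overline{\C_i}$, which does not follow from the hypothesis that $\Gamma_i$ acts cocompactly on $\C_i\subset\Omega_i$; extracting it from ``strictly convex nonideal boundary'' is precisely the content of Facts~\ref{fact:bisat-cc} and~\ref{fact:cc-subsets} of \cite{dgk-proj-cc}, which your sketch never invokes. The paper's actual argument avoids orbit dynamics altogether: Lemma~\ref{lem:cocobisat} shows that the glued core $\C$ has \emph{bisaturated boundary}, by ruling out a ray $[x,\xi)\subset\partialn\C$ with $\xi\in\partiali\C$ --- it cannot pass from one edge piece to an adjacent one because the dual form of occultation forbids a common supporting hyperplane to three consecutive interiors $\Int{\C}$-pieces, and it cannot stay inside a single ``putty'' piece because the edge group acts cocompactly there and $\partialn\C_i$ is strictly convex --- and then convex cocompactness on $\Int{\C}$, hence on $\Omega$, follows from the cited facts. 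Your intuition that strict convexity forbids bad segments in $\overline{\C}\smallsetminus\Int{\C}$ is the right germ, but without the bisaturated-boundary criterion and its consequences the bridge to containment of the full orbital limit set is missing, and that bridge is where the proof actually lives.
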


\begin{theorem} \label{thm:HNN-cc}
In the setting of Theorem~\ref{thm:HNN}, suppose that there exists a $\Gamma_0$-invariant closed convex subset $\C_0$ of~$\Omega_0$ such that
\begin{enumerate}
  \item for every $\gamma \in \Gamma_0$, the triple of properly convex open sets $(t \cdot \Int{\C_0}, \Int{\C_0}, \gamma t^{-1} \cdot\nolinebreak \Int{\C_0})$ is in occultation position, and
  \item for every $\varepsilon \in \{1,-1\}$ and every $\gamma' \in \Gamma_0 \smallsetminus t^{\varepsilon} \Gamma_0 t^{-\varepsilon}$, the triple of properly convex open sets $(t^{\varepsilon} \cdot \Int{\C_0}, \ \Int{\C_0}, \gamma' t^{\varepsilon} \cdot \Int{\C_0})$ is in occultation position.
\end{enumerate}
Suppose in addition that $\C_0$ has compact quotient by~$\Gamma_0$, and that the closure of $\Conv{\C_0 \cup t\cdot\C_0} \smallsetminus (\C_0\cup t\cdot\C_0)$ in $\Conv{\Omega_0 \cup t\cdot\Omega_0}$ has compact quotient by~$\Delta$.
Then the $\Gamma$-invariant set
$$\C := \bigcup_{\gamma \in \Gamma} \rho(\gamma) \cdot \Conv{\C_0 \cup t\C_0}$$
is convex and closed in~$\Omega$, and has compact quotient by $\Gamma = (\Gamma_0)*_t$.
Moreover, if $\C_0$ has strictly convex nonideal boundary in~$\Omega$, then the action of $\Gamma$ on $\Omega$ is convex cocompact.
\end{theorem}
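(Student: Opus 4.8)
\textbf{Proof plan for Theorem~\ref{thm:HNN-cc}.}
The strategy is to reduce everything to Theorem~\ref{thm:HNN-cc}'s ambient space $\Omega$ already produced by Theorem~\ref{thm:HNN}, and then to verify that the prescribed set $\C$ is (i) well defined, (ii) convex and closed in $\Omega$, and (iii) cocompact, following the same tree-of-convex-sets bookkeeping used to build $\Omega$. First I would invoke Theorem~\ref{thm:HNN} (whose hypotheses hold because the occultation assumptions on the $\Int{\C_0}$-translates imply them for the $\Omega_0$-translates, since $\Int{\C_0}\subset\Omega_0$ and occultation position is inherited by enlarging the outer two sets\,---\,this monotonicity should already be recorded in the properties of Lemma~\ref{lem:invisible} and Definition~\ref{def:occult}): this gives the discrete faithful $\rho$ and the properly convex $\rho(\Gamma)$-invariant $\Omega=\bigcup_{\gamma}\rho(\gamma)\cdot\Conv{\Omega_0\cup t\Omega_0}$. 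The Bass--Serre tree $T_\Gamma$ of the HNN extension carries a $\Gamma$-equivariant realization as a tree of properly convex open sets, with vertices sent to $\Gamma$-translates of $\Omega_0$ and edges to translates of the pair $(\Omega_0,t\Omega_0)$; the occultation hypotheses on the $\Int{\C_0}$ give, by the \emph{same} argument, a second tree of convex open sets $v\mapsto \gamma_v\cdot\Int{\C_0}$ sitting inside the first, again with adjacent triples in occultation position.

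Next I would run the induction that proved Theorem~\ref{thm:HNN}, but now tracking the closed convex pieces $\gamma_v\cdot\C_0$ instead of the open ones. The key local fact, Lemma~\ref{lem:invisible}, applied to a two-edge path, shows $\Conv{A\cup B\cup C}=\Conv{A\cup B}\cup\Conv{B\cup C}$ when $(A,B,C)$ is in occultation position; since occultation position propagates under edge contraction (replacing two vertex-sets by their convex hull), the convex hull of the union of \emph{all} the translates $\gamma_v\cdot\C_0$ over the tree can be assembled one edge at a time, yielding exactly $\C=\bigcup_{\gamma}\rho(\gamma)\cdot\Conv{\C_0\cup t\C_0}$ as a well-defined convex set, with adjacency graph equal to $T_\Gamma$. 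That $\C\subset\Omega$ and that $\C$ is closed in $\Omega$ follow because each local hull $\Conv{\C_0\cup t\C_0}$ is closed in $\Conv{\Omega_0\cup t\Omega_0}$ (as $\C_0$ is closed in $\Omega_0$) and the tree structure means the pieces accumulate only on $\partial\Omega$, not inside $\Omega$\,---\,this is the place where the properness of $\Omega$ and the local-to-global structure of the adjacency tree do the work, exactly as for $\Omega$ itself.

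For cocompactness I would argue that a fundamental domain for $\Gamma\curvearrowright\C$ is built from a fundamental domain for $\Gamma_0\curvearrowright\C_0$ together with a fundamental domain for $\Delta\curvearrowright\overline{\Conv{\C_0\cup t\C_0}\smallsetminus(\C_0\cup t\C_0)}$ (closure taken in $\Conv{\Omega_0\cup t\Omega_0}$): every point of $\C$ lies, up to the $\Gamma$-action and the edge-by-edge decomposition, either in a single vertex-translate of $\C_0$ or in one of the ``bridge'' regions $\Conv{\C_0\cup t\C_0}\smallsetminus(\C_0\cup t\C_0)$ attached along an edge. Both building blocks are assumed compact mod the relevant group, and there are finitely many $\Gamma$-orbits of vertices and edges in $T_\Gamma$ meeting a fundamental domain of $T_\Gamma$ (one vertex orbit, one edge orbit), so $\C/\Gamma$ is compact; hence the $\Gamma$-action on $\Omega$ is naively convex cocompact. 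Finally, for the last sentence: assuming $\C_0$ has strictly convex nonideal boundary in $\Omega$, I would show $\C$ does too\,---\,any nontrivial segment in $\overline{\C}\smallsetminus\Int{\C}$ lies in the closure of some chain of bridge regions and vertex pieces, and strict convexity of each $\C_0$-piece forces such a segment into $\partial\Omega$\,---\,and then show $\overline{\C}\supset\Lambdao_\Omega(\Gamma)$, because orbit accumulation points of $\Gamma$ on $\partial\Omega$ are limits of orbit points of the vertex groups $\Gamma_0$ (whose orbital limit sets lie in $\overline{\C_0}$ by the corresponding property for $\Gamma_0$) together with the endpoints of the infinite edge-paths in $T_\Gamma$, all of which lie in $\overline{\C}$; this upgrades naive convex cocompactness to convex cocompactness in the sense of Definition~\ref{def:cc-group}\ref{item:def-cc}.

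The main obstacle I expect is the middle step: verifying cleanly that $\C$ is closed in $\Omega$ and that the global convex hull decomposes edge-by-edge without unexpected accumulation inside $\Omega$. Occultation position gives this locally (Lemma~\ref{lem:invisible}), but turning the local statement into a statement about the infinite tree\,---\,controlling that the translates $\gamma_v\cdot\C_0$ and the bridges between them do not pile up on an interior point of $\Omega$\,---\,requires the same care as the proof that $\Omega$ is properly convex, and the compactness hypothesis on the bridge region mod $\Delta$ is precisely what prevents degeneration there. Everything else is a matter of carefully re-running the graph-of-groups induction of Theorem~\ref{thm:general-gog} with the closed sets $\C_0$ in place of the open sets $\Omega_0$, and packaging the two compactness hypotheses into a fundamental domain.
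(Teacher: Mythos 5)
Your treatment of the first assertions (convexity and closedness of $\C$ in~$\Omega$, and cocompactness) is essentially the paper's own route: the paper proves Theorem~\ref{thm:HNN-cc} as a special case of Theorem~\ref{thm:gog-cc}, running the graph-of-groups machinery a second time on the interiors $\Int{\C_0}$, getting closedness from the fact that each translate of $\Conv{\C_0\cup t\C_0}$ is closed in the corresponding translate of $\Conv{\Omega_0\cup t\Omega_0}$ and that these open pieces meet only two at a time along the Bass--Serre tree (Lemma~\ref{lem:C-closed}), and getting cocompactness by covering the quotient with the vertex piece and the bridge piece, each compact modulo its group (Lemma~\ref{lem:cocompact}). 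One incidental slip: your claim that occultation position ``is inherited by enlarging the outer two sets'' is false (enlarging $A$ and $C$ shrinks $A^*\cup C^*$ and can destroy $A\cap C=\varnothing$); it is harmless only because the hypotheses of Theorem~\ref{thm:HNN} are explicitly assumed in the statement, so no such implication is needed.

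The genuine gap is in the ``moreover'' part. You propose to show that $\C$ itself has strictly convex nonideal boundary, arguing that strict convexity of each $\C_0$-piece forces any segment of $\overline{\C}\smallsetminus\Int{\C}$ into $\partial\Omega$. This is false in general: the bridge regions $\Conv{\C_0\cup t\C_0}\smallsetminus(\C_0\cup t\C_0)$ typically contribute nontrivial ``bitangent'' segments to $\partialn\C$ lying inside~$\Omega$ (already the convex hull of two disjoint strictly convex bodies has flat lateral boundary), and the neighboring tree translates, which attach only along vertex pieces, do not absorb them. What is actually true, and what the paper proves in Lemma~\ref{lem:cocobisat}, is the weaker property that $\C$ has \emph{bisaturated} boundary; establishing it is the real work --- one must rule out a ray in $\partialn\C$ with endpoint in $\partiali\C$, using the absence of a common supporting hyperplane to three consecutive pieces (a consequence of the occultation hypothesis on the $\Int{\C_0}$-translates) and a limiting argument combining cocompactness of the bridge modulo~$\Delta$ with strict convexity of~$\C_0$. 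From bisaturation, convex cocompactness on~$\Omega$ follows formally via Facts~\ref{fact:bisat-cc} and~\ref{fact:cc-subsets}. Your alternative route --- proving directly that $\Lambdao_{\Omega}(\Gamma)\subset\overline{\C}$ because limit points come from vertex-group orbits and ``endpoints of infinite edge-paths'' --- is precisely the nontrivial point and is left unsubstantiated: controlling accumulation of orbits of arbitrary points of~$\Omega$ along infinite paths of the tree requires a nesting/contraction argument (in the spirit of Lemma~\ref{lem:P1-divergence}), and without it, or without bisaturation, you only obtain naive convex cocompactness, which the paper's own examples show is strictly weaker.
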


\subsection{Applications: free products of (naively) convex cocompact groups}

In the case of free products (without amalgamation), we use a variant of Theorem~\ref{thm:amalgam-cc} with trivial~$\Delta$ to establish the following refinement of Theorem~\ref{thm:free-product-fd}.

\begin{theorem} \label{thm:free-product-cc}
For $i=0,1$, let $\Gamma_i$ be an infinite discrete subgroup of $\PGL(V)$ preserving a properly convex open subset $\Omega_i$ of $\PP(V)$, such that the action of $\Gamma_i$ on~$\Omega_i$ is naively convex cocompact (\resp is convex cocompact) and $\Omega_i \neq \Ccore_{\Omega_i}(\Gamma_i)$.
Then there exists $g\in\PGL(V)$ such that the representation $\Gamma_0 * g\Gamma_1 g^{-1} \to \PGL(V)$ induced by the inclusions $\Gamma_0, g\Gamma_1 g^{-1} \hookrightarrow \PGL(V)$ is faithful with a discrete image which is naively convex cocompact (\resp is convex cocompact) in $\PP(V)$.
Moreover, we can take $g$ in any given lattice of $\PGL(V)$ (causing $\rho$ to take values in that lattice if it contains $\Gamma_0, \Gamma_1$), or more generally in any given Zariski-dense subgroup of $\PGL(V)$ whose proximal limit sets in $\PP(V)$ and in $\PP(V^*)$ are everything.
\end{theorem}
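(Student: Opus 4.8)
The plan is to reduce Theorem~\ref{thm:free-product-cc} to Theorem~\ref{thm:amalgam-cc} (in its version with trivial $\Delta$) by producing, for a suitable conjugator $g$, the data that the hypotheses of Theorem~\ref{thm:amalgam-cc} require: $\Gamma_i$-invariant cocompact convex subsets $\C_i$ whose interiors, together with their $\Gamma_i$-translates, land in occultation position, and a compactness statement for the ``collar'' between $\C_0$ and $g\C_1 g^{-1}$. First I would take $\C_i\subset\Omega_i$ to be the convex core $\Ccore_{\Omega_i}(\Gamma_i)$ in the convex cocompact case, or a given cocompact invariant convex set in the naive case; the hypothesis $\Omega_i\neq\Ccore_{\Omega_i}(\Gamma_i)$ guarantees that the interior $\Int{\C_i}$ is a proper sub-convex-set of $\Omega_i$, which is what will give us room to place the two pieces so as to occult each other. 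The heart of the matter is then an occultation-position statement of the same flavor as in Theorem~\ref{thm:free-product-fd}: for a generic $g$ in the relevant lattice or Zariski-dense group, the triples $(\Int{\C_1}, \ \Int{\C_0}, \ \gamma g\cdot \Int{\C_1})$ for $\gamma\in\Gamma_0\smallsetminus\{1\}$ and symmetrically $(g\cdot\Int{\C_0}, \ g\cdot\Int{\C_1}, \ \gamma' g\cdot\Int{\C_0})$ for $\gamma'\in\Gamma_1\smallsetminus\{1\}$ are in occultation position. (After conjugating $\Gamma_1$ by $g$ one works with $\Omega_1':=g\Omega_1$, $\C_1':=g\C_1$, etc.; I keep the $g$'s explicit here only to indicate where the freedom lies.)

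To establish this occultation-position claim I would follow the same contraction/positioning argument that underlies Theorem~\ref{thm:free-product-fd}: pick interior points and supporting hyperplanes of $\Omega_0$ and $\Omega_1$ whose orbits under $\Gamma_0$, $\Gamma_1$ accumulate on the full orbital limit sets, hence on the corresponding proximal limit sets; then use the hypothesis that the ambient Zariski-dense group (or lattice) has proximal limit sets equal to all of $\PP(V)$ and $\PP(V^*)$ to find $g$ conjugating the closure of $\Omega_1$ (together with its dual) into a position disjoint from, and ``hidden behind'', $\overline{\Omega_0}$ and its dual, in the precise sense demanded by Definition~\ref{def:occult}. Since the dual of a convex core of a convex cocompact group is controlled by the dual convex core (and convex cocompactness is preserved under duality, by the theory in~\cite{dgk-proj-cc}), the same $g$ simultaneously handles the primal and dual conditions $A\cap C=\varnothing$ and $\overline{B^*}\subset A^*\cup C^*$. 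The genericity is what makes the three bulleted conditions of Definition~\ref{def:occult} hold not just for $g$ but on a neighborhood of $g$, which also yields the remark after Theorem~\ref{thm:free-product-fd} and lets us intersect with the lattice or Zariski-dense subgroup; one also checks the two non-containment conditions $A\not\subset B$, $C\not\subset B$, which are immediate because $\Int{\C_i}\neq\Omega_i$ forces each $\gamma g\cdot\Int{\C_1}$ to stick out of $\Int{\C_0}$.

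With the occultation hypotheses in hand, I would verify the remaining compactness input of Theorem~\ref{thm:amalgam-cc}: that the closure of $\Conv{\C_0\cup g\C_1}\smallsetminus(\C_0\cup g\C_1)$ inside $\Conv{\Omega_0\cup g\Omega_1}$ is compact. When $\Delta$ is trivial this ``collar'' region is a single compact piece: it is closed, and it is bounded away from $\partial(\Conv{\Omega_0\cup g\Omega_1})$ because, by the occultation position of $(\Omega_1, \Omega_0, g\Omega_1)$ and Lemma~\ref{lem:invisible}, $\Conv{\Omega_0\cup g\Omega_1}=\Conv{\Omega_0}\cup\Conv{g\Omega_1}$-style decompositions localize the boundary to the two ends, each of which is contained in the respective $\partial\Omega_i$ where the cocompact cores $\C_i$ stay a definite distance away. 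Applying Theorem~\ref{thm:amalgam-cc} then gives that $\C=\bigcup_{\gamma}\rho(\gamma)\cdot\Conv{\C_0\cup g\C_1}$ is closed and convex in $\Omega$ with compact quotient, i.e.\ the action is naively convex cocompact; and in the convex cocompact case, taking $\C_i$ to be the convex cores ensures each $\C_i$ has strictly convex nonideal boundary in $\Omega$ (its nonideal boundary lies in $\partial\Omega_i\subset\partial\Omega$ by definition of the core as the convex hull of $\overline{\Lambdao_{\Omega_i}(\Gamma_i)}$), so the ``moreover'' clause of Theorem~\ref{thm:amalgam-cc} upgrades the conclusion to convex cocompactness.

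I expect the main obstacle to be the occultation-positioning step: arranging a \emph{single} $g$ (constrained to lie in a prescribed lattice or Zariski-dense subgroup) that simultaneously puts \emph{all} the infinitely many triples — over every $\gamma\in\Gamma_0\smallsetminus\{1\}$ and every $\gamma'\in\Gamma_1\smallsetminus\{1\}$, and in both $\PP(V)$ and $\PP(V^*)$ — into occultation position, with enough uniformity that the configuration is stable under small perturbation of $g$. This is exactly the technical core already needed for Theorem~\ref{thm:free-product-fd}, and the content here beyond that theorem is checking that the extra convex-cocompactness data ($\C_i$, the strict convexity of nonideal boundary, the collar compactness) is compatible with the same choice of $g$; this is where the duality-compatibility of the convex core and the hypothesis $\Omega_i\neq\Ccore_{\Omega_i}(\Gamma_i)$ do the work, and I would be careful to treat the naive and non-naive cases in parallel, since in the naive case $\C_i$ is only \emph{some} cocompact convex set and one must not secretly use properties of the convex core.
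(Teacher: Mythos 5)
There is a genuine gap, and it sits exactly where you flag the convex cocompact upgrade as easy. You propose to take $\C_i=\Ccore_{\Omega_i}(\Gamma_i)$ and assert that the core has strictly convex nonideal boundary in~$\Omega$ because ``its nonideal boundary lies in $\partial\Omega_i$''. This conflates the ideal and nonideal boundaries: by definition $\partialn\C_i=\C_i\smallsetminus\Int{\C_i}$ lies \emph{inside} $\Omega_i\subset\Omega$, not in $\partial\Omega$, and since the core is the convex hull of (the closure of) $\Lambdao_{\Omega_i}(\Gamma_i)$, its nonideal boundary typically consists of flat faces containing nontrivial segments --- for instance, for a convex cocompact free Fuchsian group acting on the Klein disk in $\PP(\RR^3)$, $\partialn\Ccore_{\Omega_0}(\Gamma_0)$ is a union of straight segments. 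So the ``moreover'' clause of Theorem~\ref{thm:amalgam-cc} cannot be invoked with the cores themselves. The paper instead replaces the core by a slightly larger invariant convex set with strictly convex ($C^1$) nonideal boundary via Fact~\ref{fact:strict-C1-nbhd}, and even then must smooth again (the $\tanh$-reparametrization of the cone) after the surgery described below, because the coned-off pieces reintroduce boundary segments.

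The second, related gap is the occultation-positioning step, which you defer to ``the same argument as Theorem~\ref{thm:free-product-fd}'' applied directly to $\Omega_0,\ g\Omega_1$ and $\C_0,\ g\C_1$. The paper never puts the original sets (or the cores) in occultation position by a generic choice of $g$: the hypotheses of Theorem~\ref{thm:amalgam-cc} force $\Int{\C_0}\cap g\cdot\Int{\C_1}\neq\varnothing$, disjointness of the $\Gamma_0$-translates of $g\cdot\Int{\C_1}$, and the blocking condition simultaneously for every nontrivial $\gamma$, and this intersection pattern is engineered, not generic. Concretely, the paper performs a surgery on each $\Omega_i$ (and on the chosen cocompact sets): it chops off the $\Gamma_i$-orbit of a pyramid near an isolated extremal point outside the core and glues back smaller pyramids coned to a point of a third, $\langle\beta\rangle$-invariant ``connector'' convex set $\Omega$, producing $\Omega''_i,\ \C''_i$; it then applies the graph-of-groups Theorem~\ref{thm:gog-cc} to a three-vertex graph with the connector in the middle (with $g=g_0\beta^{2N}g_1^{-1}$), rather than Theorem~\ref{thm:amalgam-cc} verbatim. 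This surgery is also what makes your ``collar'' compactness true: the edge pieces are compact because one arranges $g_{\mathsf{e}_i}^{-1}\cdot\overline{\Omega}\subset\Int{(\C_i^+)}$ and the middle set $\C$ is compact, whereas with the unmodified $\C_0$ and $g\C_1$ the closure of $\Conv{\C_0\cup g\C_1}\smallsetminus(\C_0\cup g\C_1)$ could accumulate on the ideal boundary (both sets are noncompact), and ``bounded away from the boundary'' is exactly what needs proof. Your overall reduction is the right one in spirit, but without the surgery, the connector, and the strict-convexification, the hypotheses you want to feed into Theorem~\ref{thm:amalgam-cc} are not available.
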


Note that the condition $\Omega_i \neq \Ccore_{\Omega_i}(\Gamma_i)$ is automatic if the action of $\Gamma_i$ on $\Omega_i$ is convex cocompact but $\Gamma_i$ does not \emph{divide} (\ie act properly discontinuously and cocompactly on) $\Omega_i$.
Thus, here is an immediate consequence of Theorem~\ref{thm:free-product-cc}, which was announced in \cite[Prop.\,12.4]{dgk-proj-cc}.

\begin{corollary} \label{cor:free-prod-cc}
Let $\Gamma_0$ and~$\Gamma_1$ be infinite discrete subgroups of $\PGL(V)$ which are convex cocompact in $\PP(V)$ but do not divide any nonempty properly convex open subset of $\PP(V)$.
Then there exists $g\in\PGL(V)$ such that the group generated by $\Gamma_0$ and $g\Gamma_1g^{-1}$ is isomorphic to the free product $\Gamma_0\ast\Gamma_1$ and is convex cocompact in $\PP(V)$.
\end{corollary}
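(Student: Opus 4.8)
The plan is to obtain Corollary~\ref{cor:free-prod-cc} as a direct application of Theorem~\ref{thm:free-product-cc} (in its convex cocompact version), once we check that the hypotheses of that theorem are met. First I would unwind Definition~\ref{def:cc-group}: since each $\Gamma_i$ is convex cocompact in $\PP(V)$, there is a properly convex open subset $\Omega_i$ of $\PP(V)$ preserved by $\Gamma_i$, together with a nonempty $\Gamma_i$-invariant closed convex subset $\C_i\subset\Omega_i$ with compact quotient $\C_i/\Gamma_i$ whose closure in $\PP(V)$ contains $\Lambdao_{\Omega_i}(\Gamma_i)$. In particular $\Gamma_i$ is infinite and acts properly discontinuously on $\Omega_i$, so the only hypothesis of Theorem~\ref{thm:free-product-cc} still to be verified is $\Omega_i\neq\Ccore_{\Omega_i}(\Gamma_i)$ for each $i\in\{0,1\}$.

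Second I would prove $\Omega_i\neq\Ccore_{\Omega_i}(\Gamma_i)$ from the non-division hypothesis; this is exactly the remark made right after Theorem~\ref{thm:free-product-cc}, and it goes as follows. Since $\overline{\C_i}$ is convex and contains $\overline{\Lambdao_{\Omega_i}(\Gamma_i)}$, the convex hull of $\overline{\Lambdao_{\Omega_i}(\Gamma_i)}$ in $\overline{\Omega_i}$ is contained in $\overline{\C_i}$; intersecting with $\Omega_i$ and using that $\C_i$ is closed in~$\Omega_i$ gives $\Ccore_{\Omega_i}(\Gamma_i)\subset\C_i$. As $\Ccore_{\Omega_i}(\Gamma_i)$ is a closed $\Gamma_i$-invariant subset of $\C_i$ and $\C_i/\Gamma_i$ is compact, the quotient $\Ccore_{\Omega_i}(\Gamma_i)/\Gamma_i$ is compact as well. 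Hence, were $\Omega_i=\Ccore_{\Omega_i}(\Gamma_i)$ to hold, then $\Gamma_i$ would act properly discontinuously and cocompactly on the nonempty properly convex open set $\Omega_i$, i.e.\ would divide $\Omega_i$, contrary to assumption. Thus $\Omega_i\neq\Ccore_{\Omega_i}(\Gamma_i)$ for $i=0,1$.

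Third I would invoke Theorem~\ref{thm:free-product-cc}: there is $g\in\PGL(V)$ such that the representation $\rho:\Gamma_0\ast g\Gamma_1 g^{-1}\to\PGL(V)$ induced by the inclusions $\Gamma_0,\,g\Gamma_1 g^{-1}\hookrightarrow\PGL(V)$ is faithful with discrete image which is convex cocompact in $\PP(V)$. By construction the image of $\rho$ is the subgroup of $\PGL(V)$ generated by $\Gamma_0$ and $g\Gamma_1 g^{-1}$, and faithfulness of $\rho$ identifies this subgroup with $\Gamma_0\ast g\Gamma_1 g^{-1}$, which is isomorphic to $\Gamma_0\ast\Gamma_1$ because conjugation by $g$ is an isomorphism from $\Gamma_1$ onto $g\Gamma_1 g^{-1}$. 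This is precisely the conclusion of the corollary.

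There is essentially no genuine obstacle here: the argument is a matching of hypotheses, and the only step requiring the slightest care is the second one, namely deducing $\Omega_i\neq\Ccore_{\Omega_i}(\Gamma_i)$ from non-division. Even there, the crucial inclusion $\Ccore_{\Omega_i}(\Gamma_i)\subset\C_i$ falls straight out of Definitions~\ref{def:limcore} and~\ref{def:cc-group}, so the whole proof is short.
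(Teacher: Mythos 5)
Your proposal is correct and follows essentially the same route as the paper: the corollary is deduced by applying Theorem~\ref{thm:free-product-cc}, after observing that the hypothesis $\Omega_i \neq \Ccore_{\Omega_i}(\Gamma_i)$ is automatic when $\Gamma_i$ acts convex cocompactly but does not divide any properly convex open set. Your second step just spells out this observation (via $\Ccore_{\Omega_i}(\Gamma_i)\subset\C_i$ and cocompactness of the convex core), which is exactly the remark the paper makes before stating the corollary.
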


\begin{remark}
It follows from the proof (see Section~\ref{subsec:proof-free-product-cc}) that if $g\in\PGL(V)$ satisfies the conclusion of Theorem~\ref{thm:free-product-cc} (\resp Corollary~\ref{cor:free-prod-cc}), then there is a neighborhood $\mathcal{U}$ of $g$ in $\PGL(V)$ such that any element of~$\mathcal{U}$ still satisfies this conclusion.
\end{remark}

In~\cite{dgk-proj-cc}, we gave an example (recalled here in Example~\ref{ex:not-irreducible}) of a group acting naively convex cocompactly in projective space, but not convex cocompactly.
However, in this example the group does not act irreducibly on projective space.
Here, we use Theorem~\ref{thm:free-product-cc} to construct an example of a group which is naively convex cocompact, but not convex cocompact, in $\PP(V)$, and which acts \emph{strongly irreducibly} on $\PP(V)$, in the sense that it does not preserve any finite union of proper projective subspaces of $\PP(V)$.

\begin{theorem} \label{thm:naive-cc-not-cc-irred}
For any $d \geq 3$, there exists a discrete subgroup $\Gamma$ of $\PGL(d+\nolinebreak 1,\RR)$ which acts strongly irreducibly on $\PP(\RR^{d+1}) = \RP^d$ and which is naively convex cocompact in $\PP(\RR^{d+1})$, but not convex cocompact in $\PP(\RR^{d+1})$. 
\end{theorem}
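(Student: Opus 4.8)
The plan is to build $\Gamma$ as a free product $\Gamma_0 \ast g\Gamma_1 g^{-1}$ to which the ``naively convex cocompact'' case of Theorem~\ref{thm:free-product-cc} applies, choosing $\Gamma_0$ to carry the failure of convex cocompactness and $\Gamma_1$ to force strong irreducibility. For $\Gamma_0$ I would take the group of Example~\ref{ex:not-irreducible}, realized inside $\PGL(d+1,\RR)$; concretely one may take a suitably chosen rank-$(d-1)$ subgroup $\Gamma_0 \cong \ZZ^{d-1}$ of the diagonal torus, preserving the open $d$-simplex $\Omega_0 = \{[x_0 \!:\! \cdots \!:\! x_d] : x_i > 0\}$, acting on $\Omega_0$ naively convex cocompactly (cocompactly by translations along an affine $(d-1)$-plane in the logarithmic chart $\Omega_0 \cong \RR^d$), with $\Omega_0 \neq \Ccore_{\Omega_0}(\Gamma_0)$, and with $\Gamma_0$ not convex cocompact in $\PP(\RR^{d+1})$ (as in Example~\ref{ex:not-irreducible}). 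For $\Gamma_1$ I would take a Zariski-dense convex cocompact subgroup of $\PO(d,1) \subset \PGL(d+1,\RR)$ that is not a lattice (\eg a ping-pong free group generated by hyperbolic isometries in general position), preserving the ball $\Omega_1$ that $\PO(d,1)$ preserves, acting convex cocompactly (hence naively convex cocompactly) on it, with $\Omega_1 \neq \Ccore_{\Omega_1}(\Gamma_1)$ because $\Gamma_1$ is not a lattice, and acting strongly irreducibly on $\PP(\RR^{d+1})$ because it is Zariski-dense in $\PO(d,1)$, whose identity component acts irreducibly on $\RR^{d+1}$.

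I would then apply Theorem~\ref{thm:free-product-cc} to the pairs $(\Gamma_0, \Omega_0)$ and $(\Gamma_1, \Omega_1)$: all hypotheses of its ``naively convex cocompact'' version hold, so there is $g \in \PGL(d+1,\RR)$ --- which may be taken generic, indeed in any Zariski-dense subgroup whose proximal limit sets in $\PP(\RR^{d+1})$ and in the dual projective space are everything --- such that the induced representation $\rho \colon \Gamma_0 \ast \Gamma_1 \to \PGL(d+1,\RR)$ is discrete and faithful and $\Gamma := \rho(\Gamma_0 \ast \Gamma_1)$ is naively convex cocompact in $\PP(\RR^{d+1})$. Strong irreducibility of $\Gamma$ is then immediate: $\Gamma$ contains $g\Gamma_1 g^{-1}$, so any $\Gamma$-invariant finite union of proper projective subspaces of $\PP(\RR^{d+1})$ is $g\Gamma_1 g^{-1}$-invariant, hence empty since $g\Gamma_1 g^{-1}$, being conjugate to $\Gamma_1$, is strongly irreducible.

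The remaining and main point is that $\Gamma$ is \emph{not} convex cocompact in $\PP(\RR^{d+1})$. I would deduce this from the following inheritance principle: if a group is convex cocompact in $\PP(V)$, then so is any free factor of it. Granting this, if $\Gamma$ were convex cocompact in $\PP(\RR^{d+1})$ then its free factor $\Gamma_0$ would be convex cocompact in $\PP(\RR^{d+1})$, contradicting the choice of $\Gamma_0$. The hard part --- which I expect to be the main technical obstacle --- is establishing the inheritance principle. One route is via general subgroup-stability properties of convex cocompactness in projective space (from \cite{dgk-proj-cc}): a free factor is a retract, hence quasi-isometrically embedded, and for any $\Gamma$-invariant properly convex open $\Omega'$ its full orbital limit set satisfies $\Lambdao_{\Omega'}(\Gamma_0)\subset\Lambdao_{\Omega'}(\Gamma)\subset\overline{\Ccore_{\Omega'}(\Gamma)}$, whence one argues that the $\Gamma_0$-orbit of a basepoint of $\Omega'$ stays within bounded Hilbert distance of $\Ccore_{\Omega'}(\Gamma_0)$, forcing the $\Gamma_0$-action on $\Ccore_{\Omega'}(\Gamma_0)$ to be cocompact. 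If no such statement is available off the shelf, it can instead be reproved directly using the tree-of-properly-convex-open-sets structure underlying the proof of Theorem~\ref{thm:free-product-cc}: the sub-forest of $\Gamma_0$-translates of $\Omega_0$ is $\Gamma_0$-cocompact and controls the $\Gamma_0$-orbits inside any $\Gamma$-invariant convex subset of $\PP(\RR^{d+1})$.
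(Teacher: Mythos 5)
Your overall architecture (apply the naive case of Theorem~\ref{thm:free-product-cc} to a ``bad'' factor $\Gamma_0$ and a strongly irreducible factor $\Gamma_1$, and let $\Gamma_1$ force strong irreducibility of the free product) is exactly the paper's, and those parts are fine. But the last and decisive step --- showing that $\Gamma$ is \emph{not} convex cocompact --- rests on an ``inheritance principle'' (convex cocompactness in $\PP(V)$ passes to free factors) that you do not prove and that is not available in the paper or in \cite{dgk-proj-cc}. Your first suggested route is not an off-the-shelf statement: quasi-isometric embeddedness of a free factor does not, in the non-hyperbolic projective setting, yield cocompactness of the $\Gamma_0$-action on $\Ccore_{\Omega'}(\Gamma_0)$ for an arbitrary $\Gamma$-invariant $\Omega'$, and your second route only controls the specific convex set produced by the construction, whereas convex cocompactness of $\Gamma$ would be witnessed by \emph{some} invariant $\Omega'$ a priori unrelated to the tree of convex sets. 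So as written the proof has a genuine gap at its crux. Moreover, your choice $\Gamma_0\cong\ZZ^{d-1}$ (with $d\geq 3$) makes $\Gamma_0\ast\Gamma_1$ contain $\ZZ^2$, hence non--Gromov-hyperbolic, which blocks the one clean obstruction the paper has at its disposal.

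The paper's Example~\ref{ex:irreducible} sidesteps all of this by taking $\Gamma_0=\langle h'\rangle$ \emph{cyclic}, with $h'$ the image of $\mathrm{diag}(4,1/2,1/2)$ under $\SL(3,\RR)\hookrightarrow\SL(d+1,\RR)$, thickened to a naively convex cocompact action via Proposition~\ref{prop:thicken-add-dim}. Then $\Gamma_0\ast\Gamma_1$ is Gromov hyperbolic (free product of hyperbolic groups), so by Fact~\ref{fact:Anosov} convex cocompactness of the image would force the inclusion to be $P_1$-Anosov; this fails because $h'$ is not biproximal (its inverse has repeated top eigenvalue), giving non--convex-cocompactness with no inheritance statement needed. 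If you want to salvage your write-up, replace your $\ZZ^{d-1}$ by such a cyclic $\Gamma_0$; note also that, as stated, your rank-$(d-1)$ diagonal group is not obviously naively convex cocompact --- an affine $(d-1)$-plane in the logarithmic chart of the simplex is not a projectively convex set, so an invariant closed convex subset with compact quotient has to be produced as in Example~\ref{ex:naive-cc-still-not-irred} (which requires a block of dimension $\geq 2$, i.e.\ eigenvalue coincidences), not by a generic choice of torus subgroup.
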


This was announced in \cite[\S\,4.1]{dgk-proj-cc}.
The construction is given in Example~\ref{ex:irreducible}.

We also use a variant of Theorem~\ref{thm:amalgam-cc} with trivial~$\Delta$ to show that any lattice of $\PGL(V)$ (or more generally any subgroup of $\PGL(V)$ with full proximal limit sets in $\PP(V)$ and $\PP(V^*)$) contains convex cocompact free subgroups preserving properly convex sets approaching any given convex shape, with full orbital limit set arbitrarily dense in the boundary (Proposition~\ref{prop:lattice-pingpong-subgroup}).

\subsection{Another application: convex gluing of convex projective manifolds with boundary}

We use Theorem~\ref{thm:amalgam-cc} to obtain a convex gluing theorem for convex cocompact projective manifolds with totally geodesic boundary. This generalizes the gluing construction used by Goldman~\cite{gol90} to construct closed convex projective surfaces from convex pairs of pants, the gluing construction used by Ballas--Danciger--Lee \cite{bdl18} to produce examples of closed convex projective 3-manifolds with embedded totally geodesic tori, and the gluing construction used by Blayac--Viaggi \cite{bv} to produce divisible convex sets with properly embedded cones in all dimensions.

\begin{theorem}[Convex gluing] \label{thm:gluing}
For $i = 0,1$, let $\Gamma_i$ be an infinite discrete subgroup of $\PGL(V)$ acting convex cocompactly on some properly convex open subset $\Omega_i$ of $\PP(V)$.
Suppose that there is a projective hyperplane $X$ of $\PP(V)$ such that
\begin{itemize}
  \item $\Ccore_{\Omega_0}(\Gamma_0) \cap  \Ccore_{\Omega_1}(\Gamma_1) = X \cap \Omega_0 = X \cap \Omega_1 \neq \varnothing$;
  \item the stabilizer $\Delta$ of $X \cap \Omega_0$ in~$\Gamma_0$ is equal to the stabilizer of $X \cap \Omega_0$ in~$\Gamma_1$;
  \item $\Delta$ divides $X \cap \Omega_0$ and fixes a point $z \in \PP(V) \smallsetminus (X \cup \Lambdao_{\Omega_0}(\Gamma_0) \cup \Lambdao_{\Omega_1}(\Gamma_1))$.
\end{itemize}
Then the representation $\Gamma_0 *_\Delta \Gamma_1 \to \PGL(V)$ induced by the inclusions $\Gamma_0, \Gamma_1 \hookrightarrow \PGL(V)$ is discrete and faithful, and its image is convex cocompact in $\PP(V)$.
\end{theorem}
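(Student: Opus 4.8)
The plan is to deduce this from Theorem~\ref{thm:amalgam-cc} by exhibiting, for the amalgamated product $\Gamma = \Gamma_0 *_\Delta \Gamma_1$, suitable $\Gamma_i$-invariant closed convex sets $\C_i \subset \Omega_i$ whose interiors are in occultation position in the required way. The natural candidates are the convex cores themselves: set $\C_i := \Ccore_{\Omega_i}(\Gamma_i)$. These are $\Gamma_i$-invariant, closed, convex in $\Omega_i$, and cocompact by the convex cocompactness hypothesis. By the first bullet hypothesis, $\C_0 \cap \C_1 = X \cap \Omega_0 = X \cap \Omega_1$, a single hyperplane slice divided by $\Delta$. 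One subtlety: for Theorem~\ref{thm:amalgam-cc} one wants $\Int{\C_i}$ (relative interiors) to interact correctly, and one wants the sets $\Omega_i$ themselves, which already satisfy occultation hypotheses in the sense of Theorem~\ref{thm:amalgam}; so the first step is to check that the hypotheses of Theorem~\ref{thm:amalgam} hold here, i.e.\ that for $\gamma \in \Gamma_i \smallsetminus \Delta$ the triple $(\Omega_{1-i}, \Omega_i, \gamma\cdot\Omega_{1-i})$ is in occultation position. This is where the fixed point $z$ enters decisively: the hyperplane $X$, together with $z \notin X$, should be used to build the dual certificate $\overline{\Omega_i^*} \subset \Omega_{1-i}^* \cup (\gamma\cdot\Omega_{1-i})^*$ via Lemma~\ref{lem:occult} (the ``blocking'' reformulation). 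Concretely, $X \cap \Omega_0 = X \cap \Omega_1$ being a common face that $\Delta$ divides means $X$ separates $\Omega_0$ from $\Omega_1$ on the two sides, and any line from $\overline{\Omega_{1-i}}$ to $\gamma\cdot\overline{\Omega_{1-i}}$ (with $\gamma \in \Gamma_i \smallsetminus \Delta$, so $\gamma$ moves the $X$-slice) must cross $\Omega_i$; the point $z$ is used to guarantee properness/nondegeneracy of the resulting convex hulls, ensuring $\Conv{\Omega_0 \cup \Omega_1}$ is properly convex (bounded in an affine chart whose hyperplane at infinity is chosen through $z$ or adapted to $z$).

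Second, I would verify the cocompactness condition in Theorem~\ref{thm:amalgam-cc}: that the closure of $\Conv{\C_0 \cup \C_1} \smallsetminus (\C_0 \cup \C_1)$ in $\Conv{\Omega_0 \cup \Omega_1}$ has compact quotient by $\Delta$. Since $\C_0 \cap \C_1 = X \cap \Omega_0$ is exactly the common hyperplane slice, which $\Delta$ divides (hence acts cocompactly on), and since the ``new'' region created by taking the convex hull across $X$ is fibered over this slice in a $\Delta$-equivariant way (a bigon-type region between the two cores meeting along $X \cap \Omega_0$), cocompactness of the slice under $\Delta$ propagates to cocompactness of the hull-minus-pieces region. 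Here I would invoke Lemma~\ref{lem:invisible} to write $\Conv{A\cup B\cup C} = \Conv{A\cup B}\cup\Conv{B\cup C}$ for adjacent triples, reducing the description of the new region to edge-by-edge pieces, each controlled by $\Delta$.

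Third, for the ``strictly convex nonideal boundary'' conclusion: since $\Gamma_i$ acts convex cocompactly on $\Omega_i$, the core $\C_i = \Ccore_{\Omega_i}(\Gamma_i)$ has the property that its non-ideal boundary (the part of $\partial\C_i$ inside $\Omega_i$, which here is $X \cap \Omega_i$) — wait, this is a hyperplane piece, not strictly convex. So instead I expect the theorem to conclude convex cocompactness of $\Gamma$ on $\Omega$ \emph{not} via the strict convexity clause of Theorem~\ref{thm:amalgam-cc}, but via a direct argument: one checks that $\C := \bigcup_{\gamma} \rho(\gamma)\cdot\Conv{\C_0\cup\C_1}$ is closed, convex, cocompact (from Theorem~\ref{thm:amalgam-cc}'s first conclusion), and that its closure in $\PP(V)$ contains $\Lambdao_\Omega(\Gamma)$. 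The latter follows because the full orbital limit set of the combined group decomposes along the Bass--Serre tree into $\Gamma$-translates of the $\Lambdao_{\Omega_i}(\Gamma_i)$ (each contained in $\overline{\C_i}$ by convex cocompactness of $\Gamma_i$) together with limit points along the tree ends, which are limits of the translated cores and hence still in $\overline\C$. So the main obstacle is the first step: constructing the dual occultation certificate and verifying proper convexity of $\Conv{\Omega_0\cup\Omega_1}$, which is precisely where the seemingly technical hypothesis ``$\Delta$ fixes $z \in \PP(V)\smallsetminus(X\cup\Lambdao_{\Omega_0}(\Gamma_0)\cup\Lambdao_{\Omega_1}(\Gamma_1))$'' must be used — without such a $z$, the convex hull across $X$ could fail to be properly convex (it could contain a full line), so $z$ plays the role of ``pinching'' the two sides together transversally to $X$ while staying away from both limit sets, and I expect the bulk of the work to be showing this $z$ yields a valid affine chart and the requisite dual inclusions.
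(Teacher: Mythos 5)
There is a genuine gap at the heart of your plan: you cannot take $\C_i := \Ccore_{\Omega_i}(\Gamma_i)$ and feed these into Theorem~\ref{thm:amalgam-cc}. That theorem requires, for every $\gamma\in\Gamma_i\smallsetminus\Delta$, that the triple $(\Int{\C_{1-i}},\Int{\C_i},\gamma\cdot\Int{\C_{1-i}})$ of \emph{open} sets be in occultation position, which in particular forces $\Int{\C_0}\cap\Int{\C_1}\neq\varnothing$; but by hypothesis the two cores meet exactly in $X\cap\Omega_0$, a subset of a hyperplane, so their interiors are disjoint (the theorem uses genuine interiors in $\PP(V)$, not the ``relative interiors'' you allude to). Your first step is also not viable as stated: verifying the hypotheses of Theorem~\ref{thm:amalgam} for the \emph{original} ambient sets would require $\Omega_{1-i}\cap\gamma\cdot\Omega_{1-i}=\varnothing$ for all $\gamma\in\Gamma_i\smallsetminus\Delta$, which has no reason to hold for the given (possibly very large) $\Omega_i$. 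The paper's proof avoids both problems: it first shrinks each $\Omega_i$ to a uniform neighborhood of its core so that $z\notin\overline{\Omega_i}$; it then defines $\C_i$ \emph{not} as the core but as the convex hull in $\Omega_i$ of $\Gamma_i\cdot p_i$, where $p_i$ lies just across $X\cap\Omega_0$ from $\Ccore_{\Omega_i}(\Gamma_i)$ and inside $\Ccore_{\Omega_{1-i}}(\Gamma_{1-i})$, so that $\C_0^\circ\cap\C_1^\circ$ is an open neighborhood of $X\cap\Omega_0$; and it ultimately applies Theorem~\ref{thm:amalgam-cc} with ambient open sets the small uniform neighborhoods $(\C_i^\varepsilon)^\circ$ of these $\C_i$, not the original $\Omega_i$.

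The second gap is the actual use of $z$, which you leave as a guess about affine charts and proper convexity. In the paper, after arranging $z\notin\overline{\Omega_i}$, the dual hyperplane $z^*$ meets $\Omega_i^*$ in a nonempty $\Delta$-invariant closed convex set, which (together with $\Lambdao_{\Omega_i}(\Delta)=X\cap\partial\Omega_0$) shows that $\Delta$ acts convex cocompactly on $\Omega_i^*$ and divides $z^*\cap\Omega_i^*$; this \emph{dual} convex cocompactness is the engine of the occultation verification, ruling out a segment $[r,s]\subset\Fr(\C_i)$ joining $\Lambdao_{\Omega_i}(\Delta)$ to $\gamma\cdot\Lambdao_{\Omega_i}(\Delta)$ via supporting hyperplanes in $\Lambdao_{\Omega_i^*}(\Delta)\cap\Lambdao_{\Omega_i^*}(\gamma\Delta\gamma^{-1})$ and the fact that the compact quotients $\Delta\backslash(X\cap\Omega_0)$ and $\Delta\backslash(z^*\cap\Omega_i^*)$ cannot accumulate on themselves. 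Without some such argument, your claim that every line from $\overline{\C_{1-i}}$ to $\gamma\cdot\overline{\C_{1-i}}$ must cross $\C_i^\circ$ is unsubstantiated. Finally, your closing argument for convex cocompactness of the amalgam (decomposing $\Lambdao_{\Omega}(\Gamma)$ along the Bass--Serre tree and asserting the tree-end limits lie in $\overline{\C}$) is precisely the delicate point and is asserted rather than proved; the paper obtains this clause from the combination machinery itself rather than from a direct limit-set decomposition.
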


As in \cite{bdl18} and \cite{bv}, we apply this to give a doubling construction for convex cocompact projective manifolds. 

\begin{corollary}[Doubling theorem] \label{cor:double}
Let $\Gamma$ be an infinite discrete subgroup of $\PGL(V)$ acting convex cocompactly on some properly convex open subset $\Omega$ of $\PP(V)$.
Suppose that there is a projective hyperplane $X$ of $\PP(V)$ such that $X \cap \Omega$ is nonempty and contained in $\partialn \Ccore_\Omega(\Gamma)$, and that $\Delta := \mathrm{stab}_{\Gamma}(X)$ divides $X \cap \Omega$ and fixes a point $z \in \PP(V) \smallsetminus \Omega$.
Then the double of $\Gamma$ over $\Delta$ admits a discrete and faithful representation $\Gamma *_\Delta \Gamma \to \PGL(V)$ whose image is convex cocompact in $\PP(V)$. 
\end{corollary}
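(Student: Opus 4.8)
The plan is to deduce this from the Convex Gluing Theorem (Theorem~\ref{thm:gluing}) by taking two identical copies of $(\Gamma, \Omega)$ and embedding the second copy by an appropriate projective transformation, so that the two copies meet precisely along the hyperplane $X$ with the matching along $\Delta$ being the identity. Concretely, I would first choose the ``reflection'' data: since $\Delta = \mathrm{stab}_\Gamma(X)$ divides $X \cap \Omega$ and fixes a point $z \notin \Omega$, I want to produce $g_0 \in \PGL(V)$ commuting with $\Delta$ (at least on the relevant subgroup) such that $g_0$ fixes $X$ pointwise, fixes $z$, and is an involution, and such that $g_0 \cdot \Omega$ is a properly convex open set meeting $\Omega$ exactly along $X \cap \Omega$, lying ``on the other side'' of $X$. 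The natural candidate is the projective involution that is the identity on the hyperplane $X = \PP(W)$ and acts as $-1$ on the line $\RR z$ (with respect to the splitting $V = W \oplus \RR z$); this automatically commutes with every element of $\PGL(V)$ stabilizing $X$ and fixing $z$, hence with all of $\Delta$, and it sends $\Omega$ to a properly convex open set $\Omega' := g_0 \cdot \Omega$ on the opposite side of $X$. Because $X \cap \Omega \subset \partialn \Ccore_\Omega(\Gamma)$, the convex core lies on one side of $X$, so $\Ccore_\Omega(\Gamma)$ and $\Ccore_{\Omega'}(g_0 \Gamma g_0^{-1})$ meet exactly in $X \cap \Omega = X \cap \Omega'$.

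Next I would set $\Gamma_0 := \Gamma$ acting on $\Omega_0 := \Omega$, and $\Gamma_1 := g_0 \Gamma g_0^{-1}$ acting on $\Omega_1 := \Omega' = g_0 \cdot \Omega$; the isomorphism $\Gamma_0 \cong \Gamma_1$ is conjugation by $g_0$, and the double $\Gamma *_\Delta \Gamma$ is exactly $\Gamma_0 *_{\Delta} \Gamma_1$ once we identify the amalgamated subgroup correctly. Here the key point is that $g_0$ centralizes $\Delta$: thus $\Delta \subset \Gamma_0$ and $g_0 \Delta g_0^{-1} = \Delta \subset \Gamma_1$, so the two copies of $\Delta$ literally coincide inside $\PGL(V)$, and the fiber product $\Gamma_0 *_\Delta \Gamma_1$ is precisely the double of $\Gamma$ over $\Delta$. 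Then I would verify the three bullet-point hypotheses of Theorem~\ref{thm:gluing}: (i) $\Ccore_{\Omega_0}(\Gamma_0) \cap \Ccore_{\Omega_1}(\Gamma_1) = X \cap \Omega_0 = X \cap \Omega_1 \neq \varnothing$, which follows from $X \cap \Omega$ being nonempty and contained in the nonideal boundary $\partialn \Ccore_\Omega(\Gamma)$ together with the fact that $g_0$ fixes $X$ and reflects across it; (ii) the stabilizer of $X \cap \Omega_0$ in $\Gamma_0$ equals that in $\Gamma_1$, which again follows from $\Delta = \mathrm{stab}_\Gamma(X)$ being centralized by $g_0$; (iii) $\Delta$ divides $X \cap \Omega_0$ and fixes a point $z \notin X \cup \Lambdao_{\Omega_0}(\Gamma_0) \cup \Lambdao_{\Omega_1}(\Gamma_1)$ — the dividing part is given, and $z \notin X$ and $z \notin \Omega$ are given; I would need to check $z \notin \Lambdao_{\Omega_0}(\Gamma_0) \cup \Lambdao_{\Omega_1}(\Gamma_1)$, which holds because each full orbital limit set is contained in $\partial\Omega_i$ and $z \notin \overline{\Omega}$ (strengthening the hypothesis $z \notin \Omega$ slightly, or noting the orbital limit sets lie in the relevant closed convex core which avoids $z$). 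Applying Theorem~\ref{thm:gluing} then gives that $\Gamma_0 *_\Delta \Gamma_1 = \Gamma *_\Delta \Gamma \to \PGL(V)$ is discrete and faithful with convex cocompact image in $\PP(V)$, which is the conclusion.

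The main obstacle I anticipate is the careful bookkeeping around the point $z$ and the hyperplane $X$: one must ensure that the involution $g_0$ is well defined (i.e. $z \notin X$, so $V = W \oplus \RR z$ genuinely splits), that $g_0$ really does map $\Omega$ to the ``far side'' of $X$ so that $\Omega_0 \cup \Omega_1$ sits inside a single affine chart and $\Conv{\Omega_0 \cup \Omega_1}$ is properly convex, and that $z$ avoids both orbital limit sets — a subtlety since a priori $z$ could be a limit point even though it is outside $\Omega$. This last point is handled by the hypothesis that $X \cap \Omega \subset \partialn\Ccore_\Omega(\Gamma)$, which pins the convex core (hence $\overline{\Lambdao_\Omega(\Gamma)}$) strictly to one side of $X$, away from $z$; I would make this precise using the description of $\partialn$ and the convexity of the core. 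Everything else is a direct translation of the setup into the hypotheses of Theorem~\ref{thm:gluing}, plus the elementary observation that $g_0$ centralizes $\Delta$.
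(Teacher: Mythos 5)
Your proposal is exactly the paper's proof: the paper takes $\sigma \in \PGL(V)$ to be the reflection fixing $X$ pointwise and fixing $z$, sets $\Gamma_0 = \Gamma$ and $\Gamma_1 = \sigma\Gamma\sigma^{-1}$ (so $\Omega_1 = \sigma\cdot\Omega$), and applies Theorem~\ref{thm:gluing}, just as you do. Your additional remarks --- that $\sigma$ centralizes $\Delta$ so the amalgam is genuinely the double, and that one should check $z \notin X \cup \Lambdao_{\Omega_0}(\Gamma_0) \cup \Lambdao_{\Omega_1}(\Gamma_1)$ before invoking Theorem~\ref{thm:gluing} --- are details the paper leaves implicit, so your write-up is, if anything, more careful than the published two-line argument.
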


Observe that Corollary~\ref{cor:double} may be applied to recover the following fact, which also follows from~\cite{gol90}: if $\Gamma$ is a torsion-free infinite discrete subgroup of $\SL(3,\RR)$ which is convex cocompact in $\PP(\RR^3)$, then $\Gamma$ is contained in the image of some $\SL(3,\RR)$-Hitchin representation of a closed surface group.

\subsection{Virtual amalgamation over convex cocompact subgroups} \label{subsec:intro-virt-amalgam}

In Section~\ref{sec:virtual}, we use the above Theorems \ref{thm:amalgam} and~\ref{thm:amalgam-cc} to prove the following general \emph{virtual amalgamation theorem} in the spirit of Baker--Cooper~\cite{bc08}.
Recall that a subgroup $\Delta$ of a group $\Gamma$ is called \emph{separable} if for any $\gamma \in \Gamma \smallsetminus \Delta$, there exists a finite-index subgroup $H$ of~$\Gamma$ which contains~$\Delta$ but not~$\gamma$.
We refer to Section~\ref{subsec:P1-div} for the notion of $P_1$-divergence.

\begin{theorem} \label{thm:hopeful}
For $i=0,1$, let $\Gamma_i$ be a discrete subgroup of $\PGL(V)$ preserving a nonempty properly convex open subset $\Omega_i$ of $\PP(V)$, with $\Omega_{01} := \Omega_0 \cap \Omega_1 \neq \varnothing$.
Suppose that
\begin{enumerate}[label=(\alph*)]
  \item\label{item:hopeful-separable} $\Delta := \Gamma_0 \cap \Gamma_1$ is separable in both $\Gamma_0$ and~$\Gamma_1$,
  \item\label{item:hopeful-CC-Omega01} if $\Delta$ is infinite, then it acts convex cocompactly on $\Omega_{01} = \Omega_0 \cap \Omega_1$,
  \item\label{item:hopeful-lim-set} $\varnothing \neq \overline{\Ccore_{\Omega_i}(\Gamma_i)} \smallsetminus \Lambdao_{\Omega_{01}}(\Delta) \subset \Omega_{1-i}$ for both $i=0,1$,
  \item\label{item:hopeful-cases} for both $i=0,1$, the ideal boundary $\partiali\Ccore_{\Omega_i}(\Gamma_i) = \overline{\Ccore_{\Omega_i}(\Gamma_i)} \smallsetminus \Ccore_{\Omega_i}(\Gamma_i)$ of $\Ccore_{\Omega_i}(\Gamma_i)$ is a union of open faces of $\partial\Omega_i$, and any ray in $\Omega_i$ with endpoint in $\partiali\Ccore_{\Omega_i}(\Gamma_i)$ eventually enters any uniform neighborhood of $\Ccore_{\Omega_i}(\Gamma_i)$ in $(\Omega_i,d_{\Omega_i})$.
\end{enumerate}
Then there exist finite-index subgroups $\Gamma'_0$ of~$\Gamma_0$ and $\Gamma'_1$ of~$\Gamma_1$, each containing~$\Delta$, such that
\begin{enumerate}
  \item\label{item:hopeful-1} the representation $\rho : \Gamma'_0 *_{\Delta} \Gamma'_1 \to \PGL(V)$ induced by the inclusions $\Gamma'_0, \Gamma'_1 \hookrightarrow \PGL(V)$ is discrete and faithful, and its image preserves a nonempty properly convex open subset $\Omega$ of $\PP(V)$ containing $\Omega_{01} \cup \Ccore_{\Omega_0}(\Gamma_0) \cup \Ccore_{\Omega_1}(\Gamma_1)$;
  \item\label{item:hopeful-2} if $\Gamma_i$ is $P_1$-divergent for both $i=0,1$, then so is $\rho(\Gamma'_0 *_{\Delta} \Gamma'_1)$;
  \item\label{item:hopeful-3} if the action of $\Gamma_i$ on~$\Omega_i$ is convex cocompact  for both $i=0,1$, then so is the action of $\Gamma'_0 *_{\Delta} \Gamma'_1$ on $\Omega$ via~$\rho$.
\end{enumerate}
Moreover, for any neighborhood $\mathcal{U}$ of $\partiali\Ccore_{\Omega_0}(\Gamma_0) \cup \partiali\Ccore_{\Omega_1}(\Gamma_1)$ in $\PP(V)$, we can choose $\Gamma'_0$ and~$\Gamma'_1$ to satisfy \eqref{item:hopeful-1}, \eqref{item:hopeful-2}, \eqref{item:hopeful-3}, and
\begin{enumerate}\setcounter{enumi}{3}
  \item\label{item:hopeful-4} $\Lambdao_{\Omega}(\rho(\Gamma'_0 *_{\Delta} \Gamma'_1)) \subset \mathcal{U}$.
\end{enumerate}
\end{theorem}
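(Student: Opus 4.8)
\emph{Proof strategy.} The plan is to follow the approach of Baker--Cooper~\cite{bc08}: produce finite-index subgroups $\Gamma'_0\supseteq\Delta$ of $\Gamma_0$ and $\Gamma'_1\supseteq\Delta$ of $\Gamma_1$ for which the occultation hypotheses of Theorems~\ref{thm:amalgam} and~\ref{thm:amalgam-cc} are satisfied, and then to invoke those theorems. Writing $\C_i:=\Ccore_{\Omega_i}(\Gamma_i)$, the first step is to shrink each $\Omega_i$ to a $\Gamma_i$-invariant properly convex open neighborhood $\Omega'_i$ of $\C_i$ inside $\Omega_i$ --- for instance the interior of a suitable uniform neighborhood of $\C_i$ for the Hilbert metric $d_{\Omega_i}$, which by hypothesis~\ref{item:hopeful-cases} is convex, satisfies $\partiali\Omega'_i=\partiali\C_i$, and is still acted on properly discontinuously by $\Gamma_i$ --- done compatibly for $i=0,1$ so that, by~\ref{item:hopeful-lim-set}, the closures of $\Omega'_0,\Omega'_1$ (and of $\C_0,\C_1$) meet $\partial\Omega_{01}$ only inside $\Lambdao_{\Omega_{01}}(\Delta)$. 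By~\ref{item:hopeful-CC-Omega01} the set $\Ccore_{\Omega_{01}}(\Delta)$ is $\Delta$-cocompact, so the resulting ``overlap region'' $\overline{\Omega'_0}\cap\overline{\Omega'_1}$ is then a uniform neighborhood of $\overline{\Ccore_{\Omega_{01}}(\Delta)}$ and in particular $\Delta$-cocompact; this also supplies the $\Delta$-cocompactness of $\overline{\Conv{\C_0\cup\C_1}\smallsetminus(\C_0\cup\C_1)}$ needed for Theorem~\ref{thm:amalgam-cc}.

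The geometric heart of the argument --- and the step I expect to be the main obstacle --- is the claim that for each $i\in\{0,1\}$ there is only a \emph{finite} set of cosets $\Delta\gamma\in\Gamma_i/\Delta$, other than $\Delta$ itself, for which one of the triples $(\Omega'_{1-i},\Omega'_i,\gamma\cdot\Omega'_{1-i})$ or $(\Int{\C_{1-i}},\Int{\C_i},\gamma\cdot\Int{\C_{1-i}})$ fails to be in occultation position. The disjointness and non-degeneracy conditions of Definition~\ref{def:occult}, such as $\Omega'_{1-i}\cap\gamma\cdot\Omega'_{1-i}=\varnothing$, hold outside finitely many cosets because $\Gamma_i$ acts properly discontinuously on $\Omega_i$ while, by~\ref{item:hopeful-lim-set}, the $\Gamma_i$-translates of $\overline{\Omega'_{1-i}}$ pile up only along $\Lambdao_{\Omega_{01}}(\Delta)\subseteq\partial\Omega_i$, which pins down the offending $\gamma$ to finitely many $\Delta$-cosets using the $\Delta$-cocompactness of the overlap region. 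The delicate point is the dual condition $\overline{(\Omega'_i)^*}\subseteq(\Omega'_{1-i})^*\cup(\gamma\cdot\Omega'_{1-i})^*$ --- equivalently, by Lemma~\ref{lem:occult}, that every projective line meeting both $\overline{\Omega'_{1-i}}$ and $\gamma\cdot\overline{\Omega'_{1-i}}$ must meet $\Omega'_i$ (and its analogue for the cores): for $\gamma$ ``far from $\Delta$'' one must certify that the thickening $\Omega'_i$ of the core $\C_i$ genuinely blocks $\overline{\Omega'_{1-i}}$ from its distant $\Gamma_i$-translate. Here~\ref{item:hopeful-cases} is essential --- a ray in $\Omega_i$ toward $\partiali\C_i$ re-enters every uniform neighborhood of $\C_i$, so the only way to ``see past'' $\Omega'_i$ is through the overlap region, which by the previous sentence happens for finitely many cosets only. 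Assembling the three geometric hypotheses~\ref{item:hopeful-CC-Omega01},~\ref{item:hopeful-lim-set},~\ref{item:hopeful-cases} into this statement is precisely the ``local-to-global'' convexity phenomenon advertised in the introduction, and is the technical crux.

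Granting the claim, hypothesis~\ref{item:hopeful-separable} finishes the construction of $\Gamma'_0,\Gamma'_1$: for each of the finitely many offending cosets $\Delta\gamma\neq\Delta$ in $\Gamma_i/\Delta$, separability of $\Delta$ in $\Gamma_i$ provides a finite-index subgroup of $\Gamma_i$ containing $\Delta$ but not $\gamma$; the intersection of these finitely many subgroups is a finite-index $\Gamma'_i\supseteq\Delta$ whose complement of $\Delta$ meets no offending coset (since $\Delta\subseteq\Gamma'_i$), so both families of triples are in occultation position for \emph{every} $\gamma\in\Gamma'_i\smallsetminus\Delta$. Theorem~\ref{thm:amalgam}, applied to $\Gamma'_0,\Gamma'_1$ and $\Omega'_0,\Omega'_1$, then yields a discrete faithful $\rho:\Gamma'_0*_\Delta\Gamma'_1\to\PGL(V)$ whose image preserves the properly convex open set $\Omega=\bigcup_\gamma\rho(\gamma)\cdot\Conv{\Omega'_0\cup\Omega'_1}$; since $\Omega\supseteq\Conv{\C_0\cup\C_1}$ and $\Omega$ may be enlarged within the $\rho(\Gamma'_0*_\Delta\Gamma'_1)$-invariant properly convex open sets so as to also contain $\Omega_{01}$, this is conclusion~\eqref{item:hopeful-1}. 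For~\eqref{item:hopeful-3} one thickens each $\C_i$ slightly, using~\ref{item:hopeful-cases}, so that it acquires strictly convex nonideal boundary in $\Omega'_i$, checks the occultation positions for the cores (from the claim) together with the $\Delta$-cocompactness of the gluing region (from~\ref{item:hopeful-CC-Omega01}), and applies Theorem~\ref{thm:amalgam-cc}.

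Conclusion~\eqref{item:hopeful-2} is an argument on normal forms in $\Gamma'_0*_\Delta\Gamma'_1$: a sequence of pairwise distinct elements either lies within finitely many $\Delta$-cosets of a conjugate of some $\Gamma_i$, in which case $P_1$-divergence of $\rho(\Gamma'_0*_\Delta\Gamma'_1)$ follows from that of $\Gamma_i$, or else has unbounded syllable length, in which case the nested family of translated convex sets along the corresponding geodesic of the Bass--Serre tree --- as built in the proof of Theorem~\ref{thm:general-gog} --- forces the first singular value gap to tend to infinity. Finally, for conclusion~\eqref{item:hopeful-4} one uses that $\Lambdao_\Omega(\rho(\Gamma'_0*_\Delta\Gamma'_1))$ is the closure of the union of the $\rho(\Gamma'_0*_\Delta\Gamma'_1)$-translates of $\Lambdao_{\Omega_0}(\Gamma_0)\cup\Lambdao_{\Omega_1}(\Gamma_1)\subseteq\partiali\C_0\cup\partiali\C_1$ together with the ``bridge'' boundary points of the convex hulls $\Conv{\C_0\cup\C_1}$ and their translates; by passing, if necessary, to still smaller finite-index $\Gamma'_i$ one confines all of these to the prescribed neighborhood $\mathcal{U}$ of $\partiali\C_0\cup\partiali\C_1$, which gives $\Lambdao_\Omega(\rho(\Gamma'_0*_\Delta\Gamma'_1))\subseteq\mathcal{U}$.
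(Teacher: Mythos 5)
Your overall architecture coincides with the paper's: thicken the convex cores, use separability to discard finitely many offending elements, then invoke Theorems~\ref{thm:amalgam} and~\ref{thm:amalgam-cc}, and treat $P_1$-divergence and the limit-set control via normal forms. But the step you yourself flag as ``the technical crux'' --- that both families of triples are in occultation position for all $\gamma$ outside a finite set of cosets (in fact one can only expect finitely many \emph{double} cosets $\Delta\gamma\Delta$, which is still enough for the separability trick) --- is exactly where the content of the theorem lies, and your proposal contains no proof of it. In the paper this is the substance of Steps~1--3: one constructs thickenings $\C_i$ with $\overline{\C_i}\smallsetminus\Lambdao_{\Omega_{01}}(\Delta)\subset\Omega_{1-i}$ and $\Delta$-cocompact (Lemma~\ref{lem:ias2-new}); one shows that any boundary segment escaping the thickening must lie in $\Lambdao_{\Omega_{01}}(\Delta)$ (Lemma~\ref{lem:ias3}); one analyzes the ``putty'' $\mathscr P$ filling $\Conv{\C_0\cup\C_1}\smallsetminus(\C_0\cup\C_1)$ and proves it is $\Delta$-cocompact (Proposition~\ref{prop:hopeful-putty}) --- note this set is \emph{not} the overlap $\overline{\Omega'_0}\cap\overline{\Omega'_1}$, so your claim that cocompactness of the overlap ``also supplies'' the $\Delta$-cocompactness of $\overline{\Conv{\C_0\cup\C_1}\smallsetminus(\C_0\cup\C_1)}$ is an unjustified leap; and only after the separability step (Lemma~\ref{lem:precise}) makes $(\C_0\cap\C_1)\cup\mathscr P$ disjoint from its nontrivial translates does one deduce occultation (Lemma~\ref{lem:virtual-occultation}). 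The delicate part is the dual condition $\overline{(\C_i^\circ)^*}\subset(\C_{1-i}^\circ)^*\cup(\gamma\cdot\C_{1-i}^\circ)^*$: its proof needs the identity $\Conv{\C_i\cup\C_{1-i}}^\circ\cap\Conv{\C_i\cup\gamma\cdot\C_{1-i}}^\circ=\C_i^\circ$ and the disjointness $\Lambdao_{\Omega_{01}}(\Delta)\cap\gamma\cdot\Lambdao_{\Omega_{01}}(\Delta)=\varnothing$ (Lemma~\ref{lem:precise-LambdaoDelta}), neither of which appears in your sketch; hypothesis~\ref{item:hopeful-cases} (rays re-enter uniform neighborhoods) by itself does not produce the hyperplane-blocking condition without this boundary analysis.

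Conclusion~\eqref{item:hopeful-4} is also thinner than you suggest. The full orbital limit set of the amalgam is not simply the closure of the translates of $\Lambdao_{\Omega_0}(\Gamma_0)\cup\Lambdao_{\Omega_1}(\Gamma_1)$ together with ``bridge'' points: limit points coming from words of unbounded syllable length require the nested-convex-sets ping-pong (Lemmas~\ref{lem:ping-pong} and~\ref{lem:P1-divergence}), the control of $\Lambdao_{\Omega}(\Gamma'_i)$ requires knowing that $\C_i$ contains a uniform neighborhood of $\Ccore_{\Omega_i}(\Gamma_i)$ for $d_\Omega$ (Lemma~\ref{lem:Ci-unif-neighb-Ccore-Omega}, which uses the tree structure of $\Omega$), and confining everything into $\mathcal U$ needs a second, separate separability argument pushing $\Gamma''_i$-translates of compacta into $\Delta\cdot\mathcal U_i$ (Lemma~\ref{lem:Si}) followed by a normal-form case analysis; merely ``passing to still smaller finite-index subgroups'' does not by itself yield this. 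Your outline of~\eqref{item:hopeful-2} does match the paper's argument.
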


Assumption~\ref{item:hopeful-cases} is satisfied in many situations, for instance if for each $i\in\{0,1\}$, the action of $\Gamma_i$ on~$\Omega_i$ is convex cocompact or every point of $\Lambdao_{\Omega_i}(\Gamma_i)$ is extremal and $C^1$ in~$\overline{\Omega_i}$ (see Lemma~\ref{lem:hopeful-cases-satisfied} below). In both such cases, the ideal boundary $\partiali \Ccore_{\Omega_i}(\Gamma_i)$ of the convex core is equal to the full orbital limit set $\Lambdao_{\Omega_i}(\Gamma_i)$.

Theorem~\ref{thm:hopeful} applies in particular to trivial~$\Delta$.
On the other hand, in the setting of Theorem~\ref{thm:hopeful}, when $\Delta$ is infinite, its action on $\Omega$ via~$\rho$ is still convex cocompact (see Fact~\ref{fact:cc-subsets}).

We use Theorem~\ref{thm:hopeful} to establish the following.

\begin{proposition} \label{prop:amalgam-word-hyperbolic}
For $i = 0,1$, let $d_i\geq 3$ be an integer, let $\Gamma_i$ be an infinite discrete subgroup of $\SL(d_i, \RR)$, and let $\Delta_i$ be a subgroup of~$\Gamma_i$ such that
\begin{itemize}
  \item $\Gamma_i$ is convex cocompact in $\PP(\RR^{d_i})$ and Gromov hyperbolic,
  \item $\Delta_i$ is separable in $\Gamma_i$,
  \item $\Delta_i$ preserves a nontrivial decomposition $\RR^{d_i} = V_i \oplus W_i$, such that $\Delta_i$ divides a nonempty properly convex open subset of $\PP(V_i)$ and acts trivially on~$W_i$,
  \item there is a linear isomorphism $\varphi: V_0 \to V_1$ inducing an isomorphism $\varphi_\Delta: \Delta_0 \to \Delta_1$.
\end{itemize}
Let $V$ be the finite-dimensional real vector space $V_0 \oplus W_0 \oplus W_1$.
Then there exist, for each $i \in \{0,1\}$, a finite-index subgroup $\Gamma_i'$ of~$\Gamma_i$, containing $\Delta_i$, and a discrete and faithful representation $\rho : \Gamma_0' *_{\varphi_\Delta} \Gamma_1' \to \SL(V)$ of the amalgamated free product whose image is convex cocompact in $\PP(V)$.
\end{proposition}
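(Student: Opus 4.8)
The plan is to reduce the statement to an application of Theorem~\ref{thm:hopeful}. First I would set up the ambient space and the embeddings. We have $V = V_0 \oplus W_0 \oplus W_1$; embed $\SL(d_0,\RR) \cong \SL(V_0 \oplus W_0) \hookrightarrow \SL(V)$ acting as the identity on $W_1$, and embed $\SL(d_1,\RR) \cong \SL(V_1 \oplus W_1) \hookrightarrow \SL(V)$ via the isomorphism $\varphi : V_0 \to V_1$ (so that on $V_0$ it acts through $\varphi$), acting as the identity on $W_0$. Under these embeddings, the images of $\Delta_0$ and $\Delta_1$ in $\SL(V)$ coincide: indeed $\varphi_\Delta$ identifies them, and on the complements $W_0$, $W_1$ both act trivially. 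Call this common subgroup $\Delta \subset \SL(V)$. We must also exhibit properly convex open sets $\Omega_0$, $\Omega_1$ in $\PP(V)$ preserved by (the images of) $\Gamma_0$, $\Gamma_1$. Here I would use that $\Gamma_i$ is convex cocompact in $\PP(\RR^{d_i})$, preserving some properly convex open $\Omega_i^0 \subset \PP(\RR^{d_i})$; the natural candidate for $\Omega_i \subset \PP(V)$ is the ``cone-off'' of $\Omega_i^0$ with respect to the extra trivial factor, i.e.\ the interior of the join of $\Omega_i^0$ with the projectivization of the complementary subspace, or — more robustly — a suitable $\Gamma_i$-invariant properly convex neighborhood of the convex core obtained by the standard bulging/cone construction. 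The point to verify is that after this cone-off, the action of $\Gamma_i$ on $\Omega_i$ is still convex cocompact and Gromov hyperbolic (this is where the hypothesis that $\Gamma_i$ is Gromov hyperbolic, hence all of $\Lambda^{\mathsf{orb}}$ is suitably nice, is used), and that $\Omega_0 \cap \Omega_1 \neq \varnothing$.

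Next I would check the four hypotheses \ref{item:hopeful-separable}--\ref{item:hopeful-cases} of Theorem~\ref{thm:hopeful} for this configuration. Hypothesis \ref{item:hopeful-separable} is immediate from the assumption that $\Delta_i$ is separable in $\Gamma_i$, since $\Delta = \Gamma_0 \cap \Gamma_1$ inside $\SL(V)$ — here one must first confirm that the intersection of the two embedded copies really is exactly $\Delta$ and not larger, which follows from a linear-algebra argument: an element of $\Gamma_0$ acting trivially on $W_1$ and lying in the $\SL(V_1 \oplus W_1)$-factor must preserve $V_0 = V_1$-image and $W_0$ separately and act as the identity on $W_0$, forcing it into $\Delta_0$. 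Hypothesis \ref{item:hopeful-CC-Omega01}: when $\Delta$ is infinite, it divides a properly convex open subset of $\PP(V_0)$ by assumption, and in $\PP(V)$ after the cone-offs the set $\Omega_0 \cap \Omega_1$ should be arranged so that its intersection with $\PP(V_0)$ is exactly that divisible set and $\Delta$ acts convex cocompactly on $\Omega_0 \cap \Omega_1$ with convex core equal to that divisible piece; this is the most delicate normalization step and I would choose the cone-offs precisely so that $\Omega_0 \cap \Omega_1 \cap \PP(V_0)$ is the $\Delta$-divisible convex set and $\Delta$ fixes complementary data transversally. Hypothesis \ref{item:hopeful-lim-set}: since $\Lambda^{\mathsf{orb}}_{\Omega_i}(\Gamma_i)$ is contained in $\PP(\RR^{d_i})$ and, under the embeddings, the only overlap of $\PP(\RR^{d_0})$ and $\PP(\RR^{d_1})$ inside $\PP(V)$ is $\PP(V_0)$ where the common limit set of $\Delta$ lives, one gets $\overline{\Ccore_{\Omega_i}(\Gamma_i)} \smallsetminus \Lambda^{\mathsf{orb}}_{\Omega_{01}}(\Delta) \subset \Omega_{1-i}$ provided the cone-offs are generous enough; this again dictates the choice. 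Hypothesis \ref{item:hopeful-cases} holds because each $\Gamma_i$ acts convex cocompactly (invoke Lemma~\ref{lem:hopeful-cases-satisfied}).

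Having verified the hypotheses, Theorem~\ref{thm:hopeful} produces finite-index $\Gamma_0' \supset \Delta$, $\Gamma_1' \supset \Delta$ and a discrete faithful $\rho : \Gamma_0' *_\Delta \Gamma_1' \to \PGL(V)$ with image preserving a properly convex open $\Omega \subset \PP(V)$; by part~\eqref{item:hopeful-3} (applicable since each $\Gamma_i$ acts convex cocompactly on $\Omega_i$) the image is convex cocompact in $\PP(V)$. Since $\varphi_\Delta : \Delta_0 \to \Delta_1$ is the identification realized by our embeddings, $\Gamma_0' *_\Delta \Gamma_1' = \Gamma_0' *_{\varphi_\Delta} \Gamma_1'$, giving the proposition; finally, $\rho$ can be conjugated into $\SL(V)$ (lifting from $\PGL(V)$, using that the image is torsion-free by faithfulness onto a group that injects, or by a determinant normalization) to land in $\SL(V)$ as stated. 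The main obstacle I anticipate is the cone-off construction: one must choose the properly convex sets $\Omega_0$, $\Omega_1$ in $\PP(V)$ simultaneously so that (i) each $\Gamma_i$-action remains convex cocompact, (ii) their intersection is nonempty with the prescribed $\Delta$-divisible core living in $\PP(V_0)$, and (iii) the transversality/separation statement \ref{item:hopeful-lim-set} holds — balancing these requires care with the fixed point $z$ of $\Delta$ playing the role analogous to the one in Theorem~\ref{thm:gluing} and Corollary~\ref{cor:double}, and is the technical heart of the argument.
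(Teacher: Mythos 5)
Your overall route---embed $\Gamma_0,\Gamma_1$ in $\SL(V)$ acting trivially on the other group's factor, thicken the invariant domains from $\PP(\RR^{d_i})$ to $\PP(V)$, and feed the result to Theorem~\ref{thm:hopeful}---is the paper's route (the paper packages the thickening step as Proposition~\ref{prop:amalgam-cc-general}, proved via Propositions~\ref{prop:thicken-add-dim} and~\ref{prop:thicken-not-proper}). But the step you defer as ``the technical heart'' is where the content lies, and as it stands it is a genuine gap: choosing the cone-offs ``generous enough'' cannot produce hypothesis~\ref{item:hopeful-lim-set} of Theorem~\ref{thm:hopeful}, because by Lemma~\ref{lem:Omega-max} (cf.\ Remark~\ref{rem:O-1-i-max}) every $\Gamma_{1-i}$-invariant properly convex open subset of $\PP(V)$ meeting your $\Omega_{1-i}$ lies under a fixed ceiling, namely the cone over $\mathcal{O}_{1-i}^{\max}$ along $W_i$; so the containment $\overline{\Ccore_{\Omega_i}(\Gamma_i)}\smallsetminus\Lambdao_{\Omega_{01}}(\Delta)\subset\Omega_{1-i}$ is a nontrivial condition on the input data, not something one can arrange by thickening more. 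The missing idea is the one the paper uses: take $\mathcal{O}_i\subset\PP(\RR^{d_i})$ \emph{strictly convex}, which is possible exactly because $\Gamma_i$ is Gromov hyperbolic and convex cocompact \cite[Th.\,1.15]{dgk-proj-cc}. Then $\Lambdao_{\mathcal{O}_i}(\Delta_i)\subset\PP(V_i)$, so $\mathcal{O}_i\cap\PP(V_i)\neq\varnothing$ and this slice is divided by $\Delta_i$; since $\Delta_i$ divides the slice and is trivial on $W_i$, every supporting hyperplane of $\mathcal{O}_i$ at a point of $\partial(\mathcal{O}_i\cap\PP(V_i))$ contains $\PP(W_i)$, and strict convexity then gives $\partial\mathcal{O}_i\cap\PP(W_i)=\varnothing$ and that the projection $\PP(\RR^{d_i})\smallsetminus\PP(W_i)\to\PP(V_i)$ sends $\overline{\Ccore_{\mathcal{O}_i}(\Gamma_i)}\smallsetminus\Lambdao_{\mathcal{O}_i}(\Delta_i)$ into $\mathcal{O}_i\cap\PP(V_i)$. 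One must also adjust $\varphi$ so that the two divided slices correspond (Vey/Benoist, as in the paper), and, to capture the other group's core inside a single properly convex $\Omega_i$, combine the exhaustion of Proposition~\ref{prop:thicken-not-proper} with the cocompactness of $\Delta$ on $\overline{\Ccore_{\mathcal{O}_{1-i}}(\Gamma_{1-i})}$ minus the limit set of $\Delta$ (Lemma~\ref{lem:even-more-pure}). None of these points appear in your plan, and without them the plan does not close.

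Two smaller remarks. Your ``linear-algebra argument'' that the intersection of the embedded groups is exactly $\Delta$ does not work as written: an element of $\Gamma_0$ preserving $V_0$ and acting trivially on $W_0$ whose $V_0$-action matches an element of $\Gamma_1$ need not lie in $\Delta_0$. Also, there is no fixed point $z$ of $\Delta$ playing a role here (that hypothesis belongs to Theorem~\ref{thm:gluing} and Corollary~\ref{cor:double}), and Gromov hyperbolicity is not what keeps the thickened action convex cocompact---Proposition~\ref{prop:thicken-add-dim} gives that in general; its real function is to supply the strictly convex domain used above.
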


We note that if $\Gamma_i$ is torsion-free and $\Delta_i$ is a maximal cyclic subgroup of~$\Gamma_i$, then the separability assumption in Proposition~\ref{prop:amalgam-word-hyperbolic} is always satisfied (see \cite[Prop.\,3.2]{tt25}), and the amalgamated free product $\Gamma_0' *_{\varphi_\Delta} \Gamma_1'$ is Gromov hyperbolic (see Fact~\ref{fact:amalgam-Gromov-hyp}).

Proposition~\ref{prop:amalgam-word-hyperbolic} applies, for example, in the case that $d_0 = d_1 = 3$, that $\Gamma_0$ and~$\Gamma_1$ are surface groups dividing properly convex open subsets of $\PP(\RR^3)$ (\ie $\Gamma_0$ and~$\Gamma_1$ are images of Hitchin representations of surface groups into $\SL(3,\RR)$, by \cite{cg93}), and that $\Delta_0$ and $\Delta_1$ are conjugate maximal cyclic subgroups with symmetric eigenvalues; then $V = V_0 \oplus W_0 \oplus W_1 \simeq \RR^4$.
See Example~\ref{ex:Hitchin}.

\subsection{Applications to Anosov representations}

Anosov representations of Gromov hyperbolic groups into noncompact real semisimple Lie groups are representations with finite kernel, discrete image, and strong dynamical properties, which generalize convex cocompact representations into semisimple Lie groups of real rank one (see in particular \cite{lab06,gw12,klp17,klp-survey,ggkw17,bps19}).
They have been much studied recently, especially in the setting of higher Teichm\"uller theory.
Given a noncompact real semisimple Lie group~$G$, there are various possible types of Anosov representations, corresponding to the various possible conjugacy classes of proper parabolic subgroups $P$ of~$G$.

We shall not recall the definition of Anosov representations in this paper, nor assume any technical knowledge of them.
We shall just use the following relation with convex cocompactness proved in \cite{dgk-proj-cc}, where $P_1$ denotes the stabilizer of a line of~$V$ (a parabolic subgroup of $\PGL(V)$).

\begin{fact}[{\cite[Th.\,1.15]{dgk-proj-cc}}] \label{fact:Anosov}
Let $\Gamma$ be an infinite discrete subgroup of $\PGL(V)$ preserving some nonempty properly convex open subset of $\PP(V)$.
Suppose that $\Gamma$ is Gromov hyperbolic.
Then the following are equivalent:
\begin{enumerate}
  \item $\Gamma$ is convex cocompact in $\PP(V)$ (Definition~\ref{def:cc-group}),
  \item the natural inclusion $\Gamma \hookrightarrow \PGL(V)$ is $P_1$-Anosov.
\end{enumerate}
\end{fact}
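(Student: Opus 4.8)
\medskip

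\noindent\textbf{Proof strategy for Fact~\ref{fact:Anosov}.}
The plan is to prove the two implications separately, in both directions exploiting the duality $\Omega\leftrightarrow\Omega^*$ (Notation~\ref{not:intro}) together with the Hilbert metric $d_\Omega$, and using as a black box the following characterization of $P_1$-Anosovness for a representation $\rho$ of a Gromov hyperbolic group $\Gamma$ (Kapovich--Leeb--Porti; Bochi--Potrie--Sambarino): $\rho$ is $P_1$-Anosov if and only if there exist continuous $\rho$-equivariant transverse dynamics-preserving maps $\xi\colon\partial_\infty\Gamma\to\PP(V)$ and $\xi^*\colon\partial_\infty\Gamma\to\PP(V^*)$ and constants $c,C>0$ such that $\log\frac{\sigma_1(\rho(\gamma))}{\sigma_2(\rho(\gamma))}\geq c\,|\gamma|-C$ for every $\gamma\in\Gamma$, where $|\gamma|$ denotes word length and $\sigma_1\geq\dots\geq\sigma_{\dim V}$ the singular values.

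\medskip

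\noindent\textbf{From (1) to (2).}
Suppose $\Gamma$ acts convex cocompactly on $\Omega$ and take $\C:=\Ccore_\Omega(\Gamma)$, a $\Gamma$-invariant closed convex subset of $\Omega$ with compact quotient and $\overline\C\supset\Lambdao_\Omega(\Gamma)$. First I would equip $\C$ with $d_\Omega$ and apply Milnor--\v Svarc: the orbit map $\gamma\mapsto\gamma\cdot x_0$ is a quasi-isometry $\Gamma\to(\C,d_\Omega)$, so $(\C,d_\Omega)$ is Gromov hyperbolic with Gromov boundary equivariantly identified with $\partial_\infty\Gamma$. Next I would show that every $d_\Omega$-geodesic ray in $\C$ converges in $\PP(V)$ to a point of $\Lambdao_\Omega(\Gamma)$ depending only on its endpoint at infinity, producing a continuous $\Gamma$-equivariant map $\xi\colon\partial_\infty\Gamma\to\Lambdao_\Omega(\Gamma)\subset\partial\Omega$; running the same construction for the dual convex cocompact action of $\Gamma$ on $\Omega^*$ yields $\xi^*\colon\partial_\infty\Gamma\to\PP(V^*)$. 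I would then upgrade regularity using $\delta$-hyperbolicity of $(\C,d_\Omega)$: thinness of triangles forces two geodesic rays with the same endpoint at infinity to converge exponentially, which makes each $\xi(\eta)$ an extreme point of $\overline\Omega$ with a unique supporting hyperplane, equal to $\xi^*(\eta)$, and forces $\xi(\eta)\notin\xi^*(\eta')$ when $\eta\neq\eta'$, \ie transversality (dynamics-preservation is immediate from equivariance). Finally, combining the coarse comparison between $d_\Omega(x_0,\rho(\gamma)\,x_0)$ and $\log\frac{\sigma_1}{\sigma_{\dim V}}(\rho(\gamma))$ on the cocompact set $\C$ with the exponential convergence of rays to the extreme points $\xi(\eta)$, I would extract the uniform gap $\log\frac{\sigma_1}{\sigma_2}(\rho(\gamma))\geq c|\gamma|-C$, giving $P_1$-Anosovness.

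\medskip

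\noindent\textbf{From (2) to (1).}
Suppose $\rho\colon\Gamma\hookrightarrow\PGL(V)$ is $P_1$-Anosov and $\Gamma$ preserves a nonempty properly convex open set $\Omega_0$. Let $\Lambda:=\xi(\partial_\infty\Gamma)$. Each attracting fixed point of a proximal element of $\Gamma$ lies in $\partial\Omega_0$ (it is an accumulation point of a $\Gamma$-orbit in $\Omega_0$), and $\Lambda$ is the closure of these points, so $\Lambda\subset\partial\Omega_0$. I would then let $\Omega$ be the interior of the convex hull of $\Lambda$ inside the affine chart containing $\overline{\Omega_0}$ (replacing $V$ by the span of $\Lambda$ if necessary); it is nonempty, $\Gamma$-invariant, and properly convex, being bounded inside $\overline{\Omega_0}$. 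It remains to prove that $\Gamma$ acts cocompactly on $\Ccore_\Omega(\Gamma)$: transversality of $\xi$ and $\xi^*$ endows $\Ccore_\Omega(\Gamma)$ with a uniform thickness transverse to $\partial\Omega$, and the uniform singular-value gap, fed through the Hilbert-displacement comparison, makes the orbit map $\Gamma\to(\Ccore_\Omega(\Gamma),d_\Omega)$ a quasi-isometry, whence the quotient is compact. Since $\overline{\Ccore_\Omega(\Gamma)}\supset\Lambda\supset\Lambdao_\Omega(\Gamma)$ by construction, the action is convex cocompact in the sense of Definition~\ref{def:cc-group}.

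\medskip

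\noindent\textbf{Main obstacle.}
The crux, in both directions, is the passage between the coarse geometry of $\Gamma$ (encoded by $d_\Omega$ on the convex core) and the \emph{local} convex geometry of $\partial\Omega$ at limit points. In (1)$\Rightarrow$(2) one must show that convex cocompactness together with Gromov hyperbolicity forces every limit point to be simultaneously extreme and $C^1$ in $\overline\Omega$ --- equivalently, that $\xi$ and $\xi^*$ exist and are transverse --- even though a convex cocompact action need not preserve a strictly convex or $C^1$ domain. In (2)$\Rightarrow$(1) one must rule out boundary segments of $\Omega$ joining two limit points, so that the convex hull of $\Lambda$ genuinely is properly convex and has a cocompact (rather than degenerate) convex core. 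Both rest on the interaction of thin triangles in $(\C,d_\Omega)$ with the facial structure of $\partial\Omega$, and on the quantitative Hilbert-to-singular-value comparison; these, together with the $\sigma_1/\sigma_2$ criterion for Anosovness, are the remaining technical inputs.
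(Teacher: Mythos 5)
You should first be aware that the paper does not prove this statement: it is imported verbatim as a Fact with the citation \cite[Th.\,1.15]{dgk-proj-cc}, so there is no internal proof to compare against, and what you have written is an outline of the proof of that cited theorem. As an outline it picks the right objects (the Hilbert metric on the convex core, the dual action on $\Omega^*$, equivariant maps $\xi,\xi^*$ into $\PP(V)$ and $\PP(V^*)$), but the steps you flag as the ``main obstacle'' are exactly the theorem's content, and two of your load-bearing claims are gaps rather than arguments.

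In (1)$\Rightarrow$(2), the final estimate is not justified: the displacement comparison on a cocompact convex set controls $\log(\sigma_1/\sigma_{\dim V})(\gamma)$ in terms of $d_\Omega(x_0,\gamma\cdot x_0)$, not $\log(\sigma_1/\sigma_2)(\gamma)$, and upgrading the top-to-bottom gap to a \emph{first} singular value gap linear in word length is precisely $P_1$-Anosovness; ``exponential convergence of rays to extreme points'' does not yield it without an argument you have not supplied. The proof in \cite{dgk-proj-cc} sidesteps this: once $\xi,\xi^*$ are built, it suffices (for a Gromov hyperbolic group) that the boundary maps be continuous, transverse, dynamics-preserving and that the group be $P_1$-divergent (see \cite{ggkw17}), and $P_1$-divergence does follow from the displacement comparison --- no linear $\sigma_1/\sigma_2$ estimate is ever derived directly. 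Likewise, the claim that thin triangles force each $\xi(\eta)$ to be extremal with a unique supporting hyperplane, and force $\xi(\eta)\notin\xi^*(\eta')$ for $\eta\neq\eta'$, is where the cited paper spends most of its effort: one must first exclude nontrivial segments in the ideal boundary of the convex core using hyperbolicity of $(\C,d_\Omega)$, and then obtain the $C^1$ property by running the segment argument on the dual convex core; without this, even the well-definedness of $\xi$ (a Hilbert ray could a priori accumulate on a segment of $\partial\Omega$) is open, so your construction is circular as stated. In (2)$\Rightarrow$(1), the essential point is cocompactness of the action on $\Conv{\Lambda}\cap\Omega_0$, and ``the orbit map is a quasi-isometry onto the core, whence the quotient is compact'' presupposes coarse surjectivity, i.e.\ the cocompactness to be proved; the standard argument is a compactness/contradiction argument using transversality of $\xi$ and $\xi^*$. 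Finally, the detour ``replace $V$ by the span of $\Lambda$'' is unnecessary and sits badly with Definition~\ref{def:cc-group}, which asks for an invariant properly convex open subset of $\PP(V)$ itself but does not require the cocompact convex set $\C$ to have nonempty interior, so one can work with the convex hull of $\Lambda$ inside $\Omega_0$ directly.
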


We can use this relation to translate Theorems \ref{thm:amalgam-cc}--\ref{thm:HNN-cc}--\ref{thm:free-product-cc}, Corollary~\ref{cor:free-prod-cc}, Theorem~\ref{thm:hopeful}, and Proposition~\ref{prop:amalgam-word-hyperbolic} into statements about Anosov representations.
For instance, putting together Corollary~\ref{cor:free-prod-cc}, Fact~\ref{fact:Anosov}, and the fact that the free product of two Gromov hyperbolic groups is Gromov hyperbolic, yields the following.

\begin{corollary} \label{cor:free-prod-Ano}
For $i=0,1$, let $\Gamma_i$ be an infinite discrete subgroup of $\PGL(V)$ which preserves, but does not divide, a nonempty properly convex open subset of $\PP(V)$; suppose that $\Gamma_i$ is Gromov hyperbolic and that the natural inclusion $\Gamma_i \hookrightarrow \PGL(V)$ is $P_1$-Anosov.
Then there exists $g\in\PGL(V)$ such that the representation $\rho:  \Gamma_0 * g\Gamma_1 g^{-1} \to \PGL(V)$ induced by the natural inclusions of $\Gamma_0$ and $g\Gamma_1 g^{-1}$ preserves a nonempty properly convex open subset of $\PP(V)$ and is $P_1$-Anosov.
\end{corollary}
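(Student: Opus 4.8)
The plan is to deduce Corollary~\ref{cor:free-prod-Ano} directly from Corollary~\ref{cor:free-prod-cc} and Fact~\ref{fact:Anosov}, using the classical fact that a free product of two Gromov hyperbolic groups is Gromov hyperbolic.

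First I would rephrase the hypotheses in the language of convex cocompactness. For each $i\in\{0,1\}$, the group $\Gamma_i$ is an infinite discrete subgroup of $\PGL(V)$ which is Gromov hyperbolic, preserves a nonempty properly convex open subset of $\PP(V)$, and for which the natural inclusion $\Gamma_i\hookrightarrow\PGL(V)$ is $P_1$-Anosov; by the implication (2)~$\Rightarrow$~(1) of Fact~\ref{fact:Anosov}, the group $\Gamma_i$ is convex cocompact in $\PP(V)$. Since by assumption $\Gamma_i$ does not divide any nonempty properly convex open subset of $\PP(V)$, the pair $(\Gamma_0,\Gamma_1)$ satisfies exactly the hypotheses of Corollary~\ref{cor:free-prod-cc}. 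Applying that corollary gives an element $g\in\PGL(V)$ such that $\Gamma := \langle\Gamma_0,\,g\Gamma_1 g^{-1}\rangle$ is isomorphic, via the natural map $\rho\colon \Gamma_0 * g\Gamma_1 g^{-1}\to\PGL(V)$, to the free product $\Gamma_0 * g\Gamma_1 g^{-1}$, and such that $\Gamma$ is convex cocompact in $\PP(V)$; in particular $\rho$ is faithful and $\Gamma$ is an infinite discrete subgroup of $\PGL(V)$ preserving a nonempty properly convex open subset of $\PP(V)$ (Definition~\ref{def:cc-group}).

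Next I would observe that $g\Gamma_1 g^{-1}$ is abstractly isomorphic to $\Gamma_1$, hence Gromov hyperbolic, and that a free product of two Gromov hyperbolic groups is Gromov hyperbolic (for instance because such a free product acts on its Bass--Serre tree with trivial edge stabilizers and Gromov hyperbolic vertex stabilizers); therefore $\Gamma\cong\Gamma_0 * g\Gamma_1 g^{-1}$ is Gromov hyperbolic. Finally I would apply the implication (1)~$\Rightarrow$~(2) of Fact~\ref{fact:Anosov} to~$\Gamma$: being an infinite discrete subgroup of $\PGL(V)$ which is Gromov hyperbolic, preserves a nonempty properly convex open subset of $\PP(V)$, and is convex cocompact in $\PP(V)$, the group $\Gamma$ has the property that the inclusion $\Gamma\hookrightarrow\PGL(V)$ is $P_1$-Anosov; equivalently, since $\rho$ is faithful with image $\Gamma$, the representation $\rho$ is $P_1$-Anosov. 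Combined with the fact that $\rho(\Gamma_0 * g\Gamma_1 g^{-1}) = \Gamma$ preserves a nonempty properly convex open subset of $\PP(V)$, this proves the corollary. I do not expect any genuine obstacle here: all the substantive work is contained in Corollary~\ref{cor:free-prod-cc} (hence in Theorem~\ref{thm:free-product-cc} and the combination theorems behind it), and the only points requiring a word of justification are the routine stability of convex cocompactness, of Gromov hyperbolicity, and of $P_1$-Anosovness under the conjugation $\Gamma_1\mapsto g\Gamma_1 g^{-1}$ and under forming the free product.
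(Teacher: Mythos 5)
Your proposal is correct and follows essentially the same route as the paper's proof: convert the Anosov hypothesis into convex cocompactness via Fact~\ref{fact:Anosov}, apply Corollary~\ref{cor:free-prod-cc}, invoke Gromov hyperbolicity of the free product of Gromov hyperbolic groups, and apply Fact~\ref{fact:Anosov} again to conclude. No gaps.
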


\begin{remark} \label{rem:vcd}
If $\Gamma_i$ preserves a nonempty properly convex open subset $\Omega_i$ of $\PP(V)$, then $\mathrm{vcd}(\Gamma_i) \leq \dim \PP(V)$, and the property that $\Gamma_i$ does \emph{not} divide~$\Omega_i$ is equivalent to the strict inequality $\mathrm{vcd}(\Gamma_i) < \dim \PP(V)$, where $\mathrm{vcd}$ is the \emph{virtual cohomological dimension} (see \eg \cite[Fait\,2.6]{ben05}).
\end{remark}

Using Corollary~\ref{cor:free-prod-Ano} and appropriate linear representations, we obtain the following.

\begin{corollary} \label{cor:free-prod-Ano-general}
For $i=0,1$, let $G_i$ be a noncompact linear real semisimple Lie group and $P_{\theta_i}$ a proper parabolic subgroup of~$G_i$.
Then there exist a finite-dimensional real vector space~$V$ and, for each $i=0,1$, a representation $\tau_i : G_i\to\GL(V)$, with the following property: for any Gromov hyperbolic group $\Gamma_i$ and any $P_{\theta_i}$-Anosov representation $\rho_i : \Gamma\to G_i$, there exists $g\in\PGL(V)$ such that the representation $\rho:  \Gamma_0 * g\Gamma_1 g^{-1} \to \PGL(V)$ induced by $\tau_0\circ\rho_0$ and $\tau_1\circ\rho_1$ is $P_1$-Anosov.
\end{corollary}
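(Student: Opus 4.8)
The plan is to reduce the general semisimple situation to the projective one treated in Corollary~\ref{cor:free-prod-Ano} by choosing, for each $i$, a well-adapted linear representation $\tau_i$ of $G_i$. First I would recall the standard fact (essentially due to Guichard--Wienhard, see also \cite{gw12}) that for any noncompact linear semisimple Lie group $G_i$ and any proper parabolic subgroup $P_{\theta_i}$, there is a finite-dimensional real representation $\sigma_i : G_i \to \GL(W_i)$ which is \emph{proximal} relative to $\theta_i$, in the sense that a $\theta_i$-Anosov representation $\rho_i : \Gamma_i \to G_i$ becomes $P_1$-Anosov after post-composition with $\sigma_i$ (the highest weight of $\sigma_i$ is chosen so that its stabilizer in $G_i$ is contained in $P_{\theta_i}$, and then the flag map of $\rho_i$ pushes forward to the Anosov boundary map into $\PP(W_i)$ and $\PP(W_i^*)$). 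Thus $\sigma_i \circ \rho_i : \Gamma_i \to \PGL(W_i)$ is $P_1$-Anosov for every Gromov hyperbolic $\Gamma_i$ and every $\theta_i$-Anosov $\rho_i$.

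The next step is to arrange that the image of $\sigma_i \circ \rho_i$ preserves a nonempty properly convex open subset of $\PP(W_i)$ and, crucially, does not \emph{divide} it, so that the hypotheses of Corollary~\ref{cor:free-prod-Ano} are met. For this I would take $\tau_i := \sigma_i \oplus \sigma_i^* \oplus \mathbf{1}$ acting on $V := W_0 \oplus W_0^* \oplus \RR \oplus W_1 \oplus W_1^*$ (a single common vector space for both $i$, with the two summands $W_0\oplus W_0^*$ and $W_1\oplus W_1^*$ sitting in complementary blocks and sharing the trivial line). The point of passing to $\sigma_i \oplus \sigma_i^*$ is twofold: it is self-dual, hence by the construction in \cite{dgk-proj-cc} a $P_1$-Anosov representation into $\PGL(W_i \oplus W_i^*)$ whose image is Gromov hyperbolic automatically preserves a properly convex open set (one builds it from the Anosov boundary maps $\xi_i : \partial_\infty\Gamma_i \to \PP(W_i)$ and $\xi_i^* : \partial_\infty\Gamma_i \to \PP(W_i^*)$, which are transverse); and the presence of the extra trivial summand forces $\mathrm{vcd}(\tau_i(\rho_i(\Gamma_i))) < \dim\PP(V)$, so by Remark~\ref{rem:vcd} the group does not divide the invariant convex set. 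Finally, being a block-diagonal sum, $\tau_i \circ \rho_i$ is still $P_1$-Anosov into $\PGL(V)$ as soon as $\sigma_i\circ\rho_i$ is (the $P_1$-Anosov property for a sum representation follows from the spectral-gap characterization, as the top singular value of $\tau_i(\rho_i(\gamma))$ is that of $\sigma_i(\rho_i(\gamma))$ or of its dual, both having the required exponential gap).

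With these $\tau_i$ in hand, apply Corollary~\ref{cor:free-prod-Ano} to the two groups $\Delta_i := \tau_i(\rho_i(\Gamma_i)) \subset \PGL(V)$: each is an infinite discrete subgroup that preserves but does not divide a nonempty properly convex open subset of $\PP(V)$, is Gromov hyperbolic (being a quotient of the Gromov hyperbolic group $\Gamma_i$ by a finite kernel), and has $P_1$-Anosov inclusion into $\PGL(V)$. The corollary then produces $g \in \PGL(V)$ such that the representation $\Delta_0 * g\Delta_1 g^{-1} \to \PGL(V)$ is $P_1$-Anosov; pulling back along the surjections $\Gamma_i \twoheadrightarrow \Delta_i$ and precomposing with $\tau_i\circ\rho_i$ gives the desired $P_1$-Anosov representation $\rho : \Gamma_0 * g\Gamma_1 g^{-1} \to \PGL(V)$. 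I expect the main obstacle to be the bookkeeping in the second step: verifying carefully that a suitable direct sum of a proximal representation, its dual, and a trivial line simultaneously (i) realizes the $\theta_i$-Anosov condition as $P_1$-Anosov, (ii) produces an invariant properly convex open set via the self-duality, and (iii) kills the dividing property by a dimension count — the delicate point being that adding summands must not destroy the spectral gap at $P_1$, which one checks using that Anosov representations remain Anosov under such well-chosen reducible post-compositions.
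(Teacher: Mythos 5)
Your overall architecture is the same as the paper's: compose each $\rho_i$ with a linear representation so that $P_{\theta_i}$-Anosov becomes $P_1$-Anosov (via \cite{ggkw17}), arrange an invariant properly convex open set, rule out dividing by a dimension count as in Remark~\ref{rem:vcd}, and conclude with Corollary~\ref{cor:free-prod-Ano} (equivalently Corollary~\ref{cor:free-prod-cc} plus hyperbolicity of free products and Fact~\ref{fact:Anosov}). The gap is in your middle step, and it is genuine: the claim that $\tau_i:=\sigma_i\oplus\sigma_i^*\oplus\mathbf{1}$ keeps $\tau_i\circ\rho_i$ $P_1$-Anosov ``because the top singular value is that of $\sigma_i(\rho_i(\gamma))$ or of its dual'' does not work. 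The required gap is between the first and second singular values of the \emph{whole} block sum, and this forces a comparison \emph{across} blocks that the $P_1$-Anosov property of $\sigma_i\circ\rho_i$ does not control. It fails outright whenever $\sigma_i\circ\rho_i$ is conjugate to its dual (e.g.\ when its image lies in an orthogonal or symplectic group): then $\sigma_i\circ\rho_i(\gamma)$ and $\sigma_i^*\circ\rho_i(\gamma)$ have identical singular values, so the top singular value of $\tau_i(\rho_i(\gamma))$ has multiplicity two for every~$\gamma$. Concretely, for $\Gamma_i$ a convex cocompact free subgroup of $\SO(d-1,1)\subset\SL(d,\RR)$ with $\sigma_i$ the standard representation, $\sigma_i\oplus\sigma_i^*$ is conjugate to the diagonal representation, which is not $P_1$-divergent, hence not $P_1$-Anosov and, by Proposition~\ref{prop:Ano-cc-P1-div} (or Fact~\ref{fact:Anosov}), not convex cocompact in $\PP(V)$ --- at best naively convex cocompact, in the spirit of Example~\ref{ex:naive-cc-still-not-irred}. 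So the groups $\tau_i\circ\rho_i(\Gamma_i)$ need not satisfy the hypotheses of Corollary~\ref{cor:free-prod-Ano}, and your final step cannot be run. (Relatedly, your assertion that a $P_1$-Anosov representation into $\PGL(W_i\oplus W_i^*)$ ``automatically'' preserves a properly convex open set presupposes the Anosov property you are trying to establish, and is not true for general $P_1$-Anosov representations anyway, which is exactly why some convexifying device is needed.)

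The paper's device is different and avoids this problem: instead of $\sigma_i\oplus\sigma_i^*$ it composes with the \emph{second symmetric power}. If $A=\sigma_i(\rho_i(\gamma))$ has singular values $\mu_1\geq\mu_2\geq\cdots$, then $S^2A$ has top two singular values $\mu_1^2$ and $\mu_1\mu_2$, so the $P_1$-gap is exactly preserved, and the image of $S^2$ lies in the automorphism group of the cone of positive definite symmetric forms (Fact~\ref{fact:Omega-sym}), so the composed representation preserves a properly convex open set and is convex cocompact by Fact~\ref{fact:Anosov}. One then takes $V=S^2V_0\oplus S^2V_1$, with $\Gamma_0$ acting trivially on the second block and $\Gamma_1$ on the first: convex cocompactness survives adding a block with trivial action by Proposition~\ref{prop:thicken-add-dim} (or \cite[Th.\,1.16.(E)]{dgk-proj-cc}), while the inequality $\mathrm{vcd}(\Gamma_i)\leq\dim\PP(S^2V_i)<\dim\PP(V)$ shows the image does not divide. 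From there your concluding step is exactly the paper's. If you want to keep a dual-summand flavor, you must replace $\sigma_i\oplus\sigma_i^*$ by something like $S^2\sigma_i$ (or otherwise certify the singular value gap across blocks); adding the dual block as a direct summand is precisely what destroys the gap.
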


We can actually construct free products of Anosov representations without passing to a linear representation, as follows.

\begin{theorem} \label{thm:Anosov-free-product-G/P}
Let $G$ be a noncompact connected real linear semisimple Lie group, $P$ a self-opposite parabolic subgroup of~$G$, and $\Gamma_0$ and~$\Gamma_1$ torsion-free discrete subgroups of~$G$ which are Gromov hyperbolic and such that the natural inclusion $\Gamma_i \hookrightarrow G$ is $P$-Anosov.
Suppose that for each $i\in\{0,1\}$ there is a point $x_i \in G/P$ which is transverse to all points of the proximal limit set $\Lambda_{\Gamma_i}^{G/P}$.
Then there exists $g \in G$ such that the representation $\rho : \Gamma_0 * g\Gamma_1 g^{-1} \to G$ induced by the inclusions $\Gamma_0, g\Gamma_1 g^{-1} \hookrightarrow G$ is $P$-Anosov.

Moreover, for any Zariski-dense subgroup $\Gamma$ of~$G$, if we can choose the points $x_0, x_1 \in G/P$ inside~$\Lambda_{\Gamma}^{G/P}$, then we can choose $g$ inside~$\Gamma$.
\end{theorem}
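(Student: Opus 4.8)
The plan is to deduce the statement from the projective combination theorems already established, via the dictionary between $P$-Anosov representations and convex cocompactness in projective space. Because $P$ is self-opposite, there is a finite-dimensional real vector space $W$ and a proximal representation $\tau : G \to \PGL(W)$ \emph{adapted to~$P$} — its highest weight line realizing a $G$-equivariant closed embedding $G/P \hookrightarrow \PP(W)$, and dually $G/P^- \cong G/P \hookrightarrow \PP(W^*)$ — with the property that a discrete subgroup $\Lambda \subset G$ is $P$-Anosov if and only if $\tau(\Lambda)$ controls both the top and bottom singular value gaps in $\PGL(W)$; here self-oppositeness of~$P$ is exactly what makes the relevant Anosov condition ``two-sided''. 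Passing to $\widehat W := W \oplus W^*$ with its canonical pairing form, and $\widehat\tau := \tau \oplus \tau^* : G \to \PGL(\widehat W)$ (with image in the orthogonal group of that form), this becomes: $\Lambda \subset G$ is $P$-Anosov if and only if $\widehat\tau(\Lambda)$ is $P_1$-Anosov in $\PGL(\widehat W)$. By \cite{dgk-proj-cc}, each $\widehat\tau(\Gamma_i)$ then preserves, and acts convex cocompactly on, a properly convex open subset $\Omega_i$ of $\PP(\widehat W)$, with $\Lambda_{\widehat\tau(\Gamma_i)}^{\PP(\widehat W)} \subset \partial\Omega_i$ lying on the isotropic quadric of the pairing form, and $\widehat\tau(\Gamma_i)$ does \emph{not} divide $\Omega_i$ for dimension reasons (cf.\ Remark~\ref{rem:vcd}: $\partial_\infty\Gamma_i$ embeds in $G/P \subset \PP(W)$, while $\dim\PP(\widehat W)$ is strictly larger), so in particular $\Omega_i \neq \Ccore_{\Omega_i}(\widehat\tau(\Gamma_i))$.

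First I would obtain the conjugator~$g$ and the discrete faithful free product inside~$G$ from Theorem~\ref{thm:free-product-G/P}. Its hypothesis is met: since $\Gamma_i$ is $P$-Anosov, every $\Gamma_i$-orbit in $G/P$ accumulates only on $\Lambda_{\Gamma_i}^{G/P}$, so $\overline{(\Gamma_i \smallsetminus \{1\}) \cdot x_i} = (\Gamma_i \smallsetminus \{1\}) \cdot x_i \cup \Lambda_{\Gamma_i}^{G/P}$; transversality of $x_i$ to the second set is assumed, and transversality to the countably many orbit points $\gamma \cdot x_i$ with $\gamma \neq 1$ can be arranged by perturbing $x_i$ slightly inside the open set of points transverse to $\Lambda_{\Gamma_i}^{G/P}$ — and, in the Zariski-dense setting, inside $\Lambda_\Gamma^{G/P}$, using that the latter is not contained in any proper subvariety of $G/P$. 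Theorem~\ref{thm:free-product-G/P} then yields $g \in G$ (resp.\ $g \in \Gamma$) making $\rho : \Gamma_0 * g\Gamma_1 g^{-1} \to G$ discrete and faithful; and $\Gamma := \Gamma_0 * g\Gamma_1 g^{-1}$ is Gromov hyperbolic, being a free product of torsion-free Gromov hyperbolic groups.

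It remains to upgrade $\rho$ to a $P$-Anosov representation, which by the reduction above is equivalent to: $\widehat\tau(\Gamma) = \widehat\tau(\Gamma_0) * \widehat\tau(g)\,\widehat\tau(\Gamma_1)\,\widehat\tau(g)^{-1}$ is $P_1$-Anosov in $\PGL(\widehat W)$, i.e.\ (by Fact~\ref{fact:Anosov}) convex cocompact in $\PP(\widehat W)$. Here I would invoke Corollary~\ref{cor:free-prod-Ano} / Theorem~\ref{thm:free-product-cc}, applied to $\widehat\tau(\Gamma_0), \widehat\tau(\Gamma_1)$, but run with the conjugator constrained to lie in the subgroup $\widehat\tau(G)$ (resp.\ $\widehat\tau(\Gamma)$) and equal to our $\widehat\tau(g)$: tracing through the proof of Theorem~\ref{thm:free-product-cc}, the conjugator is chosen so as to move $\Lambda_{\widehat\tau(\Gamma_1)}$ near a point transverse to $\Lambda_{\widehat\tau(\Gamma_0)}$ and conversely, and the two transversality points may be taken to be the images in $\PP(\widehat W)$ of $x_0, x_1 \in G/P$, which lie in the $\widehat\tau(G)$-proximal limit set $G/P \cup G/P^- \subset \PP(\widehat W)$ — and in $\Lambda_{\widehat\tau(\Gamma)}^{\PP(\widehat W)}$ when $x_i \in \Lambda_\Gamma^{G/P}$. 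Thus the ping-pong can be carried out inside the flag variety $G/P$, producing the conjugator in $\widehat\tau(G)$, namely $\widehat\tau(g)$. Then $\widehat\tau(\Gamma)$ is convex cocompact, hence $P_1$-Anosov, and its projective limit map $\partial_\infty\Gamma \to \PP(\widehat W)$ (built from attracting fixed points of elements conjugate into the vertex groups) takes values in the closed subset $G/P$, and likewise the dual limit map lands in $G/P^-$; so $\rho$ is $P$-Anosov.

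The main obstacle is the last step: forcing the ping-pong conjugator of Theorem~\ref{thm:free-product-cc} to lie in $\widehat\tau(G)$ rather than in the much larger $\PGL(\widehat W)$. The subgroup $\widehat\tau(G)$ is far from Zariski-dense in $\PGL(\widehat W)$ and its proximal limit set is the ``thin'' set $G/P \cup G/P^-$, so the Zariski-density clause of Theorem~\ref{thm:free-product-cc} does not apply verbatim; one must re-examine that proof to verify it only ever uses proximal elements whose attracting and repelling points can be prescribed near the two given transversality points — equivalently, that the entire ping-pong takes place inside $G/P$. Making this precise, together with pinning down the representation~$\tau$ and the equivalence ``$\widehat\tau\circ\rho$ is $P_1$-Anosov $\iff$ $\rho$ is $P$-Anosov'', is where the real work lies; the discreteness, faithfulness, and the Zariski-dense ``moreover'' statement then come for free from Theorem~\ref{thm:free-product-G/P}.
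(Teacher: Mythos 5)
Your high-level strategy (translate $P$-Anosov into convex cocompactness in a projective space via a representation of $G$, then invoke the free-product combination machinery) is indeed the paper's strategy, but the proposal has a genuine gap at precisely the step you defer as ``where the real work lies'', and that step is not a matter of tracing through the proof of Theorem~\ref{thm:free-product-cc}. That proof chooses its ping-pong elements $g_0,g_1,\beta$ using Fact~\ref{fact:double-lim-set} for a subgroup of $\PGL(V)$ whose proximal limit sets in $\PP(V)$ \emph{and} $\PP(V^*)$ are everything: the flags $(y_0,Y_0),(y_1,Y_1)$ that the eigenflags of $\beta$ must approximate are tangency flags of an auxiliary smooth strictly convex ball $B$, and such flags do not lie in the thin set $\iota(G/P)\subset\mathcal F$; so for a conjugator constrained to $\widehat\tau(G)$ (whose proximal limit set is exactly that thin set) the argument simply does not run, and ``carrying out the ping-pong inside $G/P$'' is the content of a new proof, not a remark. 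The paper supplies this as a separate theorem, Theorem~\ref{thm:free-product-cc-in-G} (built on Theorem~\ref{thm:free-product-in-G}): its proof redoes the geometry inside the $G$-invariant properly convex set $\Omega$, constructing $\Gamma_i$-invariant convex sets containing $\Omega\cup\{x_i\}$ (Propositions~\ref{prop:thicken-not-proper} and~\ref{prop:thicken-add-dim-general}), cutting off ``hats'' $\Delta_i$ near $x_i$ using \emph{uniform} transversality of $x_i$ to its whole $\Gamma_i$-orbit, and producing a proximal element $h\in\Gamma$ (proximal in $G/P$, hence biproximal in $\PP(V)$) whose attracting and repelling flags lie in prescribed neighborhoods inside $\Lambda_\Gamma^{\mathcal F}$, before verifying the occultation triples by hand. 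The actual proof of the theorem is then: upgrade transversality to uniform orbit-transversality (Proposition~\ref{prop:Ano-transv}), apply Theorem~\ref{thm:free-product-cc-in-G} after composing with a single irreducible representation $\tau$ as in \cite{ggkw17} (symmetrized so that $\tau(G)$ preserves a properly convex set), and conclude with Fact~\ref{fact:Anosov}. Note also that your two-stage plan (first get $g$ from Theorem~\ref{thm:free-product-G/P}, then argue the \emph{same} $g$ gives convex cocompactness) has no mechanism forcing the same $g$ to work; the paper obtains discreteness, faithfulness and convex cocompactness in one application of Theorem~\ref{thm:free-product-cc-in-G}.

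Two further points. First, the perturbation you use to pass from ``$x_i$ transverse to $\Lambda_{\Gamma_i}^{G/P}$'' to ``$x_i$ transverse to every $\gamma\cdot x_i$'' is not justified as stated: moving $x_i$ moves the orbit with it, so what is needed is that for each fixed nontrivial $\gamma\in\Gamma_i$ the locus $\{x\in G/P : x \text{ not transverse to } \gamma\cdot x\}$ is a \emph{proper} subvariety. This is a nontrivial Lie-theoretic fact, proved in the paper only for elements proximal in $G/P$ (Lemma~\ref{lem:prox-move-transverse}, via the Bruhat cell $Pw_0P$, Jordan decomposition and invertibility of $\mathrm{id}-\operatorname{Ad}(g)$ on the unipotent radical), and it is exactly where the torsion-free hypothesis is used (every nontrivial element of a torsion-free $P$-Anosov group is proximal in $G/P$); Proposition~\ref{prop:Ano-transv} then combines it with the fact that $\Lambda_\Gamma^{G/P}$ is not locally contained in a proper subvariety. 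Second, your proposed dictionary ``$\rho$ is $P$-Anosov $\iff$ $(\tau\oplus\tau^*)\circ\rho$ is $P_1$-Anosov'' is itself doubtful: the top two singular values of $\tau(\gamma)\oplus{}^t\tau(\gamma)^{-1}$ may compare $\sigma_1(\tau(\gamma))$ with $\sigma_{\dim W}(\tau(\gamma))^{-1}$, a comparison not controlled by the $P$-Anosov condition, which is why the paper works with a single irreducible $\tau$ having the properties of \cite[Prop.\,3.3--3.5]{ggkw17} rather than with $\tau\oplus\tau^*$.
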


Note that the assumption that there be a point of $G/P$ transverse to all points of $\Lambda_{\Gamma_i}^{G/P}$ (\ie that the proximal limit set $\Lambda_{\Gamma_i}^{G/P}$ not ``fill'' $G/P$) is necessary: otherwise $\Lambda_{\Gamma_i}^{G/P}$ could not be part of a larger limit set of a $P$-Anosov representation.

We can also construct amalgamated free products of Anosov representations: see for instance Corollary~\ref{cor:virt-amalgam-Ano} below, which is an immediate consequence of Theorem~\ref{thm:hopeful} and of a refinement of Fact~\ref{fact:Anosov} (namely Proposition~\ref{prop:Ano-cc-P1-div}).
By putting together Proposition~\ref{prop:amalgam-word-hyperbolic} and Fact~\ref{fact:Anosov}, we also obtain the following.

\begin{corollary} \label{cor:amalgam-cyclic-Ano}
For $i = 0,1$, let $d_i\geq 3$ be an integer and let $\Gamma_i$ be an infinite Gromov hyperbolic subgroup of $\SL(d_i,\RR)$ preserving a nonempty properly convex open subset of $\PP(\RR^{d_i})$, such that the natural inclusion $\Gamma_i \hookrightarrow \SL(d_i,\RR)$ is $P_1$-Anosov.
Let $\langle\gamma_i\rangle$ be a maximal cyclic subgroup of~$\Gamma_i$ such that $\gamma_i$ has $1$ as an eigenvalue with multiplicity $d_i-2$ and such that the centralizer of $\langle \gamma_i \rangle$ in $\Gamma_i$ is $\langle \gamma_i\rangle$ (this is automatic if $\Gamma_i$ is torsion-free).
Suppose that $\gamma_0$ and~$\gamma_1$ have the same eigenvalues $\neq 1$.
Then there exist a finite-index subgroup $\Gamma'_i$ of~$\Gamma_i$, containing~$\langle\gamma_i\rangle$, and a $P_1$-Anosov representation $\rho : \Gamma'_0 *_{\langle\gamma_0\rangle = \langle\gamma_1\rangle} \Gamma'_1 \to \SL(d_0+d_1-2,\RR)$.
\end{corollary}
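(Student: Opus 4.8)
The plan is to obtain the corollary as a straightforward combination of Proposition~\ref{prop:amalgam-word-hyperbolic} and Fact~\ref{fact:Anosov}: the work is to arrange the linear-algebraic data so that Proposition~\ref{prop:amalgam-word-hyperbolic} applies, and then to recognize its output as $P_1$-Anosov.

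First I would set up the $\gamma_i$-invariant decomposition of $\RR^{d_i}$. The element $\gamma_i$ has infinite order (a finite subgroup could not divide an open segment, which is how $\langle\gamma_i\rangle$ will be used), so, $\Gamma_i$ being $P_1$-Anosov --- equivalently, by Fact~\ref{fact:Anosov}, convex cocompact in $\PP(\RR^{d_i})$, hence also in the dual $\PP((\RR^{d_i})^*)$ --- $\gamma_i$ is biproximal: it has a simple real eigenvalue of maximal modulus and a simple real eigenvalue of minimal modulus. By the eigenvalue hypothesis the remaining $d_i-2$ eigenvalues of $\gamma_i$ equal~$1$ and span a subspace $W_i$ of dimension $d_i-2$ on which $\gamma_i$ acts as the identity --- this is how I read "$1$ is an eigenvalue of multiplicity $d_i-2$", i.e.\ that $\gamma_i$ has no nontrivial Jordan block for the eigenvalue~$1$; if preferred, this can also be deduced from the fact that $\gamma_i$ lies in a group preserving a properly convex open set. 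As $\det\gamma_i=1$, the two remaining eigenvalues are $\lambda_i$ and $\lambda_i^{-1}$ for a real $\lambda_i$ with $|\lambda_i|>1$, and I take $V_i$ to be their two-dimensional ($\gamma_i$-invariant) eigenspace-sum. Then $\RR^{d_i}=V_i\oplus W_i$ is a $\Delta_i$-invariant decomposition, where $\Delta_i:=\langle\gamma_i\rangle$, with $\Delta_i$ acting trivially on $W_i$ and with $\gamma_i|_{V_i}$ conjugate to $\mathrm{diag}(\lambda_i,\lambda_i^{-1})$, hence acting on the projective line $\PP(V_i)$ as a nontrivial translation of the open segment between its two fixed points; thus $\Delta_i$ divides that properly convex open subset of $\PP(V_i)$. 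Finally, since $\gamma_0$ and~$\gamma_1$ are assumed to have the same eigenvalues $\neq 1$, i.e.\ $\{\lambda_0,\lambda_0^{-1}\}=\{\lambda_1,\lambda_1^{-1}\}$, I would choose eigenbases of $V_0$ and $V_1$ identifying $\gamma_0|_{V_0}$ and $\gamma_1|_{V_1}$ with the same diagonal matrix; the resulting linear isomorphism $\varphi:V_0\to V_1$ satisfies $\varphi\circ\gamma_0|_{V_0}=\gamma_1|_{V_1}\circ\varphi$ and so induces an isomorphism $\varphi_\Delta:\Delta_0\to\Delta_1$.

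Next I would apply Proposition~\ref{prop:amalgam-word-hyperbolic} to $\Gamma_0,\Gamma_1$ with these $V_i$, $W_i$, $\varphi$, checking its hypotheses: $d_i\geq3$ is assumed; $\Gamma_i$ is Gromov hyperbolic by assumption and convex cocompact in $\PP(\RR^{d_i})$ by Fact~\ref{fact:Anosov}; $\Delta_i=\langle\gamma_i\rangle$ is separable in $\Gamma_i$ --- automatically so when $\Gamma_i$ is torsion-free, by \cite[Prop.\,3.2]{tt25} (as is the centralizer condition recorded in the statement) --- and the decomposition $\RR^{d_i}=V_i\oplus W_i$ together with $\varphi$ has exactly the required properties by the previous paragraph. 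The proposition then produces finite-index subgroups $\Gamma'_i\leq\Gamma_i$ containing $\Delta_i$ and a discrete and faithful representation $\rho:\Gamma'_0*_{\varphi_\Delta}\Gamma'_1\to\SL(V)$ with convex cocompact image in $\PP(V)$, where $V=V_0\oplus W_0\oplus W_1$ has dimension $2+(d_0-2)+(d_1-2)=d_0+d_1-2$; moreover $\Gamma'_0*_{\varphi_\Delta}\Gamma'_1=\Gamma'_0*_{\langle\gamma_0\rangle=\langle\gamma_1\rangle}\Gamma'_1$.

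It remains to upgrade "convex cocompact" to "$P_1$-Anosov". The subgroup $\langle\gamma_i\rangle$ remains maximal cyclic and self-centralizing in the finite-index subgroup $\Gamma'_i$, so the amalgam $\Gamma'_0*_{\langle\gamma_0\rangle=\langle\gamma_1\rangle}\Gamma'_1$ is Gromov hyperbolic by Fact~\ref{fact:amalgam-Gromov-hyp}; as $\rho$ is faithful, $\rho(\Gamma'_0*_{\Delta}\Gamma'_1)$ is thus an infinite, Gromov hyperbolic, discrete subgroup of $\PGL(V)$ which preserves a properly convex open set and is convex cocompact in $\PP(V)$, so the implication $(1)\Rightarrow(2)$ of Fact~\ref{fact:Anosov} shows $\rho$ is $P_1$-Anosov; identifying $\SL(V)\simeq\SL(d_0+d_1-2,\RR)$ completes the proof. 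I do not expect any serious obstacle: the statement is a packaging of Proposition~\ref{prop:amalgam-word-hyperbolic}, Fact~\ref{fact:Anosov}, and Fact~\ref{fact:amalgam-Gromov-hyp}, and the only point requiring a little care is the eigenvalue bookkeeping of the second paragraph --- in particular, that the hypothesis on the eigenvalue~$1$ genuinely yields a $(d_i-2)$-dimensional subspace on which $\gamma_i$ acts trivially.
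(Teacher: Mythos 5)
Your proof is correct and follows essentially the same route as the paper's: separability of $\langle\gamma_i\rangle$ from the self-centralizing hypothesis via \cite[Prop.\,3.2]{tt25}, convex cocompactness of $\Gamma_i$ via Fact~\ref{fact:Anosov}, an application of Proposition~\ref{prop:amalgam-word-hyperbolic}, and then Gromov hyperbolicity of the amalgam (Fact~\ref{fact:amalgam-Gromov-hyp}) together with Fact~\ref{fact:Anosov} to upgrade convex cocompact to $P_1$-Anosov. The only difference is that you make explicit the eigenvalue bookkeeping (the construction of $V_i$, $W_i$, $\varphi$) that the paper leaves implicit when it invokes Proposition~\ref{prop:amalgam-word-hyperbolic}.

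One caveat, which does not affect the verdict but is worth correcting: your parenthetical fallback claim that triviality of the Jordan block for the eigenvalue~$1$ ``can also be deduced from the fact that $\gamma_i$ lies in a group preserving a properly convex open set'' is false. For example, the element of $\SL(4,\RR)$ with diagonal blocks $\lambda$, $J_2(1)$, $\lambda^{-1}$ (where $\lambda>1$ and $J_2(1)$ is a nontrivial unipotent $2\times 2$ Jordan block) preserves a properly convex open subset of $\PP(\RR^4)$: the interior of the closed convex cone generated by the orbit of $(1,0,1,1)$ is invariant, has nonempty interior, and is salient, since the coordinates $x_1,x_3,x_4$ are nonnegative on it while $\pm e_2$ cannot lie in its closure (the linear growth $|n|$ in the second coordinate is dominated by $\lambda^{|n|}$). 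In fact the cyclic group generated by this element is even $P_1$-Anosov, hence convex cocompact in $\PP(\RR^4)$, so neither preservation of a properly convex set nor convex cocompactness rules out such Jordan blocks. Consequently the hypothesis on the eigenvalue~$1$ really must be read, as in your primary reading (and as the paper tacitly does), as supplying a $(d_i-2)$-dimensional subspace on which $\gamma_i$ acts trivially; similarly, your assertion that $\gamma_i$ has infinite order is a presupposition of the intended setting rather than a deduction (your parenthetical explains why it is needed, not why it holds), but the paper's own proof makes the same tacit assumptions, so with that reading your argument goes through exactly as the paper's does.
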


We note that the amalgamated free product $\Gamma'_0 *_{\langle\gamma_0\rangle = \langle\gamma_1\rangle} \Gamma'_1$ is Gromov hyperbolic by Fact~\ref{fact:amalgam-Gromov-hyp}.

Tholozan--Tsouvalas conjectured \cite[Conj.\,1.10]{tt23} that for any Anosov subgroup $\Gamma$ of a noncompact linear real semisimple Lie group~$G$ and for any maximal cyclic subgroup $\langle\gamma\rangle$ of~$\Gamma$, there exists a finite-index subgroup $\Gamma'$ of~$\Gamma$, containing $\langle\gamma\rangle$, such that the amalgamated free product $\Gamma' *_{\langle\gamma\rangle} \Gamma'$ admits an Anosov representation into some semisimple Lie group.
Corollary~\ref{cor:amalgam-cyclic-Ano} confirms this conjecture in a special case.

\subsection{Organization of the paper}

In Section~\ref{sec:proofs} we study families of properly convex open subsets of $\PP(V)$ in mutual occultation position, and give a natural construction of properly convex open set from a tree of such sets.
In Section~\ref{sec:gog} we upgrade this construction to involve group actions (Theorem~\ref{thm:tree-with-group-actions}), and prove the main Theorem~\ref{thm:general-gog} for graphs of groups, which contains Theorems \ref{thm:amalgam} and~\ref{thm:HNN} as special cases.
In Section~\ref{sec:free-prod} we prove two basic results on thickenings of convex sets, which are useful for various applications of Theorems \ref{thm:amalgam}, \ref{thm:HNN}, \ref{thm:amalgam-cc}, \ref{thm:HNN-cc}, and their generalizations; we then prove Theorems \ref{thm:free-product-fd} and~\ref{thm:free-product-G/P} and Corollaries \ref{cor:counterex-nori} and~\ref{cor:combine-discrete-groups} on free products of discrete groups.
In Section~\ref{sec:remind} we recall and establish some basic facts on properly convex sets and convex cocompactness.
In Section~\ref{sec:gog-cc} we prove the main Theorem~\ref{thm:gog-cc} concerning (naive) convex cocompactness for graphs of groups, which contains Theorems \ref{thm:amalgam-cc} and~\ref{thm:HNN-cc} as special cases; we deduce Theorems~\ref{thm:free-product-cc} and~\ref{thm:naive-cc-not-cc-irred}.
In Section~\ref{sec:gluing} we establish Theorem~\ref{thm:gluing} and Corollary~\ref{cor:double} on convex gluing of convex projective manifolds with boundary.
In Section~\ref{sec:virtual} we prove Theorem~\ref{thm:hopeful} about virtual amalgamation over convex cocompact subgroups, and in Section~\ref{sec:virt-amalg-applic} we give several applications, including 
Proposition~\ref{prop:amalgam-cc-general} which implies Proposition~\ref{prop:amalgam-word-hyperbolic}.
Finally, in Section~\ref{sec:Anosov} we prove Corollaries \ref{cor:free-prod-Ano}, \ref{cor:free-prod-Ano-general}, \ref{cor:amalgam-cyclic-Ano} and Theorem~\ref{thm:Anosov-free-product-G/P} on Anosov representations.

\subsection*{Acknowledgements}

This work started in 2017.
Theorem~\ref{thm:free-product-cc}, Corollary \ref{cor:free-prod-cc}, Theorem~\ref{thm:naive-cc-not-cc-irred}, and Example~\ref{ex:irreducible} were announced in our previous paper \cite{dgk-proj-cc}, initially under the title \emph{Examples and non-examples of convex cocompact groups in projective space}.
In \cite{dgk-proj-cc} the link with Anosov subgroups was also made, yielding Corollaries \ref{cor:free-prod-Ano} and~\ref{cor:free-prod-Ano-general}.

Part of the work was completed while JD and FG were in residence at the IHES in Bures-sur-Yvette in 2023 and while FK was in residence at the IAS in Princeton in 2024.
We thank these two institutions for their hospitality and support.

FK would like to thank Sara Maloni for inspiring discussions several years ago around Maskit's combination theorems and generalizations.
JD would like to thank Daryl Cooper for enlightening discussions from 2017 which shaped some of the key ideas in this paper. 
We are grateful to Sami Douba for many valuable questions and suggestions which helped improve our results.
We also thank Oliver Guichard for many useful comments on a previous version of this paper, Blandine Galiay for interesting discussions on Nagano spaces in relation to Corollary~\ref{cor:Anosov-free-product-proper-domain}, and Subhadip Dey for pointing out a subtlety in Corollary~\ref{cor:amalgam-cyclic-Ano}.

\section{Trees of convex sets} \label{sec:proofs}

Throughout this section, we consider a finite-dimensional real vector space $V = \RR^d$.
In Section~\ref{subsec:basic-occult} we investigate the notion of \emph{occultation position} from Definition~\ref{def:occult}, as well as some weaker version of it.
In Section~\ref{subsec:propagate-occult} we establish the key Lemmas \ref{lem:line-up} and~\ref{lem:valence-3}, about propagating occultation.
In Section~\ref{subsec:trees} we use these lemmas to establish Theorem~\ref{thm:tree}, which gives a natural construction of a properly convex open set from a tree of properly convex open sets in mutual occultation position.

\subsection{Basic tool: occultation position} \label{subsec:basic-occult}

The following elementary observation justifies the terminology of \emph{occultation}, when we have three convex sets $A,B,C$ such that $B$ prevents $A$ and $C$ from ``seeing'' each other.

\begin{lemma} \label{lem:occult}
Let $(A, B, C)$ be a triple of properly convex open subsets of $\PP(V)$ satisfying
\begin{enumerate}[label=(O\arabic*)]
  \item $A \not\subset B$ and $C \not\subset B$,
  \item $A \cap B \neq \varnothing$ and $B \cap C \neq \varnothing$, but $A \cap C = \varnothing$.
\end{enumerate}
Then the following are equivalent:
\begin{enumerate}[label=(O3-\roman*)]
  \item\label{item:occult-3-i} $(A, B, C)$ is in occultation position (Definition~\ref{def:occult}), \ie $\overline{B^*} \subset A^* \cup C^*$, or in other words, any projective hyperplane of $\PP(V)$ meeting both $\overline{A}$ and~$\overline{C}$ must meet~$B$;
  \item\label{item:occult-3-ii} any projective line of $\PP(V)$ meeting both $\overline{A}$ and~$\overline{C}$ must meet~$B$.
 \end{enumerate}
\end{lemma}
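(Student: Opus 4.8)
The plan is to prove the chain of implications $\ref{item:occult-3-ii} \Rightarrow \ref{item:occult-3-i} \Rightarrow \ref{item:occult-3-ii}$ and, along the way, to justify the reformulation of $\overline{B^*} \subset A^* \cup C^*$ as a statement about hyperplanes. First I would unpack the duality dictionary: a hyperplane $H \in \PP(V^*)$ lies in $A^*$ precisely when $H \cap \overline{A} = \varnothing$, so its complement $\PP(V^*) \smallsetminus A^*$ is the set of hyperplanes meeting~$\overline{A}$. Hence $\overline{B^*} \subset A^* \cup C^*$ says exactly that a hyperplane meeting neither $\overline{A}$ nor $\overline{C}$ cannot lie in $\overline{B^*}$; since $B$ is open, $\overline{B^*}$ contains $B^*$, and a hyperplane disjoint from $B$ — equivalently from $\overline{B}$ after a limiting argument, using that $B$ is open and properly convex so that $\overline{B^*} = \{H : H \cap B = \varnothing\}$ — is the relevant condition. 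The cleanest route is: $\overline{B^*} = \{ H \in \PP(V^*) : H \cap B = \varnothing \}$, which holds because $B$ is properly convex and open (a hyperplane disjoint from $\overline{B}$ is in the interior $B^*$, and one disjoint from $B$ but meeting $\partial B$ is a limit of such). Granting this, \ref{item:occult-3-i} is literally the contrapositive of: \emph{any hyperplane meeting $\overline{A}$ and $\overline{C}$ meets $B$}, which is the ``in other words'' clause.

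The implication $\ref{item:occult-3-i} \Rightarrow \ref{item:occult-3-ii}$ is the easy direction: if a projective line $\ell$ meets both $\overline{A}$ and $\overline{C}$, extend $\ell$ to a hyperplane $H$ (possible since $\dim\PP(V)\geq 2$, so any line lies in a hyperplane); then $H$ meets $\overline{A}$ and $\overline{C}$, so by \ref{item:occult-3-i} it meets~$B$. This does not immediately give that $\ell$ itself meets $B$, so I'd instead argue directly at the level of lines using a projection trick, or — more robustly — prove $\ref{item:occult-3-ii}\Rightarrow\ref{item:occult-3-i}$ and $\neg\ref{item:occult-3-ii}\Rightarrow\neg\ref{item:occult-3-i}$. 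For $\neg\ref{item:occult-3-ii}\Rightarrow\neg\ref{item:occult-3-i}$: suppose there is a line $\ell$ meeting $\overline{A}$ at a point $a$ and $\overline{C}$ at a point $c$ with $\ell \cap B = \varnothing$. Since $B$ is open and convex and disjoint from the segment $[a,c]$, I would use a separation argument to find a hyperplane $H \supset \ell$ with $H \cap B = \varnothing$: project $\PP(V)$ away from a generic point of $\ell$ onto $\PP(V/\text{line})\cong\PP(\RR^{d-1})$, where the image of $B$ is an open convex set (possibly not properly convex, but one can work in an affine chart adapted to the configuration) and the image of $\ell$ is a point disjoint from it; separate that point from the convex image by a hyperplane, and pull back. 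This produces $H \supset \ell \ni a,c$ with $H \cap B = \varnothing$, i.e. $H \in \overline{B^*}$ but $H \notin A^* \cup C^*$, contradicting \ref{item:occult-3-i}.

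The implication $\ref{item:occult-3-ii} \Rightarrow \ref{item:occult-3-i}$ is the heart of the matter and where I expect the main obstacle. Suppose $H$ is a hyperplane meeting $\overline{A}$ at some point $a$ and $\overline{C}$ at some point $c$; I must find a line through $\overline{A}$ and $\overline{C}$ meeting $B$ — but that's the wrong direction, I actually want to show $H$ meets $B$. The line $\ell := (ac) \subset H$ meets $\overline{A}$ and $\overline{C}$, so by \ref{item:occult-3-ii} it meets $B$; since $\ell \subset H$, we get $H \cap B \neq \varnothing$. Wait — this works cleanly provided $a \neq c$, which holds because $\overline{A} \cap \overline{C} = \varnothing$ (from $A\cap C = \varnothing$ together with... actually $A \cap C = \varnothing$ does not give $\overline{A}\cap\overline{C}=\varnothing$ in general, but the occultation hypotheses (O1),(O2) and the eventual equivalence do; here for this implication I only need that \emph{if} $a=c$ then $H$ already meets $\overline A$, and I should handle that degenerate case — if $\overline{A}\cap\overline{C}\ne\varnothing$, pick $a\in\overline A\cap\overline C$, then $a\in\overline A$ and I need $H\ni a$ to meet $B$; take any line $\ell'\subset H$ through $a$ that also meets... this needs $\overline C$, circular). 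The clean fix: for $\ref{item:occult-3-ii}\Rightarrow\ref{item:occult-3-i}$, given $H$ meeting $\overline A$ at $a$ and $\overline C$ at $c$, the segment or line $(ac)$ lies in $H$ and meets both closures, so \ref{item:occult-3-ii} forces $(ac)\cap B\ne\varnothing$, hence $H\cap B\ne\varnothing$; the case $a=c$ is vacuous since then $a\in\overline A\cap\overline C$ already and we may instead note $\overline A\cap\overline C=\varnothing$ is forced — but to keep the lemma self-contained I would simply remark that if $a = c$ one replaces $c$ by any other point of $H\cap\overline C$, which exists and differs from $a$ unless $\overline C \subset \{a\}$, impossible as $C$ is open. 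So the genuine work is concentrated in the separation/projection argument of the previous paragraph; the rest is bookkeeping with the duality $\overline{B^*} = \{H : H\cap B = \varnothing\}$, which itself deserves a one-line justification using proper convexity and openness of~$B$.
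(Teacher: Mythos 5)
Your proposal follows essentially the same route as the paper's proof: for (O3-ii) $\Rightarrow$ (O3-i) you pass from a hyperplane meeting $\overline{A}$ and $\overline{C}$ to a line inside it with the same property and apply (O3-ii); for (O3-i) $\Rightarrow$ (O3-ii) you argue the contrapositive via the fact that a projective line missing the convex open set $B$ lies in a projective hyperplane missing $B$ --- the paper simply invokes this convexity fact, while you sketch its standard proof (project from a point of the line, separate, pull back), and your identification $\overline{B^*}=\{H \,:\, H\cap B=\varnothing\}$ is exactly the duality bookkeeping the paper leaves implicit in the ``in other words'' clause. So the mathematical content matches.

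One micro-step is incorrectly justified, though easily repaired. In the degenerate case $a=c$ of (O3-ii) $\Rightarrow$ (O3-i) you propose to ``replace $c$ by any other point of $H\cap\overline{C}$, which exists and differs from $a$ unless $\overline{C}\subset\{a\}$''. That existence claim is false: $H$ may touch $\overline{C}$ only at the single point $a$ (a supporting hyperplane at $a$) without $\overline{C}$ being a point, and this situation genuinely occurs, since (O1)--(O2) together with (O3) do not force $\overline{A}\cap\overline{C}=\varnothing$ (e.g.\ two balls tangent at a point occulted by a larger ball containing the tangency point, with $H$ the common tangent hyperplane). The correct fix is immediate and is what the paper's phrasing covers: if $a=c\in\overline{A}\cap\overline{C}$, take \emph{any} projective line contained in $H$ through $a$ (it exists since $\dim\PP(V)\geq 2$); it meets both $\overline{A}$ and $\overline{C}$ at $a$, hence meets $B$ by (O3-ii), so $H$ meets $B$. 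With that one sentence substituted for your patch, the argument is complete.
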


\begin{proof}
The implication \ref{item:occult-3-ii}~$\Rightarrow$~\ref{item:occult-3-i} follows from the fact that any projective hyperplane of $\PP(V)$ meeting both $\overline{A}$ and~$\overline{C}$ contains a projective line with the same property.
For the implication \ref{item:occult-3-i}~$\Rightarrow$~\ref{item:occult-3-ii}, observe that, since $B$ is convex, any projective line of $\PP(V)$ missing~$B$ is contained in a projective hyperplane of $\PP(V)$ missing~$B$; if the line meets both $\overline{A}$ and~$\overline{C}$, then so does the hyperplane.
\end{proof}

In the important Lemma~\ref{lem:invisible} below, we do not need the full strength of occultation position: we can replace the condition $\overline{B^*} \subset A^* \cup C^*$ by the weaker condition $B^* \subset A^* \cup C^*$, yielding a notion of \emph{weak occultation}.
The following lemma is proved in the same way as Lemma~\ref{lem:occult}.

\begin{lemma} \label{lem:weak-occult}
Let $(A, B, C)$ be a triple of properly convex open subsets of $\PP(V)$ satisfying
\begin{enumerate}[label=(O\arabic*)]
  \item\label{item:occult-1} $A \not\subset B$ and $C \not\subset B$,
  \item\label{item:occult-2} $A \cap B \neq \varnothing$ and $B \cap C \neq \varnothing$, but $A \cap C = \varnothing$.
\end{enumerate}
Then the following are equivalent:
\begin{enumerate}[label=(o3-\roman*)]
  \item\label{item:weak-occult-3-i} $B^* \subset A^* \cup C^*$, \ie any projective hyperplane of $\PP(V)$ meeting both $\overline{A}$ and~$\overline{C}$ must meet~$\overline{B}$;
  \item\label{item:weak-occult-3-ii} $\overline{B^*} \subset \overline{A^*} \cup \overline{C^*}$, \ie any projective hyperplane of $\PP(V)$ meeting both $A$ and~$C$ must meet~$B$;
  \item\label{item:weak-occult-3-iii} any projective line of $\PP(V)$ meeting both $\overline{A}$ and~$\overline{C}$ must meet~$\overline{B}$;
  \item\label{item:weak-occult-3-iv} any projective line of $\PP(V)$ meeting both $A$ and~$C$ must meet~$B$.
\end{enumerate}
\end{lemma}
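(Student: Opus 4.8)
The plan is to prove the chain of implications \ref{item:weak-occult-3-iii}$\,\Rightarrow\,$\ref{item:weak-occult-3-i}$\,\Rightarrow\,$\ref{item:weak-occult-3-ii}$\,\Rightarrow\,$\ref{item:weak-occult-3-iv}$\,\Rightarrow\,$\ref{item:weak-occult-3-iii}. The two ``hyperplane versus line'' equivalences \ref{item:weak-occult-3-i}$\,\Leftrightarrow\,$\ref{item:weak-occult-3-iii} (for the closed sets $\overline A,\overline B,\overline C$) and \ref{item:weak-occult-3-ii}$\,\Leftrightarrow\,$\ref{item:weak-occult-3-iv} (for the open sets $A,B,C$) are each proved exactly as the corresponding equivalence in Lemma~\ref{lem:occult}, using two elementary facts: that any projective hyperplane meeting two prescribed sets contains a projective line meeting both of them (through one point of each intersection if these are distinct — as they automatically are in the open case, since $A\cap C=\varnothing$ — and through any line of the hyperplane passing through a common point otherwise); and that any projective line disjoint from a properly convex subset $\Omega_0$ of $\PP(V)$ is contained in some projective hyperplane disjoint from $\Omega_0$ (work in an affine chart containing $\Omega_0$, project along the direction of the line, and separate the image point from the convex image of $\Omega_0$), applied with $\Omega_0=\overline B$ in the closed case and $\Omega_0=B$ in the open case. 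Finally, \ref{item:weak-occult-3-i}$\,\Rightarrow\,$\ref{item:weak-occult-3-ii} is immediate from the dual reformulations: \ref{item:weak-occult-3-i} reads $B^*\subset A^*\cup C^*$, hence $\overline{B^*}\subset\overline{A^*\cup C^*}=\overline{A^*}\cup\overline{C^*}$, which is~\ref{item:weak-occult-3-ii}.

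The substantive step is \ref{item:weak-occult-3-iv}$\,\Rightarrow\,$\ref{item:weak-occult-3-iii}, the passage from the open sets back to the closed ones. Given a projective line $\ell$ meeting $\overline A$ at a point $a$ and $\overline C$ at a point $c$, I would produce projective lines $\ell_n\to\ell$ (in the Grassmannian of lines of $\PP(V)$) with $\ell_n\cap A\neq\varnothing$ and $\ell_n\cap C\neq\varnothing$ for all $n$; then \ref{item:weak-occult-3-iv} gives $b_n\in\ell_n\cap B$, and by compactness of $\PP(V)$ a subsequence of $(b_n)$ converges to a point $b$, which lies in $\overline B$ and, since $b_n\in\ell_n\to\ell$, also lies in $\ell$; hence $\ell\cap\overline B\neq\varnothing$. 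Such $\ell_n$ are easy to build in all cases but one: if $a\neq c$, approximate $a$ and $c$ by points $a_n\in A$ and $c_n\in C$ and set $\ell_n$ to be the line through $a_n$ and $c_n$; and if $\ell$ already meets the open set $A$ (resp.\ $C$) — which is automatic as soon as $\ell\cap\overline A$ (resp.\ $\ell\cap\overline C$) is nondegenerate, being then the closure of the nonempty relatively open set $\ell\cap A$ (resp.\ $\ell\cap C$) — keep a fixed point of $\ell\cap A$ (resp.\ $\ell\cap C$) and let the other endpoint tend to the tangency point.

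The one remaining configuration, where $\ell$ is tangent to both $A$ and $C$ at a single common boundary point $a\in\overline A\cap\overline C$ (so $\ell\cap\overline A=\ell\cap\overline C=\{a\}$), is the main obstacle. Here I would argue by contradiction: if $\ell\cap\overline B=\varnothing$, extend $\ell$ to a projective hyperplane $H\supset\ell$ disjoint from $\overline B$, so that $A$ and $C$ lie in the affine chart $\PP(V)\smallsetminus H$ with recession cones pointing at the common point $a$ at infinity; one then uses the occultation hypotheses $A\cap B\neq\varnothing$ and $B\cap C\neq\varnothing$ — which pin $B$, hence $\overline B$, on the ``same side'' of $H$ as both $A$ and $C$ — to exhibit a hyperplane meeting open $A$, open $C$ and missing $\overline B$, contradicting~\ref{item:weak-occult-3-ii}. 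The delicate point is the subcase in which the recession directions of $A$ and $C$ at $a$ are opposite; ruling this out (or treating it directly) is where the argument must go beyond the purely formal duality manipulations, and is the step I expect to require the most care.
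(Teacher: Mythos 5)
Your easy steps are all fine and are exactly what the paper has in mind: the paper's entire proof is the one-line remark that the lemma ``is proved in the same way as Lemma~\ref{lem:occult}'', which delivers precisely the equivalences \ref{item:weak-occult-3-i}$\Leftrightarrow$\ref{item:weak-occult-3-iii} and \ref{item:weak-occult-3-ii}$\Leftrightarrow$\ref{item:weak-occult-3-iv} (hyperplane versus line, via the two moves you quote), together with the trivial closure-of-duals implication \ref{item:weak-occult-3-i}$\Rightarrow$\ref{item:weak-occult-3-ii}. So you have not missed a slick idea that the paper supplies for the remaining direction; the paper simply does not address it. The step you single out — \ref{item:weak-occult-3-iv}$\Rightarrow$\ref{item:weak-occult-3-iii} when the line $\ell$ meets $\overline A$ and $\overline C$ only in a single common point of $\overline A\cap\overline C$ — is indeed the only nontrivial point, and your approximation argument does settle every other configuration.

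However, the remaining case is not merely delicate: under hypotheses \ref{item:occult-1}--\ref{item:occult-2} alone it is false, so your plan cannot be completed as stated. Work in an affine chart $\RR^2\subset\PP(\RR^3)$ and take the two half-strips $A=(0,\infty)\times(-1,1)$ and $C=(-\infty,0)\times(9,11)$, and the box $B=(-\tfrac12,\tfrac12)\times(-2,12)$; all three are properly convex open, and \ref{item:occult-1}, \ref{item:occult-2} hold. Any projective line meeting $A$ and~$C$ is affine and crosses $\{x=0\}$ at a point $(0,y_*)$ with $y_*$ a convex combination of a number in $(-1,1)$ and a number in $(9,11)$, hence lies in~$B$; so \ref{item:weak-occult-3-iv} (and \ref{item:weak-occult-3-ii}) hold. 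But $\overline A$ and $\overline C$ share the point at infinity $[1:0:0]$, and the projective closure of the line $\{y=100\}$ (or the line at infinity itself) meets $\overline A$ and $\overline C$ there while missing the compact set $\overline B$; so \ref{item:weak-occult-3-iii} and \ref{item:weak-occult-3-i} fail. This is exactly your ``opposite recession directions at a common point'' configuration, so that subcase cannot be ruled out or treated: the open-to-closed implication genuinely needs an extra hypothesis such as $\overline A\cap\overline C=\varnothing$ (under which your limiting-lines argument closes the proof, since one can then always choose $a\neq c$). In other words, only \ref{item:weak-occult-3-i}$\Leftrightarrow$\ref{item:weak-occult-3-iii}, \ref{item:weak-occult-3-ii}$\Leftrightarrow$\ref{item:weak-occult-3-iv} and \ref{item:weak-occult-3-i}$\Rightarrow$\ref{item:weak-occult-3-ii} are provable from \ref{item:occult-1}--\ref{item:occult-2}; note that Definition~\ref{def:weak-occult} imposes all four conditions and the paper's applications verify them from the stronger condition $\overline{B^*}\subset A^*\cup C^*$, which implies all four, so the missing implication is not actually used downstream.
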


\begin{definition} \label{def:weak-occult}
A triple $(A, B, C)$ of properly convex open subsets of $\PP(V)$ is \emph{in weak occultation position} if it satisfies conditions \ref{item:occult-1}, \ref{item:occult-2}, and \ref{item:weak-occult-3-i}--\ref{item:weak-occult-3-iv} of Lemma~\ref{lem:weak-occult}.
\end{definition}

\begin{lemma}[Invisibility Lemma] \label{lem:invisible}
Let $A, B, C \subset \PP(V)$ be properly convex sets such that the triple $(A,B,C)$ is in weak occultation position.
Then
\begin{enumerate}
  \item \label{invis:1} $A, B, C$ all lie in a common affine chart, $A\cup B\cup C$ is connected, and the convex hull $\Conv{A \cup B \cup C}$ is well defined and properly convex,
  \item \label{invis:2} $\Conv{A \cup B} \cup \Conv{B \cup C} = \Conv{A \cup B \cup C}$,
  \item \label{invis:3} $\Conv{A \cup B} \cap \Conv{B \cup C} = B$,
  \item \label{invis:4} $A \not \subset \Conv{B\cup C}$ and $C \not \subset \Conv{A\cup B}$.
\end{enumerate}
\end{lemma}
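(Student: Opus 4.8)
The plan is to first establish a normal form for the configuration using an affine chart adapted to $B$, then extract everything else from the line-meeting characterization \ref{item:weak-occult-3-iv} of weak occultation. For \eqref{invis:1}: since $A\cap B\neq\varnothing$ and $B\cap C\neq\varnothing$ with all three sets convex, $A\cup B\cup C$ is connected as soon as all three lie in a common affine chart; so the first task is to produce such a chart. I would argue that $\overline{B^*}$ nonempty (which holds since $B$ is properly convex) gives a hyperplane $H_0$ disjoint from $\overline{B}$; the weak occultation condition in the form $\overline{B^*}\subset\overline{A^*}\cup\overline{C^*}$ (item \ref{item:weak-occult-3-ii}) says $H_0$ meets at most one of $A$, $C$ — but I actually want a single hyperplane missing all of $\overline{A}\cup\overline{B}\cup\overline{C}$. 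The cleanest route: because $A\cap C=\varnothing$ and both are properly convex open, by the Hahn--Banach separation there is a hyperplane $H$ with $\overline{A}$ and $\overline{C}$ on (weakly) opposite closed sides; perturbing, one finds a hyperplane strictly separating the compact convex $\overline{A}$ from... no — $\overline{C}$ need not be compact-disjoint. Instead I would use that weak occultation forces $B$ to separate $A$ from $C$ in a strong sense and combine with the existence of $H_0\in\overline{B^*}$: a hyperplane close to $H_0$ and suitably tilted misses $\overline{A}\cup\overline{B}\cup\overline{C}$. Once such a common affine chart exists, $A\cup B\cup C$ is connected and bounded in it, so $\Conv{A\cup B\cup C}$ is well defined; properness follows because it is contained in the same bounded affine region, and it contains the open set $B$ so it has nonempty interior — hence it is properly convex.

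For \eqref{invis:2}, the inclusion $\Conv{A\cup B}\cup\Conv{B\cup C}\subset\Conv{A\cup B\cup C}$ is immediate. For the reverse, take $p\in\Conv{A\cup B\cup C}$; by Carathéodory in the affine chart, $p$ lies on a segment $[a,c']$ with (WLOG) $a\in\overline{A}$ and $c'\in\Conv{B\cup C}$, or on a segment between two points of $\overline{A}$ (then $p\in\overline{A}\subset\Conv{A\cup B}$), or between two points of $\Conv{B\cup C}$ (done). In the mixed case, consider the line $\ell$ through $a$ and $c'$. If $\ell$ meets $B$, then $p$... I need $p$ itself to be in one of the two hulls. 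The key point is to push $c'$ further: write $c'$ on a segment $[b,c]$ with $b\in\overline B$, $c\in\overline C$; then $a,c$ lie in the plane spanned with $b$, and the segment $[a,c]$ meets $\overline{B}$ by \ref{weak-occult-3-iv} applied to $A,B,C$ (strictly, to nearby interior points, then take limits), say at $b'$. Then $p\in\Conv{\{a\}\cup[b,c]}=\Conv{\{a,b,c\}}$, and since $[a,c]$ crosses $\overline B$ at $b'$, this triangle decomposes as $\Conv{\{a,b,b'\}}\cup\Conv{\{b,b',c\}}\subset\Conv{A\cup B}\cup\Conv{B\cup C}$. So $p$ lies in the union. (One must be mildly careful that $b'\in\overline B$, not just $\overline{\Conv{A\cup B\cup C}}$; this is exactly what \ref{item:weak-occult-3-iv} delivers after an approximation argument, since $B$ is open.)

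For \eqref{invis:3}, the inclusion $B\subset\Conv{A\cup B}\cap\Conv{B\cup C}$ is trivial. For the reverse, suppose $p\in\Conv{A\cup B}\cap\Conv{B\cup C}$ but $p\notin B$. Write $p$ on a segment with one endpoint in $\overline A$ and one in $\overline B$ (or both in $\overline A$, or both in $\overline B$), and similarly on a segment with endpoints in $\overline B,\overline C$; I would feed the two chords through $p$ into condition \ref{item:weak-occult-3-iv}/\ref{item:weak-occult-3-iii} to force $p\in\overline B$, then use that $p$ also lies on a chord from $\overline A$ meeting $\overline{C}$-side material to contradict $A\cap C=\varnothing$ unless $p$ is actually interior to $B$; a limiting argument from interior points handles the closure-vs-interior gap, using that $A,B,C$ are open. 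Finally \eqref{invis:4}: if $A\subset\Conv{B\cup C}$, pick $a\in A$; then $a$ lies on a segment $[b,c]$ with $b\in\overline B$, $c\in\overline C$, so the line through $a$ in that direction meets $\overline C$ at $c$ and meets $A$ (hence $\overline A$) at $a$; by \ref{item:weak-occult-3-iv} it must meet $B$. That alone is not yet a contradiction, so instead I would use $A\not\subset B$ from \ref{item:occult-1}: choose $a\in A\smallsetminus B$; if $a\in\Conv{B\cup C}$ then $a\in[b,c]$ as above with $a\neq b$ possible only if moving from $a$ toward $c$ stays in $\Conv{B\cup C}$, and moving toward $b$ one exits $B$ at worst at $a$ itself — running this against occultation and $A\cap C=\varnothing$ yields $a\in B$, contradiction. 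The symmetric statement $C\not\subset\Conv{A\cup B}$ follows by swapping $A$ and $C$.

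I expect the main obstacle to be step \eqref{invis:1}'s production of a \emph{single} common affine chart: the weak occultation hypotheses are stated in terms of individual hyperplanes/lines meeting pairs of the sets, and one must combine them (together with proper convexity of $B$, i.e. $B^*\neq\varnothing$) to get one hyperplane avoiding the closures of all three simultaneously. Once that chart is fixed, parts \eqref{invis:2}--\eqref{invis:4} are a matter of careful two- and three-point Carathéodory arguments in affine space, with the recurring technical nuisance of passing from the open sets $A,B,C$ to their closures and back, which is handled uniformly by approximating boundary points by interior points and invoking that $A,B,C$ are open convex sets.
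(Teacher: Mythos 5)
There are genuine gaps here, and the most serious one is the one you flagged yourself. For part~\eqref{invis:1}, producing a single hyperplane missing $\overline{A}\cup\overline{B}\cup\overline{C}$ is the crux, and ``take $H_0\in\overline{B^*}$ and tilt it suitably'' is not an argument: an arbitrary element of $\overline{B^*}$ is only guaranteed (by~\ref{item:weak-occult-3-ii}) to miss one of $A$, $C$, and it may cut through the other in such a way that no small perturbation avoids all three sets. The paper's proof supplies exactly the idea you are missing, and it is purely dual: by~\ref{item:weak-occult-3-i}, $B^*=(B^*\cap A^*)\cup(B^*\cap C^*)$; condition~\ref{item:occult-1} dualizes to $B^*\not\subset C^*$ and $B^*\not\subset A^*$, so both pieces are nonempty open subsets of the connected set $B^*$, hence they intersect, giving a point $X\in A^*\cap B^*\cap C^*$, i.e.\ a hyperplane disjoint from all three closures --- no perturbation needed. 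Nothing in your sketch substitutes for this step, and the Hahn--Banach attempt you started and abandoned does not recover it.

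The later parts also rest on unproved claims. In~\eqref{invis:2} and~\eqref{invis:3} you repeatedly use that the \emph{segment} $[a,c]$, for $a\in A$ and $c\in C$, meets $\overline{B}$, citing~\ref{item:weak-occult-3-iv}; but that condition only says the projective \emph{line} through $a$ and $c$ meets $B$, and the point of $B$ may fall outside $[a,c]$, in which case your cevian decomposition of $\Conv{\{a,b,c\}}$ into $\Conv{\{a,b,b'\}}\cup\Conv{\{b,b',c\}}$ breaks down. The segment statement is true but requires its own argument: the paper proves it (inside the proof of~\eqref{invis:3}) by assuming $(a,c)\cap B=\varnothing$, taking a hyperplane through $c$ transverse to $[a,c]$ separating $a$ from $B$, and using a point of $A\cap B$ to see that this hyperplane meets both $A$ and $C$ while missing $B$, contradicting~\ref{item:weak-occult-3-ii}. (For~\eqref{invis:2} one can instead sidestep the issue, as the paper does, by observing that for $x\in[a,c]$ and $b$ \emph{anywhere} on the line one has $x\in[a,b]$ or $x\in[b,c]$.) Your part~\eqref{invis:3} contains no actual argument: ``feeding the two chords into~\ref{item:weak-occult-3-iv}'' does nothing, since each chord meets only $A,B$ or only $B,C$, never both $A$ and $C$; and the closure-versus-interior point is not where the difficulty lies (the two hulls are open, so landing in $\overline{B}$ would already give $B$) --- the difficulty is landing in $\overline{B}$ at all. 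The paper does this by taking $x\in[a,b]\cap[b',c]$, producing $b''\in(a,c)\cap B$ via the segment claim, and verifying by a short affine computation that $x\in\Conv{\{b,b',b''\}}\subset B$. Finally, \eqref{invis:4} follows in one line from~\eqref{invis:3} together with~\ref{item:occult-1}, whereas your direct argument for it (``running this against occultation and $A\cap C=\varnothing$ yields $a\in B$'') is not carried out and, as written, would not go through.
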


\begin{proof}
\eqref{invis:1} By condition~\ref{item:weak-occult-3-i} of Lemma~\ref{lem:weak-occult}, we have $B^* = (B^* \cap A^*) \cup (B^* \cap C^*)$. 
By condition~\ref{item:occult-1} we have $B^* \not\subset C^*$, hence $B^* \cap A^*\neq \varnothing$. 
Similarly, $B^* \cap C^*\neq \varnothing$. 
Since $B^*$ is connected, the open sets $B^* \cap A^*$ and $B^* \cap C^*$ are not disjoint, hence their intersection $A^* \cap B^* \cap C^*$ contains a point $X\in \PP(V^*)$.
If we view $X$ as a projective hyperplane in $\PP(V)$, then the convex sets $A,B,C$ are contained and bounded in the affine chart $\PP(V)\smallsetminus X$.
Moreover, their union is connected since $A\cap B$ and $B\cap C$ are both nonempty by condition~\ref{item:occult-2}.
Thus the convex hull $\Conv{A \cup B \cup C}$ is well defined and properly convex.

\eqref{invis:2} We work in an affine chart containing $\Conv{A \cup B \cup C}$, as given by~\eqref{invis:1}.
The inclusion $\Conv{A \cup B} \cup \Conv{B \cup C} \subset \Conv{A \cup B \cup C}$ is clear.
Let us prove the reverse inclusion.
Consider a point $z \in \Conv{A \cup B \cup C}$.
It is a barycentric combination of a point $x\in \Conv{A \cup C}$ and a point $y\in B$.
In order to prove that $z \in \Conv{A \cup B} \cup \Conv{B \cup C}$, it is enough to prove that $x$ belongs to $\Conv{A \cup B}$ or $\Conv{B \cup C}$ (since $y\in B$ belongs to both these convex sets).
The point $x \in \Conv{A \cup C}$ is contained in a closed interval $[a,c]$ with $a\in A$ and $c\in C$.
By condition~\ref{item:weak-occult-3-iv} the projective line through $a$ and~$c$ contains a point $b\in B$.
In our affine chart, we have $x \in  [a, b] \subset \Conv{A \cup B}$ or $x \in [b, c] \subset \Conv{B \cup C}$.
This shows that $\Conv{A \cup B \cup C} \subset \Conv{A \cup B} \cup \Conv{B \cup C}$.

\eqref{invis:3} We work again in an affine chart containing $\Conv{A \cup B \cup C}$, as given by~\eqref{invis:1}.
First, recall from condition~\ref{item:weak-occult-3-iv} that every projective line through two points $a \in A$ and $c \in C$ meets~$B$.
We claim that in fact, the interval $(a,c)$ meets~$B$.
Indeed, suppose by contradiction that this is not the case.
Then, up to switching the roles of $a$ and $c$, we can find an affine hyperplane $H$ through $c$, transverse to $[a,c]$, separating $a$ from~$B$.
Consider any point $a_0\in A\cap B$ (as given by condition~\ref{item:occult-2}).
The segment $[a,a_0]$ intersects $H$ at a point $a'\in A$, hence $H$ intersects both $A$ and~$C$, but misses~$B$: contradiction with condition~\ref{item:weak-occult-3-ii}.

Consider a point $x \in \Conv{A \cup B} \cap \Conv{B \cup C}$.
There exist $(a,b,b',c) \in A\times B^2\times C$ such that $x \in [a,b] \cap [b',c]$. 
By the claim above, there also exists a point $b'' \in (a,c) \cap B$. 
Observe that $x \in \Conv{\{b,b',b''\}}$.
This shows that $\Conv{A \cup B} \cap \Conv{B \cup C} \subset B$, hence $\Conv{A \cup B} \cap \Conv{B \cup C} = B$. 

\eqref{invis:4} This is a direct consequence of~\eqref{invis:3}: if $A$ were contained in $\Conv{B\cup C}$, then the left-hand side of~\eqref{invis:3} would be $\Conv{B\cup C}$, hence~\eqref{invis:3} would state $C\subset B$, contradicting condition~\ref{item:occult-1}.
Thus $A \not \subset \Conv{B\cup C}$.
Similarly, $C \not \subset \Conv{A\cup B}$.
\end{proof}

\subsection{Propagating occultation position} \label{subsec:propagate-occult}

In this section we establish the key Lemmas \ref{lem:line-up} and~\ref{lem:valence-3}, dealing with four convex sets $A,B,C,D$ satisfying certain mutual occultations: we show that triples of the form $(A,\Conv{B,C},D)$ or $(A,B,\Conv{C,D})$ are then themselves in occultation position.
This will be the bootstrap for an induction in the next section.
We note that in Lemmas \ref{lem:line-up} and~\ref{lem:valence-3} weak occultation is not enough: we need the full strength of occultation (see Figure~\ref{fig:4sets}).

\begin{lemma}[Lining Up] \label{lem:line-up}
Suppose $\dim \PP(V) \geq 2$. 
Let $A,B,C,D$ be properly convex open subsets of $\PP(V)$ such that the triples $(A,B,C)$ and $(B,C,D)$ are in occultation position.
Then
\begin{enumerate}
  \item \label{line-up:1} the set $\Conv{A \cup B \cup C \cup D}$ is well defined and properly convex;
  \item \label{line-up:ABcupCD} the triple $(A, \Conv{B\cup C}, D)$ is in occultation position.
  \item \label{line-up:ABCcupD} the triple $(A, B, \Conv{C\cup D})$ is in occultation position (and similarly for the triple\linebreak $(\Conv{A\cup B},C,D)$) .
\end{enumerate}
\end{lemma}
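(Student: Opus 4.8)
The plan is to reduce the entire lemma to a single statement about the ``middle'' block $E:=\Conv{B\cup C}$, and then to prove that statement by a planar argument; the last step is where I expect the real difficulty.

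\emph{Setup.} First I would record what Lemma~\ref{lem:invisible}, applied to the two given triples, already gives: the hulls $\Conv{A\cup B}$, $E=\Conv{B\cup C}$, $\Conv{C\cup D}$ are well-defined properly convex open sets; $\Omega_1:=\Conv{A\cup B\cup C}=\Conv{A\cup B}\cup E$ and $\Omega_2:=\Conv{B\cup C\cup D}=E\cup\Conv{C\cup D}$, with $\Conv{A\cup B}\cap E=B$ and $E\cap\Conv{C\cup D}=C$; and $\overline A,\overline B,\overline C$ lie in a common affine chart, as do $\overline B,\overline C,\overline D$. Dually, using $\Omega^*=\{H:H\cap\overline\Omega=\varnothing\}$, $\overline{\Omega^*}=\{H:H\cap\Omega=\varnothing\}$, and $(\Conv{X\cup Y})^*=X^*\cap Y^*$, one has $E^*=B^*\cap C^*$ properly convex open in $\PP(V^*)$, $\Omega_1^*=A^*\cap B^*\cap C^*$, $\Omega_2^*=B^*\cap C^*\cap D^*$.

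\emph{The key claim, and why it suffices.} I would then isolate the claim
$$(\star)\qquad B^*\subseteq A^*\cup D^*,$$
and observe that, by the occultation hypotheses, $(\star)$ is equivalent to each of: $E^*\subseteq A^*\cup D^*$ (since $\overline{B^*}\subseteq A^*\cup C^*$ gives $B^*\smallsetminus A^*\subseteq C^*$, hence $B^*\cap C^*\smallsetminus A^*=B^*\smallsetminus A^*$); $\overline{E^*}\subseteq A^*\cup D^*$ (since $\overline{E^*}\subseteq\overline{B^*}\cap\overline{C^*}$, and a short case check confines the part of $\overline{E^*}$ lying outside $A^*\cup D^*$ to $B^*\cap C^*=E^*$); and ``every projective line meeting $\overline A$ and $\overline D$ meets $E$'' (by the argument of Lemma~\ref{lem:occult}, which uses neither (O1) nor (O2)). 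Granting $(\star)$, I would deduce the lemma as follows. For~\eqref{line-up:1}: $E^*$ is connected (being properly convex open) and equals $(E^*\cap A^*)\cup(E^*\cap D^*)=\Omega_1^*\cup\Omega_2^*$ with both pieces nonempty and open, so they meet; hence $A^*\cap B^*\cap C^*\cap D^*\ne\varnothing$, i.e.\ some hyperplane avoids $\overline A\cup\overline B\cup\overline C\cup\overline D$, and $\Conv{A\cup B\cup C\cup D}=\Conv{A\cup E\cup D}$ is well-defined and properly convex in the corresponding chart. For~\eqref{line-up:ABcupCD}: $\overline{E^*}\subseteq A^*\cup D^*$ is condition (O3) for $(A,E,D)$; conditions (O1) and the nonemptiness in (O2) come from Lemma~\ref{lem:invisible}\eqref{invis:4} and from $A\cap B,\ C\cap D\ne\varnothing$, while $A\cap D=\varnothing$ follows --- once~\eqref{line-up:1} puts everything in one chart --- by applying the interval refinement of occultation of $(A,B,C)$ from the proof of Lemma~\ref{lem:invisible}\eqref{invis:3} to the line through a hypothetical $p\in A\cap D$ and a point $c\in C\cap D$, which would force a point of $B$ onto $[p,c]\subseteq D$. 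For~\eqref{line-up:ABCcupD}: a similar case check from the occultation hypotheses and $E^*\subseteq A^*\cup D^*$ gives $\overline{B^*}\subseteq A^*\cup(C^*\cap D^*)=A^*\cup\Conv{C\cup D}^*$, which is (O3) for $(A,B,\Conv{C\cup D})$; and $A\cap\Conv{C\cup D}=\varnothing$, the only nonobvious remaining condition, follows by applying Lemma~\ref{lem:invisible}\eqref{invis:3} to $(A,E,D)$ to get $\Omega_1\cap\Omega_2=E$, hence (expanding the unions) $\Conv{A\cup B}\cap\Conv{C\cup D}=B\cap C$, so $A\cap\Conv{C\cup D}\subseteq A\cap C=\varnothing$. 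The parenthetical triple $(\Conv{A\cup B},C,D)$ is then obtained by applying~\eqref{line-up:ABCcupD} to the reversed quadruple $(D,C,B,A)$, whose consecutive triples $(D,C,B)$ and $(C,B,A)$ are again in occultation position by the $A\leftrightarrow C$ symmetry of Definition~\ref{def:occult}.

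\emph{Proving $(\star)$ --- the anticipated main obstacle.} The remaining, genuinely hard task is to show that a line $\ell$ meeting $\overline A$ and $\overline D$ meets $E$. If $\ell$ meets $\overline C$ it meets $B\subseteq E$ by occultation of $(A,B,C)$; if it meets $\overline B$ it meets $C\subseteq E$ by occultation of $(B,C,D)$; so the real case is $\ell$ disjoint from $\overline B\cup\overline C$. Here I would fix $p_0\in B\cap C$ and pass to the projective plane $\Pi=\langle\ell,p_0\rangle$ (this is where $\dim\PP(V)\ge 2$ is used), choose $a\in\overline A\cap\ell$ and $d\in\overline D\cap\ell$ (reducing to $a\in A$, $d\in D$ by a small perturbation of $\ell$), and apply the interval refinement of occultation to the lines $(p_0a)$ and $(p_0d)$ to produce $b'\in B$ on the segment $(a,p_0)$ and $c'\in C$ on $(d,p_0)$; then $[b',c']\subseteq E\cap\Pi$, and the point is to show --- using that occultation makes $\overline A$ sit ``past $B$'' and $\overline D$ ``past $C$'', on opposite sides of the connected block $\overline B\cup\overline C$ --- that this segment, or one obtained by sliding $b'$, $c'$ within $B\cap\Pi$, $C\cap\Pi$, must cross $\ell$, the crossing point then lying in $E\cap\ell$. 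I expect the delicate points to be (i) that this crossing genuinely forces a point of $E\cap\ell$, for which the \emph{full} (not merely weak) strength of occultation is needed to exclude the boundary-grazing degeneracies, cf.\ Figure~\ref{fig:4sets}; and (ii) the careful bookkeeping of which affine chart --- among those containing $\{\overline A,\overline B,\overline C\}$, $\{\overline B,\overline C,\overline D\}$, or $E$ --- each of the relevant points and segments lives in. Once $(\star)$ is proved, the rest of the lemma follows as above.
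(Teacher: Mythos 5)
Your reduction of the whole lemma to the single claim $(\star)$ $B^*\subseteq A^*\cup D^*$ is essentially correct, and several of your deductions from it are clean (the nonemptiness of $A^*\cap B^*\cap C^*\cap D^*$ via connectedness of $E^*=B^*\cap C^*$ for part~\eqref{line-up:1}; the case checks upgrading $E^*\subseteq A^*\cup D^*$ to $\overline{E^*}\subseteq A^*\cup D^*$ and giving $\overline{B^*}\subseteq A^*\cup(C^*\cap D^*)$; the derivation of $A\cap\Conv{C\cup D}=\varnothing$ from Lemma~\ref{lem:invisible}\eqref{invis:3} applied to $(A,\Conv{B\cup C},D)$). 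These are a genuinely different, and in places more economical, organization than the paper's, which instead proves $\overline{\Conv{B\cup C}^*}\subset A^*\cup D^*$ directly and handles part~\eqref{line-up:1} and the disjointness $A\cap\Conv{C\cup D}=\varnothing$ by separate arguments.

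The problem is that $(\star)$ itself --- which is exactly where all the geometric content of the lemma sits --- is not proved, and your sketch for it does not work as stated. A naive dual case chase fails (for $H\in B^*\cap C^*$ the two occultation hypotheses return no information), so one needs a real argument that a hyperplane missing $\overline{B}\cup\overline{C}$ cannot meet both $\overline{A}$ and $\overline{D}$. The paper supplies this in two steps: first the boundary inclusion $\partial(B^*\cap C^*)\subset A^*\cup D^*$, using the decomposition $\partial(B^*\cap C^*)=(\partial B^*\cap C^*)\sqcup(B^*\cap\partial C^*)\sqcup(\partial B^*\cap\partial C^*)$ together with the \emph{closures} in $\overline{B^*}\subset A^*\cup C^*$ and $\overline{C^*}\subset B^*\cup D^*$; and then an extension from the boundary to the interior of $B^*\cap C^*$ by an antipodal (visual-sphere) argument: any interior point of $B^*\cap C^*$ lies on a segment between two boundary points landing in the same set $A^*$ or $D^*$, and the corresponding pencil of hyperplanes then avoids $\overline{A}$ (resp.\ $\overline{D}$). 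Your primal planar sketch replaces none of this: with $b'\in(a,p_0)\cap B$ and $c'\in(p_0,d)\cap C$, the segment $[b',c']$ lies inside the triangle $a\,p_0\,d$ on the same side as $p_0$ and in general does \emph{not} cross $\ell$, so the ``must cross $\ell$, possibly after sliding'' step is precisely the unproven assertion, as you yourself flag in points (i)--(ii). (Your worry about the limiting/perturbation step is actually harmless --- by your own case check, the weak conclusion that lines through $\overline{A},\overline{D}$ meet $\overline{E}$ already implies $(\star)$ --- but the crossing claim still needs the paper's covering argument or an equivalent, and without it parts \eqref{line-up:1}, \eqref{line-up:ABcupCD}, \eqref{line-up:ABCcupD} are all left hanging, since you routed them all through $(\star)$.)
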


\begin{figure}[h]
\labellist
\small\hair 2pt
\pinlabel {\ref{lem:line-up}.\eqref{line-up:1}} at 				12	100 
\pinlabel {\ref{lem:line-up}.\eqref{line-up:ABcupCD}} at 			145	100 
\pinlabel {\ref{lem:line-up}.\eqref{line-up:ABCcupD}} at 		276	100 
\pinlabel {\ref{lem:valence-3}.\eqref{valence-three:1}} at 		12	69 
\pinlabel {\ref{lem:valence-3}.\eqref{valence-three:234}} at 	133	69 
\pinlabel {$A$} at 				15	132 
\pinlabel {$B$} at 				45	132 
\pinlabel {$C$} at 				80	132 
\pinlabel {$D$} at 				112	132 
\pinlabel {$A$} at 				145	132 
\pinlabel {$\Conv{B\cup C}$} at 		195	132 
\pinlabel {$D$} at 				243	132 
\pinlabel {$A$} at 				275	132 
\pinlabel {$B$} at 				305	132 
\pinlabel {$\Conv{C\cup D}$} at 	355	132 
\pinlabel {$A$} at 				13	14 
\pinlabel {$B$} at 				53	36 
\pinlabel {$C$} at 				54	75 
\pinlabel {$D$} at 				94	14 
\pinlabel {$A$} at 				134	14 
\pinlabel {$\Conv{B\cup C}$} at 		175	40 
\pinlabel {$D$} at 				215	14 
\pinlabel {${}_{A'}$} at 				272	73 
\pinlabel {${}_{B'}$} at 				273	52 
\pinlabel {${}_{C'}$} at 				282	35 
\pinlabel {${}_{D'}$} at 				287	18 
\pinlabel {${}_{A'}$} at 				360	75 
\pinlabel {${}_{B'}$} at 				339	57 
\pinlabel {${}_{C'}$} at 				337	33 
\pinlabel {${}_{D'}$} at 				360	16 
\endlabellist
\includegraphics[width = 0.99 \textwidth]{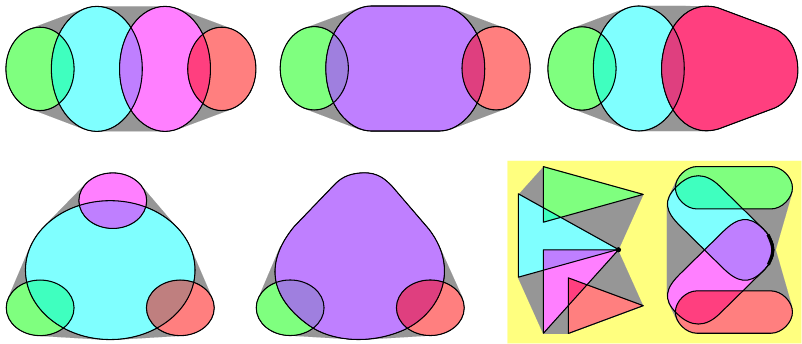}
\caption{Illustrations of Lemmas~\ref{lem:line-up} (top) and~\ref{lem:valence-3} (bottom).
Yellow box: configurations showing how Lemma~\ref{lem:line-up} can fail if we replace the occultation condition $\overline{B^*} \subset A^*\cup C^*$ by the weaker one $B^* \subset A^*\cup C^*$.}
\label{fig:4sets}
\end{figure}

\begin{proof}
\eqref{line-up:1} Since $(A,B,C)$ is in occultation position, Lemma~\ref{lem:invisible}.\eqref{invis:1} states that $\Conv{B \cup C}$ is a well-defined properly convex open set. 
Its dual $\Conv{B \cup C}^* = B^* \cap C^*$ has boundary
\begin{equation} \label{eqn:bound-B*-cap-C*}
\partial(B^* \cap C^*) = (\partial B^* \cap C^*) \sqcup (B^* \cap \partial C^*) \sqcup (\partial B^* \cap \partial C^*)
\end{equation}
(see Figure~\ref{fig:4sets}, top left).
Since $B \not\subset C$ and $C \not \subset B$, we have $C^* \not\subset B^*$ and $B^* \not \subset C^*$, hence the first two sets in this disjoint union are nonempty. 
These two sets are also open in the space $\partial(B^* \cap C^*)$, which is connected (it is a topological sphere of dimension $\dim \PP(V)-1 \geq 1$). 
Therefore the third set in the disjoint union is also nonempty, \ie there exists $X \in \partial B^* \cap \partial C^*$. 
Since $(A,B,C)$ is in occultation position, we have $\overline{B^*} \subset A^* \cup C^*$; since $X \in \overline{B^*} \smallsetminus C^*$, we deduce $X \in A^*$.
Similarly, since $(B,C,D)$ is in occultation position, we have $\overline{C^*} \subset B^* \cup D^*$; since $X \in \overline{C^*} \smallsetminus B^*$, we deduce $X \in D^*$.
Thus $X$ does not intersect $\overline{A}$, $B$, $C$, nor $\overline{D}$.
In an affine chart where the properly convex set $\Conv{B\cup C}$ is bounded, $X$ is tangent to both $B$ and $C$.
A small affine translation in a direction transverse to~$X$, away from the side that contains $\Conv{B\cup C}$, takes $X$ to a hyperplane $X'$ still disjoint from $\overline{A}$ and~$\overline{D}$, but also from $\overline{B}$ and~$\overline{C}$. 
The set $A\cup B \cup C \cup D$ is contained and bounded in the affine chart $\PP(V)\smallsetminus X'$,  and connected.

We prove \eqref{line-up:ABcupCD} that the triple $(A, \Conv{B\cup C}, D)$ is in occultation position  (see Figure~\ref{fig:4sets}, top middle), \ie that
\begin{enumerate}[label=(\alph*)]
  \item \label{line-up:2} $A \not\subset \Conv{B \cup C}$ and $D \not\subset \Conv{B \cup C}$;
  \item \label{line-up:3} $\Conv{B \cup C}$ is properly convex and $\overline{\Conv{B \cup C}^*}  \subset A^* \cup D^*$;
  \item \label{line-up:4} $A \cap \Conv{B \cup C} \neq \varnothing$, $D \cap \Conv{B \cup C} \neq \varnothing$, but $A \cap D = \varnothing$
\end{enumerate}
and~\eqref{line-up:ABCcupD} that the triple $(A, B, \Conv{C\cup D})$ is in occultation position  (see Figure~\ref{fig:4sets}, top right), \ie that
\begin{enumerate}[label=(\alph*')]
  \item \label{line-up:2prime} $A \not\subset B$ and $\Conv{C \cup D} \not \subset B$;
  \item \label{line-up:3prime} $\Conv{C \cup D}$ is properly convex and $\overline{B^*}  \subset A^* \cup \Conv{C \cup D}^*$;
  \item \label{line-up:4prime} $A \cap B  \neq \varnothing$, $\Conv{C \cup D} \cap B \neq \varnothing$, but $A \cap \Conv{C \cup D} = \varnothing$.
\end{enumerate}

Items~\ref{line-up:2} and~\ref{line-up:2prime} follow immediately from Lemma~\ref{lem:invisible}.\eqref{invis:4}, applied to $(A,B,C)$ and $(B,C,D)$ respectively.

We now check item~\ref{line-up:3}.
The fact that $\Conv{B\cup C}$ is properly convex follows from Lemma~\ref{lem:invisible}.\eqref{invis:1}. 
In order to show that $\overline{\Conv{B\cup C}^*} \subset A^*\cup D^*$, we first check that $\partial \Conv{B \cup C}^* \subset A^{*} \cup D^{*}$.
Note that $\partial \Conv{B\cup C}^* \subset (\overline{B^*} \smallsetminus C^*) \cup (\overline{C^*} \smallsetminus B^*)$. Indeed, use~\eqref{eqn:bound-B*-cap-C*} and observe that  $\partial B^* \cap C^*$ is contained in $\overline{C^*} \smallsetminus B^*$,  that $B^* \cap \partial C^*$ is contained in $\overline{B^*} \smallsetminus C^*$, and that $\partial B^* \cap \partial C^*$ is contained in both.
Moreover, $\overline{B^*} \smallsetminus C^* \subset A^*$ by the occultation assumption on $(A,B,C)$, and $\overline{C^*} \smallsetminus B^* \subset D^*$ by the occultation assumption on $(B,C,D)$.
Therefore $\partial \Conv{B \cup C}^* \subset A^{*} \cup D^{*}$.
We now check that $\Conv{B\cup C}^* \subset A^*\cup D^*$.
Consider any point $X \in \Conv{B\cup C}^*$.
The above covering of $\partial \Conv{B \cup C}^*$ by two open sets $A^*$ and~$D^*$ induces a covering of the sphere of visual directions at any point $p\in \Conv{B\cup C}^*$.
One of the two terms of the covering must have spherical area larger than one-half of the full visual sphere, hence contain a pair of opposite directions; in other words, there exist two distinct points $X_1,X_2 \in \partial \Conv{B \cup C}^*$, with $X_1,X,X_2$ aligned in $\PP(V^*)$, such that $X_1$ and~$X_2$ both belong to~$A^*$ or both belong to~$D^*$.
Suppose they both belong to~$A^*$.
Viewing $X_1$ and~$X_2$ as projective hyperplanes of $\PP(V)$, the complement $\PP(V) \smallsetminus (X_1 \cup X_2)$ has two connected components, and the convex sets $\overline{A}$ and $\Conv{B \cup C}$ are both contained in one of them, in fact in the same component since $A \cap B \neq \varnothing$.
The hyperplane $X \in \Conv{B \cup C}^*$ does not intersect this component, hence it does not intersect~$\overline{A}$; thus $X \in A^*$.
Similarly, if $X_1$ and~$X_2$ both belong to~$D^*$, then so does~$X$.
This completes the proof that $\overline{\Conv{B\cup C}^*} \subset A^*\cup D^*$.

We now check item~\ref{line-up:3prime}.
Again, the fact that $\Conv{C\cup D}$ is properly convex follows from Lemma~\ref{lem:invisible}.\eqref{invis:1}.
As in the proof of~\ref{line-up:3}, in order to show that $\overline{B^*} \subset A^* \cup \Conv{C\cup D}^*$, it suffices to check that $\partial B^* \subset A^* \cup \Conv{C\cup D}^{*} = A^* \cup (C^* \cap D^*)$.
For this, observe that $\partial B^* \smallsetminus A^* \subset C^*$, since $(A,B,C)$ is in occultation position, and $C^* \subset \overline{C^*} \subset B^* \cup D^*$, since $(B,C,D)$ is in occultation position.
Since $\partial B^* \cap B^* = \varnothing$, we have $\partial B^* \smallsetminus A^* \subset C^* \cap D^*$. 

Finally, we check items \ref{line-up:4} and~\ref{line-up:4prime}.
The inclusions $A \cap \Conv{B \cup C} \neq \varnothing$, $D \cap \Conv{B \cup C} \neq \varnothing$, $A \cap B  \neq \varnothing$, and $\Conv{C \cup D} \cap B \neq \varnothing$ are immediate consequences of the fact that the triples $(A,B,C)$ and $(B,C,D)$ are in occultation position.
We now prove $A \cap \Conv{C \cup D} = \varnothing$, which implies in particular $A \cap D = \varnothing$. 
By~\eqref{line-up:1}, we can work in the affine chart containing all four sets. 
Since $A$ and $C$ are disjoint convex sets, there is a hyperplane $X$ separating $A$ from~$C$. 
Since $A \cup B \cup C$ is connected, $X$ must pass through $B$, in particular $X \notin B^*$. 
Since $X \in \overline{C^*} \smallsetminus B^*$, we must have $X \in D^*$ by the occultation assumption on $(B,C,D)$. 
Hence $D$ lies entirely on one side of~$X$, and it must be the same side as~$C$ since $C \cap D \neq \varnothing$, again by the occultation assumption on $(B,C,D)$. 
This side of~$X$ therefore also contains $\Conv{C \cup D}$, hence $\Conv{C \cup D}$ is disjoint from~$A$. 
\end{proof}

\begin{lemma}[Valence Three] \label{lem:valence-3}
Suppose $\dim \PP(V) \geq 2$. 
Let $A,B,C,D \subset \PP(V)$ be properly convex open subsets of $\PP(V)$ such that the triples $(A,B,C)$, $(A,B,D)$, and $(C,B,D)$ are all in occultation position.
Then
\begin{enumerate}
  \item \label{valence-three:1} the set $\Conv{A \cup B \cup C \cup D}$ is well defined and properly convex; 
  \item \label{valence-three:234} the triple $(A, \Conv{B \cup C}, D)$ is in occultation position. 
\end{enumerate}
\end{lemma}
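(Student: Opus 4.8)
The plan is to follow the two–part template of Lemma~\ref{lem:line-up}, taking advantage of the extra hypothesis that $(A,B,D)$ --- and not only $(A,B,C)$ and $(C,B,D)$ --- is in occultation position. This hypothesis makes the dual inclusion needed for part~\eqref{valence-three:234} almost immediate, so the delicate visual-sphere / opposite-directions argument from the proof of Lemma~\ref{lem:line-up} will not be needed here; in fact part~\eqref{valence-three:234} will not even use part~\eqref{valence-three:1}.

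For part~\eqref{valence-three:1}, I would first observe that $A\cup B\cup C\cup D$ is connected, since each of $A$, $C$, $D$ meets $B$. To obtain a common affine chart it suffices to produce a projective hyperplane disjoint from $\overline A\cup\overline B\cup\overline C\cup\overline D$, i.e.\ a point of $A^*\cap B^*\cap C^*\cap D^*$; in any affine chart complementary to such a hyperplane, $\Conv{A\cup B\cup C\cup D}$ is then well defined and bounded, hence properly convex. To find the hyperplane I would argue as follows. By Lemma~\ref{lem:invisible}.\eqref{invis:1} applied to $(C,B,D)$ and to $(A,B,C)$, the open convex sets $B^*\cap C^*\cap D^*$ and $A^*\cap B^*\cap C^*$ are both nonempty. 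Intersecting the occultation conditions of $(A,B,C)$ and $(A,B,D)$ gives $\overline{B^*}\subset(A^*\cup C^*)\cap(A^*\cup D^*)=A^*\cup(C^*\cap D^*)$. Now $\overline{B^*}$ is connected, and the two open sets $A^*$ and $C^*\cap D^*$ both meet it (the first meets $A^*\cap B^*\cap C^*$, the second meets $B^*\cap C^*\cap D^*$); hence their intersection $A^*\cap C^*\cap D^*$ meets $\overline{B^*}$. Since $A^*\cap C^*\cap D^*$ is open and $\overline{B^*}$ is the closure of $B^*$, this open set already meets $B^*$, and any point of the intersection lies in $A^*\cap B^*\cap C^*\cap D^*$.

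For part~\eqref{valence-three:234}, I would verify the three conditions of Definition~\ref{def:occult} for the triple $(A,\Conv{B\cup C},D)$ one at a time. The set $\Conv{B\cup C}$ is properly convex and open by Lemma~\ref{lem:invisible}.\eqref{invis:1} (applied to $(A,B,C)$), and $(\Conv{B\cup C})^*=B^*\cap C^*$. The non-containments $A\not\subset\Conv{B\cup C}$ and $D\not\subset\Conv{B\cup C}$ are instances of Lemma~\ref{lem:invisible}.\eqref{invis:4} applied to $(A,B,C)$ and to $(C,B,D)$, respectively. The intersection conditions are immediate: $A\cap\Conv{B\cup C}\supset A\cap B\neq\varnothing$ and $\Conv{B\cup C}\cap D\supset B\cap D\neq\varnothing$, while $A\cap D=\varnothing$ because $(A,B,D)$ is in occultation position. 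Finally, for the dual inclusion, $\overline{(\Conv{B\cup C})^*}=\overline{B^*\cap C^*}\subset\overline{B^*}\subset A^*\cup D^*$, the last inclusion being exactly the occultation condition for $(A,B,D)$.

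The only step requiring genuine care --- hence the main (mild) obstacle --- is the production of the hyperplane in part~\eqref{valence-three:1}: the connectedness argument a priori only yields a point of $\overline{B^*}$, possibly on $\partial B^*$, so one must argue that it can be taken inside $B^*$; this is precisely the role of the elementary remark that an open set meeting $\overline{B^*}$ must already meet $B^*$. The conceptual point worth stressing is that, unlike in Lemma~\ref{lem:line-up}, the third hypothesis here is that $(A,B,D)$ (rather than $(B,C,D)$) is in occultation position, and it is exactly this that trivializes the dual inclusion and removes the need for any opposite-directions argument.
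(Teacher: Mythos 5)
Your proof is correct. Part~\eqref{valence-three:234} is exactly the paper's argument: the inclusion $\overline{\Conv{B\cup C}^*}\subset\overline{B^*}\subset A^*\cup D^*$ from the occultation of $(A,B,D)$, the non-containments from Lemma~\ref{lem:invisible}.\eqref{invis:4} applied to $(A,B,C)$ and $(C,B,D)$, and the intersection conditions read off directly from the hypotheses. For part~\eqref{valence-three:1} you take a genuinely different (and somewhat cleaner) route. The paper argues as in Lemma~\ref{lem:line-up}.\eqref{line-up:1}: using the boundary decomposition \eqref{eqn:bound-B*-cap-C*} and the connectedness of the sphere $\partial(B^*\cap C^*)$ (this is where $\dim\PP(V)\geq 2$ enters), it produces a common supporting hyperplane $X\in\partial B^*\cap\partial C^*$, places it in $A^*\cap D^*$ via the occultation of $(A,B,C)$ and $(C,B,D)$, and then perturbs $X$ slightly in an affine chart to push it off $\overline{B}$ and $\overline{C}$ as well. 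You instead cover $\overline{B^*}$ by the two open sets $A^*$ and $C^*\cap D^*$ (using the occultation of $(A,B,C)$ and $(A,B,D)$), note that each meets $\overline{B^*}$ because $A^*\cap B^*\cap C^*$ and $B^*\cap C^*\cap D^*$ are nonempty by Lemma~\ref{lem:invisible}.\eqref{invis:1}, and conclude by connectedness of $\overline{B^*}$ plus the elementary remark that an open set meeting a closure meets the set; this lands you directly in the open set $A^*\cap B^*\cap C^*\cap D^*$, so no perturbation step (and, for this part, no use of the dimension hypothesis) is needed. Both arguments are dual-space arguments of the same flavor; yours trades the boundary-sphere analysis and perturbation for a covering/connectedness argument on $\overline{B^*}$, at the small cost of invoking Lemma~\ref{lem:invisible}.\eqref{invis:1} twice to seed the two pieces of the cover.
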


We refer to Figure~\ref{fig:4sets}, bottom, for some illustrations.

\begin{proof}
\eqref{valence-three:1} It follows from Lemma~\ref{lem:invisible}.\eqref{invis:1} that $\Conv{B \cup C}$ is a well-defined properly convex open set. 
As in the proof of Lemma~\ref{lem:line-up}.\eqref{line-up:1}, using \eqref{eqn:bound-B*-cap-C*} we see that there exists $X \in \partial B^* \cap \partial C^*$.
Since $(A,B,C)$ and $(C,B,D)$ are in occultation position, we have $X \in \overline{B^*} \smallsetminus C^* \subset A^* \cap D^*$. 
Thus $X$ does not intersect $\overline{A}$, $B$, $C$, nor $\overline{D}$. 
As in the proof of Lemma~\ref{lem:line-up}.\eqref{line-up:1}, we can perturb $X$ slightly to a hyperplane $X'$ disjoint from $\overline{A} \cup \overline{B} \cup \overline{C} \cup \overline{D}$.
This shows that the set $\Conv{A \cup B \cup C \cup D}$ is well defined and properly convex.

\eqref{valence-three:234} We need to prove that:
\begin{enumerate}[label=(\alph*)]
  \item \label{valence-three:2} $\Conv{B \cup C}$ is properly convex and $\overline{\Conv{B \cup C}^*} \subset A^* \cup D^*$;
  \item \label{valence-three:3} $A \not\subset \Conv{B \cup C}$ and $D \not\subset \Conv{B \cup C}$;
  \item \label{valence-three:4} $A \cap \Conv{B \cup C} \neq \varnothing$, $D \cap \Conv{B \cup C} \neq \varnothing$, but $A \cap D = \varnothing$.
\end{enumerate}
Item~\ref{valence-three:2} follows from the inclusions $\overline{\Conv{B \cup C}^*} \subset \overline{B^*} \subset A^* \cup D^*$, where the second inclusion is due to $(A,B,D)$ being in occultation position. 
Item~\ref{valence-three:3} follows from Lemma~\ref{lem:invisible}.\eqref{invis:4} applied to $(A,B,C)$ and $(C,B,D)$. 
Item~\ref{valence-three:4} follows immediately from the fact that the triples $(A,B,C)$, $(A,B,D)$, and $(C,B,D)$ are in occultation position.
\end{proof}

\subsection{Trees of properly convex open subsets} \label{subsec:trees}

In this section we consider a countable tree $\mathscr T = (\mathscr V, \mathscr E)$ together with an assignment $v \mapsto \Omega(v)$ of properly convex open set $\Omega(v)$ to each vertex $v \in \mathscr V$ of the tree.
We call this data a \emph{tree of properly convex open sets}.

Here $\mathscr{E}$ is the set of edges of~$\mathscr{T}$; they are unoriented by convention.
We say that two vertices $v,w \in \mathscr{V}$ are \emph{adjacent} if $\{v,w\} \in \mathscr{E}$, and that three (ordered) vertices $u,v,w \in \mathscr{V}$ are \emph{consecutive} if $u,v$ are adjacent, $v,w$ are adjacent, and $u\neq w$.

\begin{figure}[h]
\includegraphics[width = 0.8\textwidth]{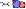}
\caption{\label{fig:tree} Left: A combinatorial tree. Right: A tree of convex sets in mutual occultation position as in Theorem~\ref{thm:tree}.}
\end{figure}

The following theorem will be central in the proofs of Theorems \ref{thm:amalgam} and~\ref{thm:HNN} and their more general version for graphs of groups (Theorem~\ref{thm:general-gog}) in Section~\ref{sec:gog}.

\begin{theorem} \label{thm:tree}
Suppose $\dim \PP(V) \geq 2$. 
Let $\mathscr T = (\mathscr V,\mathscr E)$ be a countable tree, with $|\mathscr V| \geq 3$, together with an assignment $v \mapsto \Omega(v)$ of a properly convex open set to each vertex.
Suppose that for any consecutive vertices $u,v,w \in \mathscr V$, the triple of properly convex open sets $(\Omega(u),\Omega(v),\Omega(w))$ is in occultation position.
Then for every $e = \{v, w\} \in \mathscr E$, the set $\Omega(e):= \Conv{\Omega(v) \cup \Omega(w)}$ is well defined and properly convex.
Further, 
\begin{enumerate}
  \item \label{treethm:1} the union
  $$\Omega := \bigcup_{e \in \mathscr E} \Omega(e)$$
  is a properly convex open subset of $\PP(V)$;
  \item \label{treethm:2} for any distinct $v, w \in \mathscr V$, we have $\Omega(v) \cap \Omega(w) \neq \varnothing$ if and only if $v,w$ are adjacent;
  \item \label{treethm:3} for any distinct $e,e'\in \mathscr E$, we have
  $$\Omega(e) \cap \Omega(e') = \left \{ \begin{array}{ll} \Omega(v) & \text{if $e\cap e'$ is reduced to a vertex $v$;} \\
  \varnothing & \text{otherwise.} \end{array} \right .$$
\end{enumerate}
\end{theorem}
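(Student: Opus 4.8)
The plan is to build the properly convex set $\Omega$ and verify properties \eqref{treethm:1}--\eqref{treethm:3} by a systematic induction on finite subtrees, using Lemmas~\ref{lem:invisible}, \ref{lem:line-up}, and~\ref{lem:valence-3} as the inductive engine. The statement that $\Omega(e) = \Conv{\Omega(v)\cup\Omega(w)}$ is well defined for each edge $e=\{v,w\}$ is immediate: since $|\mathscr V|\geq 3$, every edge $\{v,w\}$ extends to a consecutive triple, so $(\Omega(v),\Omega(w),\Omega(?))$ or $(?,\Omega(v),\Omega(w))$ is in occultation position and Lemma~\ref{lem:invisible}.\eqref{invis:1} gives a common affine chart for $\Omega(v)\cup\Omega(w)$.

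First I would set up the core induction. For a finite subtree $\mathscr S = (\mathscr V_{\mathscr S}, \mathscr E_{\mathscr S})$ of $\mathscr T$, define $\Omega(\mathscr S) := \bigcup_{e\in\mathscr E_{\mathscr S}} \Omega(e)$ when $\mathscr S$ has at least one edge, and $\Omega(\mathscr S):=\Omega(v)$ when $\mathscr S=\{v\}$. The claim to prove by induction on $|\mathscr V_{\mathscr S}|$ is: $\Omega(\mathscr S)$ is a well-defined properly convex open set, and moreover for any \emph{leaf} $v$ of $\mathscr S$ with unique neighbor $v'$ in $\mathscr S$, the triple $(\Omega(v),\Omega(\mathscr S'),\Omega(w))$ is in occultation position for any $w\in\mathscr V$ adjacent to $v$ but outside $\mathscr S$, where $\mathscr S'$ is $\mathscr S$ with the leaf $v$ removed --- wait, more precisely the key propagation statement I want is: \emph{if $\mathscr S$ is obtained from $\mathscr S'$ by contracting one edge (merging its two endpoints' convex sets into their convex hull), the resulting ``tree of convex sets'' still has all consecutive triples in occultation position.} This is exactly what Lemmas~\ref{lem:line-up}.\eqref{line-up:ABCcupD} and~\ref{lem:valence-3}.\eqref{valence-three:234} provide at the local level: contracting an edge $\{v,w\}$ affects only triples involving the new merged vertex, and each such triple is handled either by Lemma~\ref{lem:line-up} (when the merged vertex sits linearly, \ie the third vertex is adjacent to $v$ or $w$ but the configuration is a path) or by Lemma~\ref{lem:valence-3} (when two of the relevant sets are both adjacent to the same side). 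Iterating edge contractions from the leaves inward, a finite subtree $\mathscr S$ collapses to a single vertex carrying the convex set $\Conv{\bigcup_{v\in\mathscr V_{\mathscr S}}\Omega(v)}$, which is therefore well defined and properly convex; and along the way Lemma~\ref{lem:invisible}.\eqref{invis:2} shows that contracting an edge does not change the union $\Omega(\mathscr S)$ of edge-convex-hulls, so $\Omega(\mathscr S) = \Conv{\bigcup_{v\in\mathscr V_{\mathscr S}}\Omega(v)}$. In particular each $\Omega(\mathscr S)$ is convex.

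Next, \eqref{treethm:1}: write $\mathscr T$ as an increasing union of finite subtrees $\mathscr S_1\subset\mathscr S_2\subset\cdots$ (possible since $\mathscr T$ is countable and connected). Then $\Omega = \bigcup_n \Omega(\mathscr S_n)$ is an increasing union of convex open sets, hence convex and open. For proper convexity I need a single projective hyperplane disjoint from all of $\Omega$. Here I would argue that the dual picture stabilizes: each $\Omega(\mathscr S_n)^*$ is a nonempty properly convex open set in $\PP(V^*)$, and the inclusions $\Omega(\mathscr S_n)\subset\Omega(\mathscr S_{n+1})$ give $\Omega(\mathscr S_{n+1})^*\subset\Omega(\mathscr S_n)^*$, a nested decreasing sequence of nonempty properly convex open sets. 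The subtle point --- and this is where I expect the main obstacle --- is to show $\bigcap_n \Omega(\mathscr S_n)^* \neq\varnothing$, equivalently that the $\Omega(\mathscr S_n)$ do not "fill up" an affine chart and escape to have no common supporting hyperplane. I would prove this using the occultation structure: fix an edge $e_0=\{v_0,w_0\}$ and a consecutive vertex giving an occultation triple; the hyperplane $X$ produced in the proof of Lemma~\ref{lem:line-up}.\eqref{line-up:1} (a point of $\partial\Omega(v_0)^*\cap\partial\Omega(w_0)^*$ lying in the duals of the neighbors) persists under edge contraction because the propagated occultation triples keep witnessing that such a hyperplane avoids the growing convex hull; more carefully, one shows by induction that there is a fixed compact set $K\subset\PP(V^*)$ with $K\cap\Omega(\mathscr S_n)^* \neq\varnothing$ for all $n$ (\eg the closure of $\overline{\Omega(v_0)^*}\cap\overline{\Omega(w_0)^*}$ intersected appropriately), so the nested intersection is nonempty by compactness. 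Any point in it is a hyperplane disjoint from $\overline{\Omega}$, giving proper convexity.

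Finally, \eqref{treethm:2} and \eqref{treethm:3} follow from the finite case by a locality argument. If $v,w$ are adjacent then $\Omega(v)\cap\Omega(w)\neq\varnothing$ by condition~\ref{item:occult-2} of occultation (applied to a triple containing the edge $\{v,w\}$). Conversely, suppose $v,w$ are non-adjacent; let $u$ be a vertex on the geodesic from $v$ to $w$ with $u\notin\{v,w\}$, so $(v,u,\cdot)$ type triples along the path are in occultation position, and by Lemma~\ref{lem:invisible}.\eqref{invis:3} applied iteratively along the path (or directly: the hyperplane separating $\Omega(v)$ from $\Omega(w)$ supplied by occultation at $u$), one gets $\Omega(v)\cap\Omega(w)\subset$ something empty --- concretely, reduce to the finite subtree spanned by the geodesic and use that after contracting all edges except those at $v$ and $w$ one lands in the situation $A\cap C=\varnothing$ of Definition~\ref{def:occult}. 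For \eqref{treethm:3}, given distinct edges $e,e'$: if $e\cap e'=\{v\}$, then in the finite subtree spanned by $e\cup e'$ (a path $v_1$-$v$-$v_2$), Lemma~\ref{lem:invisible}.\eqref{invis:3} gives $\Omega(e)\cap\Omega(e') = \Conv{\Omega(v_1)\cup\Omega(v)}\cap\Conv{\Omega(v)\cup\Omega(v_2)} = \Omega(v)$; if $e\cap e'=\varnothing$, the geodesic between them has an interior vertex $u$ blocking, and one argues as in \eqref{treethm:2} that the separating hyperplane at $u$ keeps $\Omega(e)$ and $\Omega(e')$ apart. The one thing to be careful about throughout is that $\Omega(e)$ and $\Omega(e')$ genuinely sit inside $\Omega(\mathscr S)$ for $\mathscr S$ the spanning finite subtree and that intersections computed there agree with intersections in $\Omega$ --- this is clear since both edges lie in $\mathscr S$ and $\Omega(\mathscr S)\subset\Omega$.
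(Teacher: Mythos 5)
Your finite-tree induction (propagating occultation by contracting edges, via Lemmas~\ref{lem:line-up} and~\ref{lem:valence-3}) and your reduction of \eqref{treethm:2}--\eqref{treethm:3} to finite subtrees are essentially the paper's own argument (its Lemma~\ref{lem:induction} and Corollary~\ref{cor:finite-tree}). The genuine gap is in \eqref{treethm:1} for infinite trees, exactly at the point you yourself flagged. Your compactness argument can at best produce a hyperplane $H$ in $\bigcap_n\overline{\Omega(\mathscr S_n)^*}$, i.e.\ with $H\cap\Omega(\mathscr S_n)=\varnothing$ for all $n$, hence $H\cap\Omega=\varnothing$. That is strictly weaker than proper convexity, which requires a hyperplane disjoint from $\overline{\Omega}$ (equivalently, that $\overline\Omega$ contain no projective line), and the sentence ``any point in it is a hyperplane disjoint from $\overline{\Omega}$'' is false in general: an increasing union of properly convex open sets can fail to be properly convex even though a single hyperplane misses all the closures $\overline{\Omega(\mathscr S_n)}$. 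For instance, round balls exhausting the affine strip $\{|y|<1\}\subset\RR^2\subset\RP^2$ have closures all avoided by the projective line carried by $\{y=2\}$, yet the closure of the strip contains the full projective line carried by $\{y=0\}$. Worse, your concrete candidates, the points of $\partial\Omega(v_0)^*\cap\partial\Omega(w_0)^*$, are \emph{tangent} to $\Omega(v_0)$ and $\Omega(w_0)$, so they always meet $\overline\Omega$ and can never witness proper convexity; and the perturbation trick from the proof of Lemma~\ref{lem:line-up}.\eqref{line-up:1} only pushes such a hyperplane off finitely many closures, with the admissible perturbation size shrinking to $0$ along the exhaustion.

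What the paper does instead: it first proves the correct part of your persistence claim, namely that for adjacent $v,w$ every $X\in\partial\Omega(v)^*\cap\partial\Omega(w)^*$ supports all of $\Omega$ (via Lemma~\ref{lem:induction} applied along the path from $\{v,w\}$ to any vertex whose convex set $X$ would meet). It then assumes $\overline\Omega$ contains a projective subspace, so that all supporting hyperplanes of $\Omega$ pass through a common point $x$, and derives a contradiction from a single consecutive triple $(u,v,w)$: the set $\partial\Omega(u)^*\cap\partial\Omega(v)^*$ is forced to be the entire $(d-3)$-sphere $x^\perp\cap\partial(\Omega(u)^*\cap\Omega(v)^*)$, dually the common tangent hyperplanes of $\Omega(u),\Omega(v)$ cut out a tube ruled in the direction of $x$ that contains $\Omega$, the analogous tube for $(v,w)$ must coincide with it, and any hyperplane supporting the tube lies in $\partial\Omega(u)^*\cap\partial\Omega(v)^*\cap\partial\Omega(w)^*$, contradicting occultation position. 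Some argument of this kind --- actually excluding a projective line in $\overline\Omega$, rather than merely exhibiting a hyperplane missing $\Omega$ --- is what your proposal is missing, and it is the real content of the infinite case.
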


The fact that $\Omega(e)$ is well defined and properly convex for every $e\in\mathscr{E}$ follows from Lemma \ref{lem:invisible}.\eqref{invis:1}, applied to a pair of consecutive edges containing~$e$.
In the sequel, we will use Lemmas \ref{lem:invisible}, \ref{lem:line-up}, and~\ref{lem:valence-3} inductively to prove Theorem~\ref{thm:tree}.
We first introduce some terminology.

Recall that by definition, a tree is path connected.
A nonempty subset $\mathscr V'$ of~$\mathscr V$ determines a subgraph $\mathscr T'$ of $\mathscr T$ by the rule that two vertices of $\mathscr V'$ are connected by an edge in $\mathscr T'$ if they are connected by an edge in $\mathscr T$. 
If the subgraph $\mathscr T'$ is connected, then it is itself a tree and we call it a \emph{subtree} of~$\mathscr T$. 
We say that two subtrees of~$\mathscr T$ are \emph{disjoint} if their vertex sets are disjoint, and that they are \emph{adjacent} if they are disjoint and the union of their vertex sets determines a connected subgraph (\ie a subtree).
We say that three subtrees $\mathscr T_A$, $\mathscr T_B$, $\mathscr T_C$ of~$\mathscr T$ are \emph{consecutive} if $\mathscr T_A, \mathscr T_B$ are adjacent, $\mathscr T_B, \mathscr T_C$ are adjacent, and $\mathscr T_A, \mathscr T_C$ are disjoint (and, automatically, not adjacent).
For any subtree $\mathscr{T}'=(\mathscr{V}', \mathscr{E}')$ of~$\mathscr{T}$, we define
$$\Omega(\mathscr{T'}) := \bigcup_{e\in \mathscr{E}'} \Omega(e) \cup \bigcup_{v \in \mathscr{V}'} \Omega(v).$$
Note that $\Omega(\mathscr{T'}) = \bigcup_{e\in \mathscr{E}'} \Omega(e)$ unless $\mathscr{V}'$ is reduced to a single vertex $\{v\}$ (\ie $\mathscr{E}'=\varnothing$), in which case $\Omega(\mathscr{T'}) =\Omega(v)$.

The following lemma shows that for finite subtrees $\mathscr T'$, the convex sets $\Omega(\mathscr T')$ behave much in the same way as the vertex convex sets $\Omega(v)$, $v \in \mathscr V$.
This will be used to induct on the number of vertices in the proof of the finite case of Theorem~\ref{thm:tree} (Corollary~\ref{cor:finite-tree}).

\begin{lemma} \label{lem:induction}
In the setting of Theorem~\ref{thm:tree}, suppose that $|\mathscr V| \geq 3$ is finite. 
Then for any consecutive subtrees $\mathscr T_{A} = (\mathscr V_A, \mathscr E_A)$, $\mathscr T_B = (\mathscr V_B, \mathscr E_B)$, and $\mathscr  T_C = (\mathscr V_C, \mathscr E_C)$ of~$\mathscr{T}$ with $\mathscr{V} = \mathscr V_A \sqcup \mathscr V_B \sqcup \mathscr V_C$,
$$\big ( \Omega(\mathscr{T}_A), \Omega(\mathscr{T}_B), \Omega(\mathscr{T}_C) \big )$$
is a triple of properly convex open sets  in occultation position.
\end{lemma}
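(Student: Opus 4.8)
The plan is to prove Lemma~\ref{lem:induction} by induction on the number of vertices $|\mathscr V|$, using Lemmas~\ref{lem:invisible}, \ref{lem:line-up}, and~\ref{lem:valence-3} as the basic moves. The base case is $|\mathscr V|=3$: then each of $\mathscr T_A,\mathscr T_B,\mathscr T_C$ is a single vertex, $\Omega(\mathscr T_\bullet)$ is the corresponding vertex convex set, and the three vertices are consecutive in the tree, so the conclusion is exactly the occultation hypothesis of Theorem~\ref{thm:tree}. For the inductive step, suppose $|\mathscr V|\geq 4$ and the result holds for all trees with fewer vertices satisfying the hypotheses of Theorem~\ref{thm:tree}.

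For the inductive step I would argue as follows. Since $|\mathscr V|=|\mathscr V_A|+|\mathscr V_B|+|\mathscr V_C|\geq 4$, at least one of the three subtrees has at least two vertices; call it $\mathscr T_X$ (with $X\in\{A,B,C\}$), so it contains an edge, and pick a \emph{leaf} edge of $\mathscr T_X$, that is, an edge $e=\{v,w\}$ of $\mathscr T_X$ with $w$ a leaf of $\mathscr T_X$; moreover choose it so that $w$ is also a leaf of the whole tree $\mathscr T$ — this is possible because the three subtrees partition $\mathscr V$ and are arranged linearly ($\mathscr T_A$--$\mathscr T_B$--$\mathscr T_C$), so a suitable extremal vertex of $\mathscr T_X$ is extremal in $\mathscr T$ as well (if $X=B$ one has to take a little care, picking $w$ not on either of the two edges joining $\mathscr T_B$ to $\mathscr T_A$ and to $\mathscr T_C$; such a $w$ exists since $\mathscr T_B$ has at least two vertices and the two bridging edges meet it in at most two distinct vertices, and if they coincide $\mathscr T_B$ still has a second vertex which is then a $\mathscr T$-leaf). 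Form the smaller tree $\mathscr T^\flat$ by contracting $e$: delete $w$, delete $v$, and add a new vertex $u$ with $\Omega(u):=\Omega(e)=\Conv{\Omega(v)\cup\Omega(w)}$, joined to exactly the neighbours of $v$ in $\mathscr T$ other than $w$. The edges incident to $u$ are $\{u,v'\}$ for $v'$ a former neighbour of $v$; for such $v'$, the triple $(\Omega(v'),\Omega(v),\Omega(w))$ is in occultation position (as $v',v,w$ are consecutive, $w$ being a leaf), so Lemma~\ref{lem:line-up}.\eqref{line-up:ABCcupD} (in the ``$(\Conv{A\cup B},C,D)$'' form) gives that any triple through $u$ in $\mathscr T^\flat$ — namely $(\Omega(v''),\Omega(v'),\Omega(u))$ and $(\Omega(v'),\Omega(u),\Omega(v''))$ for neighbours — is again in occultation position, and triples not involving $u$ are unchanged. (When $v$ had a single other neighbour, so $\mathscr T^\flat$ just has $u$ in place of the edge $e$, one uses Lemma~\ref{lem:line-up}.\eqref{line-up:ABCcupD} once; when $v$ had two other neighbours one also needs Lemma~\ref{lem:valence-3}.\eqref{valence-three:234} to handle the triple centred at $u$.) Thus $\mathscr T^\flat$, with $|\mathscr V^\flat|=|\mathscr V|-1\geq 3$, satisfies the hypotheses of Theorem~\ref{thm:tree}. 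The three subtrees $\mathscr T_A,\mathscr T_B,\mathscr T_C$ descend to three consecutive subtrees $\mathscr T_A^\flat,\mathscr T_B^\flat,\mathscr T_C^\flat$ of $\mathscr T^\flat$ partitioning $\mathscr V^\flat$ (the subtree $\mathscr T_X$ becomes $\mathscr T_X^\flat$ with $v,w$ replaced by $u$; the other two are unchanged), and crucially $\Omega(\mathscr T_X^\flat)=\Omega(\mathscr T_X)$ because $\Omega(\mathscr T_X)=\bigcup_{f\in\mathscr E_X}\Omega(f)$ already contains $\Omega(e)=\Omega(u)$, while $\Omega(\mathscr T_Y^\flat)=\Omega(\mathscr T_Y)$ trivially for $Y\neq X$. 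By the induction hypothesis applied to $\mathscr T^\flat$, the triple $(\Omega(\mathscr T_A^\flat),\Omega(\mathscr T_B^\flat),\Omega(\mathscr T_C^\flat))=(\Omega(\mathscr T_A),\Omega(\mathscr T_B),\Omega(\mathscr T_C))$ is in occultation position, which is what we wanted.

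I expect the main obstacle to be purely combinatorial bookkeeping rather than geometric: making sure the leaf edge $e$ can always be chosen \emph{inside} one of the three subtrees in such a way that (i) the contracted tree $\mathscr T^\flat$ still has the occultation property for \emph{all} consecutive triples — in particular those centred at the new vertex $u$, which is exactly where Lemmas~\ref{lem:line-up} and~\ref{lem:valence-3} are needed and where the full strength of occultation (not weak occultation) is essential — and (ii) the three subtrees still partition $\mathscr V^\flat$ into consecutive pieces with the correct associated convex sets. The case $X=B$ (contracting inside the middle subtree) needs the extra care described above to keep $\mathscr T_B^\flat$ adjacent to both $\mathscr T_A^\flat$ and $\mathscr T_C^\flat$; one should note that $\mathscr T$ being a tree guarantees each of $\mathscr T_B$--$\mathscr T_A$ and $\mathscr T_B$--$\mathscr T_C$ is joined by a unique edge, so only at most two vertices of $\mathscr T_B$ are "bridge" vertices and a non-bridge leaf of $\mathscr T_B$ can be found whenever $|\mathscr V_B|\geq 2$ is large enough; if $|\mathscr V_B|=2$ and both vertices are bridges, one instead contracts inside $\mathscr T_A$ or $\mathscr T_C$, one of which then has $\geq 2$ vertices since $|\mathscr V|\geq 4$. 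The geometric content is entirely absorbed into Lemmas~\ref{lem:invisible}, \ref{lem:line-up}, \ref{lem:valence-3}, so once the inductive scaffolding is set up correctly the proof is short.
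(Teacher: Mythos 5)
Your inductive scheme — contract a leaf edge of $\mathscr T$ lying inside one of the three pieces, check that occultation propagates to the contracted tree via Lemmas~\ref{lem:line-up} and~\ref{lem:valence-3}, and apply the induction hypothesis to the contracted tree — is geometrically sound, and is a legitimate alternative to the paper's argument (which instead splits a leaf \emph{vertex} off one of the pieces and applies the induction hypothesis to the subtrees of $\mathscr T$ spanned by unions of two pieces). The problem is exactly the combinatorial step you flag: it is \emph{not} always possible to find an edge $\{v,w\}$ contained in one of the pieces with $w$ a leaf of the whole tree $\mathscr T$. Take $\mathscr T$ to be a path with consecutive vertices $a,b_1,\dots,b_k,c$ ($k\geq 2$), with $\mathscr V_A=\{a\}$, $\mathscr V_B=\{b_1,\dots,b_k\}$, $\mathscr V_C=\{c\}$: the only leaves of $\mathscr T$ are $a$ and~$c$, which lie in edgeless pieces, and every leaf of $\mathscr T_B$ is a bridge vertex. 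Both of your escape clauses fail here: the bad situation is not confined to $|\mathscr V_B|=2$ (any path-shaped middle piece with its two ends as bridge vertices has no $\mathscr T$-leaf), and even when $|\mathscr V_B|=2$ the inequality $|\mathscr V|\geq 4$ does not force $\mathscr T_A$ or $\mathscr T_C$ to have two vertices ($|\mathscr V|=4=1+2+1$). So as written the induction cannot be started on such trees, which are not degenerate configurations.

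The gap is repairable inside your framework, but only by dropping the requirement that $w$ be a $\mathscr T$-leaf: contract an arbitrary edge $\{v,w\}$ interior to a piece. Then the new vertex $u$ inherits neighbours coming from both $v$'s side and $w$'s side, so besides the two configurations you treat (handled by Lemma~\ref{lem:line-up}.\eqref{line-up:ABCcupD} and Lemma~\ref{lem:valence-3}.\eqref{valence-three:234}) you must also verify occultation for triples centred at $u$ with one neighbour formerly adjacent to $v$ and the other formerly adjacent to $w$; this is exactly the configuration of Lemma~\ref{lem:line-up}.\eqref{line-up:ABcupCD}, which your proposal never invokes — and it is precisely the case that the paper's case analysis does cover. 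Two smaller points: the identity $\Omega(\mathscr T_X^\flat)=\Omega(\mathscr T_X)$ requires Lemma~\ref{lem:invisible}.\eqref{invis:2} applied to the triples $(\Omega(v'),\Omega(v),\Omega(w))$ (the nontrivial inclusion is $\Omega(\mathscr T_X^\flat)\subset\Omega(\mathscr T_X)$), not merely the remark that $\Omega(e)\subset\Omega(\mathscr T_X)$; and the induction hypothesis may only be applied to $\mathscr T^\flat$ after this full verification that all consecutive triples of $\mathscr T^\flat$, including those through~$u$, are in occultation position.
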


\begin{proof}
We argue by induction on $n := |\mathscr V| \geq 3$.
To start with, observe that $\mathscr T_A$ and $\mathscr T_C$ play symmetric roles.
Recall that a \emph{leaf} of a combinatorial tree is a vertex of degree~$1$.

If $n=3$, then $|\mathscr{V}_A|=|\mathscr{V}_B|=|\mathscr{V}_C|=1$ and the result follows from the assumptions of Theorem~\ref{thm:tree} and from Lemma~\ref{lem:invisible}.
Given $N\geq 3$, suppose the result holds for all $n\leq N$, and let us prove it for $n=N+1$.
Since $n > 3$, at least one of the sets $\mathscr V_A$, $\mathscr V_B$, or $\mathscr V_C$ has at least two elements.

Suppose $|\mathscr V_B| \geq 2$. 
Choose a leaf $v \in \mathscr V_B$ of~$\mathscr T_B$.
Define $\mathscr V_{B_1} := \mathscr V_B \smallsetminus \{v\}$ and $\mathscr V_{B_2} := \{v\}$, let $\mathscr T_{B_1}$ and $\mathscr T_{B_2}$ be the corresponding subtrees of~$\mathscr T_B$, with respective edge sets $\mathscr E_{B_1}$ and $\varnothing$, and let $B_1 := \Omega(\mathscr{T}_{B_1})$ and $B_2 := \Omega(v)$ be the corresponding properly convex open subsets of $\PP(V)$. Let $A := \Omega(\mathscr{T}_{A})$ and $C := \Omega(\mathscr{T}_C)$. Note that $A, B_1, B_2, C$ are properly convex open sets by induction.
There are three possibilities to consider:
\begin{itemize}
  \item $v$ is adjacent in $\mathscr T$ to a vertex of exactly one of $\mathscr V_A$ or $\mathscr V_C$;
  \item $v$ is not adjacent in $\mathscr T$ to any vertex of $\mathscr V_A$ or $\mathscr V_C$;
  \item $v$ is adjacent in $\mathscr T$ to a vertex of $\mathscr V_A$ and to a vertex of $\mathscr V_C$.
\end{itemize}
If $v$ is adjacent in~$\mathscr T$ to some vertex in~$\mathscr V_C$ but to no vertex of $\mathscr V_A$, then the subtrees $\mathscr{T}_A$, $\mathscr{T}_{B_1}$, $\mathscr{T}_{B_2}$ of~$\mathscr{T}$ are consecutive, and so are the subtrees $\mathscr{T}_{B_1}$, $\mathscr{T}_{B_2}$, $\mathscr{T}_C$; by induction, the triples of properly convex sets $(A,B_1,B_2)$ and $(B_1,B_2,C)$ are in occultation position, and so the triple $(A,B,C) = (A,\Conv{B_1\cup B_2},C)$ is in occultation position by Lemma~\ref{lem:line-up}.\eqref{line-up:ABcupCD}.
Similarly, if $v$ is adjacent in~$\mathscr T$ to some vertex in~$\mathscr V_A$ but to no vertex of $\mathscr V_C$, then the triple $(A,B,C)$ is in occultation position.
Suppose $v$ is not adjacent in~$\mathscr{T}$ to any vertex in $\mathscr V_A$ nor~$\mathscr V_C$.
Then $v$ is a leaf of $\mathscr{T}$, and the subtrees $\mathscr{T}_A$, $\mathscr{T}_{B_1}$, $\mathscr{T}_{B_2}$ of~$\mathscr{T}$ are consecutive, and so are the subtrees $\mathscr{T}_A$, $\mathscr{T}_{B_1}$, $\mathscr{T}_C$ and the subtrees $\mathscr{T}_{B_2}$, $\mathscr{T}_{B_1}$, $\mathscr{T}_C$.
By induction, the triples of properly convex sets $(A,B_1,B_2)$, $(A,B_1,C)$, and $(B_2,B_1,C)$ are in occultation position, and so the triple $(A,B,C) = (A,\Conv{B_1\cup B_2},C)$ is in occultation position by Lemma~\ref{lem:valence-3}.\eqref{valence-three:234}.
Finally, suppose $v$ is adjacent to both a vertex of $\mathscr V_A$ and a vertex of $\mathscr V_C$.
Then the subtrees $\mathscr{T}_{A}, \mathscr{T}_{B_2}, \mathscr{T}_{B_1}$ are consecutive, and so are $\mathscr{T}_{C}, \mathscr{T}_{B_2}, \mathscr{T}_{B_1}$, and so are $\mathscr{T}_{A}, \mathscr{T}_{B_2}, \mathscr{T}_{C}$.
By induction, the triples of properly convex sets $(A,B_2,B_1)$, $(C,B_2,B_1)$, and $(A,B_2,C)$ are in occultation position, and so the triple $(A,B,C) = (A,\Conv{B_1\cup B_2},C)$ is in occultation position by Lemma~\ref{lem:valence-3}.\eqref{valence-three:234}.

Suppose $|\mathscr V_C| \geq 2$. 
There is only one vertex in $\mathscr V_C$ that is adjacent in~$\mathscr T$ to some vertex in~$\mathscr V_B$, hence there exists a leaf $v$ of $\mathscr T_C$ which is also a leaf of~$\mathscr T$.
Define $\mathscr V_{C_1} := \mathscr V_C \smallsetminus \{v\}$ and $\mathscr V_{C_2} := \{v\}$, let $\mathscr T_{C_1}$ and $\mathscr T_{C_2}$ be the corresponding subtrees of~$\mathscr T_C$, with respective edge sets $\mathscr E_{C_1}$ and $\varnothing$, and let $C_1 := \Omega({\mathscr{T}_{C_1}})$ and $C_2 := \Omega(v)$ be the corresponding properly convex open subsets of $\PP(V)$.
The subtrees $\mathscr{T}_A$, $\mathscr{T}_B$, $\mathscr{T}_{C_1}$ of~$\mathscr{T}$ are consecutive, and so are the subtrees $\mathscr{T}_B$, $\mathscr{T}_{C_1}$, $\mathscr{T}_{C_2}$; by induction, the triples of properly convex sets $(A,B_1,B_2)$ and $(B_1,B_2,C)$ are in occultation position, and so the triple $(A,B,C) = (A,B,\Conv{C_1\cup C_2})$ is in occultation position by Lemma~\ref{lem:line-up}.\eqref{line-up:ABCcupD}.
\end{proof}

\begin{corollary} \label{cor:finite-tree}
The conclusion of Theorem~\ref{thm:tree} holds when $|\mathscr V|$ is finite. 
\end{corollary}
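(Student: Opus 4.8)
The plan is to feed Lemma~\ref{lem:induction} into an induction on the number of vertices. Since each $\Omega(e)$ is already known to be well defined and properly convex, and is open (being the convex hull of the open set $\Omega(v) \cup \Omega(w)$ inside an affine chart), the union $\Omega = \bigcup_{e \in \mathscr{E}} \Omega(e)$ is open; moreover $\Omega = \Omega(\mathscr{T})$ in the notation fixed before Lemma~\ref{lem:induction}, because every vertex lies on an edge (as $|\mathscr{V}| \geq 2$). It therefore remains to prove that $\Omega$ is properly convex, i.e.\ part~\eqref{treethm:1}, together with the combinatorial statements \eqref{treethm:2}--\eqref{treethm:3}.

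\textbf{Part~\eqref{treethm:1}.}
I would prove, by induction on $|\mathscr{V}'|$, that $\Omega(\mathscr{T}')$ is properly convex for every subtree $\mathscr{T}' = (\mathscr{V}', \mathscr{E}')$ of $\mathscr{T}$; taking $\mathscr{T}' = \mathscr{T}$ then finishes part~\eqref{treethm:1}. The cases $|\mathscr{V}'| \in \{1,2\}$ are immediate, since $\Omega(\mathscr{T}')$ is then $\Omega(v)$ or $\Omega(e)$. For $|\mathscr{V}'| = k \geq 3$, pick a leaf $p$ of $\mathscr{T}'$; since $\mathscr{T}' \smallsetminus \{p\}$ has $k-1 \geq 2$ vertices it has an edge, and splitting along that edge (with the neighbour of $p$ on one side) produces a partition $\mathscr{V}' = \mathscr{V}_A \sqcup \mathscr{V}_B \sqcup \mathscr{V}_C$ into three consecutive subtrees of $\mathscr{T}'$ with $\mathscr{V}_A = \{p\}$. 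Now $\mathscr{T}'$ itself satisfies the hypotheses of Theorem~\ref{thm:tree} (its consecutive vertex triples are consecutive in $\mathscr{T}$), so Lemma~\ref{lem:induction} applies to $\mathscr{T}'$ and yields that $(\Omega(\mathscr{T}_A), \Omega(\mathscr{T}_B), \Omega(\mathscr{T}_C))$ is in occultation, hence weak occultation, position. Since $\mathscr{T}_A \cup \mathscr{T}_B$ and $\mathscr{T}_B \cup \mathscr{T}_C$ are subtrees with fewer than $k$ vertices, the inductive hypothesis makes $\Omega(\mathscr{T}_A \cup \mathscr{T}_B)$ and $\Omega(\mathscr{T}_B \cup \mathscr{T}_C)$ properly convex; comparing the defining unions and using $\Omega(u) \subseteq \Omega(e)$ for $u \in e$, one identifies these with $\Conv{\Omega(\mathscr{T}_A) \cup \Omega(\mathscr{T}_B)}$ and $\Conv{\Omega(\mathscr{T}_B) \cup \Omega(\mathscr{T}_C)}$ respectively. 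Since $\mathscr{T}_A$ and $\mathscr{T}_C$ are not adjacent, every edge (and vertex) of $\mathscr{T}'$ lies in $\mathscr{T}_A \cup \mathscr{T}_B$ or in $\mathscr{T}_B \cup \mathscr{T}_C$, so
$$\Omega(\mathscr{T}') = \Conv{\Omega(\mathscr{T}_A) \cup \Omega(\mathscr{T}_B)} \cup \Conv{\Omega(\mathscr{T}_B) \cup \Omega(\mathscr{T}_C)} = \Conv{\Omega(\mathscr{T}_A) \cup \Omega(\mathscr{T}_B) \cup \Omega(\mathscr{T}_C)}$$
by Lemma~\ref{lem:invisible}.\eqref{invis:2}, and this is properly convex by Lemma~\ref{lem:invisible}.\eqref{invis:1}.

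\textbf{Parts~\eqref{treethm:2}--\eqref{treethm:3}.}
No further induction is needed here. If $v, w$ are adjacent, then (as $|\mathscr{V}| \geq 3$) the edge $\{v,w\}$ extends to a consecutive triple of vertices, whose occultation position forces $\Omega(v) \cap \Omega(w) \neq \varnothing$; if $v, w$ are not adjacent, applying Lemma~\ref{lem:induction} to the path subtree joining them, with the two endpoints as the outer subtrees, forces $\Omega(v) \cap \Omega(w) = \varnothing$. For~\eqref{treethm:3}, if distinct edges $e = \{v,a\}$ and $e' = \{v,b\}$ share the vertex $v$ (so $a \neq b$), then $(\Omega(a), \Omega(v), \Omega(b))$ is in occultation position and Lemma~\ref{lem:invisible}.\eqref{invis:3} gives $\Omega(e) \cap \Omega(e') = \Omega(v)$; if $e, e'$ are disjoint, one applies Lemma~\ref{lem:induction} to the path subtree joining them with $e$ and $e'$ playing the roles of the two outer subtrees, and in the degenerate short configuration where no interior vertex lies outside $e \cup e'$ one argues directly from the two consecutive occultation triples via Lemma~\ref{lem:line-up}.

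\textbf{Main obstacle.}
I expect the difficulty to be organizational rather than conceptual: one must check at each stage that a genuine partition into \emph{three} consecutive subtrees is available, and must treat separately the low-complexity configurations --- single vertices and single edges as subtrees, and paths on three or four vertices --- where the ``middle'' piece demanded by Lemma~\ref{lem:induction} would be empty and one has to fall back on Lemmas~\ref{lem:invisible}, \ref{lem:line-up}, and~\ref{lem:valence-3} directly.
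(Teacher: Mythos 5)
Your handling of parts \eqref{treethm:1} and \eqref{treethm:2}, and of the shared-vertex case of \eqref{treethm:3}, is correct and follows the paper's own route (induction on the number of vertices, fed by Lemma~\ref{lem:induction} and Lemma~\ref{lem:invisible}); you are merely more explicit than the paper about exhibiting the three-piece partition via a leaf and about applying Lemma~\ref{lem:induction} to the spanned subtree rather than to $\mathscr{T}$ itself. The genuine problem is the ``degenerate short configuration'' of part \eqref{treethm:3} that you flag at the end, namely two disjoint edges $e=\{v,w\}$, $e'=\{v',w'\}$ whose spanned subtrees are adjacent, i.e.\ $w$ and $v'$ are joined by an edge (a path $v$--$w$--$v'$--$w'$). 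You propose to settle this case ``directly from the two consecutive occultation triples via Lemma~\ref{lem:line-up}'', but no such argument can succeed: since $(v,w,v')$ is a consecutive triple, occultation position forces $\Omega(w)\cap\Omega(v')\neq\varnothing$, and this set is contained in both $\Omega(e)=\Conv{\Omega(v)\cup\Omega(w)}$ and $\Omega(e')=\Conv{\Omega(v')\cup\Omega(w')}$, so the asserted conclusion $\Omega(e)\cap\Omega(e')=\varnothing$ is not available in this configuration. What Lemma~\ref{lem:line-up}.\eqref{line-up:ABCcupD} combined with Lemma~\ref{lem:invisible}.\eqref{invis:3} actually gives here is $\Omega(e)\cap\Omega(e')=\Omega(w)\cap\Omega(v')$, which is nonempty.

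To be fair, this is not only your gap: the paper's own proof of \eqref{treethm:3} asserts, for any pair of disjoint edges, the existence of a subtree $\mathscr{T}_B$ making $(\mathscr{T}_A,\mathscr{T}_B,\mathscr{T}_C)$ consecutive, and precisely in the distance-one configuration no such $\mathscr{T}_B$ exists (the two $2$-vertex subtrees are already adjacent, and a middle subtree adjacent to both would create a cycle in the tree). So conclusion \eqref{treethm:3}, read literally, fails in that case, and the correct statement is that $\Omega(e)\cap\Omega(e')=\varnothing$ when the subtrees spanned by $e$ and $e'$ are disjoint \emph{and non-adjacent}, while $\Omega(e)\cap\Omega(e')=\Omega(w)\cap\Omega(v')$ when they are joined by a single edge $\{w,v'\}$. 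Your write-up should either prove this corrected trichotomy or explicitly restrict the claim; as it stands, the closing appeal to Lemma~\ref{lem:line-up} cannot be completed, so the proposal does not prove the statement as literally formulated.
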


\begin{proof}
Suppose $| \mathscr V| \geq 3$ is finite.
By Lemma~\ref{lem:invisible}.\eqref{invis:1}, the set $\Omega(e):= \Conv{\Omega(v) \cup\nolinebreak \Omega(w)}$ is well defined and properly convex for every edge $e = \{v, w\} \in\nolinebreak \mathscr E$.
We now check conclusions \eqref{treethm:1}, \eqref{treethm:2}, \eqref{treethm:3} of Theorem~\ref{thm:tree}.

\eqref{treethm:1} We argue by induction on $| \mathscr V|$.
Let  $\mathscr T_{A} = (\mathscr V_A, \mathscr E_A)$, $\mathscr T_B = (\mathscr V_B, \mathscr E_B)$, $\mathscr  T_C = (\mathscr V_C, \mathscr E_C)$ be any triple of consecutive subtrees of~$\mathscr{T}$ with $\mathscr{V} = \mathscr V_A \sqcup \mathscr V_B \sqcup \mathscr V_C$. 
We know from Lemma~\ref{lem:induction} that $(\Omega(\mathscr T_A), \Omega(\mathscr T_B), \Omega(\mathscr T_C))$ is a triple of properly convex open sets in occultation position.
By Lemma~\ref{lem:invisible}.\eqref{invis:1}--\eqref{invis:2},
$$\Conv{\Omega(\mathscr T_A) \cup \Omega(\mathscr T_B) \cup \Omega(\mathscr T_C)} = \Conv{\Omega(\mathscr T_A) \cup \Omega(\mathscr T_B)} \cup \Conv{\Omega(\mathscr T_B) \cup \Omega(\mathscr T_C)}$$
is a well-defined properly convex open set. 
Letting $\mathscr T_{A \cup B}$ denote the proper subtree of~$\mathscr T$ defined by $\mathscr V_A \cup \mathscr V_B$, we have by induction that $\Omega(\mathscr T_{A \cup B})$ is properly convex.
It then contains $\Conv{\Omega(\mathscr T_A) \cup \Omega(\mathscr T_B)}$, but it also is clearly contained in it; hence $\Omega(\mathscr T_{A \cup B}) = \Conv{\Omega(\mathscr T_A) \cup \Omega(\mathscr T_B)}$.
Similarly, $\Omega(\mathscr T_{B \cup C}) = \Conv{\Omega(\mathscr T_B) \cup \Omega(\mathscr T_C)}$.
By definition, $\Omega(\mathscr T) = \Omega(\mathscr T_{A \cup B}) \cup \Omega(\mathscr T_{B \cup C})$, and hence $\Omega(\mathscr T)$ is properly convex.

\eqref{treethm:2} Let $v,w \in \mathscr V$ be distinct vertices.
If they are adjacent, then there is a vertex $u \in \mathscr V$ such that $u,v,w$ are consecutive; by assumption, the triple $(\Omega(u), \Omega(v), \Omega(w))$ is in occultation position, hence $\Omega(v) \cap \Omega(w) \neq \varnothing$.
Suppose $v$ and~$w$ are not adjacent.
Let $\mathscr T_A$ and $\mathscr T_C$ be the subtrees of~$\mathscr T$ defined by $\{ v\}$ and~$\{ w\}$.
There is a subtree $\mathscr T_B$ of~$\mathscr T$ such that $\mathscr T_A,\mathscr T_B,\mathscr T_C$ are consecutive.
We know from Lemma~\ref{lem:induction} that $(\Omega(\mathscr T_A), \Omega(\mathscr T_B), \Omega(\mathscr T_C)) = (\Omega(v), \Omega(\mathscr T_B), \Omega(w))$ is a triple of properly convex open sets in occultation position; in particular, $\Omega(v) \cap \Omega(w) = \varnothing$.

\eqref{treethm:3} Let $e,e' \in \mathscr E$ be distinct edges.
If they share a vertex~$v$, then the occultation assumption and Lemma~\ref{lem:invisible}.\eqref{invis:3} imply $\Omega(e) \cap \Omega(e') = \Omega(v)$.
Suppose $e = \{ v,w\}$ and $e' = \{ v',w'\}$ do not share a vertex.
Let $\mathscr T_A$ and $\mathscr T_C$ be the subtrees of~$\mathscr T$ defined by $\{ v,w\}$ and~$\{ v',w'\}$.
There is a subtree $\mathscr T_B$ of~$\mathscr T$ such that $\mathscr T_A,\mathscr T_B,\mathscr T_C$ are consecutive.
We know from Lemma~\ref{lem:induction} that $(\Omega(\mathscr T_A), \Omega(\mathscr T_B), \Omega(\mathscr T_C)) = (\Omega(e), \Omega(\mathscr T_B), \Omega(e'))$ is a triple of properly convex open sets in occultation position; in particular, $\Omega(e) \cap \Omega(e') = \varnothing$.
\end{proof}

\begin{proof}[Proof of Theorem~\ref{thm:tree}]
The case that $|\mathscr V|$ is finite has already been established in Corollary~\ref{cor:finite-tree}, so we now consider the case that $|\mathscr V|$ is infinite (countable).
Conclusions~\eqref{treethm:2} and~\eqref{treethm:3} follow from Corollary~\ref{cor:finite-tree} as they can be checked inside a finite subtree of~$\mathscr{T}$.
We now check~\eqref{treethm:1}.
Choose an exhaustion of the countable tree $\mathscr T$ by an increasing sequence $(\mathscr T_n = (\mathscr V_n, \mathscr E_n))_{n\in\NN}$ of finite subtrees. 
By Corollary~\ref{cor:finite-tree}, Theorem~\ref{thm:tree} holds for each $\mathscr T_n$. 
This yields an increasing sequence of properly convex open sets $\Omega_n = \bigcup_{e\in\mathscr E_n} \Omega(e)$. 
Each $\Omega_n$ is contained in some affine chart $\PP(V) \smallsetminus X_n$, where $X_n$ is a projective hyperplane of $\PP(V)$. 
Up to passing to a subsequence, the hyperplanes $X_n$ converge to some projective hyperplane $X_\infty$. 
The increasing union $\Omega = \bigcup_{n\in\NN} \Omega_n$ is then contained in $\PP(V) \smallsetminus X_\infty$ and is convex in that affine chart.
It remains to prove that the convex open set $\Omega$ is properly convex.

For this, we first observe that if $v, w \in \mathscr V$ are adjacent vertices, then any hyperplane $X \in \partial \Omega(v)^* \cap \partial \Omega(w)^*$ supporting both $\Omega(v)$ and $\Omega(w)$ must support the entire convex set $\Omega$.
Indeed, suppose by contradiction that such a hyperplane $X$ intersects~$\Omega$. 
Then $X$ intersects some $\Omega(e) = \Conv{\Omega(v')\cup \Omega(w')}$ for $e = \{v',w'\} \in \mathscr{E}$, and hence $X$ intersects one of $\Omega(v'),\Omega(w')$, let us say~$\Omega(w')$. 
The vertices $v, w, w'$ lie on a unique path $\mathscr T'$ in the tree $\mathscr{T}$ with one endpoint $w'$ and the other either $v$ or $w$, let us say~$v$.
In particular, $\mathscr{T}'$ is a subtree of $\mathscr{T}$. 
The subtrees $\mathscr T_A, \mathscr T_B, \mathscr T_{C}$ of $\mathscr{T}'$ defined by $\mathscr V_A = \{v\}$, by $\mathscr V_B = \{w\}$, and by the remainder $\mathscr V_{C}$ of the vertices of~$\mathscr{T}'$ (including $w'$), are disjoint and cover the vertices of~$\mathscr T'$. 
By Lemma~\ref{lem:induction}, the corresponding triple of properly convex open sets $(A,B,C) = (\Omega(v),\Omega(w),\Omega(\mathscr T_{C}))$ is in occultation position.
In particular, $X \in \partial \Omega(v)^* \cap \partial \Omega(w)^*$ must lie in $C^*$, hence $X$ does not intersect $C \supset \Omega(w')$: contradiction.
Therefore, any $X\in \partial \Omega(v)^* \cap \partial \Omega(w)^*$ is a supporting hyperplane of $\Omega$.

We now prove that the convex open set $\Omega$ is properly convex.
Suppose by contradiction that this is not the case.
Then $\overline{\Omega}$ contains a projective subspace of $\PP(V)$.
Every supporting hyperplane to~$\Omega$ also contains this projective subspace, hence all supporting hyperplanes to $\Omega$ lie in the hyperplane $x^\perp$ of $\PP(V^*)$ consisting of all projective hyperplanes of $\PP(V)$ going through a common point~$x$.
Consider a consecutive triple $(u,v,w)$ of vertices of $\mathscr T$.
By the assumption, $\partial \Omega(u)^* \cap \partial \Omega(v)^*$ and $\partial \Omega(v)^* \cap \partial \Omega(w)^*$ are contained in $x^\perp$.
 This means that all hyperplanes tangent to both $\Omega(u)$ and $\Omega(v)$, or to both $\Omega(v)$ and $\Omega(w)$, run through the point $x \in \PP(V) \simeq \PP(\RR^d)$.
 Recall from the proof of Lemma~\ref{lem:line-up}.\eqref{line-up:1} that
$$\partial(\Omega(u)^* \cap \Omega(v)^*) = (\partial \Omega(u)^* \cap \Omega(v)^*) \sqcup (\Omega(u)^* \cap \partial \Omega(v)^*) \sqcup (\partial \Omega(u)^* \cap \partial \Omega(v)^*), $$
where all three sets on the right-hand side are nonempty, and the first two are open in $\partial(\Omega(u)^* \cap\nolinebreak \Omega(v)^*)$.
Hence $\partial \Omega(u)^* \cap \partial \Omega(v)^*$ is a closed subset of the topological sphere\linebreak $\partial(\Omega(u)^* \cap \Omega(v)^*)\cong \SS^{d-2}$ that separates it into two or more components. 
The intersection $x^\perp \cap \partial(\Omega(u)^* \cap \Omega(v)^*)$ separates $\partial(\Omega(u)^* \cap \Omega(v)^*)$ into (at most) two components, but removing any open set from $x^\perp \cap \partial(\Omega(u)^* \cap \Omega(v)^*)$ destroys this property. 
It follows that the closed subset $\partial \Omega(u)^* \cap \partial \Omega(v)^*$ is all of $x^\perp \cap \partial(\Omega(u)^* \cap \Omega(v)^*)$, and is homeomorphic to the sphere $\SS^{d-3}$ (it is the boundary of a properly convex $(d-2)$-ball). 
Dually, there exists an affine chart containing $\overline{\Omega(u)}, \overline{\Omega(v)}$ and for which $x$ is at infinity: the hyperplanes of $\partial \Omega(u)^* \cap \partial \Omega(v)^*$ cut out a \emph{tube} $U$, topologically $\SS^{d-3} \times \RR$, whose boundary is ruled by lines in the direction of $x$.
The tangent hyperplanes to $U$, which are tangent to both $\Omega(u)$ and $\Omega(v)$, sweep out the entire affine chart minus the tube~$U$, therefore $\Omega$ is contained in $U$ (and in the chart).
In particular, $\Omega(w) \subset U$. 
By the same argument, the hyperplanes of $\partial \Omega(v)^* \cap \partial \Omega(w)^*$ cut out a tube $U'$ whose boundary is ruled by lines in the direction of $x$. 
Since the first tube $U$ contains both $\Omega(v)$ and~$\Omega(w)$, we have $U' \subset U$. 
By symmetry we must also have $U \subset U'$, hence $U=U'$. 
Every supporting hyperplane $X$ to the tube $U=U'$ must intersect the boundaries of all three sets, that is $X \in \partial \Omega(u)^* \cap \partial \Omega(v)^* \cap \partial \Omega(w)^*$. 
But this contradicts occultation position. 
Therefore $\Omega$ is properly convex, i.e.~\eqref{treethm:1} holds.
\end{proof}

\section{Convex combination for graphs of groups} \label{sec:gog}

A graph of groups is a combinatorial graph whose vertices and edges are decorated with groups, together with inclusion maps from edge groups into vertex groups. 
The fundamental group of a graph of groups combines this data into one group, generalizing the construction of the fundamental group of a graph.
The goal of this section is to add extra data, namely proper actions on properly convex open sets, to a graph of groups, and then show (Theorem~\ref{thm:general-gog}) how to construct a properly convex open set on which the fundamental group of the graph of groups acts properly.

Our main tool is Theorem~\ref{thm:tree-with-group-actions}, established in Section~\ref{subsec:tree-conv-group} just below.
In Section~\ref{subsec:proof-amalgam-HNN} we use this theorem to give self-contained proofs of Theorems~\ref{thm:amalgam} and~\ref{thm:HNN}, concerning amalgamated free products and HNN extensions.
In Section~\ref{subsec:remind-gog} we recall the general definition of a graph of groups, and the construction of its fundamental group and its universal covering tree.
Finally, in Section~\ref{subsec:combin-gog} we prove our general combination result, Theorem~\ref{thm:general-gog}.

\subsection{Trees of convex sets with group actions} \label{subsec:tree-conv-group}

Here is a refinement of Theorem~\ref{thm:tree} in which each properly convex open set is endowed with a group action. 

\begin{theorem} \label{thm:tree-with-group-actions}
Suppose $\dim \PP(V) \geq 2$. 
Let $\mathscr T = (\mathscr V,\mathscr E)$ be a countable tree, with $|\mathscr V| \geq 3$, together with an assignment $v \mapsto \Omega(v)$ of a properly convex open set to each vertex.
Suppose that for any consecutive vertices $u,v,w \in \mathscr V$, the triple of properly convex open sets $(\Omega(u),\Omega(v),\Omega(w))$ is in occultation position, and let
$$\Omega := \bigcup_{e \in \mathscr{E}} \Omega(e)$$
be the properly convex open subset of $\PP(V)$ given by Theorem~\ref{thm:tree}.
Suppose further that a group $\Gamma$ acts faithfully on $\mathscr T$.
Let $\rho: \Gamma \to \PGL(V)$ be a representation such that
\begin{itemize}
  \item $\Omega(\gamma\cdot v) = \rho(\gamma) \cdot \Omega(v)$ for all $\gamma \in \Gamma$ and all $v \in \mathscr{V}$;
  \item for any $v \in \mathscr{V}$, the restriction of $\rho$ to the stabilizer $\Gamma(v)$ of $v$ in~$\Gamma$ is discrete and faithful.
\end{itemize}
Then $\rho$ is discrete and faithful and $\rho(\Gamma)$ preserves~$\Omega$. 
\end{theorem}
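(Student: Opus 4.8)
The plan is to establish the three assertions in order: $\rho(\Gamma)$ preserves $\Omega$, $\rho$ is faithful, and $\rho(\Gamma)$ is discrete; the last is the substantive point and the others are short.

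For invariance, I would use that $\Gamma$ permutes the edge set $\mathscr{E}$ of $\mathscr{T}$ and that, for $e=\{v,w\}\in\mathscr{E}$, the union $\Omega(v)\cup\Omega(w)$ is connected and contained in an affine chart (by Lemma~\ref{lem:invisible}.\eqref{invis:1}, applied to a consecutive triple through $e$, which exists since $|\mathscr{V}|\ge 3$). As projective transformations carry affine charts to affine charts and commute with convex hulls inside them, the equivariance hypothesis $\Omega(\gamma v)=\rho(\gamma)\cdot\Omega(v)$ gives $\rho(\gamma)\cdot\Omega(e)=\Conv{\Omega(\gamma v)\cup\Omega(\gamma w)}=\Omega(\gamma e)$; summing over $\mathscr{E}$ yields $\rho(\gamma)\cdot\Omega=\Omega$.

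For faithfulness, the key observation is that $v\mapsto\Omega(v)$ is injective: if $\Omega(v)=\Omega(w)$ with $v\neq w$, then $\Omega(v)\cap\Omega(w)\neq\varnothing$, so $v,w$ are adjacent by Theorem~\ref{thm:tree}.\eqref{treethm:2}; since $|\mathscr{V}|\ge3$ there is a third vertex $u$ adjacent to $v$ or to $w$, and occultation position of the resulting consecutive triple forces $\Omega(u)$ to meet exactly one of $\Omega(v),\Omega(w)$, a contradiction. Then $\rho(\gamma)=\id$ implies $\Omega(\gamma v)=\Omega(v)$ for all $v$, hence $\gamma v=v$ for all $v$ by injectivity, hence $\gamma=1$ since $\Gamma$ acts faithfully on $\mathscr{T}$.

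Discreteness is where I expect the main obstacle, because $\mathscr{T}$ may have infinite valence (as in the Bass--Serre trees of the target applications), so a bare pigeonhole on an orbit $\Gamma\cdot v_0$ is not available. I would argue by contradiction: assume there are pairwise distinct $\gamma_n\in\Gamma$ with $\rho(\gamma_n)\to\id$ (distinct by faithfulness), fix an edge $e_0=\{v_0,v_1\}$ and a point $p\in\Omega(e_0)$, and note that for large $n$ the point $\rho(\gamma_n)\cdot p$ lies in $\Omega(e_0)\cap\rho(\gamma_n)\cdot\Omega(e_0)=\Omega(e_0)\cap\Omega(\gamma_n e_0)$ (using invariance and the openness of $\Omega(e_0)$), so by Theorem~\ref{thm:tree}.\eqref{treethm:3} the edge $\gamma_n e_0$ equals $e_0$ or shares a single vertex with it. Passing to a subsequence, I would split into cases. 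Whenever infinitely many $\gamma_n$ fix a common vertex $v$ of $e_0$ --- directly, or after replacing $\gamma_n$ by $\gamma_m^{-1}\gamma_n$ in the subcases where $\gamma_n$ reverses an edge --- one gets infinitely many elements of $\Gamma(v)$, and discreteness of $\rho|_{\Gamma(v)}$ together with $\rho(\gamma_n)\to\id$ forces $\rho(\gamma_n)=\id$ eventually, contradicting distinctness. In the complementary cases, after passing to a subsequence and relabeling $v_0\leftrightarrow v_1$, one has $\gamma_n v_1=v_0$ and $\gamma_n v_0=w_n$ with the $w_n$ pairwise distinct neighbors of $v_0$ (if the $w_n$ repeated, a $\gamma_m^{-1}\gamma_n$ would again fix $v_0$, handled above); then $\Omega(w_n)=\rho(\gamma_n)\cdot\Omega(v_0)$, and these sets are pairwise disjoint because distinct neighbors of a common vertex are non-adjacent in a tree (Theorem~\ref{thm:tree}.\eqref{treethm:2}), yet $\rho(\gamma_n)^{-1}\to\id$ forces every interior point $q$ of $\Omega(v_0)$ to lie in $\rho(\gamma_n)\cdot\Omega(v_0)=\Omega(w_n)$ for all large $n$ --- a point in infinitely many pairwise disjoint sets, which is absurd. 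This contradiction establishes discreteness and completes the proof.
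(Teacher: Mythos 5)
Your proof is correct, and its overall strategy---use the disjointness/adjacency dichotomy of Theorem~\ref{thm:tree} to push a sequence violating discreteness into a single vertex stabilizer, where the hypothesis on $\rho|_{\Gamma(v)}$ gives a contradiction---is the same as the paper's, but the implementation differs. The paper fixes a vertex $v_0$ and a point $x \in \Omega(v_0)$ and notes that $x \in \rho(\gamma_n)\cdot\Omega(v_0) = \Omega(\gamma_n\cdot v_0)$ for all large $n$; hence by Theorem~\ref{thm:tree}.\eqref{treethm:2} the vertices $\gamma_n\cdot v_0$ are pairwise equal or adjacent, and since a tree has no triangles they take at most two values, so after extraction $\gamma_n\cdot v_0$ is constant and one lands in a vertex stabilizer at once via $\gamma_n\gamma_{n_0}^{-1}$. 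You instead fix an edge $e_0$ and only compare each translate $\gamma_n\cdot e_0$ with $e_0$ itself (via Theorem~\ref{thm:tree}.\eqref{treethm:3}), which is weaker information and forces your longer case analysis, including the extra (correct) argument that a fixed point cannot lie in infinitely many pairwise disjoint sets $\Omega(w_n)$; both routes work, the paper's just buys brevity by exploiting a point common to all the translates rather than pairwise intersections with the base set. Your separate, direct proof of faithfulness via injectivity of $v\mapsto\Omega(v)$ is also correct, and cleanly decouples the two conclusions (the paper folds faithfulness into the same sequence argument). One small imprecision: after replacing $\gamma_n$ by $\gamma_m^{-1}\gamma_n$, the images converge to $\rho(\gamma_m)^{-1}$, not to the identity, so "discreteness together with $\rho(\gamma_n)\to\id$" is not literally what you use; either invoke that a discrete subgroup of $\PGL(V)$ has no accumulation points, or take $m,n$ both large so that $\rho(\gamma_m^{-1}\gamma_n)$ lies in a neighborhood $U^{-1}U$ of $1$ meeting $\rho(\Gamma(v))$ only in $\{1\}$, forcing $\gamma_m=\gamma_n$. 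This is a one-line fix (and the paper's own wording at the analogous step is similarly terse).
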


\begin{proof}
Since $\Gamma$ preserves~$\mathscr{E}$ and the assignment $v\mapsto\Omega(v)$ is $\rho(\Gamma)$-equivariant, the collection $\{ \Omega(e) \,|\, e\in\mathscr{E}\}$ is $\rho(\Gamma)$-invariant, and so is the set~$\Omega$.

Suppose by contradiction that $\rho$ is \emph{not} discrete and faithful.
Then there is a sequence $(\gamma_n)_{n\in\NN}$ of pairwise distinct elements of~$\Gamma$ such that $\rho(\gamma_n) \to 1 \in \PGL(V)$.
Fix a vertex $v_0 \in \mathscr{V}$.
For large enough~$n_0$, we have
$$\bigcap_{n\geq n_0} \Omega(\gamma_n\cdot v_0) = \bigcap_{n\geq n_0} \rho(\gamma_n) \cdot \Omega(v_0) \neq \varnothing.$$
By Theorem~\ref{thm:tree}.\eqref{treethm:2}, any two vertices $\gamma_m \cdot v_0$ and $\gamma_n \cdot v_0$, for $m,n\geq n_0$, are equal or adjacent. 
Since $\mathscr T$ is a tree, it has no triangles, and so in fact the tail of the sequence of vertices $(\gamma_n \cdot v_0)_{n\in \NN}$ takes at most two (adjacent) values. 
Up to passing to a subsequence, $(\gamma_n \cdot v_0)_{n\in \NN}$ is constant, equal to some vertex~$w$, and $\gamma_n \gamma_0^{-1} \in \Gamma(w)$ for all $n\in \NN$. 
But $\rho|_{\Gamma(w)}$ is discrete and faithful by assumption, contradicting $\rho(\gamma_n) \to 1$.
This shows that $\rho$ is discrete and faithful.
\end{proof}

\subsection{Application to amalgamated free products and HNN extension} \label{subsec:proof-amalgam-HNN}

We will apply Theorem~\ref{thm:tree-with-group-actions} in the context of a general graph of groups in Section~\ref{subsec:combin-gog} to prove Theorem~\ref{thm:general-gog}. 
But first, we give self-contained proofs of Theorems~\ref{thm:amalgam} and~\ref{thm:HNN}, concerning amalgamated free products and HNN extensions.
These are special cases of Theorem~\ref{thm:general-gog} (see Examples~\ref{ex:gog}).

\begin{proof}[Proof of Theorem~\ref{thm:amalgam}]
To the amalgamated free product $\Gamma =\Gamma_0 *_\Delta \Gamma_1$ is naturally associated a simplicial tree $\mathscr T = (\mathscr{V},\mathscr{E})$, the \emph{Bass--Serre tree}. 
The set $\mathscr{V} = \Gamma/\Gamma_0 \sqcup \Gamma/\Gamma_1$ of vertices is the collection of cosets of~$\Gamma_0$ and cosets of~$\Gamma_1$.
The set of edges is given by $\mathscr{E} = \big \{ \{g \Gamma_0, g \Gamma_1\} \,|\, g \in \Gamma \big \}$.
Our assumption that $\Gamma_0 \neq \Gamma_1$ means that $|\mathscr{V}|\geq 3$.

We define an assignment $v \mapsto \Omega(v)$ of a properly convex open set to each vertex: for each $g \in \Gamma$ and each $i \in \{0,1\}$, we set $\Omega(g \Gamma_i) := \rho(g) \cdot \Omega_i$.
This assignment is $\rho$-equivariant by construction.

One easily checks that any three consecutive vertices of~$\mathscr{T}$ are of the form $g\Gamma_{1-i},g\Gamma_i,g\gamma_i\Gamma_{1-i}$ where $g\in\Gamma$ and $i\in\{0,1\}$ and $\gamma_i\in\Gamma_i\smallsetminus\Delta$.
The corresponding triple
$$\big(\Omega(g\Gamma_{1-i}), \Omega(g\Gamma_i), \Omega(g\gamma_i\Gamma_{1-i})\big) = \rho(g) \cdot (\Omega_{1-i}, \Omega_i, \gamma_i\cdot\Omega_{1-i})$$
of properly convex open sets is in occultation position, because the triple $(\Omega_{1-i}, \Omega_i, \gamma_i\cdot\Omega_{1-i})$ is by assumption.

The stabilizer in~$\Gamma$ of a vertex $g \Gamma_i$ is $g \Gamma_i g^{-1}$, and the restriction $\rho|_{g \Gamma_i g^{-1}} : g\gamma g^{-1} \mapsto \rho(g) \gamma \rho(g)^{-1}$ 
is discrete and faithful by construction. 
By Theorem~\ref{thm:tree-with-group-actions}, the representation $\rho : \Gamma = \Gamma_0 *_{\Delta} \Gamma_1 \to \PGL(d,\RR)$ is discrete and faithful, and its image preserves the open set $\Omega = \bigcup_{\gamma \in \Gamma} \rho(\gamma) \cdot \Conv{\Omega_0 \cup \Omega_1}$, which is properly convex by Theorem~\ref{thm:tree}.
\end{proof}

\begin{proof}[Proof of Theorem~\ref{thm:HNN}]
To the HNN extension $\Gamma = (\Gamma_0) *_t$ is naturally associated a simplicial tree $\mathscr T = (\mathscr{V},\mathscr{E})$, the \emph{Bass--Serre tree}. 
The set $\mathscr{V} = \Gamma/\Gamma_0$ of vertices is the collection of cosets of~$\Gamma_0$.
The set of edges is given by $\mathscr{E} = \big \{ \{ g \Gamma_0, g t \Gamma_0\} \,|\, g \in \Gamma \big \}$.

We define an assignment $v \mapsto \Omega(v)$ of a properly convex open set to each vertex: for each $g \in \Gamma$, we set $\Omega(g \Gamma_0) := \rho(g) \cdot \Omega_0$.
This assignment is $\rho$-equivariant by construction.

It is an easy exercise to check that any three consecutive vertices of~$\mathscr{T}$ are, up to reversing order, either of the form 
$(gt \Gamma_0,g\Gamma_0,g\gamma t^{-1} \Gamma_0)$ where $g\in \Gamma$ and $\gamma \in \Gamma_0$, or of the form $(gt^\varepsilon \Gamma_0,g\Gamma_0,g\gamma' t^\varepsilon \Gamma_0)$ where $g\in\Gamma$ and $\varepsilon \in\{-1,1\}$ and $\gamma' \in \Gamma_0 \smallsetminus t^\varepsilon \Gamma_0 t^{-\varepsilon}$.
The corresponding triples
\begin{align*}
\big(\Omega(gt \Gamma_0), \Omega(g\Gamma_0), \Omega(g\gamma t^{-1} \Gamma_0)\big) & = \rho(g) \cdot (t \cdot \Omega_0, \Omega_0, \gamma t^{-1} \cdot \Omega_0) \\ 
\text{and }\ 
\big(\Omega(gt^\varepsilon \Gamma_0), \Omega(g\Gamma_0), \Omega(g\gamma' t^\varepsilon \Gamma_0)\big) &= \rho(g) \cdot (t^\varepsilon \cdot \Omega_0, \Omega_0, \gamma' t^\varepsilon \cdot \Omega_0)
\end{align*}
of properly convex open sets are in occultation position, because the triples $(t \cdot\Omega_0, \Omega_0, \gamma t^{-1} \cdot\Omega_0)$ and $(t^\varepsilon \cdot \Omega_0, \Omega_0, \gamma' t^\varepsilon \cdot\Omega_0)$ are by assumption.

The stabilizer in~$\Gamma$ of a vertex $g \Gamma_0$ is $g \Gamma_0 g^{-1}$, and the restriction $\rho|_{g \Gamma_0 g^{-1}} : g\gamma g^{-1} \mapsto \rho(g) \gamma \rho(g)^{-1}$ is discrete and faithful by construction.
By Theorem~\ref{thm:tree-with-group-actions}, the representation $\rho : \Gamma = (\Gamma_0) *_t \to \PGL(d,\RR)$ is discrete and faithful, and its image preserves the open set $\Omega = \bigcup_{\gamma \in \Gamma} \rho(\gamma) \cdot \Conv{\Omega_0 \cup t \cdot \Omega_0}$, which is properly convex by Theorem~\ref{thm:tree}.
\end{proof}

\subsection{Reminders: graphs of groups} \label{subsec:remind-gog}

Let us recall the notions of a graph of groups, of its fundamental group, and of its universal covering tree.
We follow Serre~\cite{ser80}, with some slight notational adjustments.

Given a graph $\mathsf{Y} = (\mathsf{V}, \mathsf{E})$, we denote by $\vec{\mathsf{E}}$ the collection of all edges with a choice of orientation. 
Given $\mathsf e \in \vec{\mathsf{E}}$, we denote by $\overline{\mathsf e} \in \vec{\mathsf{E}}$, the same edge but with opposite orientation, by $|\mathsf e| \in \mathsf{E}$ the corresponding unoriented edge, and by $\mathsf{o}(\mathsf e)$ (\resp $\mathsf{t}(\mathsf e)$) the initial (\resp terminal) vertex of~$\mathsf e$. 

A \emph{graph of groups} $(\boldsymbol{\Gamma},\mathsf{Y})$ is a connected graph $\mathsf{Y} = (\mathsf{V}, \mathsf{E})$ together with an assignment $\mathsf v \mapsto \Gamma_{\mathsf v}$ of group to each vertex, an assignment $|\mathsf e| \mapsto \Gamma_{|\mathsf e|}$ of group to each (unoriented) edge $|\mathsf e| \in \mathsf{E}$, and an injective group homomorphism $\iota_{\mathsf e} : \Gamma_{|\mathsf e|} \hookrightarrow \Gamma_{\mathsf{o}(\mathsf e)}$ for each oriented edge $\mathsf e \in \vec{\mathsf{E}}$.
For $\gamma \in \Gamma_{|\mathsf e|}$, we will use the notation $\gamma^{\mathsf e} := \iota_{\mathsf e}(\gamma)$. 
(Notations $\iota_{\mathsf e}$ and $\iota_{\overline{\mathsf e}}$ are switched compared to \cite{ser80}; the present convention is somewhat more natural for our purpose.)

Given a graph of groups $(\boldsymbol{\Gamma},\mathsf{Y})$, we denote by $F(\boldsymbol{\Gamma}, \mathsf{Y})$ the group freely generated by the vertex groups $\{\Gamma_{\mathsf v}\}_{\mathsf v \in \mathsf{V}}$ and by the oriented edges $\vec{\mathsf{E}}$, modulo the relations
\begin{itemize}
  \item $\mathsf e = \overline{\mathsf e}^{\, -1}$ for all $\mathsf e \in \vec{\mathsf{E}}$, and
  \item $\gamma^{\mathsf e} = \mathsf e \gamma^{\overline{\mathsf e}} \mathsf{e}^{-1}$ for all $\mathsf e \in \vec{\mathsf{E}}$ and all $\gamma \in \Gamma_{|\mathsf e|}$.
\end{itemize}
It is an important fact that each $\Gamma_{\mathsf v}$ injects naturally into $F(\boldsymbol{\Gamma},\mathsf{Y})$ (see \cite[\S\,5.2, Cor.\,1]{ser80}).
Thus we will think of elements of $\Gamma_{\mathsf v}$ as elements of $F(\boldsymbol{\Gamma},\mathsf{Y})$.

Given a path $\mathsf c = (\mathsf e_1, \ldots, \mathsf e_n)$ in $\mathsf{Y}$, where $\mathsf e_i \in \vec{\mathsf{E}}$ and $\mathsf{t}(\mathsf e_i) = \mathsf{o}(\mathsf e_{i+1}) := \mathsf v_i \in \mathsf{V}$ for all $1\leq i\leq n-1$, and given $\mu = (r_0, \ldots, r_n)$ where $r_i \in \Gamma_{\mathsf v_i}$ for all $0\leq i\leq n$ (where $\mathsf v_0 := \mathsf{o}(\mathsf e_1)$ and $\mathsf v_n := \mathsf{t}(\mathsf e_n)$), we set
\begin{align}\label{eqn:cmu}
[\mathsf c,\mu] := r_0 \mathsf e_1 r_1 \mathsf e_2 \ldots \mathsf e_n r_n \in F(\boldsymbol{\Gamma}, \mathsf{Y})
\end{align}
and call such an element a \emph{path in $F(\boldsymbol{\Gamma}, \mathsf{Y})$} starting at~$\mathsf v_0$ and ending at~$\mathsf v_n$.
We say that this path is a \emph{loop in $F(\boldsymbol{\Gamma}, \mathsf{Y})$} if $\mathsf v_0 = \mathsf v_n$.

Fix a base vertex $\mathsf v_0 \in \mathsf{V}$. 
Then the \emph{fundamental group} $\pi_1(\boldsymbol{\Gamma},\mathsf{Y},\mathsf v_0)$ of the graph of groups $(\boldsymbol{\Gamma},\mathsf{Y})$, based at $\mathsf v_0$, is the subgroup of $F(\boldsymbol{\Gamma},\mathsf{Y})$ consisting of all loops $[\mathsf c, \mu]$ in $F(\boldsymbol{\Gamma}, \mathsf{Y})$ with $\mathsf{o}(\mathsf c) = \mathsf{t}(\mathsf c) = \mathsf v_0$. 

\begin{examples} \label{ex:gog}
When $\Gamma_{\mathsf{v}}=\{1\}$ for all vertices $\mathsf{v} \in \mathsf{V}$, the group $\pi_1(\boldsymbol{\Gamma}, \mathsf{Y},\mathsf{v}_0)$ identifies with $\pi_1(\mathsf{Y},\mathsf{v}_0)$, the fundamental group in the usual sense.

When the graph $\mathsf{Y}$ consists of two vertices $\mathsf{v}_0,\mathsf{v}_1$ connected by an edge~$\mathsf{e}$, the group $\pi_1(\boldsymbol{\Gamma}, \mathsf{Y},\mathsf{v}_0)$ identifies with the amalgamated free product $\Gamma_{\mathsf v_0} *_{\Gamma_{|\mathsf e|}}\nolinebreak \Gamma_{\mathsf v_1}$.

When the graph $\mathsf{Y}$ consists of a single vertex $\mathsf{v}_0$ connected to itself by one edge $\mathsf e$, the group $\pi_1(\boldsymbol{\Gamma}, \mathsf{Y},\mathsf{v}_0)$ identifies with an HNN extension $(\Gamma_{\mathsf v_0})*_t$, where $t$ conjugates $\iota_{\mathsf{e}}(\Gamma_{|\mathsf e|})$ to $\iota_{\mathsf{\overline{e}}}(\Gamma_{|\mathsf e|})$.
\end{examples}

The \emph{universal covering tree} (or \emph{Bass--Serre tree}) ${\mathscr{T}}(\boldsymbol{\Gamma}, \mathsf{Y}, \mathsf v_0)$ of the graph of groups $(\boldsymbol{\Gamma}, \mathsf{Y})$, based at $\mathsf v_0$, is defined as follows. 
The set $\mathscr{V}(\boldsymbol{\Gamma}, \mathsf{Y}, \mathsf v_0)$ of vertices of $\mathscr{T}(\boldsymbol{\Gamma}, \mathsf{Y}, \mathsf v_0)$ is the collection of cosets in $F(\boldsymbol{\Gamma}, \mathsf{Y})$ of the form $v = [\mathsf c, \mu] \, \Gamma_{\mathsf v}$ where $\mathsf v \in \mathsf{V}$ and where $[\mathsf c, \mu]$ is a path in $F(\boldsymbol{\Gamma}, \mathsf{Y})$ starting at $\mathsf v_0$ and ending at~$\mathsf v$. 
Two vertices of $\mathscr{T}(\boldsymbol{\Gamma}, \mathsf{Y}, \mathsf v_0)$ are adjacent (\ie related by an edge) if they are of the form $[\mathsf c, \mu] \, \Gamma_{\mathsf v}$ and $[\mathsf c, \mu]\mathsf e \, \Gamma_{\mathsf v'}$ where $\mathsf v,\mathsf v' \in \mathsf{V}$, where $[\mathsf c, \mu]$ is a path in $F(\boldsymbol{\Gamma}, \mathsf{Y})$ starting at $\mathsf v_0$ and ending at~$\mathsf v$, and where $\mathsf e \in \vec{\mathsf{E}}$ is an oriented edge of~$\mathsf{Y}$ with $\mathsf{o}(\mathsf e) = \mathsf v$ and $\mathsf{t}(\mathsf e) = \mathsf v'$. 
Then $\mathscr{T}(\boldsymbol{\Gamma}, \mathsf{Y}, \mathsf v_0) = (\mathscr{V}(\boldsymbol{\Gamma}, \mathsf{Y}, \mathsf v_0),\mathscr{E}(\boldsymbol{\Gamma}, \mathsf{Y}, \mathsf v_0))$ is a tree, and the action of $F(\boldsymbol{\Gamma}, \mathsf{Y})$ on itself by left multiplication induces a natural simplicial action of the fundamental group $\pi_1(\boldsymbol{\Gamma},\mathsf{Y},\mathsf v_0)$ on the tree $\mathscr{T}(\boldsymbol{\Gamma}, \mathsf{Y}, \mathsf v_0)$.
There are natural projections 
$$ \begin{array}{ccccccc} \mathscr{V}(\boldsymbol{\Gamma}, \mathsf{Y}, \mathsf v_0) & \longrightarrow & \mathsf{V} &~\text{ and }~& {\mathscr{E}} (\boldsymbol{\Gamma}, \mathsf{Y}, \mathsf v_0) & \longrightarrow & {\mathsf{E}} \\ 
\, [\mathsf c, \mu] \, \Gamma_{\mathsf v} &\longmapsto & \mathsf{v} & & \{[\mathsf c, \mu] \, \Gamma_{\mathsf v}, [\mathsf c, \mu]\mathsf e \, \Gamma_{\mathsf v'}\} & \longmapsto & \ |\mathsf{e}|. \end{array} $$

\begin{remark}
There is an equivalent definition of the fundamental group and universal covering tree which replaces the basepoint $\mathsf v_0$ with a spanning tree in~$\mathsf{Y}$. 
The definition given here is more natural for our purposes.
\end{remark}

Given a target group $G$, a representation $\rho : \pi_1(\boldsymbol{\Gamma},\mathsf{Y}, \mathsf v_0) \to G$ may be defined as follows. 
Choose representations $\rho_{\mathsf v}: \Gamma_{\mathsf v} \to G$ for each $\mathsf v \in \mathsf{V}$, and elements $g_{\mathsf e} \in G$ for each $\mathsf e \in \vec{\mathsf{E}}$, such that
\begin{itemize}
  \item $g_{\mathsf e} = g_{\overline{\mathsf e}}^{-1}$ for all $\mathsf e \in \vec{\mathsf{E}}$, and
  \item $\rho_{\mathsf{o}(\mathsf e)}(\gamma^{\mathsf e}) = g_{\mathsf e} \, \rho_{\mathsf{t}(\mathsf e)}(\gamma^{\overline{\mathsf e}}) \, g_{\mathsf{e}}^{-1}$ for all $\mathsf e \in \vec{\mathsf{E}}$ and all $\gamma \in \Gamma_{|\mathsf e|}$.
\end{itemize}
Then
\begin{equation} \label{eqn:rhobar}
\begin{array}{crcc} \underline{\rho} : & F(\boldsymbol{\Gamma}, \mathsf{Y}) & \longrightarrow & G \\
& \Gamma_{\mathsf v} \ni r \hspace{0.15cm} & \longmapsto & \rho_{\mathsf v}(r) \\
& \vec{\mathsf{E}} \ni \mathsf{e} \hspace{0.15cm} & \longmapsto & g_{\mathsf e}  \end{array}
\end{equation}
is a representation, which restricts to a representation
\begin{equation} \label{eqn:rhonobar}
\begin{array}{cccc} \rho : & \pi_1(\boldsymbol{\Gamma},\mathsf{Y}, \mathsf v_0)  & \longrightarrow &  G \\
& \underbrace{r_0 \mathsf e_1r_1 \dots \mathsf e_n r_n}_{[\mathsf c,\mu]} & \longmapsto & \rho_{\mathsf v_0}(r_0) \, g_{\mathsf e_1}\,  \rho_{\mathsf v_1}(r_1) \dots \, g_{\mathsf e_n}\,  \rho_{\mathsf v_n}(r_n)~.   \end{array}
\end{equation}

\subsection{A general combination theorem for graphs of groups} \label{subsec:combin-gog}

Here is our main combination theorem.

\begin{theorem} \label{thm:general-gog}
Let $(\boldsymbol{\Gamma}, \mathsf{Y})$ be a graph of groups, where $\mathsf{Y} = (\mathsf{V}, \mathsf{E})$, and let $\mathsf v_0 \in \mathsf{V}$ be a base vertex.
Suppose $\mathsf{E} \neq \varnothing$.
For $d\geq 3$, consider discrete and faithful representations $\rho_{\mathsf v}: \Gamma_{\mathsf v} \to \PGL(d,\RR)=:G$ for each $\mathsf v \in \mathsf{V}$, and elements $g_{\mathsf e} \in \PGL(d,\RR)$ for each $\mathsf e \in \vec{\mathsf{E}}$, such that 
\begin{itemize}
  \item $g_{\mathsf e} = g_{\overline{\mathsf e}}^{-1}$ for all $\mathsf e \in \vec{\mathsf{E}}$, and
  \item $\rho_{\mathsf{o}(\mathsf e)}(\gamma^{\mathsf e}) = g_{\mathsf e} \, \rho_{\mathsf{t}(\mathsf e)}(\gamma^{\overline{\mathsf e}}) \, g_{\mathsf{e}}^{-1}$ for all $\mathsf e \in \vec{\mathsf{E}}$ and all $\gamma \in \Gamma_{|\mathsf e|}$.
\end{itemize}
Let $\underline{\rho} : F(\boldsymbol{\Gamma}, \mathsf{Y}) \to \PGL(d,\RR)$ and $\rho : \pi_1(\boldsymbol{\Gamma},\mathsf{Y},\mathsf v_0) \to \PGL(d,\RR)$ be the associated representations as in \eqref{eqn:rhobar}--\eqref{eqn:rhonobar}.
For each $\mathsf v \in \mathsf{V}$, let $\Omega_{\mathsf v}$ be a $\rho_{\mathsf v}(\Gamma_{\mathsf v})$-invariant properly convex open subset of $\PP(\RR^n)$. 
Suppose that for all oriented edges $\mathsf e, \mathsf e' \in \vec{\mathsf{E}}$ with $\mathsf{o}(\mathsf e) = \mathsf{o}(\mathsf e') =: \mathsf v$, and for each $\gamma \in \Gamma_{\mathsf v}$ such that either $\mathsf e\neq \mathsf e'$, or $\mathsf e= \mathsf e'$ and $\gamma \notin \Gamma_{|\mathsf{e}|}$, the triple of properly convex open sets
$$(g_{\mathsf e} \cdot \Omega_{\mathsf{t}(\mathsf e)}, \Omega_{\mathsf v}, \rho_{\mathsf v}(\gamma) g_{\mathsf e'} \cdot \Omega_{\mathsf{t}(\mathsf e')})$$
is in occultation position.
Then $\rho : \pi_1(\boldsymbol{\Gamma},\mathsf{Y},\mathsf v_0) \to \PGL(d,\RR)$ is discrete and faithful.
Moreover, for each oriented edge $\mathsf e\in \vec{\mathsf{E}}$, the set $\Omega_{\mathsf e}:=\Conv{\Omega_{\mathsf{o}(\mathsf e)} \cup\nolinebreak g_{\mathsf e} \cdot\nolinebreak \Omega_{\mathsf{t}(\mathsf e)}}$ is well defined, properly convex, 
and the $\rho(\pi_1(\boldsymbol{\Gamma},\mathsf{Y},\mathsf v_0))$-invariant open set
\begin{equation} \label{eqn:defomegaa}
\Omega  := \bigcup_{\substack{[\mathsf c,\mu]\,\Gamma_{\mathsf{v}} \in {\mathscr{V}}(\boldsymbol{\Gamma},\mathsf{Y}, \mathsf v_0)\\ \mathsf e \in \vec{\mathsf{E}} \text{ with $\mathsf{o}(\mathsf e) = \mathsf{v}$}}} \underline{\rho}([\mathsf c,\mu]) \cdot \Omega_{\mathsf e}\end{equation}
is properly convex.
\end{theorem}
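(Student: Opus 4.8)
The plan is to derive Theorem~\ref{thm:general-gog} from Theorems~\ref{thm:tree} and~\ref{thm:tree-with-group-actions} by realizing the Bass--Serre tree $\mathscr{T} := \mathscr{T}(\boldsymbol{\Gamma},\mathsf{Y},\mathsf v_0)$ of the graph of groups as a tree of properly convex open sets. Write $\Gamma := \pi_1(\boldsymbol{\Gamma},\mathsf{Y},\mathsf v_0)$, acting on $\mathscr{T}$ by left multiplication on cosets, so that vertex stabilizers are the conjugates of the vertex groups. Since $\mathsf{E}\neq\varnothing$ the tree $\mathscr{T}$ has an edge, and it has at least three vertices unless $\mathsf{Y}$ consists of a single edge whose two edge inclusions are surjective --- in which case $\Gamma$ is a vertex group and there is nothing to prove --- so I will assume $|\mathscr{V}(\boldsymbol{\Gamma},\mathsf{Y},\mathsf v_0)|\geq 3$. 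First I would build the $\rho$-equivariant assignment $v\mapsto\Omega(v)$: for a vertex $v = [\mathsf c,\mu]\,\Gamma_{\mathsf v}$ set $\Omega(v) := \underline{\rho}([\mathsf c,\mu])\cdot\Omega_{\mathsf v}$. This is independent of the chosen representative path, because two paths ending at $\mathsf v$ represent the same coset exactly when they differ on the right by an element of $\Gamma_{\mathsf v}$, while $\Omega_{\mathsf v}$ is $\rho_{\mathsf v}(\Gamma_{\mathsf v})$-invariant and $\underline{\rho}$ restricts to $\rho_{\mathsf v}$ on $\Gamma_{\mathsf v}$; here one uses precisely that the hypotheses $g_{\mathsf e} = g_{\overline{\mathsf e}}^{-1}$ and $\rho_{\mathsf{o}(\mathsf e)}(\gamma^{\mathsf e}) = g_{\mathsf e}\,\rho_{\mathsf{t}(\mathsf e)}(\gamma^{\overline{\mathsf e}})\,g_{\mathsf e}^{-1}$ make $\underline{\rho}$ a well-defined homomorphism on $F(\boldsymbol{\Gamma},\mathsf{Y})$ as in~\eqref{eqn:rhobar}. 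Equivariance $\Omega(\gamma\cdot v) = \rho(\gamma)\cdot\Omega(v)$ for $\gamma\in\Gamma$ is then immediate from $\gamma\cdot([\mathsf c,\mu]\,\Gamma_{\mathsf v}) = (\gamma[\mathsf c,\mu])\,\Gamma_{\mathsf v}$.

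Next I would verify the hypothesis of Theorem~\ref{thm:tree}, namely that consecutive triples of vertices of $\mathscr{T}$ map to triples in occultation position. Given consecutive vertices $(u,v,w)$, choose a representative $[\mathsf c,\mu]$ of the middle vertex $v$; after replacing it by $[\mathsf c,\mu]\,\gamma_0$ for a suitable $\gamma_0\in\Gamma_{\mathsf v}$, which does not change $\Omega(v)$, one can write $u = [\mathsf c,\mu]\,\mathsf e\,\Gamma_{\mathsf{t}(\mathsf e)}$ and $w = [\mathsf c,\mu]\,\gamma\,\mathsf e'\,\Gamma_{\mathsf{t}(\mathsf e')}$ for oriented edges $\mathsf e,\mathsf e'$ of $\mathsf{Y}$ with $\mathsf{o}(\mathsf e) = \mathsf{o}(\mathsf e') = \mathsf v$ and some $\gamma\in\Gamma_{\mathsf v}$. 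The standard normal-form description of adjacency in the Bass--Serre tree (\cite{ser80}) shows that $u\neq w$ holds exactly when $\mathsf e\neq\mathsf e'$, or $\mathsf e = \mathsf e'$ and $\gamma\notin\iota_{\mathsf e}(\Gamma_{|\mathsf e|})$ --- which is exactly the range over which occultation is assumed. Since
$$\underline{\rho}([\mathsf c,\mu])^{-1}\cdot\big(\Omega(u),\,\Omega(v),\,\Omega(w)\big) \;=\; \big(g_{\mathsf e}\cdot\Omega_{\mathsf{t}(\mathsf e)},\ \Omega_{\mathsf v},\ \rho_{\mathsf v}(\gamma)\,g_{\mathsf e'}\cdot\Omega_{\mathsf{t}(\mathsf e')}\big)$$
is in occultation position by assumption, and occultation position is invariant under the projective transformation $\underline{\rho}([\mathsf c,\mu])$, the triple $(\Omega(u),\Omega(v),\Omega(w))$ is in occultation position.

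Now Theorem~\ref{thm:tree} applies (its dimension hypothesis holding since $d\geq 3$): each $\Omega(e) := \Conv{\Omega(v)\cup\Omega(w)}$, for $e = \{v,w\}\in\mathscr{E}$, is well defined and properly convex, and $\Omega := \bigcup_e\Omega(e)$ is a properly convex open subset of $\PP(\RR^d)$. The restriction of $\rho$ to any vertex stabilizer of $\mathscr{T}$ is conjugate to some $\rho_{\mathsf v}$, hence discrete and faithful, so Theorem~\ref{thm:tree-with-group-actions} then yields that $\rho$ is discrete and faithful and that $\rho(\Gamma)$ preserves $\Omega$. It remains to identify this $\Omega$ with the one in the statement: for an edge $e = \{[\mathsf c,\mu]\,\Gamma_{\mathsf{o}(\mathsf e)},\,[\mathsf c,\mu]\,\mathsf e\,\Gamma_{\mathsf{t}(\mathsf e)}\}$ one computes $\Omega(e) = \underline{\rho}([\mathsf c,\mu])\cdot\Conv{\Omega_{\mathsf{o}(\mathsf e)}\cup g_{\mathsf e}\cdot\Omega_{\mathsf{t}(\mathsf e)}} = \underline{\rho}([\mathsf c,\mu])\cdot\Omega_{\mathsf e}$; in particular each $\Omega_{\mathsf e}$ is well defined and properly convex, and as $\{v,w\}$ runs over $\mathscr{E}(\boldsymbol{\Gamma},\mathsf{Y},\mathsf v_0)$ these sets are exactly the translates $\underline{\rho}([\mathsf c,\mu])\cdot\Omega_{\mathsf e}$ appearing in~\eqref{eqn:defomegaa}, so the two descriptions of $\Omega$ agree.

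I expect the main --- if modest --- obstacle to be the combinatorial translation in the second step: one must check, using Serre's normal form for $F(\boldsymbol{\Gamma},\mathsf{Y})$, that every consecutive triple of vertices of $\mathscr{T}$ has the displayed shape and that the distinctness condition $u\neq w$ corresponds precisely to ``$\mathsf e\neq\mathsf e'$, or $\mathsf e = \mathsf e'$ and $\gamma\notin\Gamma_{|\mathsf e|}$'' in the hypothesis, together with the bookkeeping in $F(\boldsymbol{\Gamma},\mathsf{Y})$ behind the well-definedness and equivariance of $v\mapsto\Omega(v)$. No geometric input beyond Theorems~\ref{thm:tree} and~\ref{thm:tree-with-group-actions} is needed.
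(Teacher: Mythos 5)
Your proposal is correct and follows essentially the same route as the paper: define the $\rho$-equivariant assignment $v\mapsto\Omega(v)=\underline{\rho}([\mathsf c,\mu])\cdot\Omega_{\mathsf v}$ on the Bass--Serre tree, check via the normal-form description of adjacency that consecutive triples are exactly the $\underline{\rho}([\mathsf c,\mu])$-translates of the triples assumed to be in occultation position, and then invoke Theorems~\ref{thm:tree} and~\ref{thm:tree-with-group-actions}. Your extra remark about the degenerate case $|\mathscr{V}(\boldsymbol{\Gamma},\mathsf{Y},\mathsf v_0)|<3$ (a single-edge amalgam with both edge inclusions surjective) is a reasonable aside that the paper itself leaves implicit, and it does not affect the argument.
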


We note that in the setting of Theorem~\ref{thm:general-gog} we have $\Omega_{\mathsf e} = g_{\mathsf e} \cdot \Omega_{\overline{\mathsf e}}$ for all oriented edges $\mathsf e \in \vec{\mathsf{E}}$.

\begin{proof}
Let ${\mathscr{T}}(\boldsymbol \Gamma, \mathsf Y, \mathsf v_0) = (\mathscr V(\boldsymbol \Gamma, \mathsf Y, \mathsf v_0), \mathscr E(\boldsymbol \Gamma, \mathsf Y, \mathsf v_0))$ be the universal covering tree of the graph of groups $(\boldsymbol{\Gamma}, \mathsf{Y})$, based at $\mathsf v_0 \in \mathsf{V}$.

To each vertex $v = [\mathsf c, \mu] \, \Gamma_{\mathsf v} \in \mathscr V(\boldsymbol \Gamma, \mathsf Y, \mathsf v_0)$ of ${\mathscr{T}}(\boldsymbol \Gamma, \mathsf Y, \mathsf v_0)$, we associate the properly convex open set
$$\Omega(v) = \Omega([\mathsf c, \mu] \, \Gamma_{\mathsf v}) := \underline{\rho}([\mathsf c,\mu]) \cdot \Omega_{\mathsf v}.$$
This assignment is $\rho$-equivariant: if $[\mathsf c',\mu'] \in \pi_1(\boldsymbol{\Gamma}, \mathsf{Y}, \mathsf v_0)$ and if $[\mathsf c,\mu] \, \Gamma_{\mathsf v}$ is a vertex of ${\mathscr{T}}(\boldsymbol \Gamma, \mathsf Y, \mathsf v_0)$ as above, then
\begin{align*}
\Omega([\mathsf c',\mu'] \cdot [\mathsf c,\mu] \, \Gamma_{\mathsf v}) & = \underline{\rho}([\mathsf c',\mu'][\mathsf c,\mu]) \cdot \Omega_{\mathsf v}\\
& = \rho([\mathsf c', \mu']) \cdot \Omega([\mathsf c, \mu] \, \Gamma_{\mathsf v}).
\end{align*}

One easily checks that any three consecutive vertices of ${\mathscr{T}}(\boldsymbol \Gamma, \mathsf Y, \mathsf v_0)$ are of the form $[\mathsf c, \mu] \mathsf e \, \Gamma_{\mathsf{t}(\mathsf e)}$, $[\mathsf c, \mu] \, \Gamma_{\mathsf v}$, $[\mathsf c, \mu] \gamma \mathsf e' \, \Gamma_{\mathsf{t}(\mathsf e')}$ where $\mathsf{v} \in \mathsf{V}$, where $[\mathsf c, \mu]$ is a path in $F(\boldsymbol{\Gamma}, \mathsf{Y})$ starting at $\mathsf v_0$ and ending at~$\mathsf v$, and where $\mathsf e, \mathsf e' \in \vec{\mathsf{E}}$ are oriented edges of~$\mathsf{Y}$ with $\mathsf{o}(\mathsf e) = \mathsf{o}(\mathsf e') = \mathsf v$. 
The corresponding triple
\begin{align*}
& \big(\Omega([\mathsf c, \mu] \mathsf e' \, \Gamma_{\mathsf{t}(\mathsf e)}), \Omega([\mathsf c, \mu] \, \Gamma_{\mathsf v}), \Omega([\mathsf c, \mu] \gamma \mathsf e \, \Gamma_{\mathsf{t}(\mathsf e')})\big)\\
& = \big(\underline{\rho}([\mathsf c,\mu]) g_{\mathsf e'} \cdot \Omega_{\mathsf{t}(\mathsf e')}, \underline{\rho}([\mathsf c,\mu]) \cdot \Omega_{\mathsf v}, \underline{\rho}([\mathsf c,\mu]) \rho_{\mathsf v}(\gamma) g_{\mathsf e} \cdot \Omega_{\mathsf{t}(\mathsf e)})
\end{align*}
of properly convex open sets is in occultation position, because the triple $(g_{\mathsf e} \cdot \Omega_{\mathsf{t}(\mathsf e)}, \Omega_{\mathsf v}, \rho_{\mathsf v}(\gamma) g_{\mathsf e'} \cdot \Omega_{\mathsf{t}(\mathsf e')})$ is by assumption.

The stabilizer in $\pi_1(\boldsymbol{\Gamma},\mathsf{Y},\mathsf v_0)$ of a vertex $[\mathsf c, \mu] \, \Gamma_{\mathsf v}$ is $[\mathsf c, \mu] \, \Gamma_{\mathsf v} \, [\mathsf c, \mu]^{-1}$, and the restriction $\rho|_{[\mathsf c, \mu] \, \Gamma_{\mathsf v} \, [\mathsf c, \mu]^{-1}} : [\mathsf c, \mu] \, \gamma \, [\mathsf c, \mu]^{-1} \mapsto \underline{\rho}([\mathsf c, \mu]) \gamma \underline{\rho}([\mathsf c, \mu])^{-1}$ 
is discrete and faithful by construction. 
By Theorem~\ref{thm:tree-with-group-actions}, the representation $\rho : \pi_1(\boldsymbol{\Gamma},\mathsf{Y},\mathsf v_0) \to \PGL(d,\RR)$ is discrete and faithful, and its image preserves the open set $\Omega$ of \eqref{eqn:defomegaa}, which is properly convex by Theorem~\ref{thm:tree}.
\end{proof}

\section{Applications: free products of discrete groups} \label{sec:free-prod}

In this section we prove Theorem~\ref{thm:free-product-fd} and Corollaries \ref{cor:counterex-nori} and~\ref{cor:combine-discrete-groups}.
This is done in Sections \ref{subsec:proof-free-product-fd} to~\ref{subsec:nori}.

Before this we start, in Section~\ref{subsec:thicken}, by proving two basic results on thickenings of convex sets, which are useful for various applications of Theorems \ref{thm:amalgam}, \ref{thm:HNN}, \ref{thm:tree-with-group-actions}, and later Theorems \ref{thm:amalgam-cc}, \ref{thm:HNN-cc}, \ref{thm:gog-cc} involving convex cocompactness.
We also give some reminders and basic facts about second symmetric powers, proximality, and limit sets in Section~\ref{subsec:remind-prox}.

\subsection{Reminders: the Hilbert metric} \label{subsec:remind-Hilbert}

Let $\Omega$ be a nonempty properly convex open subset of $\PP(V)$, with boundary~$\partial\Omega$.
Recall the \emph{Hilbert metric} $d_{\Omega}$ on~$\Omega$:
\begin{equation} \label{eqn:d-Omega}
d_{\Omega}(x,y) := \frac{1}{2} \log \, \cro{a}{x}{y}{b}
\end{equation}
for all distinct $x,y\in\Omega$, where $\cro{\,\underline{~~}}{\underline{~~}}{\underline{~~}}{\underline{~~}\,}$ is the cross-ratio on $\PP^1(\RR)$, normalized so that $\cro{0}{1}{t}{\infty}=t$, and where $a,b$ are the intersection points of $\partial\Omega$ with the projective line through $x$ and~$y$, with $a,x,y,b$ in this order.
The metric space $(\Omega,d_{\Omega})$ is proper (closed balls are compact) and complete, and the group
$$\mathrm{Aut}(\Omega) := \{g\in\PGL(V) ~|~ g\cdot\Omega=\Omega\}$$
acts on~$\Omega$ by isometries for~$d_{\Omega}$.
As a consequence, any discrete subgroup of $\mathrm{Aut}(\Omega)$ acts properly discontinuously on~$\Omega$.

\begin{remark} \label{rem:Hilb-metric-include}
It follows from the definition that if $\Omega_1 \subset \Omega_2$ are nonempty properly convex open subsets of $\PP(V)$, then the corresponding Hilbert metrics satisfy $d_{\Omega_1}(x,y) \geq d_{\Omega_2}(x,y)$ for all $x,y\in\Omega_1$.
\end{remark}

\subsection{Preliminaries: thickening convex sets} \label{subsec:thicken}

We shall use the following notation.

\begin{notation} \label{notn:fat}
Suppose our finite-dimensional real vector space $V$ is the direct sum of two linear subspaces $U$ and~$U'$.
For any nonempty properly convex open subset $\mathcal{O}$ of $\PP(U)$, we set
$$\mathcal{O} \times U' := \PP\big(\big\{ u+u' \,|\, u\in\widetilde{\mathcal{O}},\ u'\in U'\big\}\big),$$
where $\widetilde{\mathcal{O}}$ is any properly convex open cone of~$U$ lifting~$\mathcal{O}$.
The set $\mathcal{O} \times U'$ is open and convex, and its intersection with $\PP(U)$ is $\mathcal{O} \times \{0\} = \mathcal O$.
\end{notation}

In general, every nonempty convex open subset $\mathbb{O}$ of $\PP(V)$ is of the form $\mathbb{O}= \mathcal{O} \times U'$ for some decomposition $V = U\oplus U'$ and some nonempty properly convex open subset $\mathcal{O}$ of $\PP(U)$.
The subspace $U'$ is uniquely defined: $\PP(U')$ is the intersection of all projective hyperplanes of $\PP(V)$ disjoint from~$\mathbb{O}$.
The set $\mathbb{O}$ is properly convex if and only if $U' = \{0\}$.

\begin{proposition} \label{prop:thicken-add-dim}
Let $\Gamma$ be a subgroup of $\SL^{\pm}(V)$ preserving a direct sum decomposition $V = U \oplus U'$.
Suppose that $\Gamma$ preserves a nonempty properly convex open subset $\mathcal{O}$ of $\PP(U)$ and acts trivially on~$U'$.
Then there is a $\Gamma$-invariant properly convex open subset $\Omega$ of $\PP(V)$ such that $\mathcal{O} \subset \Omega \subset \mathcal{O} \times U'$ and $\overline{\Omega} \cap \PP(U') = \varnothing$.
Moreover, for any $\Gamma$-invariant properly convex open subset $\Omega$ of $\PP(V)$ containing~$\mathcal{O}$ we have $\Lambdao_{\Omega}(\Gamma) = \Lambdao_{\mathcal{O}}(\Gamma)$; in particular, the action of $\Gamma$ on~$\Omega$ is convex cocompact if and only if the action of $\Gamma$ on~$\mathcal{O}$ is.
\end{proposition}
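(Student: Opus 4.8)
The plan is to construct $\Omega$ explicitly as a suitable thickening of $\mathcal{O}$ inside the wedge $\mathcal{O} \times U'$, controlled by a $\Gamma$-invariant gauge on the fibers. Choose a $\Gamma$-invariant properly convex open cone $\widetilde{\mathcal{O}} \subset U$ lifting $\mathcal{O}$, and fix a point $u_0 \in \widetilde{\mathcal{O}}$. Using the Hilbert metric $d_{\mathcal{O}}$ on $\mathcal{O}$ and the properness of the $\Gamma$-action, one can build a positive continuous $\Gamma$-invariant function $\varepsilon : \mathcal{O} \to (0,\infty)$ — for instance by averaging a bump function over the $\Gamma$-orbit, or by taking $\varepsilon(x)$ proportional to $1/(1 + \sup_{\gamma} \text{(something locally finite)})$; the key point is that $\Gamma$ acts properly discontinuously and cocompactly-on-a-neighborhood is not needed, just local finiteness of orbits. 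First I would set
$$\widetilde{\Omega} := \big\{\, t(u + u') \,\mid\, t > 0,\ u \in \widetilde{\mathcal{O}},\ u' \in U',\ \|u'\|_U < \varepsilon(\PP(u))\,\big\},$$
where $\|\cdot\|_U$ is an auxiliary norm on $U'$ (this need not be $\Gamma$-invariant since we only use it to define the $\Gamma$-invariant shape via the $\Gamma$-invariant function $\varepsilon$ — but care is needed, so I would instead fix an arbitrary norm and make $\varepsilon$ absorb the distortion, \ie replace the condition by ``$u'$ lies in the $\Gamma$-translate of a fixed small ball scaled by $\varepsilon$''). Then $\Omega := \PP(\widetilde{\Omega})$. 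Openness and convexity of $\Omega$ follow if $\varepsilon$ is chosen concave enough (or one takes a convex hull at the end); $\mathcal{O} \subset \Omega \subset \mathcal{O} \times U'$ is immediate; $\Gamma$-invariance holds by construction; and $\overline{\Omega} \cap \PP(U') = \varnothing$ because every point of $\overline{\Omega}$ has nonzero $U$-component (the fiber width shrinks but the cone $\widetilde{\mathcal{O}}$ stays away from $\partial$ in the $U$-directions relative to the $u'$-directions). Proper convexity: $\Omega$ is bounded in the affine chart complementary to a hyperplane containing $\PP(U')$ together with a supporting hyperplane of $\mathcal{O}$ in $\PP(U)$.

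Next I would prove the orbital limit set statement, which does not depend on the particular $\Omega$ constructed: let $\Omega$ be any $\Gamma$-invariant properly convex open set with $\mathcal{O} \subset \Omega$. The inclusion $\Lambdao_{\mathcal{O}}(\Gamma) \subset \Lambdao_{\Omega}(\Gamma)$ is clear since any accumulation point in $\partial\mathcal{O}$ of a $\Gamma$-orbit in $\mathcal{O}$ is also an accumulation point of that same orbit viewed in $\Omega$, and it lies in $\partial\Omega$ because $\partial\mathcal{O} \subset \overline{\mathcal{O}} \subset \overline{\Omega}$ while $\mathcal{O} \cap \partial\Omega = \varnothing$ (as $\mathcal{O} \subset \Omega$ open). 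For the reverse inclusion, take $z \in \Lambdao_{\Omega}(\Gamma)$, say $z = \lim \gamma_n \cdot y$ for some $y \in \Omega$ and $\gamma_n \to \infty$ in $\Gamma$. Since $\Gamma$ preserves $U$ and acts trivially on $U'$, write $\gamma_n$ in block form $\begin{pmatrix} A_n & 0 \\ C_n & I\end{pmatrix}$ with $A_n \in \GL(U)$ preserving $\mathcal{O}$; projectively, the $U$-component of $\gamma_n \cdot y$ is governed by $A_n$ acting on the $U$-component of $y$, which is a point of $\mathcal{O}$ (after projecting, since $\Omega \subset \mathcal O \times U'$). The point is that the dynamics in the $U$-directions dominates: $\|A_n\|$ grows and the $U$-projection of $\gamma_n \cdot y$ accumulates on $\overline{\mathcal{O}}$, in fact on $\Lambdao_{\mathcal{O}}(\Gamma)$ by definition; meanwhile the $U'$-contribution, being fixed pointwise and only translated by $C_n A_n^{-1}$ relative to the $U$-part, becomes projectively negligible. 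Hence $z \in \partial\mathcal{O}$, and since $z$ is a limit of $A_n$ applied to a point of $\mathcal{O}$, we get $z \in \Lambdao_{\mathcal{O}}(\Gamma)$.

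Finally, the equivalence of convex cocompactness follows formally: by Definition~\ref{def:limcore} the convex core $\Ccore_{\Omega}(\Gamma)$ is the intersection of $\Omega$ with the convex hull of $\overline{\Lambdao_{\Omega}(\Gamma)} = \overline{\Lambdao_{\mathcal{O}}(\Gamma)}$, which is contained in $\overline{\mathcal{O}}$; therefore $\Ccore_{\Omega}(\Gamma) = \Ccore_{\mathcal{O}}(\Gamma)$ as subsets of $\PP(V)$, and $\Gamma$ acts cocompactly on one iff on the other. Similarly the ``naively cocompact, large enough'' condition transfers verbatim. The main obstacle I anticipate is the construction of $\Omega$ in the first sentence of the proposition: getting a $\Gamma$-invariant \emph{convex} open thickening whose closure avoids $\PP(U')$ requires the fiberwise gauge $\varepsilon$ to be simultaneously $\Gamma$-invariant, positive, and compatible with convexity, and the cleanest route is probably to take $\widetilde\Omega$ to be the convex hull of $\bigcup_{\gamma \in \Gamma} \gamma \cdot (\text{small cone around } u_0 \text{ in the } U' \text{-directions})$ together with $\widetilde{\mathcal{O}}$, and then check by a compactness/properness argument that this hull stays inside $\mathcal O \times U'$ and misses $\PP(U')$ — this is where one uses that $\Gamma$ acts properly on $\mathcal{O}$ so that the $\Gamma$-orbit of a small transverse cone does not ``fill up'' a neighborhood of $\PP(U')$.
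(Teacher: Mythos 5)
Your overall strategy (thicken $\mathcal{O}$ transversally inside $\mathcal{O}\times U'$ using a $\Gamma$-controlled gauge) is the right idea, but the execution has a genuine gap exactly where the paper supplies its key device. The paper lifts $\mathcal{O}$ to a cone $\widetilde{\mathcal{O}}\subset U$ and uses the \emph{canonical $\Gamma$-invariant convex hypersurface} $S\subset\widetilde{\mathcal{O}}$ (Koecher--Vinberg), setting $\widetilde{\Omega}=\RR_{>0}(S+\mathcal{A})$ for a bounded convex neighborhood $\mathcal{A}$ of $0$ in $U'$; invariance is automatic (since $\Gamma$ fixes $U'$ pointwise and preserves $S$), convexity is a two-line computation, and $\overline{\Omega}\cap\PP(U')=\varnothing$ follows because $\|s\|$ is bounded below on $S$ while $\|a\|$ is bounded on $\mathcal{A}$. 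Your gauge construction fails on two counts: the condition $\|u'\|<\varepsilon(\PP(u))$ is not scale-invariant, so the cone it generates degenerates to all of $\widetilde{\mathcal{O}}+U'$ (to make sense of a fiberwise gauge you need precisely a section of the cone, i.e.\ a hypersurface like $S$); and building $\varepsilon$ by averaging over the orbit requires local finiteness, i.e.\ discreteness of $\Gamma$ --- but the proposition does not assume $\Gamma$ discrete, and the paper applies it to $\tau'_d(\SL(d,\RR))$. Your fallback (interior of the hull of $\widetilde{\mathcal{O}}\cup\Gamma\cdot C$ for a small cone $C$ around $u_0$) can be made to work, but the decisive step --- that the closure of this hull misses $\PP(U')$ --- is exactly what you leave unproved, and ``properness of the action on $\mathcal{O}$'' is not the mechanism: what one needs is a uniform bound $\|u'\|/\|A_\gamma u\|\le\mathrm{const}$ over the orbit (e.g.\ via a linear functional positive on $\overline{\widetilde{\mathcal{O}}}\smallsetminus\{0\}$ together with $|\det A_\gamma|=1$, or again via the invariant hypersurface), plus the fact that on a properly convex cone the norm is additive up to constants, so the bound survives taking convex hulls.

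For the ``moreover'' part the paper simply cites \cite[Lem.\,10.5.(1)]{dgk-proj-cc}, whereas you attempt a direct proof, and several steps would fail as written. Since $\Gamma$ preserves the direct sum decomposition and acts trivially on $U'$, its elements are block \emph{diagonal} $A_n\oplus\mathrm{Id}_{U'}$, not lower triangular with a block $C_n$ (with $C_n\neq 0$ the claimed conclusion need not even hold). More seriously, the ``moreover'' statement concerns an \emph{arbitrary} $\Gamma$-invariant properly convex open $\Omega\supset\mathcal{O}$, so you may not assume $\Omega\subset\mathcal{O}\times U'$: the $U$-component of a point $y\in\Omega$ need not lie in $\mathcal{O}$ (or even in $\overline{\mathcal{O}}$), so ``the $U$-projection of $\gamma_n\cdot y$ accumulates on $\Lambdao_{\mathcal{O}}(\Gamma)$ by definition'' does not apply, and the domination/negligibility of the $U'$-part ($\|A_n u\|\to\infty$) is only clear when $[u]\in\mathcal{O}$. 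Even your ``easy'' inclusion is a non sequitur: from $z\in\overline{\Omega}$ and $\mathcal{O}\cap\partial\Omega=\varnothing$ you cannot conclude $z\in\partial\Omega$ for $z\in\partial\mathcal{O}$; one must show $\Lambdao_{\mathcal{O}}(\Gamma)\cap\Omega=\varnothing$, which is not automatic without discreteness (it requires, e.g., that the automorphism group of a properly convex open set is closed in $\PGL(V)$ and acts properly, so a diverging sequence of automorphisms cannot send an interior point to an interior point). Finally, the step $\Lambdao_\Omega(\Gamma)=\Lambdao_{\mathcal{O}}(\Gamma)\Rightarrow\Ccore_\Omega(\Gamma)=\Ccore_{\mathcal{O}}(\Gamma)$ also needs a word, since a priori the convex hull of the limit set could meet $\Omega$ along $\partial\mathcal{O}$; this is covered by the cited lemma, but not by your argument.
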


\begin{proof}
Let $\widetilde{\mathcal{O}}$ be a properly convex open cone of $U\smallsetminus\{0\}$ whose projection to $\PP(U)$ is~$\mathcal{O}$.
Inside~$\widetilde{\mathcal{O}}$, there is a $\Gamma$-invariant hypersurface $S$ which intersects every ray in $\widetilde{\mathcal{O}}$ exactly once, and which is convex in the sense that $[1,\infty)S$ is a convex subset of~$V$; indeed, a canonical such hypersurface is described in the classical works \cite{koe57,vin63}, for instance.
Choose a bounded convex open neighborhood $\mathcal A$ of the origin in~$U'$, and consider the $\Gamma$-invariant open cone
$$\widetilde{\Omega} := \RR_{>0} \, (S + \mathcal A)$$
of~$V$.
By construction, $\widetilde{\mathcal{O}} \subset \widetilde{\Omega} \subset \widetilde{\mathcal{O}} + U'$, hence the projection $\Omega$ of $\widetilde{\Omega}$ to $\PP(V)$ satisfies $\mathcal{O} \subset \Omega \subset \mathcal{O} \times U'$.

Let us check that $\widetilde{\Omega}$ is convex, which implies that $\Omega$ is convex.
For this it is sufficient to check that for any $s_0, s_1 \in S$, any $a_0, a_1 \in \mathcal A$, and any $t \in [0,1]$, the point $(1-t)(s_0, a_0) + t (s_1, a_1) \in U \oplus U'$ lies in $\widetilde{\Omega}$.
Since $S$ is a convex hypersurface in~$U$, the point $(1-t)s_0 + t s_1$ lies above (or on)~$S$, hence there exists $\lambda_t \in (0,1]$ such that $\lambda_t((1-t)s_0 + t s_1) \in S$.
Since the subset $\mathcal A$ of~$U'$ is convex and contains the origin, we have $\lambda_t((1-t)a_0 + t a_1) \in \mathcal A$.
Therefore $(1-t)(s_0, a_0) + t (s_1, a_1)$ lies in $\widetilde{\Omega}$, showing that $\widetilde{\Omega}$ is convex.

The set $\overline{\Omega} = \overline{\{[(s,a)]: s \in S, a \in \mathcal A\}}$ is disjoint from $\PP(U')$ since the norm of any $s \in S$ is bounded below, and the norm of any $a \in \mathcal A$ is bounded above.

Since $\mathcal{O}$ is properly convex, there is a hyperplane $H$ of~$U$ such that $\PP(H)$ avoids~$\overline{\mathcal{O}}$.
Since $\overline{\Omega} \cap \PP(U') = \varnothing$, the projective hyperplane $\PP(H \oplus U')$ of $\PP(V)$ avoids~$\overline{\Omega}$, and so the convex set $\Omega$ is properly convex.

Now let $\Omega$ be any $\Gamma$-invariant properly convex open subset of $\PP(V)$ containing~$\mathcal{O}$.
By \cite[Lem.\,10.5.(1)]{dgk-proj-cc}, we have $\Lambdao_{\Omega}(\Gamma) = \Lambdao_{\mathcal{O}}(\Gamma)$, and so $\Ccore_{\Omega}(\Gamma) = \Ccore_{\mathcal{O}}(\Gamma)$.
In particular, the action of $\Gamma$ on~$\Omega$ is convex cocompact if and only if the action of $\Gamma$ on~$\mathcal{O}$ is.
\end{proof}

Here is a variant of Proposition~\ref{prop:thicken-add-dim} where the action of $\Gamma$ on~$U'$ is allowed to be nontrivial (but dominated by the action on~$U$).

\begin{proposition} \label{prop:thicken-add-dim-general}
Let $\Gamma$ be a subgroup of $\SL^{\pm}(V)$ preserving a direct sum decomposition $V = U \oplus U'$.
Suppose that $\Gamma$ acts convex cocompactly on some properly convex open subset $\mathcal{O}$ of $\PP(U)$ and that any accumulation point of any $\Gamma$-orbit of $(\mathcal{O} \times U') \smallsetminus \PP(U')$ is contained in $\PP(U)$.
Then $\Gamma$ acts convex cocompactly on some properly convex open subset $\Omega$ of $\PP(V)$, contained in $\mathcal{O} \times U'$, such that $\overline{\Omega} \cap \PP(U') = \varnothing$.
\end{proposition}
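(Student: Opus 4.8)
The plan is to realize $\Omega$ as the interior of the convex hull of a $\Gamma$-orbit of a small bounded thickening of a cocompact piece of $\mathcal{C}:=\Ccore_{\mathcal{O}}(\Gamma)$, and to use the domination hypothesis to show that this convex hull stays bounded away from $\PP(U')$.

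\textbf{Setup.} Using convex cocompactness, fix a nonempty $\Gamma$-invariant closed convex $\mathcal{C}\subset\mathcal{O}$ with $\mathcal{C}/\Gamma$ compact and $\Lambdao_{\mathcal{O}}(\Gamma)\subset\overline{\mathcal{C}}$, and a connected compact $D\subset\mathcal{O}$ with nonempty interior and $\Gamma\cdot D\supset\mathcal{C}$. Choose $f\in U^{*}$ with $[f]\in\mathcal{O}^{*}$; then $U'\subset\ker f$, the hyperplane $\PP(\ker f)$ meets $\overline{\mathcal{O}\times U'}$ exactly in $\PP(U')$, and $\mathcal{E}:=\PP(V)\smallsetminus\PP(\ker f)$ is an affine chart containing $\mathcal{O}\times U'$. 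In the affine coordinates on $\mathcal{E}$ given by normalizing $f\equiv 1$, the set $\mathcal{O}\times U'$ becomes a product $P\oplus U'$ with $P$ bounded, so that any convex $\Omega_{0}\subset\mathcal{O}\times U'$ which is bounded in $\mathcal{E}$ satisfies $\overline{\Omega_{0}}\cap\PP(U')=\varnothing$; let $h:\mathcal{E}\to[0,\infty)$ be the convex function recording the norm of the $U'$-component of a point in these coordinates. Pick $\varepsilon>0$ and set $D_{0}:=\{[u+a]\,:\,u\in\widetilde{D},\ a\in\overline{\mathcal{A}}\}$, where $\widetilde{D}\subset\widetilde{\mathcal{O}}$ is the compact lift of $D$ in the section $\{f=1\}$ and $\mathcal{A}\subset U'$ is the open $\varepsilon$-ball: a connected compact neighbourhood of $D$ contained in $\mathcal{O}\times U'$. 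Finally put
\[\Omega\ :=\ \mathrm{int}\,\Conv{\textstyle\bigcup_{\gamma\in\Gamma}\gamma\cdot D_{0}},\]
the convex hull taken in the chart $\mathcal{E}$ (legitimate since $\bigcup_{\gamma}\gamma\cdot D_{0}$ is connected and contained in $\mathcal{O}\times U'\subset\mathcal{E}$). By construction $\Omega$ is a $\Gamma$-invariant convex open subset of $\mathcal{O}\times U'$ containing $\bigcup_{\gamma}\gamma\cdot D\supset\mathcal{C}$.

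\textbf{Main step: $\Omega$ is properly convex and $\overline{\Omega}\cap\PP(U')=\varnothing$.} Since $h$ is convex and finite on $\mathcal{E}$, we have $\sup_{\Omega}h\leq\sup_{\gamma\in\Gamma}\sup_{\gamma D_{0}}h$, so it suffices to bound $\sup_{\gamma D_{0}}h$ uniformly in $\gamma$. Writing $\gamma=\gamma_{U}\oplus\gamma_{U'}$ for the block decomposition of $\gamma$ along $V=U\oplus U'$, a direct computation in the affine coordinates gives $\sup_{\gamma D_{0}}h\asymp\varepsilon\,\|\gamma_{U'}\|\big/\min_{u\in\widetilde{D}}f(\gamma_{U}u)$. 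The domination hypothesis says that for every $x\in\mathcal{O}\times U'$ every accumulation point of $\Gamma\cdot x$ lies in $\PP(U)$, equivalently $\|\gamma_{U'}u'\|/\|\gamma_{U}u\|\to 0$ as $\gamma\to\infty$ for all $u\in\widetilde{\mathcal{O}}$, $u'\in U'$; combined with the compactness of $\widetilde{D}$ and $\overline{\mathcal{A}}$ — and with convex cocompactness of $\Gamma$ on $\mathcal{O}$, used to upgrade this pointwise statement to one uniform over $u\in\widetilde{D}$ — this forces $\sup_{\gamma D_{0}}h\to 0$ as $\gamma\to\infty$. As each $\gamma D_{0}$ is in any case a compact subset of $\mathcal{E}$, hence of finite $h$-height, we obtain $\sup_{\Omega}h\leq C<\infty$; therefore $\overline{\Omega}$ is a compact subset of $\mathcal{E}$, so $\Omega$ is properly convex and $\overline{\Omega}\cap\PP(\ker f)=\varnothing$, in particular $\overline{\Omega}\cap\PP(U')=\varnothing$. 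I expect this uniform bound to be the main obstacle: it is exactly the point at which the right quantitative form of the domination hypothesis (together with cocompactness) must be extracted. An alternative is to take for $\Omega$ a $\Gamma$-equivariant ellipsoid-thickening of $\mathcal{C}$ in the spirit of the proof of Proposition~\ref{prop:thicken-add-dim}, but the same uniform-height estimate is then the crux.

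\textbf{Convex cocompactness.} Being discrete in $\mathrm{Aut}(\Omega)$, $\Gamma$ acts properly discontinuously on $(\Omega,d_{\Omega})$. Let $\pi:\PP(V)\smallsetminus\PP(U')\to\PP(U)$ be the linear projection along $U'$; since $\Omega\subset\mathcal{O}\times U'$ is disjoint from $\PP(U')$ and $\pi(\gamma x)=\gamma_{U}\pi(x)$ with $\pi(x)\in\mathcal{O}$ for $x\in\Omega$, the domination hypothesis and continuity of $\pi$ at points of $\PP(U)$ show that any accumulation point of a $\Gamma$-orbit in $\Omega$ lies in $\Lambdao_{\mathcal{O}}(\Gamma)$; conversely, applying this to a point of $\mathcal{C}\subset\Omega$ and using that $\mathcal{C}/\Gamma$ is compact gives $\Lambdao_{\mathcal{O}}(\Gamma)\subset\Lambdao_{\Omega}(\Gamma)$ (compare \cite[Lem.\,10.5]{dgk-proj-cc}). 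Thus $\Lambdao_{\Omega}(\Gamma)=\Lambdao_{\mathcal{O}}(\Gamma)\subset\overline{\mathcal{C}}$. Finally, $\mathcal{C}$ is a nonempty $\Gamma$-invariant convex subset of $\Omega$, closed in $\Omega$ (a limit in $\Omega$ of points of $\mathcal{C}$ lies in $\overline{\mathcal{C}}\cap\mathcal{O}=\mathcal{C}$, using $\overline{\mathcal{C}}\subset\PP(U)$ and $\Omega\cap\PP(U)\subset\mathcal{O}$), with $\mathcal{C}/\Gamma$ compact and $\overline{\mathcal{C}}\supset\Lambdao_{\Omega}(\Gamma)$; hence the action of $\Gamma$ on $\Omega$ is convex cocompact, which completes the proof.
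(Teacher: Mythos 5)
Your overall route is essentially the same as the paper's: the paper also takes the $\Gamma$-orbit of a thickened compact piece of the convex core (there $(S \cap \widetilde{\mathcal{D}'}) + \overline{\mathcal{A}}$, with $S$ the $\Gamma$-invariant Vinberg-type hypersurface from the proof of Proposition~\ref{prop:thicken-add-dim}, instead of your affine section $\{f=1\}$ — a cosmetic difference), then the interior of its convex hull inside $\mathcal{O}\times U'$, then shows the closure misses $\PP(U')$, and finally identifies $\Lambdao_{\Omega}(\Gamma)\subset\Lambdao_{\mathcal{O}}(\Gamma)$ and uses cocompactness of the core. The problem is that your proof has a genuine gap exactly where you flag it: the uniform bound $\sup_{\gamma}\sup_{\gamma\cdot D_0}h<\infty$, i.e.\ $\|\gamma_{U'}\|\big/\min_{u\in\widetilde{D}}f(\gamma_U u)$ bounded, is asserted but not proved, and the mechanism you invoke ("convex cocompactness of $\Gamma$ on $\mathcal{O}$, used to upgrade this pointwise statement to one uniform over $u\in\widetilde{D}$") is not what makes the upgrade work: pointwise decay of $\|\gamma_{U'}u'\|/\|\gamma_U u\|$ for each fixed $u$ does not formally control $\min_{u\in\widetilde{D}}f(\gamma_U u)$ against the operator norm $\|\gamma_{U'}\|$, since a priori $f(\gamma_U u)$ could be much smaller than $\|\gamma_U\|$ at some points of $\widetilde{D}$.

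What closes the gap in the paper uses cone-invariance, not cocompactness, in two steps. First, for any sequence $\gamma_n\to\infty$ in $\Gamma$, applying the domination hypothesis to an open set's worth of points of $(\mathcal{O}\times U')\smallsetminus\PP(U')$ forces $\|(\gamma_n)_{U'}\|/\|(\gamma_n)_U\|\to 0$ (otherwise a rescaled subsequential limit of $\gamma_n$ would have nonzero $U'$-block and produce accumulation points off $\PP(U)$). Second — the key missing idea — since the blocks $(\gamma_n)_U$ preserve the properly convex cone $\widetilde{\mathcal{O}}$, one has uniform expansion on compact subsets of the cone: via the Cartan decomposition of $(\gamma_n)_U$ in $\GL(U)$, after extracting a subsequence the "contracted" subspace $U^-$ satisfies $\PP(U^-)\cap\mathcal{O}=\varnothing$, whence $\|(\gamma_n)_U u\|\geq c\,\|(\gamma_n)_U\|$ for all $u\in\widetilde{D}$, and therefore $f((\gamma_n)_U u)\geq c'\,\|(\gamma_n)_U\|$ because $f$ is bounded below by a multiple of the norm on the closed cone. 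Combining the two steps gives $\sup_{\gamma_n\cdot D_0}h\to 0$, which is precisely your missing estimate; with it the rest of your argument goes through. Two smaller points to repair: arrange $\Gamma\cdot\operatorname{int}(D)\supset\mathcal{C}$ (otherwise the inclusion $\mathcal{C}\subset\Omega$, which you use at the end, is not immediate), and note that $\Gamma$-invariance of your $\Omega$ rests on the chart-independence of the convex hull of a connected set contained in an affine chart.
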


\begin{proof}
Since $\Gamma$ acts convex cocompactly on~$\mathcal{O}$, there is a compact subset $\mathcal{D}$ of $\PP(U)$ such that $\Ccore_{\mathcal{O}}(\Gamma) = \bigcup_{\gamma\in\Gamma} \gamma \cdot \mathcal{D}$.
Choose $R>0$ and let $\C$ be the closed uniform $R$-neighborhood of $\Ccore_{\mathcal{O}}(\Gamma)$ in $(\mathcal O ,d_{\mathcal O})$.
Then $\C = \bigcup_{\gamma\in\Gamma} \gamma \cdot \mathcal{D}'$ where $\mathcal{D}'$ is the closed uniform $R$-neighborhood of $\mathcal{D}$ in $(\mathcal O ,d_{\mathcal O})$.

We can lift $\mathcal{O}$ to a properly convex open cone $\widetilde{\mathcal{O}}$ of~$U$, and $\mathcal{D}'$ to a closed subcone $\widetilde{\mathcal{D}'}$ of the closure of~$\widetilde{\mathcal{O}}$.
As in the proof of Proposition~\ref{prop:thicken-add-dim}, inside~$\widetilde{\mathcal{O}}$, there is a $\Gamma$-invariant hypersurface $S$ which intersects every ray in $\widetilde{\mathcal{O}}$ exactly once.
Choose a bounded neighborhood $\mathcal A$ of the origin in~$U'$, and consider the $\Gamma$-invariant cone
$$\widetilde{\mathcal B} := \RR_{>0} \, \bigcup_{\gamma\in\Gamma} \gamma \cdot \big((S \cap \widetilde{\mathcal{D}'}) + \overline{\mathcal{A}}\big)$$
of~$V$.
It is contained in $\widetilde{\mathcal{O}} + U'$, hence its projection $\mathcal B$ to $\PP(V)$ is contained in $\mathcal{O} \times U'$.
We have $\mathcal{B} \cap \PP(U') = \varnothing$.

We claim that $\overline{\mathcal{B}} \cap \PP(U') = \varnothing$.
Indeed, consider a sequence $(b_n)_{n\in\NN}$ of points of $\widetilde{\mathcal B}$ whose projections $[b_n]$ converge to a point $x \in \PP(V)$.
Let us check that $x \notin \PP(U')$.
For each~$n$, up to scaling we can write $b_n = \gamma_n \cdot (s_n + a_n)$ where $\gamma_n \in \Gamma$, where $s_n \in S \cap \widetilde{\mathcal{D}'}$, and where $a_n \in \overline{\mathcal{A}}$.
Up to passing to a subsequence, we may assume that either $(\gamma_n)_{n\in\NN}$ is constant, or $\gamma_n \to \infty$ in~$\Gamma$.
If $(\gamma_n)_{n\in\NN}$ is constant equal to~$\gamma$, then $x$ belongs to the projection of the compact set $\gamma \cdot ((S \cap \widetilde{\mathcal{D}'}) + \overline{\mathcal{A}})$ to $\PP(V)$, which lies in $(\mathcal{O} \times U') \smallsetminus \PP(U')$.
Suppose then that $\gamma_n \to \infty$ in~$\Gamma$.
Then $\gamma_n \to \infty$ in $\SL^{\pm}(V)$ (as $\Gamma$ is discrete in $\SL^{\pm}(V)$).
Decompose each $\gamma_n = g_n \oplus g_n'$ into its restrictions to $U$ and~$U'$.
Choose auxiliary norms $\|\cdot\|$ on both $U$ and~$U'$, and denote also by $\|\cdot\|$ the corresponding operator norms on $\GL(U)$ and $\GL(U')$.
Since all accumulation points of $\gamma_n\cdot p$ lie in $\PP(U)$ for an open set's worth of $p \in \PP(V)$, we have $\|g_n\|/\|g_n'\| \to +\infty$.
It easily follows from the Cartan decomposition in $\GL(U)$ that, after replacing $(\gamma_n)_{n\in\NN}$ by a subsequence, there is a nonzero linear subspace $U^-$ of~$U$ with the following property: for any compact subset $\mathcal{K}$ of $U \smallsetminus U^-$, there exists $c > 0$ such that $\| g_n\cdot u\| \geq c\| g_n\|$ for all $n\in\NN$ and all $u\in K$.
Further, we must have $\PP(U^-) \cap \mathcal O = \varnothing$.
Applying this fact with $\mathcal{K} = S \cap \widetilde{\mathcal{D}'}$, and setting $M := \max \{ \| a\| \,|\, a \in \overline{\mathcal A}\}$, we see that
$$\frac{\|g_n \cdot s_n\|}{\| g_n' \cdot a_n\|} \geq \frac{c \|g_n\|}{M\| g_n'\|} \longrightarrow +\infty, $$
which implies $x \in \PP(U)$, hence $x \notin \PP(U')$.
This shows that $\overline{\mathcal{B}} \cap \PP(U') = \varnothing$.

Let $\Omega$ be the interior of the convex hull of $\mathcal B$ taken inside the convex open set $\mathcal O \times U'$.
Since $\mathcal B$ and $\mathcal O \times U'$ are $\Gamma$-invariant, so is~$\Omega$. 

Since $\mathcal{O}$ is properly convex, there is a hyperplane $H$ of~$U$ such that $\PP(H)$ avoids~$\overline{\mathcal{O}}$.
Since $\overline{\mathcal{B}} \cap \PP(U') = \varnothing$, the projective hyperplane $\PP(H \oplus U')$ of $\PP(V)$ avoids~$\overline{\mathcal{B}}$, hence also~$\overline{\Omega}$.
This shows that $\Omega$ is properly convex.

By assumption $\Lambdao_{\Omega}(\Gamma) \subset \PP(U)$, hence $\Lambdao_{\Omega}(\Gamma)$ is a closed subset of $\Lambdao_{\mathcal{O}}(\Gamma)$ by looking at the first factor.
Since the action of $\Gamma$ on $\Ccore_{\mathcal{O}}(\Gamma)$ is cocompact, so is the action of $\Gamma$ on $\Ccore_{\Omega}(\Gamma)$; thus the action of $\Gamma$ on~$\Omega$ is convex cocompact.
\end{proof}

Here is another result about thickening properly convex sets, inside of convex (but possibly not properly convex) sets. 
For a nonempty convex open subset $\Omega$ of $\PP(V)$, we let $\vec{\Omega} \subset \PP(V)$ denote the intersection of all projective hyperplanes of $\PP(V)$ disjoint from $\Omega$.
Equivalently, $\vec{\Omega}$ is the projectivization of the largest subspace entirely contained in the closure of a convex open cone of~$V$ lifting $\Omega$.
One has $\vec{\Omega} = \varnothing$ if and only if $\Omega$ is properly convex.

\begin{proposition} \label{prop:thicken-not-proper}
Let $\Omega \subset \mathbb{O}$ be two nonempty convex open subsets of $\PP(V)$.
Suppose $\overline{\Omega} \cap \vec{\mathbb{O}}=\varnothing$.
Then there is a family $(\Omega_t)_{t\geq 0}$ of properly convex open subsets of $\mathbb{O}$ such that $\Omega_0 = \Omega$ and $\bigcup_{t\geq 0} \Omega_t = \mathbb{O}$, with $\Omega_s \subset \Omega_t$ for all $s<t$.
Furthermore, if the sets $\mathbb{O}$ and $\Omega$ are preserved by a subgroup $\Gamma$ of $\PGL(V)$, then we may arrange that $\Omega_t$ is also preserved by~$\Gamma$ for all $t \geq 0$.
\end{proposition}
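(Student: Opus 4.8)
The plan is to pass to linear cones, where everything becomes a statement about cones stable under a linear group action, and to split the thickening into a clean ``fibre direction'' part and a more delicate ``base direction'' part.

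\emph{Step 1 (cone reduction).} Let $U'\subset V$ be the subspace with $\PP(U')=\vec{\mathbb O}$ (canonical, hence $\Gamma$-invariant in the equivariant case), let $W:=V/U'$, let $\pi:V\to W$ be the projection and $\bar\pi:\PP(V)\smallsetminus\PP(U')\to\PP(W)$ the induced map. As in the discussion preceding Proposition~\ref{prop:thicken-add-dim}, $\mathbb{O}=\bar\pi^{-1}(\mathcal{O})$ where $\mathcal{O}:=\bar\pi(\mathbb{O})\subseteq\PP(W)$ is properly convex open. Choose $\GL(V)$-lifts of the elements of $\Gamma$ preserving a properly convex open cone $\widetilde\Omega\subset V$ over $\Omega$; these automatically preserve the convex open cone $\widetilde{\mathbb O}:=\pi^{-1}(\widetilde{\mathcal O})$ over $\mathbb{O}$, where $\widetilde{\mathcal O}:=\pi(\widetilde{\mathbb O})\subset W$ is a properly convex open cone over $\mathcal{O}$, and $\widetilde\Omega\subseteq\widetilde{\mathbb O}$. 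The hypothesis $\overline\Omega\cap\vec{\mathbb O}=\varnothing$ becomes $\overline{\widetilde\Omega}\cap U'=\{0\}$, equivalently (fixing auxiliary norms) there is $\epsilon>0$ with $\|\pi(v)\|\geq\epsilon\|v\|$ for all $v\in\widetilde\Omega$. I record the elementary fact that \emph{any convex subcone $\mathcal C$ of $\widetilde{\mathbb O}$ with $\overline{\mathcal C}\cap U'=\{0\}$ is properly convex}: indeed $\pi(\overline{\mathcal C})\subseteq\overline{\widetilde{\mathcal O}}$ contains no line, so any line through $0$ inside $\overline{\mathcal C}$ would project to $\{0\}$, i.e.\ lie in $\overline{\mathcal C}\cap U'=\{0\}$. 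Hence it suffices to build $\Gamma$-invariant convex open cones $\widetilde\Omega_t\subseteq\widetilde{\mathbb O}$, nested, with $\widetilde\Omega_0=\widetilde\Omega$, with $\overline{\widetilde\Omega_t}\cap U'=\{0\}$, and with $\bigcup_{t\geq0}\widetilde\Omega_t=\widetilde{\mathbb O}$; then $\Omega_t:=\PP(\widetilde\Omega_t)$ does the job.

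\emph{Step 2 (thickening in the fibre direction).} For $v\in\widetilde\Omega$ set $Q_v:=\{u\in U':v+u\in\widetilde\Omega\}$. By the estimate of Step~1 it is bounded; it is convex, open in $U'$, contains $0$ in its interior, and $\gamma Q_v=Q_{\gamma v}$, $\lambda Q_v=Q_{\lambda v}$ for $\lambda>0$. For $t\geq0$ put $\widetilde\Lambda_t:=\{v+tu:v\in\widetilde\Omega,\ u\in Q_v\}$. Then $\widetilde\Lambda_t$ is $\Gamma$-invariant and is a cone; it is open, being the image of the open set $\{(v,u)\in\widetilde\Omega\times U':v+u\in\widetilde\Omega\}$ under the open linear surjection $(v,u)\mapsto v+tu$; it is convex, since if $v_0+u_0,v_1+u_1\in\widetilde\Omega$ then $\theta(v_0+u_0)+(1-\theta)(v_1+u_1)=\bar v+(\theta u_0+(1-\theta)u_1)$ with $\bar v\in\widetilde\Omega$; it is nested in $t$ because each $Q_v$ is star-shaped at $0$; and $\widetilde\Lambda_0=\widetilde\Omega$. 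Finally $\overline{\widetilde\Lambda_t}\cap U'=\{0\}$: if $v_n+tu_n\to w\in U'$ then $\pi(v_n)\to0$, so $\|v_n\|\to0$, hence $\|v_n+u_n\|\to0$, forcing $w=0$. So the $\PP(\widetilde\Lambda_t)$ form a nested family of $\Gamma$-invariant properly convex open subsets of $\mathbb{O}$ with $\PP(\widetilde\Lambda_0)=\Omega$; since $Q_v$ is a neighbourhood of $0$, $\bigcup_{t\geq0}t\,Q_v=U'$, so their union is $\widetilde\Omega+U'=\bar\pi^{-1}(\bar\pi(\Omega))$, i.e.\ they exhaust $\Omega$ along the fibres of $\bar\pi$ but reach only the preimage of $\bar\pi(\Omega)$.

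\emph{Step 3 (thickening in the base direction, and combining).} Since $\mathcal{O}$ is properly convex, its Hilbert metric $d_{\mathcal O}$ is a genuine $\Gamma$-invariant metric; taking convex hulls of $d_{\mathcal O}$-neighbourhoods of $\bar\pi(\Omega)$ (this is the easy case $\vec{\mathbb O}=\varnothing$ of the statement) produces nested $\Gamma$-invariant properly convex open sets $P_s\uparrow\mathcal{O}$ with $P_0=\bar\pi(\Omega)$. Over each $P_s$ one enlarges $\widetilde\Omega$ to a $\Gamma$-invariant properly convex cone $\widetilde\Omega^{(s)}\subseteq\widetilde{\mathbb O}$ with $\pi(\widetilde\Omega^{(s)})$ the cone over $P_s$ and $\overline{\widetilde\Omega^{(s)}}\cap U'=\{0\}$, using a canonical (Koszul--Vinberg) cross-section of $\widetilde{\mathcal O}$ together with the domination estimate, in the spirit of the proofs of Propositions~\ref{prop:thicken-add-dim}--\ref{prop:thicken-add-dim-general}; feeding the seed $\widetilde\Omega^{(s)}$ into the Step~2 construction then gives $\Gamma$-invariant properly convex open cones filling $\bar\pi^{-1}(P_s)$. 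A diagonal combination $\widetilde\Omega_t:=\widetilde\Lambda^{(s(t))}_{r(t)}$ with $r(t),s(t)\to\infty$, reparametrised so that the first members reduce to $\widetilde\Omega$ and the family is nested, has all the required properties, and $\bigcup_t\PP(\widetilde\Omega_t)=\mathbb{O}$ because $\bigcup_s\bar\pi^{-1}(P_s)=\mathbb{O}$.

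I expect the main obstacle to be the $\Gamma$-equivariant base enlargement in Step~3. Unlike the purely fibrewise thickening of Step~2, it forces one to choose, compatibly and $\Gamma$-equivariantly, a ``fibrewise origin and width'' over base points not in $\bar\pi(\Omega)$, and the extension $0\to U'\to V\to W\to0$ of $\Gamma$-modules need not split. This is exactly where the hypothesis $\overline\Omega\cap\vec{\mathbb O}=\varnothing$ (a domination of the $U'$-part of the $\Gamma$-action by the $W$-part on $\widetilde\Omega$) is essential, together with a Cartan-decomposition analysis in the style of the proof of Proposition~\ref{prop:thicken-add-dim-general}, in order to keep the closures of the enlarged cones disjoint from $U'$.
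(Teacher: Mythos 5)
Your Steps 1--2 are essentially correct: the fibrewise thickening $\widetilde\Lambda_t$ is indeed open, convex, nested, $\Gamma$-invariant, and its closure meets $U'$ (the subspace with $\PP(U')=\vec{\mathbb O}$) only at the origin. But, as you yourself note, this only exhausts the preimage $\bar\pi^{-1}(\bar\pi(\Omega))$ of the projection of $\Omega$ to $\PP(V/U')$, so the entire content of the proposition is carried by Step 3 --- and Step 3 is not a proof. The existence of $\Gamma$-invariant properly convex cones $\widetilde\Omega^{(s)}\subset\widetilde{\mathbb O}$ projecting onto the cone over $P_s$ and with closure meeting $U'$ only at $0$ is asserted ``in the spirit of'' Propositions \ref{prop:thicken-add-dim} and \ref{prop:thicken-add-dim-general}, but those constructions rest on hypotheses you do not have here: in Proposition~\ref{prop:thicken-add-dim}, a $\Gamma$-invariant splitting $V=U\oplus U'$ with $\Gamma$ acting trivially on $U'$ (which is what makes the coned-off section $\RR_{>0}(S+\mathcal{A})$ invariant), and in Proposition~\ref{prop:thicken-add-dim-general}, an explicit dynamical assumption on where $\Gamma$-orbits accumulate, which is what feeds the Cartan-decomposition estimate. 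In the present generality $\Gamma$ is an arbitrary subgroup of $\PGL(V)$ preserving $(\Omega,\mathbb{O})$, the extension $0\to U'\to V\to V/U'\to 0$ need not split equivariantly, and the hypothesis $\overline{\Omega}\cap\vec{\mathbb O}=\varnothing$ constrains only the fixed set $\Omega$, not the behaviour of the action over base points of $\mathcal{O}\smallsetminus\bar\pi(\Omega)$; no substitute estimate is formulated, and you flag exactly this as an unresolved obstacle. As written, the argument reduces the proposition to an unproven claim of essentially the same difficulty, which is a genuine gap.

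For comparison, the paper closes precisely this gap by a duality trick rather than by working upstairs with cones. The hypothesis forces $\Omega$ to be properly convex ($\vec{\Omega}$ is contained both in $\overline{\Omega}$ and in $\vec{\mathbb O}$, hence empty), so $\Omega^*$ is a nonempty properly convex open subset of $\PP(V^*)$, and it contains $\mathbb{O}^*:=\{X\in\PP(V^*) \,|\, X\cap\overline{\mathbb O}=\vec{\mathbb O}\}$, since such a hyperplane misses $\overline{\Omega}$ by the hypothesis. One then sets $\Omega_t:=(\mathcal{O}_t)^*$, where $\mathcal{O}_t$ is the open uniform $(1/t)$-neighborhood of $\mathbb{O}^*$ in $(\Omega^*,d_{\Omega^*})$: this family is canonical, hence $\Gamma$-invariant, it is nested and properly convex, and it exhausts $\mathbb{O}$ because $\bigcap_{t>0}\mathcal{O}_t=\mathbb{O}^*$. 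If you want to keep your framework, the quickest repair is to replace Step 3 by this dual shrinking (equivalently, to run your enlargement in $\PP(V^*)$, where the troublesome non-split extension becomes the harmless inclusion $\mathbb{O}^*\subset\Omega^*$ of convex sets and equivariance is automatic).
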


\begin{proof}
If $\mathbb{O}$ is properly convex, we may take $\Omega_t$ to be the uniform $t$-neighbor\-hood of $\Omega$ for the Hilbert metric on $\mathbb{O}$. 
If not, we first notice that $\Omega$ is properly convex: indeed $\vec{\Omega}$ is both contained in and disjoint from $\vec{\mathbb{O}}$, so it is empty.
The dual convex sets in $\PP(V^*)$ satisfy $\mathbb{O}^* \subset \Omega^*$, where $\mathbb{O}^* \subset \PP(V^*)$ is defined as the set of projective hyperplanes $X$ of $\PP(V)$ such that $X \cap \overline{\mathbb{O}} = \vec{\mathbb{O}}$.
For any $t>0$, let $\mathcal{O}_t \subset \PP(V^*)$ be the open uniform $(1/t)$-neighborhood of $\mathbb{O}^*$ in $\Omega^*$ for the Hilbert metric, and let $\Omega_t:=(\mathcal{O}_t)^* \subset \PP(V)$.
If $s<t$, then $\mathcal{O}_s \supset \mathcal{O}_t$, hence $\Omega_s \subset \Omega_t$.
Since $\mathcal{O}_t$ is open (and properly convex), $\Omega_t$ is properly convex (and open).
Since $\bigcap_{t>0} \mathcal{O}_t = \mathbb{O}^*$ (possibly not open), $\bigcup_{t>0} \Omega_t = \mathbb{O}$.
All constructions are compatible with projective actions preserving $(\Omega, \mathbb{O})$.
\end{proof}

\subsection{Reminders: second symmetric powers, proximality, and limit sets} \label{subsec:remind-prox}

In the sequel we shall use several times the following elementary fact.

\begin{fact} \label{fact:Omega-sym}
Let $\tau_d : \SL(d,\RR)\to\SL(S^2\RR^d)$ be the second symmetric power of the standard representation.
Then $\SL(d,\RR)$ acts properly and transitively via~$\tau_d$ on a nonempty properly convex open subset $\Omega_d$ of $\PP(S^2\RR^d)$; if we identify $S^2\RR^d$ with the space of symmetric $d\times d$ real matrices (on which $\SL(d,\RR)$ acts by $g\cdot M = gMg^t$), then $\Omega_d$ is the projectivization of those symmetric matrices that are positive definite.
As an $\SL(d,\RR)$-homogeneous space, $\Omega_d$ identifies with $\SL(d,\RR)/\SO(d)$.
\end{fact}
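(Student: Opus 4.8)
The plan is to realize $\Omega_d$ concretely. Identify $S^2\RR^d$ with the space $\mathrm{Sym}(d,\RR)$ of real symmetric $d\times d$ matrices, so that $\tau_d$ becomes the action $g\cdot M = gMg^t$ of $\SL(d,\RR)$, and let $\widetilde{\Omega}_d\subset\mathrm{Sym}(d,\RR)$ be the cone of positive definite matrices, with $\Omega_d:=\PP(\widetilde{\Omega}_d)\subset\PP(S^2\RR^d)$.

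First I would check that $\Omega_d$ is a nonempty properly convex open subset of $\PP(S^2\RR^d)$ preserved by $\tau_d(\SL(d,\RR))$. The set $\widetilde{\Omega}_d$ is the intersection of the open linear half-spaces $\{M : x^tMx>0\}$ over all $x\in\RR^d\smallsetminus\{0\}$, hence an open convex cone; it is \emph{salient}, \ie $\widetilde{\Omega}_d\cap(-\widetilde{\Omega}_d)=\varnothing$, because a matrix and its opposite cannot both be positive definite. Therefore $\Omega_d$ is properly convex and open. For $g\in\SL(d,\RR)$ and $M$ positive definite, $gMg^t$ is positive definite since $x^t(gMg^t)x=(g^tx)^tM(g^tx)>0$ for $x\neq 0$; thus $\tau_d(\SL(d,\RR))$ preserves $\widetilde{\Omega}_d$, hence $\Omega_d$.

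Next I would prove transitivity and identify the stabilizer of the base point $[I]\in\Omega_d$. Given a positive definite $M$, the spectral theorem yields its positive definite square root $h=M^{1/2}$, so $M=hh^t$; replacing $h$ by $(\det M)^{-1/(2d)}h$ gives $g\in\SL(d,\RR)$ with $gg^t=(\det M)^{-1/d}M$, so $[g\cdot I]=[gg^t]=[M]$. Hence $\SL(d,\RR)$ acts transitively on $\Omega_d$. An element $g\in\SL(d,\RR)$ fixes $[I]$ exactly when $gg^t=\lambda I$ for some $\lambda>0$; taking determinants gives $\lambda^d=1$, so $\lambda=1$ and $g\in\OO(d)\cap\SL(d,\RR)=\SO(d)$. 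Thus $\Omega_d$ is $\SL(d,\RR)$-equivariantly identified with $\SL(d,\RR)/\SO(d)$ (consistency check: $\dim\SL(d,\RR)/\SO(d)=(d^2-1)-\tfrac{d(d-1)}{2}=\tfrac{d(d+1)}{2}-1=\dim\PP(S^2\RR^d)$, confirming that $\Omega_d$ is open).

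Finally I would note that properness is automatic: the point stabilizer $\SO(d)$ is compact, so the transitive action of $\SL(d,\RR)$ on $\Omega_d\cong\SL(d,\RR)/\SO(d)$ is proper. (Equivalently, $\tau_d(\SL(d,\RR))\subset\mathrm{Aut}(\Omega_d)$ acts properly on $\Omega_d$ for the Hilbert metric, and $\ker\tau_d\subset\{\pm I\}$ is finite.) There is no genuine obstacle here; the only mild points of care are the rescaling of the square root needed to land in $\SL(d,\RR)$ rather than $\GL(d,\RR)$, and the remark that properness is unaffected by the finite kernel of $\tau_d$.
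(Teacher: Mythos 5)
The paper states this as a classical reminder and gives no proof of its own, so there is nothing to compare line by line; your argument is the standard one (projective model of the symmetric space: positive definite cone, $M=hh^t$ via the square root, stabilizer $\SO(d)$, properness from compactness of the stabilizer), and it is correct in substance. One step should be tightened: salience of the \emph{open} cone ($\widetilde{\Omega}_d\cap(-\widetilde{\Omega}_d)=\varnothing$) does not by itself imply that the projectivization is properly convex --- the open upper half-plane in $\RR^2$ is an open convex cone disjoint from its negative, yet its projectivization is $\PP^1$ minus a point, which is not bounded in any affine chart. What you need is that the \emph{closure} of the cone contains no line, which here is immediate: a matrix that is both positive and negative semidefinite is zero, or more concretely $\mathrm{tr}>0$ on the closed PSD cone minus the origin, so the projective hyperplane $\{\mathrm{tr}=0\}$ misses $\overline{\Omega_d}$ and the trace-one slice of the PSD cone is compact, giving boundedness in that affine chart. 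With that one-line patch, the rest of your proof (the rescaled square root landing in $\SL(d,\RR)$, the computation $gg^t=\lambda I\Rightarrow\lambda=1$ identifying the stabilizer with $\SO(d)$, and properness from the compact stabilizer, unaffected by the finite kernel of $\tau_d$) is exactly the argument the paper implicitly relies on.
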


Recall that an element $g\in\GL(V)$ is said to be \emph{proximal in $\PP(V)$} if it has a unique eigenvalue of maximal modulus; this eigenvalue is then necessarily real.
The corresponding eigenline is then an attracting fixed point $x_g^+ \in \PP(V)$ for the action of $g$, and the sum of the generalized eigenspaces for the other eigenvalues of~$g$ is then an element $X_g^-$ of $\PP(V^*)$ (a projective hyperplane) such that for any $x\in\PP(V)\smallsetminus X_g^-$ we have $g^n\cdot x\to x_g^+$ as $n\to +\infty$.
We say that an element of $\PGL(V)$ is proximal if it is the image of a proximal element of $\GL(V)$.
We say that $g$ is \emph{biproximal in $\PP(V)$} if both $g$ and~$g^{-1}$ are proximal in $\PP(V)$.

For any subgroup $\Gamma$ of $\PGL(V)$, the \emph{proximal limit set} $\Lambda^{\PP(V)}_{\Gamma}$ (\resp $\Lambda^{\PP(V^*)}_{\Gamma}$) of $\Gamma$ in $\PP(V)$ (\resp $\PP(V^*)$) is the closure in $\PP(V)$ (\resp $\PP(V^*)$) of the set of fixed points $x_{\gamma}^+ \in \PP(V)$ (\resp $X_{\gamma}^- \in \PP(V^*)$) for elements $\gamma \in \Gamma$ that are proximal in $\PP(V)$.

Let $\mathcal{F}$ be the space of partial flags $(x,X)$ with $x \in \PP(V)$ and $X \in \PP(V^*)$, such that $x$ belongs to~$X$ seen as a projective hyperplane of $\PP(V)$.
For any subgroup $\Gamma$ of $\PGL(V)$, the \emph{proximal limit set} $\Lambda^{\mathcal{F}}_{\Gamma}$ of $\Gamma$ in~$\mathcal{F}$ is the closure in~$\mathcal{F}$ of the set of fixed flags $F_{\gamma}^+ = (x_{\gamma}^+,X_{\gamma^{-1}}^-)$ for elements $\gamma \in \Gamma$ that are biproximal in $\PP(V)$.
We shall use the following two facts.

\begin{fact}[{Benoist \cite[Lem.\,3.6.(iv)]{ben97}}] \label{fact:double-lim-set}
Let $\Gamma$ be a Zariski-dense subgroup of $\PGL(V)$.
Then the set $\{(F^+_{\gamma}, F^+_{\gamma^{-1}}) \,|\, \gamma\in\Gamma \text{ biproximal in }\PP(V)\}$ is dense in $\Lambda_{\Gamma}^{\mathcal{F}} \times\nolinebreak \Lambda_{\Gamma}^{\mathcal{F}}$.
\end{fact}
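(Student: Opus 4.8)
The statement is a theorem of Benoist; the plan is to reproduce its proof. Write $\Lambda := \Lambda_{\Gamma}^{\mathcal{F}}$ and let $S := \{(F_{\gamma}^+, F_{\gamma^{-1}}^+) ~|~ \gamma\in\Gamma \text{ biproximal in }\PP(V)\}$ be the set whose density is asserted. Since $\gamma^{-1}$ is biproximal in $\PP(V)$ whenever $\gamma$ is, both coordinates of any point of~$S$ lie in~$\Lambda$, so $\overline{S}\subset\Lambda\times\Lambda$; moreover $S$ is symmetric (swap $\gamma$ and~$\gamma^{-1}$) and invariant under the diagonal action of~$\Gamma$ on $\mathcal{F}\times\mathcal{F}$ (conjugate~$\gamma$). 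Thus it suffices to prove $\Lambda\times\Lambda\subset\overline{S}$. The one genuinely nontrivial input is that a Zariski-dense subgroup of~$\PGL(V)$ contains biproximal elements, and in fact that the attracting flags $F_{\gamma}^+$ of biproximal $\gamma\in\Gamma$ are dense in~$\Lambda$ (Benoist, Goldsheid--Margulis); in particular $\Lambda$ is nonempty. From this and the contraction dynamics of proximal elements one gets that $\Lambda$ is the unique minimal nonempty closed $\Gamma$-invariant subset of~$\mathcal{F}$, so that any nonempty relatively open subset of~$\Lambda$ has finitely many $\Gamma$-translates covering~$\Lambda$. Since $\Gamma$ acts strongly irreducibly on~$\PP(V)$ and on~$\PP(V^*)$, the images of~$\Lambda$ under the two projections $\mathcal{F}\to\PP(V)$ and $\mathcal{F}\to\PP(V^*)$, namely $\Lambda_{\Gamma}^{\PP(V)}$ and $\Lambda_{\Gamma}^{\PP(V^*)}$, span and are not contained in any finite union of proper projective subspaces.

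The first step is to show that $T := \{(A,B)\in\Lambda\times\Lambda ~|~ A\text{ and }B\text{ transverse in }\mathcal{F}\}$ is dense in $\Lambda\times\Lambda$. If not, some nonempty relatively open box $U\times W\subset\Lambda\times\Lambda$ consists entirely of non-transverse pairs; fixing $(x,X)\in W$ and writing the non-transversality of $(x_0,X_0)\in U$ with $(x,X)$ as ``$x\in X_0$ or $x_0\in X$'' covers~$U$ by two Schubert subvarieties, so by the Baire property some nonempty relatively open $U'\subset\Lambda$ has either all hyperplane-parts through~$x$ or all point-parts in~$X$; covering~$\Lambda$ by finitely many $\Gamma$-translates of~$U'$ then puts $\Lambda_{\Gamma}^{\PP(V^*)}$ (\resp $\Lambda_{\Gamma}^{\PP(V)}$) in a finite union of hyperplanes, contradicting the non-degeneracy just recalled. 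The second step is the classical composition-of-loxodromics construction: if $g,h\in\Gamma$ are biproximal and the repelling flags $F_{g^{-1}}^+, F_{h^{-1}}^+$ are transverse, then computing the limits of the rescaled sequences $g^nh^{-n}$ and $h^ng^{-n}$ (they converge to rank-one endomorphisms whose image line is not in their kernel hyperplane), together with the standard fact that a rescaled sequence in $\GL(V)$ converging to such an endomorphism is eventually proximal with attracting line and repelling hyperplane converging to that image and that kernel, shows that $\gamma_n := g^nh^{-n}$ is biproximal for $n$ large and that $(F_{\gamma_n}^+, F_{\gamma_n^{-1}}^+)\to(F_g^+, F_h^+)$. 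Passing to limits, this yields a \emph{splicing property} of~$\overline{S}$: if $(P,Q),(P',Q')\in\overline{S}$ with $Q$ and~$Q'$ transverse, then $(P,P')\in\overline{S}$ (and symmetrically in the first coordinate, using symmetry of~$S$).

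It then remains to combine these. Since $\overline{S}$ is closed, nonempty and diagonally $\Gamma$-invariant, each of its two coordinate projections is a nonempty closed $\Gamma$-invariant subset of~$\Lambda$, hence equals~$\Lambda$ by minimality; so $\overline{S}$ surjects onto~$\Lambda$ in each coordinate. Starting from the transverse pair $(F_{\gamma_0}^+, F_{\gamma_0^{-1}}^+)\in S$ furnished by a single biproximal element and iterating the splicing property — using surjectivity of the projections to produce partners and the density of~$T$ together with the non-degeneracy of~$\Lambda$ to guarantee the transversalities required at each stage — one upgrades $\overline{S}$ to all of $\Lambda\times\Lambda$; with $\overline{S}\subset\Lambda\times\Lambda$ this gives $\overline{S}=\Lambda_{\Gamma}^{\mathcal{F}}\times\Lambda_{\Gamma}^{\mathcal{F}}$. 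The main obstacle is precisely this last bootstrap: orchestrating the splices so that at every stage the two flags to be lined up can be chosen transverse to one another, which is where one must use in an essential way both the minimality of~$\Lambda$ and the fact that the projections of~$\Lambda$ avoid finite unions of proper subspaces. This is the technical core of Benoist's lemma, and everything else above is either a cited input (existence of biproximal elements) or a routine consequence of contraction dynamics.
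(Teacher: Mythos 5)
First, a framing remark: the paper does not prove this statement at all — it is quoted as a Fact with a citation to Benoist — so there is no in-paper argument to compare with; what you are reconstructing is Benoist's proof, and your reduction and Step~1 (density of transverse pairs via minimality of $\Lambda_{\Gamma}^{\mathcal{F}}$ plus irreducibility) are fine. The first genuine gap is in your composition lemma. Transversality of the repelling flags $F^+_{g^{-1}}$ and $F^+_{h^{-1}}$ only guarantees that the rescaled limits of $g^nh^{-n}$ and $h^ng^{-n}$ are nonzero rank-one endomorphisms; it does not give your parenthetical claim that their image lines avoid their kernel hyperplanes. For $g^nh^{-n}$ the limit has image $x_g^+$ and kernel $X^-_{h^{-1}}$, so the non-nilpotency you need is $x_g^+\notin X^-_{h^{-1}}$, i.e.\ half of transversality of the \emph{attracting} flags $F^+_g,F^+_h$, which you never impose. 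The conclusion genuinely fails without it: in $\PGL(3,\RR)$ take $g$ with eigenbasis $(e_1,e_2,e_3)$ and eigenvalues $(\lambda,1,\lambda^{-1})$, and $h$ with eigenbasis $(e_1,e_2,e_1+e_3)$ and the same eigenvalues. Both are biproximal, $F^+_{g^{-1}}=([e_3],\langle e_2,e_3\rangle)$ and $F^+_{h^{-1}}=([e_1+e_3],\langle e_2,e_1+e_3\rangle)$ are transverse, yet $g^nh^{-n}$ is a nontrivial unipotent (all eigenvalues $1$) for every $n$, hence never proximal. So the splicing property is not established as stated; it requires transversality of both the attracting and the repelling pairs.

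The second gap is one you name yourself: the final bootstrap, arranging that at each splice the flags to be matched are transverse, is exactly the nontrivial content, and it is not a routine consequence of minimality plus density of transverse pairs, because the flags you must make transverse are pinned (they are coordinates of prescribed elements of $\overline{S}$), not free to perturb. Benoist's actual argument sidesteps the orchestration: by Step~1 it suffices to reach transverse pairs $(A,B)$; approximate $A$ by $F^+_g$ and $B$ by $F^+_{h^{-1}}$ with $g,h\in\Gamma$ biproximal (density of attracting flags), note that $F^+_g$ transverse to $F^+_{h^{-1}}$ holds by openness, and then use Zariski density to choose a single auxiliary element $f\in\Gamma$ in general position (the $f\in\PGL(V)$ with $f\cdot x_h^+\in X_g^-$ or $x_g^-\in f\cdot X_h^+$ form a proper Zariski-closed subset). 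The elements $\gamma_n=g^nfh^n\in\Gamma$ are then biproximal for large $n$ with $(F^+_{\gamma_n},F^+_{\gamma_n^{-1}})\to(F^+_g,F^+_{h^{-1}})$, which finishes the proof; the middle factor $f$ is precisely what replaces the uncontrolled transversality you would otherwise need. Without this device (or the corrected hypotheses in your splicing lemma together with an argument producing partners satisfying them), your proof does not close.
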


\begin{fact}[{Borel \cite{bor60}, Mostow \cite[Lem.\,8.5]{mos73}}] \label{fact:lattices}
Let $\Gamma$ be a lattice of $\PGL(V)$.
Then $\Gamma$ is Zariski-dense in $\PGL(V)$, and $\Lambda^{\mathcal{F}}_{\Gamma} = \mathcal{F}$.
\end{fact}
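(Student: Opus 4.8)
The statement comprises two assertions — that $\Gamma$ is Zariski-dense in $\PGL(V)=\PGL(d,\RR)$, and that its proximal limit set in the flag variety $\mathcal{F}$ of incident (point, hyperplane) pairs equals all of $\mathcal{F}$ — and the second will use the first. Both are classical (Borel \cite{bor60}, Mostow \cite{mos73}), so the plan is to recall the arguments rather than to reprove them in full. Throughout, $\PGL(d,\RR)$ has finitely many components with identity component $\PSL(d,\RR)$ a simple Lie group of real rank $d-1\geq 2$ and no compact factors, which is exactly the setting in which these theorems apply (one may pass to $\Gamma\cap\PSL(d,\RR)$, a finite-index subgroup, if one prefers $G$ connected; this loses nothing, as Zariski density and the proximal limit set are unchanged).

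\emph{Zariski density.} This is the Borel density theorem, which I would argue as follows. Let $\mathbf{G}=\PGL_d$ be the ambient connected reductive $\RR$-algebraic group, so that $G:=\PGL(V)=\mathbf{G}(\RR)$ is Zariski-dense in $\mathbf{G}$, and let $\mathbf{H}\subset\mathbf{G}$ be the Zariski closure of $\Gamma$. By Chevalley's theorem there is an algebraic representation $\mathbf{G}\to\GL(W)$ and a line $[\ell]\in\PP(W)$ whose stabilizer in $\mathbf{G}$ is exactly $\mathbf{H}$. Since $\Gamma\subset\mathbf{H}(\RR)$ fixes $[\ell]$, the map $g\Gamma\mapsto g\cdot[\ell]$ is a well-defined $G$-equivariant map $G/\Gamma\to\PP(W)$; pushing forward the $G$-invariant probability measure on $G/\Gamma$ (which exists because $\Gamma$ is a lattice) yields a $G$-invariant probability measure $\nu$ on $\PP(W)$. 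By Furstenberg's lemma — a $G$-invariant probability measure on the projectivization of a finite-dimensional representation of a semisimple Lie group without compact factors is supported on the $G$-fixed points — the measure $\nu$ is supported on $\mathrm{Fix}_G$. As $G$ acts transitively on $G/\Gamma$, the point $[\ell]$ lies in the support of $\nu$, so $[\ell]$ is fixed by $G$; hence $G\subset\mathbf{H}(\RR)$, and since $G$ is Zariski-dense in the connected group $\mathbf{G}$ we conclude $\mathbf{H}=\mathbf{G}$, i.e.\ $\Gamma$ is Zariski-dense in $\PGL(V)$.

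\emph{Full flag limit set.} Write $\mathcal{F}=G/P$, where $P$ is the proper parabolic subgroup stabilizing an incident pair $(x,X)$. Since $\Gamma$ is Zariski-dense, by \cite{ben97} it contains biproximal elements, so $\Lambda^{\mathcal{F}}_{\Gamma}\neq\varnothing$; it is closed and $\Gamma$-invariant by construction, and by \cite{ben97} it is the unique minimal nonempty closed $\Gamma$-invariant subset of $\mathcal{F}$. This yields a first reduction: being the unique minimal closed invariant set, $\Lambda^{\mathcal{F}}_{\Gamma}$ is all of $\mathcal{F}$ as soon as it has nonempty interior, for then $\bigcup_{\gamma\in\Gamma}\gamma\cdot\mathrm{int}(\Lambda^{\mathcal{F}}_{\Gamma})$ is a nonempty open $\Gamma$-invariant subset of $\Lambda^{\mathcal{F}}_{\Gamma}$ whose complement — closed and $\Gamma$-invariant — would otherwise have to contain the minimal set $\Lambda^{\mathcal{F}}_{\Gamma}$, a contradiction. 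So it suffices to show $\Lambda^{\mathcal{F}}_{\Gamma}$ has nonempty interior, equivalently that the $\Gamma$-action on $G/P$ is minimal. This is where the finite-covolume hypothesis is indispensable — Zariski density alone does not suffice, as Zariski-dense Anosov subgroups with fractal limit set show. For a uniform lattice one argues as in rank one: a regular geodesic ray in the symmetric space determines a point of $G/P$ and remains in a fixed cocompact region, hence admits $\Gamma$-translates accumulating onto it, placing its endpoint in $\Lambda^{\mathcal{F}}_{\Gamma}$; the non-uniform case requires controlling the behaviour at cusps, and the conclusion — every $\Gamma$-orbit on $G/P$ is dense — is precisely \cite[Lem.\,8.5]{mos73} (see also \cite{bor60}), which I would invoke.

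The main obstacle is this last step: minimality of the $\Gamma$-action on $G/P$ (equivalently $\Lambda^{\mathcal{F}}_{\Gamma}=\mathcal{F}$) for a possibly non-uniform lattice. Everything preceding it is soft, and the dichotomy ``$\Lambda^{\mathcal{F}}_{\Gamma}$ is all of $\mathcal{F}$ or has empty interior'' reduces the problem to producing interior points of the limit set; but producing these — dealing with the cusps in the non-uniform case — genuinely uses finite covolume, and I would rely on Borel's and Mostow's arguments rather than reconstruct them.
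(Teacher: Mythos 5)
Your proposal is correct in substance, but note that the paper does not prove this statement at all: it is quoted as a Fact with attributions to Borel \cite{bor60} and Mostow \cite[Lem.\,8.5]{mos73}, so there is no internal proof to match. What you have written is a faithful reconstruction of the cited arguments: the Borel density theorem via Chevalley plus Furstenberg's lemma (complete and correct, up to one small slip --- the reason $[\ell]$ lies in $\mathrm{supp}\,\nu$ is that $\mathrm{supp}\,\nu$ is the closure of the orbit $G\cdot[\ell]$, the image of the full-support measure on $G/\Gamma$, not ``transitivity of $G$ on $G/\Gamma$''), followed by the reduction of $\Lambda^{\mathcal{F}}_{\Gamma}=\mathcal{F}$ to minimality of the $\Gamma$-action on $G/P$ using Benoist's result that the proximal limit set of a Zariski-dense group with a biproximal element is the unique minimal nonempty closed invariant subset, with the minimality itself deferred to Mostow --- exactly the citation the paper relies on, so you and the authors lean on the same external input for the only genuinely hard step. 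One small remark: once you have uniqueness of the minimal set, Mostow's minimality of the $\Gamma$-action on $G/P$ gives $\Lambda^{\mathcal{F}}_{\Gamma}=\mathcal{F}$ directly, so the ``nonempty interior'' dichotomy, while correct, is an unnecessary intermediate; conversely it cannot substitute for the finite-covolume input, as you rightly emphasize.
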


We note that there exist Zariski-dense discrete subgroups of $\PGL(V)$ which satisfy $\Lambda^{\mathcal{F}}_{\Gamma} = \mathcal{F}$, but which are not lattices in $\PGL(V)$: see \cite{dh}.

In Section~\ref{subsec:free-product-in-G} we shall work in the following more general setting.
Let $G$ be a noncompact connected real linear semisimple Lie group and $P$ a self-opposite parabolic subgroup of~$G$.
We then say that an element $g\in G$ is \emph{proximal in $G/P$} if it has an attracting fixed point $x_g^+ \in G/P$; in that case, $x_g^+$ is unique, and $g$ also a unique repelling fixed point $x_g^- \in G/P$; moreover, $g^n \cdot x \to x_g^+$ as $n \to +\infty$ for all $x \in G/P$ transverse to~$x_g^-$ (see \cite[\S\,2.4]{ggkw17}).
For any subgroup $\Gamma$ of $\PGL(V)$, the \emph{proximal limit set} $\Lambda_{\Gamma}^{G/P}$ of $\Gamma$ in $G/P$ is the closure in $G/P$ of the set of fixed points $x_{\gamma}^+$ for elements $\gamma \in \Gamma$ that are proximal in $G/P$.
The following holds similarly to Fact~\ref{fact:double-lim-set}.

\begin{fact}[{Benoist \cite[Lem.\,3.6.(iv)]{ben97}}] \label{fact:double-lim-set-G/P}
Let $G$ be a noncompact connected real linear semisimple Lie group, $P$ a self-opposite parabolic subgroup of~$G$, and $\Gamma$ a Zariski-dense subgroup of~$G$.
Then $\{(x^+_{\gamma}, x^+_{\gamma^{-1}}) \,|\, \gamma\in\Gamma \text{ proximal in }G/P\}$ is dense in $\Lambda_{\Gamma}^{G/P} \times\nolinebreak \Lambda_{\Gamma}^{G/P}$.
\end{fact}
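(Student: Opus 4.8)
The plan is to run, inside $G/P$, the ping-pong argument with proximal elements that underlies Fact~\ref{fact:double-lim-set}. First I would fix a metric $d$ on the compact homogeneous space $G/P$ inducing its topology, and recall from \cite[\S\,2.4]{ggkw17} the basic dynamics: since $P$ is self-opposite, any $g \in G$ proximal in $G/P$ has a unique repelling fixed point $x_g^- \in G/P$, which is transverse to $x_g^+$ and is the attracting fixed point of $g^{-1}$ in $G/P$, and $g^n \cdot x \to x_g^+$ uniformly on compact subsets of $\{x \in G/P : x \text{ transverse to } x_g^-\}$. Using that $\Lambda_\Gamma^{G/P}$ is by definition the closure of $\{x_\gamma^+ : \gamma \in \Gamma \text{ proximal in } G/P\}$, I would reduce the statement, by a triangle-inequality argument, to the following: given two elements $\gamma_1, \gamma_2 \in \Gamma$ proximal in $G/P$ and $\varepsilon > 0$, produce an element $\gamma \in \Gamma$ proximal in $G/P$ with $d(x_\gamma^+, x_{\gamma_1}^+) < \varepsilon$ and $d(x_{\gamma^{-1}}^+, x_{\gamma_2}^+) < \varepsilon$. (If $\Lambda_\Gamma^{G/P}$ is empty there is nothing to prove.)

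To construct $\gamma$, I would first use Zariski-density: the Zariski closure of $\Gamma \cdot x_{\gamma_2}^-$ is $\Gamma$-invariant, hence $G$-invariant, hence equal to $G/P$, so it is not contained in the proper Zariski-closed set of points failing to be transverse to $x_{\gamma_1}^-$; thus I can pick $h \in \Gamma$ with $h \cdot x_{\gamma_2}^-$ transverse to $x_{\gamma_1}^-$ (a symmetric condition on the pair). Then I would set $\gamma := \gamma_1^n \, h \, \gamma_2^{-m}$ and take $n, m$ large. The key claim is that such $\gamma$ is proximal in $G/P$ with $x_\gamma^+$ arbitrarily close to $x_{\gamma_1}^+$ and $x_{\gamma^{-1}}^+$ arbitrarily close to $x_{\gamma_2}^+$. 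I would prove this by transporting the problem into a linear representation: choose a faithful irreducible $\tau : G \to \GL(W)$ whose highest weight line has stabilizer exactly $P$, so that the orbit map gives a $G$-equivariant embedding $G/P \hookrightarrow \PP(W)$ (a homeomorphism onto its image, $G/P$ being compact), an element $g \in G$ is proximal in $G/P$ if and only if $\tau(g)$ is proximal in $\PP(W)$, and then $x_g^+$ maps to the attracting line of $\tau(g)$. As $n \to +\infty$ (resp. $m \to +\infty$) the suitably normalized operators $\tau(\gamma_1)^n$ (resp. $\tau(\gamma_2)^{-m}$) converge to a rank-one projector $p_1$ onto the line of $x_{\gamma_1}^+$ (resp. $p_2$ onto the line of $x_{\gamma_2}^-$, using that $\gamma_2^{-1}$ is again proximal in $G/P$), and the choice of $h$ makes $p_1 \tau(h) p_2$ nonzero, hence rank one, with image the line of $x_{\gamma_1}^+$. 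Therefore, for $n, m$ large, the normalized $\tau(\gamma)$ is close to $p_1 \tau(h) p_2$, so $\tau(\gamma)$ is proximal in $\PP(W)$ with attracting line near that of $x_{\gamma_1}^+$; via the embedding this gives $x_\gamma^+$ near $x_{\gamma_1}^+$. Running the identical analysis on $\gamma^{-1} = \gamma_2^m \, h^{-1} \, \gamma_1^{-n}$ — same shape, same symmetric transversality hypothesis — gives $x_{\gamma^{-1}}^+$ near $x_{\gamma_2}^+$; choosing $n, m$ large enough for both estimates finishes the argument.

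The main obstacle is the proximality criterion for the product $\gamma_1^n h \gamma_2^{-m}$: making rigorous that its normalized image under $\tau$ converges to the rank-one operator $p_1 \tau(h) p_2$ uniformly in the two independent parameters $n, m$, and hence that $\tau(\gamma)$ is genuinely proximal with the claimed approximate attracting line. This, together with the construction of the auxiliary representation $\tau$ adapted to the self-opposite parabolic $P$, is exactly the mechanism in Benoist's proof of \cite[Lem.\,3.6]{ben97}; alternatively the whole statement can be deduced from Fact~\ref{fact:double-lim-set} (or its analogue in $\PP(W)$ for biproximal elements) applied to $\tau(\Gamma)$. In the write-up I would therefore either reproduce this ping-pong lemma or simply invoke \cite[Lem.\,3.6]{ben97}, exactly as is done for Fact~\ref{fact:double-lim-set}.
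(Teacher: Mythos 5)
The paper does not prove this statement: it is quoted directly as a Fact from Benoist \cite[Lem.\,3.6.(iv)]{ben97}, and your reconstruction — transversality of $h\cdot x_{\gamma_2}^-$ and $x_{\gamma_1}^-$ via Zariski density, then proximality of $\gamma_1^n h\gamma_2^{-m}$ through a $P$-compatible irreducible representation and convergence of normalized powers to rank-one projectors — is exactly the ping-pong mechanism behind that lemma, so your argument is correct and matches the cited source. One caveat on your closing remark: you cannot literally deduce the statement from Fact~\ref{fact:double-lim-set} applied to $\tau(\Gamma)$, since $\tau(\Gamma)$ is Zariski-dense only in $\tau(G)$, not in $\PGL(W)$; what you need there is Benoist's version of the lemma for Zariski-dense subgroups of the reductive group $\tau(G)$ acting by a proximal irreducible representation, which is precisely what the citation \cite[Lem.\,3.6]{ben97} supplies.
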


\subsection{Proof of Theorem~\ref{thm:free-product-fd}} \label{subsec:proof-free-product-fd}

We first note that for each $i\in\{0,1\}$, there exist a nonempty $\Gamma_i$-invariant properly convex open subset $\Omega'_i$ of~$\Omega_i$, an extremal point $x_i$ of $\overline{\Omega'_i}$, 
and a neighborhood $\mathcal{U}_i$ of $x_i$ such that
\begin{itemize}
  \item $\mathcal{U}_i \subset \Omega_i \smallsetminus \Ccore_{\Omega_i}(\Gamma_i)$,
  \item $\mathcal{U}_i \cap \gamma\cdot \mathcal{U}_i = \varnothing$ for all $\gamma \in \Gamma \smallsetminus \{1\}$, and
  \item $\mathcal{U}_i$ contains no other extremal points of $\overline{\Omega'_i}$ than~$x_i$.
\end{itemize}
Indeed, by assumption $\Omega_i \smallsetminus \Ccore_{\Omega_i}(\Gamma_i) \neq \varnothing$.
Since the action of $\Gamma_i$ on~$\Omega_i$ is properly discontinuous, we can find a nonempty open subset $\mathcal{U}'_i$ of $\Omega_i \smallsetminus \Ccore_{\Omega_i}(\Gamma_i)$ such that $\mathcal{U}'_i \cap \gamma\cdot \mathcal{U}'_i = \varnothing$ for all $\gamma \in \Gamma \smallsetminus \{1\}$.
Let $\Sigma_i \subset \mathcal{U}'_i$ be a projective simplex with vertex set $\mathbf{x}_i$ and nonempty interior.
Let $\Omega'_i$ be the interior of the convex hull of $\Gamma_i \cdot \Sigma_i$ in~$\Omega_i$.
The extremal points of $\overline{\Omega'_i}$ are contained in $\overline {\Gamma_i \cdot \mathbf{x}_i} \subset \Lambdao_{\Omega_i}(\Gamma_i) \cup \Gamma_i \cdot \mathbf{x}_i$. 
Since $\mathbf{x}_i$ is disjoint from the convex hull $\Ccore_{\Omega_i}(\Gamma_i)$ of $\Lambdao_{\Omega_i}(\Gamma_i)$, it follows that some point $x_i \in \mathbf{x}_i$ is extremal for~$\overline{\Omega'_i}$, and there is a neighborhood $\mathcal{U}_i\subset \mathcal{U}'_i$ of $x_i$ containing no other extremal point.

Since $x_i$ is an isolated extremal point of~$\overline{\Omega'_i}$, we can find a projective hyperplane $X_i$ of $\PP(V)$ such that $\varnothing \subsetneq X_i \cap \Omega'_i \subset \mathcal{U}_i$.
Let $\Delta_i$ be the connected component of $\Omega'_i \smallsetminus X_i$ containing $x_i$ in its boundary: it is a small open pyramid with tip~$x_i$.
See Figure~\ref{fig:taille}.

\begin{figure}[h]
\labellist
\small\hair 2pt
\pinlabel {$x_i$} at 						7.85	3.75
\pinlabel {$x$} at 						6.8	3.75
\pinlabel {$g_i \! \cdot \! \Omega$} at 		7.7	4.5
\pinlabel {$\Omega''_i(x)$} at 				4.7	5.6
\pinlabel {$\gamma g_i \! \cdot \! \Omega$} at 	5.3	7.2
\pinlabel {$\Omega'_i$} at 				5.5	0.3
\pinlabel {$X_i$} at 						5.9	1.3
\pinlabel {$\overbrace{~\hspace{2cm}~}^{\textstyle{\Delta_i}~}$} at 	6.8	6.4
\endlabellist
\includegraphics[width = 8cm]{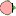}
\caption{Illustration of the proof of Theorem~\ref{thm:free-product-fd}: the triple \eqref{eq:repyr2} is in occultation position}
\label{fig:taille}
\end{figure}

Fix a closed, properly convex subset $B$ of $\PP(V)$, with nonempty interior and with smooth, strictly convex boundary (\eg an ellipsoid).
For each $i\in\{0,1\}$, we can find an element $g_i \in \PGL(V)$ taking $B$ into~$\Delta_i$ (indeed, take a large power of any proximal element whose attracting fixed point lies in~$\Delta_i$ and whose repelling hyperplane misses~$B$, see Section~\ref{subsec:remind-prox}).
By Fact~\ref{fact:double-lim-set}, we may require $g_i$ to lie in any given Zariski-dense subgroup $\Gamma$ of $\PGL(V)$ satisfying $\Lambda^{\PP(V)}_{\Gamma} = \PP(V)$ and $\Lambda^{\PP(V^*)}_{\Gamma} = \PP(V^*)$, for instance any lattice of $\PGL(V)$ (see Fact~\ref{fact:lattices}).

The compact set $B$ has a neighborhood $\mathcal{B}$ such that $g_i \cdot \overline{\mathcal{B}} \subset \Delta_i$ for both $i\in \{0,1\}$.
Hence, for any nonempty convex open subset $\Omega$ of~$\mathcal{B}$ and any point $x\in g_i\cdot\Omega$, 
if $\Delta'_i(x) \subset \Delta_i$ denotes the open pyramid with the same basis $X_i \cap \Omega'_i$ as $\Delta_i$ but with tip~$x$,
then the set
\begin{equation} \label{eq:repyr}
\Omega''_i(x):= \Big ( \Omega'_i \smallsetminus \bigsqcup_{\gamma\in \Gamma_i} \gamma \cdot \Delta_i \Big ) 
\cup \bigsqcup_{\gamma \in \Gamma_i} \gamma \cdot \Delta'_i(x) 
\quad \subset \Omega'_i
\end{equation}
is convex, $\Gamma_i$-invariant, and has the property that for any $\gamma \in \Gamma_i \smallsetminus \{1\}$, the triple
\begin{equation} \label{eq:repyr2} 
\left ( g_i \cdot \Omega,~ \Omega''_i(x),~ \gamma g_i \cdot \Omega \right )
\end{equation}
is in occultation position (Figure~\ref{fig:taille}).
We will choose appropriate $\Omega$ and $x$ below.

\begin{figure}[h]
\labellist
\small\hair 2pt
\pinlabel {$\color{red}\beta^{\texttt{-}N} g_0^{\texttt{-}1}=:g_{\mathsf{e}_0}$} at 	13		15
\pinlabel {$\color[rgb]{0,0.65,0.25} g_0$} at 						16.5		25.3
\pinlabel {$x_0$} at 											11.5		19.6
\pinlabel {$X_0$} at 											3.2		14.1
\pinlabel {${}_{x'_0}$} at 										9.4		19.9 
\pinlabel {${}_{X'_0}$} at 										9.1		17.2
\pinlabel {${}_{x''_0}$} at 										6.6		19.7 
\pinlabel {${}_{g_0 \cdot \Omega}$} at 							6.7		21 
\pinlabel {$\overbrace{~\hspace{2.2cm}~}^{~\textstyle{\Delta_0}}$} at 		7.7		25.6
\pinlabel {${}_{\Delta'_0}$} at 										3.75		18
\pinlabel {$\color{red}\Omega''_0$} at 							1.2		19.8
\pinlabel {$\underbrace{~\hspace{3cm}~}_{~\textstyle{\Omega'_0}}$} at 	6		12.4
\pinlabel {$\color{blue} g_{\mathsf{e}_1}:=\beta^{N} g_1^{\texttt{-}1}$} at 			32.8		15
\pinlabel {$\color[rgb]{0,0.65,0.25} g_1$} at 						30		25.4 
\pinlabel {$x_1$} at 											35.0		19.6
\pinlabel {$X_1$} at 											43.2		14.2
\pinlabel {${}_{x'_1}$} at 										37.0		19.9 
\pinlabel {${}_{X'_1}$} at 										37.4		17.2
\pinlabel {${}_{x''_1}$} at 										39.6		19.9 
\pinlabel {${}_{g_1 \!\cdot \Omega}$} at 							39.4		21.2 
\pinlabel {$\overbrace{~\hspace{2.2cm}~}^{~\textstyle{\Delta_1}}$} at 		38.5		25.7
\pinlabel {${}_{\Delta'_1}$} at 										42.4		18
\pinlabel {$\color{blue} \Omega''_1$} at 							45		19.8
\pinlabel {$\underbrace{~\hspace{3cm}~}_{\textstyle{\Omega'_1}~}$} at 	40		12.4
\pinlabel {$\mathcal{B}$} at 						21.8		14.7
\pinlabel {$B$} at 								23.0		15.7
\pinlabel {$\Omega$} at 							23.0		22.7
\pinlabel {$\beta$} at 							23.0		20.9
\pinlabel {${}_{y_1}$} at 							17.7		20.1
\pinlabel {${}_{y_{\!\smash \beta}^{\texttt{-}}}$} at 		19.35	19.7
\pinlabel {${}_{Y_1}$} at 							18.15	17.3
\pinlabel {${}_{Y_{\! \beta}^{\texttt{-}}}$} at 				19.5		17.2
\pinlabel {${}_{y_0}$} at 							28.5		20.1
\pinlabel {${}_{y_{\!\smash \beta}^{\texttt{+}}}$} at 		26.93	19.67
\pinlabel {${}_{Y_0}$} at 							28.3		17.4
\pinlabel {${}_{Y_{\! \beta}^{\texttt{+}}}$} at 			26.8		17.3
\pinlabel {$\color{red} g_{\mathsf{e}_0} \!\!\cdot\! \Omega''_0$} at 	15.3		18.8
\pinlabel {$\color{blue} g_{\mathsf{e}_1} \!\!\cdot\! \Omega''_1$} at 	31.1		18.7
\pinlabel {$x_0$} at 					12		4.5
\pinlabel {${}_{x''_0}$} at 				7.5		5.2
\pinlabel {${}_{x'''_0}$} at 				5.2		5.1
\pinlabel {$\color{red} \Omega''_0$} at 	-0.9		5.1
\pinlabel{${}_{\C_0^+}$} at 			9.6		5.3
\pinlabel{${}_{\Delta''_0}$} at 				3.8		3.2
\pinlabel{$\color{red} \C_0''$} at 		1.5		5.1
\pinlabel{$\color{red} g_{\mathsf{e}_0}$} at 		8.9		0.2
\pinlabel {$x_1$} at 					34		4.5
\pinlabel {${}_{x''_1}$} at 				38.4		5.4
\pinlabel {${}_{x'''_1}$} at 				40.5		5.4
\pinlabel {$\color{blue} \Omega''_1$} at 	46.5		5.1
\pinlabel{${}_{\C_1^+}$} at 			36.1		5.4
\pinlabel{${}_{\Delta''_1}$} at 				41.9		3.5
\pinlabel{$\color{blue} \C_1''$} at 		44.2		5.1
\pinlabel{$\color{blue} g_{\mathsf{e}_1}$} at 		38		0.1
\pinlabel{$\mathcal{C}$} at 	20.6		7.3
\pinlabel {$\Omega$} at 		23		9.45
\pinlabel {$\beta$} at 		23.0		6.3
\endlabellist
\includegraphics[width = \textwidth]{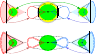}
\caption{Illustrations of the proofs of Theorems~\ref{thm:free-product-fd} (top) and~\ref{thm:free-product-cc} (bottom)}
\label{fig:free}
\end{figure}

For the rest of this proof, we refer to Figure~\ref{fig:free}, top panel.
For each $i\in\{0,1\}$, there exists $x'_i \in g_i \cdot \partial B$ such that the supporting hyperplane $X'_i$ to $g_i\cdot B$ at $x'_i$ misses $\overline{\Omega'_i \smallsetminus \Delta_i}$ and separates $g_i\cdot B^\circ$ from the tip $x_i$ in the pyramid~$\overline{\Delta_i}$.
Since $\partial B$ is smooth, up  to a small perturbation we may assume that the points $y_i := g_i^{-1}\cdot x'_i \in \partial B$ satisfy $y_0 \neq y_1$. 
Let $Y_i$ be the supporting hyperplane to $B$ at~$y_i$. 
Since $\overline{B}$ is strictly convex, the flags $(y_0, Y_0)$ and $(y_1, Y_1)$ are transverse.
We can find a biproximal element $\beta \in \PGL(V)$ whose attracting and repelling eigenflags, $(y_\beta^+, Y_\beta^+)$ and $(y_\beta^-,Y_\beta^-)$, are arbitrarily close to $(y_0, Y_0)$ and $(y_1, Y_1)$ respectively.
By Fact~\ref{fact:double-lim-set}, we may require $\beta$ to lie in any given Zariski-dense subgroup $\Gamma$ of $\PGL(V)$ satisfying $\Lambda^{\PP(V)}_{\Gamma} = \PP(V)$ and $\Lambda^{\PP(V^*)}_{\Gamma} = \PP(V^*)$, for instance any lattice of $\PGL(V)$ (see Fact~\ref{fact:lattices}).
Up to squaring $\beta$, we may assume that it preserves (hence acts cocompactly on) the open segment $(y_\beta^-, y_\beta^+)$; by Proposition~\ref{prop:thicken-add-dim-general}, this segment is then contained in some $\beta$-invariant properly convex open subset $\Omega$ of $\PP(V)$.
Up to replacing $\Omega$ by some open uniform neighborhood of $(y_\beta^-, y_\beta^+)$ in $(\Omega,d_{\Omega})$, we may assume that $\Omega$ is contained in~$\mathcal{B}$ (though not necessarily in~$B$).
There are properly convex neighborhoods $\mathcal{U}^\pm$ of $y_\beta^{\pm}$ in $\PP(V)$ such that $(\mathcal{U}^-, \Omega, \mathcal{U}^+)$ is in occultation position.

By construction, $g_0 \cdot Y_\beta^+$ misses $\overline{\Omega'_0 \smallsetminus \Delta_0}$, because $g_0\cdot Y_0=X'_0$ does.
Therefore, if we choose a point $x''_0 \in g_0 \cdot \Omega$, then the set $\Omega''_0:=\Omega''_0(x''_0)$ defined by~\eqref{eq:repyr} satisfies $\overline{\Omega''_0} \cap g_0 \cdot Y_\beta^+ =\varnothing$, hence for large $N\geq 0$ the set $\beta^{-N} g_0^{-1} \cdot \Omega''_0$ shrinks into~$\mathcal{U}^-$.

Similarly, $g_1 \cdot Y_\beta^-$ misses $\overline{\Omega'_1 \smallsetminus \Delta_1}$, because $g_1 \cdot Y_1 = X'_1$ does.
Choosing $x''_1 \in g_1 \cdot \Omega$, the set $\Omega''_1:=\Omega''_1(x''_1)$ satisfies $\overline{\Omega''_1} \cap g_1 \cdot Y_\beta^- =\varnothing$, hence for large $N$ the set $\beta^{N} g_1^{-1} \cdot \Omega''_1$ shrinks into~$\mathcal{U}^+$.

It follows that the triple
\begin{equation} \label{eq:betaN} 
\left ( \beta^{-N} g_0^{-1} \cdot \Omega''_0 ,~ \Omega,~ \beta^{N} g_1^{-1} \cdot \Omega''_1 \right )
\end{equation}
is in occultation position. 
Since $\Omega$ is $\langle \beta \rangle$-invariant and the triples \eqref{eq:repyr2} are in occultation position, so is the triple
\begin{equation} \label{eq:betaN2} 
\left (g_0 \beta^N \cdot \Omega, \Omega''_0, \gamma g_0 \beta^N \cdot \Omega \right ) \quad \text{ (\resp } \left (g_1 \beta^{-N} \cdot \Omega, \Omega''_1, \gamma g_1 \beta^{-N} \cdot \Omega \right )\text{)}
\end{equation}
for all $\gamma \in \Gamma_0 \smallsetminus \{1\}$ (\resp for all $\gamma \in \Gamma_1 \smallsetminus \{1\}$).

Let $\mathsf{Y} = (\mathsf{V},\mathsf{E})$ be the graph with three vertices $\mathsf{V} = \{\mathsf{v}_0, \mathsf{w} , \mathsf{v}_1\}$ and two edges $(\mathsf{w},\mathsf{v}_0)$ and $(\mathsf{w},\mathsf{v}_1)$. 
For $i \in \{0,1\}$, write $\mathsf{e}_i \in \vec{\mathsf{E}}$ for the directed edge from $\mathsf{w}$ to $\mathsf{v}_i$. 
Consider the graph of groups $(\boldsymbol{\Gamma}, \mathsf{Y})$ with $\Gamma_{\mathsf{v}_i} = \Gamma_i$ and $\Gamma_{\mathsf{w}}\simeq \Gamma_{|\mathsf{e}_i|} \simeq \{1\}$. 
Let $\rho_{\mathsf{v}_i}:\Gamma_i \rightarrow \PGL(V)$ be the inclusion, let $(g_{\mathsf{e}_{0}}, g_{\mathsf{e}_{1}}) := (\beta^{-N} g_0^{-1}, \beta^N g_1^{-1})$, and 
let $(\Omega_{\mathsf{v}_0}, \Omega_\mathsf{w}, \Omega_{\mathsf{v}_1}) := (\Omega''_0,  \Omega, \Omega''_1)$.

Theorem~\ref{thm:general-gog} applies to this datum, as the triples from \eqref{eq:betaN} and \eqref{eq:betaN2} are in occultation position. 
Since $\pi_1(\boldsymbol{\Gamma},\mathsf{Y},\mathsf{w})$ is naturally isomorphic to $\Gamma_0 * \Gamma_1$, this proves Theorem~\ref{thm:free-product-fd} with $g=g_{\mathsf{e}_0}^{-1} g_{\mathsf{e}_1} = g_0 \beta^{2N} g_1^{-1}$ (whose factors can, as remarked above, be chosen in any given Zariski-dense subgroup $\Gamma$ of $\PGL(V)$ satisfying $\Lambda^{\PP(V)}_{\Gamma} = \PP(V)$ and $\Lambda^{\PP(V^*)}_{\Gamma} = \PP(V^*)$, for instance any lattice of $\PGL(V)$).

\subsection{An application of Theorem~\ref{thm:free-product-fd} and Proposition~\ref{prop:thicken-add-dim}}

The following can be used \eg to construct discrete subgroups isomorphic to $\ZZ^{d-2} * \ZZ^{d-2}$ inside $\SL(d,\RR)$; see Corollary~\ref{cor:Soifer} below for an improvement.

\begin{corollary} \label{cor:free-product-fd-subspaces}
For $i\in\{0,1\}$, let $\Gamma_i$ be a discrete subgroup of $\SL^{\pm}(V)$ preserving a nontrivial decomposition $V = V_i \oplus V'_i$, preserving a nonempty properly convex open subset $\mathcal{O}_i$ of $\PP(V_i)$, and acting trivially on~$V'_i$.
Then there exists $g\in\PGL(V)$ such that the representation $\rho:  \Gamma_0 * g\Gamma_1 g^{-1} \to \PGL(V)$ induced by the inclusions $\Gamma_0, g\Gamma_1 g^{-1} \hookrightarrow \PGL(V)$ is discrete and faithful, and such that the image of this representation preserves a nonempty properly convex open subset of $\PP(V)$.
Moreover, we can take $g$ in any given lattice of $\PGL(V)$ (causing $\rho$ to take values in that lattice if it contains $\Gamma_0, \Gamma_1$), or more generally in any given Zariski-dense subgroup of $\PGL(V)$ whose proximal limit sets in $\PP(V)$ and $\PP(V^*)$ are everything.
\end{corollary}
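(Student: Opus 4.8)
The plan is to reduce Corollary~\ref{cor:free-product-fd-subspaces} to Theorem~\ref{thm:free-product-fd}, using Proposition~\ref{prop:thicken-add-dim} to promote each $\mathcal{O}_i\subset\PP(V_i)$ to a properly convex open subset of $\PP(V)$ on which $\Gamma_i$ still acts with a ``small'' convex core.

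First I would note that, as $\Gamma_i$ is discrete in $\SL^{\pm}(V)$ and the projection $\SL^{\pm}(V)\to\PGL(V)$ has finite kernel, the image of $\Gamma_i$ in $\PGL(V)$ is a discrete subgroup, which I continue to denote by~$\Gamma_i$. Applying Proposition~\ref{prop:thicken-add-dim} with $U=V_i$ and $U'=V'_i$ (its hypotheses are satisfied because $\mathcal{O}_i$ is properly convex and $\Gamma_i$ acts trivially on~$V'_i$), I obtain a $\Gamma_i$-invariant properly convex open subset $\Omega_i$ of $\PP(V)$ containing~$\mathcal{O}_i$ and satisfying $\Lambdao_{\Omega_i}(\Gamma_i)=\Lambdao_{\mathcal{O}_i}(\Gamma_i)$.

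Next I would verify the hypothesis $\Omega_i\neq\Ccore_{\Omega_i}(\Gamma_i)$ of Theorem~\ref{thm:free-product-fd}. The orbital limit set $\Lambdao_{\Omega_i}(\Gamma_i)=\Lambdao_{\mathcal{O}_i}(\Gamma_i)$ is contained in $\partial\mathcal{O}_i\subset\PP(V_i)$, so its convex hull in $\overline{\Omega_i}$ --- hence the convex core $\Ccore_{\Omega_i}(\Gamma_i)$ --- is contained in the projective subspace $\PP(V_i)$. Since the decomposition $V=V_i\oplus V'_i$ is nontrivial, $\PP(V_i)$ is a proper projective subspace of $\PP(V)$, with empty interior; as $\Omega_i$ is nonempty and open in $\PP(V)$, it is not contained in $\PP(V_i)$, so $\Omega_i\neq\Ccore_{\Omega_i}(\Gamma_i)$.

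Finally, I would apply Theorem~\ref{thm:free-product-fd} to the pairs $(\Gamma_0,\Omega_0)$ and $(\Gamma_1,\Omega_1)$. It provides $g\in\PGL(V)$ --- which can be chosen in any prescribed lattice of $\PGL(V)$, or more generally in any prescribed Zariski-dense subgroup whose proximal limit sets in $\PP(V)$ and $\PP(V^*)$ are everything --- such that the representation $\rho\colon\Gamma_0*g\Gamma_1g^{-1}\to\PGL(V)$ induced by the inclusions is discrete and faithful and its image preserves a nonempty properly convex open subset of $\PP(V)$; moreover $\rho$ takes values in the chosen lattice whenever that lattice contains $\Gamma_0$ and~$\Gamma_1$. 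This is precisely the statement of Corollary~\ref{cor:free-product-fd-subspaces}. I do not anticipate any genuine difficulty here: the only steps requiring a moment's care are the elementary dimension count showing $\Omega_i\neq\Ccore_{\Omega_i}(\Gamma_i)$, and the routine bookkeeping with the finite-kernel projection $\SL^{\pm}(V)\to\PGL(V)$ when one wants $\rho$ to land inside the prescribed lattice.
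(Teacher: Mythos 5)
Your proposal is correct and follows the same route as the paper: use Proposition~\ref{prop:thicken-add-dim} to thicken $\mathcal{O}_i$ into a $\Gamma_i$-invariant properly convex open set $\Omega_i\subset\PP(V)$ whose convex core stays inside $\PP(V_i)$ (so $\Omega_i\neq\Ccore_{\Omega_i}(\Gamma_i)$), then apply Theorem~\ref{thm:free-product-fd}. The only difference is that you spell out the dimension count and the passage through $\SL^{\pm}(V)\to\PGL(V)$, which the paper leaves implicit.
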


\begin{proof}
By Proposition~\ref{prop:thicken-add-dim}, the group $\Gamma_i$ preserves a nonempty properly convex open subset $\Omega_i$ of $\PP(V)$ with $\Ccore_{\Omega_i}(\Gamma_i) = \Ccore_{\mathcal{O}_i}(\Gamma_i) \subset \Omega_i \cap \PP(V_i) \neq \Omega_i$.
We conclude by applying Theorem~\ref{thm:free-product-fd}.
\end{proof}

\subsection{Proof of Corollary~\ref{cor:counterex-nori}: counterexample in the setting of Nori's problem} \label{subsec:nori}

Fix $d\geq\nolinebreak 2$.
Let $\tau_d : \SL(d,\RR)\to\SL(S^2\RR^d)$ be the second symmetric power of the standard representation.
By Fact~\ref{fact:Omega-sym}, the group $\SL(d,\RR)$ acts properly and transitively, via~$\tau_d$, on some nonempty properly convex open subset $\Omega_d$ of $\PP(S^2\RR^d)$.
Let $V := S^2\RR^d\oplus\RR \simeq \RR^{n_d+1}$ and let $\tau'_d = \tau_d\oplus\mathbf{1} : \SL(d,\RR) \to \SL(V)$ be the sum of $\tau_d$ and of the trivial representation; it induces an embedding of $\SL(d,\RR)$ into $\PSL(V)$, which we still denote by~$\tau'_d$.

By Proposition~\ref{prop:thicken-add-dim}, the group $\SL(d,\RR)$, acting faithfully on $\PP(V)$ via~$\tau'_d$, preserves a properly convex open subset $\Omega'_d$ of $\PP(V)$ containing~$\Omega_d$ (where we see $\PP(S^2\RR^d)$ as a projective hyperplane of $\PP(V) = \PP(S^2\RR^d\oplus\RR)$), and $\Lambdao_{\Omega'_d}(\tau'_d(\SL(d,\RR)) = \Lambdao_{\Omega_d}(\tau_d(\SL(d,\RR)) \subset \PP(S^2\RR^d)$; in particular, we have $\Omega'_d \supsetneq \Ccore_{\Omega'_d}(\tau'_d(\SL(d,\RR))$.

Let $\Gamma_0$ be a lattice of $\SL(d,\RR)$.
Then $\Gamma'_0 := \tau'_d(\Gamma_0) < \PSL(S^2\RR^d\oplus\RR)$ preserves~$\Omega'_d$ and $\Omega'_d \supsetneq \Ccore_{\Omega'_d}(\tau'_d(\SL(d,\RR)) \supset \Ccore_{\Omega'_d}(\Gamma'_0)$.
Let $\Gamma_1$ be a Zariski-dense discrete subgroup of $\PGL(S^2\RR^d\oplus\RR)$ acting convex cocompactly on some properly convex open subset $\Omega_1$ of $\PP(S^2\RR^d\oplus\RR)$ such that $\Omega_1 \neq \Ccore_{\Omega_1}(\Gamma_1)$ (there exist many such groups, \eg free groups, as given \eg by Lemma~\ref{lem:free-groups} below).
By Theorem~\ref{thm:free-product-fd}, there exists $g\in\PGL(V)$ such that the subgroup $\Gamma$ of $\PGL(V)$ generated by $\Gamma'_0$ and $g\Gamma_1 g^{-1}$ is discrete in $\PGL(V)$, is isomorphic to the amalgamated free product $\Gamma_0 * g\Gamma_1 g^{-1}$, and preserves a nonempty properly convex open subset $\Omega$ of $\PP(V)$.
This group $\Gamma$ is Zariski-dense in $G := \PGL(V)$ because its subgroup $\Gamma_1$ is.
The intersection of $\Gamma$ with $H := \tau'_d(\SL(d,\RR))$ is the lattice $\Gamma'_0$ of~$H$.
The group $\Gamma$ is not itself a lattice of~$G$ since the attracting fixed points of proximal elements in $\Gamma$ all lie in $\partial\Omega$, hence we do \emph{not} have $\Lambda^{\PP(V)}_{\Gamma} = \PP(V)$ as in Fact~\ref{fact:lattices}. 
Corollary~\ref{cor:counterex-nori} is proved.

\begin{remark}
Similarly, in the construction above, taking for $\Gamma_0$ a lattice of any \emph{subgroup} $S$ of $\SL(d,\RR)$ yields a discrete subgroup $\Gamma$ of $G=\PGL(V)$ which is \emph{not} a lattice of~$G$ but contains a lattice $\tau'_d(\Gamma_0)$ of $H:=\tau'_d(S)$.
\end{remark}

\begin{remark}
As in Fact~\ref{fact:Omega-sym}, we can identify $S^2\RR^d$ with the space of symmetric $d\times d$ real matrices, and $\Omega_d$ with the projectivization of those symmetric matrices that are positive definite.
Using the fact that $\det(\cdot)^\frac{1}{d+1}$ is concave on positive definite symmetric matrices, we can then take the following explicit properly convex open set for~$\Omega'_d$:
$$\Omega'_d = \PP \left \{(M,t) \in \Omega_d \times \RR~ \middle | ~ |t| < \det(M)^\frac{1}{d+1} \right \} \subset \PP(V).$$
\end{remark}

\begin{remark} \label{rem:Nori-over-C-H}
The construction above also works over $\mathbb{K} = \CC$ or the ring $\HH$ of quaternions.
Indeed, let $U$ be the space of Hermitian $(d\times d)$ matrices over~$\mathbb{K}$; it is a real vector space of dimension $n_d := d^2$ if $\mathbb{K} = \CC$, and $n_d := 2d^2-d$ if $\mathbb{K} = \HH$.
The group $\SL(d,\mathbb{K})$ acts on~$U$ by $g\cdot X = g X \overline{g}^t$, which defines a linear representation $\tau_{d,\mathbb{K}} : \SL(d,\mathbb{K}) \to \SL(U)$.
It preserves a nonempty properly convex open subset $\Omega_{d,\mathbb{K}}$ of $\PP(U)$, namely the projectivization of the set of positive definite Hermitian matrices. 
Let $V := U\oplus\RR$ and let $\tau'_{d,\mathbb{K}} = \tau_{d,\mathbb{K}}\oplus\mathbf{1} : \SL(d,\mathbb{K}) \to \SL(V)$ be the sum of $\tau_{d,\mathbb{K}}$ and of the trivial representation; it induces an embedding of $\SL(d,\mathbb{K})$ into $\PSL(V)$, which we still denote by~$\tau'_{d,\mathbb{K}}$.
Arguing as above, we can construct a discrete subgroup $\Gamma$ of $G = \PGL(V) \simeq \PGL(n_d+1,\RR)$ which meets $H = \tau'_{d,\mathbb{K}}(\SL(d,\mathbb{K}))$ in a lattice of~$H$, providing a negative answer to Question~\ref{question-Nori}.
\end{remark}

\begin{lemma} \label{lem:free-groups}
Let $n \geq 2$ and let $F_2$ denote the free group on two generators.
Then there exists a representation $\rho : F_2 \to \SL(n+1,\RR)$ whose image is both convex cocompact in $\PP(\RR^{n+1})$ and Zariski-dense in $\SL(n+1,\RR)$. 
\end{lemma}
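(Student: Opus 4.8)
The plan is to build $\rho$ as a Zariski-dense deformation of a classical projective Schottky representation. Fix the standard inclusion $\SO(n,1)\hookrightarrow\SL(n+1,\RR)$, under which $\SO(n,1)$ acts on $\PP(\RR^{n+1})=\RP^n$ preserving the open ball $\mathbb{B}$ of the Klein projective model of $\HH^n$. Taking two loxodromic isometries of $\HH^n$ with disjoint axes and sufficiently large translation lengths, a classical ping-pong argument on $\partial\HH^n\cong\SS^{n-1}$ shows that they generate a discrete free subgroup $\Gamma_0\cong F_2$ of $\SO(n,1)$ which is convex cocompact as a group of isometries of $\HH^n$. Equivalently, working in the Klein model, the action of $\Gamma_0$ on the properly convex open set $\mathbb{B}$ is convex cocompact in the sense of Definition~\ref{def:cc-group}: the convex core $\Ccore_{\mathbb{B}}(\Gamma_0)$ is the intersection with $\mathbb{B}$ of the convex hull of the limit Cantor set $\Lambda\subset\partial\mathbb{B}$, and it has compact quotient. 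Let $\rho_0\in\Hom(F_2,\SL(n+1,\RR))$ be the associated representation.

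Next I would deform $\rho_0$ inside $\Hom(F_2,\SL(n+1,\RR))\simeq\SL(n+1,\RR)^2$. Since $F_2$ is Gromov hyperbolic and $\rho_0$ has convex cocompact image in $\PP(\RR^{n+1})$, convex cocompactness in $\PP(\RR^{n+1})$ is stable under small deformations of $\rho_0$ (this is the openness statement of \cite{dgk-proj-cc}; it also follows by combining Fact~\ref{fact:Anosov}, which makes the inclusion of $\rho_0(F_2)$ a $P_1$-Anosov representation, with the openness of the Anosov condition \cite{gw12,ggkw17}). Hence there is a neighborhood $\mathcal{U}$ of $\rho_0$ in $\SL(n+1,\RR)^2$ consisting of representations whose image is discrete, isomorphic to $F_2$, and convex cocompact in $\PP(\RR^{n+1})$. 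On the other hand, the pairs $(a,b)\in\SL(n+1,\RR)^2$ generating a Zariski-dense subgroup form a dense subset (the complement is contained in a countable union of proper real algebraic subvarieties). Since $\mathcal{U}$ is open and nonempty, it contains a representation $\rho$ with $\rho(F_2)$ Zariski-dense in $\SL(n+1,\RR)$; this $\rho$ satisfies both required conclusions.

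The point needing care is the \emph{simultaneous} fulfilment of the two conditions, i.e.\ that the open locus of convex cocompact representations meets the dense locus of Zariski-dense ones. This forces us to deform inside the ambient group $\SL(n+1,\RR)$ rather than within the rigid subgroup $\SO(n,1)$, and it is why density (not merely nonemptiness) of the Zariski-dense locus is used. The remaining input, stability of convex cocompactness, is also the mechanism that transports an invariant properly convex open set from $\rho_0$ to nearby $\rho$: a suitable thickening of the convex hull of the perturbed limit set provides a $\rho(F_2)$-invariant properly convex open subset of $\PP(\RR^{n+1})$, after which Fact~\ref{fact:Anosov} converts the $P_1$-Anosov property back into convex cocompactness.
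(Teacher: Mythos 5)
Your proposal is correct and follows essentially the same route as the paper's own (very brief) proof: start from a convex cocompact representation into $\SO(n,1)$, use openness of convex cocompactness (equivalently, of the $P_1$-Anosov condition via Fact~\ref{fact:Anosov}) under small deformations into $\SL(n+1,\RR)$, and note that arbitrarily small deformations can be made Zariski-dense. Your added justifications (ping-pong construction of the Schottky group and density of Zariski-dense pairs) simply spell out what the paper leaves implicit.
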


\begin{proof}
Start with a convex cocompact representation into $\SO(n,1)$.
Any small enough deformation into $\SL(n+1,\RR)$ remains convex cocompact in $\PP(\RR^{n+1})$.
There are arbitrarily small deformations which become Zariski-dense in $\SL(n+1,\RR)$.
\end{proof}

\subsection{Proof of Corollary~\ref{cor:combine-discrete-groups}: free products of arbitrary discrete groups} \label{subsec:proof-combine-discrete-groups}

Fix $d\geq 2$ and let $\Gamma_0$ and~$\Gamma_1$ be two discrete subgroups of $\SL(d,\mathbb{A}) \subset \SL(d,\RR)$.
Let $\tau'_d = \tau\oplus\mathbf{1} : \SL(d,\RR) \hookrightarrow \PSL(S^2\RR^d\oplus\RR) \simeq \PSL(n_d+1,\RR)$ be as in Section~\ref{subsec:nori} just above. Note that $\tau'_d(\SL(d,\mathbb A)) \subset \PSL(n_d+1,\mathbb A)$.
By Fact~\ref{fact:Omega-sym}, the groups $\Gamma'_0 := \tau'_d(\Gamma_0)$ and $\Gamma'_1 := \tau'_d(\Gamma_1)$ preserve the nonempty properly convex open subset $\Omega_d$ of $\PP(S^2\RR^d)$ and act trivially on the $\RR$ factor of $V = S^2\RR^d \oplus \RR$.
By Corollary~\ref{cor:free-product-fd-subspaces}, there exists $g\in\PGL(n_d+1,\mathbb{Z}) \subset \PGL(n_d+1,\mathbb{A})$ such that the representation $\rho:  \Gamma_0 * g\Gamma_1 g^{-1} \to \PGL(n_d+1,\mathbb{A})$ induced by the inclusions $\Gamma_0, g\Gamma_1 g^{-1} \hookrightarrow \PGL(n_d+1,\mathbb{A})$ is discrete and faithful, and preserves a nonempty properly convex open subset of $\PP(\RR^{n_d+1})$.

\subsection{A variant of Theorem~\ref{thm:free-product-fd} in a semisimple Lie group} \label{subsec:free-product-in-G}

As in Section~\ref{subsec:remind-prox}, let $\mathcal{F}$ be the space of partial flags $(x,X)$ with $x \in \PP(V)$ and $X \in \PP(V^*)$, and for any subgroup $\Gamma$ of $\PGL(V)$ let $\Lambda_{\Gamma}^{\mathcal{F}}$ be the proximal limit set of~$\Gamma$ in~$\mathcal{F}$.
As in Section~\ref{subsec:intro-applic-free-prod}, we say that a point of $\mathcal{F}$ is \emph{uniformly transverse} to a subset of~$\mathcal{F}$ if it is transverse to all points in the closure of the subset (and therefore also to all points in a neighborhood of that closure).
Theorem~\ref{thm:free-product-G/P} is a consequence of the following.

\begin{theorem} \label{thm:free-product-in-G}
Let $G$ be a noncompact closed connected subgroup of $\PGL(V)$ which is semisimple, acts irreducibly on $\PP(\mathrm{span}(\Lambda_G^{\PP(V)}))$, and preserves a nonempty properly convex open subset $\Omega$ of $\PP(V)$.
Let $\Gamma_0$ and~$\Gamma_1$ be discrete subgroups of~$G$, and let $\Gamma$ be a Zariski-dense subgroup of~$G$ (\eg $G$ itself).
Suppose that for each $i\in\{0,1\}$ there is a flag $F_i \in \Lambda_{\Gamma}^{\mathcal{F}}$ which is uniformly transverse to $(\Gamma_i\smallsetminus\{1\}) \cdot F_i$.
Then
\begin{enumerate}
  \item \label{item:unitrans} there is a neighborhood $\mathcal{W}_i$ of $F_i$ in~$\mathcal{F}$ such that $F'_i$ is uniformly transverse to\linebreak $(\Gamma_i\smallsetminus\{1\}) \cdot F''_i$ for all $F'_i,F''_i \in \mathcal{W}_i$;
  \item there exists $g\in\Gamma$ such that the representation $\rho : \Gamma_0 * g\Gamma_1 g^{-1} \to G$ induced by the inclusions $\Gamma_0, g\Gamma_1 g^{-1} \hookrightarrow G$ is discrete and faithful.
\end{enumerate}
\end{theorem}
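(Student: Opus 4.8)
The plan is to mimic the proof of Theorem~\ref{thm:free-product-fd} given in Section~\ref{subsec:proof-free-product-fd}, but replacing the sets $\Omega_i' \setminus \bigsqcup_\gamma \gamma\cdot\Delta_i$ and the ``pyramid'' modification trick with an argument that keeps track of flags rather than convex sets. The representation $G \to \PGL(V)$ is proximal on $\mathrm{span}(\Lambda_G^{\PP(V)})$ by irreducibility, and the properly convex $G$-invariant set $\Omega$ allows us to apply the convex-combination machinery after thickening; the role of $\Lambda^{\mathcal F}_\Gamma$ is to give us biproximal ping-pong elements inside $\Gamma$ via Fact~\ref{fact:double-lim-set}. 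First I would establish part~\eqref{item:unitrans}: uniform transversality is an open condition because the transversality relation on $\mathcal F\times\mathcal F$ is open and $\overline{(\Gamma_i\smallsetminus\{1\})\cdot F_i}$ is compact; a straightforward compactness/continuity argument upgrades ``$F_i$ transverse to the whole orbit closure'' to ``all $F_i'$ in a neighborhood $\mathcal W_i$ are transverse to all $F_i''$ in $\mathcal W_i$'', shrinking $\mathcal W_i$ if necessary.

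For part~(2), the strategy is: for each $i\in\{0,1\}$, build a $\Gamma_i$-invariant properly convex open set $\Omega_i''$ whose closure (as a subset of $\PP(V)$) is ``small'' near $F_i$ in the following sense: it avoids the hyperplane component of $F_i$ and is contained in a small neighborhood of the line direction determined by $F_i$ — more precisely, $\Omega_i''$ should be a $\Gamma_i$-invariant convex set such that the pair $(\overline{\Omega_i''}, $ the fixed flag $F_i)$ behaves like the pair $(\overline{\Omega_0''}, x_0)$ in the earlier proof. Concretely, using the $\Gamma_i$-action on $\Omega$, one takes the $\Gamma_i$-orbit of a small convex neighborhood of a point near $F_i$, thickens it by Proposition~\ref{prop:thicken-add-dim-general} or Proposition~\ref{prop:thicken-not-proper} to obtain a $\Gamma_i$-invariant properly convex open $\Omega_i''\subset\Omega$, and arranges (by uniform transversality from \eqref{item:unitrans}, together with the contraction dynamics of proximal elements) that for every $\gamma\in\Gamma_i\smallsetminus\{1\}$ the triple $(\mathcal U_i^{\mathrm{in}},\ \Omega_i'',\ \gamma\cdot\mathcal U_i^{\mathrm{in}})$ is in occultation position, where $\mathcal U_i^{\mathrm{in}}$ is a small convex set attached near $F_i$. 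Then I would choose, using Fact~\ref{fact:double-lim-set} applied to the Zariski-dense $\Gamma$, a biproximal element $\beta\in\Gamma$ whose attracting and repelling eigenflags are close to $F_0$ and $F_1$ respectively (after first conjugating $\Gamma_1$ by some $h\in\Gamma$ so that its ``good flag'' is also realized near a point of $\Lambda_\Gamma^{\mathcal F}$, or equivalently moving $F_1$ into the picture by a density argument — this is where Fact~\ref{fact:double-lim-set-G/P}/\ref{fact:double-lim-set} doing double-limit-set density is essential). Taking a large power $\beta^N$ and setting $g := \beta^{2N}$ (or the appropriate product), one checks exactly as in the proof of Theorem~\ref{thm:free-product-fd} that the triple $(\beta^{-N}\cdot\Omega_0'',\ \Omega,\ \beta^N\cdot\Omega_1'')$ and its $\Gamma_i$-translated relatives are in occultation position, and then Theorem~\ref{thm:general-gog} applied to the obvious three-vertex graph of groups (vertex groups $\Gamma_0,\{1\},\Gamma_1$) yields that $\rho:\Gamma_0 * g\Gamma_1 g^{-1}\to G$ is discrete and faithful, with $g\in\Gamma$.

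The main obstacle I anticipate is the construction of the $\Gamma_i$-invariant properly convex sets $\Omega_i''$ with the required occultation property relative to a small set near $F_i$: in Theorem~\ref{thm:free-product-fd} one had the luxury of an honest $\Gamma_i$-invariant properly convex $\Omega_i$ with an isolated extremal point, which is exactly what the pyramid surgery exploited; here we only know $\Gamma_i$ is discrete in $G$ and that there is a \emph{flag} $F_i$ uniformly transverse to the orbit $(\Gamma_i\smallsetminus\{1\})\cdot F_i$, with no a priori invariant convex set adapted to $F_i$. The fix is to observe that $G$ itself preserves the properly convex $\Omega$, so $\Gamma_i\subset\mathrm{Aut}(\Omega)$; one then replaces ``isolated extremal point'' by ``small convex region near the flag $F_i$, pushed around by $\Gamma_i$'' and uses uniform transversality to guarantee the dual-separation condition $\overline{(\Omega_i'')^*}\subset(\cdots)^*$ defining occultation. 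Making this precise — in particular verifying the hyperplane-occultation condition (O3) rather than just disjointness — is the technical heart, and will require the $C^1$-type / openness input from~\eqref{item:unitrans} together with the contraction estimates for proximal elements recalled in Section~\ref{subsec:remind-prox}. Everything after that is a faithful transcription of the Section~\ref{subsec:proof-free-product-fd} argument, with $\mathcal F$ in place of $\PP(V)$ and $\Gamma$-membership of $\beta$ (hence of $g$) supplied by Fact~\ref{fact:double-lim-set}.
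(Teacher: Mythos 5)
Your overall route is the paper's: enlarge the $G$-invariant set $\Omega$ to $\Gamma_i$-invariant properly convex sets adapted to the flags $F_i=(x_i,X_i)$, perform a hat/pyramid surgery near $x_i$ as in the proof of Theorem~\ref{thm:free-product-fd}, pick the ping-pong element inside $\Gamma$ via Benoist density (Facts \ref{fact:double-lim-set} and~\ref{fact:double-lim-set-G/P}), and conclude with Theorem~\ref{thm:general-gog} on the three-vertex graph of groups. However, your argument for part~\eqref{item:unitrans} does not work as stated. Openness of transversality plus compactness of $\overline{(\Gamma_i\smallsetminus\{1\})\cdot F_i}$ only controls pairs consisting of a nearby test flag $F'_i$ and the orbit closure of the \emph{fixed} basepoint $F_i$; it says nothing about the orbit of a \emph{varying} basepoint $F''_i$, because the orbit closure of $F''_i$ is not a continuous function of $F''_i$ and the closeness of $\gamma\cdot F''_i$ to $\gamma\cdot F_i$ degenerates as $\gamma$ ranges over the infinitely many elements of $\Gamma_i$. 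What is really needed is a uniform dynamical statement: if $\gamma_n\to\infty$ in $\Gamma_i$ and $F''_n\to F_i$, then $\gamma_n\cdot F''_n$ and $\gamma_n\cdot F_i$ have the same limits. The paper obtains this from the fact that $\Gamma_i$ preserves properly convex open sets $\Omega_i\subset\PP(V)$ containing $\Omega\cup\{x_i\}$ and $\mathcal{O}_i\subset\PP(V^*)$ containing $\Omega^*\cup\{X_i\}$ (built, before proving~\eqref{item:unitrans}, from the orbit of the supporting hyperplane $X_i$ together with Proposition~\ref{prop:thicken-not-proper}), combined with \cite[Cor.\,3.6]{dgk-proj-cc}. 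Your plan has the logical order reversed: you use \eqref{item:unitrans} to build the invariant convex sets, but the compactness argument you offer in its place is insufficient.

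For part (2) you explicitly defer what you call the technical heart, namely the construction of the $\Gamma_i$-invariant properly convex sets and the verification of the dual occultation condition $\overline{B^*}\subset A^*\cup C^*$ for every $\gamma\in\Gamma_i\smallsetminus\{1\}$; this is precisely where the content of the theorem lies, so the proposal as written is an outline rather than a proof. Note also a relevant slip: you ask for $\Omega''_i\subset\Omega$, but the invariant convex set adapted to $F_i$ must contain $x_i$ in its interior, so that $\Conv{\Gamma_i\cdot x_i}$ has $x_i$ as a conical extremal point and a hat $\Delta_i$ can be chopped off by a hyperplane $H_i$ close to $X_i$; since $x_i\in\Lambda_G^{\PP(V)}$ generally lies on $\partial\Omega$, one must enlarge $\Omega$ equivariantly (the paper's $\Omega_i\supset\Omega\cup\{x_i\}$), not shrink it. In the paper, uniform transversality enters quantitatively through a uniform Hilbert-distance gap between $\C_i\smallsetminus\mathcal{U}_i$ and $H_i$, and the occultation of the triples $(B,\Omega_i^z,\gamma\cdot B)$ is proved by the concrete geometric observation that any segment from the hat to one of its $\gamma$-translates must cross the un-chopped part of the $\varepsilon$-neighborhood $\C_i^{\varepsilon}$; neither of these steps follows from the ``openness plus contraction estimates'' you appeal to, so both key steps of your plan remain unestablished.
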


\begin{proof}
For $i\in \{0,1\}$, we write the flag $F_i \in \Lambda_{\Gamma}^{\mathcal{F}}$ as $(x_i,X_i)  \in \PP(V) \times \PP(V^*)$.
We break up the proof in several steps and refer to Figure~\ref{fig:chapeau} for illustration.

\smallskip
\noindent
$\bullet$ \textbf{Step 1: find a suitable $\Gamma_i$-invariant properly convex open set.}
Since $X_i$ belongs to~$\Lambda_G^{\PP(V^*)}$, it is a supporting hyperplane to $\Omega$ at~$x_i$.
Let $\Lambda_i^* \subset \PP(V^*)$ be the set of accumulation points of the $\Gamma_i$-orbit of~$X_i$; it consists again of supporting hyperplanes to~$\Omega$.
Let $\Omega_i^{\max}$ be the connected component  of
$$\PP(V) \smallsetminus \bigcup_{H\in\Lambda_i^*} H$$
containing~$\Omega$; it is a $\Gamma_i$-invariant convex open subset of $\PP(V)$.
By assumption $x_i$ is uniformly transverse to $(\Gamma_i\smallsetminus\{1\}) \cdot X_i$, hence $x_i \in \Omega_i^{\max}$.
By Proposition~\ref{prop:thicken-not-proper}, there is a $\Gamma_i$-invariant properly convex open subset $\Omega_i$ of $\PP(V)$ (inside $\Omega_i^{\max}$) containing $\Omega \cup \{ x_i\}$.
Similarly, there is a $\Gamma_i$-invariant properly convex open subset $\mathcal{O}_i$ of $\PP(V^*)$ containing $\Omega^* \cup \{ X_i\}$.

\smallskip
\noindent
$\bullet$ \textbf{Step 2: uniform transversality is stable under small deformations.}
Let us prove~\eqref{item:unitrans}: by contradiction, suppose~\eqref{item:unitrans} fails.
Then there exist sequences $(F'_n)_{n\in\NN}, (F''_n)_{n\in\NN} \in \mathcal{F}^{\NN}$ converging to~$F_i$ such that for each~$n$, the flag $F'_n$ is \emph{not} uniformly transverse to $(\Gamma_i \smallsetminus \{1\}) \cdot F''_n$: there is a sequence $(\gamma_{n,k})_{k\in\NN}$ of elements of $\Gamma_i \smallsetminus \{1\}$, such that $(\gamma_{n,k}\cdot F''_n)_{k\in\NN}$ converges to some element of the set $Z_{F'_n} \subset \mathcal{F}$ of flags nontransverse to~$F'_n$.
Let $d_{\mathcal{F}}$ be a Riemannian distance function on~$\mathcal{F}$.
For each $n\geq 1$ there exists $\varphi(n)\in\NN$ such that $d_{\mathcal{F}}(\gamma_{n,\varphi(n)}\cdot F''_n, Z_{F'_n}) \leq 1/n$.
Up to passing to a subsequence, we may assume that $(\gamma_{n,\varphi(n)}\cdot F''_n)_{n\in\NN}$ converges to some $F''\in\PP(V)$.
On the other hand, since $F'_n \to F_i$, the compact sets $Z_{F'_n}$ converge to~$Z_{F_i}$.
Thus $F''$ is nontransverse to~$F_i$ by passing to the limit.
On the other hand, since $F''_n\to F_i$, we have $F'' = \lim_n \gamma_{n,\varphi(n)}\cdot F''_n = \lim_n \gamma_{n,\varphi(n)}\cdot F_i$ (if $\gamma_{n,\varphi(n)} \to \infty$ this follows from~\cite[Cor.\,3.6]{dgk-proj-cc} applied both to~$\Omega_i$ and to~$\mathcal{O}_i$). 
This contradicts uniform transversality of $F_i$ to $(\Gamma_i\smallsetminus\{1\}) \cdot F_i$.
Therefore \eqref{item:unitrans} holds: there is neighborhood $\mathcal{W}_i$ of $F_i$ in~$\mathcal{F}$ such that $F'_i$ is uniformly transverse to $(\Gamma_i\smallsetminus\{1\}) \cdot F''_i$ for all $F'_i,F''_i \in \mathcal{W}_i$.

We may and shall assume that this neighborhood $\mathcal{W}_i$ is of the form $\mathcal{F} \cap (\mathcal{U}_i \times \mathcal{V}_i)$ for some open subsets $\mathcal{U}_i$ of~$\Omega_i$ and $\mathcal{V}_i$ of~$\mathcal{O}_i$.

\begin{figure}[h]
\labellist
\small\hair 2pt
\pinlabel {$\mathcal{U}_i$} at 				3 	8
\pinlabel {$\mathcal{U}'_i$} at 				7.2	5.55
\pinlabel {$\Delta_i$} at 					9 	7.3
\pinlabel {$\C_i$} at 						4.4	0.2
\pinlabel {$\C_i^\varepsilon$} at 			0.8	1
\pinlabel {$H_i$} at 						-0.3	2.2
\pinlabel {$X_i$} at 						3.5	5.2
\pinlabel {${}_{x_i}$} at 					9.1	5.5
\pinlabel {$\Lambda_\Gamma^{\PP(V)}$} at 	-0.3	3
\pinlabel {${}_{y^+}$} at 					6.4	4.8
\pinlabel {${}_{y^-}$} at 					11.8	4.8
\pinlabel {$B$} at 						7.4	4.65
\pinlabel {$\Delta_i^z$} at 					4.6	2.9
\pinlabel {${}_z$} at 						8.9	4.6
\endlabellist
\includegraphics[width = 0.8\textwidth]{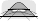}
\caption{Illustration of the proof of Theorem~\ref{thm:free-product-in-G}.}
\label{fig:chapeau}
\end{figure}

\smallskip
\noindent
$\bullet$ \textbf{Step 3: construct hats.}
Since $X_i$ belongs to~$\Lambda_G^{\PP(V^*)}$, it is a supporting hyperplane to $\Omega$ at~$x_i$; therefore, the closure of $(\Gamma_i\smallsetminus\{1\}) \cdot x_i$ lies entirely on one side of the projective hyperplane~$X_i$ inside the convex set~$\overline{\Omega_i}$, and in fact strictly on one side by the uniform transversality assumption.
Inside~$\Omega_i$, the convex hull $\C_i := \Conv{\Gamma_i \cdot x_i}$ admits $x_i$ as an extremal point; in fact it looks near $x_i$ like a properly convex cone (possibly of positive codimension) inside an affine chart of $\PP(V)$ containing~$\overline{\Omega_i}$.
Let $H_i$ be a projective hyperplane of $\PP(V)$ close to~$X_i$, not meeting $X_i$ in $\overline{\Omega_i}$, and chopping off a small conical neighborhood of $x_i$ in~$\C_i$ which is still contained in~$\mathcal{U}_i$.
Points of $\C_i \smallsetminus \mathcal{U}_i$ are a uniform $d_{\Omega_i}$-distance away from $H_i$, by uniform transversality of $F_i = (x_i, X_i)$ to its whole $(\Gamma_i\smallsetminus\{1\})$-orbit.
For $\varepsilon>0$, let $\C_i^\varepsilon$ be the closed uniform $\varepsilon$-neighborhood of $\C_i$ in $(\Omega_i,d_{\Omega_i})$.
If $\varepsilon$ is small enough, then $H_i$ chops off a small neighborhood (a ``hat'') $\Delta_i$ of $x_i$ in $\C_i^{\varepsilon}$ which is still contained in~$\mathcal{U}_i$, and $H_i \cap \C_i^{\varepsilon}$ (being contained in $H_i \cap \Omega_i$) does not meet~$X_i$.
Up to making the neighborhood $\mathcal{V}_i$ of $X_i$ smaller, we may assume that any hyperplane $X \in \mathcal{V}_i$ supporting $\Omega$ satisfies $X\cap \C_i^{\varepsilon} = X\cap \Delta_i \neq \varnothing$, and the base $H_i \cap \C_i^{\varepsilon}$ of the hat $\Delta_i$ lies entirely on the side of $X$ that contains the properly convex set~$\Omega$ (preserved by $G$ and supported by $X_i)$. 

There is a neighborhood $\mathcal{U}'_i \subset \mathcal{U}_i$ of $x_i$ in $\PP(V)$ whose closure is contained in the hat $\Delta_i$. 
Define $\mathcal{W}'_i := \mathcal{U}'_i \times \mathcal{V}_i \subset \mathcal{W}_i$.

\smallskip
\noindent
$\bullet$ \textbf{Step 4: conjugate appropriately.}
Having completed the first three steps separately for both values $i\in \{0,1\}$, we conclude as follows.

By assumption the group $G$ contains an element which is biproximal in $\PP(V)$.
Since $G$ acts irreducibly on $\PP(\mathrm{span}(\Lambda_G^{\PP(V)}))$, we may apply \cite[Prop.\,3.3]{ggkw17} to the restriction $G \to \PGL(\mathrm{span}(\Lambda_G^{\PP(V)}))$ of the natural inclusion $G \hookrightarrow \PGL(V)$: this yields a self-opposite parabolic subgroup $P$ of~$G$~such~that
\begin{itemize}
  \item for any $g\in G \subset \PGL(V)$, the element $g$ is proximal in $G/P$ if and only if it is biproximal in $\PP(V)$,
  \item the natural inclusion $G\hookrightarrow\PGL(V)$ induces a $G$-equivariant embedding $G/P \hookrightarrow \mathcal{F}$, with image $\Lambda_G^{\mathcal{F}}$, with the property that two points in $G/P$ are transverse if and only if their images in $\mathcal{F}$ are transverse.
\end{itemize}

Since $\Gamma$ is Zariski-dense in~$G$, there exists a flag in $\Lambda_{\Gamma}^{G/P} \simeq \Lambda_{\Gamma}^{\mathcal{F}}$ which is transverse to $F_1 = (x_1,X_1)$.
By Fact~\ref{fact:double-lim-set-G/P}, there is an element $\gamma\in\Gamma$ which is proximal in $G/P$ hence biproximal in $\PP(V)$, whose attracting flag $\gamma^+$ belongs to~$\mathcal{W}'_0$ and whose repelling flag $\gamma^-$ is transverse to~$F_1$.
Up to shrinking $\mathcal{W}'_1$ around~$F_1$, we may assume that~$\gamma^-$ is transverse to all flags in~$\mathcal{W}'_1$.
Then for all large enough~$n$, we have $\gamma^n \cdot \mathcal{W}'_1 \subset \mathcal{W}'_0$.
Therefore, up to replacing $\Gamma_1$  with $\gamma^n \Gamma_1 \gamma^{-n}$ for large~$n$, we may and shall assume $\mathcal{W}'_0 \cap \mathcal{W}'_1 \cap \Lambda_{\Gamma}^{\mathcal{F}} \neq \varnothing$.

By Fact~\ref{fact:double-lim-set-G/P}, there is an element $h\in\Gamma$ which is proximal in $G/P$, hence biproximal in $\PP(V)$, and whose attracting and repelling flags $(y^+, Y^+)$, $(y^-, Y^-)$ both belong to $\mathcal{W}'_0 \cap \mathcal{W}'_1$.
By Proposition~\ref{prop:thicken-add-dim-general}, the segment 
$$(y^-, y^+) ~\subset~ \mathcal{U}'_0 \cap \mathcal{U}'_1 \cap \Omega ~\subset~ \Delta_0 \cap \Delta_1 \cap \Omega$$ 
admits an $h$-invariant properly convex open neighborhood $B \subset \Delta_0 \cap \Delta_1\cap \Omega$.
Since $B$ is contained in~$\Omega$, it lies on the same side of $Y^\pm$ (inside~$\Omega_i$) as the base $H_i \cap \C_i^{\varepsilon}$ of the hat $\Delta_i$, for each $i\in \{0,1\}$.

Fix a point $z \in B$, and let $\Delta_i^z \subset \Delta_i$ be the open pyramid with vertex $z$ and base $H_i \cap \Int{(\C_i^{\varepsilon})}$.
By construction, the closure of $\Delta_i^z$ is disjoint from the hyperplanes~$Y^\pm$. 
For $i\in\{0,1\}$, define
\begin{equation} \label{eq:pyrchir}
 \Omega_i^z := \bigg(\Int{(\C_i^{\varepsilon})} \:\: \smallsetminus \:\: \bigsqcup _{\gamma \in \Gamma_i} (\gamma \cdot \Delta_i)\bigg) \:\: \cup \:\: \bigsqcup_{\gamma \in \Gamma_i} (\gamma \cdot \Delta_i^z) \quad \subset \:\:\Int{(\C_i^{\varepsilon})}.
 \end{equation}
Then $\Omega_i^z$ is a nonempty $\Gamma_i$-invariant properly convex open subset of $\PP(V)$, meeting $B$, and $\Omega_i^z$ is disjoint from neighborhoods of the hyperplanes $Y^\pm$, because $(y^\pm, Y^\pm)\in\mathcal{W}'_i$ implies $Y^\pm \cap \C_i^{\varepsilon} = Y^\pm \cap \Delta_i$.
Therefore, for large $N\in \NN$ and for all $\gamma \in \Gamma_i \smallsetminus \{1\}$, the triples
\begin{equation}\label{eq:oryx}
 (B~,~ \Omega_i^z~,~ \gamma \cdot B) \quad \text{ and } \quad (h^N \cdot \Omega_0^z~,~ B~,~ h^{-N} \cdot \Omega_1^z)
 \end{equation}
are in occultation position: the first because $B\subset \Delta_i$ and any segment from the hat $\Delta_i$ to $\gamma \cdot \Delta_i$ crosses $\Int{(\C_i^\varepsilon)} \smallsetminus \bigsqcup_{\gamma'\in \Gamma_i} \gamma'\cdot \Delta_i \subset \Omega_i^z$; the second because $h^{N} \cdot \Omega_{0}^z \to y^+$ and $h^{-N} \cdot \Omega_{1}^z \to y^-$ as $N \to +\infty$. 

We conclude as in the proof of Theorem~\ref{thm:free-product-fd}: by~\eqref{eq:oryx}, Theorem~\ref{thm:general-gog} applies to the graph of groups $\big [ (\Gamma_0) \overset{\mathsf{e}_0}\longleftarrow (1) \overset{\mathsf{e}_1}\longrightarrow (\Gamma_1) \big ] \simeq \Gamma_0 *_{\{1\}} \Gamma_1$, with $\Omega_{(\Gamma_i)}= \Omega_i^z$ and $\Omega_{(1)}=B$ and $(g_{\mathsf{e}_0}, g_{\mathsf{e}_1}) =(h^N, h^{-N})$. Theorem~\ref{thm:free-product-in-G} follows, with $g=g_{\mathsf{e}_0}^{-1} g_{\mathsf{e}_1} = h^{-2N}$.
\end{proof}

\begin{remark} \label{rem:move-g}
It follows from the proof that the element~$g$ can be chosen to admit a neighborhood $\mathcal{N}$ in~$G$ such that for any $g'\in\mathcal{N}$, the representation $\Gamma_0 * g'\Gamma_1 {g'}^{-1} \to G$ induced by the inclusions $\Gamma_0, g'\Gamma_1 {g'}^{-1} \hookrightarrow G$ is still discrete and faithful.
\end{remark}

\begin{proof}[Proof of Theorem~\ref{thm:free-product-G/P}]
By \cite[Lem.\,3.2 \& Prop.\,3.3]{ggkw17}, there is a finite-dimensional irreducible linear representation $\tau : G\to\GL(V)$ of~$G$ such that
\begin{itemize}
  \item for any $g\in G$, the element $g$ is proximal in $G/P$ if and only if $\tau(g)$ is biproximal in $\PP(V)$,
  \item there is a $\tau$-equivariant embedding $\iota : G/P \hookrightarrow \mathcal{F}$, with image $\Lambda_G^{\mathcal{F}}$, such that a pair of points of $G/P$ is transverse if and only if its image under~$\iota$ is transverse in~$\mathcal{F}$,
\end{itemize}
where $\mathcal{F}$ is the space of partial flags $(x,X)$ with $x \in \PP(V)$ and $X \in \PP(V^*)$.
Up to replacing $\tau$ by $\mathrm{Sym}^2(\tau)$ and $V$ by $\mathrm{Sym}^2(V)$ (see Fact~\ref{fact:Omega-sym}), we may furthermore assume that $\tau(G)$ preserves a nonempty properly convex open subset $\Omega$ of $\PP(V)$.
For each $i\in\{0,1\}$, since the point $x_i \in G/P$ is uniformly transverse to $(\Gamma_i\smallsetminus\{1\}) \cdot x_i$, the group $\Gamma_i$ acts faithfully on $G/P$; therefore $\Gamma_i$ also acts faithfully on $\Lambda_G^{\PP(V)} = \iota(G/P)$ via~$\tau$, and the restriction of $\tau$ to~$\Gamma_i$ is injective.
Moreover, $\iota(x_i) \in \Lambda_{\tau(\Gamma)}^{\mathcal{F}}$ is uniformly transverse to $(\tau(\Gamma_i)\smallsetminus\{1\}) \cdot x_i$.
We conclude by applying Theorem~\ref{thm:free-product-in-G}.
\end{proof}

\begin{corollary} \label{cor:Soifer}
For any $d\geq 2$, the group $G = \SL(d,\RR)$ contains a discrete subgroup isomorphic to the free product $\ZZ^{d-1} * \ZZ^{d-1}$, where each $\ZZ^{d-1}$ consists of matrices that are simultaneously diagonalizable over~$\RR$.
Such a discrete subgroup can be taken to be Zariski-dense in~$G$, but is not a lattice in~$G$.
\end{corollary}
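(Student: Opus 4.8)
The plan is to deduce Corollary~\ref{cor:Soifer} from Theorem~\ref{thm:free-product-G/P}, applied to $G=\SL(d,\RR)$ and a carefully chosen self-opposite parabolic. I would take $P=P_{1,d-1}$, the stabilizer of an incident pair $(\ell,H)$ with $\ell$ a line contained in a hyperplane $H$ of $\RR^d$: this $P$ is self-opposite because the set $\{\alpha_1,\alpha_{d-1}\}$ of simple roots defining it is symmetric under the involution $\alpha_i\mapsto\alpha_{d-i}$ induced by the longest Weyl element, $G/P$ is the variety of such incident pairs, and two pairs $(\RR v,w^\perp)$ and $(\RR v',w'^\perp)$ are transverse exactly when $v\cdot w'\neq 0$ and $v'\cdot w\neq 0$. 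For the two (isomorphic) groups take $\Gamma_0=\Gamma_1:=\Lambda$, where $\Lambda$ is the group of diagonal matrices $\mathrm{diag}(2^{n_1},\dots,2^{n_d})$ with $n_i\in\ZZ$ and $\sum_i n_i=0$; thus $\Lambda\cong\ZZ^{d-1}$, generated by the commuting matrices $\mathrm{diag}(1,\dots,1,2,\tfrac12,1,\dots,1)$, every element of $\Lambda$ is diagonal, and the group is simultaneously diagonalizable over $\RR$. With this setup, Theorem~\ref{thm:free-product-G/P} produces the required $g\in\SL(d,\RR)$ once we exhibit a single $x_0\in G/P$ uniformly transverse to $(\Lambda\smallsetminus\{1\})\cdot x_0$.

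Establishing the existence of such an $x_0$ is the main content. I would take $x_0=(\RR v,w^\perp)$ with $v\cdot w=0$ and all coordinates of $v$ and of $w$ nonzero, and describe $\overline{(\Lambda\smallsetminus\{1\})\cdot x_0}$ explicitly. Since $\lambda=\mathrm{diag}(a_1,\dots,a_d)$ acts by $\lambda\cdot x_0=(\RR\,\lambda v,(\lambda^{-1}w)^\perp)$, and any divergent sequence in $\Lambda$ has, after extraction, a well-defined asymptotic ordered partition $(S_1,\dots,S_k)$ of $\{1,\dots,d\}$ recording the ordering of the eigenvalue blocks, the corresponding limit flag has the form $(\ell',H')$ with $\ell'$ contained in the coordinate subspace $E_{S_1}:=\mathrm{span}\{e_j:j\in S_1\}$ and $H'$ containing $E_{\{1,\dots,d\}\smallsetminus S_k}$; crucially, because $\Lambda$ is \emph{rational} its within-block exponent comparisons are integer-valued, hence eventually constant along a convergent subsequence, so the remaining ``small'' data of $\ell'$ inside $E_{S_1}$ and of $H'$ inside $E_{S_k}$ ranges over a fixed \emph{countable} set. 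Therefore $\overline{(\Lambda\smallsetminus\{1\})\cdot x_0}$ is countable, and it omits $x_0$ itself, since every limit $\ell'$ lies in a proper coordinate subspace whereas $v$ has no zero coordinate. Finally, writing $b_j:=v_jw_j$ (so $\sum_j b_j=v\cdot w=0$), transversality of $x_0$ to a given point of this set unwinds to countably many conditions of the form $\sum_j a_jb_j\neq 0$ and $\sum_j b_j/a_j\neq 0$ (for the orbit points) and $\sum_{j\in S}\nu_jb_j\neq 0$ (for the limit points, with $\nu$ positive and $S\subsetneq\{1,\dots,d\}$), each of which is the complement, inside the hyperplane $\{\sum_j b_j=0\}$ of $(b_j)$-space, of a \emph{proper} linear subspace — proper because no $\lambda\neq 1$ makes $\sum_j a_jb_j$ proportional to $\sum_j b_j$, and no $S\subsetneq\{1,\dots,d\}$ lets $\sum_{j\in S}\nu_jb_j$ be. A generic $b$ with all $b_j\neq 0$ and $\sum_j b_j=0$ then satisfies all of them simultaneously, and one recovers admissible $v_j:=\sqrt{|b_j|}$, $w_j:=b_j/v_j$.

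Granting this, Theorem~\ref{thm:free-product-G/P} gives $g\in\SL(d,\RR)$ for which $\langle\Lambda,g\Lambda g^{-1}\rangle$ is discrete in $\SL(d,\RR)$ and isomorphic to $\Lambda\ast g\Lambda g^{-1}\cong\ZZ^{d-1}\ast\ZZ^{d-1}$, each free factor being simultaneously $\RR$-diagonalizable. By Remark~\ref{rem:move-g} (applicable since Theorem~\ref{thm:free-product-G/P} is proved through Theorem~\ref{thm:free-product-in-G}), the admissible elements $g$ form a nonempty open subset $\mathcal{N}$ of $\SL(d,\RR)$. Since $\Lambda$ is Zariski-dense in the maximal torus $A$ (an infinite subgroup of a torus is Zariski-dense), while the set of $g$ for which the two maximal tori $A$ and $gAg^{-1}$ lie in a common proper algebraic subgroup of $\SL(d,\RR)$ is a proper Zariski-closed subvariety, there is a $g\in\mathcal{N}$ with $\langle\Lambda,g\Lambda g^{-1}\rangle$ Zariski-dense. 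Such a group is not a lattice: for $d\geq 3$, lattices in $\SL(d,\RR)$ have Kazhdan's property~(T) and hence finite abelianization, whereas $\ZZ^{d-1}\ast\ZZ^{d-1}$ has abelianization $\ZZ^{2(d-1)}$; for $d=2$ it is a free product of two hyperbolic cyclic subgroups in ping-pong position, \ie a Schottky subgroup of $\SL(2,\RR)$, hence of infinite covolume.

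The main obstacle is the second paragraph: controlling the accumulation set of the lattice orbit in $G/P$ and finding a basepoint transverse to all of it. Two choices make this work and cannot be weakened. Using $P_{1,d-1}$ rather than a Borel subgroup is essential — for the Borel, a non-regular diagonal element need not admit any flag transverse to its own translate, and the limit flags are far harder to pin down. Using a \emph{rational} lattice $\Lambda$ (rather than a generic one) is equally essential — for a generic lattice the images of $\Lambda$ in the block subtori are dense, the accumulation set in $G/P$ becomes uncountable, and the transversality conditions then force mutually incompatible sign constraints on the $b_j$ that contradict $\sum_j b_j=0$. The only genuinely delicate point is that these two choices are compatible: for $P_{1,d-1}$ one does not need $\Lambda$ to consist of regular elements, so the rational (hence wall-hitting) lattice is admissible.
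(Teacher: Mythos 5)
Your proposal is correct and follows essentially the same route as the paper's proof: there too one takes $\Gamma_0=\Gamma_1$ an integral lattice in the positive diagonal subgroup, verifies the hypothesis of Theorem~\ref{thm:free-product-in-G}/Theorem~\ref{thm:free-product-G/P} on the incident point--hyperplane flag variety by a countability argument for the orbit closure (the paper fixes $x=[(1,\dots,1)]$ and chooses a hyperplane through $x$ missing the countable compact set $\overline{\Gamma_0\cdot x}\smallsetminus\{x\}$, where you instead run a genericity argument on $b_j=v_jw_j$ over the hyperplane $\sum_j b_j=0$), and then perturbs $g$ via Remark~\ref{rem:move-g} to get Zariski density (the paper argues via strong irreducibility plus containment of a Cartan subgroup rather than your two-maximal-tori subvariety, but both work). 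Two small caveats, neither structural: your parenthetical ``an infinite subgroup of a torus is Zariski-dense'' is false in general (what is needed and true is that no nontrivial character of the diagonal torus of $\SL(d,\RR)$ kills a full-rank integral lattice), and for $d=2$ the ``Schottky, hence infinite covolume'' step needs a word, since Theorem~\ref{thm:free-product-G/P} gives only discreteness and not absence of parabolics --- though the paper's own proof leaves the non-lattice assertion entirely implicit, so on this point you say more than the paper does.
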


The existence of a discrete subgroup of $\SL(d,\RR)$ isomorphic to $\ZZ^{d-1} * \ZZ^{d-1}$ was first proved by Soifer \cite{soi12} for $d=3$. 
Note that for any $k\geq 1$, the free product of $k+1$ copies of $\ZZ^{d-1}$ lives as an index-$k$ subgroup in $\ZZ^{d-1} * \ZZ^{d-1}$ (\eg the subgroup of elements whose abelianization belongs to $k\ZZ \times \ZZ^{2d-3}$).
Corollary~\ref{cor:Soifer} answers an open case of Nori's problem discussed in \cite[\S\,8]{fis-survey-margulis}.

\begin{proof}
For each $1\leq i\leq d-1$, let $v_i \in \ssl(d,\RR)$ be the diagonal matrix whose diagonal entries are all $-1$ except for the $i$-th entry which is $d-\nolinebreak 1$.
Let $\Gamma_0 = \exp(\ZZ\text{-span}(\{ v_1,\dots,v_{n-1}\}))$.
We claim that there is a flag $(x,X)$ (where $x\in\PP(\RR^d)$ and $X\in\PP((\RR^d)^*)$) which is uniformly transverse to $(\Gamma_0\smallsetminus\{1\}) \cdot (x,X)$.
Indeed, let $x := [(1,\dots,1)] \in \PP(\RR^d)$.
The orbit closure 
$$\overline{\Gamma_0 \cdot x} \subset \PP \big(\big\{ (y_1,\dots,y_d) \in \big ( \{\mathrm{e}^{-\nu}\}_{\nu\in\NN} \cup \{0\} \big )^d ~ \big | ~ \max(y_1,\dots,y_d) = 1 \big\}\big)$$ 
is compact and countable in $\PP(\RR^d)$, hence so is $\Xi:= \overline{\Gamma_0 \cdot x} \smallsetminus \{x\}$.
Therefore there exists a projective hyperplane $X$ of $\PP(\RR^d)$ containing~$x$ which is disjoint from~$\Xi$, and in fact from some uniform $\varepsilon$-neighborhood $\Xi_\varepsilon$ of $\Xi$ (for the spherical metric on $\PP(\RR^d)$). 
Thus, $\gamma \cdot x$ misses the $\varepsilon$-neighborhood $\mathcal{X}_\varepsilon$ of $X$ for all $\gamma\in \Gamma_0\smallsetminus \{1\}$.
Dually, there exists a neighborhood $\mathbf{x}_\varepsilon$ of $x$ such that $\gamma \cdot \mathbf{x}_\varepsilon \subset \Xi_\varepsilon$ for all $\gamma \in \Gamma_0 \smallsetminus \{1\}$; hence (applying $\gamma^{-1}$) this $\mathbf{x}_\varepsilon$ misses $\gamma^{-1}\cdot X$ for all $\gamma \in \Gamma_0 \smallsetminus \{1\}$. 
The flag $(x,X)$ thus has the desired uniform transversality property.

We apply Theorem~\ref{thm:free-product-in-G} with $\Gamma_0 = \Gamma_1$ as above: we obtain the existence of $g\in G$ such that the representation $\rho : \Gamma_0 * g\Gamma_1 g^{-1} \to G$ induced by the inclusions $\Gamma_0, g\Gamma_1 g^{-1} \hookrightarrow G$ is discrete and faithful.
Up to deforming~$g$ slightly using Remark~\ref{rem:move-g}, we may assume that $\Gamma_0 * g\Gamma_1 g^{-1}$ acts strongly irreducibly on~$\RR^d$.
Since its Zariski closure in~$G$ contains a Cartan subgroup of~$G$, it must be all of~$G$.
\end{proof}

\section{Reminders on properly convex sets and convex cocompactness} \label{sec:remind}

We recall some terminology and facts from~\cite{dgk-proj-cc} that will be needed in the rest of the paper.

\subsection{Boundaries of convex sets} \label{subsec:remind-boundaries}

We use the following terminology.

\begin{definition} \label{def:boundaries}
Let $\C$ be a nonempty convex subset of $\PP(V)$ (not necessarily open nor closed, possibly with empty interior).
\begin{itemize}
  \item The \emph{frontier} of~$\C$ is $\Fr(\C):=\overline{\C}\smallsetminus\Int{\C}$.
  \item A \emph{supporting hyperplane} of~$\C$ at a point $x\in\Fr(\C)$ is a projective hyperplane $X$ such that 
  $x \in X \cap \overline \C$ and $\overline \C \smallsetminus X$ is connected (possibly empty).
  \item A point $x\in\Fr(\C)$ is \emph{$C^1$ in $\overline{\C}$} if there is a unique supporting hyperplane of $\C$ at~$x$.
  \item The \emph{ideal boundary} of~$\C$ is $\partiali\C:=\overline{\C}\smallsetminus\C$.
  \item The \emph{nonideal boundary} of~$\C$ is $\partialn\C:= \C \smallsetminus \Int{\C} = \Fr(\C)\smallsetminus\partiali\C$.
  Note that if $\C$ is open, then $\partiali \C = \Fr(\C)$ and $\partialn \C = \varnothing$; in this case, it is common in the literature to denote $\partiali \C$ simply by $\partial \C$.
  \item The convex set~$\C$ has \emph{strictly convex nonideal boundary} if every point $x \in \partialn \C$ is an extremal point of~$\overline \C$.
  \item The convex set~$\C$ has \emph{$C^1$ nonideal boundary} if every point $x \in \partialn \C$ is $C^1$ in~$\overline{\C}$.
  \item \label{item:bisatdef} The convex set~$\C$ has \emph{bisaturated boundary} if for any supporting hyperplane $X$ of~$\C$, the set $X \cap \overline{\C} \subset \Fr(\C)$ is either fully contained in $\partiali \C$ or fully contained in $\partialn \C$.
  \end{itemize}
\end{definition}

Note that strictly convex nonideal boundary implies bisaturated boundary.
If $\C$ has bisaturated boundary and if $\C$ is not closed in $\PP(V)$, then $\C$ has nonempty interior.

\begin{remark} \label{rem:strict-conv-rays}
If $x\in\partial\Omega$ is $C^1$ in~$\overline{\Omega}$, then any two rays of~$\Omega$ with endpoint~$x$ are asymptotic, in the sense that their $d_{\Omega}$-distance tends to~$0$.
\end{remark}

Here $d_{\Omega}$ is the Hilbert metric of~$\Omega$ (see \eqref{eqn:d-Omega}).

\subsection{Basic facts on convex cocompact actions}

Recall the notions of full orbital limit set $\Lambdao_{\Omega}(\Gamma)$ and of (naively) convex cocompact actions on~$\Omega$ from Definitions \ref{def:limcore} and~\ref{def:cc-group}.
We shall use the following facts.

\begin{fact}[{\cite[Prop.\,4.18]{dgk-proj-cc}}] \label{fact:cc-subsets}
Let $\Gamma$ be an infinite discrete subgroup of $\PGL(V)$ preserving two nonempty properly convex open subsets $\Omega \subset \Omega'$ of $\PP(V)$.
\begin{itemize}
\item If $\Gamma$ acts convex cocompactly on~$\Omega$, then it also acts convex cocompactly on~$\Omega'$ and $\Lambdao_{\Omega'}(\Gamma) = \Lambdao_{\Omega}(\Gamma)$.
\item If $\Gamma$ acts convex cocompactly on $\Omega'$, then it acts convex cocompactly on $\Omega$ if and only if $\Ccore_{\Omega'}(\Gamma) \subset \Omega$. 
\end{itemize}
In particular, if $\Gamma$ acts convex cocompactly on~$\Omega$, then it also acts convex cocompactly on any properly convex open subset of $\PP(V)$ containing $\Ccore_{\Omega}(\Gamma)$.
\end{fact}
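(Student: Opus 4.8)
Here is the plan. The goal is to deduce everything from the definition of convex cocompactness (Definition~\ref{def:cc-group}), two soft facts about the pair $\Omega\subseteq\Omega'$, and one genuinely metric input. The two soft facts are: (a) $\Gamma$ is discrete in $\PGL(V)$, hence acts properly discontinuously on every $\Gamma$-invariant properly convex open set; so no orbit $\Gamma\cdot p$ with $p\in\Omega\subseteq\Omega'$ can accumulate inside the open set $\Omega'$, which forces $\Lambdao_\Omega(\Gamma)\subseteq\partial\Omega'$ and $\Lambdao_\Omega(\Gamma)\subseteq\Lambdao_{\Omega'}(\Gamma)$; (b) if $\mathcal C\subseteq\Omega$ is a nonempty $\Gamma$-invariant closed convex set with $\mathcal C/\Gamma$ compact, say $\mathcal C=\Gamma\cdot K$ with $K\subseteq\Omega$ compact, then $\mathcal C$ is also closed in $\Omega'$ and convex there (the orbit of a compact set under a proper action is closed), with $\mathcal C/\Gamma$ still compact, so $\Gamma$ acts naively convex cocompactly on $\Omega'$ via the same $\mathcal C$; moreover every point of $\partiali\mathcal C=\overline{\mathcal C}\setminus\mathcal C$ is an accumulation point of the orbit of a point of $K\subseteq\Omega$ (split $\xi=\lim\gamma_n k_n$ with $k_n\to k\in K$ and use that the Hilbert distance $d_\Omega(\gamma_n k_n,\gamma_n k)=d_\Omega(k_n,k)\to 0$), so $\partiali\mathcal C\subseteq\Lambdao_\Omega(\Gamma)$.

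The metric input I would isolate — and which I expect to be the crux — is: in the situation of (b), for each $x\in\Omega'$ there is a point $c\in\mathcal C$ (a $d_{\Omega'}$-nearest point of $\mathcal C$ to $x$) such that every sequence with $\gamma_n\cdot x\to\xi\in\partial\Omega'$ satisfies $\gamma_n\cdot c\to\xi$ as well. One has $d_{\Omega'}(\gamma_n\cdot x,\gamma_n\cdot c)=d_{\Omega'}(x,c)$ for all $n$, but bounded Hilbert distance alone does \emph{not} force the same boundary limit in an arbitrary Hilbert geometry (this already fails inside a triangle), so the proof must genuinely use that $\mathcal C$ has compact quotient and lies in $\Omega$; I would run it by following a supporting hyperplane of $\mathcal C$ near $\gamma_n\cdot c$ and using compactness of $\mathcal C/\Gamma$ to extract uniform control, as in \cite{dgk-proj-cc}. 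Everything else below is bookkeeping.

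For the first bullet: assuming convex cocompactness on $\Omega$, fix a witnessing set $\mathcal C\subseteq\Omega$ (nonempty, $\Gamma$-invariant, closed convex, $\mathcal C/\Gamma$ compact, $\overline{\mathcal C}\supseteq\Lambdao_\Omega(\Gamma)$). By (b), $\mathcal C$ already witnesses naive convex cocompactness on $\Omega'$. Given $x\in\Omega'$ and $\gamma_n\cdot x\to\xi\in\partial\Omega'$, the metric input produces $c\in\mathcal C\subseteq\Omega$ with $\gamma_n\cdot c\to\xi$; this gives simultaneously $\xi\in\overline{\mathcal C}$ (so $\Lambdao_{\Omega'}(\Gamma)\subseteq\overline{\mathcal C}$, i.e.\ $\mathcal C$ is ``large enough'' in $\Omega'$ and the action on $\Omega'$ is convex cocompact) and $\xi\in\Lambdao_\Omega(\Gamma)$ (so $\Lambdao_{\Omega'}(\Gamma)\subseteq\Lambdao_\Omega(\Gamma)$); combined with (a) this is the equality $\Lambdao_{\Omega'}(\Gamma)=\Lambdao_\Omega(\Gamma)$.

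For the second bullet, assume convex cocompactness on $\Omega'$. If $\Ccore_{\Omega'}(\Gamma)\subseteq\Omega$: the core is nonempty (a standard fact for an infinite convex cocompact group), closed in $\Omega'$ and hence in $\Omega$, convex, and — being a $\Gamma$-invariant closed subset of a witnessing cocompact set in $\Omega'$ — has compact quotient; by (a), $\Lambdao_\Omega(\Gamma)\subseteq\Lambdao_{\Omega'}(\Gamma)\subseteq\overline{\Ccore_{\Omega'}(\Gamma)}$, so the core is also large enough in $\Omega$, giving convex cocompactness on $\Omega$. Conversely, if $\Gamma$ acts convex cocompactly on $\Omega$, the first bullet gives the same on $\Omega'$ with the same limit set $\Lambda$; since $\overline\Lambda\subseteq\partial\Omega\subseteq\overline\Omega$ and $\overline\Omega$ is convex, the convex hull of $\overline\Lambda$ is the same compact set $K$ whether formed in $\overline\Omega$ or in $\overline{\Omega'}$, so $\Ccore_\Omega(\Gamma)=\Omega\cap K$ and $\Ccore_{\Omega'}(\Gamma)=\Omega'\cap K$; writing $\Ccore_\Omega(\Gamma)=\Gamma\cdot K_0$ with $K_0\subseteq\Omega$ compact, a short argument with segments from an interior point of $K$ shows $K=\overline{\Ccore_\Omega(\Gamma)}$, whence $K\setminus\Omega=\partiali\Ccore_\Omega(\Gamma)\subseteq\Lambdao_\Omega(\Gamma)\subseteq\partial\Omega'$ by (b) and (a); therefore $\Omega'\cap K\subseteq\Omega$, i.e.\ $\Ccore_{\Omega'}(\Gamma)\subseteq\Omega$. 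Finally the ``in particular'' is purely formal: if $\Gamma$ is convex cocompact on $\Omega$ and $\Omega''$ is a properly convex open set with $\Ccore_\Omega(\Gamma)\subseteq\Omega''$, then $\Omega\cap\Omega''$ is a nonempty $\Gamma$-invariant properly convex open set containing $\Ccore_\Omega(\Gamma)$, so the second bullet applied to $\Omega\cap\Omega''\subseteq\Omega$ yields convex cocompactness on $\Omega\cap\Omega''$, and then the first bullet applied to $\Omega\cap\Omega''\subseteq\Omega''$ yields it on $\Omega''$.
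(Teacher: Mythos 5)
You have isolated the right crux, but you neither state it correctly nor prove it, and it is precisely where all the content of this statement lives. (Note that the present paper does not prove this Fact at all: it is imported from \cite[Prop.\,4.18]{dgk-proj-cc}, so only your argument is at issue.) Your ``metric input'' is asserted ``in the situation of (b)'', i.e.\ for an arbitrary nonempty $\Gamma$-invariant closed convex $\C\subset\Omega$ with compact quotient, with no largeness assumption on $\C$. In that generality it is false, and the paper's own Example~\ref{ex:not-irreducible} is a counterexample: let $\Gamma_0=\langle h\rangle$ with $h=\mathrm{diag}(4,1/2,1/2)$ act on the triangle $T$, let $\C=\Conv{\{b_1\}\cup I}\cap T$ (cocompact under $\Gamma_0$), and take $\Omega=\Omega'=T$ and $x=[1:1:\delta]$ with $[0:1:\delta]\notin I$, which is possible since $I$ is a proper subinterval of the edge $(b_2,b_3)$. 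Then $h^{-n}\cdot x\to[0:1:\delta]\notin\overline{\C}$, while for \emph{every} $c\in\C$ (in particular any nearest-point projection) one has $h^{-n}\cdot c\to y\in I$. So no choice of $c$ works: the conclusion genuinely requires the hypothesis $\Lambdao_{\Omega}(\Gamma)\subset\overline{\C}$ --- exactly the difference between convex cocompact and naively convex cocompact in Definition~\ref{def:cc-group} --- and your sketch never says where that hypothesis enters.

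Even granting the corrected hypotheses, you give no proof: ``following a supporting hyperplane of $\C$ near $\gamma_n\cdot c$ and using compactness of $\C/\Gamma$ to extract uniform control'' is a gesture, not an argument, and the obstruction you yourself flag is the real one --- constancy of $d_{\Omega'}(\gamma_n\cdot x,\gamma_n\cdot c)$ only confines the two limits to a common open face of $\partial\Omega'$ (Fact~\ref{fact:unif-neighb-face}), and when $\partial\Omega'$ has nontrivial faces meeting the limit set it is not even clear that the pointwise conclusion $\gamma_n\cdot c\to\xi$ is the right statement to aim for; the actual proof in \cite{dgk-proj-cc} controls accumulation of orbits of points of $\Omega'$ on $\overline{\Ccore_{\Omega}(\Gamma)}$ by a finer analysis of faces and supporting hyperplanes rather than a single nearest-point trick. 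Your surrounding reductions are fine --- observation (a), the closedness and cocompactness transfer in (b) with $\partiali\C\subset\Lambdao_{\Omega}(\Gamma)$, the derivation of the second bullet and of the ``in particular'' from the two bullets --- but these are the easy bookkeeping; the substance of the Fact, namely that enlarging $\Omega$ cannot enlarge the full orbital limit set when the action is genuinely convex cocompact, is left unestablished.
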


\begin{fact}[{\cite[Cor.\,4.10.(3) \& Lem.\,4.16]{dgk-proj-cc}}] \label{fact:ideal-bound-cc}
Let $\Gamma$ be an infinite discrete subgroup of $\PGL(V)$ acting convex cocompactly on a properly convex open subset $\Omega$ of $\PP(V)$.
Then any nonempty closed $\Gamma$-invariant convex subset $\C$ of~$\Omega$ contains $\Ccore_{\Omega}(\Gamma)$; moreover, if the action of $\Gamma$ on~$\C$ is cocompact (\eg if $\C = \Ccore_{\Omega}(\Gamma)$), then $\partiali \C = \Lambdao_{\Omega}(\Gamma)$.
\end{fact}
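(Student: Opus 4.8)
The plan is to prove the containment assertion and the equality assertion in turn, using only the properties of the Hilbert metric $d_{\Omega}$ recalled in Section~\ref{subsec:remind-Hilbert} — that $(\Omega,d_{\Omega})$ is proper and complete and that $\mathrm{Aut}(\Omega)$ acts on $\Omega$ by isometries, so that a discrete subgroup acts properly discontinuously — together with Definitions~\ref{def:limcore} and~\ref{def:cc-group}. Write $\Lambda:=\Lambdao_{\Omega}(\Gamma)$ and $\mathcal{K}:=\Ccore_{\Omega}(\Gamma)=\Omega\cap\Conv{\overline{\Lambda}}$. Since $\Gamma$ is infinite and $(\Omega,d_{\Omega})$ is proper and complete, $\Lambda$ is a nonempty closed subset of $\partial\Omega$ and $\mathcal{K}$ is a nonempty closed $\Gamma$-invariant convex subset of $\Omega$.

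For the containment, let $\C$ be a nonempty closed $\Gamma$-invariant convex subset of $\Omega$. As $\C$ is convex and relatively closed in $\Omega$, its closure $\overline{\C}$ in $\PP(V)$ is convex and satisfies $\overline{\C}\cap\Omega=\C$; hence it suffices to prove $\overline{\Lambda}\subset\overline{\C}$, for then $\Conv{\overline{\Lambda}}\subset\overline{\C}$ and $\mathcal{K}=\Omega\cap\Conv{\overline{\Lambda}}\subset\overline{\C}\cap\Omega=\C$. By density it is enough to check $\xi\in\overline{\C}$ for $\xi\in\Lambda$. Given such a $\xi$, choose $x\in\Omega$ and elements $\gamma_n\in\Gamma$, eventually pairwise distinct since $\xi\in\partial\Omega$ and the action is proper, with $\gamma_n\cdot x\to\xi$. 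Pick $y\in\C$; then $\gamma_n\cdot y\in\C$, and $d_{\Omega}(\gamma_n\cdot x,\gamma_n\cdot y)=d_{\Omega}(x,y)$ is constant, so after extraction $\gamma_n\cdot y\to\eta\in\overline{\C}$, and $\eta\in\partial\Omega$ (otherwise $\gamma_n\cdot x$ would remain in a $d_{\Omega}$-bounded, hence relatively compact, subset of $\Omega$, contradicting $\gamma_n\cdot x\to\xi$). If $\eta=\xi$ we are done. If $\eta\neq\xi$, the same reasoning applied to an arbitrary point of the open segment $(\xi,\eta)$ shows $(\xi,\eta)\subset\partial\Omega$, so $\xi$ and $\eta$ lie in a common face of the convex set $\overline{\Omega}$; here the convex cocompactness hypothesis is genuinely needed to conclude $\xi\in\overline{\C}$ (see the obstacle below), concretely by showing that $\overline{\C}$ contains the entire face of $\overline{\Omega}$ through $\eta$, which is part of the boundary analysis of convex cocompact cores in \cite{dgk-proj-cc}. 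This yields $\overline{\Lambda}\subset\overline{\C}$, hence $\mathcal{K}\subset\C$.

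For the equality, suppose moreover that the action of $\Gamma$ on $\C$ is cocompact, say $\Gamma\cdot D=\C$ with $D\subset\C$ compact. Since $\Gamma$ is infinite, $\C$ is unbounded and not closed in $\PP(V)$, so $\partiali\C=\overline{\C}\smallsetminus\C=\overline{\C}\cap\partial\Omega$. The inclusion $\Lambda\subset\partiali\C$ follows from the containment just proved: $\mathcal{K}\subset\C$ gives $\Lambda\subset\overline{\mathcal{K}}\subset\overline{\C}$, and $\Lambda\subset\partial\Omega$. Conversely, let $\xi\in\overline{\C}\cap\partial\Omega$, say $c_n\to\xi$ with $c_n\in\C$; write $c_n=\gamma_n\cdot d_n$ with $\gamma_n\in\Gamma$ and $d_n\in D$, and pass to a subsequence with $d_n\to d\in D\subset\Omega$. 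Since $c_n\to\xi\in\partial\Omega$ while $d_n\to d\in\Omega$, the $\gamma_n$ are eventually pairwise distinct, and $d_{\Omega}(\gamma_n\cdot d,\gamma_n\cdot d_n)=d_{\Omega}(d,d_n)\to 0$, so $\gamma_n\cdot d\to\xi$ as well; thus $\xi$ is an accumulation point of the orbit $\Gamma\cdot d$, i.e.\ $\xi\in\Lambda$. Hence $\partiali\C=\Lambda$; the displayed case $\C=\mathcal{K}$ then also requires knowing that $\mathcal{K}$ itself has cocompact $\Gamma$-action, which follows from Fact~\ref{fact:cc-subsets} applied to the cocompact convex set furnished by Definition~\ref{def:cc-group}.

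The main obstacle is the step in the containment argument that upgrades ``$\eta\in\overline{\C}$'' to ``$\xi\in\overline{\C}$'' in the case $\xi\neq\eta$: one must show that a closed $\Gamma$-invariant convex subset of $\Omega$, once its closure meets the relative interior of a face of $\partial\Omega$, must contain that whole face. Some such statement is unavoidable, since the containment assertion fails for general properly discontinuous actions (for instance, a suitable cyclic subgroup of $\mathrm{Aut}(\Omega)$ with $\Omega$ a triangle preserves proper closed convex subsets not containing its convex core), so this is exactly where the convex cocompactness hypothesis must be exploited, relying on the finer structure of convex cocompact cores established in \cite{dgk-proj-cc}. Everything else — the transport of boundary limits under the isometric action and the fundamental-domain argument for the equality — is a direct consequence of properness of the Hilbert metric.
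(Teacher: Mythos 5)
There is a genuine gap, and it sits exactly at the heart of the statement. Your treatment of the second assertion (that $\partiali\C=\Lambdao_{\Omega}(\Gamma)$ when the action on~$\C$ is cocompact) is essentially sound: the fundamental-domain argument plus the observation that $d_{\Omega}(\gamma_n\cdot d,c_n)\to 0$ forces the same boundary limit (which can be justified via Fact~\ref{fact:unif-neighb-face}) gives $\partiali\C\subset\Lambdao_{\Omega}(\Gamma)$, and the reverse inclusion follows from the first assertion; the only quibbles are that the cocompactness of the action on $\Ccore_{\Omega}(\Gamma)$ is not really a consequence of Fact~\ref{fact:cc-subsets} (it follows instead because $\Ccore_{\Omega}(\Gamma)$ is a nonempty closed invariant subset of the cocompact convex set furnished by Definition~\ref{def:cc-group}), and that nonemptiness of $\Ccore_{\Omega}(\Gamma)$ is used without comment. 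The problem is the first assertion. Your reduction to proving $\Lambdao_{\Omega}(\Gamma)\subset\overline{\C}$ is fine, and so is the analysis up to the point where $\gamma_n\cdot x\to\xi$ and $\gamma_n\cdot y\to\eta\in\overline{\C}$ with $(\xi,\eta)\subset\partial\Omega$. But the step that upgrades $\eta\in\overline{\C}$ to $\xi\in\overline{\C}$ is precisely the mathematical content of the minimality of the convex core, and you do not prove it: you defer it to ``the boundary analysis of convex cocompact cores in \cite{dgk-proj-cc}''. Since the entire Fact is itself a quotation of \cite[Cor.\,4.10.(3) \& Lem.\,4.16]{dgk-proj-cc} (the present paper gives no proof of it), appealing back to that paper for the decisive step makes the argument circular rather than a proof.

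Moreover, the bridging statement you propose — that $\overline{\C}$ must contain the whole face of $\partial\Omega$ through $\eta$ once it meets it — is both unproven and not quite sufficient as formulated: from $(\xi,\eta)\subset\partial\Omega$ you only get that $\xi$ and $\eta$ lie in the \emph{closure} of the open face of an interior point of the segment, and $\xi$ (or $\eta$) may well lie in a strictly smaller face, so even granting your claim one does not immediately get $\xi\in\overline{\C}$. Fact~\ref{fact:cc-strata} controls how $\Lambdao_{\Omega}(\Gamma)$ meets open faces, but says nothing about an arbitrary closed invariant convex $\C$. The route actually used in \cite{dgk-proj-cc} goes through the conical-limit-point property of convex cocompact actions (their Cor.\,4.10.(2), quoted elsewhere in this paper in the proof of Proposition~\ref{prop:Ano-cc-P1-div}): transporting a conical approach to $\xi$ by the isometries $\gamma_n$ shows that a uniform Hilbert neighborhood of $\C$ — which is again convex — accumulates at $\xi$ itself, and one then passes from the neighborhood back to $\C$ using Fact~\ref{fact:unif-neighb-face}.\eqref{item:R-nbhd-faces}; some argument of this kind, exploiting convex cocompactness quantitatively rather than only through the face structure, is what is missing. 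Your remark that the hypothesis is necessary (the cyclic group in the triangle of Example~\ref{ex:not-irreducible}) is correct and shows you identified where the difficulty lies, but identifying it is not the same as closing it.
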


\begin{fact}[{\cite[Cor.\,5.4 \& Prop.\,5.10]{dgk-proj-cc}}] \label{fact:bisat-cc}
Let $\Gamma$ be an infinite discrete subgroup of $\PGL(V)$.
\begin{itemize}
  \item Suppose that $\Gamma$ acts properly discontinuously and cocompactly on a nonempty properly convex subset $\C$ of $\PP(V)$ with bisaturated boundary.
  Then $\Omega:=\Int\C$ is nonempty and $\Gamma$ acts convex cocompactly on both $\Omega$ and~$\Omega^*$.
  \item Suppose that $\Gamma$ acts convex cocompactly on some properly convex open subset $\Omega$ of $\PP(V)$ and also convex cocompactly on its dual~$\Omega^*$.
  Then $\C := \overline{\Omega} \smallsetminus \Lambdao_{\Omega}(\Gamma)$ is a nonempty properly convex subset of $\PP(V)$ with bisaturated boundary on which $\Gamma$ acts properly discontinuously and cocompactly.
\end{itemize}
\end{fact}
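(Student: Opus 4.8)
The statement pairs two converse claims linking a properly discontinuous cocompact action on a properly convex set $\C$ with bisaturated boundary to convex cocompact actions on $\Omega:=\Int{\C}$ and on its dual $\Omega^{*}$. The plan is to treat the two implications in turn, with the self-dual condition ``bisaturated boundary'' carrying the load.

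\textbf{From $\C$ to $\Omega$ and $\Omega^{*}$.} First I would note that $\Omega\neq\varnothing$: were $\C$ closed in $\PP(V)$ it would be compact (properly convex and closed), and an infinite group cannot act properly discontinuously on a compact set; hence $\C$ is not closed, and by the remark following Definition~\ref{def:boundaries} a convex set with bisaturated boundary that is not closed has nonempty interior. Thus $\overline{\Omega}=\overline{\C}$ and $\partial\Omega=\partialn\C\sqcup\partiali\C$, with $\partiali\C\neq\varnothing$ since $\C$ is not compact. Properness of the action on $\C\supseteq\Omega$ forces every $\Gamma$-orbit of $\Omega$ to accumulate only in $\overline{\C}\setminus\C=\partiali\C$, so $\Lambdao_{\Omega}(\Gamma)\subseteq\partiali\C$; in particular it is nonempty. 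The substantive step is to show $\Gamma$ acts cocompactly on $\Ccore_{\Omega}(\Gamma)$, which by construction is a nonempty closed convex $\Gamma$-invariant subset of $\Omega$ whose closure contains $\Lambdao_{\Omega}(\Gamma)$. Fixing a compact convex $D$ with $\Gamma\cdot D=\C$, I would prove that every point of $\C$ lies within a uniform $d_{\Omega}$-distance of $\Ccore_{\Omega}(\Gamma)$ after straightening along a segment toward $\Lambdao_{\Omega}(\Gamma)$, the straightened segment remaining inside $\Omega$ precisely because of bisaturatedness; cocompactness on $\C$ then transfers to cocompactness on $\Ccore_{\Omega}(\Gamma)$, i.e.\ the action on $\Omega$ is convex cocompact. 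Finally I would verify that the dual $\C^{*}$ — the set of projective hyperplanes of $\PP(V)$ disjoint from $\C$ — is again a properly convex set with bisaturated boundary and with interior $\Omega^{*}$, on which $\Gamma$ acts properly discontinuously and cocompactly (cocompactness being the dual incarnation of the same straightening estimate); applying the case already handled to $\C^{*}\subset\PP(V^{*})$ yields convex cocompactness on $\Omega^{*}$.

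\textbf{From $\Omega$ and $\Omega^{*}$ to $\C$.} Put $\C:=\overline{\Omega}\setminus\Lambdao_{\Omega}(\Gamma)$. It contains $\Omega$, hence is nonempty with $\Int{\C}=\Omega$, $\overline{\C}=\overline{\Omega}$, $\partiali\C=\Lambdao_{\Omega}(\Gamma)$ and $\partialn\C=\partial\Omega\setminus\Lambdao_{\Omega}(\Gamma)$. For convexity of $\C$ (whence proper convexity, as $\Omega\subseteq\C\subseteq\overline{\Omega}$) I would use that for a convex cocompact action $\Lambdao_{\Omega}(\Gamma)=\partiali\Ccore_{\Omega}(\Gamma)$ (Fact~\ref{fact:ideal-bound-cc}) is a union of open faces of $\partial\Omega$, so that no open segment of $\partial\Omega$ with both endpoints off $\Lambdao_{\Omega}(\Gamma)$ can meet it. Bisaturatedness of $\C$ amounts to: every supporting hyperplane $X$ of $\Omega$ meets $\partial\Omega$ in a set contained entirely in $\Lambdao_{\Omega}(\Gamma)$ or entirely in its complement; this follows, by projective duality between faces of $\Omega$ and faces of $\Omega^{*}$, from the parallel fact that $\Lambdao_{\Omega^{*}}(\Gamma)$ is a union of open faces of $\partial\Omega^{*}$, valid by convex cocompactness of $\Omega^{*}$. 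Properness of the $\Gamma$-action on $\C$ is immediate from $\partiali\C=\Lambdao_{\Omega}(\Gamma)$ together with the fact that $\Gamma$ acts properly on $\overline{\Omega}$ minus its full orbital limit set; cocompactness on $\C$ follows because $\Ccore_{\Omega}(\Gamma)$ has compact quotient (definition of convex cocompactness) and, using both convex cocompactness hypotheses, $\C$ lies in a bounded $d_{\Omega}$-neighborhood of $\Ccore_{\Omega}(\Gamma)$ together with a controlled ideal part, so it meets only finitely many $\Gamma$-translates of a fixed compact set.

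\textbf{The hard part.} In both directions the genuine difficulty is the same cocompactness transfer between $\C$ and $\Ccore_{\Omega}(\Gamma)$ (and dually between $\C^{*}$ and $\Ccore_{\Omega^{*}}(\Gamma)$): one must show that a properly convex set with bisaturated boundary cannot stray arbitrarily far, in the Hilbert metric of its interior, from its convex core, controlling the nonideal boundary through supporting hyperplanes and the properness of the action. This is exactly where the self-duality of the bisaturated-boundary condition is essential; by contrast the remaining points — nonemptiness of the interior, the inclusions of limit sets, and the convexity, bisaturatedness and properness statements for $\overline{\Omega}\setminus\Lambdao_{\Omega}(\Gamma)$ — are soft.
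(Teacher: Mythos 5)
First, note that the paper does not actually prove this statement: it is imported verbatim from \cite[Cor.\,5.4 \& Prop.\,5.10]{dgk-proj-cc}, so the comparison here is with what a complete proof would require, and your outline has genuine gaps at exactly the hard points. In the first direction, the claimed estimate that ``every point of $\C$ lies within a uniform $d_{\Omega}$-distance of $\Ccore_{\Omega}(\Gamma)$'' is false: points of $\partialn\C$ are at infinite Hilbert distance from everything in $\Omega$, and even $\C\cap\Omega$ need not be Hilbert-bounded away from the core (take $\Gamma$ a convex cocompact Fuchsian group and $\C=\overline{\HH^2}\smallsetminus\Lambda$, which is properly convex with bisaturated boundary and cocompact). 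The correct route to cocompactness on the core is different: show $\partiali\Ccore_{\Omega}(\Gamma)\subset\partiali\C$ (this is where bisaturation enters, via supporting hyperplanes containing segments or convex combinations of ideal points), so that $\Ccore_{\Omega}(\Gamma)$ is closed in $\C$ and inherits cocompactness as a closed $\Gamma$-invariant subset. More seriously, the dual half of this direction is not ``the dual incarnation of the same estimate'': one must actually prove that $\C^*=\{X : X\cap\C=\varnothing\}$ is properly convex with bisaturated boundary, that $\Int{\C^*}=\Omega^*$, and above all that $\Gamma$ acts \emph{cocompactly} on $\C^*$; this last point is the heart of the cited proposition and needs its own argument, which you do not supply.

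In the second direction the same pattern recurs. Proper discontinuity of the action on $\C=\overline{\Omega}\smallsetminus\Lambdao_{\Omega}(\Gamma)$ is precisely the hard point, and you assume it by invoking ``the fact that $\Gamma$ acts properly on $\overline{\Omega}$ minus its full orbital limit set'' -- but this is not a fact: with convex cocompactness on $\Omega$ alone it can fail, as the paper's own Lemma~\ref{lem:even-more-pure} explicitly warns (``not necessarily properly discontinuous''), so the hypothesis on $\Omega^*$ must be brought to bear here and you never show how. Likewise, your bisaturation step rests on a face-duality matching $\Lambdao_{\Omega}(\Gamma)$ with $\Lambdao_{\Omega^*}(\Gamma)$; in this paper that matching is Corollary~\ref{cor:cc-bisat-support}, which is \emph{deduced from} the Fact you are proving, so citing it would be circular -- a direct argument (e.g.\ pulling supporting hyperplanes at points of $\Lambdao_{\Omega}(\Gamma)$ back by group elements along orbit sequences staying near a ray, using cocompactness of both cores) is required. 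Finally, ``$\C$ lies in a bounded $d_{\Omega}$-neighborhood of $\Ccore_{\Omega}(\Gamma)$'' is false (same Fuchsian example); cocompactness on $\C$ should instead be obtained by the nearest-point-projection argument of Lemma~\ref{lem:even-more-pure}, which needs only convex cocompactness on $\Omega$ and is proved independently. The soft steps you do carry out (nonemptiness of $\Omega$, $\Lambdao_{\Omega}(\Gamma)\subset\partiali\C$, convexity of $\C$ via closedness of $\Lambdao_{\Omega}(\Gamma)$ and Fact~\ref{fact:cc-strata}) are correct.
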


Following \cite[Def.\,4.11]{dgk-proj-cc}, given a properly convex open subset $\Omega$ of $\PP(V)$ and a point $z\in\partial\Omega$, we define the \emph{open face} of $\partial\Omega$ at~$z$ to be the union of $\{z\}$ and of all open segments of $\partial\Omega$ containing~$z$; in other words, it is the largest convex subset of $\partial\Omega$ containing~$z$ which is relatively open, in the sense that it is open in the projective subspace that it spans.
An \emph{open face} of $\partial\Omega$ is by definition the open face of some point $z\in\partial\Omega$.
The set $\partial\Omega$ is the disjoint union of its open faces.

\begin{fact}[{\cite[Cor.\,4.13]{dgk-proj-cc}}] \label{fact:cc-strata}
Let $\Gamma$ be a discrete subgroup of $\PGL(V)$ acting convex cocompactly on a properly convex open subset $\Omega$ of $\PP(V)$.
Let $F$ be an open face of $\partial \Omega$.
Then $\Lambdao_\Omega(\Gamma) \cap F$ is either empty or all of~$F$.
\end{fact}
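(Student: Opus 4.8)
The plan is to prove the statement by a direct dynamical argument; in fact the argument will only use that $\Gamma$ is a discrete subgroup of $\mathrm{Aut}(\Omega)$, so convex cocompactness will not really be needed (if one prefers to use the hypothesis literally, one may first replace $\Omega$ by $\Ccore_{\Omega}(\Gamma)$ and invoke Fact~\ref{fact:ideal-bound-cc} to the effect that $\partiali\Ccore_{\Omega}(\Gamma)=\Lambdao_{\Omega}(\Gamma)$, but this turns out to be unnecessary). I would fix $z\in\Lambdao_{\Omega}(\Gamma)\cap F$ and an arbitrary $w\in F$, and show $w\in\Lambdao_{\Omega}(\Gamma)$; one may assume $w\neq z$. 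First, using that the open face $F$ is convex and relatively open in its projective span, I would choose $z',w'\in F$ with $z$ and $w$ both in the open segment $(z',w')$, so that $[z',w']\subset F\subset\partial\Omega$. Working in an affine chart in which $\overline{\Omega}$ is compact, set $[A,B]:=\overline{zw}\cap\overline{\Omega}$ (a segment, since $\Omega$ is properly convex); then $z$ and $w$ lie in the relative interior $(A,B)$, because $[z',w']$ is a subsegment of $[A,B]$ containing $z,w$ in its interior.

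Next I would exploit $z\in\Lambdao_{\Omega}(\Gamma)$ to pick $x_0\in\Omega$ and $\gamma_n\in\Gamma$ with $p_n:=\gamma_n\cdot x_0\to z$. The key trick is to move $p_n$ by the fixed affine translation $v:=w-z$: put $q_n:=p_n+v$, so $q_n\to w$. I would then establish two facts: (a) $q_n\in\Omega$ for $n$ large; and (b) $\sup_n d_{\Omega}(p_n,q_n)<\infty$. For this, consider the line $\ell_n$ through $p_n$ parallel to $\overline{zw}$ (it contains $q_n$), and the chord $\ell_n\cap\overline{\Omega}=:[\alpha_n,\beta_n]$. Since $p_n\to z$ we have $\ell_n\to\overline{zw}$, and the chord endpoints satisfy $\alpha_n\to A$, $\beta_n\to B$; as $z,w\in(A,B)$ and $p_n\to z$, $q_n\to w$, both $p_n$ and $q_n$ therefore lie in the open chord $(\alpha_n,\beta_n)=\ell_n\cap\Omega$ for $n$ large, which is (a), and, with $\alpha_n,p_n,q_n,\beta_n$ in this order on $\ell_n$,
\[
d_{\Omega}(p_n,q_n)=\tfrac12\log\cro{\alpha_n}{p_n}{q_n}{\beta_n}\ \longrightarrow\ \tfrac12\log\cro{A}{z}{w}{B}<\infty,
\]
the four limit points being distinct, which is (b).

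Finally I would conclude: by (b), $d_{\Omega}(x_0,\gamma_n^{-1}\cdot q_n)=d_{\Omega}(p_n,q_n)$ is bounded by some constant $C$, so $k_n:=\gamma_n^{-1}\cdot q_n$ stays in the closed $d_{\Omega}$-ball of radius $C$ about $x_0$, which is compact because $(\Omega,d_{\Omega})$ is proper. After passing to a subsequence with $k_n\to k\in\Omega$, we get $d_{\Omega}(\gamma_n\cdot k,\gamma_n\cdot k_n)=d_{\Omega}(k,k_n)\to 0$ while $\gamma_n\cdot k_n=q_n\to w\in\partial\Omega$, hence $\gamma_n\cdot k\to w$. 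Thus $w$ is an accumulation point in $\partial\Omega$ of the $\Gamma$-orbit of a point $k\in\Omega$, i.e.\ $w\in\Lambdao_{\Omega}(\Gamma)$, as desired.

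The one step that is not pure formalism is the chord-endpoint convergence $\alpha_n\to A$, $\beta_n\to B$: this is a standard fact about compact convex bodies, but it is exactly where the hypothesis $w\in F_\Omega(z)$ enters, since it forces $z$ and $w$ to be interior points of a common boundary segment $[A,B]$, so that the chords $\ell_n\cap\overline{\Omega}$ stay uniformly non-degenerate near $z$ and $w$. Concretely, for each $m\in(A,B)$ and a fixed $o\in\Omega$, the triangle with vertices $A$, $B$, $o$ (contained in $\overline{\Omega}$) shows that the translate of $m$ onto $\ell_n$ lies in $\overline{\Omega}$ for $n$ large, while by compactness of $\overline{\Omega}$ no subsequential limit of $\alpha_n$ or $\beta_n$ can leave $\overline{zw}\cap\overline{\Omega}=[A,B]$; together these give $\alpha_n\to A$ and $\beta_n\to B$. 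I expect this bookkeeping, rather than any hard estimate, to be the main point to write out carefully; everything else — the translation trick and the properness argument — is routine.
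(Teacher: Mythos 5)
Your overall scheme --- produce points $y_n \to w$ at bounded Hilbert distance from $p_n = \gamma_n\cdot x_0$, pull back by $\gamma_n^{-1}$, extract a limit in a compact Hilbert ball, and push forward again --- is sound, and the endgame (properness of $(\Omega,d_\Omega)$, the reindexing) is fine. The gap is in the production of the $y_n$: your claims (a) and (b), and the chord-endpoint convergence $\alpha_n\to A$, $\beta_n\to B$ on which they rest, are false for an arbitrary sequence of interior points $p_n\to z$ as soon as $\dim\PP(V)\geq 3$. Chords of $\overline\Omega$ along lines $\ell_n\to\ell$ are only upper semicontinuous, and the hypothesis that $\ell_n$ passes through an interior point tending to $z$ does not force lower semicontinuity. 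Concretely, in an affine chart $\RR^3$ with coordinates $(x,y,u)$, let $C\subset\RR^2$ be the closed unit disk centered at $(0,1)$ (so $0\in\partial C$), and let $K$ be the convex body whose slice at $x=t$ is $C$ for $t\in[-1,0]$ and $\tfrac{2-t}{2}C$ for $t\in[0,2]$ (convex because $t\mapsto$ slice is a concave set-valued map). The segment from $A=(-1,0,0)$ to $B=(2,0,0)$ lies in $\partial K$, so $z=(0,0,0)$ and $w=(1,0,0)$ lie in a common open face of $\partial\Omega$ for $\Omega=\Int{K}$. Yet the interior points $p_n=(0,\tfrac1n,\tfrac{3}{4n^2})\to z$ satisfy $p_n+(w-z)=(1,\tfrac1n,\tfrac{3}{4n^2})\notin K$, because $(\tfrac1n,\tfrac{3}{4n^2})$ lies in $\Int{C}$ but outside $\tfrac12 C$; in fact the chords $\ell_n\cap\overline{\Omega}$ converge to $\{(t,0,0): -1\le t\le \tfrac23\}$, not to $[A,B]$, so $d_\Omega(p_n,\,p_n+tv)$ is not even defined for $t$ close to $1$. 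Your ``triangle'' justification breaks exactly here: the displacement $p_n-z$ need not be parallel to the plane of $A,B,o$, so the translate of $m\in(A,B)$ onto $\ell_n$ has no reason to lie in (or near) that triangle. In dimension $2$ your chord claim is true (the chord is a slice by a line parallel to the supporting line), which is probably why it felt standard; the failure is a genuinely higher-dimensional, second-order phenomenon (the tangent cones at $z$ and $w$ do agree, but that is not enough).

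Since your argument imposes no condition on the sequence $p_n=\gamma_n\cdot x_0$ beyond $p_n\to z$ --- you even remark that convex cocompactness is not needed --- the counterexample shows the proof as written does not go through, and the apparent dispensability of the hypothesis was the warning sign. Note that the present paper does not prove Fact~\ref{fact:cc-strata}; it quotes it from \cite[Cor.~4.13]{dgk-proj-cc}, so there is no internal proof to compare with, but the toolkit the paper uses for the closely related argument in Proposition~\ref{prop:Ano-cc-P1-div} indicates the intended route: under convex cocompactness the limit point $z$ is attained \emph{conically} (\cite[Cor.~4.10(2)]{dgk-proj-cc}), i.e.\ $\gamma_n\cdot x_0$ stays at bounded $d_\Omega$-distance from the ray $[x_0,z)$, and then \cite[Lem.~B.2]{dgk-proj-cc} (accumulation of $\gamma_n\cdot B$, for compact $B\subset\Omega$ with nonempty interior, onto compact subsets of the face $F$ with nonempty interior, followed by exhausting $F$ with larger $B$), or alternatively Fact~\ref{fact:unif-neighb-face}-type control along the ray, produces the bounded-distance points $y_n\to w$ that your translation trick was meant to supply. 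So the fix is not a refinement of the chord estimate but an actual use of convex cocompactness at precisely the step where you discarded it.
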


Here is a consequence of Facts \ref{fact:bisat-cc} and~\ref{fact:cc-strata}.

\begin{corollary} \label{cor:cc-bisat-support}
Let $\Gamma$ be an infinite discrete subgroup of $\PGL(V)$ preserving a properly convex open subset $\Omega$ of $\PP(V)$.
Suppose that $\Gamma$ acts convex cocompactly on both $\Omega$ and~$\Omega^*$.
Then
\begin{enumerate}
  \item\label{item:cc-bisat-support-1} any element of $\Lambdao_{\Omega^*}(\Gamma)$, seen as a supporting hyperplane to~$\Omega$, meets $\partial\Omega$ in a subset of $\Lambdao_{\Omega}(\Gamma)$;
  \item\label{item:cc-bisat-support-2} any supporting hyperplane to~$\Omega$ at a point of $\Lambdao_{\Omega}(\Gamma)$ lies in $\Lambdao_{\Omega^*}(\Gamma)$.
\end{enumerate}
\end{corollary}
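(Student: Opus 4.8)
The plan is to deduce both statements from Fact~\ref{fact:bisat-cc} together with Fact~\ref{fact:cc-strata}, by passing through the bisaturated closed convex set $\C := \overline{\Omega} \smallsetminus \Lambdao_{\Omega}(\Gamma)$. First I would invoke Fact~\ref{fact:bisat-cc} (second bullet): since $\Gamma$ acts convex cocompactly on both $\Omega$ and $\Omega^*$, the set $\C$ is a nonempty properly convex subset of $\PP(V)$ with bisaturated boundary on which $\Gamma$ acts properly discontinuously and cocompactly. Note that $\Int\C = \Omega$, that $\partiali\C = \Lambdao_{\Omega}(\Gamma)$, and that $\partialn\C = \partial\Omega \smallsetminus \Lambdao_{\Omega}(\Gamma)$; thus a supporting hyperplane $X$ of $\C$ (equivalently of $\Omega$) meets $\Fr(\C) = \partial\Omega$ in a set $X \cap \overline{\Omega}$ which, by bisaturation, is either contained in $\Lambdao_{\Omega}(\Gamma)$ or disjoint from it.

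For part~\eqref{item:cc-bisat-support-1}: let $X \in \Lambdao_{\Omega^*}(\Gamma)$, viewed as a supporting hyperplane of $\Omega$. I must rule out the possibility that $X \cap \overline{\Omega}$ is disjoint from $\Lambdao_{\Omega}(\Gamma)$, i.e.\ contained in $\partialn\C$. Since $X$ lies in the full orbital limit set $\Lambdao_{\Omega^*}(\Gamma)$, there is a sequence in a $\Gamma$-orbit of $\Omega^*$ converging to $X$; dually this produces, for any point $p$ of the contact set $X \cap \overline{\Omega}$, a sequence $\gamma_n \in \Gamma$ with $\gamma_n \to \infty$ such that appropriate accumulation is forced into $X$. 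Concretely, I would argue that the support set $X \cap \overline{\Omega}$ of a hyperplane in the limit set $\Lambdao_{\Omega^*}(\Gamma)$ cannot be compactly contained in $\partialn\C = \C \cap \partial\Omega$: the $\Gamma$-action on $\C$ is cocompact, so every point of $\C$ (including those in $\partialn\C$) has orbit contained in a uniformly bounded neighbourhood of a compact fundamental domain, whereas $X \in \Lambdao_{\Omega^*}(\Gamma)$ means $X$ is an accumulation point ``at infinity'' of the $\Gamma$-orbit structure on the dual side, so its contact locus must meet the ideal boundary $\partiali\C = \Lambdao_{\Omega}(\Gamma)$. By bisaturation, $X \cap \overline{\Omega}$ being not disjoint from $\Lambdao_{\Omega}(\Gamma)$ forces it to be entirely contained in $\Lambdao_{\Omega}(\Gamma)$, which is \eqref{item:cc-bisat-support-1}.

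For part~\eqref{item:cc-bisat-support-2}: let $X$ be a supporting hyperplane to $\Omega$ at a point $z \in \Lambdao_{\Omega}(\Gamma)$. Then $z \in X \cap \overline{\Omega}$, so this contact set meets $\partiali\C$; by the bisaturated boundary property it is entirely contained in $\partiali\C = \Lambdao_{\Omega}(\Gamma)$. Now apply the \emph{first} bullet of Fact~\ref{fact:bisat-cc} in the dual picture, or more directly: the contact set $X \cap \overline{\Omega}$ is contained in $\Lambdao_{\Omega}(\Gamma)$, and I claim this forces $X \in \Lambdao_{\Omega^*}(\Gamma)$. Here I would use that $\Gamma$ acts cocompactly on $\C$, hence also on $\C^* := \overline{\Omega^*} \smallsetminus \Lambdao_{\Omega^*}(\Gamma)$ (again by Fact~\ref{fact:bisat-cc}, since $\Gamma$ is convex cocompact on $\Omega^*$ too and $\C^{**}=\C$). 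A hyperplane $X$ belongs to $\partialn\C^* = \partial\Omega^* \smallsetminus \Lambdao_{\Omega^*}(\Gamma)$ if and only if, dually, its contact set $X \cap \overline{\Omega}$ is not contained in $\partiali\C$; contrapositively, $X \cap \overline{\Omega} \subset \Lambdao_{\Omega}(\Gamma)$ forces $X \in \partiali\C^* = \Lambdao_{\Omega^*}(\Gamma)$. Applying Fact~\ref{fact:cc-strata} to identify open faces on either side cleans up the edge cases.

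The main obstacle I anticipate is making precise the duality dictionary between ``contact set of a supporting hyperplane meets the ideal boundary $\partiali\C$'' and ``the hyperplane lies in the orbital limit set $\Lambdao_{\Omega^*}(\Gamma)$''. This is essentially the statement that the bisaturated convex body $\C$ and its dual $\C^*$ have matching ideal/nonideal decompositions under the duality $z \leftrightarrow \{X : X \text{ supports } \Omega \text{ at } z\}$; it should follow from the construction of $\C$ and $\C^*$ in Fact~\ref{fact:bisat-cc} and the cocompactness of the $\Gamma$-action on both, but spelling out that a point of $\partialn\C$ cannot have a supporting hyperplane in $\Lambdao_{\Omega^*}(\Gamma)$ (and symmetrically) is the crux and will require carefully tracking divergent sequences $\gamma_n \to \infty$ on both sides, using that the fundamental domains for $\C$ and $\C^*$ are compact.
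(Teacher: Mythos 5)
Your overall skeleton matches the paper's: pass to the bisaturated set $\C := \overline{\Omega} \smallsetminus \Lambdao_{\Omega}(\Gamma)$ from Fact~\ref{fact:bisat-cc}, so that once the contact set $X \cap \overline{\Omega}$ of a hyperplane $X \in \Lambdao_{\Omega^*}(\Gamma)$ is known to meet $\partiali\C = \Lambdao_{\Omega}(\Gamma)$, bisaturation forces it to lie entirely inside. But the step you leave as a heuristic is precisely the mathematical content of the statement: you assert that because ``$X$ is an accumulation point at infinity on the dual side'' and the action on $\C$ is cocompact, the contact locus ``must'' meet the ideal boundary. That does not follow from cocompactness of the action on $\C$ by itself; one needs an argument genuinely linking divergence in the dual orbit to accumulation in the primal one. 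The paper supplies it as follows: write $X = \lim_n \gamma_n \cdot Y$ with $Y \in \Omega^*$ and $(\gamma_n)$ diverging, lift $Y$ to a linear form $\tilde{Y}$ and pick $y \in \Omega$ lifted to $\tilde{y}$ with (after extraction) $\gamma_n \cdot y \to x \in \Lambdao_{\Omega}(\Gamma)$; the evaluation $(\gamma_n\cdot\tilde{Y})(\gamma_n\cdot\tilde{y})$ is a nonzero constant while $\Vert\gamma_n\cdot\tilde{Y}\Vert$ and $\Vert\gamma_n\cdot\tilde{y}\Vert$ both tend to infinity (by \cite[Lem.\,3.3]{dgk-proj-cc}), so in the limit $x \in X$. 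Without this (or an equivalent) computation your proof of~\eqref{item:cc-bisat-support-1} is incomplete; you yourself flag this as ``the crux,'' but a crux flagged is still a gap.

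Your treatment of~\eqref{item:cc-bisat-support-2} has a second problem: the ``duality dictionary'' you invoke --- $X \in \partialn\C^*$ if and only if $X \cap \overline{\Omega} \not\subset \partiali\C$ --- is, unwound, exactly the two assertions of the corollary, so using it to deduce~\eqref{item:cc-bisat-support-2} is circular. The clean route, and the one the paper takes, is to observe that~\eqref{item:cc-bisat-support-2} is simply~\eqref{item:cc-bisat-support-1} applied with the roles of $\Omega$ and $\Omega^*$ exchanged: the hypotheses are symmetric in $\Omega$ and $\Omega^*$, a point $z \in \Lambdao_{\Omega}(\Gamma) = \Lambdao_{(\Omega^*)^*}(\Gamma)$ is a supporting hyperplane $z^\perp$ of~$\Omega^*$, and the points of $\partial\Omega^*$ lying on $z^\perp$ are exactly the supporting hyperplanes of $\Omega$ at~$z$; so~\eqref{item:cc-bisat-support-1} for $\Omega^*$ yields~\eqref{item:cc-bisat-support-2} for $\Omega$. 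So: fix the key step of~\eqref{item:cc-bisat-support-1} with the pairing argument, and replace your dictionary by plain duality.
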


\begin{proof}
The two statements are dual to each other, so we only need to prove~\eqref{item:cc-bisat-support-1}.

First, we observe that any element $X$ of $\Lambdao_{\Omega^*}(\Gamma)$, seen as a supporting hyperplane to~$\Omega$, meets $\Lambdao_{\Omega}(\Gamma)$.
Indeed, we can write $X = \lim_n \gamma_n \cdot Y$ for some diverging sequence $(\gamma_n) \in \Gamma^{\NN}$ and some $Y \in \Omega^*$.
Lift $Y$ to a nonzero linear form $\tilde{Y} \in V^*$, and consider a nonzero vector $\tilde{y} \in V$ projecting to a point $y \in \Omega$.
Up to passing to a subsequence, we may assume that $\gamma_n \cdot y \to x$ for some $x \in \PP(V)$, which then belongs to $\Lambdao_{\Omega}(\Gamma)$.
We have $(\gamma_n\cdot\tilde{Y})(\gamma_n\cdot\tilde{y}) = (\gamma_0\cdot\tilde{Y})(\gamma_0\cdot\tilde{y}) \in \RR\smallsetminus\{0\}$ for all~$n$.
On the other hand, the norms of $\gamma_n\cdot\tilde{Y}$ and $\gamma_n\cdot\tilde{y}$ go to infinity by \cite[Lem.\,3.3]{dgk-proj-cc}.
Therefore $x \in X$ by passing to the limit.

By Fact~\ref{fact:bisat-cc}, the set $\C := \overline{\Omega} \smallsetminus \Lambdao_{\Omega}(\Gamma)$ is a nonempty properly convex subset $\C$ of $\PP(V)$ with bisaturated boundary on which $\Gamma$ acts properly discontinuously and cocompactly.
Consider an element $X \in \Lambdao_{\Omega^*}(\Gamma)$.
Then $X$, seen as a supporting hyperplane to~$\Omega$, meets $\overline{\C} = \overline{\Omega}$ in a subset of $\Fr(\C) = \partial\Omega$.
By the above observation, $X$ contains a point of $\Lambdao_{\Omega}(\Gamma) = \partiali \C$.
Since $\C$ has bisaturated boundary, $X$ must meet $\overline{\C} = \overline{\Omega}$ in a subset of $\partiali\C = \Lambdao_{\Omega}(\Gamma)$.
\end{proof}

\begin{fact}[{\cite[Cor.\,5.2]{dgk-proj-cc}}] \label{fact:cc-nbhd}
Let $\Gamma$ be an infinite discrete subgroup of $\PGL(V)$ acting convex cocompactly on a properly convex open subset $\Omega$ of $\PP(V)$.
Let $\C$ be a closed convex subset of~$\Omega$ on which $\Gamma$ acts cocompactly and which contains a neighborhood of $\Ccore_\Omega(\Gamma)$.
Then $\C$ is a properly convex subset of $\PP(V)$ with bisaturated boundary on which $\Gamma$ acts properly discontinuously and cocompactly.
\end{fact}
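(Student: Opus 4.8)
The plan is to verify the three conclusions separately; proper convexity and proper discontinuity are immediate, and all the content lies in the bisaturated boundary statement. Since $\C\subset\Omega$ and $\Omega$ is properly convex, $\overline{\C}$ is a compact convex subset of an affine chart, so $\C$ is properly convex. Since $\Gamma$ is discrete in $\PGL(V)$ and preserves the properly convex open set $\Omega$, it is a discrete subgroup of $\mathrm{Aut}(\Omega)$, hence acts properly discontinuously on $\Omega$ (Section~\ref{subsec:remind-Hilbert}) and therefore on its invariant subset $\C$; cocompactness of the action on $\C$ is part of the hypothesis.

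Before treating the boundary, I would record two preliminary facts. Writing $\Ccore:=\Ccore_\Omega(\Gamma)$, Fact~\ref{fact:ideal-bound-cc} applied to the closed $\Gamma$-invariant convex set $\C$ (on which $\Gamma$ acts cocompactly) and to $\Ccore$ itself gives $\Ccore\subset\C$ and $\partiali\C=\Lambdao_{\Omega}(\Gamma)=\partiali\Ccore$. Second, a routine compactness argument upgrades the hypothesis that $\C$ contains a neighborhood of $\Ccore$ to the statement that $\C$ contains a \emph{uniform} Hilbert neighborhood of $\Ccore$: cover a compact fundamental domain for the $\Gamma$-action on $\Ccore$ by finitely many $d_\Omega$-balls contained in $\Int{\C}$ and translate by $\Gamma$, producing $\varepsilon>0$ with $\{x\in\Omega : d_\Omega(x,\Ccore)<\varepsilon\}\subset\Int{\C}$. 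In particular, $\partialn\C=\C\smallsetminus\Int{\C}$ is disjoint from this uniform $\varepsilon$-neighborhood of $\Ccore$.

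For the bisaturated boundary, let $X$ be a supporting hyperplane of $\C$. Proper convexity of $\C$ forces $X\cap\Int{\C}=\varnothing$, so $X\cap\overline{\C}\subset\Fr(\C)=\partiali\C\sqcup\partialn\C$. Suppose for contradiction that $X\cap\overline{\C}$ meets both pieces, say at $p\in\partialn\C$ and $q\in\partiali\C$. Since $p\in\C\subset\Omega$ and $q\in\partiali\C\subset\partial\Omega$, convexity of $\Omega$ gives $(p,q)\subset\Omega$; convexity of $\C$ together with $p,q\in X$ gives $[p,q]\subset X\cap\overline{\C}$; hence $[p,q)=[p,q]\cap\Omega\subset X\cap\C\subset\partialn\C$. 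Thus $[p,q)$ is a projective (hence $d_\Omega$-geodesic) ray of $\Omega$ lying entirely in $\partialn\C$, with endpoint $q\in\Lambdao_{\Omega}(\Gamma)=\partiali\Ccore$. But since the action of $\Gamma$ on $\Omega$ is convex cocompact, Lemma~\ref{lem:hopeful-cases-satisfied} (the convex cocompact case of assumption~\ref{item:hopeful-cases} of Theorem~\ref{thm:hopeful}, ultimately a result of \cite{dgk-proj-cc}) tells us that $[p,q)$ eventually enters the uniform $\varepsilon$-neighborhood of $\Ccore$, contradicting $[p,q)\cap\{x : d_\Omega(x,\Ccore)<\varepsilon\}=\varnothing$. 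Hence $X\cap\overline{\C}$ is entirely contained in $\partiali\C$ or entirely in $\partialn\C$, i.e. $\C$ has bisaturated boundary.

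The main obstacle is this last step, and within it the geometric input that, for a convex cocompact action, every Hilbert geodesic ray ending in $\partiali\Ccore_\Omega(\Gamma)$ eventually enters every uniform neighborhood of $\Ccore_\Omega(\Gamma)$; this is where the coarse geometry of convex cocompactness really enters, and one must check that the version invoked is logically prior to the present statement (to avoid circularity). An alternative, purely dual route would be to verify that $\Gamma$ acts convex cocompactly on $\Int{\C}$ (using $\Ccore\subset\Int{\C}\subset\Omega$ and Fact~\ref{fact:cc-subsets}) and also on $(\Int{\C})^*$, and then to quote Fact~\ref{fact:bisat-cc}, whose hypotheses then yield at once that $\overline{\C}\smallsetminus\Lambdao_{\Int{\C}}(\Gamma)=\C$ is properly convex with bisaturated boundary on which $\Gamma$ acts properly discontinuously and cocompactly; the delicate point there is cocompactness on the dual convex set $(\Int{\C})^*$, for which the uniform-neighborhood property of $\C$ is again the essential ingredient.
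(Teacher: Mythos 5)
Your argument is correct. Note first that this statement is not proved in the present paper at all: it is imported as a Fact from \cite[Cor.\,5.2]{dgk-proj-cc}, so there is no in-paper proof to compare against; what you have written is a legitimate self-contained derivation from material that \emph{is} available here. The reduction of bisaturation to the non-existence of a ray $[p,q)\subset\partialn\C$ with $q\in\partiali\C$ is exactly the right move: $\partiali\C=\Lambdao_\Omega(\Gamma)$ comes from Fact~\ref{fact:ideal-bound-cc}, the upgrade from ``contains a neighborhood of $\Ccore_\Omega(\Gamma)$'' to ``contains a uniform $d_\Omega$-neighborhood'' is the standard Lebesgue-number/doubling argument over a compact fundamental domain (your sketch suppresses the doubling of radii, but it is routine), and the contradiction then follows from the ray-penetration property. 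Two small remarks: the cleanest citation for that last input is Lemma~\ref{lem:enter-nbhd} itself rather than Lemma~\ref{lem:hopeful-cases-satisfied} (which merely repackages it as hypothesis~\ref{item:hopeful-cases} of Theorem~\ref{thm:hopeful}); and your worry about circularity is resolved affirmatively, since the proof of Lemma~\ref{lem:enter-nbhd} given in Section~\ref{sec:remind} uses only cocompactness of the action on the convex core and Hilbert-metric estimates, never Fact~\ref{fact:cc-nbhd}. Your alternative dual route via Fact~\ref{fact:bisat-cc} is correctly flagged as the harder path: cocompactness of the $\Gamma$-action on $(\Int{\C})^*$ is essentially equivalent in strength to the bisaturation you are after (this is the content of the two bullets of Fact~\ref{fact:bisat-cc}), so the direct ray argument you give first is preferable.
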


\begin{lemma} \label{lem:even-more-pure}
Let $\Gamma$ be an infinite discrete subgroup of $\PGL(V)$ acting convex cocompactly on a properly convex open subset $\Omega$ of $\PP(V)$.
Then the action of $\Gamma$ on $\overline{\Omega} \smallsetminus \Lambdao_{\Omega}(\Gamma)$ is cocompact (but not necessarily properly discontinuous).
In particular, for any sequence $(x_n)_{n\in\NN}$ of points of~$\Omega$ converging to a point of $\Lambdao_{\Omega}(\Gamma)$, there is a sequence $(\gamma_n)_{n\in\NN}$ of elements of~$\Gamma$ such that all accumulation points of the sequence $(\gamma_n\cdot x_n)_{n\in\NN}$ lie in $\overline{\Omega} \smallsetminus \Lambdao_{\Omega}(\Gamma)$. 
\end{lemma}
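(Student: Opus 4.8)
The plan is to deduce Lemma~\ref{lem:even-more-pure} from the convex cocompactness machinery already recalled above, in particular Facts~\ref{fact:bisat-cc} and~\ref{fact:ideal-bound-cc}. First I would observe that, by the first bullet of Fact~\ref{fact:bisat-cc} applied in the contrapositive direction --- or more directly by \cite[Prop.\,5.10]{dgk-proj-cc} as quoted in the second bullet of Fact~\ref{fact:bisat-cc} --- convex cocompactness of $\Gamma$ on $\Omega$ does \emph{not} by itself give cocompactness on $\overline\Omega \smallsetminus \Lambdao_\Omega(\Gamma)$ unless we also know $\Gamma$ acts convex cocompactly on $\Omega^*$. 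So the honest approach is different: I would not pass through $\Omega^*$ at all. Instead, set $\C := \Ccore_\Omega(\Gamma)$, which by definition is a nonempty closed $\Gamma$-invariant convex subset of $\Omega$ on which $\Gamma$ acts cocompactly, with $\partiali\C = \Lambdao_\Omega(\Gamma)$ by Fact~\ref{fact:ideal-bound-cc}. Pick a compact fundamental domain $\mathcal D \subset \C$ for the $\Gamma$-action, so $\C = \bigcup_{\gamma\in\Gamma}\gamma\cdot\mathcal D$.

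The key step is to produce, for each point $x \in \overline\Omega\smallsetminus\Lambdao_\Omega(\Gamma)$, a point of $\C$ at bounded ``distance'' in a suitable sense, and then translate everything back into $\mathcal D$. Concretely: if $x \in \Omega$, nearest-point projection onto the closed convex set $\C$ in the Hilbert metric $d_\Omega$ gives a point $\pi(x) \in \C$; the issue is that $d_\Omega(x,\pi(x))$ need not be bounded. This is exactly why the statement only claims cocompactness on $\overline\Omega\smallsetminus\Lambdao_\Omega(\Gamma)$ and not a uniform neighborhood statement. So instead I would argue directly with sequences, which is also the form in which the ``In particular'' clause is phrased. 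Let $(x_n)$ be a sequence in $\overline\Omega\smallsetminus\Lambdao_\Omega(\Gamma)$ (the case $x_n \in \Omega$ suffices for the final clause; the general case is handled by approximation from inside $\Omega$). Choose for each $n$ a point $c_n \in \C$; one can take $c_n = \pi(x_n)$, the $d_\Omega$-nearest point of $\C$, or if $x_n \in \partial\Omega$ one takes a point of $\C$ on the segment from a fixed basepoint $o\in\C$ toward $x_n$. Write $c_n = \gamma_n \cdot d_n$ with $d_n \in \mathcal D$ and $\gamma_n \in \Gamma$. Then $\gamma_n^{-1}\cdot c_n = d_n$ stays in the compact set $\mathcal D$, and the main point to verify is that $\gamma_n^{-1}\cdot x_n$ cannot accumulate on $\Lambdao_\Omega(\Gamma)$.

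Suppose for contradiction that, after passing to a subsequence, $\gamma_n^{-1}\cdot x_n \to \xi \in \Lambdao_\Omega(\Gamma)$ while $\gamma_n^{-1}\cdot c_n = d_n \to d \in \mathcal D \subset \C \subset \Omega$. Since $\Lambdao_\Omega(\Gamma) \subset \overline\C$ and $\C$ is convex and closed in $\Omega$, the segment from $d$ to $\xi$ lies in $\overline\C$, with $\xi \in \partiali\C$. Now the relevant dichotomy is that $\gamma_n^{-1}\cdot x_n$ lies in $\overline\Omega\smallsetminus\Lambdao_\Omega(\Gamma) = \overline\Omega\smallsetminus\partiali\C$ for every $n$ (this set is $\Gamma$-invariant), so the limit $\xi$ is approached from outside $\Lambdao_\Omega(\Gamma)$; this is not yet a contradiction, so one needs to exploit that $c_n$ was chosen as a nearest point (or a ``closest from $o$'' point). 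The hard part will be converting the nearest-point property into a uniform statement: I would use that $d_\Omega$-nearest-point projection onto a convex set is $1$-Lipschitz and that if $\gamma_n^{-1}x_n \to \xi \in \partiali\C$ then any nearest point of $\C$ to $\gamma_n^{-1}x_n$ must also converge to a point of $\overline\C$ lying ``on the way'' to $\xi$, forcing $d_n \to \xi$; but $d_n \in \mathcal D \subset \C \subset \Omega$ cannot converge to $\xi \in \partial\Omega$, a contradiction. The cleanest route, and the one I would ultimately write up, is instead to invoke \cite[Cor.\,5.2 and the surrounding results]{dgk-proj-cc}: by Fact~\ref{fact:cc-nbhd}, for a slightly enlarged cocompact $\C'$ (a uniform neighborhood of $\Ccore_\Omega(\Gamma)$) the set $\C'$ has bisaturated boundary, hence $\Fr(\C') \smallsetminus \partiali\C'$ together with $\Int{\C'}$ is a $\Gamma$-cocompact set, and one checks $\overline\Omega\smallsetminus\Lambdao_\Omega(\Gamma)$ is covered by $\Gamma$-translates of the compact ``nonideal part'' of $\C'$ union finitely many compact pieces. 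The main obstacle is precisely this last covering argument --- showing that every point of $\overline\Omega\smallsetminus\Lambdao_\Omega(\Gamma)$, including boundary points of $\Omega$ not in the limit set, can be pushed by $\Gamma$ into one fixed compact subset of $\overline\Omega\smallsetminus\Lambdao_\Omega(\Gamma)$ --- and I expect it to follow from the bisaturated-boundary description in Fact~\ref{fact:bisat-cc} applied to $\C = \overline\Omega\smallsetminus\Lambdao_\Omega(\Gamma)$ itself once one knows this $\C$ is properly convex with $\Gamma$ acting cocompactly, which is the content we are after; so in practice I would cite \cite[Cor.\,5.2]{dgk-proj-cc} (or its proof) for the cocompactness and only spell out the ``in particular'' sequential consequence here.
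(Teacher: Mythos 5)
Your ``Route A'' is indeed the approach the paper takes in outline --- project to the convex core by nearest-point projection for $d_\Omega$, move the projected point into a compact fundamental domain $\mathcal{D}\subset\Ccore_{\Omega}(\Gamma)$, and argue that the translated points cannot accumulate on $\Lambdao_{\Omega}(\Gamma)$ --- but the two places you yourself flag as ``the hard part'' are exactly where the content of the lemma lies, and you do not supply them. First, the claim that nearest points of a sequence tending to $\xi\in\partiali\Ccore_{\Omega}(\Gamma)$ must leave every compact subset of the core (you assert the even stronger ``$d_n\to\xi$'') is not automatic and is not a formal consequence of the projection being $1$-Lipschitz; the paper proves it by picking an auxiliary point $w$ on the ray $[y_\infty,\xi)\subset\Ccore_{\Omega}(\Gamma)$ and establishing the asymptotic additivity $|d_{\Omega}(x_n,y_n)-d_{\Omega}(x_n,w)-d_{\Omega}(w,y_n)|\to 0$, so that $w$ is eventually strictly closer to $x_n$ than the alleged nearest point $y_n$, a contradiction. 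Second, the points of $\partial\Omega\smallsetminus\Lambdao_{\Omega}(\Gamma)$ are not genuinely treated: your ``point of $\C$ on the segment from a fixed basepoint $o$'' is not equivariant and no argument is run with it, and the ``approximation from inside'' you mention in passing is where the paper does real work. One must show that the nearest points $y_n$ of interior approximants $x_n\to x_\infty\in\partial\Omega\smallsetminus\Lambdao_{\Omega}(\Gamma)$ converge \emph{into} the core rather than to $\Lambdao_{\Omega}(\Gamma)$; the paper's contradiction there uses crucially that $\Lambdao_{\Omega}(\Gamma)$ is a union of open faces of $\partial\Omega$ (Fact~\ref{fact:cc-strata}), which guarantees $(x_\infty,y_\infty)\subset\Omega$, together with a second Hilbert-metric divergence estimate. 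Nothing in your sketch addresses this, and it is needed both for the covering of the boundary part of $\overline{\Omega}\smallsetminus\Lambdao_{\Omega}(\Gamma)$ and for compactness of the covering set $\mathcal{D}'$ away from $\Lambdao_{\Omega}(\Gamma)$.

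The ``cleanest route'' you say you would ultimately write up does not work and is circular. Fact~\ref{fact:cc-nbhd} produces a cocompact convex set $\C'$ with bisaturated boundary, but $\C'\subset\Omega$ and $\partiali\C'=\Lambdao_{\Omega}(\Gamma)$ (Fact~\ref{fact:ideal-bound-cc}), so $\Gamma$-translates of a compact subset of $\C'$ (or of its nonideal boundary) can never reach the points of $\partial\Omega\smallsetminus\Lambdao_{\Omega}(\Gamma)$; covering those points is precisely the missing step. Invoking Fact~\ref{fact:bisat-cc} for the set $\overline{\Omega}\smallsetminus\Lambdao_{\Omega}(\Gamma)$ presupposes the cocompactness you are trying to prove, and the relevant direction of that fact also requires convex cocompactness of $\Gamma$ on $\Omega^*$, which is not a hypothesis here --- the parenthetical ``not necessarily properly discontinuous'' in the statement exists exactly because dual convex cocompactness may fail. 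Finally, \cite[Cor.\,5.2]{dgk-proj-cc} does not contain the statement of the lemma, so citing it ``for the cocompactness'' does not deliver. In short: the correct skeleton is there, but the two analytic steps (the nearest-point displacement estimate and the treatment of boundary points outside the limit set) must be proved, and the fallback you propose instead is circular.
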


\begin{proof}
Let $\mathcal{D}$ be a compact fundamental domain for the action of $\Gamma$ on $\Ccore_{\Omega}(\Gamma)$.
Let $\mathcal{D}'$ be the closure in~$\overline{\Omega}$ of the set of points $x\in\Omega$ for which there exists $y\in\mathcal{D}$ with $d_{\Omega}(x,\Ccore_{\Omega}(\Gamma)) = d_{\Omega}(x,y)$.
Then $\mathcal{D}'$ is a compact subset of~$\overline{\Omega}$ and $\bigcup_{\gamma\in\Gamma} \gamma\cdot\mathcal{D}'$ contains~$\Omega$.

Let us check that $\mathcal{D}' \cap \Lambdao_{\Omega}(\Gamma) = \varnothing$.
Suppose by contradiction that there is a point $x_{\infty} \in \mathcal{D}' \cap \Lambdao_{\Omega}(\Gamma)$.
We can write $x_{\infty}$ as the limit of some sequence $(x_n)_{n\in\NN}$ of points of $\mathcal{D}' \cap \Omega$.
By definition of~$\mathcal{D}'$, for each~$n$ there exists $y_n \in \mathcal{D}$ such that $d_{\Omega}(x_n,\Ccore_{\Omega}(\Gamma)) = d_{\Omega}(x_n,y_n)$.
Up to passing to a subsequence, we may assume that $(y_n)_{n\in\NN}$ converges to some $y_{\infty} \in \mathcal{D} \subset \Ccore_{\Omega}(\Gamma)$.
The ray $[y_\infty, x_\infty)$ is contained in $\Ccore_{\Omega}(\Gamma)$.
Choose a point $w \in (y_{\infty}, x_{\infty})$.
We claim that
$$\varepsilon_n := |d_{\Omega}(x_n, y_n) - d_\Omega(x_n, w) - d_\Omega(w,y_n)| \longrightarrow 0.$$
Indeed, consider a projective hyperplane intersecting the ray $[y_\infty, x_\infty)$ transversely at~$w$.
For sufficiently large~$n$, it intersects the segment $[x_n,y_n]$ in a point~$w_n$, with $w_n \to w$.
Since $d_{\Omega}(x_n, y_n) = d_\Omega(x_n, w_n) + d_\Omega(w_n,y_n)$ and $d_{\Omega}(w_n,w) \to 0$, the triangle inequality gives $\varepsilon_n \to 0$.
We then have
$$d_{\Omega}(x_n,y_n) - d_{\Omega}(x_n, w) \geq d_{\Omega}(w,y_n) - \varepsilon_n \longrightarrow d_{\Omega}(w,y) > 0.$$
Thus $w$ is a point of $\Ccore_{\Omega}(\Gamma)$ which, for sufficiently large~$n$, is closer to $x_n$ than~$y_n$.
This contradicts the definition of~$y_n$.
Thus $\mathcal{D}' \cap \Lambdao_{\Omega}(\Gamma) = \varnothing$.

Finally, let us show that $\bigcup_{\gamma\in\Gamma} \gamma\cdot\mathcal{D}'$ contains (hence is equal to) $\overline{\Omega} \smallsetminus \Lambdao_{\Omega}(\Gamma)$.
Consider a point $x_{\infty} \in \partial \Omega \smallsetminus \Lambdao_{\Omega}(\Gamma)$.
We can write it as the limit of some sequence $(x_n)_{n\in\NN}$ of points of~$\Omega$.
For each~$n$, consider $y_n \in \Ccore_{\Omega}(\Gamma)$ such that $d_{\Omega}(x_n,\Ccore_{\Omega}(\Gamma)) = d_{\Omega}(x_n, y_n)$.
Up to passing to a subsequence, we may assume that $(y_n)_{n\in\NN}$ converges to some $y_{\infty} \in \overline{\Ccore_{\Omega}(\Gamma)}$.
We claim that $y_{\infty} \in \Ccore_{\Omega}(\Gamma)$.
Indeed, suppose by contradiction that $y_{\infty}$ belongs to $\partiali\Ccore_{\Omega}(\Gamma)$, which is equal to $\Lambdao_{\Omega}(\Gamma)$ by Fact~\ref{fact:ideal-bound-cc}.
Since $x_{\infty} \in \partial \Omega \smallsetminus \Lambdao_{\Omega}(\Gamma)$ and since $\Lambdao_{\Omega}(\Gamma)$ is a union of open faces of $\partial \Omega$ (Fact~\ref{fact:cc-strata}), the interval $(x_{\infty},y_{\infty})$ is contained in~$\Omega$.
Choose a point $w \in (x_{\infty},y_{\infty})$.
As above, we have
$$\varepsilon_n := |d_{\Omega}(x_n, y_n) - d_\Omega(x_n, w) - d_\Omega(w,y_n)| \longrightarrow 0.$$
Consider $p \in \Ccore_{\Omega}(\Gamma)$ such that $d_\Omega(w, \Ccore_{\Omega}(\Gamma)) = d_\Omega(w, p)$.
We have
$$d_\Omega(x_n, y_n) -  d_\Omega(x_n,  p) \geq d_\Omega(w,y_n) - \varepsilon_n - d_\Omega(w,  p) \longrightarrow +\infty. $$
Thus $p$ is a point of $\Ccore_{\Omega}(\Gamma)$ which, for sufficiently large~$n$, is closer to $x_n$ than~$y_n$.
This contradicts the definition of~$y_n$.
Thus it is not the case that $y_{\infty} \in \Lambdao_{\Omega}(\Gamma)$, and so we must have $y_{\infty} \in \Ccore_\Omega(\Gamma)$, proving the claim.
Since $y_n \to y_{\infty}$, up to passing to a subsequence we may assume that there exists $\gamma \in \Gamma$ such that $y_n \in \gamma \cdot \mathcal{D}$ for all~$n$.
We then have $x_n \in \gamma \cdot \mathcal{D}'$ for all~$n$, hence $x_{\infty} \in \gamma \cdot \mathcal{D}'$.
\end{proof}

\begin{lemma} \label{lem:enter-nbhd}
Let $\Gamma$ be an infinite discrete subgroup of $\PGL(V)$ acting convex cocompactly on some properly convex open subset $\Omega$ of $\PP(V)$.
For any $x\in\Omega$ and any $\xi\in\Lambdao_{\Omega}(\Gamma)$, the ray $[x,\xi)$ eventually enters any uniform neighborhood of $\Ccore_{\Omega}(\Gamma)$ in $(\Omega,d_{\Omega})$.
\end{lemma}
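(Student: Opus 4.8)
The plan is to prove the stronger statement that $d_{\Omega}(x_t,\Ccore_{\Omega}(\Gamma)) \to 0$ as $t\to+\infty$, where $(x_t)_{t\geq 0}$ is the arclength parametrization of the ray $[x,\xi)$ for $d_{\Omega}$ (so that $x_t\to\xi$ in $\PP(V)$). Write $\Ccore := \Ccore_{\Omega}(\Gamma)$ and $\Lambdao := \Lambdao_{\Omega}(\Gamma)$. I would argue by contradiction: assume there are $\varepsilon>0$ and $t_n\to+\infty$ with $\delta_n := d_{\Omega}(x_{t_n},\Ccore) > \varepsilon$ for all $n$. Since $x_{t_n}\to\xi\in\Lambdao$, Lemma~\ref{lem:even-more-pure} supplies elements $\gamma_n\in\Gamma$ such that, after extraction, $\gamma_n\cdot x_{t_n}\to z$ for some $z\in\overline{\Omega}\smallsetminus\Lambdao$; extracting further, I may also assume $\gamma_n\cdot x\to w$ and $\gamma_n\cdot\xi\to\eta$ in $\overline{\Omega}$.

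The heart of the argument is to locate $z$. If the $\gamma_n$ did not go to infinity in $\Gamma$, then (the group being discrete) a subsequence would be constant equal to some $\gamma$, forcing $z=\gamma\cdot\xi\in\Lambdao$, which is absurd; hence $\gamma_n\to\infty$. As $\Gamma$ acts properly discontinuously on $\Omega$, the point $w$ is then an accumulation point of the orbit $\Gamma\cdot x$, so $w\in\Lambdao$; and $\eta\in\overline{\Lambdao}$ as a limit of points $\gamma_n\cdot\xi\in\Lambdao$. Now $x_{t_n}$ lies on the projective segment $[x,\xi]$, so $\gamma_n\cdot x_{t_n}$ lies on $[\gamma_n\cdot x,\gamma_n\cdot\xi]$ (an unambiguous segment, since everything sits inside an affine chart containing $\overline{\Omega}$); passing to the limit gives $z\in[w,\eta]$. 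Since $\Lambdao\subset\partiali\Ccore\subset\overline{\Ccore}$, the closed convex set $\overline{\Ccore}$ contains $w$ and $\eta$, hence the whole segment $[w,\eta]$, and therefore $z\in\overline{\Ccore}$.

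To conclude I would split into two cases. If $z\in\Omega$, then $z\in\overline{\Ccore}\cap\Omega=\Ccore$ because $\Ccore$ is closed in $\Omega$; but each $\gamma_n$ acts by an isometry of $(\Omega,d_{\Omega})$ preserving $\Ccore$, so $d_{\Omega}(\gamma_n\cdot x_{t_n},\Ccore)=\delta_n>\varepsilon$, and letting $n\to\infty$ (using that $y\mapsto d_{\Omega}(y,\Ccore)$ is $1$-Lipschitz, hence continuous, on $\Omega$) yields $d_{\Omega}(z,\Ccore)\geq\varepsilon$, contradicting $z\in\Ccore$. If $z\in\partial\Omega$, then $z\in\overline{\Ccore}\smallsetminus\Ccore=\partiali\Ccore$, which equals $\Lambdao$ by Fact~\ref{fact:ideal-bound-cc} applied to the convex core (on which $\Gamma$ acts cocompactly), contradicting $z\notin\Lambdao$. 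Either way we reach a contradiction, proving $d_{\Omega}(x_t,\Ccore)\to 0$, and in particular the ray eventually enters every uniform neighborhood of $\Ccore_{\Omega}(\Gamma)$.

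The crux is the normalization step: a single geodesic ray could in principle spiral toward $\xi$ while drifting away from the convex core, and only the group action rules this out, so Lemma~\ref{lem:even-more-pure} is exactly what pulls the far-out points $x_{t_n}$ back into a controlled region and breaks the apparent circularity. After that the reasoning is essentially bookkeeping; the only mildly delicate point is the dichotomy $z\in\Omega$ versus $z\in\partial\Omega$, together with the identity $\partiali\Ccore=\Lambdao$ used in the boundary case.
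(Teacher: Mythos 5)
Your proof is correct, and it takes a genuinely different route from the paper's. The paper argues directly, with no contradiction: it first observes that any two rays with the same endpoint $\xi$ stay at bounded Hilbert distance, so $[x,\xi)$ lies in some uniform $R$-neighborhood $\C_R$ of $\Ccore_{\Omega}(\Gamma)$ on which $\Gamma$ acts cocompactly; it then translates the points $x_n\to\xi$ into a compact fundamental domain of $\C_R$, so the renormalized limit $y$ is automatically an \emph{interior} point lying on the open segment $(\xi_\infty,x_\infty)$ between two points of $\Lambdao_{\Omega}(\Gamma)$, hence in $\Ccore_{\Omega}(\Gamma)$, and concludes $d_{\Omega}(x_n,\Ccore_{\Omega}(\Gamma))\le d_{\Omega}(\gamma_n\cdot x_n,y)\to 0$. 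You instead outsource the normalization to Lemma~\ref{lem:even-more-pure} (whose proof, note, does not use the present lemma, so there is no circularity), which only guarantees that the limit $z$ avoids $\Lambdao_{\Omega}(\Gamma)$ and may a priori lie on $\partial\Omega$; this forces your contradiction structure and the interior/boundary dichotomy, with Fact~\ref{fact:ideal-bound-cc} killing the boundary case. What the paper's mechanism buys is self-containedness (it does not rely on the comparatively heavy Lemma~\ref{lem:even-more-pure}) and a limit point landing in $\Omega$ for free; what yours buys is that, once Lemma~\ref{lem:even-more-pure} is accepted, only soft convexity and limit-set facts remain. A small streamlining of your argument: since $\partiali\Ccore_{\Omega}(\Gamma)=\Lambdao_{\Omega}(\Gamma)$ by Fact~\ref{fact:ideal-bound-cc}, one has $\overline{\Ccore_{\Omega}(\Gamma)}\smallsetminus\Lambdao_{\Omega}(\Gamma)=\Ccore_{\Omega}(\Gamma)$, so $z\in[w,\eta]\smallsetminus\Lambdao_{\Omega}(\Gamma)$ lies in $\Ccore_{\Omega}(\Gamma)\subset\Omega$ at once and the case split collapses to your interior contradiction. (Also, in the sentence locating $w$, the role of proper discontinuity is really to force $w\in\partial\Omega$, after which the definition of the full orbital limit set gives $w\in\Lambdao_{\Omega}(\Gamma)$; your phrasing slightly conflates these two steps, but the reasoning is sound.)
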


\begin{proof}
For $x\in\Omega$ and $\xi\in\Lambdao_{\Omega}(\Gamma)$, consider a sequence $(x_n)_{n\in\NN}$ of points on the ray $[x,\xi)$, with $x_n \to \xi$.
Let us check that $d_{\Omega}(x_n,\Ccore_{\Omega}(\Gamma)) \to 0$.

It follows from the definition \eqref{eqn:d-Omega} of the Hilbert metric that the ray $[x,\xi)$ remains at bounded distance in $(\Omega,d_{\Omega})$ from any other ray with endpoint~$\xi$.
In particular, there exists $R>0$ such that $[x,\xi)$ is contained in the uniform $R$-neighborhood $\C_R$ of $\Ccore_{\Omega}(\Gamma)$ in $(\Omega,d_{\Omega})$.
Since the action of $\Gamma$ on $\Ccore_{\Omega}(\Gamma)$ is cocompact by assumption, so is the action of $\Gamma$ on~$\C_R$.
Let $\mathcal{D}_R$ be a compact fundamental domain for the action of $\Gamma$ on~$\C_R$.
For any $n\in\NN$, there exists $\gamma_n\in\Gamma$ such that $\gamma_n\cdot x_n \in \mathcal{D}_R$.
Up to passing to a subsequence, we may assume that $(\gamma_n\cdot\xi)$, $(\gamma_n\cdot x_n)$, and $(\gamma_n\cdot x)$ converge respectively to some $\xi_{\infty} \in \Lambdao_{\Omega}(\Gamma)$, some $y \in \mathcal{D}_R$, and some $x_{\infty} \in \Lambdao_{\Omega}(\Gamma)$.
We have $y \in (\xi_{\infty}, x_{\infty})$, hence $y \in \Ccore_{\Omega}(\Gamma)$.
By $\Gamma$-invariance of the Hilbert metric, we obtain $d_{\Omega}(x_n, \Ccore_{\Omega}(\Gamma)) = d_{\Omega}(\gamma_n\cdot x_n, \Ccore_{\Omega}(\Gamma)) \leq d_{\Omega}(\gamma_n\cdot x_n, y) \to 0$.
\end{proof}

\begin{fact}[{\cite[Lem.\,9.2]{dgk-proj-cc}}] \label{fact:strict-C1-nbhd}
Let $\Gamma$ be an infinite discrete subgroup of $\PGL(V)$ acting convex cocompactly on some properly convex open subset $\Omega$ of $\PP(V)$.
Let $\C_{\mathsf{unif}}$ be a uniform neighborhood of $\Ccore_{\Omega}(\Gamma)$ in $(\Omega,d_{\Omega})$.
Then the convex core $\Ccore_{\Omega}(\Gamma)$ admits a $\Gamma$-invariant, properly convex, closed neighborhood $\C\subset\C_{\mathsf{unif}}$ in~$\Omega$ which has $C^1$, strictly convex nonideal boundary.
\end{fact}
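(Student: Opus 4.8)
The plan is to first enlarge $\Ccore_\Omega(\Gamma)$ to a uniform neighborhood --- which is already well behaved by Fact~\ref{fact:cc-nbhd} --- and then to round off its nonideal boundary in a $\Gamma$-equivariant way. Write $\Ccore := \Ccore_\Omega(\Gamma)$, and for $t>0$ let $\C_t\subset\Omega$ denote the closed uniform $t$-neighborhood of $\Ccore$ in $(\Omega,d_\Omega)$. Uniform neighborhoods of convex sets are convex for the Hilbert metric, so $\C_t$ is a $\Gamma$-invariant convex set, closed in $\Omega$, on which $\Gamma$ acts cocompactly (as it does on $\Ccore$), and Fact~\ref{fact:cc-nbhd} shows $\C_t$ is properly convex with bisaturated boundary; moreover $\partiali\C_t = \Lambdao_\Omega(\Gamma)$ by Fact~\ref{fact:ideal-bound-cc}, and $\partialn\C_t = \{x\in\Omega : d_\Omega(x,\Ccore)=t\}$. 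First I would fix $\varepsilon>0$ small enough that $\C_{3\varepsilon}\subset\C_{\mathsf{unif}}$, and record that the ``annulus'' $\mathcal A := \C_{2\varepsilon}\smallsetminus\Int{\C_\varepsilon} = \{x\in\Omega : \varepsilon\le d_\Omega(x,\Ccore)\le 2\varepsilon\}$ is a closed $\Gamma$-invariant subset of the $\Gamma$-cocompact set $\C_{2\varepsilon}$, hence itself $\Gamma$-cocompact, and lies in $\Omega$ (so is disjoint from $\Lambdao_\Omega(\Gamma)$).

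The reduction I would then use is: it suffices to produce a $\Gamma$-invariant convex set $\C$, closed in $\Omega$, with $\C_\varepsilon\subseteq\Int{\C}\subseteq\C\subseteq\C_{2\varepsilon}$, whose nonideal boundary $\partialn\C$ is a $C^\infty$ hypersurface of $\Omega$ with second fundamental form (towards $\C$) everywhere positive definite. Such a $\C$ is properly convex (a convex subset of $\Omega$), is a neighborhood of $\Ccore$, is contained in $\C_{3\varepsilon}\subset\C_{\mathsf{unif}}$, is closed in $\Omega$ (its frontier in $\Omega$ being the properly embedded hypersurface $\partialn\C$), and satisfies $\partiali\C = \Lambdao_\Omega(\Gamma)$ by Fact~\ref{fact:ideal-bound-cc}. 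Furthermore, at a point $x\in\partialn\C$ the only supporting hyperplane of $\C$ is the tangent hyperplane of $\partialn\C$ at $x$, so $x$ is $C^1$ in $\overline\C$; and positive definiteness of the second fundamental form forbids any nontrivial segment of $\overline\C$ passing through $x$, so $x$ is extremal in $\overline\C$. Hence $\partialn\C$ is both $C^1$ and strictly convex, which is exactly the desired conclusion. To obtain such a $\C$, I would start from the intermediate body $\C_{3\varepsilon/2}$, whose nonideal boundary $\Sigma_0 = \{x\in\Omega : d_\Omega(x,\Ccore) = 3\varepsilon/2\}$ is a $\Gamma$-cocompact level hypersurface sitting in $\Int{\mathcal A}$, and round $\Sigma_0$ to a smooth strictly convex $\Gamma$-invariant hypersurface still contained in $\Int{\mathcal A}$ that bounds, on the $\Ccore$-side, a convex set $\C$ with $\C_\varepsilon\subseteq\Int{\C}\subseteq\C\subseteq\C_{2\varepsilon}$.

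The heart of the proof, and the step I expect to be the main obstacle, is performing this rounding $\Gamma$-equivariantly. An off-the-shelf smoothing of the convex body $\C_{3\varepsilon/2}$ is not $\Gamma$-invariant, while taking intersections (respectively convex hulls) over the $\Gamma$-orbit of a smooth strictly convex body reintroduces corners (respectively flat faces). What makes the equivariant smoothing possible is that the metric $d_\Omega$ in which we work is itself $\Gamma$-invariant and that the region $\mathcal A$ where any modification takes place is $\Gamma$-cocompact and disjoint from $\Lambdao_\Omega(\Gamma)$: one can therefore cover $\mathcal A$ by a locally finite $\Gamma$-equivariant family of affine charts, use a $\Gamma$-invariant partition of unity subordinate to it to mollify $\Sigma_0$, and add a small equivariant perturbation to make the second fundamental form positive definite, all corrections kept small enough that the enclosed convex set $\C$ still satisfies $\C_\varepsilon\subseteq\Int{\C}\subseteq\C\subseteq\C_{2\varepsilon}$. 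The delicate point, and where the argument must be carried out with care, is to keep the enclosed set genuinely convex throughout this equivariant smoothing. (Alternatively one could bring in the duality for convex sets with bisaturated boundary from \cite{dgk-proj-cc}, under which strict convexity and the $C^1$ property of the nonideal boundary are exchanged, to reduce to producing merely a strictly convex nonideal boundary inside a uniform neighborhood; but a rounding step of the above kind still seems unavoidable.)
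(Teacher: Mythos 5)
This statement is not proved in the paper at all: it is imported as a black box from \cite[Lem.~9.2]{dgk-proj-cc}, so there is no internal argument to compare yours against; I can only judge your proposal on its own terms. The soft part of your plan is fine: the uniform neighborhoods $\C_t$ are $\Gamma$-invariant, convex, closed in $\Omega$ and $\Gamma$-cocompact, the squeeze $\C_\varepsilon\subseteq\Int{\C}\subseteq\C\subseteq\C_{2\varepsilon}\subset\C_{\mathsf{unif}}$ is the right reduction, and your verification that a smooth, locally strictly convex (positive definite second fundamental form) nonideal boundary yields both the $C^1$ and the extremality conditions of Definition~\ref{def:boundaries} is correct.

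The genuine gap is that the step you yourself flag as ``the main obstacle'' is exactly the content of the lemma, and your sketch does not supply a mechanism that actually works. Patching chartwise mollifications of $\Sigma_0$ with a $\Gamma$-invariant partition of unity does not preserve convexity: a variable-weight combination $\sum_i\theta_i u_i$ of convex local defining functions is in general not convex, since the Hessian acquires terms involving $D\theta_i$ and $D^2\theta_i$; so the enclosed set produced by your procedure need not be convex, and nothing in the proposal controls this. Likewise, ``add a small equivariant perturbation to make the second fundamental form positive definite'' is not available as stated: the level hypersurface $\Sigma_0=\partialn\C_{3\varepsilon/2}$ genuinely contains flat pieces (inherited from faces of $\Ccore_\Omega(\Gamma)$ and of $\partial\Omega$ when $\Omega$ is not strictly convex), and no $C^2$-small perturbation makes a flat piece strictly convex unless the perturbation is itself built from globally defined $\Gamma$-invariant convex data --- which is precisely the object one is trying to construct. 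So the decisive ingredient, an equivariant convexity-preserving smoothing/strictification (in \cite{dgk-proj-cc} this is handled by a dedicated construction, not by local mollification), is asserted rather than proved; as written, the argument would not go through without importing essentially the full strength of the cited Lemma~9.2.
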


\begin{lemma} \label{lem:Omega-max}
Let $\Gamma$ be an infinite discrete subgroup of $\PGL(V)$ acting convex cocompactly on some properly convex open subset $\Omega$ of $\PP(V)$.
Then $\Omega$ is contained in
$$\mathcal{U} := \PP(V) \smallsetminus \bigcup_{X \in \Lambdao_{\Omega^*}(\Gamma)} X,$$
where we see each $X \in \Lambdao_{\Omega^*}(\Gamma) \subset \PP(V^*)$ as a projective hyperplane in $\PP(V)$.
Moreover, the connected component $\Omega^{\max}$ of $\mathcal{U}$ containing~$\Omega$ is a $\Gamma$-invariant convex open subset of $\PP(V)$, and any $\Gamma$-invariant properly convex open subset of $\PP(V)$ intersecting~$\Omega$ is contained in $\Omega^{\max}$.
\end{lemma}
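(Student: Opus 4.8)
The plan is to argue in four steps, the first three being essentially formal and the last requiring real work. \textbf{Step 1 ($\Omega\subseteq\mathcal{U}$ and nonemptiness).} The topological boundary $\partial(\Omega^*)=\overline{\Omega^*}\smallsetminus\Omega^*$ is precisely the set of supporting hyperplanes of $\Omega$ (a hyperplane missing the open convex set $\Omega$ is a limit of hyperplanes missing $\overline{\Omega}$), so every $X\in\Lambdao_{\Omega^*}(\Gamma)\subseteq\partial(\Omega^*)$ satisfies $X\cap\Omega=\varnothing$; hence $\Omega\cap\bigcup_{X}X=\varnothing$, i.e.\ $\Omega\subseteq\mathcal{U}$. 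I would also record that $\Lambdao_{\Omega^*}(\Gamma)\neq\varnothing$: the group $\Gamma$ is infinite and discrete in $\mathrm{Aut}(\Omega)=\mathrm{Aut}(\Omega^*)$, hence acts properly discontinuously on the nonempty set $\Omega^*$, so any orbit accumulates on $\partial(\Omega^*)$. \textbf{Step 2 ($\mathcal{U}$ is open and $\Gamma$-invariant).} Invariance holds because $\Lambdao_{\Omega^*}(\Gamma)$ is $\Gamma$-invariant. Openness follows from the fact that $\Lambdao_{\Omega^*}(\Gamma)$ is closed (full orbital limit sets of discrete groups are closed, cf.~\cite{dgk-proj-cc}; alternatively, after the reduction of Step 4 one gets this from Fact~\ref{fact:bisat-cc} and Fact~\ref{fact:ideal-bound-cc}), so that $\bigcup_{X\in\Lambdao_{\Omega^*}(\Gamma)}X$ is the image in $\PP(V)$ of a closed, hence compact, subset of the incidence variety $\{(x,X):x\in X\}\subseteq\PP(V)\times\PP(V^*)$, hence closed.

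\textbf{Step 3 ($\Omega^{\max}$ is a $\Gamma$-invariant convex open set).} Since $\Omega$ is connected and $\Omega\subseteq\mathcal{U}$, it lies in a single connected component $\Omega^{\max}$ of $\mathcal{U}$, which is open as a component of an open subset of the locally connected manifold $\PP(V)$. For $\gamma\in\Gamma$, $\gamma\cdot\Omega^{\max}$ is a component of the $\Gamma$-invariant set $\mathcal{U}$ containing $\gamma\cdot\Omega=\Omega$, hence equals $\Omega^{\max}$; thus $\Omega^{\max}$ is $\Gamma$-invariant. For convexity I would fix any $X_0\in\Lambdao_{\Omega^*}(\Gamma)$; then $\mathcal{U}\subseteq\PP(V)\smallsetminus X_0$, so $\Omega^{\max}$ lies in the affine chart $\mathbb{A}:=\PP(V)\smallsetminus X_0$ and is there a connected component of $\mathbb{A}$ minus the family of affine hyperplanes $\{X\cap\mathbb{A}:X\in\Lambdao_{\Omega^*}(\Gamma),\,X\neq X_0\}$; such a component equals the intersection of the open half-spaces containing it, hence is convex.

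\textbf{Step 4 (maximality).} Given a $\Gamma$-invariant properly convex open $\Omega'$ with $\Omega'\cap\Omega\neq\varnothing$, I must show $\Omega'\subseteq\Omega^{\max}$; since $\Omega'$ is connected and meets $\Omega\subseteq\Omega^{\max}$, it suffices to prove $\Omega'\subseteq\mathcal{U}$, i.e.\ $X\cap\Omega'=\varnothing$ for every $X\in\Lambdao_{\Omega^*}(\Gamma)$. My plan: pick $z\in\Omega\cap\Omega'$; since $\Gamma\cdot z\subseteq\Omega'$ and $\Gamma$ acts properly discontinuously on $\Omega'$, base-point independence of the orbital limit set under convex cocompactness shows the orbit $\Gamma\cdot z$ accumulates exactly on $\Lambdao_{\Omega}(\Gamma)$ while lying in $\Omega'$, forcing $\Lambdao_{\Omega}(\Gamma)\subseteq\overline{\Omega'}\smallsetminus\Omega'=\partial\Omega'$ and, via convex hulls, $\Ccore_{\Omega}(\Gamma)\subseteq\Omega'$. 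I would then replace $\Omega$ by $\Int\C$ for $\C$ as in Fact~\ref{fact:strict-C1-nbhd}: this changes neither $\Lambdao_{\Omega}(\Gamma)$ nor $\Ccore_{\Omega}(\Gamma)$, only enlarges $\Lambdao_{\Omega^*}(\Gamma)$ and hence only shrinks $\mathcal{U}$, and by Fact~\ref{fact:bisat-cc} makes $\Gamma$ convex cocompact on both $\Omega$ and $\Omega^*$; then Corollary~\ref{cor:cc-bisat-support} gives $X\cap\overline{\Omega}\subseteq\Lambdao_{\Omega}(\Gamma)\subseteq\partial\Omega'$ for every $X\in\Lambdao_{\Omega^*}(\Gamma)$. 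A separation argument in an affine chart — comparing the $\Omega'$-segment from a point of $\Ccore_{\Omega}(\Gamma)\subseteq\Omega\cap\Omega'$ to a hypothetical $p\in X\cap\Omega'$ with the convex geometry of $\overline{\Omega}$ — then yields $X\cap\Omega'=\varnothing$, so $\Omega'\subseteq\mathcal{U}$ and therefore $\Omega'\subseteq\Omega^{\max}$.

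The genuine obstacle is Step 4: transferring dual-limit-set information from $\Omega$ to $\Omega'$ when only $\Omega'\cap\Omega\neq\varnothing$ is known. The two ingredients making it work are base-point independence of $\Lambdao_{\Omega}(\Gamma)$ (to place $\Ccore_{\Omega}(\Gamma)$ inside $\Omega'$ and push $\Lambdao_{\Omega}(\Gamma)$ onto $\partial\Omega'$) and the reduction to a convex-core neighborhood with bisaturated boundary so that Corollary~\ref{cor:cc-bisat-support} applies. The one delicate point is that in the final planar separation step one must work in an affine chart containing $\overline{\Omega}$ and the disputed point $p$, so that the segments computed inside $\Omega$ and inside $\Omega'$ coincide; this is harmless once $\Ccore_{\Omega}(\Gamma)\subseteq\Omega'$ and $X\cap\overline{\Omega}\subseteq\partial\Omega'$ are in hand, since they supply a common interior point that makes the chart choices compatible.
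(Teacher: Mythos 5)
Your Steps 1--3 are essentially fine (the convexity of a component of the complement of a family of hyperplanes is exactly the "clearly" in the paper), but Step 4 -- which you yourself identify as the genuine obstacle -- rests on a false intermediate claim. The assertion $\Ccore_{\Omega}(\Gamma)\subset\Omega'$ does not follow from $\Omega'\cap\Omega\neq\varnothing$ and is false in general. Take $\Gamma=\langle\mathrm{diag}(4,1,1/4)\rangle\subset\PGL(3,\RR)$, $\Omega=\{[x:y:z]: xz>y^{2}\}$ (a $\Gamma$-invariant conic on which the action is convex cocompact, with $\Lambdao_{\Omega}(\Gamma)=\{[1:0:0],[0:0:1]\}$ and $\Ccore_{\Omega}(\Gamma)=\{[x:0:z]: x,z>0\}$), and $\Omega'=\{[x:y:z]: x,y,z>0\}$: then $\Omega'$ is $\Gamma$-invariant, properly convex, open and meets $\Omega$, yet $\Ccore_{\Omega}(\Gamma)$ lies entirely in $\partial\Omega'$, not in $\Omega'$. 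The flaw in your derivation is that $\Lambdao_{\Omega}(\Gamma)\subset\partial\Omega'$ only places the convex hull inside $\overline{\Omega'}$, and it may sit entirely in the boundary (besides, the arc joining two limit points inside $\overline{\Omega}$ need not a priori coincide with the arc inside $\overline{\Omega'}$, which is the circularity you acknowledge at the end). Since your concluding separation argument is explicitly anchored at a point of $\Ccore_{\Omega}(\Gamma)\subset\Omega\cap\Omega'$, and is in any case only sketched, the maximality statement -- the actual content of the lemma -- is not established. (A smaller wobble: in Step 2, closedness of $\Lambdao_{\Omega^*}(\Gamma)$ is not a general fact for the dual action, which is not known to be convex cocompact at that point; note the paper's Definition~\ref{def:limcore} takes a closure of $\Lambdao$ precisely because of this.)

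For comparison, the paper's proof of the maximality needs neither $\Ccore_{\Omega}(\Gamma)\subset\Omega'$ nor, at this stage, the hypothesis $\Omega'\cap\Omega\neq\varnothing$. Convex cocompactness provides an element of $\Gamma$ proximal in $\PP(V^*)$, so $\Lambda_{\Gamma}^{\PP(V^*)}\neq\varnothing$, and by \cite[Lem.\,4.16]{dgk-proj-cc} the union of hyperplanes defining $\mathcal{U}$ may be taken over the closed set $\Lambda_{\Gamma}^{\PP(V^*)}$ (this also settles openness of $\mathcal{U}$ cleanly). It then suffices to show that for every $\gamma\in\Gamma$ proximal in $\PP(V)$ the repelling hyperplane $X_{\gamma}^{-}$ is a supporting hyperplane of $\Omega'$: pick any supporting hyperplane $X$ of $\Omega'$ not containing $x_{\gamma}^{+}$ (possible because $\Omega'$ is properly convex); then $\gamma^{-n}\cdot X\to X_{\gamma}^{-}$, and since the set of supporting hyperplanes of the $\Gamma$-invariant set $\Omega'$ is closed, $X_{\gamma}^{-}$ supports $\Omega'$; passing to closures gives $\Omega'\subset\mathcal{U}$, and the hypothesis $\Omega'\cap\Omega\neq\varnothing$ is used only to place $\Omega'$ in the component $\Omega^{\max}$. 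Some dynamical input of this kind appears unavoidable; a purely convex-geometric separation from a single common point, as in your sketch, does not suffice.
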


\begin{proof}
We have $\Lambdao_{\Omega^*}(\Gamma) \subset \partial \Omega^*$, hence any $X \in \Lambdao_{\Omega^*}(\Gamma)$ is a supporting hyperplane to $\Omega$ in $\PP(V)$, and $\Omega \subset \mathcal{U}$.
The connected component $\Omega^{\max}$ of $\mathcal{U}$ containing~$\Omega$ is clearly a $\Gamma$-invariant convex open subset of $\PP(V)$.

Since $\Gamma$ acts convex cocompactly on~$\Omega$, it contains an element which is proximal in $\PP(V^*)$ \cite[Prop.\,2.3.15]{bla-PhD}; in other words, the proximal limit set $\Lambda_{\Gamma}^{\PP(V^*)}$ of $\Gamma$ in $\PP(V^*)$ (see Section~\ref{subsec:remind-prox}) is nonempty.
Note that $\Lambda_{\Gamma}^{\PP(V^*)}$ is a closed $\Gamma$-invariant subset of $\Lambdao_{\Omega^*}(\Gamma)$.
By \cite[Lem.\,4.16]{dgk-proj-cc}, we have
$$\mathcal{U} = \PP(V) \smallsetminus \bigcup_{X \in \Lambda_{\Gamma}^{\PP(V^*)}} X.$$

Let us show that any nonempty $\Gamma$-invariant properly convex open subset $\Omega' $ of $\PP(V)$ which intersects $\Omega$ is contained in~$\mathcal{U}$.
For this it is sufficient to show that each $X \in \Lambda_{\Gamma}^{\PP(V^*)}$ is a supporting hyperplane to $\Omega'$ in $\PP(V)$.
Since the set of supporting hyperplanes to~$\Omega'$ is closed in $\PP(V^*)$, it is sufficient to check that for any element $\gamma \in \Gamma$ which is proximal in $\PP(V)$, its repelling hyperplane $X_{\gamma}^-$ (see Section~\ref{subsec:remind-prox}) is a supporting hyperplane to~$\Omega'$ in $\PP(V)$.
For this, consider any supporting hyperplane $X$ to~$\Omega'$ which does not contain the attracting fixed point $x_{\gamma}^+$ of $\gamma$ in $\PP(V)$; such an $X$ exists because $\Omega'$ is properly convex (but this could fail if $\Omega'$ were only convex).
Then $\gamma^{-n} \cdot X \to X_{\gamma}^-$ as $n\to +\infty$, and so $X_{\gamma}^-$ is a supporting hyperplane to~$\Omega'$.
This shows that $\Omega' \subset \mathcal{U}$.

We deduce that any $\Gamma$-invariant properly convex open subset of $\PP(V)$ intersecting~$\Omega$ is contained in $\Omega^{\max}$.
\end{proof}

\subsection{$P_1$-divergence} \label{subsec:P1-div}

We say that a sequence $(g_n)_{n\in\NN}$ of elements of $\PGL(V)$ is \emph{$P_1$-divergent} if the ratio of the first and second singular values of (a lift to $\GL(V)$ of) $g_n$ tends to $+\infty$ as $n\to +\infty$.
Here is an easy consequence of the Cartan decomposition (see \eg \cite[Lem.\,7.5]{dgk-proj-cc} and \cite[Lem.\,4.5.16]{kas-notes}).

\begin{fact} \label{fact:P1-div}
For a sequence $(g_n)_{n\in\NN}$ of elements of $\PGL(V)$, consider the following conditions:
\begin{enumerate}
  \item\label{item:P1-div-1} there exist a point $x^+ \in \PP(V)$ and a projective hyperplane $X^- \subset \PP(V)$ such that\linebreak $g_n \cdot x \to x^+$ for all $x \in \PP(V) \smallsetminus X^-$;
  \item\label{item:P1-div-2} there exist a point $x^+ \in \PP(V)$ and a nonempty open subset $\mathcal{U}$ of $\PP(V)$ such that\linebreak $g_n \cdot x \to x^+$ for all $x \in \mathcal{U}$;
  \item\label{item:P1-div-3} $(g_n)_{n\in\NN}$ is $P_1$-divergent.
\end{enumerate}
Then \eqref{item:P1-div-1}~$\Rightarrow$~\eqref{item:P1-div-2}~$\Rightarrow$~\eqref{item:P1-div-3}.
Moreover, if $(g_n)$ satisfies~\eqref{item:P1-div-3}, then some subsequence satisfies~\eqref{item:P1-div-1}.
\end{fact}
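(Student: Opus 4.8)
The plan is to deduce everything from the Cartan ($KAK$) decomposition in $\PGL(d,\RR)$ together with the compactness of the orthogonal group $\mathrm{O}(d)$. The implication \eqref{item:P1-div-1}~$\Rightarrow$~\eqref{item:P1-div-2} is immediate, taking $\mathcal{U} := \PP(V)\smallsetminus X^-$, so the content lies in the other two assertions.

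For the last assertion, \eqref{item:P1-div-3}~$\Rightarrow$~\eqref{item:P1-div-1} along a subsequence, I would lift each $g_n$ to $\GL(V)$ and write $g_n = k_n a_n \ell_n$ with $k_n,\ell_n \in \mathrm{O}(d)$ and $a_n = \mathrm{diag}\big(\sigma_1(g_n),\dots,\sigma_d(g_n)\big)$, where $\sigma_1(g_n)\geq\cdots\geq\sigma_d(g_n)>0$ are the singular values. After passing to a subsequence, compactness gives $k_n\to k_\infty$ and $\ell_n\to\ell_\infty$ in $\mathrm{O}(d)$. Dividing by $\sigma_1(g_n)$, the diagonal matrix $\bar a_n := \mathrm{diag}\big(1,\sigma_2(g_n)/\sigma_1(g_n),\dots,\sigma_d(g_n)/\sigma_1(g_n)\big)$ has all entries past the first tending to $0$, by $P_1$-divergence, hence converges to the rank-one orthogonal projection $\pi$ onto $\RR e_1$. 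Therefore $\bar g_n := \sigma_1(g_n)^{-1}\,g_n \to g_\infty := k_\infty\,\pi\,\ell_\infty$, which is a linear endomorphism of $V$ of rank one: its kernel is a hyperplane, giving a projective hyperplane $X^-\subset\PP(V)$, and its image is a line, giving a point $x^+\in\PP(V)$. For any $x\in\PP(V)\smallsetminus X^-$ with lift $\tilde x\in V$ we have $g_\infty\tilde x\neq 0$, so $g_n\cdot x = [\bar g_n\tilde x]\to[g_\infty\tilde x] = x^+$, and $x^+$ does not depend on $x$ since $g_\infty$ has rank one. This is precisely \eqref{item:P1-div-1}.

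For \eqref{item:P1-div-2}~$\Rightarrow$~\eqref{item:P1-div-3}, I would argue by contradiction: if $(g_n)$ is not $P_1$-divergent, then along some subsequence $\sigma_2(g_n)/\sigma_1(g_n)\geq c$ for some $c>0$. Normalizing by the operator norm $\|g_n\| = \sigma_1(g_n)$ and passing to a further subsequence, $\bar g_n := \|g_n\|^{-1} g_n \to g_\infty$ with $\|g_\infty\| = 1$ and, by continuity of singular values, $\sigma_2(g_\infty)\geq c>0$; in particular $\operatorname{rank}(g_\infty)\geq 2$. Let $L^+\subset V$ be the line representing the point $x^+$ of \eqref{item:P1-div-2}, and let $C\subset V$ be the open cone over $\mathcal{U}\smallsetminus\PP(\ker g_\infty)$, which is nonempty because $\PP(\ker g_\infty)$ is a proper projective subspace of $\PP(V)$. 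For $\tilde x\in C$ the limit $[g_\infty\tilde x] = \lim_n[\bar g_n\tilde x] = \lim_n g_n\cdot x$ equals $x^+$, so $g_\infty(\tilde x)\in L^+$; as the open cone $C$ spans $V$, this forces $g_\infty(V)\subseteq L^+$ and $\operatorname{rank}(g_\infty)\leq 1$, a contradiction. Hence $(g_n)$ is $P_1$-divergent.

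I do not expect any serious obstacle: each step is a routine unwinding of the Cartan decomposition, and the only point requiring a moment's care is the very last one --- that a sequence failing $P_1$-divergence subconverges, after rescaling, to a linear map of rank at least two, which cannot send a nonempty open subset of $\PP(V)$ to a single point.
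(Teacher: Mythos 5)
Your proof is correct, and it is exactly the standard Cartan-decomposition argument that the paper has in mind: the paper gives no proof of this fact, stating only that it is "an easy consequence of the Cartan decomposition" and citing \cite{dgk-proj-cc} and \cite{kas-notes}, which argue just as you do (rescale by the top singular value, extract limits of the $K$-factors, and exploit that a rank-one limit collapses the complement of a hyperplane to a point while a rank-$\geq 2$ limit cannot collapse a nonempty open set). So you have simply written out the intended proof in full.
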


We say that a subgroup of $\PGL(V)$ is \emph{$P_1$-divergent} if any sequence of pairwise distinct elements of~$\Gamma$ is $P_1$-divergent; in particular, such a subgroup is discrete in $\PGL(V)$.
The following characterization is a complement to Fact~\ref{fact:Anosov}; it immediately follows from \cite{dgk-proj-cc}.

\begin{proposition} \label{prop:Ano-cc-P1-div}
Let $\Gamma$ be an infinite discrete subgroup of $\PGL(V)$ preserving a nonempty properly convex open subset of $\PP(V)$.
Then the following are equivalent:
\begin{enumerate}
  \item\label{item:P1-div-cc} $\Gamma$ is $P_1$-divergent and convex cocompact in $\PP(V)$ (Definition~\ref{def:cc-group}),
  \item\label{item:hyp-cc} $\Gamma$ is Gromov hyperbolic and convex cocompact in $\PP(V)$,
  \item\label{item:Ano} $\Gamma$ is Gromov hyperbolic and the natural inclusion $\Gamma\hookrightarrow\PGL(V)$ is $P_1$-Anosov.
\end{enumerate}
\end{proposition}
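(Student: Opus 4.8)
The plan is to prove Proposition~\ref{prop:Ano-cc-P1-div} by establishing the cycle of implications \eqref{item:P1-div-cc}~$\Rightarrow$~\eqref{item:hyp-cc}~$\Rightarrow$~\eqref{item:Ano}~$\Rightarrow$~\eqref{item:P1-div-cc}, invoking Fact~\ref{fact:Anosov} for the equivalence of \eqref{item:hyp-cc} and \eqref{item:Ano} and reducing everything to a statement about $P_1$-divergence versus convex cocompactness. The implication \eqref{item:hyp-cc}~$\Leftrightarrow$~\eqref{item:Ano} is literally Fact~\ref{fact:Anosov}, so no work is needed there. For \eqref{item:Ano}~$\Rightarrow$~\eqref{item:P1-div-cc}: a $P_1$-Anosov inclusion $\Gamma\hookrightarrow\PGL(V)$ is $P_1$-divergent by the definition of Anosov (the singular value gap grows, indeed linearly in word length); combined with Fact~\ref{fact:Anosov} giving convex cocompactness from \eqref{item:Ano}, this yields \eqref{item:P1-div-cc}. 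So the only implication with genuine content is \eqref{item:P1-div-cc}~$\Rightarrow$~\eqref{item:hyp-cc}, i.e.\ that a $P_1$-divergent group which is convex cocompact in $\PP(V)$ must be Gromov hyperbolic.

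For that implication, the strategy is to use convex cocompactness to exhibit a geometric action and then detect hyperbolicity from the $P_1$-divergence. Let $\Omega$ be a properly convex open set on which $\Gamma$ acts convex cocompactly, and let $\C := \Ccore_\Omega(\Gamma)$, which is closed, convex, $\Gamma$-invariant, with $\Gamma$ acting cocompactly on it. Then $(\C,d_\Omega)$ with the Hilbert metric is a proper geodesic metric space on which $\Gamma$ acts properly discontinuously and cocompactly (the Hilbert metric is $\Gamma$-invariant, cf.\ Section~\ref{subsec:remind-Hilbert}), so by the \v{S}varc--Milnor lemma $\Gamma$ is quasi-isometric to $(\C,d_\Omega)$. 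It therefore suffices to show $(\C,d_\Omega)$ is Gromov hyperbolic. The key point is that $P_1$-divergence of $\Gamma$ translates, via the Cartan decomposition (Fact~\ref{fact:P1-div}), into a uniform control: any sequence $\gamma_n\to\infty$ in $\Gamma$ has a subsequence with an attracting point $x^+\in\PP(V)$ and repelling hyperplane $X^-$ as in Fact~\ref{fact:P1-div}.\eqref{item:P1-div-1}. One then shows that the full orbital limit set $\Lambdao_\Omega(\Gamma)$ consists of extremal, $C^1$ points of $\overline\Omega$, and that $\partial\C$ near $\Lambdao_\Omega(\Gamma)$ is ``uniformly round'' in the appropriate sense; this is precisely the condition (from \cite{dgk-proj-cc}, in the circle of results around Fact~\ref{fact:Anosov}) guaranteeing that the Hilbert metric on $\C$ is Gromov hyperbolic. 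Concretely I would cite the relevant equivalence in \cite{dgk-proj-cc} (the statement that for a convex cocompact $\Gamma$, Gromov hyperbolicity of $\Gamma$ is equivalent to $P_1$-divergence, equivalently to strict convexity and $C^1$-regularity of $\Ccore_\Omega(\Gamma)$ at its ideal boundary) rather than reproving it; indeed the sentence preceding the proposition asserts it ``immediately follows from \cite{dgk-proj-cc}''.

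The main obstacle, then, is bookkeeping: assembling the precise statements from \cite{dgk-proj-cc} that connect $P_1$-divergence, strict convexity/$C^1$-regularity of the convex core's ideal boundary, Gromov hyperbolicity of the Hilbert metric, and the Anosov property, and checking that the hypotheses match (in particular that $\Gamma$ infinite and preserving a properly convex open set lets us pass freely between $\Omega$, $\Ccore_\Omega(\Gamma)$, $\Omega^*$, and uniform neighborhoods, using Facts~\ref{fact:cc-subsets}, \ref{fact:ideal-bound-cc}, \ref{fact:bisat-cc}). No new geometric idea is required beyond what \cite{dgk-proj-cc} and Fact~\ref{fact:Anosov} already provide; the proof is essentially a citation-and-assembly argument, written as the short cycle of implications above, with \eqref{item:P1-div-cc}~$\Rightarrow$~\eqref{item:hyp-cc} discharged by the quoted equivalence from \cite{dgk-proj-cc} and the remaining two implications discharged by Fact~\ref{fact:Anosov} and the definition of Anosov.
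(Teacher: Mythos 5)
Your reduction is the right one: \eqref{item:hyp-cc}~$\Leftrightarrow$~\eqref{item:Ano} is Fact~\ref{fact:Anosov}, \eqref{item:Ano}~$\Rightarrow$~\eqref{item:P1-div-cc} is the standard fact that $P_1$-Anosov implies $P_1$-divergent, and all the content sits in \eqref{item:P1-div-cc}~$\Rightarrow$~\eqref{item:hyp-cc}. But that is exactly the step you do not prove. You write ``one then shows that the full orbital limit set consists of extremal, $C^1$ points'' and then propose to discharge the whole implication by citing ``the statement that for a convex cocompact $\Gamma$, Gromov hyperbolicity of $\Gamma$ is equivalent to $P_1$-divergence'' from \cite{dgk-proj-cc}. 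No such quotable statement is being invoked by the paper: the proposition under discussion is precisely that equivalence (it is introduced as a \emph{complement} to Fact~\ref{fact:Anosov}), so appealing to it is circular. What \cite{dgk-proj-cc} (Th.~1.15) does provide is a different criterion: for a group acting convex cocompactly on~$\Omega$, Gromov hyperbolicity is equivalent to the condition that $\Lambdao_\Omega(\Gamma)$ contains no nontrivial segment, i.e.\ that every open face of $\partial\Omega$ meeting $\Lambdao_\Omega(\Gamma)$ is a singleton. The missing mathematical content is the deduction of this segment-freeness from $P_1$-divergence, and neither your \v{S}varc--Milnor/``uniform roundness'' detour nor the Cartan-decomposition remark actually carries it out.

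The paper fills this gap with a short but genuine argument: given $x\in\Lambdao_\Omega(\Gamma)$ in an open face $F$ of $\partial\Omega$, one writes $x=\lim_n\gamma_n\cdot y$ with $d_\Omega(\gamma_n\cdot y,[y,x))$ bounded (conical approach, \cite[Cor.\,4.10.(2)]{dgk-proj-cc}); by \cite[Lem.\,B.2]{dgk-proj-cc}, for a compact $B\subset\Omega$ with nonempty interior, any Hausdorff accumulation point of $(\gamma_n\cdot B)$ is a compact subset of $F$ with nonempty interior in $F$; but $P_1$-divergence, via Fact~\ref{fact:P1-div}.\eqref{item:P1-div-1}, forces a subsequence of $(\gamma_n)$ to contract all of $\PP(V)\smallsetminus X^-$ to a single point, so that accumulation set cannot have nonempty interior unless $F$ is a singleton. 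Then Fact~\ref{fact:cc-strata} and \cite[Th.\,1.15]{dgk-proj-cc} give Gromov hyperbolicity. (Your stronger claim that all limit points are $C^1$ is not needed for this step and would itself require justification.) Without some version of this argument your proof of \eqref{item:P1-div-cc}~$\Rightarrow$~\eqref{item:hyp-cc} is an assertion, not a proof.
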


\begin{proof}
The equivalence \eqref{item:hyp-cc}~$\Leftrightarrow$~\eqref{item:Ano} is contained in \cite[Th.\,1.15]{dgk-proj-cc}.
Together with the classical fact that $P_1$-Anosov implies $P_1$-divergent (see \eg \cite{ggkw17}), it gives \eqref{item:Ano}~$\Rightarrow$~\eqref{item:P1-div-cc}.

Let us check \eqref{item:P1-div-cc}~$\Rightarrow$~\eqref{item:hyp-cc}.
Suppose that $\Gamma$ is $P_1$-divergent and acts convex cocompactly on some properly convex open subset $\Omega$ of $\PP(V)$.
By Fact~\ref{fact:cc-strata}, if $F$ is an open face of $\partial \Omega$ meeting $\Lambdao_\Omega(\Gamma)$, then $F \subset \Lambdao_\Omega(\Gamma)$.
By \cite[Th.\,1.15]{dgk-proj-cc}, in order to prove that $\Gamma$ is Gromov hyperbolic, it is enough to check that if $F$ is any open face of $\partial \Omega$ containing a point $x \in \Lambdao_\Omega(\Gamma)$, then $F$ is a singleton.
Let us check this.
By \cite[Cor.\,4.10.(2)]{dgk-proj-cc}, we can write $x = \lim_n \gamma_n \cdot y$ for some $(\gamma_n) \in \Gamma^{\NN}$ and $y \in \Omega$ with $(d_{\Omega}(\gamma_n \cdot y, [y,x)))_{n\in\NN}$ bounded.
By \cite[Lem.\,B.2]{dgk-proj-cc}, if $B$ is a compact subset of~$\Omega$ with nonempty interior, then any accumulation point of $(\gamma_n\cdot B)_{n\in\NN}$ (for the Hausdorff topology) is a compact subset of~$F$ with nonempty interior.
Since the sequence $(\gamma_n)_{n\in\NN}$ is $P_1$-divergent, some subsequence satisfies Condition~\eqref{item:P1-div-1} of Fact~\ref{fact:P1-div}.
Therefore the open face~$F$ must be a singleton.
\end{proof}

\subsection{When the ideal boundary of the convex core is a union of faces}

\begin{fact}[{\cite[Lem.\,4.14]{dgk-proj-cc}}] \label{fact:unif-neighb-face}
Let $\Omega$ be a nonempty properly convex open subset of $\PP(V)$ and let $R>0$.
\begin{enumerate}
  \item \label{item:distance-goes-down} Let $(x_n)_{n\in\NN}$ and $(x'_n)_{n\in\NN}$ be sequences of points of~$\Omega$ converging respectively to points $z$ and~$z'$ of $\partial\Omega$.
  If $d_{\Omega}(x_n,x'_n)\leq R$ for all $n\in\NN$, then $z$ and~$z'$ belong to the same open face $F$ of $\partial\Omega$ and $d_F(z,z') \leq R$.
  \item \label{item:R-nbhd-faces} Let $\C$ be a nonempty closed convex subset of~$\Omega$ and let $\C_R$ be the closed uniform $R$-neighborhood of $\C$ in $(\Omega,d_{\Omega})$.
  Then for any open face $F$ of $\partial\Omega$, the set $\partiali \C_R \cap F$ is equal to the closed uniform $R$-neighborhood of $\partiali \C \cap F$ in $(F,d_F)$. 
\end{enumerate}
\end{fact}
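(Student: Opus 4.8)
The plan is to reduce everything to the cross-ratio formula \eqref{eqn:d-Omega} for the Hilbert metric together with the elementary structure of the open faces of $\partial\Omega$, proving \eqref{item:distance-goes-down} first and then bootstrapping \eqref{item:R-nbhd-faces} from it.

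For \eqref{item:distance-goes-down}, I would first show $[z,z']\subset\partial\Omega$: if some $m\in(z,z')$ lay in $\Omega$, pick $m_n\in[x_n,x'_n]$ with the same barycentric weight, so that $m_n\to m$; additivity of $d_{\Omega}$ along projective segments gives $d_{\Omega}(x_n,m_n)\le d_{\Omega}(x_n,x'_n)\le R$, while $d_{\Omega}(x_n,m_n)\to+\infty$ since $x_n\to z\in\partial\Omega$ and $m_n\to m\in\Omega$ --- a contradiction. Thus $[z,z']$ is a genuine segment of $\partial\Omega$ through $z$, so by the definition of open face, $z$ and $z'$ lie in a common open face $F$. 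For the inequality, write the ordered chord $\overline{\ell_n\cap\Omega}=(a_n,x_n,x'_n,b_n)$ of the line $\ell_n$ through $x_n,x'_n$, so that $d_{\Omega}(x_n,x'_n)=\tfrac12\log\cro{a_n}{x_n}{x'_n}{b_n}$, and pass to a subsequence with $a_n\to a$ and $b_n\to b$; the limiting line $\ell$ passes through $z,z'$. The key point is that $\ell\cap\overline{\Omega}=\ell\cap\overline F=[a,b]$: running the blow-up argument above on the point of $[a_n,x_n]$ at fixed $d_{\Omega}$-distance $K$ from $x_n$, for every $K>0$, shows that $[a,b]\subset\partial\Omega$, hence $[a,b]\subset F$, and then the $\Omega$-chord and the $F$-chord of $\ell$ both equal $[a,b]$. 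Continuity of the cross-ratio now gives $d_F(z,z')=\tfrac12\log\cro{a}{z}{z'}{b}=\lim_n d_{\Omega}(x_n,x'_n)\le R$; in fact this yields the slightly stronger statement $d_F(z,z')\le\liminf_n d_{\Omega}(x_n,x'_n)$, which I would record for use in \eqref{item:R-nbhd-faces}.

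For \eqref{item:R-nbhd-faces}, the inclusion $\partiali\C_R\cap F\subseteq\{z\in F: d_F(z,\partiali\C\cap F)\le R\}$ is immediate from \eqref{item:distance-goes-down}: given $z=\lim_n x_n$ with $x_n\in\C_R$, choose $c_n\in\C$ with $d_{\Omega}(x_n,c_n)\le R+1/n$; since $\C\subset\Omega$, no subsequential limit $c$ of $(c_n)$ can lie in $\Omega$ (else $d_{\Omega}(x_n,c_n)\to+\infty$), so $c\in\overline{\C}\cap\partial\Omega=\partiali\C$, and \eqref{item:distance-goes-down} places $c$ in the face $F$ of $z$ with $d_F(z,c)\le\liminf_n d_{\Omega}(x_n,c_n)\le R$.

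The reverse inclusion is the crux. Given $z\in F$ with $d_F(z,\partiali\C\cap F)\le R$, pick $z'\in\partiali\C\cap F$ (nearly) realizing the infimum --- possible since $\overline{\C}\cap F$ is closed in the proper space $(F,d_F)$ --- and $c_n\in\C$ with $c_n\to z'$. The goal is points $x_n\to z$ in $\Omega$ with $d_{\Omega}(x_n,\C)\le d_{\Omega}(x_n,c_n)\to d_F(z',z)\le R$; once this is achieved, a diagonal extraction gives $x_n\to z$ with $d_{\Omega}(x_n,\C)\le R+\varepsilon_n$, $\varepsilon_n\to 0$, and sliding $x_n$ by an $o(1)$ amount in $d_{\Omega}$ toward a nearby point of $\C$ produces $x'_n\in\C_R$ with $d_{\Omega}(x_n,x'_n)\to 0$, whence $x'_n\to z$ by \eqref{item:distance-goes-down}, so $z\in\overline{\C_R}\cap\partial\Omega=\partiali\C_R$. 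To construct the $x_n$: fix an affine chart containing $\overline{\Omega}$, let $m_n$ be the line through $c_n$ parallel there to the line $\ell$ through $z,z'$, set $(a_n,b_n)=m_n\cap\Omega$ (which contains $c_n$), and let $x_n\in(a_n,b_n)$ occupy the same affine ratio on $[a_n,b_n]$ as $z$ does on $[a,b]=\ell\cap\overline{\Omega}$. Then $d_{\Omega}(x_n,c_n)=\tfrac12\log\cro{a_n}{c_n}{x_n}{b_n}\to\tfrac12\log\cro{a}{z'}{z}{b}=d_F(z',z)\le R$ \emph{provided} $[a_n,b_n]\to[a,b]$ and $x_n\to z$. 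This stability of chords --- that lines in a fixed direction through points converging to a relative-interior point $z'$ of the boundary face cut out chords of $\Omega$ converging to the face chord $[a,b]$ --- is a two-dimensional statement about the shape of $\partial\Omega$ transverse to $F$, and is where I expect essentially all of the remaining difficulty to lie.
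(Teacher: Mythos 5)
The paper does not actually prove this statement (it is imported verbatim from \cite[Lem.\,4.14]{dgk-proj-cc}), so your argument has to stand on its own, and as written it has a genuine gap in each part. In \eqref{item:distance-goes-down}, the step ``$[z,z']\subset\partial\Omega$, so by the definition of open face $z$ and $z'$ lie in a common open face'' is not a valid inference: for $\Omega$ a square, $z$ a vertex and $z'$ an interior point of an adjacent edge, $[z,z']\subset\partial\Omega$ while the face of $z$ is $\{z\}$ and the face of $z'$ is the open edge. The same-face conclusion --- the heart of \eqref{item:distance-goes-down} --- must come from the bounded cross-ratio: assuming $z\neq z'$ (the case $z=z'$ being trivial), from $d_\Omega(x_n,x'_n)=\tfrac12\log\cro{a_n}{x_n}{x'_n}{b_n}\le R$ one sees that if $a=\lim a_n$ equalled $z$ (or $b=\lim b_n$ equalled $z'$) the cross-ratios would blow up; hence $a\neq z$ and $b\neq z'$, so $(a,b)$ is an \emph{open} segment of $\partial\Omega$ containing both $z$ and $z'$, which is what places them in one face $F$ and also legitimizes passing to the limit in the cross-ratio (your identity $\tfrac12\log\cro{a}{z}{z'}{b}=\lim_n d_\Omega(x_n,x'_n)$ silently assumes exactly this nondegeneracy). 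Your blow-up argument only yields $[a,b]\subset\partial\Omega$, which does not rule out $a=z$; and the asserted equality $\ell\cap\overline\Omega=\ell\cap\overline F=[a,b]$ is neither needed nor clear (chord endpoints are only semicontinuous) --- what is needed, and what does follow once $z,z'\in(a,b)\subset F$, is $[a,b]\subseteq \ell\cap\overline F$, giving $d_F(z,z')\le\tfrac12\log\cro{a}{z}{z'}{b}\le R$.

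In \eqref{item:R-nbhd-faces} the ``crux'' you flag at the end is not only unproven but false in the generality your construction requires: parallel chords through \emph{arbitrary} points $c_n\to z'$ need not converge to the face chord. For a counterexample, let $\Omega\subset\RR^3$ be the interior of the convex hull of the segment $[-1,1]\times\{(0,0)\}$ and the arc $\{(0,s,s^2):s\in[0,1]\}$; the open segment is an open face, but for $c_n=(0,t_n,t_n^2+t_n^4)\in\Omega$ with $t_n\to0$, a Cauchy--Schwarz estimate on the weights shows the chord through $c_n$ in the direction of the segment has length at most $2t_n^4/(t_n^2+t_n^4)\to0$; these chords collapse to $z'$, so your $x_n$ would converge to $z'$ rather than $z$. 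The missing idea is to use the convexity of $\C$, which your construction never invokes: since $z'\in\partiali\C$, for any $c\in\C$ the segment $[c,z')$ lies in $\C$, so you may take $c_n$ on that segment; then the whole configuration lies in the fixed $2$-plane $Q$ spanned by $c$ and the line $\ell$ through $z,z'$, and inside the planar convex set $\Omega\cap Q$ the chord stability you want is a genuine two-dimensional fact (the slice-endpoint functions in the ``height'' over $\ell$ are convex and concave, so the chords at height $\to0$ converge to $\overline{\Omega\cap Q}\cap\ell$, which contains the $F$-chord of $\ell$). Combined with $d_\Omega\le d_{\Omega\cap Q}$ and the monotonicity of the cross-ratio under enlarging the chord, this yields $\limsup_n d_\Omega(x_n,c_n)\le d_F(z',z)\le R$; after that correction, your forward inclusion and the final sliding/diagonal step are fine, granted the corrected version of \eqref{item:distance-goes-down} with the $\liminf$ refinement.
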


The following easy consequence of Fact~\ref{fact:unif-neighb-face}.\eqref{item:distance-goes-down} will be important in the proof of Theorem~\ref{thm:hopeful} in Section~\ref{sec:virtual}.

\begin{lemma} \label{lem:partiali-Ccore-union-of-faces}
Let $\Gamma$ be a discrete subgroup of $\PGL(V)$ preserving a nonempty properly convex open subset $\Omega$ of $\PP(V)$.
Suppose that $\partiali \Ccore_{\Omega}(\Gamma)$ is a union of open faces of $\partial\Omega$.
Let $\C$ be the closed uniform $R$-neighborhood of $\Ccore_{\Omega}(\Gamma)$ in $(\Omega,d_{\Omega})$, for some $R>0$.
Then $\partiali \C = \partiali \Ccore_{\Omega}(\Gamma)$.
\end{lemma}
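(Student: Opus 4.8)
\textbf{Proof plan for Lemma~\ref{lem:partiali-Ccore-union-of-faces}.}

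The plan is to prove the two inclusions $\partiali \C \subset \partiali \Ccore_{\Omega}(\Gamma)$ and $\partiali \Ccore_{\Omega}(\Gamma) \subset \partiali \C$ separately. The second inclusion is immediate: since $\Ccore_{\Omega}(\Gamma) \subset \C$, we have $\overline{\Ccore_{\Omega}(\Gamma)} \subset \overline{\C}$, and since both sets have the same intersection with $\Omega$ in the relevant sense (the uniform neighborhood is taken inside $\Omega$, so $\C \cap \partial\Omega = \varnothing$ — actually $\C$ is a subset of $\Omega$, so $\partiali\C = \overline{\C}\smallsetminus\C \subset \overline{\C}\cap\partial\Omega$), we get $\partiali \Ccore_{\Omega}(\Gamma) = \overline{\Ccore_{\Omega}(\Gamma)}\cap\partial\Omega \subset \overline{\C}\cap\partial\Omega = \partiali\C$.

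For the harder inclusion $\partiali \C \subset \partiali \Ccore_{\Omega}(\Gamma)$, I would take a point $z \in \partiali \C$ and write it as a limit of a sequence $(x_n)_{n\in\NN}$ of points of~$\C$. By definition of the uniform $R$-neighborhood, for each $n$ there is a point $y_n \in \Ccore_{\Omega}(\Gamma)$ with $d_{\Omega}(x_n, y_n) \leq R$. Up to passing to a subsequence, $(y_n)_{n\in\NN}$ converges to some $z' \in \overline{\Ccore_{\Omega}(\Gamma)}$. If $z' \in \Ccore_{\Omega}(\Gamma) \subset \Omega$, then $z'$ is an interior point of $\Omega$, and since $d_{\Omega}(x_n, y_n) \leq R$ with $y_n \to z'$, the points $x_n$ stay in a fixed compact ball around $z'$ in $(\Omega, d_\Omega)$, contradicting $x_n \to z \in \partial\Omega$. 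Hence $z' \in \partiali \Ccore_{\Omega}(\Gamma)$, which in particular means $z' \in \partial\Omega$. Now apply Fact~\ref{fact:unif-neighb-face}.\eqref{item:distance-goes-down} to the sequences $(x_n)$ and $(y_n)$: since $d_\Omega(x_n, y_n) \leq R$ and both sequences converge to points of $\partial\Omega$ (namely $z$ and $z'$), the points $z$ and $z'$ lie in the same open face $F$ of $\partial\Omega$. By hypothesis $\partiali \Ccore_{\Omega}(\Gamma)$ is a union of open faces, and $z' \in \partiali \Ccore_{\Omega}(\Gamma)$, so the entire face $F$ containing $z'$ is contained in $\partiali \Ccore_{\Omega}(\Gamma)$; since $z \in F$, we conclude $z \in \partiali \Ccore_{\Omega}(\Gamma)$.

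The main obstacle — really the only subtle point — is making sure the dichotomy "$z'$ is interior or $z'$ is ideal" is handled cleanly and that the hypothesis that $\partiali\Ccore_\Omega(\Gamma)$ is a union of \emph{open} faces is used exactly where it is needed, namely to promote membership of the single limit point $z'$ in $\partiali\Ccore_\Omega(\Gamma)$ to membership of the whole face $F$, which then captures $z$. Everything else is a routine application of Fact~\ref{fact:unif-neighb-face}.\eqref{item:distance-goes-down} together with properness of the Hilbert metric (closed balls are compact), so I do not anticipate any genuine difficulty.
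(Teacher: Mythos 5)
Your proof is correct and follows essentially the same route as the paper: the easy inclusion from $\Ccore_{\Omega}(\Gamma)\subset\C$, and for the other inclusion an application of Fact~\ref{fact:unif-neighb-face}.\eqref{item:distance-goes-down} showing each point of $\partiali\C$ lies in the open face of a point of $\partiali\Ccore_{\Omega}(\Gamma)$, then invoking the union-of-open-faces hypothesis. You merely spell out explicitly (via the dichotomy on the limit $z'$ and properness of the Hilbert metric) what the paper leaves implicit when citing that fact.
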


\begin{proof}
The inclusion $\partiali \C \supset \partiali \Ccore_{\Omega}(\Gamma)$ is clear.
On the other hand, by Fact~\ref{fact:unif-neighb-face}.\eqref{item:distance-goes-down} each point of $\partiali \C$ belongs to the open face of some point of $\partiali \Ccore_{\Omega}(\Gamma)$; since by assumption $\partiali \Ccore_{\Omega}(\Gamma)$ is a union of open faces of $\partial\Omega$, we have $\partiali \C \subset \partiali \Ccore_{\Omega}(\Gamma)$.
\end{proof}

\section{Combination theorems for (naively) convex cocompact actions} \label{sec:gog-cc}

In this section we establish a strengthening of Theorem~\ref{thm:tree-with-group-actions} in the context of (naively) convex cocompact actions, namely Theorem~\ref{thm:gog-cc}.
We use it to prove Theorem~\ref{thm:free-product-cc}, and also to show that lattices of $\PGL(V)$ (or more generally subgroups with full proximal limit sets in $\PP(V)$ and $\PP(V^*)$) contain convex cocompact (Anosov) subgroups preserving properly convex sets approaching any given convex shape, with full orbital limit set arbitrarily dense in the boundary (Proposition~\ref{prop:lattice-pingpong-subgroup}).
Finally, we establish Theorem~\ref{thm:naive-cc-not-cc-irred} by giving the explicit Example~\ref{ex:irreducible} of a group acting strongly irreducibly and naively convex cocompactly, but not convex cocompactly, on $\PP(\RR^{d+1})$ for $d\geq 3$.

\subsection{A general combination theorem with (naively) convex cocompact actions}

The following is a strengthening of Theorem~\ref{thm:tree-with-group-actions} in the context of (naively) convex cocompact actions.
Theorem~\ref{thm:amalgam-cc} corresponds to the special case where the graph $\mathsf{Y}$ consists of two vertices related by an edge~$\mathsf{e}$ and $g_{\mathsf{e}}=1$, while Theorem~\ref{thm:HNN-cc} corresponds to the special case where the graph $\mathsf{Y}$ consists of one vertex connected to itself by an edge~$\mathsf{e}$ and $g_\mathsf{e}=t$: see Examples~\ref{ex:gog}.

\begin{theorem} \label{thm:gog-cc}
Let $(\boldsymbol{\Gamma}, \mathsf{Y})$ be a graph of groups, where $\mathsf{Y} = (\mathsf{V}, \mathsf{E})$, and let $\mathsf v_0 \in \mathsf{V}$ be a base vertex.
Suppose $\mathsf{E} \neq \varnothing$.
For $n\geq 3$, consider discrete and faithful representations $\rho_{\mathsf v}: \Gamma_{\mathsf v} \to \PGL(d,\RR)=:G$ for each $\mathsf v \in \mathsf{V}$, and elements $g_{\mathsf e} \in \PGL(d,\RR)$ for each $\mathsf e \in \vec{\mathsf{E}}$, such that 
\begin{itemize}
  \item $g_{\mathsf e} = g_{\overline{\mathsf e}}^{-1}$ for all $\mathsf e \in \vec{\mathsf{E}}$, and
  \item $\rho_{\mathsf{o}(\mathsf e)}(\gamma^{\mathsf e}) = g_{\mathsf e} \, \rho_{\mathsf{t}(\mathsf e)}(\gamma^{\overline{\mathsf e}}) \, g_{\mathsf{e}}^{-1}$ for all $\mathsf e \in \vec{\mathsf{E}}$ and all $\gamma \in \Gamma_{|\mathsf e|}$.
\end{itemize}
Let $\underline{\rho} : F(\boldsymbol{\Gamma}, \mathsf{Y}) \to \PGL(d,\RR)$ and $\rho : \pi_1(\boldsymbol{\Gamma},\mathsf{Y},\mathsf v_0) \to \PGL(d,\RR)$ be the associated representations as in \eqref{eqn:rhobar}--\eqref{eqn:rhonobar}.

For each $\mathsf{v} \in \mathsf{V}$, let $\C_\mathsf{v} \subset \Omega_\mathsf{v}$ be two $\rho_\mathsf{v}(\Gamma_\mathsf{v})$-invariant nonempty properly convex subsets of $\PP(\RR^d)$ with $\Omega_\mathsf{v}$ open in $\PP(\RR^d)$ and $\C_\mathsf{v}$ closed in $\Omega_\mathsf{v}$. 
Suppose that for all oriented edges $\mathsf e, \mathsf e' \in \vec{\mathsf{E}}$ with $\mathsf{o}(\mathsf e) = \mathsf{o}(\mathsf e') =: \mathsf v$, and for each $\gamma \in \Gamma_{\mathsf v}$ such that either $\mathsf e\neq \mathsf e'$, or $\mathsf e= \mathsf e'$ and $\gamma \notin \Gamma_{|\mathsf{e}|}$, the triples
$$\big (g_\mathsf{e} \cdot \Omega_{\mathsf{t}(\mathsf{e})}, \Omega_\mathsf{v}, \rho_\mathsf{v}(\gamma) g_{\mathsf{e}'} \cdot \Omega_{\mathsf{t}(\mathsf{e}')} \big ) \quad \text{and} \quad \big ( g_\mathsf{e} \cdot \Int{\C_{\mathsf{t}(\mathsf{e})}}, \Int{\C_\mathsf{v}}, \rho_\mathsf{v}(\gamma) g_{\mathsf{e}'} \cdot \Int{\C_{\mathsf{t}(\mathsf{e}')}} \big) $$
are in occultation position.
Then: 
\begin{enumerate}[leftmargin=0.5cm]
  \item \label{item:loong1} The representation $\rho: \pi_1(\boldsymbol{\Gamma}, \mathsf{Y}, \mathsf{v}_0) \to \PGL(d,\RR)$ is discrete and faithful and preserves the nonempty properly convex open set
$$\Omega := \bigcup_{\substack{[\mathsf{c},\mu] \, \Gamma_{\mathsf{v}} \in \mathscr{T}(\boldsymbol{\Gamma}, \mathsf{Y}, \mathsf{v}_0)\\ \mathsf{e} \in \vec{\mathsf{E}} \text{ with $\mathsf{o}(\mathsf{e}) = \mathsf{v}$}}} \underline{\rho}([\mathsf{c},\mu]) \cdot \Omega_\mathsf{e} \quad \text{where} \quad \Omega_\mathsf{e}:=\Conv{ \Omega_{\mathsf{o}(\mathsf{e})} \cup g_\mathsf{e} \cdot \Omega_{\mathsf{t}(\mathsf{e})}},$$
and its nonempty properly convex subset
$$\C := \bigcup_{\substack{[\mathsf{c},\mu] \, \Gamma_{\mathsf{v}} \in \mathscr{T}(\boldsymbol{\Gamma},\mathsf{Y}, \mathsf{v}_0)\\ \mathsf{e} \in \vec{\mathsf{E}} \text{ with $\mathsf{o}(\mathsf{e}) = \mathsf{v}$}}} \underline{\rho} ([\mathsf{c},\mu]) \cdot \C_\mathsf{e} \quad \text{where} \quad \C_\mathsf{e} :=\overline{\Conv{ \C_{\mathsf{o}(\mathsf{e})} \cup g_\mathsf{e} \cdot \C_{\mathsf{t}(\mathsf{e})}}} \cap \Omega_\mathsf{e},$$
which is closed in~$\Omega$.
  \item \label{item:loong2} Suppose moreover that the graph $\mathsf{Y} = (\mathsf{V}, \mathsf{E})$ is finite and that for each $\mathsf{v} \in \mathsf{V}$ the action of $\Gamma_{\mathsf{v}}$ on $\C_{\mathsf{v}}$ via $\rho_{\mathsf{v}}$ is cocompact, and for each $\mathsf{e} \in \vec{\mathsf{E}}$ the action of $\Gamma_{\mathsf{e}}$ on
$$\overline{\C_{\mathsf{e}} \smallsetminus \big( \C_{\mathsf{o}(\mathsf{e})} \cup g_{\mathsf{e}} \cdot \C_{\mathsf{t}(\mathsf{e})}\big)} \cap \Omega_{\mathsf{e}}$$
via $\rho_{\mathsf{o}(\mathsf{e})} \circ \iota_{\mathsf{e}}$ is cocompact.
Then the action of $\pi_1(\boldsymbol{\Gamma},\mathsf{Y},\mathsf v_0)$ on $\C$ via $\rho$ is cocompact, and so the action of $\pi_1(\boldsymbol{\Gamma},\mathsf{Y},\mathsf v_0)$ on $\Omega$ via $\rho$ is naively convex cocompact.
  \item \label{item:loong3} Suppose moreover that for every $\mathsf{v} \in \mathsf{V}$, the convex set $\C_\mathsf{v}$ has strictly convex nonideal boundary.
Then $\C$ has bisaturated boundary, hence (Facts \ref{fact:cc-subsets}--\ref{fact:bisat-cc}) the group $\pi_1(\boldsymbol{\Gamma}, \mathsf{Y}, \mathsf{v}_0)$ acts convex cocompactly on $\Omega$ via~$\rho$.
\end{enumerate}
\end{theorem}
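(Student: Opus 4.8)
\textbf{Proof plan for Theorem~\ref{thm:gog-cc}.}

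The plan is to run exactly the same Bass--Serre tree machine as in the proof of Theorem~\ref{thm:general-gog}, but carrying along the pair $(\C_\mathsf{v} \subset \Omega_\mathsf{v})$ at each vertex instead of just $\Omega_\mathsf{v}$. First I would set up the universal covering tree $\mathscr{T}(\boldsymbol{\Gamma},\mathsf{Y},\mathsf{v}_0) = (\mathscr{V},\mathscr{E})$ and assign to each vertex $v = [\mathsf{c},\mu]\,\Gamma_\mathsf{v}$ both the open set $\Omega(v) := \underline{\rho}([\mathsf{c},\mu])\cdot\Omega_\mathsf{v}$ and the closed convex set $\C(v) := \underline{\rho}([\mathsf{c},\mu])\cdot\C_\mathsf{v}$. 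As in the proof of Theorem~\ref{thm:general-gog}, any three consecutive vertices of the tree map under $\Omega(\cdot)$, resp.\ under $\Int{\C}(\cdot) := (\C(\cdot))^\circ$, to a triple in occultation position, by the two hypotheses. So Theorem~\ref{thm:tree} applies simultaneously to the tree of open sets $v\mapsto\Omega(v)$ and to the tree of open sets $v\mapsto\Int{\C(v)}$ (note $(\underline{\rho}([\mathsf{c},\mu])\cdot\C_\mathsf{v})^\circ = \underline{\rho}([\mathsf{c},\mu])\cdot\Int{\C_\mathsf{v}}$ since $\underline{\rho}([\mathsf{c},\mu])\in\PGL(d,\RR)$ is a homeomorphism). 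Together with Theorem~\ref{thm:tree-with-group-actions}, this yields part~\eqref{item:loong1}: $\rho$ is discrete and faithful, $\Omega$ is the properly convex open set built from the tree $v\mapsto\Omega(v)$, and $\bigcup_{e}\Conv{\Int{\C(v)}\cup\Int{\C(w)}}$ is a properly convex open set. It then remains to identify $\C$ with the closure of this set intersected with $\Omega$, and to check $\C$ is closed in $\Omega$: the edge pieces $\C_\mathsf{e}$ glue correctly because, by Lemma~\ref{lem:invisible}.\eqref{invis:2}--\eqref{invis:3} applied along each two-edge path and Theorem~\ref{thm:tree}.\eqref{treethm:3}, consecutive $\C_\mathsf{e}$'s meet exactly in a vertex piece $\C(v)$, so the union is locally finite in $\Omega$ and hence closed; convexity of $\C$ follows by the same one-edge-at-a-time argument as for $\Omega$, using Lemma~\ref{lem:invisible}.\eqref{invis:2} to take convex hulls along paths.

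For part~\eqref{item:loong2}, I would argue cocompactness of the $\pi_1(\boldsymbol{\Gamma},\mathsf{Y},\mathsf{v}_0)$-action on $\C$ directly. Decompose $\C$ as a union of $\rho$-translates of the vertex pieces $\C_\mathsf{v}$ and the ``collar'' pieces $\mathsf{K}_\mathsf{e} := \overline{\C_\mathsf{e}\smallsetminus(\C_{\mathsf{o}(\mathsf{e})}\cup g_\mathsf{e}\cdot\C_{\mathsf{t}(\mathsf{e})})}\cap\Omega_\mathsf{e}$, one collar for each edge orbit. Since $\mathsf{Y}$ is finite, there are finitely many vertex orbits and finitely many edge orbits in the tree; for each of these, the hypothesis gives a compact fundamental domain for the relevant stabilizer action (on $\C_\mathsf{v}$ for vertex pieces, on $\mathsf{K}_\mathsf{e}$ for collar pieces). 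The finitely many translates of these compact fundamental domains under a finite set of coset representatives then form a compact fundamental domain for the $\rho(\pi_1)$-action on $\C$. The one point needing care is that these pieces overlap only in vertex pieces (again by Theorem~\ref{thm:tree}.\eqref{treethm:3} and Lemma~\ref{lem:invisible}.\eqref{invis:3}), so the covering by compact sets is genuinely locally finite and the quotient is compact; together with proper discontinuity of the $\rho(\pi_1)$-action on $\Omega$ (from Theorem~\ref{thm:tree-with-group-actions}, $\rho$ being discrete, and $\rho(\pi_1)<\mathrm{Aut}(\Omega)$), this gives naive convex cocompactness.

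For part~\eqref{item:loong3}, I would verify that $\C$ has bisaturated boundary and then invoke Facts~\ref{fact:cc-subsets}--\ref{fact:bisat-cc}. Let $X$ be a supporting hyperplane of $\C$; I must show $X\cap\overline\C$ lies entirely in $\partiali\C$ or entirely in $\partialn\C$. Any point $x\in X\cap\overline\C$ lies in $\overline{\Omega(e)}$ for some edge $e=\{v,w\}$, hence $X$ supports $\Omega(e) = \Conv{\Omega(v)\cup\Omega(w)}$; using the occultation/invisibility structure (and the argument in the proof of Theorem~\ref{thm:tree} that a hyperplane supporting two adjacent vertex sets supports all of $\Omega$), $X$ is in fact a supporting hyperplane of the ambient $\Omega$, and moreover the part of $X$ meeting $\overline\C$ is confined to boundedly many edge pieces. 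Within a single vertex piece $\C(v) = \underline\rho([\mathsf{c},\mu])\cdot\C_\mathsf{v}$, strict convexity of the nonideal boundary of $\C_\mathsf{v}$ forces any nontrivial segment of $X\cap\overline{\C(v)}\smallsetminus\Int{\C(v)}$ to lie in $\partial\Omega_\mathsf{v}$, hence in $\partial\Omega$; and a single boundary point of $\C(v)$ lying on $X$ is either interior to $\Omega$ (a nonideal point, but then strict convexity makes it extremal and $X$ tangent, forcing the whole local picture into nonideal points) or in $\partial\Omega$. The bookkeeping — checking that the ``ideal vs.\ nonideal'' dichotomy propagates consistently across the finitely many edge pieces that $X$ touches, and across the collar pieces $\mathsf{K}_\mathsf{e}$ where two vertex pieces are bridged — is the main obstacle; I expect it to follow from Lemma~\ref{lem:invisible}.\eqref{invis:3}, which pins the overlap of adjacent edge pieces to a vertex piece, so that a supporting hyperplane cannot ``switch type'' as it crosses from one piece to the next. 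Once $\C$ is shown to have bisaturated boundary, Fact~\ref{fact:bisat-cc} gives that $\rho(\pi_1)$ acts convex cocompactly on $\Omega = \Int\C$ (and on $\Omega^*$), completing the proof.
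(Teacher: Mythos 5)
Your treatment of parts \eqref{item:loong1} and \eqref{item:loong2} is essentially the paper's own argument: apply the graph-of-groups combination theorem twice (to the $\Omega_\mathsf{v}$'s and to the $\Int{\C_\mathsf{v}}$'s), observe that the pieces $\underline{\rho}([\mathsf{c},\mu])\cdot\C_\mathsf{e}$ are closed in the corresponding $\underline{\rho}([\mathsf{c},\mu])\cdot\Omega_\mathsf{e}$ and overlap only along vertex pieces, and cover the quotient of $\C$ by finitely many compact sets coming from vertex pieces and edge ``collars''. That part is fine.

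Part \eqref{item:loong3} has a genuine gap. The hard case for bisaturation is not a supporting hyperplane ``switching type'' across pieces, but a nontrivial ray of $\partialn\C$ lying \emph{inside a single collar piece} $\overline{\C_\mathsf{e}\smallsetminus(\C_{\mathsf{o}(\mathsf{e})}\cup g_\mathsf{e}\cdot\C_{\mathsf{t}(\mathsf{e})})}\cap\Omega_\mathsf{e}$ and accumulating at a point of $\partiali\C$: the boundary of $\Conv{\C_{\mathsf{o}(\mathsf{e})}\cup g_\mathsf{e}\cdot\C_{\mathsf{t}(\mathsf{e})}}$ is generally \emph{not} strictly convex in the collar region, so strict convexity of the $\C_\mathsf{v}$ alone does not exclude such a ray, and Lemma~\ref{lem:invisible}.\eqref{invis:3} (adjacent edge pieces meet in a vertex piece) says nothing about it. The paper's proof of this step (Lemma~\ref{lem:cocobisat}) argues by contradiction with exactly such a ray $[x,\xi)$, and needs two inputs that your sketch never invokes: (i) the occultation hypothesis on the triples of \emph{interiors} $\Int{\C_\mathsf{v}}$, which yields that three consecutive pieces in the tree admit no common supporting hyperplane, so the ray cannot cross from one collar to a neighboring one; and (ii) the cocompactness of the $\Gamma_{|\mathsf{e}|}$-action on the collar (i.e.\ the hypotheses of part \eqref{item:loong2}, which the paper's Lemma~\ref{lem:cocobisat} explicitly assumes), used to translate points of the ray into a compact fundamental domain, extract a limit segment $(x_\infty,\xi_\infty)$ with ideal endpoints through an interior point, and contradict strict convexity of $\partialn\C_{\mathsf{o}(\mathsf{e})}$ (together with Facts~\ref{fact:cc-subsets}, \ref{fact:ideal-bound-cc}, \ref{fact:bisat-cc} to identify $\partiali$ of a uniform neighborhood with $\partiali\C_{\mathsf{o}(\mathsf{e})}$). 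Writing ``the bookkeeping \dots\ is the main obstacle; I expect it to follow from Lemma~\ref{lem:invisible}.\eqref{invis:3}'' is precisely where the proof is missing. A smaller slip in the same part: $\Omega$ is not $\Int{\C}$; the correct chain is that bisaturated boundary plus the cocompact properly discontinuous action on $\C$ gives convex cocompactness on $\Int{\C}$ via Fact~\ref{fact:bisat-cc}, and then Fact~\ref{fact:cc-subsets} upgrades this to the larger set $\Omega\supset\Int{\C}$.
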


We refer to Definition~\ref{def:boundaries} for the notions of strictly convex nonideal boundary and bisaturated boundary.

\subsection{Proof of Theorem~\ref{thm:gog-cc}}

By Theorem~\ref{thm:general-gog}, the representation $\rho: \pi_1(\boldsymbol{\Gamma}, \mathsf{Y}, \mathsf{v}_0) \to \PGL(d,\RR)$ is discrete and faithful and preserves the nonempty properly convex open set
$$\Omega = \bigcup_{\substack{[\mathsf{c},\mu] \, \Gamma_{\mathsf{v}} \in \mathscr{T}(\boldsymbol{\Gamma}, \mathsf{Y}, \mathsf{v}_0)\\ \mathsf{e} \in \vec{\mathsf{E}} \text{ with $\mathsf{o}(\mathsf{e}) = \mathsf{v}$}}} \underline{\rho}([\mathsf{c},\mu]) \cdot \Omega_\mathsf{e} \quad \text{where} \quad \Omega_\mathsf{e}:=\Conv{ \Omega_{\mathsf{o}(\mathsf{e})} \cup g_\mathsf{e} \cdot \Omega_{\mathsf{t}(\mathsf{e})}}.$$
Since the interiors $\Int{\C_{\mathsf{v}}}$ of the $\C_\mathsf{v}$ satisfy similar assumptions to the~$\Omega_{\mathsf{v}}$, Theorem~\ref{thm:general-gog} implies that $\rho$ preserves the nonempty properly convex open set
$$\Int{\C} := \bigcup_{\substack{[\mathsf{c},\mu] \, \Gamma_{\mathsf{v}} \in \mathscr{T}(\boldsymbol{\Gamma}, \mathsf{Y}, \mathsf{v}_0)\\ \mathsf{e} \in \vec{\mathsf{E}} \text{ with $\mathsf{o}(\mathsf{e}) = \mathsf{v}$}}} \underline{\rho}([\mathsf{c},\mu]) \cdot \Int{\C_\mathsf{e}} \quad \text{where} \quad \C^\circ_\mathsf{e} := \Conv{ \C^\circ_{\mathsf{o}(\mathsf{e})} \cup g_\mathsf{e} \cdot \C^\circ_{\mathsf{t}(\mathsf{e})}}.$$

The statements \eqref{item:loong1}, \eqref{item:loong2}, \eqref{item:loong3} of Theorem~\ref{thm:gog-cc} are now proved, respectively, in the three lemmas below.

\begin{lemma} \label{lem:C-closed}
In the setting of Theorem~\ref{thm:gog-cc}, the set
$$\C := \bigcup_{\substack{[\mathsf{c},\mu] \, \Gamma_{\mathsf{t}(\mathsf{c})} \in \mathscr{T}(\boldsymbol{\Gamma},\mathsf{Y},\mathsf{v}_0)\\ \mathsf{e} \in \vec{\mathsf{E}} \text{ with $\mathsf{o}(\mathsf{e}) = \mathsf{t}(\mathsf{c})$}}} \underline{\rho} ([\mathsf{c},\mu]) \cdot \C_\mathsf{e}
\quad \text{where} \quad \C_\mathsf{e} := \overline{\C^\circ_\mathsf{e}} \cap \Omega_\mathsf{e}\hspace{1.2cm}~ $$
is closed in~$\Omega$.
\end{lemma}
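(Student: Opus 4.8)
The claim is that $\C := \bigcup \underline{\rho}([\mathsf{c},\mu]) \cdot \C_\mathsf{e}$ is closed in $\Omega$. Since $\Omega = \bigcup \underline{\rho}([\mathsf{c},\mu]) \cdot \Omega_\mathsf{e}$, and each $\Omega_\mathsf{e}$ is open, it suffices to show that for every vertex-path $[\mathsf{c},\mu]$ and every oriented edge $\mathsf{e}$ with $\mathsf{o}(\mathsf{e}) = \mathsf{t}(\mathsf{c})$, the intersection $\C \cap \big(\underline{\rho}([\mathsf{c},\mu]) \cdot \Omega_\mathsf{e}\big)$ is closed in $\underline{\rho}([\mathsf{c},\mu]) \cdot \Omega_\mathsf{e}$. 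By $\rho$-equivariance of the whole construction (both $\Omega$ and $\C$ are $\rho$-invariant, and the tree of convex sets is $\rho$-equivariant by the setup of Theorem~\ref{thm:general-gog}), it is enough to treat a single edge $\Omega_{\mathsf{e}_0}$ at the base, i.e.\ to show that $\C \cap \Omega_{\mathsf{e}_0}$ is closed in $\Omega_{\mathsf{e}_0}$ for a fixed edge $\mathsf{e}_0 \in \vec{\mathsf{E}}$ with $\mathsf{o}(\mathsf{e}_0) = \mathsf{v}_0$.

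\textbf{Main step: only finitely many $\underline{\rho}([\mathsf{c},\mu]) \cdot \C_{\mathsf{e}}$ meet $\Omega_{\mathsf{e}_0}$.} Here is where the occultation machinery is used. Recall from Theorem~\ref{thm:tree}.\eqref{treethm:3} applied to the tree of convex sets $\big(\Omega([\mathsf{c},\mu]\,\Gamma_{\mathsf{v}})\big)$ that for distinct edges $e, e'$ of the Bass--Serre tree, $\Omega(e) \cap \Omega(e')$ is either $\Omega(v)$ (when $e \cap e' = \{v\}$) or empty. Thus $\Omega_{\mathsf{e}_0}$ — which is $\Omega(e_0)$ for the corresponding tree edge $e_0$ — meets $\Omega(e)$ only for $e$ sharing a vertex with $e_0$, so $\Omega_{\mathsf{e}_0}$ meets at most the finitely-or-countably-many translates $\underline{\rho}([\mathsf{c},\mu]) \cdot \Omega_{\mathsf{e}}$ associated to edges $e$ adjacent to $e_0$ in the tree. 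More precisely, $\C \cap \Omega_{\mathsf{e}_0}$ decomposes as
$$\C \cap \Omega_{\mathsf{e}_0} = \bigcup_{e \,:\, e \cap e_0 \neq \varnothing} \big(\underline{\rho}([\mathsf{c}_e,\mu_e]) \cdot \C_{e}\big) \cap \Omega_{\mathsf{e}_0},$$
and for $e$ sharing vertex $v$ with $e_0$ we have $\big(\underline{\rho}([\mathsf{c}_e,\mu_e])\cdot \C_e\big) \cap \Omega_{\mathsf{e}_0} \subset \Omega(e) \cap \Omega(e_0) = \Omega(v)$, the vertex convex set. So the relevant piece of $\C$ inside $\Omega_{\mathsf{e}_0}$ is governed by what happens near the two vertices of $e_0$.

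\textbf{Reducing to the vertex level.} At a vertex $v$ of $e_0$ — say $v$ corresponds to $[\mathsf{c},\mu]\,\Gamma_{\mathsf{w}}$ — the set of tree-edges incident to $v$ is a $\Gamma_{\mathsf{w}}$-orbit (via the stabilizer action), so the part of $\C$ inside $\Omega(v) = \underline{\rho}([\mathsf{c},\mu]) \cdot \Omega_{\mathsf{w}}$ is, after translating by $\underline{\rho}([\mathsf{c},\mu])^{-1}$, exactly $\bigcup_{\gamma \in \Gamma_{\mathsf{w}}, \,\mathsf{e}} \rho_{\mathsf{w}}(\gamma) \cdot \big(\C_{\mathsf{e}} \cap \Omega_{\mathsf{w}}\big)$ intersected with $\Omega_{\mathsf{w}}$ — i.e.\ a $\Gamma_{\mathsf{w}}$-invariant union. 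Each $\C_{\mathsf{e}} \cap \Omega_{\mathsf{w}}$ is closed in $\Omega_{\mathsf{w}}$: indeed $\C_{\mathsf{e}} = \overline{\C^\circ_{\mathsf{e}}} \cap \Omega_{\mathsf{e}}$ and $\overline{\C^\circ_{\mathsf{e}}} \cap \Omega_{\mathsf{w}}$ is closed in $\Omega_{\mathsf{w}}$ since $\Omega_{\mathsf{w}} \subset \Omega_{\mathsf{e}}$ open. Now use that $\Gamma_{\mathsf{w}}$ acts properly discontinuously on $\Omega_{\mathsf{w}}$ (Hilbert-metric isometries, Section~\ref{subsec:remind-Hilbert}): a locally finite union of closed sets is closed, so $\bigcup_{\gamma}\rho_{\mathsf{w}}(\gamma)\cdot(\C_{\mathsf{e}}\cap\Omega_{\mathsf{w}})$ is closed in $\Omega_{\mathsf{w}}$, provided the translates accumulate only at $\partial\Omega_{\mathsf{w}}$ — which follows from proper discontinuity since each $\C_{\mathsf{e}} \cap \Omega_{\mathsf{w}}$ is contained in a $d_{\Omega_{\mathsf{w}}}$-bounded... wait, this needs care: $\C_{\mathsf{e}} \cap \Omega_{\mathsf{w}}$ need not be bounded. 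The \textbf{main obstacle} is exactly this point: to show local finiteness of the family $\{\rho_{\mathsf{w}}(\gamma)\cdot(\C_{\mathsf{e}}\cap\Omega_{\mathsf{w}})\}$ at points of $\Omega_{\mathsf{w}}$, one must argue that only finitely many translates meet any given compact subset of $\Omega_{\mathsf{w}}$; this uses proper discontinuity of $\Gamma_{\mathsf{w}}$ on $\Omega_{\mathsf{w}}$ together with the fact that, by the occultation/invisibility structure (Lemma~\ref{lem:invisible}.\eqref{invis:3} and Theorem~\ref{thm:tree}.\eqref{treethm:2}--\eqref{treethm:3}), the distinct translates $\rho_{\mathsf{w}}(\gamma)\cdot\Int{\C_{\mathsf{e}}}$ are pairwise disjoint inside $\Int{\C_{\mathsf{w}}}$, so a point of $\Omega_{\mathsf{w}}$ lies in the closure of at most the $\C_{\mathsf{e}}$-translates through the finitely many tree-edges adjacent to a fixed edge at the corresponding tree-vertex. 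Assembling these finitely many closed pieces over the (at most two) vertices of $e_0$ shows $\C \cap \Omega_{\mathsf{e}_0}$ is a finite union of sets closed in $\Omega_{\mathsf{e}_0}$, hence closed; by equivariance, $\C$ is closed in $\Omega$.
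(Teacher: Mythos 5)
Your overall plan — localize to the open cover of $\Omega$ by the translates $\underline{\rho}([\mathsf{c},\mu])\cdot\Omega_{\mathsf{e}}$ and use Theorem~\ref{thm:tree}.\eqref{treethm:2}--\eqref{treethm:3} to restrict which pieces of $\C$ can meet a fixed $\Omega_{\mathsf{e}_0}$ — is exactly the localization the paper's (very short) proof rests on. But the decisive step is missing, and you flag it yourself: since $\C_{\mathsf{e}}\cap\Omega_{\mathsf{w}}$ is unbounded, proper discontinuity of $\Gamma_{\mathsf{w}}$ on $\Omega_{\mathsf{w}}$ does not give local finiteness of the family of translates, and the sentence you offer to repair this (``the distinct translates $\rho_{\mathsf{w}}(\gamma)\cdot\Int{\C_{\mathsf{e}}}$ are pairwise disjoint \dots, so a point of $\Omega_{\mathsf{w}}$ lies in the closure of at most \dots finitely many translates'') is not a proof. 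First, by Theorem~\ref{thm:tree}.\eqref{treethm:3} applied to the $\C^\circ$-tree these translates are \emph{not} pairwise disjoint: any two of them intersect exactly in $\Int{\C_{\mathsf{w}}}$; only their parts outside $\Int{\C_{\mathsf{w}}}$ are disjoint. Second, and more seriously, pairwise disjointness of open sets does not bound the number of translates whose closures contain a given point, nor does it exclude a sequence $x_n$, lying in the piece of a \emph{different} edge for each $n$, converging to a point of $\Omega_{\mathsf{w}}$ that lies in no piece. Ruling out precisely this accumulation is the content of the lemma at the vertex level, and your text asserts it rather than proves it. (The heading of your ``main step'' -- that only finitely many translates meet $\Omega_{\mathsf{e}_0}$ -- is also false when a vertex group is infinite, as your own ``finitely-or-countably-many'' hedge concedes.)

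There is a second gap in the assembly. Even if the vertex-level union $\bigcup_{\gamma,\mathsf{e}}\rho_{\mathsf{w}}(\gamma)\cdot(\C_{\mathsf{e}}\cap\Omega_{\mathsf{w}})$ were shown closed \emph{in} $\Omega_{\mathsf{w}}$, that is closedness in the open set $\Omega(v)$, which is a proper open subset of $\Omega_{\mathsf{e}_0}$: its closure in $\Omega_{\mathsf{e}_0}$ may contain points of $\partial\Omega(v)\cap\Omega_{\mathsf{e}_0}$, and likewise the closure of a single piece $\underline{\rho}([\mathsf{c},\mu])\cdot\C_{\mathsf{e}}$, which is only closed in its own $\underline{\rho}([\mathsf{c},\mu])\cdot\Omega_{\mathsf{e}}$, can stick out into $\Omega_{\mathsf{e}_0}\smallsetminus\underline{\rho}([\mathsf{c},\mu])\cdot\Omega_{\mathsf{e}}$. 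One must show that such limit points are absorbed by other pieces of $\C$ (for instance by the $\mathsf{e}_0$-piece, or by pieces at the other vertex of $\mathsf{e}_0$), and your proposal never addresses this; note that one cannot simply claim the adjacent pieces intersected with $\Omega_{\mathsf{e}_0}$ are contained in the $\mathsf{e}_0$-piece, since by Theorem~\ref{thm:tree}.\eqref{treethm:3} two adjacent hulls meet only in the vertex set, so each adjacent piece genuinely sticks out of the $\mathsf{e}_0$-piece inside $\Omega(v)$. Consequently your final sentence -- that $\C\cap\Omega_{\mathsf{e}_0}$ is a \emph{finite} union of sets \emph{closed in} $\Omega_{\mathsf{e}_0}$ -- is unjustified on both counts. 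In short: the reduction matches the paper's two-line argument, but the actual closedness of the contribution of the (infinitely many) pieces meeting a fixed $\Omega_{\mathsf{e}_0}$, i.e.\ that limits of points taken from different pieces, or limits escaping a piece's own open set, still land in $\C$, is exactly what remains to be proved, and your proposal leaves it open.
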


\begin{proof}
The set $\C$ is locally closed in~$\Omega$ since each $\underline{\rho}([\mathsf{c},\mu]) \cdot \C_\mathsf{e}$ is closed in the corresponding $\underline{\rho}([\mathsf{c},\mu]) \cdot \Omega_\mathsf{e}$.
Moreover, open sets of the latter form intersect only two at a time, and their union is $\Omega$.
\end{proof}

\begin{lemma}\label{lem:cocompact}
In the setting of Theorem~\ref{thm:gog-cc}, suppose that the graph $\mathsf{Y} = (\mathsf{V}, \mathsf{E})$ is finite and that for each $\mathsf{v} \in \mathsf{V}$ the action of $\Gamma_{\mathsf{v}}$ on $\C_{\mathsf{v}}$ via $\rho_{\mathsf{v}}$ is cocompact, and for each $\mathsf{e} \in \vec{\mathsf{E}}$ the action of $\Gamma_{\mathsf{e}}$ on $\overline{\C_{\mathsf{e}} \smallsetminus ( \C_{\mathsf{o}(\mathsf{e})} \cup g_{\mathsf{e}} \cdot \C_{\mathsf{t}(\mathsf{e})})} \cap \Omega_{\mathsf{e}}$ via $\rho_{\mathsf{o}(\mathsf{e})} \circ \iota_{\mathsf{e}}$ is cocompact.
Then the action of $\pi_1(\boldsymbol{\Gamma},\mathsf{Y},\mathsf v_0)$ on $\C$ via $\rho$ is cocompact.
\end{lemma}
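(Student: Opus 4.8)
The plan is to break $\C$ into the natural ``vertex pieces'' and ``edge pieces'' indexed by the Bass--Serre tree $\mathscr{T} := \mathscr{T}(\boldsymbol{\Gamma},\mathsf{Y},\mathsf{v}_0) = (\mathscr{V},\mathscr{E})$, to observe that since the graph $\mathsf{Y}$ is finite these pieces fall into only finitely many $\rho(\pi_1(\boldsymbol{\Gamma},\mathsf{Y},\mathsf{v}_0))$-orbits, and that each piece has compact quotient by its stabilizer by hypothesis; concatenating compact fundamental domains for the finitely many orbit representatives then produces a compact set $K \subset \C$ with $\rho(\pi_1(\boldsymbol{\Gamma},\mathsf{Y},\mathsf{v}_0)) \cdot K = \C$.

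First I would record a purely set-theoretic decomposition of each edge convex set. Fix $\mathsf{e} \in \vec{\mathsf{E}}$. Since $\C_{\mathsf{o}(\mathsf{e})} \subset \Omega_{\mathsf{o}(\mathsf{e})} \subset \Omega_{\mathsf{e}}$ and $\C_{\mathsf{o}(\mathsf{e})} \subset \overline{\Conv{\C_{\mathsf{o}(\mathsf{e})} \cup g_{\mathsf{e}} \cdot \C_{\mathsf{t}(\mathsf{e})}}}$, one has $\C_{\mathsf{o}(\mathsf{e})} \subset \C_{\mathsf{e}}$, and likewise $g_{\mathsf{e}} \cdot \C_{\mathsf{t}(\mathsf{e})} \subset \C_{\mathsf{e}}$; as $\C_{\mathsf{e}}$ is closed in $\Omega_{\mathsf{e}}$, this gives
$$\C_{\mathsf{e}} \;=\; \C_{\mathsf{o}(\mathsf{e})} \,\cup\, g_{\mathsf{e}}\cdot\C_{\mathsf{t}(\mathsf{e})} \,\cup\, \C_{\mathsf{e}}^{\mathrm{conn}}, \qquad \C_{\mathsf{e}}^{\mathrm{conn}} := \overline{\C_{\mathsf{e}}\smallsetminus\big(\C_{\mathsf{o}(\mathsf{e})}\cup g_{\mathsf{e}}\cdot\C_{\mathsf{t}(\mathsf{e})}\big)}\cap\Omega_{\mathsf{e}}.$$
A short computation using $g_{\mathsf{e}} = g_{\overline{\mathsf{e}}}^{-1}$ and $\C_{\overline{\mathsf{e}}} = g_{\overline{\mathsf{e}}}\cdot\C_{\mathsf{e}}$ shows $g_{\mathsf{e}}\cdot\C_{\overline{\mathsf{e}}}^{\mathrm{conn}} = \C_{\mathsf{e}}^{\mathrm{conn}}$, so the ``connecting region'' depends only on the unoriented edge $|\mathsf{e}|$.

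Next I would transport this to $\mathscr{T}$. To a vertex $v = [\mathsf{c},\mu]\,\Gamma_{\mathsf{v}} \in \mathscr{V}$ associate the convex set $\C(v) := \underline{\rho}([\mathsf{c},\mu])\cdot\C_{\mathsf{v}}$, and to an edge $e = \{[\mathsf{c},\mu]\,\Gamma_{\mathsf{o}(\mathsf{e})},\,[\mathsf{c},\mu]\mathsf{e}\,\Gamma_{\mathsf{t}(\mathsf{e})}\} \in \mathscr{E}$ associate $\C^{\mathrm{conn}}(e) := \underline{\rho}([\mathsf{c},\mu])\cdot\C_{\mathsf{e}}^{\mathrm{conn}}$ (well defined by the orientation-independence just noted). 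Applying $\underline{\rho}([\mathsf{c},\mu])$ to the displayed identity shows that each term $\underline{\rho}([\mathsf{c},\mu])\cdot\C_{\mathsf{e}}$ appearing in the definition of $\C$ equals $\C(v)\cup\C(v')\cup\C^{\mathrm{conn}}(e)$ for the corresponding edge $e=\{v,v'\}$ of $\mathscr{T}$; running over all such terms (which visits every edge of $\mathscr{T}$) gives
$$\C \;=\; \bigcup_{v \in \mathscr{V}} \C(v) \;\cup\; \bigcup_{e \in \mathscr{E}} \C^{\mathrm{conn}}(e).$$
Both assignments $v\mapsto\C(v)$ and $e\mapsto\C^{\mathrm{conn}}(e)$ are $\rho(\pi_1(\boldsymbol{\Gamma},\mathsf{Y},\mathsf{v}_0))$-equivariant (because $\underline{\rho}$ restricts to $\rho$ on $\pi_1(\boldsymbol{\Gamma},\mathsf{Y},\mathsf{v}_0)$ and is a homomorphism). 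Moreover the stabilizer of $v = [\mathsf{c},\mu]\,\Gamma_{\mathsf{v}}$ in $\pi_1(\boldsymbol{\Gamma},\mathsf{Y},\mathsf{v}_0)$ is $[\mathsf{c},\mu]\,\Gamma_{\mathsf{v}}\,[\mathsf{c},\mu]^{-1}$ and acts on $\C(v)$ via the conjugate $\underline{\rho}([\mathsf{c},\mu])\,\rho_{\mathsf{v}}(\cdot)\,\underline{\rho}([\mathsf{c},\mu])^{-1}$ of $\rho_{\mathsf{v}}\colon\Gamma_{\mathsf{v}}\curvearrowright\C_{\mathsf{v}}$, cocompact by hypothesis; similarly the stabilizer of $e$ acts on $\C^{\mathrm{conn}}(e)$ via a conjugate of $\rho_{\mathsf{o}(\mathsf{e})}\circ\iota_{\mathsf{e}}\colon\Gamma_{\mathsf{e}}\curvearrowright \overline{\C_{\mathsf{e}}\smallsetminus(\C_{\mathsf{o}(\mathsf{e})}\cup g_{\mathsf{e}}\cdot\C_{\mathsf{t}(\mathsf{e})})}\cap\Omega_{\mathsf{e}}$, again cocompact by hypothesis.

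Finally I would conclude by a counting argument. Since $\mathsf{Y} = (\mathsf{V},\mathsf{E})$ is finite, the natural projections $\mathscr{V}\to\mathsf{V}$ and $\mathscr{E}\to\mathsf{E}$ identify the $\pi_1(\boldsymbol{\Gamma},\mathsf{Y},\mathsf{v}_0)$-orbit sets with $\mathsf{V}$ and $\mathsf{E}$; choose a representative vertex $v_{\mathsf{v}}\in\mathscr{V}$ for each $\mathsf{v}\in\mathsf{V}$ and a representative edge $e_{\mathsf{e}}\in\mathscr{E}$ for each $\mathsf{e}\in\mathsf{E}$. By the previous paragraph there are compact sets $K_{\mathsf{v}}\subset\C(v_{\mathsf{v}})$ and $L_{\mathsf{e}}\subset\C^{\mathrm{conn}}(e_{\mathsf{e}})$ with $\mathrm{stab}(v_{\mathsf{v}})\cdot K_{\mathsf{v}} = \C(v_{\mathsf{v}})$ and $\mathrm{stab}(e_{\mathsf{e}})\cdot L_{\mathsf{e}} = \C^{\mathrm{conn}}(e_{\mathsf{e}})$. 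Then $K := \bigcup_{\mathsf{v}\in\mathsf{V}} K_{\mathsf{v}} \cup \bigcup_{\mathsf{e}\in\mathsf{E}} L_{\mathsf{e}}$ is compact and contained in $\C$; by equivariance $\rho(\pi_1(\boldsymbol{\Gamma},\mathsf{Y},\mathsf{v}_0))\cdot K$ contains every $\C(v)$ and every $\C^{\mathrm{conn}}(e)$, hence equals $\bigcup_{v}\C(v)\cup\bigcup_{e}\C^{\mathrm{conn}}(e) = \C$ (the reverse inclusion being clear since $\C$ is $\rho(\pi_1(\boldsymbol{\Gamma},\mathsf{Y},\mathsf{v}_0))$-invariant by part \eqref{item:loong1} and $K\subset\C$). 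Thus $\C$ has compact quotient by $\rho(\pi_1(\boldsymbol{\Gamma},\mathsf{Y},\mathsf{v}_0))$, as claimed. The only delicate point is the bookkeeping between $\mathsf{Y}$, the Bass--Serre tree, the conjugating elements $\underline{\rho}([\mathsf{c},\mu])$, and edge orientations; all of the actual cocompactness of the pieces is supplied directly by the hypotheses.
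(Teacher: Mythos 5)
Your proof is correct and follows essentially the same route as the paper: both decompose each $\C_{\mathsf{e}}$ as $\C_{\mathsf{o}(\mathsf{e})} \cup g_{\mathsf{e}}\cdot\C_{\mathsf{t}(\mathsf{e})} \cup \big(\overline{\C_{\mathsf{e}}\smallsetminus(\C_{\mathsf{o}(\mathsf{e})}\cup g_{\mathsf{e}}\cdot\C_{\mathsf{t}(\mathsf{e})})}\cap\Omega_{\mathsf{e}}\big)$ and use the finiteness of $\mathsf{Y}$ together with the vertex and edge cocompactness hypotheses to cover the quotient by finitely many compact pieces. The paper states this in three lines (the quotient is covered by at most $3|\mathsf{E}|$ compact sets), while you spell out the Bass--Serre bookkeeping and the assembly of a compact set $K$ with $\rho(\pi_1(\boldsymbol{\Gamma},\mathsf{Y},\mathsf{v}_0))\cdot K=\C$; the content is the same.
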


\begin{proof}
The finitely many sets $\C_\mathsf{e}$, varying over all $\mathsf{e} \in \vec{\mathsf{E}}$, project down to subsets whose union is the entire quotient $\rho(\pi_1(\boldsymbol{\Gamma},\mathsf{Y},\mathsf v_0)) \backslash \C$.
For each $\mathsf{e} \in \vec{\mathsf{E}}$,
$$\C_\mathsf{e} = \C_{\mathsf{o}(\mathsf{e})} \cup g_\mathsf{e} \cdot \C_{\mathsf{t}(\mathsf{e})} \cup \left(\overline{\C_\mathsf{e} \smallsetminus \big( \C_{\mathsf{o}(\mathsf{e})} \cup g_\mathsf{e} \cdot \C_{\mathsf{t}(\mathsf{e})}\big)} \cap \Omega_\mathsf{e} \right).$$
All three sets in the union project down to compact sets in $\C/\rho$ by assumption. 
So the quotient is covered by $\leq 3 |\mathsf{E}|$ compact sets, hence is compact.
\end{proof}

\begin{lemma} \label{lem:cocobisat}
In the setting of Lemma~\ref{lem:cocompact}, suppose that for every $\mathsf{v} \in \mathsf{V}$, the convex set $\C_\mathsf{v}$ has strictly convex nonideal boundary.
Then $\C$ has bisaturated boundary.
\end{lemma}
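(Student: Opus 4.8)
The plan is to reduce the bisaturation of the global set $\C$ to the bisaturation of each individual glued piece, and then to prove the latter by a face analysis of the convex hull $\overline{\Conv{\C_{\mathsf o(\mathsf e)}\cup g_{\mathsf e}\cdot\C_{\mathsf t(\mathsf e)}}}$, using that strictly convex nonideal boundary implies bisaturated boundary (Definition~\ref{def:boundaries}). Throughout, write $\C(v),\Omega(v)$ and $\C(e),\Omega(e)$ for the $\underline\rho$-translates of $\C_{\mathsf v},\Omega_{\mathsf v}$ and of $\C_{\mathsf e},\Omega_{\mathsf e}$ indexed by the vertices and edges of the universal covering tree; thus $\overline{\C(e)}=\overline{\Conv{\C(v)\cup\C(w)}}$ for $e=\{v,w\}$, $\C(e)=\overline{\C(e)}\cap\Omega(e)$, and $\Omega(e)=\Conv{\Omega(v)\cup\Omega(w)}$.

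\textbf{Step 1: reduction to the pieces.} From the proof of Lemma~\ref{lem:C-closed} (which rests on Theorem~\ref{thm:tree}.\eqref{treethm:3} and the invisibility identities of Lemma~\ref{lem:invisible}) we have $\C\cap\Omega(e)=\C(e)$ for every edge $e$; since $\Omega(e)$ is open this gives $\overline\C\cap\Omega(e)=\overline{\C(e)}\cap\Omega(e)=\C(e)$. Let $X$ be a supporting hyperplane of $\C$. As $X\cap\Int\C=\varnothing$, it suffices to show that if $X\cap\C\neq\varnothing$ then $X\cap\overline\C\subseteq\C$ (then $X\cap\overline\C\subseteq\partialn\C$; otherwise $X\cap\overline\C\subseteq\partiali\C$ automatically). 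So fix $p\in X\cap\C$; then $p\in\Omega(e)$ for some edge $e$, hence $p\in\C(e)$, and $p\notin\Int\C\supseteq\Int\C(e)$, so $p\in\partialn\C(e)$ and $X$ is a supporting hyperplane of $\C(e)$ at $p$. Granting (Step 2) that $\C(e)$ has bisaturated boundary, we get $X\cap\overline{\C(e)}\subseteq\partialn\C(e)\subseteq\Omega(e)$. Now take any $q\in X\cap\overline\C$. The intersection $[p,q]\cap\overline{\Omega(e)}$ is a subsegment $[p,q_1]$, and since $p\in\Omega(e)$ and $\Omega(e)$ is convex open, $[p,q_1)\subseteq\Omega(e)$, hence $[p,q_1)\subseteq\Omega(e)\cap\overline\C=\C(e)$, so $q_1\in\overline{\C(e)}\cap X\subseteq\Omega(e)$. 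Were $q_1\neq q$ we would have $q_1\in\partial\Omega(e)$, a contradiction; therefore $q_1=q$, and the same computation gives $q\in\overline{\C(e)}\cap X\subseteq\Omega(e)\subseteq\Omega$. Thus $X\cap\overline\C\subseteq\overline\C\cap\Omega=\C$, as desired.

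\textbf{Step 2: each piece has bisaturated boundary.} Fix $e=\{v,w\}$. Since $\C(v)$ and $\C(w)$ have strictly convex nonideal boundary, they have bisaturated boundary. Let $Y$ be a supporting hyperplane of $\C(e)$; if $Y\cap\overline{\C(e)}\subseteq\partial\Omega(e)$ we are done (ideal case), so suppose $Y$ meets $\partialn\C(e)=\overline{\C(e)}\cap\Omega(e)$ at a point $a$; we must show $Y\cap\overline{\C(e)}\subseteq\Omega(e)$. Set $F_v:=Y\cap\overline{\C(v)}$ and $F_w:=Y\cap\overline{\C(w)}$, each a (possibly empty) face of $\overline{\C(v)}$, resp.\ $\overline{\C(w)}$, on which $Y$ is a supporting hyperplane; a standard fact about convex hulls of compact convex sets gives that $Y\cap\overline{\C(e)}$ is the convex hull of $F_v\cup F_w$. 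By bisaturation of $\C(v),\C(w)$, each nonempty face among $F_v,F_w$ lies entirely in the ideal, or entirely in the nonideal, boundary of its vertex piece. If, say, $F_v$ meets $\partialn\C(v)$, then by \emph{strict} convexity it reduces to a single extremal point $a_v\in\partialn\C(v)\subseteq\Omega(v)$; using that $\C(v)$ occults the tree pieces beyond $e$ (so $Y$ can graze $\overline{\C(w)}$ only in the ``window'' where $\overline{\C(v)}$ and $\overline{\C(w)}$ meet), one shows $F_w$ cannot be ideal, hence is a point $a_w\in\partialn\C(w)\subseteq\Omega(w)$ or empty, and $Y\cap\overline{\C(e)}=[a_v,a_w]\subseteq\Omega(e)$ by convexity of $\Omega(e)$. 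The remaining possibility — $F_v\subseteq\partiali\C(v)$ and $F_w\subseteq\partiali\C(w)$ both nonempty — is excluded: the occultation position of the consecutive triple through $v$ and $w$ forces the convex hull of an ideal face of $\overline{\C(v)}$ and an ideal face of $\overline{\C(w)}$ to remain in $\partial\Omega(e)$, contradicting $a\in Y\cap\overline{\C(e)}\cap\Omega(e)$. Hence $Y\cap\overline{\C(e)}\subseteq\Omega(e)$, so $\C(e)$ has bisaturated boundary.

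\textbf{Main obstacle and conclusion.} The technical heart is Step~2's face analysis: determining exactly which faces of the convex hull of two occulting vertex pieces protrude into the interior of their joint hull $\Omega(e)$, and in particular ruling out a supporting hyperplane that simultaneously grazes a nonideal (totally geodesic) wall of one vertex piece and an ideal boundary point of the other across the gluing. Both hypotheses enter together here: strict convexity collapses the nonideal contributions to single extremal points, so that the bridging segments of $\partial\C(e)$ stay inside $\Omega(e)$; the occultation position keeps the vertex pieces properly ``lined up'' along the tree, preventing an ideal face of one piece from being engulfed by $\Omega(e)$ and merging with a face of the other. Once Step~2 is in place, Step~1 together with Lemma~\ref{lem:cocompact} (cocompactness), proper discontinuity of the action, and Facts~\ref{fact:cc-subsets}--\ref{fact:bisat-cc} yields that $\pi_1(\boldsymbol\Gamma,\mathsf Y,\mathsf v_0)$ acts convex cocompactly on $\Omega$ via $\rho$, completing Theorem~\ref{thm:gog-cc}.\eqref{item:loong3}.
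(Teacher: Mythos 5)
There is a genuine gap, and it already occurs in Step 1. The identity $\overline{\C}\cap\Omega(e)=\C(e)$ on which your reduction rests is false in general, and it is not what the proof of Lemma~\ref{lem:C-closed} gives (that lemma only shows $\C$ is closed in~$\Omega$). Indeed, let $e'=\{u,v\}$ be an edge adjacent to $e=\{v,w\}$. The piece $\C(e')$ contains points of $\Conv{\C(u)\cup\C(v)}\smallsetminus(\C(u)\cup\C(v))$ arbitrarily close to $\partialn\C(v)$, and such points lie in the open set $\Omega(v)\subset\Omega(e)$; on the other hand, Lemma~\ref{lem:invisible} applied to the occulting triple $(\Int{\C(u)},\Int{\C(v)},\Int{\C(w)})$ gives $\Conv{\Int{\C(u)}\cup\Int{\C(v)}}\cap\Conv{\Int{\C(v)}\cup\Int{\C(w)}}=\Int{\C(v)}$, so these points are in general \emph{not} in $\C(e)$. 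Thus $\C\cap\Omega(e)$ strictly contains $\C(e)$ whenever the ``putty'' of an adjacent edge pokes into $\Omega(v)$, which is the generic situation. This false identity is used at the crucial moment where you write $[p,q_1)\subseteq\Omega(e)\cap\overline{\C}=\C(e)$: the segment from $p$ to $q$ inside $X\cap\overline{\C}$ may leave $\C(e)$ and run through pieces of adjacent edges while staying in $\Omega(e)$, so you cannot conclude $q_1\in\overline{\C(e)}$ and invoke bisaturation of the single piece. Repairing this requires showing that a supporting hyperplane of $\C$ cannot simultaneously graze the pieces of two adjacent edges, i.e.\ that three consecutive sets $\Int{\C(\cdot)}$ have no common supporting hyperplane; this is exactly the dual consequence of the occultation hypothesis, $\partial(\C_{\mathsf{o}(\mathsf{e})}^\circ)^*\subset(g_\mathsf{e}\cdot\C_{\mathsf{t}(\mathsf{e})}^\circ)^*\cup(\rho_\mathsf{v}(\gamma)g_{\mathsf{e}'}\cdot\C_{\mathsf{t}(\mathsf{e}')}^\circ)^*$, on which the paper's proof turns and which your Step 1 never invokes.

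Step 2 is also not a proof: the two decisive claims (``one shows $F_w$ cannot be ideal'' and ``occultation forces the hull of an ideal face of $\overline{\C(v)}$ and an ideal face of $\overline{\C(w)}$ to remain in $\partial\Omega(e)$'') are asserted, not argued, and the first is precisely the hard configuration. More tellingly, your argument never uses the cocompactness hypotheses of Lemma~\ref{lem:cocompact}, which are part of the setting of this lemma and are essential in the paper's proof. The paper argues by contradiction with a ray $[x,\xi)\subset\partialn\C$, $\xi\in\partiali\C$: strict convexity of the $\C_\mathsf{v}$ confines it to the sets $\overline{\C_\mathsf{e}\smallsetminus(\C_{\mathsf{o}(\mathsf{e})}\cup g_\mathsf{e}\cdot\C_{\mathsf{t}(\mathsf{e})})}\cap\Omega_\mathsf{e}$; the dual occultation statement above prevents it from crossing between two adjacent such sets; and then the cocompact action of the edge group on the remaining single such set is used to translate points of the ray into a compact fundamental domain and extract, in the limit, a segment joining two points of $\partiali\C_{\mathsf{o}(\mathsf{e})}$ through a point of~$\Omega$, which strict convexity forces into $\Int{\C_{\mathsf{o}(\mathsf{e})}}$ — a contradiction (this step also uses Facts~\ref{fact:cc-subsets}, \ref{fact:ideal-bound-cc}, \ref{fact:bisat-cc}). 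Without this group-theoretic input, a supporting hyperplane of $\overline{\Conv{\C(v)\cup\C(w)}}$ touching the hull along a segment with one endpoint in $\partiali\C(v)$ and the other a nonideal point is not excluded by strict convexity and occultation alone, so bisaturation of the individual pieces — and hence your whole reduction — remains unestablished.
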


\begin{proof}
Suppose by contradiction that $\C$ does \emph{not} have bisaturated boundary: this means that there is an infinite ray $[x,\xi)$ in $\partialn \C$ with $\xi \in \partiali \C$.
Recall (Lemma~\ref{lem:C-closed}) that $\C$ is a union, parametrized by the edges of the Bass--Serre tree $\mathscr T(\boldsymbol \Gamma, \mathsf Y, \mathsf v_0)$, of translates of sets of the form $\C_{\mathsf{e}} =\overline{\Conv{ \C_{\mathsf{o}(\mathsf{e})} \cup g_\mathsf{e} \cdot \C_{\mathsf{t}(\mathsf{e})}}} \cap \Omega_\mathsf{e}$.
Further, two such sets intersect if and only if the corresponding edges are adjacent, \ie share a vertex, in $\mathscr T(\boldsymbol \Gamma, \mathsf Y, \mathsf v_0)$. It follows that the ray $[x,\xi)$ is contained in the nonideal boundary of a single such set, or otherwise must cross the nonideal boundary of two adjacent such sets. We rule out both possibilities.

By assumption, each convex set $\C_\mathsf{v}$, for $\mathsf{v} \in \mathsf{V}$, has strictly convex nonideal boundary. Therefore whenever the ray $[x,\xi)$ meets a translate $\underline{\rho}([\mathsf{c},\mu]) \C_{\mathsf{e}}$ of some $\C_{\mathsf{e}}$, it in fact meets the subset:
\begin{equation} \label{eqn:set-of-form}
\underline{\rho}([\mathsf{c},\mu]) \  \overline{\C_\mathsf{e} \smallsetminus \big( \C_{\mathsf{o}(\mathsf{e})} \cup g_\mathsf{e} \cdot \C_{\mathsf{t}(\mathsf{e})}\big)} \cap \Omega_\mathsf{e}.
\end{equation}

We claim that the ray $[x,\xi)$ cannot meet two sets of the form \eqref{eqn:set-of-form} corresponding to neighboring edges in  $\mathscr T(\boldsymbol \Gamma, \mathsf Y, \mathsf v_0)$.
Indeed, observe that for any $\mathsf{e}, \mathsf{e}' \in \vec{\mathsf{E}}$, with $\mathsf{o}(\mathsf{e}) = \mathsf{o}(\mathsf{e}') =: \mathsf{v}$, and any $\gamma \in \Gamma_{\mathsf{v}}$, such that $\mathsf{e} \neq \mathsf{e}'$ or $\mathsf{e} = \mathsf{e}'$ and $\gamma \notin \Gamma_\mathsf{e}$, 
the intersection $$\partial(\C_{\mathsf{o}(\mathsf{e})}^\circ)^* \cap \partial(g_\mathsf{e} \cdot \C_{\mathsf{t}(\mathsf{e})}^\circ)^* \cap \partial(\rho_\mathsf{v}(\gamma) g_{\mathsf{e}'} \cdot \C_{\mathsf{t}(\mathsf{e}')}^\circ)^*$$
is empty, since $\partial(\C_{\mathsf{o}(\mathsf{e})}^\circ)^* \subset (g_\mathsf{e} \cdot \C_{\mathsf{t}(\mathsf{e})}^\circ)^* \cup (\rho_\mathsf{v}(\gamma) g_{\mathsf{e}'} \cdot \C_{\mathsf{t}(\mathsf{e}')}^\circ)^*$ by the occultation assumption on $(g_\mathsf{e} \cdot \Int{\C_{\mathsf{t}(\mathsf{e})}}, \Int{\C_{\mathsf{o}(\mathsf{e})}}, \rho_\mathsf{v}(\gamma) g_{\mathsf{e}'} \cdot \Int{\C_{\mathsf{t}(\mathsf{e}')}})$.
In other words, there is no common supporting hyperplane to all three sets $\C_{\mathsf{o}(\mathsf{e})}^\circ$, $g_\mathsf{e} \cdot \C_{\mathsf{t}(\mathsf{e})}^\circ$, $\rho_\mathsf{v}(\gamma) g_{\mathsf{e}'} \cdot \C_{\mathsf{t}(\mathsf{e}')}^\circ$. 
It follows that the ray $[x, \xi)$, which lives in the nonideal boundary of $\C$, cannot cross from one set of the form~\eqref{eqn:set-of-form} to a neighboring (for the tree structure) set of that form. 

Therefore, the only remaining possibility is that the ray $[x,\xi)$ is fully contained in \eqref{eqn:set-of-form} for some $\mathsf{e} \in \vec{\mathsf{E}}$.
Since the action of $\Gamma_{\mathsf{e}}$ on \eqref{eqn:set-of-form} is cocompact by assumption, the ray $[x,\xi)$ is contained in some uniform neighborhood $\C^u$ of $\C_{\mathsf{o}(\mathsf{e})}$ in $(\Omega,d_{\Omega})$.
We claim that $\partiali \C^u = \partiali \C_{\mathsf{o}(\mathsf{e})}$.
Indeed, since $\C_{\mathsf{o}(\mathsf{e})}$ has strictly convex nonideal boundary by assumption, it has bisaturated boundary, hence the action of $\Gamma_{\mathsf{e}}$ on $\Int{\C_{\mathsf{o}(\mathsf{e})}}$ is convex cocompact by Fact~\ref{fact:bisat-cc}, and the action of $\Gamma_{\mathsf{e}}$ on~$\Omega$ is also convex cocompact by Fact~\ref{fact:cc-subsets}; then Fact~\ref{fact:ideal-bound-cc} yields $\partiali \C^u = \partiali \C_{\mathsf{o}(\mathsf{e})}$.
Consider a sequence $(x_n)_{n\in\NN}$ of points of $[x,\xi)$ converging to~$\xi$.
Let $\mathcal{D}_\mathsf{e}$ be a compact fundamental domain for the action of $\Gamma_\mathsf{e}$ on the set \eqref{eqn:set-of-form}.
For each~$n$, there exists $\gamma_n \in \Gamma_\mathsf{e}$ such that $\gamma_n \cdot x_n \in \mathcal{D}_{\mathsf{e}}$.
Up to passing to a subsequence, we may assume that $(\gamma_n\cdot x_n)_{n\in\NN}$ converges to some $x'_{\infty} \in \mathcal{D}_{\mathsf{e}}$, that $(\gamma_n\cdot\xi)_{n\in\NN}$ converges to some $\xi_{\infty} \in \partial \Omega$, and that $(\gamma_n\cdot x)_{n\in\NN}$ converges to some $x_{\infty} \in \partial \Omega$, with $x_{\infty}, x'_{\infty}, \xi_{\infty}$ aligned in this order. 
The points $x_\infty$ and~$\xi_{\infty}$ both belong to $\overline{\C^u} \cap \partial \Omega = \partiali \C^u = \partiali\C_{\mathsf{o}(\mathsf{e})}$.
Since $x'_\infty \in \Omega$, the open segment $(x_\infty, \xi_\infty) \subset \overline{\C_{\mathsf{o}(\mathsf{e})}}$ must be contained in $\C_{\mathsf{o}(\mathsf{e})}$. 
But $\C_{\mathsf{o}(\mathsf{e})}$ has strictly convex nonideal boundary by assumption, hence the open segment $(x_\infty, \xi_\infty) \subset \overline{\C_\mathsf{e} \smallsetminus ( \C_{\mathsf{o}(\mathsf{e})} \cup g_\mathsf{e} \cdot \C_{\mathsf{t}(\mathsf{e})})}$ must be contained in  $\C^\circ_{\mathsf{o}(\mathsf{e})}$: contradiction since $[x,\xi)$ was assumed to be contained in \eqref{eqn:set-of-form}.
\end{proof}

\subsection{Proof of Theorem~\ref{thm:free-product-cc}} \label{subsec:proof-free-product-cc}

The strategy is similar to the proof of Theorem~\ref{thm:free-product-fd} in Section \ref{subsec:proof-free-product-fd}, applying Theorem~\ref{thm:gog-cc} instead of Theorem~\ref{thm:general-gog}.
It is illustrated in Figure~\ref{fig:free}, bottom panel.

We start with the case that the action of  $\Gamma_i$ on $\Omega_i$ is naively convex cocompact for both $i\in\{ 0,1\}$.
As in Section~\ref{subsec:proof-free-product-fd}, we construct 
$\Omega'_i \subset \Omega_i$, the pyramids $\Delta_i \subset \Omega'_i$, the elements $g_i, \beta, g_{\mathsf{e}_i} \in \PGL(V)$ and the $\beta$-invariant convex set $\Omega \subset \PP(V)$.
As in Section~\ref{subsec:proof-free-product-fd}, using Facts \ref{fact:double-lim-set} and~\ref{fact:lattices}, we may choose $g_0$, $g_1$, and~$\beta$ (hence $g_{\mathsf{e}_0}$ and~$g_{\mathsf{e}_1}$) inside any given lattice of $\PGL(V)$, or more generally inside any given Zariski-dense subgroup of $\PGL(V)$ whose proximal limit sets in $\PP(V)$ and $\PP(V^*)$ are everything.
We also define the smaller pyramid $\Delta'_i \subset \Delta_i$ with tip $x''_i\in g_i \cdot \Omega$, and the $\Gamma_i$-invariant set
$$\Omega''_i:= \Big ( \Omega'_i \smallsetminus \bigsqcup_{\gamma\in \Gamma_i} \gamma \cdot \Delta_i \Big ) \cup \bigsqcup_{\gamma\in \Gamma_i} \gamma \cdot \Delta'_i \quad \subset \Omega'_i$$
just as in the proof of Theorem~\ref{thm:free-product-fd}, so that the triple
\begin{equation}
\label{eq:bob}
(g_{\mathsf{e}_i}^{-1} \cdot \Omega,~ \Omega''_i,~ \gamma g_{\mathsf{e}_i}^{-1} \cdot \Omega)
\end{equation} 
of convex sets is in occultation position for all $\gamma \in \Gamma_i \smallsetminus \{1\}$, as in~\eqref{eq:betaN2}.

Next, fix a nonempty closed convex subset $\C'_i$  of $\Omega'_i$ such that $\Gamma_i \backslash \C'_i$ is compact.
We can find a uniform neighborhood $\C^+_i$ of $\C'_i$ in the Hilbert metric of $\Omega'_i$ such that $g_{\mathsf{e}_i}^{-1} \cdot \overline{\Omega} \subset \Int{(\C^+_i)}$. 
(Recall that uniform Hilbert neighborhoods of convex sets are convex, see \cite[(18.12)]{bus55}.)
We cone $\Int{(\C^+_i)} \cap H$ off to a point $x'''_i\in (g_{\mathsf{e}_i}^{-1} \cdot \Omega) \cap \Delta'_i$ to form a pyramid $\Delta''_i \subset \Delta'_i$, and define 
$$\C''_i:= \Big ( \C^+_i \smallsetminus \bigsqcup_{\gamma\in \Gamma_i} \gamma \cdot \Delta_i \Big ) \cup \bigsqcup_{\gamma\in \Gamma_i} \gamma \cdot \overline{\Delta''_i} \quad \subset \C^+_i \cap \Omega''_i$$
which is closed in $\Omega''_i$.
By construction, for any $\gamma \in \Gamma_i \smallsetminus \{1\}$, the convex sets 
\begin{equation} \label{eq:bcb}
(g_{\mathsf{e}_i}^{-1} \cdot \Omega,~ \Int{(\C''_i)}, \gamma g_{\mathsf{e}_i}^{-1} \cdot \Omega)
\end{equation} 
are still in occultation position.

We may find a closed strictly convex $\C \subset \Omega$ such that the triple
$$(g_{\mathsf{e}_0} \cdot \Omega''_0, \Int{\C}, g_{\mathsf{e}_1} \cdot \Omega''_1)$$
of properly convex open sets is in occultation position, and therefore so are the triples
\begin{equation} \label{eq:obo}
 (g_{\mathsf{e}_0} \cdot \Omega''_0, \Omega, g_{\mathsf{e}_1} \cdot \Omega''_1)  \quad \text{ and } \quad 
 (g_{\mathsf{e}_0} \cdot \Int{(\C''_0)}, \Int{\C}, g_{\mathsf{e}_1} \cdot \Int{(\C''_1)}),
\end{equation}
because $\Omega \supset \C$ and $\C''_i\subset \Omega''_i$.
Note finally that the triple \eqref{eq:bcb} is still in occultation position if we replace $g_{\mathsf{e}_i}^{-1} \cdot \Omega$ by any smaller convex open set meeting~$\C''_i$: in particular, the triple
\begin{equation} \label{eq:cCc} 
(g_{\mathsf{e}_i}^{-1} \cdot \Int{\C}, \Int{(\C''_i)}, \gamma g_{\mathsf{e}_i}^{-1} \cdot \Int{\C})
\end{equation}
is in occultation position.

Let $\mathsf{Y} = (\mathsf{V},\mathsf{E})$ be the graph with three vertices $\mathsf{V} = \{\mathsf{v}_0, \mathsf{w}, \mathsf{v}_1\}$ and two edges $(\mathsf{w}, \mathsf{v}_0)$ and $(\mathsf{w}, \mathsf{v}_1)$. 
For $i \in \{0,1\}$, write $\mathsf{e}_i \in \vec{\mathsf{E}}$ for the directed edge from $\mathsf{w}$ to $\mathsf{v}_i$. 
Consider the graph of groups $(\boldsymbol{\Gamma}, \mathsf{Y})$ with $\Gamma_{\mathsf{v}_i} = \Gamma_i$ and $\Gamma_{\mathsf{w}}\simeq \Gamma_{|\mathsf{e}_i|} \simeq \{1\}$. 
Let $\rho_{\mathsf{v}_i}:\Gamma_i \rightarrow \PGL(V)$ be the inclusion, 
let $g_{\mathsf{e}_i} \in \PGL(V)$ be as above, and 
let $(\Omega_{\mathsf{v}_0}, \Omega_\mathsf{w}, \Omega_{\mathsf{v}_1}):=(\Omega''_0, \Omega, \Omega''_1)$ and $(\C_{\mathsf{v}_0}, \C_\mathsf{w}, \C_{\mathsf{v}_1}):=(\C''_0, \C, \C''_1)$.

Theorem~\ref{thm:gog-cc} applies to this datum, due to the occultation position of the triples of \eqref{eq:bob}, \eqref{eq:obo}, and \eqref{eq:cCc}. 
Since the group $\pi_1(\boldsymbol{\Gamma}, \mathsf{Y}, \mathsf{w})$ is naturally isomorphic to $\Gamma_0 * \Gamma_1$, this proves Theorem~\ref{thm:free-product-cc} with 
$g=g_{\mathsf{e}_0}^{-1} g_{\mathsf{e}_1}$, in the naively convex cocompact case.

For the convex cocompact case, in order to apply Theorem~\ref{thm:gog-cc} we only need to ensure in addition that the $\C''_i$ can be taken with strictly convex nonideal boundary. 
Since the action of $\Gamma_i$ on $\Omega_i$ (and hence on $\Omega'_i$) is convex cocompact, we may assume that the original set $\C'_i \subset \Omega'_i$ has strictly convex nonideal boundary, hence so does its uniform neighborhood $\C^+_i$.
The only region where the final set $\C''_i$ defined above may have nontrivial segments in its nonideal boundary is thus the pyramid $\Delta''_i$ (and its orbit), and all such segments end at the tip $x'''_i$ (because the base $\C^+_i \cap H_i$ of $\Delta''_i$ is strictly convex). 
We can modify $\Delta''_i$ to get rid of these segments, \eg by viewing $\partial \Delta''_i  \smallsetminus H$ in some affine chart as the graph of a small positive scalar function defined over the base of $\Delta''_i$, and composing that function with $\tanh$ (or any strictly concave contracting monotonic map $\varphi:(\RR^+,0) \rightarrow (\RR^+,0)$ close enough to the identity near~$0$).

This completes the proof of Theorem~\ref{thm:free-product-cc}.

\subsection{A variant of Theorem~\ref{thm:free-product-cc} in a semisimple Lie group}

Here is a version of Theorem~\ref{thm:free-product-in-G} in which the groups $\Gamma_0, \Gamma_1$ (input) and their free product (output) all satisfy a convex cocompactness property.
This is akin to the relationship between Theorems~\ref{thm:free-product-fd} and~\ref{thm:free-product-cc} in Section~\ref{subsec:proof-free-product-cc} above, and will be used in Section~\ref{sec:appli-anosov} to prove Theorem~\ref{thm:Anosov-free-product-G/P}.

As in Theorem~\ref{thm:free-product-in-G}, we write $\mathcal{F}$ for the space of partial flags $(x,X)$ with $x \in \PP(V)$ and $X \in \PP(V^*)$.

\begin{theorem} \label{thm:free-product-cc-in-G}
Let $G$ be a noncompact closed connected subgroup of $\PGL(V)$ which is semisimple, acts irreducibly on $\PP(\mathrm{span}(\Lambda_G^{\PP(V)}))$, and preserves a nonempty properly convex open subset $\Omega$ of $\PP(V)$.
For $i\in\{0,1\}$, let $\Gamma_i$ be a discrete subgroup of~$G$ acting convex cocompactly on some properly convex open subset of $\PP(V)$ intersecting~$\Omega$, and let $\Gamma$ be a Zariski-dense subgroup of~$G$ (\eg $G$ itself).
Suppose that for each $i\in\{0,1\}$ there is a flag $F_i \in \Lambda_{\Gamma}^{\mathcal{F}}$ which is uniformly transverse to $(\Gamma_i \smallsetminus\{1\}) \cdot F_i$.
Then there exists $g\in\Gamma$ such that the representation $\rho : \Gamma_0 * g\Gamma_1 g^{-1} \to G$ induced by the inclusions $\Gamma_0, g\Gamma_1 g^{-1} \hookrightarrow G$ is faithful with a discrete image which is convex cocompact in $\PP(V)$.
\end{theorem}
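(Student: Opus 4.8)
The plan is to mimic the proof of Theorem~\ref{thm:free-product-in-G} given above, but to upgrade the ping-pong construction so that it also carries along convex cocompact cores, exactly as Theorem~\ref{thm:free-product-cc} upgrades Theorem~\ref{thm:free-product-fd}. First I would repeat Steps 1--3 of the proof of Theorem~\ref{thm:free-product-in-G} verbatim: for each $i\in\{0,1\}$ produce a $\Gamma_i$-invariant properly convex open set $\Omega_i \supset \Omega \cup \{x_i\}$ and its dual $\mathcal{O}_i \supset \Omega^* \cup \{X_i\}$, establish stability of uniform transversality under small deformations (so we get neighborhoods $\mathcal{W}'_i = \mathcal{U}'_i \times \mathcal{V}_i$ of $F_i$), and construct the ``hats'' $\Delta_i$ as conical neighborhoods of $x_i$ cut off by hyperplanes $H_i$ near $X_i$. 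The one extra ingredient needed in Step 1 is that $\Gamma_i$ should act convex cocompactly on $\Omega_i$: this follows from Fact~\ref{fact:cc-subsets} since, by hypothesis, $\Gamma_i$ acts convex cocompactly on some properly convex open set intersecting $\Omega$ (hence intersecting $\Omega_i$), and any $\Gamma_i$-invariant properly convex open set containing $\Ccore$ of such a set inherits convex cocompactness; alternatively one invokes Lemma~\ref{lem:Omega-max} to see that $\Omega_i$ can be chosen inside $\Omega_i^{\max}$ and still be large enough. Then in Step~3 I would also choose, inside the hat, a $\Gamma_i$-invariant \emph{closed} convex cocompact core: take $\C_i^{\mathsf{unif}}$ a uniform neighborhood of $\Ccore_{\Omega_i}(\Gamma_i)$, intersect with a uniform neighborhood of the cone $\Conv{\Gamma_i\cdot x_i}$, and apply Fact~\ref{fact:strict-C1-nbhd} to get a $\Gamma_i$-invariant properly convex closed neighborhood with $C^1$, strictly convex nonideal boundary; the hyperplane $H_i$ then also chops a ``hat'' $\Delta_i^{\C}$ out of this core.

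Next, in Step~4, I would run the same conjugation argument: apply \cite[Prop.\,3.3]{ggkw17} to obtain the self-opposite parabolic $P$ of $G$ with $G/P \hookrightarrow \mathcal{F}$ identifying transversality, use Fact~\ref{fact:double-lim-set-G/P} and Zariski density of $\Gamma$ to find a proximal $\gamma\in\Gamma$ with attracting flag in $\mathcal{W}'_0$ and repelling flag transverse to $\mathcal{W}'_1$, conjugate $\Gamma_1$ by a high power $\gamma^n$ so that $\mathcal{W}'_0\cap\mathcal{W}'_1\cap\Lambda_\Gamma^{\mathcal{F}}\neq\varnothing$, and then find a biproximal $h\in\Gamma$ whose attracting and repelling flags both lie in $\mathcal{W}'_0\cap\mathcal{W}'_1$. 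By Proposition~\ref{prop:thicken-add-dim-general} the axis $(y^-,y^+)$ has an $h$-invariant properly convex open neighborhood $B\subset\Delta_0\cap\Delta_1\cap\Omega$; I would additionally pick a \emph{strictly convex} closed $h$-invariant core $\C\subset B$ by the same thickening argument, so that $\C$ has strictly convex nonideal boundary. Then, picking $z\in\C$, I would form the pyramids $\Omega_i^z$ as in \eqref{eq:pyrchir} and also the cored pyramids
$$\C_i^z := \Big(\Int{\C_i^\varepsilon}\smallsetminus\bigsqcup_{\gamma\in\Gamma_i}\gamma\cdot\Delta_i\Big)\cup\bigsqcup_{\gamma\in\Gamma_i}\gamma\cdot\overline{\Delta_i^{z}}\quad\subset\Omega_i^z,$$
a $\Gamma_i$-invariant closed convex subset of $\Omega_i^z$ with strictly convex nonideal boundary (after the same $\tanh$-smoothing of the pyramid tips used at the end of the proof of Theorem~\ref{thm:free-product-cc}, to remove the segments ending at the tip $z$). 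One checks, exactly as for \eqref{eq:oryx}, that for $N$ large the triples $(B,\Omega_i^z,\gamma\cdot B)$, $(\Int\C,\Int\C_i^z,\gamma\cdot\Int\C)$, $(h^N\cdot\Omega_0^z,B,h^{-N}\cdot\Omega_1^z)$, and $(h^N\cdot\Int\C_0^z,\Int\C,h^{-N}\cdot\Int\C_1^z)$ are all in occultation position, the cocompactness of the relevant edge pieces being automatic here since the edge group is trivial.

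Finally I would apply Theorem~\ref{thm:gog-cc} to the graph of groups $[(\Gamma_0)\overset{\mathsf{e}_0}{\longleftarrow}(1)\overset{\mathsf{e}_1}{\longrightarrow}(\Gamma_1)]\simeq\Gamma_0*_{\{1\}}\Gamma_1$, with vertex data $(\Omega_{(\Gamma_i)},\C_{(\Gamma_i)})=(\Omega_i^z,\C_i^z)$, $(\Omega_{(1)},\C_{(1)})=(B,\C)$, and $(g_{\mathsf{e}_0},g_{\mathsf{e}_1})=(h^N,h^{-N})$. Part~\eqref{item:loong1} gives the discrete faithful $\rho$ preserving a properly convex open $\Omega$; part~\eqref{item:loong2} (the graph is finite, each $\Gamma_i$ acts cocompactly on $\C_i^z$ by construction, and the trivial edge groups act cocompactly on their trivially-bounded pieces) gives naive convex cocompactness; and part~\eqref{item:loong3} (all vertex convex sets have strictly convex nonideal boundary) gives that $\C$ has bisaturated boundary, hence by Facts~\ref{fact:cc-subsets}--\ref{fact:bisat-cc} that $\Gamma_0*g\Gamma_1g^{-1}$ acts convex cocompactly on $\Omega$, i.e.\ is convex cocompact in $\PP(V)$. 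Taking $g=g_{\mathsf{e}_0}^{-1}g_{\mathsf{e}_1}=h^{-2N}$ (times the earlier conjugator $\gamma^n$) finishes the proof. The main obstacle I anticipate is bookkeeping the strict convexity through all the pyramid/cone surgeries: one must verify that $\C_i^z$ genuinely has strictly convex nonideal boundary everywhere — away from the pyramids this is inherited from the Fact~\ref{fact:strict-C1-nbhd} core, but at the interface with the pyramids $\gamma\cdot\Delta_i$ and at the smoothed tips $\gamma\cdot z$ one needs the $\tanh$-type modification argument from the end of Section~\ref{subsec:proof-free-product-cc} to be compatible with $\Gamma_i$-invariance and with the occultation positions already arranged; this is the only genuinely delicate point, and it is entirely parallel to what was already done for Theorem~\ref{thm:free-product-cc}.
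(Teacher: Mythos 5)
Your proposal is correct and follows essentially the same route as the paper: repeat Steps 1--4 of the proof of Theorem~\ref{thm:free-product-in-G}, arrange convex cocompactness of the action on the enlarged sets $\Omega_i$ via Fact~\ref{fact:cc-subsets}, equip the pyramidal sets $\Omega_i^z$ with $\Gamma_i$-invariant closed cores with strictly convex nonideal boundary (smoothing the coned-off tips), check occultation for both the open sets and the cores, and apply Theorem~\ref{thm:gog-cc}.(1)--(3) to the two-edge graph of groups with trivial edge groups and $(g_{\mathsf{e}_0},g_{\mathsf{e}_1})=(h^N,h^{-N})$. The only cosmetic difference is how the strict-convexity bookkeeping you flag is handled: the paper first shrinks $\Omega_i$ (via \cite[Lem.\,9.2]{dgk-proj-cc}) so that $\partial\Omega_i\smallsetminus\Lambdao_{\Omega_i}(\Gamma_i)$ contains no nontrivial segments, making the uniform neighborhoods $\C_i^{\varepsilon},\C_i^{\varepsilon'}$ of $\Conv{\Gamma_i\cdot x_i}$ automatically strictly convex, and then replaces the coned pyramid by a slightly smaller convex region ${\Delta''_i}^z$ with strictly convex boundary away from its base and with $z$ not in its closure, which is exactly your $\tanh$-type smoothing.
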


\begin{proof}
For $i\in \{0,1\}$, we write the flag $F_i \in \Lambda_{\Gamma}^{\mathcal{F}}$ as $(x_i,X_i)  \in \PP(V) \times \PP(V^*)$.
We follow the four-step proof strategy of Theorem~\ref{thm:free-product-in-G} (Section~\ref{subsec:free-product-in-G}), highlighting mainly the relevant changes.

In Step~1, we now wish to find a $\Gamma_i$-invariant properly convex open subset $\Omega_i$ of $\PP(V)$ containing $\Omega$ and~$x_i$, \emph{on which $\Gamma_i$ acts convex cocompactly}.
By assumption, there is a $\Gamma_i$-invariant properly convex open subset $\Omega'_i$ of $\PP(V)$ intersecting~$\Omega$ on which $\Gamma_i$ acts convex cocompactly.
We know from Step~1 of the proof of Theorem~\ref{thm:free-product-in-G} that $X_i$ is contained in a $\Gamma_i$-invariant properly convex open subset of $\PP(V^*)$, and so the set $\Lambda_i^* \subset \PP(V^*)$ of accumulation points of the $\Gamma_i$-orbit of~$X_i$ contains the proximal limit set $\Lambda_{\Gamma_i}^{\PP(V^*)}$ of $\Gamma_i$ in $\PP(V^*)$, which is nonempty by \cite[Prop.\,2.3.15]{bla-PhD}.
Both $\Omega$, $x_i$ and~$\Omega'_i$ are then contained in a common connected component $\Omega_i^{\max}$ of
$$\PP(V) \smallsetminus \bigcup_{H\in\Lambda_{\Gamma_i}^{\PP(V^*)}} H.$$
This component $\Omega_i^{\max}$ is a $\Gamma_i$-invariant convex open subset of $\PP(V)$.
Let $\mathcal{D}_i$ be a compact fundamental domain for the action of $\Gamma_i$ on $\Ccore_{\Omega'_i}(\Gamma_i)$.
By Proposition~\ref{prop:thicken-not-proper}, there is a $\Gamma_i$-invariant properly convex open subset $\Omega_i$ of $\Omega_i^{\max}$ containing both $\Omega$ and $\{ x_i\} \cup \mathcal{D}_i$.
Then the action of $\Gamma_i$ on~$\Omega_i$ is convex cocompact by Fact~\ref{fact:cc-subsets}.

By \cite[Lem.\,9.2]{dgk-proj-cc}, up to replacing $\Omega_i$ by a smaller $\Gamma_i$-invariant properly convex open subset still containing $\Omega \cup \{x_i\}$ (and on which the action of $\Gamma_i$ is still convex cocompact), we may and shall assume that $\partial\Omega_i \smallsetminus \Lambdao_{\Omega_i}(\Gamma_i)$ does not contain any nontrivial projective segments.
  
Step~2 is unchanged: as per Theorem~\ref{thm:free-product-in-G}.(1), for each $i\in\{0,1\}$ there is a neighborhood $\mathcal{W}_i = \mathcal{F} \cap (\mathcal{U}_i \times \mathcal{V}_i)$ of $F_i$ in~$\mathcal{F}$ such that $F'_i$ is uniformly transverse to $(\Gamma_i\smallsetminus\{1\}) \cdot F''_i$ for all $F'_i,F''_i \in \mathcal{W}_i$.
 
In Step~3, recall the convex hull $\C_i := \Conv{\Gamma_i \cdot x_i} \subset \Omega_i$, its $\varepsilon$-neighborhood $\C_i^\varepsilon$ in $(\Omega_i, d_{\Omega_i})$, and the ``hat'' $\Delta_i \ni x_i$ that a hyperplane $H_i$ chops off $\C_i^\varepsilon$. 
Given a positive $\varepsilon'<\varepsilon$, define in the same manner a neighborhood $\C_i^{\varepsilon'}$ and hat $\Delta'_i \subset \C_i^{\varepsilon'}$ (chopped off by the same hyperplane $H_i$).
Both $\C_i^{\varepsilon}$ and $\C_i^{\varepsilon'}$ have strictly convex nonideal boundary, because $\partial\Omega_i \smallsetminus \Lambdao_{\Omega_i}(\Gamma_i)$ does not contain any nontrivial projective segments.
We may assume that any hyperplane $X \in \mathcal{V}_i$ satisfies both $X\cap \C_i^\varepsilon = X\cap \Delta_i \neq \varnothing$ and $X\cap \C_i^{\varepsilon'} = X\cap \Delta'_i \neq \varnothing$.
We also choose the neighborhood $\mathcal{U}'_i$ of $x_i$ small enough to fit in the smaller hat,~$\Delta'_i$.
 
In Step~4, the construction produces a proximal element $h\in \Gamma$ whose axis $(y^-, y^+)$ (and its convex $h$-invariant neighborhood $B$) is contained in $\Delta'_0 \cap \Delta'_1 \cap \Omega$.
Next, having defined the open pyramid $\Delta_i^z \subset \Delta_i$ for $z\in B$, let also ${\Delta'_i}^z \subset \Delta'_i$ be the pyramid with vertex $z$ and  base $H_i \cap (\C_i^{\varepsilon'})^\circ$.
We can construct a slightly smaller convex region ${\Delta''_i}^z \subset {\Delta'_i}^z$ sharing the same base ($\overline{{\Delta''_i}^z} \cap H_i = \overline{{\Delta'_i}^z} \cap H_i = \C_i^{\varepsilon'} \cap H_i$), still intersecting $B$ but \emph{not} containing $z$ in its closure, and with strictly convex boundary away from the base.

We now define
\begin{align*} 
 \Omega_i^z & := \Int{(\C_i^{\varepsilon})} \: \smallsetminus \: \bigsqcup _{\gamma \in \Gamma_i} (\gamma \cdot \Delta_i)  \: \cup \: \bigsqcup_{\gamma \in \Gamma_i} (\gamma \cdot \Delta_i^z) \:\: \subset \Int{(\C_i^{\varepsilon})} & \text{ as in \eqref{eq:pyrchir};} \\
 \C_i^z & := (\C_i^{\varepsilon'}) \: \smallsetminus \: \bigsqcup _{\gamma \in \Gamma_i} (\gamma \cdot \overline{\Delta'_i})  \: \cup \: \bigsqcup_{\gamma \in \Gamma_i} (\gamma \cdot \overline{{\Delta''_i}^z}) \:\:  \subset  \Omega_i^z.
 \end{align*}
 By construction, $\Omega_i^z$ is open, convex, and $\C_i^z$ is closed in $\Omega_i^z$, with strictly convex boundary, and cocompact modulo $\Gamma_i$. 
Choose a compact, strictly convex subset $C \subset B$ whose interior $C^\circ$ still intersects $(\C_0^z)^\circ$ and $(\C_1^z)^\circ$ (that is to say, ${\Delta''_0}^z$ and ${\Delta''_1}^z$).
 The triples
$$ \begin{array}{ll}
 (B~,~ \Omega_i^z~,~ \gamma \cdot B) & (h^N \cdot \Omega_0^z~,~ B~,~ h^{-N} \cdot \Omega_1^z) \\
 (C^\circ~,~ (\C_i^z)^\circ~,~ \gamma \cdot C^\circ) & (h^N \cdot (\C_0^z)^\circ~,~ C^\circ~,~ h^{-N} \cdot (\C_1^z)^\circ)
 \end{array} $$
are in occultation position. 
Theorem~\ref{thm:gog-cc}.\eqref{item:loong1} applies to this datum, for the graph of groups $\big [ (\Gamma_0) \overset{\mathsf{e}_0}\longleftarrow (1) \overset{\mathsf{e}_1}\longrightarrow (\Gamma_1) \big ] \simeq \Gamma_0 *_{\{1\}} \Gamma_1$ and $(g_{\mathsf{e}_0}, g_{\mathsf{e}_1})=(h^N, h^{-N})$.
So does Theorem~\ref{thm:gog-cc}.\eqref{item:loong2}, because $C$ and $\Gamma_i \backslash \C_i^z$ are compact, as are the $\{1\} \backslash (C\cap \C_i^z)$ 
(where $\{1\}$ stands for an edge group). 
So does Theorem~\ref{thm:gog-cc}.\eqref{item:loong3}, because $C$ and $\C_i^z$ have strictly convex nonideal boundary.

Therefore, the representation $\rho : \Gamma_0 * g\Gamma_1 g^{-1} \to G$ induced by the inclusions $\Gamma_0, g\Gamma_1 g^{-1} \hookrightarrow G$ is discrete and faithful and its image is convex cocompact in $\PP(V)$.
\end{proof}

\subsection{Convex cocompact subgroups of lattices}

Here is another consequence of Theorems~\ref{thm:general-gog} and~\ref{thm:gog-cc}.

\begin{proposition} \label{prop:lattice-pingpong-subgroup}
Let $\Gamma$ be any lattice of $\PGL(V)$, or more generally any Zariski-dense subgroup of $\PGL(V)$ satisfying $\Lambda^{\PP(V)}_{\Gamma} = \PP(V)$ and $\Lambda^{\PP(V^*)}_{\Gamma} = \PP(V^*)$.
Let $\Omega$ be any nonempty properly convex open subset of $\PP(V)$.
Then $\Gamma$ contains a nonabelian free subgroup $\Gamma'$ acting convex cocompactly on some properly convex open subset $\Omega'$ of $\PP(V)$,
which can be chosen as close as we like to~$\Omega$, with full orbital limit set $\Lambdao_{\Omega'}(\Gamma')$ arbitrarily close to $\partial \Omega$ (for the spherical Hausdorff distance).
\end{proposition}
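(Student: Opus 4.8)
The plan is to realize $\Gamma'$ as a free product of many conjugates of cyclic groups, all built by the ping-pong machinery of Theorem~\ref{thm:gog-cc}, with the conjugating elements and the base convex set chosen so that the resulting invariant convex set hugs $\Omega$ and the orbital limit set becomes dense in $\partial\Omega$. First I would fix a finite $\delta$-net $\{z_1,\dots,z_k\}$ of $\partial\Omega$ (for the spherical metric), and for each $j$ choose a point $y_j$ lying just outside $\overline{\Omega}$, very close to $z_j$, together with a biproximal element $\beta_j\in\PGL(V)$ whose attracting fixed point is $y_j$ and whose repelling hyperplane $Y_j^-$ misses $\overline\Omega$; by Fact~\ref{fact:double-lim-set} and Fact~\ref{fact:lattices} (or the hypothesis $\Lambda^{\PP(V)}_\Gamma=\PP(V)$, $\Lambda^{\PP(V^*)}_\Gamma=\PP(V^*)$) each $\beta_j$ may be taken inside~$\Gamma$. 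Replacing each $\beta_j$ by a high power, we may assume $\beta_j$ preserves and acts cocompactly on the open segment $(y_j^-,y_j^+)$, which by Proposition~\ref{prop:thicken-add-dim-general} lies in a $\beta_j$-invariant properly convex open set $\Omega_j$; shrinking $\Omega_j$ to a uniform neighborhood of that segment, we may arrange $\Omega_j$ to be a thin "sausage" nested between $\partial\Omega$ near $z_j$ and a small neighborhood of $y_j$, disjoint from (but accumulating onto $z_j$ of) $\Omega$.

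Next I would set up a graph of groups: a central vertex $\mathsf w$ with trivial vertex group, edges $\mathsf e_1,\dots,\mathsf e_k$ joining $\mathsf w$ to vertices $\mathsf v_1,\dots,\mathsf v_k$ with $\Gamma_{\mathsf v_j}=\langle\beta_j\rangle\cong\ZZ$ and all edge groups trivial, so that $\pi_1(\boldsymbol\Gamma,\mathsf Y,\mathsf w)\cong\langle\beta_1\rangle\ast\cdots\ast\langle\beta_k\rangle$, a nonabelian free group. As the vertex convex set at $\mathsf w$ I would take a slight properly convex thickening $\Omega_{\mathsf w}$ of $\Omega$ (using Proposition~\ref{prop:thicken-not-proper} if $\Omega$ were not already properly convex — here it is), chosen so close to $\Omega$ that $\Omega_{\mathsf w}$ meets each $\Omega_j$ exactly in the far tip near $z_j$, while the $\Omega_j$ remain pairwise disjoint because they sit near distinct net points. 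As the vertex convex set at $\mathsf v_j$ I take $\Omega_j$ above (conjugated by nothing, since $g_{\mathsf e_j}$ will be the identity), and as cocompact subsets $\C_{\mathsf v_j}=\overline{(y_j^-,y_j^+)}$, $\C_{\mathsf w}$ a compact strictly convex core inside $\Omega_{\mathsf w}$. The occultation-position hypotheses of Theorem~\ref{thm:gog-cc} then reduce to three families of statements: (i) $(\Omega_j,\Omega_{\mathsf w},\beta_j^m\cdot\Omega_j)$ is in occultation position for every nonzero integer $m$ — this holds because $\beta_j$ is biproximal and $\Omega_{\mathsf w}$ blocks the two orbit pieces, exactly as in the triple \eqref{eq:repyr2} / \eqref{eq:betaN2} of the proof of Theorem~\ref{thm:free-product-fd}; (ii) $(\Omega_j,\Omega_{\mathsf w},\Omega_{j'})$ is in occultation position for $j\neq j'$ — this is where I would need the $\Omega_j$ to be sufficiently thin and well-separated so that any line through $\overline{\Omega_j}$ and $\overline{\Omega_{j'}}$ passes through $\Omega_{\mathsf w}$, which I can force by first choosing $\Omega_{\mathsf w}\supset\Omega$ generic and then shrinking the sausages; and (iii) the analogous statements for the interiors of the $\C$'s, which follow identically. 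The cocompactness and strict-convexity conditions of parts \eqref{item:loong2} and \eqref{item:loong3} of Theorem~\ref{thm:gog-cc} are immediate since each $\C_{\mathsf v_j}$ is a cocompact segment with (vacuously) strictly convex nonideal boundary and $\C_{\mathsf w}$ is strictly convex, while the edge-group (trivial) cocompactness on the "collar" sets is automatic.

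Applying Theorem~\ref{thm:gog-cc} then produces the free subgroup $\Gamma'=\langle\beta_1,\dots,\beta_k\rangle\leq\Gamma$ acting convex cocompactly on $\Omega':=\bigcup_{\gamma\in\Gamma'}\gamma\cdot\Conv{\Omega_{\mathsf w}\cup\Omega_j}$ over all edges; by construction $\Omega'\supset\Omega_{\mathsf w}\supset\Omega$ and, since each $\Omega_j$ is $\delta$-close to $\Omega$ and the central set $\Omega_{\mathsf w}$ was taken within any prescribed Hausdorff distance of $\Omega$, the union $\Omega'$ is as close to $\Omega$ as we like for the spherical Hausdorff distance. Finally, $\Lambdao_{\Omega'}(\Gamma')$ contains each attracting fixed point $y_j^+$, which lies within $\delta$ of $z_j\in\partial\Omega$; since $\{z_j\}$ is a $\delta$-net of $\partial\Omega$ and $\Lambdao_{\Omega'}(\Gamma')\subset\partial\Omega'$ with $\partial\Omega'$ itself $\delta$-close to $\partial\Omega$, the full orbital limit set is $2\delta$-dense in $\partial\Omega$; letting $\delta\to0$ (i.e.\ choosing the net finer and the sausages thinner from the outset) gives the density assertion. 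The main obstacle I expect is the simultaneous verification of occultation position for \emph{all} the mutual triples (i)--(iii) while keeping $\Omega_{\mathsf w}$ arbitrarily close to $\Omega$: one must choose the order of quantifiers carefully — first the net $\{z_j\}$, then $\Omega_{\mathsf w}$, then the sausages $\Omega_j$ thin enough relative to both — and check that the blocking property survives; this is a hands-on but routine geometric argument modeled on the pyramid constructions in Sections~\ref{subsec:proof-free-product-fd} and~\ref{subsec:proof-free-product-cc}.
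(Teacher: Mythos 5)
Your overall architecture matches the paper's (a star-shaped graph of groups with trivial central and edge groups, cyclic outer groups generated by biproximal elements of $\Gamma$ placed near $\partial\Omega$, fed into Theorem~\ref{thm:gog-cc}, with density of the net giving closeness of $\Omega'$ and of the limit set), but there is a genuine gap in the occultation conditions at the outer vertices, and your geometry is not set up to make the correct conditions hold. At the vertex $\mathsf v_j$ with vertex group $\langle\beta_j\rangle$, Theorems~\ref{thm:general-gog} and~\ref{thm:gog-cc} require the triples $(\Omega_{\mathsf w},\,\Omega_j,\,\beta_j^m\cdot\Omega_{\mathsf w})$ to be in occultation position for every $m\neq 0$: the thin sausage must block the big central set from all of its own $\beta_j^m$-translates. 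Your condition (i), $(\Omega_j,\Omega_{\mathsf w},\beta_j^m\cdot\Omega_j)$, is not this condition, and as written it can never hold, since $\Omega_j$ is $\beta_j$-invariant and occultation requires the two outer sets to be disjoint. The correct condition is the delicate one, and your data cannot deliver it: you only control the attracting point $y_j^+$ (just outside $\overline\Omega$) and the repelling hyperplane $Y_j^-$ (missing $\overline\Omega$), with no control on the repelling point $y_j^-$ or the attracting hyperplane $Y_j^+$ (which passes through $y_j^+$, hence close to $\partial\Omega$). A generic such $\beta_j$ has $Y_j^+$ cutting through $\Omega\subset\Omega_{\mathsf w}$, so the negative powers $\beta_j^{-m}\cdot\Omega_{\mathsf w}$ do not contract toward $y_j^-$ and will in general meet $\Omega_{\mathsf w}$, violating even the disjointness part of occultation; and taking the central set to be a thickening $\Omega_{\mathsf w}\supset\Omega$ makes this unfixable, because hyperplanes through points adjacent to $\partial\Omega$ cannot be kept off $\overline{\Omega_{\mathsf w}}$ (even the forward direction needs $Y_j^-\cap\overline{\Omega_{\mathsf w}}=\varnothing$, not just $Y_j^-\cap\overline{\Omega}=\varnothing$). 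The paper is built precisely to avoid this: it chooses two nearby boundary points $x_i,x'_i$ in a small neighborhood $U_i$, uses Fact~\ref{fact:double-lim-set} (with $\Lambda^{\mathcal F}_\Gamma=\mathcal F$) to find a single $g_i\in\Gamma$ whose attracting \emph{and} repelling eigenflags simultaneously approximate the tangent flags at $x_i$ and $x'_i$, takes the central set to be a large Hilbert ball $\Omega'$ with $\overline{\Omega'}\subset\Omega$ disjoint from both $Y_i^{\pm}$, and takes the sausage to be a $\langle g_i\rangle$-invariant neighborhood of the whole axis, which crosses $\Omega'$; then high powers $g_i^{\pm k}$, $k\geq k_0$, contract $\Omega'$ into $U_i$ near $y_i^{\pm}$ and the sausage occults in both directions.

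A second, more repairable, gap concerns the cocompact subsets: $\C_{\mathsf v_j}=\overline{(y_j^-,y_j^+)}$ is not admissible for Theorem~\ref{thm:gog-cc}, since it is not contained in $\Omega_j$ (it includes the ideal endpoints $y_j^{\pm}\in\partial\Omega_j$) and its interior in $\PP(V)$ is empty, so the required occultation hypotheses on the triples of interiors cannot even be formulated; the strict-convexity condition is not ``vacuous''. As in the final paragraph of the paper's proof, one must take full-dimensional $\langle\beta_j\rangle$-invariant closed convex subsets $\C_j\subset\Omega_j$, cocompact modulo $\langle\beta_j\rangle$, together with a large compact core in the central set, all chosen large enough that the interior triples stay in occultation position and the edge ``collar'' sets of Theorem~\ref{thm:gog-cc}(2) are compact. (A minor point: $\Lambda^{\PP(V)}_{\Gamma}=\PP(V)$ only lets you place attracting fixed points arbitrarily close to, not exactly at, prescribed points, which is all that is needed.)
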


Such a group $\Gamma'$ is $P_1$-Anosov by Fact~\ref{fact:Anosov}.

\begin{proof}
Without loss of generality, we may assume that $\Omega$ is strictly convex with $C^1$ boundary.
Consider distinct points $x_1, \dots, x_N \in \partial \Omega$.
Choose properly convex neighborhoods $U_i$ of the $x_i$ such that the triple $(U_i, \Omega, U_j)$ is in occultation position for all $1\leq i < j \leq N$.
Choose $x'_i \in \partial \Omega \cap U_i \smallsetminus \{x_i\}$. 
Let $X_i, X'_i \subset \PP(V)$ be the supporting hyperplanes to $\Omega$ at $x_i, x'_i$, so that $\Omega \subset \PP(V) \smallsetminus (X_i \cup X'_i)$.
We can find a properly convex open subset $\Omega'$ of $\PP(V)$ such that $\overline{\Omega'} \subset \Omega$ and $\Omega'$ intersects every segment $(x_i, x'_i) \subset \Omega$.
For example, take $\Omega$ to be a sufficiently large ball for the Hilbert metric $d_{\Omega}$ on~$\Omega$. 
Taking $\Omega'$ large enough, we may assume that the $(U_i, \Omega', U_j)$ are still in occultation position.

For every $1\leq i \leq N$, by Fact~\ref{fact:double-lim-set} (and Fact~\ref{fact:lattices} if $\Gamma$ is a lattice), we can find biproximal elements $g_i \in \Gamma$ whose attracting eigenflag $(y_i^+, Y_i^+)$ (\resp repelling eigenflag $(y_i^-, Y_i^-)$) is arbitrarily close to $(x_i, X_i)$ (\resp $(x'_i, X'_i)$). 
In particular, we can arrange the following for all $1\leq i \leq N$:
$$ 
\begin{array}{lll} 
\bullet \: [y_i^+, y_i^-] \subset U_i,
& \quad &
\bullet \: [ y_i^+, y_i^- ] \text{ intersects } \Omega',
\\ 
\bullet \: y_i^+, y_i^- \notin \overline{\Omega'},
&&
\bullet \: \overline{\Omega'} \subset \PP(V) \smallsetminus (Y_i^+ \cup Y_i^-).
\end{array}
$$
Up to squaring $g_i$, we may assume that its extremal eigenvalues are positive.
Then there exists a $\langle g_i \rangle$-invariant properly convex set $\Omega_i \subset U_i \smallsetminus (Y_i^+ \cup Y_i^-)$ containing the open segment $(y_i^+, y_i^-)$: for example, take the convex hull in $U_i$ of the $\langle g_i \rangle$-orbit
of a small enough open ball of $U_i \smallsetminus (Y_i^+ \cup Y_i^-)$ that intersects $[y_i^+, y_i^-]$.
There exists $k_0 \in \NN$ such that for all distinct $i , j \in \{ 1, \dots, N \}$ and all $k\in \ZZ$ with $|k| \geq k_0$, the triples 
$$(\Omega_i, \Omega', \Omega_j) \quad \text{ and } \quad (\Omega', \Omega_i, g_i^k \cdot \Omega') $$
are in occultation position.

Let $\mathsf{Y} = (\mathsf{V},\mathsf{E})$ be the graph with $N+1$ vertices $\mathsf{V} = \{\mathsf{v}_0, \dots, \mathsf{v}_N\}$ and $N$ edges $(\mathsf{v}_0, \mathsf{v}_i)$ for $1\leq i \leq N$. 
Write $\mathsf{e}_i \in \vec{\mathsf{E}}$ for the directed edge from $\mathsf{v}_0$ to~$\mathsf{v}_i$. 
Consider the graph of groups $(\boldsymbol{\Gamma}, \mathsf{Y})$ with $\Gamma_{\mathsf{v}_i} =: \langle \gamma_i \rangle \simeq \ZZ$ for $1\leq i \leq N$ and $\Gamma_{\mathsf{v}_0}\simeq \Gamma_{|\mathsf{e}_i|} \simeq \{1\}$.
Let $\rho_{\mathsf{v}_i}:\Gamma_{\mathsf{v}_i} \rightarrow \PGL(V)$ take $\gamma_i$ to $g_i^{k_0} \in \Gamma$, let $g_{\mathsf{e}_i}:=1 \in \PGL(V)$, and let $(\Omega_{\mathsf{v}_0}, \Omega_{\mathsf{v}_1}, \dots, \Omega_{\mathsf{v}_N}):=(\Omega', \Omega_1,\dots , \Omega_N)$.

Theorem~\ref{thm:general-gog} applies to this datum, with $\pi_1(\boldsymbol{\Gamma}, \mathsf{Y}, v_0) =:\Gamma_\mathsf{Y} \simeq \ZZ^{*N}$.
We obtain a discrete and faithful representation $\rho: \Gamma_\mathsf{Y} \rightarrow \Gamma < \PGL(V)$, preserving a properly convex set (call it $\Omega_\mathsf{Y} \supset \Omega'$) that contains all the $(y_i^\pm, Y_i^\pm)_{1\leq i \leq N}$ among its supporting flags. 
By choosing the original subset $\{x_1, \dots, x_N\} \subset \partial \Omega$ dense enough, it follows that $(\Omega_\mathsf{Y}, \Lambdao_{\Omega_\mathsf{Y}}(\rho(\Gamma_\mathsf{Y}))$ can be made arbitrarily close to~$(\Omega, \partial \Omega)$.

Finally, we show that $\Gamma_\mathsf{Y}$ acts convex cocompactly on $\Omega_{\mathsf{Y}}$. 
For this, we replace Theorem~\ref{thm:general-gog} with Theorem~\ref{thm:gog-cc} in the above argument, by endowing 
$\Omega'$ with a compact properly convex subset~$\C'$, 
and each $\Omega_i$ with a closed, $\langle g_i \rangle$-invariant, properly convex subset~$\C_i$ that is compact modulo~$\langle g_i \rangle$, in such a way that
(with $i,j,k$ as above) the triples 
$$(\Int{\C_i}, \Int{\C'}, \Int{\C_j}) \quad \text{ and } \quad (\Int{\C'}, \Int{\C_i}, g_i^k \cdot \Int{\C'}) $$
are still in occultation position. This can be ensured by taking the $\C$'s large enough.
\end{proof}

\subsection{Examples of naively convex cocompact actions which are not convex cocompact}

Let us recall an example from~\cite{dgk-proj-cc} of a discrete group which is naively convex cocompact in projective space, but not convex cocompact in projective space. 

\begin{example}[{\cite[Ex.\,4.4.(2)]{dgk-proj-cc}}] \label{ex:not-irreducible}
Let $\Gamma_0 := \langle h \rangle \subset \SL(3,\RR)$, where
$$h := \begin{pmatrix} 4 & & \\ & 1/2 & \\ & & 1/2 \end{pmatrix}.$$
Then $\Gamma_0$ is naively convex cocompact in $\PP(\RR^3)$.
Indeed, let $T$ be the triangle of $\PP(\RR^3)$ obtained by projecting the cone of positive linear combinations of the standard basis $(b_1, b_2, b_3)$ of~$\RR^3$. 
Let $I \subset (b_2, b_3)$ be a nontrivial interval in the repelling edge of~$T$, and let $\mathcal{T} = \Conv{\{b_1\} \cup I} $ denote the closed convex hull in $T$ of $b_1$ and $I$. 
Then $\Gamma_0$ acts cocompactly on~$\C$, hence the action of $\Gamma_0$ on~$T$ is naively convex cocompact. 
However, $\Gamma_0$ is \emph{not} convex cocompact in $\PP(\RR^3)$ because $\Gamma_0 \cong \mathbb Z$ is Gromov hyperbolic, so if it were convex cocompact, then it would be $P_1$-Anosov by Fact~\ref{fact:Anosov}, but this is not the case since $h^{-1}$ is not proximal in $\PP(\RR^3)$.
\end{example}

This example easily extends as follows.

\begin{example} \label{ex:naive-cc-still-not-irred}
Let $V = V_1 \oplus \dots \oplus V_m$ be a direct sum decomposition of~$V$.
For each~$i$, let $\widetilde{\C}_i \subset \widetilde{\Omega}_i$ be two nonempty properly convex cones of~$V_i$, with $\widetilde{\Omega}_i$ open and $\widetilde{\C}_i \cup \{0\}$ closed in~$V_i$.
Then $\Omega := \PP(\widetilde{\Omega}_1 + \dots + \widetilde{\Omega}_m)$ is a properly convex open subset of $\PP(V)$ and $\C := \Omega \cap \PP(\widetilde{\C}_1 + \dots + \widetilde{\C}_m)$ is a closed convex subset of~$\Omega$.
The group of elements of $\PGL(V)$ preserving~$\Omega$ and acting on each~$V_i$ as a homothety, is isomorphic to $\RR^{m-1}$; let $\Gamma_0 \simeq \ZZ^{m-1}$ be a lattice in it.
Then $\Gamma_0$ acts cocompactly on~$\C$, and so $\Gamma_0$ is naively convex cocompact in $\PP(V)$.
However, if $\dim(V_i) \geq 2$ for some~$i$, then $\Gamma_0$ is \emph{not} convex cocompact in $\PP(V)$, as $\Lambdao_{\Omega}(\Gamma)$ contains $\PP(\widetilde{\Omega}_i)$ which is not fully contained in~$\overline{\C}$.
\end{example}

\begin{remark}
If $m=2$ and $\dim(V_1) = \dim(V_2) = p$ in Example~\ref{ex:naive-cc-still-not-irred}, then $\Gamma_0 \simeq \ZZ$ is contained in $\PO(p,p)$ and we can take $\Omega$ to be contained in
$$\HH^{p,p-1} = \{ [v] \in \PP(\RR^{2p}) \,|\, v_1^2 + \dots + v_p^2 - v_{p+1}^2 - \dots - v_{2p}^2 < 0\}, $$
which is a pseudo-Riemannian analogue of the real hyperbolic space in signature $(p,p-1)$.
Then $\Gamma_0$ is naively convex cocompact in $\PP(V)$ but \emph{not} $\HH^{p,p-1}$-convex cocompact in the sense of \cite{dgk-Hpq-cc}.
By deforming slightly a generator of~$\Gamma_0$ to act on each~$V_i$, not as a homothety, but as a homothety composed with an irrational rotation, we see that there exist small deformations of $\Gamma_0$ in $\PGL(V)$ which do \emph{not} preserve any properly convex open subset of $\PP(V)$.
\end{remark}

Note that in Examples \ref{ex:not-irreducible} and~\ref{ex:naive-cc-still-not-irred}, the action of $\Gamma_0$ on~$\PP(V)$ is \emph{not} irreducible.
As promised in \cite[\S\,4.1]{dgk-proj-cc}, we now give a strongly irreducible example which is naively convex cocompact, but not convex cocompact, proving Theorem~\ref{thm:naive-cc-not-cc-irred}.
For this we combine Example~\ref{ex:not-irreducible} with a Zariski-dense free group supplied by Lemma~\ref{lem:free-groups}.

\begin{example} \label{ex:irreducible}
Building on Example~\ref{ex:not-irreducible}, let $N \geq 1$ and let 
$$h' = \begin{pmatrix} 4 & & & \\ & 1/2 & & \\ & & 1/2  & \\ & & & \mathrm{Id}_N \end{pmatrix}$$
be the image of $h$ under the inclusion $\iota: \SL(3,\RR) \hookrightarrow \SL(3+N,\RR)$, and let $\Gamma_0 = \langle h' \rangle$. 
Let $\mathcal{T}'$ and~$T'$ be the respective images of $\mathcal{T}$ and~$T$ under the corresponding inclusion $\PP(\RR^3) \hookrightarrow \PP(\RR^{3+N})$.
By Proposition~\ref{prop:thicken-add-dim}, there exists an $\iota(\mathrm{Aut}(T))$-invariant properly convex open subset $\Omega_0$ of $\PP(\RR^{3+N})$ containing~$T'$.
Let $\C'_0$ be a closed uniform neighborhood of $\mathcal{T}'$ in $(\Omega_0,d_{\Omega_0})$.
Then $\C'_0$ is $\Gamma_0$-invariant, and the fact that $\Gamma_0$ acts cocompactly on~$\mathcal{T}$ implies that $\Gamma_0$ acts cocompactly on~$\C'_0$.
Since $\Gamma_0$ is a cyclic group, it is easy to smooth out $\C'_0$ into a $\Gamma_0$-invariant closed convex subset of~$\Omega_0$ such that $\partialn \C_0$ is strictly convex everywhere except along $\partialn \C_0 \cap T'$. 

Let $\Gamma_1$ be any discrete subgroup of $\PGL(3+N,\RR)$ which is strongly irreducible, Gromov hyperbolic, convex cocompact in $\PP(\RR^{3+N})$, and does not divide any properly convex set.
Such a group is supplied, for example, by Lemma~\ref{lem:free-groups}.
Since $\Gamma_1$ is convex cocompact in $\PP(\RR^{3+N})$, by Fact~\ref{fact:strict-C1-nbhd} there exist a $\Gamma_1$-invariant properly convex open subset $\Omega_1$ of $\PP(\RR^{3+N})$ and a nonempty closed convex subset $\C_1$ of~$\Omega_1$ with strictly convex nonideal boundary on which $\Gamma_1$ acts cocompactly. 
Since $\Gamma_1$ does not divide~$\Omega_1$, we have $\Omega_1 \neq \Ccore_{\Omega_1}(\Gamma_1)$. 

Theorem~\ref{thm:free-product-cc} applied to $\Gamma_0$ and $\Gamma_1$ yields an element $g \in \PGL(3+N,\RR)$ and a representation $\rho_g : \Gamma_0 * \Gamma_1 \to \PGL(3+N,\RR)$ acting naively convex cocompactly on some properly convex open set $\Omega \subset \PP(\RR^{3+N})$ such that for each $i \in \{0,1\}$, the restriction of $\rho_g$ to $\Gamma_i$ is conjugate to the inclusion. 
Since the action of $\Gamma_1$ is already strongly irreducible, the action of $\Gamma_0 * \Gamma_1$ by $\rho_g$ is also strongly irreducible. 
Finally, observe that the representation $\rho_g$ of the Gromov hyperbolic group $\Gamma_0 * \Gamma_1$ is not $P_1$-Anosov because the element $\rho_g(h') = h' \in \PGL(3+N,\RR)$ is not proximal in $\PP(\RR^{3+N})$. 
By Fact~\ref{fact:Anosov}, the group $\rho_g(\Gamma_0 * \Gamma_1)$ is not convex cocompact in $\PP(\RR^{3+N})$.
\end{example}

\section{Convex gluing of convex projective manifolds with boundary} \label{sec:gluing}

In this section we prove Theorem~\ref{thm:gluing} using Theorem~\ref{thm:amalgam-cc}.
We then deduce Corollary~\ref{cor:double}.

\subsection{Proof of Theorem~\ref{thm:gluing}}

If $\Gamma_0$ or~$\Gamma_1$ is equal to~$\Delta$, then Theorem~\ref{thm:gluing} is trivially true.
So we assume that $\Gamma_0$ and~$\Gamma_1$ both strictly contain~$\Delta$.

By assumption, $z \notin X \cup \Lambdao_{\Omega_0}(\Gamma_0) \cup \Lambdao_{\Omega_1}(\Gamma_1)$.
Therefore, after replacing $\Omega_i$ by a uniform neighborhood of $\Ccore_{\Omega_i}(\Gamma_i)$ in $(\Omega_i,d_{\Omega_i})$ (which does not change the fact that $\Gamma_i$ acts convex cocompactly on~$\Omega_i$, see Fact~\ref{fact:cc-subsets}), we may and shall assume that 
\begin{equation} \label{eq:zout}
z \notin \overline{\Omega_0} \quad \text{ and } z \notin \overline{\Omega_1}.
\end{equation}

\begin{lemma} \label{lem:gluing-1}
For $i\in\{0,1\}$, the set $\Ccore_{\Omega_i}(\Gamma_i)$ has nonempty interior and lies entirely on one side of $X \cap \Omega_0$ in~$\overline{\Omega_i}$.
\end{lemma}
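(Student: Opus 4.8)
Fix $i\in\{0,1\}$ and write $\C_i := \Ccore_{\Omega_i}(\Gamma_i)$. Recall that $\C_i$ is the intersection of $\Omega_i$ with the convex hull in $\overline{\Omega_i}$ of $\overline{\Lambdao_{\Omega_i}(\Gamma_i)}$, and that by hypothesis $\C_0\cap\C_1 = X\cap\Omega_0 = X\cap\Omega_1 \neq \varnothing$, with $\Delta$ dividing $X\cap\Omega_i$, so in particular $X\cap\Omega_i$ is a properly convex set of dimension $\dim\PP(V)-1$ on which the infinite group $\Delta$ acts cocompactly; hence $\Lambdao_{\Omega_i}(\Gamma_i)$ contains the (nonempty) limit set of $\Delta$ acting on $X\cap\Omega_i$, which spans $X$.

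\textbf{Nonempty interior.} The plan is to argue by contradiction: if $\C_i$ had empty interior, then $\overline{\Lambdao_{\Omega_i}(\Gamma_i)}$ would be contained in a proper projective subspace $W\subsetneq\PP(V)$, and since $\C_i\supset X\cap\Omega_i$ spans the hyperplane $X$, we would need $W = X$, i.e.\ $\Lambdao_{\Omega_i}(\Gamma_i)\subset X$ and $\C_i = X\cap\Omega_i$. But then $X\cap\Omega_i$ would be a $\Gamma_i$-invariant convex subset of $\Omega_i$ on which $\Gamma_i$ acts convex cocompactly (being equal to the convex core), so $\Gamma_i$ would divide $X\cap\Omega_i$; since $\Delta$ also divides $X\cap\Omega_i$ and $\Delta\subset\Gamma_i$, the index $[\Gamma_i:\Delta]$ would be finite. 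One then derives a contradiction: $z$ is fixed by $\Delta$ and $\Delta$ has finite index in $\Gamma_i$, so the $\Gamma_i$-orbit of $z$ is finite; but $z\notin X$ and $z\notin\overline{\Omega_i}$ by \eqref{eq:zout}, and (the point being) a $\Gamma_i$-invariant properly convex open set $\Omega_i$ whose group acts cocompactly preserving the hyperplane $X = \mathrm{span}(\C_i)$ and fixing a finite set off $X$ forces $\Gamma_i$ into the stabilizer of that configuration, which contradicts $\Gamma_i\supsetneq\Delta$ together with the fact that $\mathrm{stab}_{\Gamma_0}(X\cap\Omega_0)=\Delta=\mathrm{stab}_{\Gamma_1}(X\cap\Omega_0)$. (I would double-check whether instead it is cleaner to invoke $\mathrm{vcd}$: $\Gamma_i$ dividing $X\cap\Omega_i$ would give $\mathrm{vcd}(\Gamma_i)=\dim\PP(V)-1$, while $\Delta$ dividing $X\cap\Omega_i$ gives $\mathrm{vcd}(\Delta)=\dim\PP(V)-1$ too, so finite index is consistent there — hence the contradiction must come from the fixed point $z$ and the stabilizer hypothesis, not from dimension count.)

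\textbf{One side of $X\cap\Omega_i$.} Here the idea is: $X$ is a supporting hyperplane of $\C_i$ along $X\cap\Omega_i = \C_0\cap\C_1$, because $X\cap\Omega_i$ is a face of $\C_i$ of codimension one — indeed $\C_0\cup\C_1$ is convex (it will be, as part of the gluing picture; $\C_0$ and $\C_1$ meet exactly in $X\cap\Omega_0$ which is a hyperplane section of both), so $\C_i$ lies on one side of the hyperplane $X$ in $\overline{\Omega_i}$, i.e.\ $\overline{\C_i}\smallsetminus X$ is connected. More directly: if $\C_i$ met both open sides of $X$, then since $\C_i$ is convex its intersection with $X$ would be the whole $(\dim\PP(V)-1)$-section $\Omega_i\cap X$; combined with nonempty interior of $\C_i$ this is fine, so the real content is that $\C_i$ does \emph{not} meet one of the two sides. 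I would use that $z\in\PP(V)\smallsetminus(X\cup\Lambdao_{\Omega_0}(\Gamma_0)\cup\Lambdao_{\Omega_1}(\Gamma_1))$ is $\Delta$-fixed: projecting from $z$, or rather using that $X$ separates $z$ from the side of $\C_i$ that would be needed, to rule out that $\C_i$ pokes through $X$. Concretely, the two components $\Omega_i\smallsetminus X$ are permuted or each preserved by $\Delta$; since $\Delta$ fixes $z\notin X$, $z$ lies in one component, and I claim $\C_i$ lies in the closure of the \emph{other} component — otherwise a $\Delta$-orbit in $\C_i$ on the $z$-side together with the $\Delta$-cocompact action on $X\cap\Omega_i$ would force $z\in\overline{\C_i}\subset\overline{\Omega_i}$, contradicting \eqref{eq:zout}.

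\textbf{Main obstacle.} The delicate point I anticipate is making rigorous the claim ``$\C_i$ lies entirely on one side of $X$'': a priori a convex set containing a codimension-one slice $X\cap\Omega_i$ could bulge to both sides. The hypothesis that must be fully exploited is $\Ccore_{\Omega_0}(\Gamma_0)\cap\Ccore_{\Omega_1}(\Gamma_1)=X\cap\Omega_0=X\cap\Omega_1$ together with $z$ being fixed by $\Delta$ and outside both closures: I expect the clean argument uses that the convex hulls of the limit sets, being $\Gamma_i$-invariant, cannot escape the half-space cut out by $X$ that avoids $z$ — because any point of $\C_i$ on the $z$-side, pushed around by the $\Delta$-action which is cocompact on the common slice and fixes $z$, would accumulate onto $z$. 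I would organize the write-up so that nonempty interior is settled first (via the $\mathrm{stab}$ and $z$-fixed-point argument above), and then the ``one side'' claim follows because, having nonempty interior, $\overline{\C_i}\smallsetminus X$ has the right local structure and the global separation is forced by \eqref{eq:zout}.
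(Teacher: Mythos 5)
Your first half (nonempty interior) is essentially the paper's argument: the convex core contains $X\cap\Omega_0$, so if it had empty interior it would equal $X\cap\Omega_i$, forcing $\Gamma_i$ to preserve $X\cap\Omega_0$ and hence $\Gamma_i=\mathrm{stab}_{\Gamma_i}(X\cap\Omega_0)=\Delta$, contradicting the standing assumption $\Gamma_i\supsetneq\Delta$. That contradiction is already complete once you note the slice equals the $\Gamma_i$-invariant convex core; the detour through finite index of $\Delta$ and the finiteness of the $\Gamma_i$-orbit of $z$ is unnecessary noise, and the $\mathrm{vcd}$ hedge is indeed a dead end, but the load-bearing step is there.

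The second half is where there is a genuine gap, and you say so yourself (``the delicate point I anticipate is making rigorous the claim''). The mechanism you propose does not work. First, after the normalization \eqref{eq:zout} we have $z\notin\overline{\Omega_i}$, so $z$ does \emph{not} lie in a connected component of $\Omega_i\smallsetminus X$; your sentence ``$z$ lies in one component'' is inconsistent with the very normalization you quote. Second, the claim that a $\Delta$-orbit of a point of $\Ccore_{\Omega_i}(\Gamma_i)$ on the ``$z$-side'' would ``accumulate onto $z$'' is unjustified and false in general: $\Delta$-orbits of points of $\Omega_i$ accumulate on $\Lambdao_{\Omega_i}(\Delta)\subset\partial\Omega_i$, nowhere near $z\notin\overline{\Omega_i}$, so no contradiction with \eqref{eq:zout} arises this way. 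Third, the alternative ``$\C_0\cup\C_1$ is convex (it will be, as part of the gluing picture)'' is circular — that is what the theorem is building towards. The missing idea, which the paper uses and which needs no dynamics and no $z$ at all, is to exploit the full strength of the hypothesis $\Ccore_{\Omega_0}(\Gamma_0)\cap\Ccore_{\Omega_1}(\Gamma_1)=X\cap\Omega_0$: since this intersection lies in the hyperplane $X$ and both cores have nonempty interior, no point of $X\cap\Omega_i$ can be interior to $\Ccore_{\Omega_i}(\Gamma_i)$ (an interior point of one core on the slice, joined to an interior point of the other core off $X$, would produce a point of the intersection outside $X$); hence $X\cap\Omega_i\subset\partialn\Ccore_{\Omega_i}(\Gamma_i)$. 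Then convexity finishes it: if the core met both open components of $\Omega_i\smallsetminus X$, a segment between two interior points on opposite sides would cross $X\cap\Omega_i$ in an interior point of the core, a contradiction. You only ever use the inclusion $X\cap\Omega_0\subset\Ccore_{\Omega_i}(\Gamma_i)$, not the equality with the intersection of the two cores, and that is precisely why your ``one side'' argument cannot close.
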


Here and in the sequel, by a \emph{side} of $X \cap \Omega_0$ in~$\overline{\Omega_i}$, we mean the \emph{closure} of one of the two connected components of $\Omega_i \smallsetminus X$; thus a side of $X \cap \Omega_0$ includes $X \cap \Omega_0 = X \cap \Omega_1$.
Since the latter set is nonempty, note that we also have $X \cap \partial \Omega_0 = X \cap \partial \Omega_1$, by convexity.

\begin{proof}
By assumption, $\Delta$ divides $X \cap \Omega_0$, hence
\begin{equation} \label{eqn:bound-divide}
X \cap \partial \Omega_0 \subset \Lambdao_{\Omega_i}(\Delta)
\end{equation}
(see \cite[Prop.\,3]{vey70} or \cite[Cor.\,4.10.(3)]{dgk-proj-cc}).
By assumption, $\Gamma_i$ strictly contains the stabilizer $\Delta$ of~$X$, \ie $\Gamma_i$ does not preserve~$X$.
The projective span of $\Lambdao_{\Omega_i}(\Gamma_i)$ is $\Gamma_i$-invariant and contains~$X$ by \eqref{eqn:bound-divide}; therefore, $\Lambdao_{\Omega_i}(\Gamma_i) \smallsetminus X \neq \varnothing$ and $\Ccore_{\Omega_i}(\Gamma_i)$ has nonempty interior.
Since $\Ccore_{\Omega_0}(\Gamma_0) \cap\nolinebreak \Ccore_{\Omega_1}(\Gamma_1)$ is contained in the projective hyperplane~$X$, it follows that\linebreak $\Ccore_{\Omega_0}(\Gamma_0) \cap \Ccore_{\Omega_1}(\Gamma_1) = \partialn \Ccore_{\Omega_0}(\Gamma_0) \cap \partialn \Ccore_{\Omega_1}(\Gamma_1)$. In particular, since $X \cap \Omega_i \subset \partialn \Ccore_{\Omega_i}(\Gamma_i)$, the set $\Ccore_{\Omega_i}(\Gamma_i)$ lies entirely on one side of $X \cap \Omega_i$.
The same holds for $\Lambdao_{\Omega_i}(\Gamma_i)$.
\end{proof}

\begin{lemma} \label{lem:gluing-2}
For $i\in\{0,1\}$, we have $\Lambdao_{\Omega_i}(\Delta) = X \cap \partial \Omega_0$.
\end{lemma}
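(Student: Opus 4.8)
\textbf{Plan for the proof of Lemma~\ref{lem:gluing-2}.}
The inclusion $X \cap \partial\Omega_0 \subset \Lambdao_{\Omega_i}(\Delta)$ is exactly \eqref{eqn:bound-divide} from the proof of Lemma~\ref{lem:gluing-1}, which follows from the hypothesis that $\Delta$ divides $X \cap \Omega_0 = X \cap \Omega_1$ (here one uses $X\cap\Omega_i \subset \partialn\Ccore_{\Omega_i}(\Gamma_i)$, so $X \cap \partial\Omega_i$ is the ideal boundary of the convex core of $\Delta$ acting on $X\cap\Omega_i$, which equals $\Lambdao_{X\cap\Omega_i}(\Delta)$, hence is contained in $\Lambdao_{\Omega_i}(\Delta)$; recall $X \cap \partial\Omega_0 = X \cap \partial\Omega_1$ by convexity, as noted after Lemma~\ref{lem:gluing-1}). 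So the content of the lemma is the reverse inclusion $\Lambdao_{\Omega_i}(\Delta) \subset X \cap \partial\Omega_0$, i.e.\ that no $\Delta$-orbit in $\Omega_i$ can accumulate anywhere on $\partial\Omega_i$ off the hyperplane $X$.

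The plan is as follows. Since $\Delta$ divides the properly convex set $X \cap \Omega_0$, it acts cocompactly on it; fix a compact fundamental domain $\mathcal{K} \subset X\cap\Omega_0$ for this action. Now take any point $p \in \Omega_i$ and any sequence $(\delta_n)$ of pairwise distinct elements of $\Delta$; we must show every accumulation point of $(\delta_n \cdot p)$ lies in $X$. The idea is to compare the orbit of $p$ with the orbit of a nearby point of $X\cap\Omega_i$. Since $\delta_n$ leaves $X$ invariant and acts cocompactly on $X\cap\Omega_0$, after passing to a subsequence we can write $\delta_n = \delta_n' \delta_n''$ (or simply translate: there is $\gamma_n\in\Delta$ with $\gamma_n\delta_n$ bounded on $X\cap\Omega_0$)—more precisely, pick $q \in X\cap\Omega_0$; up to subsequence there is $\gamma_n \in \Delta$ with $\gamma_n \delta_n \cdot q \in \mathcal{K}$, and then $\gamma_n\delta_n$ ranges over a relatively compact subset of $\PGL(V)$ if and only if... — actually the cleaner route is: the stabilizer in $\Delta$ of a point of $X\cap\Omega_0$ is finite (proper discontinuity), so $(\gamma_n\delta_n)$ stays in a fixed compact set $C \subset \PGL(V)$ (up to subsequence it converges to some $g_\infty$). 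Then $\delta_n = \gamma_n^{-1}(\gamma_n\delta_n)$, so $\delta_n \cdot p = \gamma_n^{-1} \cdot \big((\gamma_n\delta_n)\cdot p\big)$, where $(\gamma_n\delta_n)\cdot p \to g_\infty \cdot p \in \Omega_i$ (an interior point, since $g_\infty\in\PGL(V)$ and $p\in\Omega_i$, using that $\Omega_i$ is $\Delta$-invariant but $g_\infty$ need not be in $\Delta$ — here one must be slightly careful: $g_\infty$ is a limit of elements of $\Delta$, so $g_\infty\cdot p$ could a priori be on $\partial\Omega_i$ only if $(\gamma_n\delta_n)$ escapes, which it doesn't). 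So $\delta_n\cdot p = \gamma_n^{-1}\cdot y_n$ with $y_n \to y_\infty \in \Omega_i$.

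Now either $(\gamma_n)$ is bounded — then $\delta_n = \gamma_n^{-1}(\gamma_n\delta_n)$ is bounded, contradicting the $\delta_n$ being pairwise distinct and $\Delta$ discrete — or $\gamma_n \to \infty$ in $\Delta$. In the latter case, since $\gamma_n \in \Delta$ preserves the properly convex set $X\cap\Omega_0$ and acts as a divergent sequence on it, any limit of $\gamma_n^{-1}\cdot y_\infty$ (for $y_\infty$ a fixed interior point of $\Omega_i$) lies in $\partial\Omega_i$; and because $\gamma_n$ divides $X\cap\Omega_0$, the relevant dynamical behaviour (Lemma~\ref{lem:even-more-pure}, or directly \cite[Cor.\,3.6]{dgk-proj-cc} applied to the convex set $X\cap\Omega_0$ inside $\Omega_i$) forces the limit to lie in $\overline{X\cap\Omega_0} \subset X$. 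More carefully: by \cite[Cor.\,3.6]{dgk-proj-cc} applied to $\gamma_n^{-1}\in\Delta\subset\mathrm{Aut}(X\cap\Omega_0)$ with $\gamma_n^{-1}\to\infty$, and observing $y_n\to y_\infty$ with $y_\infty$ in the interior of $\Omega_i$ (not on $\partial\Omega_i$), the accumulation points of $(\gamma_n^{-1}\cdot y_n)$ coincide with those of $(\gamma_n^{-1}\cdot y_\infty)$, which lie in the limit set of $\Delta$ acting on the smaller set $X\cap\Omega_0$, hence in $\overline{X\cap\Omega_0}\subset X$. Therefore every accumulation point of $(\delta_n\cdot p)$ lies in $X\cap\partial\Omega_i = X\cap\partial\Omega_0$, which proves $\Lambdao_{\Omega_i}(\Delta)\subset X\cap\partial\Omega_0$.

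\textbf{Main obstacle.} The delicate point is justifying that the accumulation points of $(\gamma_n^{-1}\cdot y_n)$ are computed inside the hyperplane section: one needs a version of the statement that a divergent sequence of automorphisms of a properly convex set $\mathcal{C}' = X\cap\Omega_0$, which happens to sit inside a larger properly convex set $\Omega_i$, sends interior points of $\Omega_i$ to accumulation points lying in $\overline{\mathcal{C}'}$ rather than elsewhere on $\partial\Omega_i$. This is precisely the kind of statement provided by \cite[Cor.\,3.6]{dgk-proj-cc} (the orbit map comparison used repeatedly in this paper, e.g.\ in the proof of Theorem~\ref{thm:free-product-in-G}, Step~2, and in Corollary~\ref{cor:cc-bisat-support}): since $\gamma_n^{-1}$ is $P_1$-divergent as a sequence of automorphisms of the properly convex set $X\cap\Omega_0$ (divisibility gives Anosov/hyperbolic behaviour, or at least divergence), its action on $\Omega_i$ is also $P_1$-divergent, and the attracting point in $\PP(V)$ of the relevant subsequence lies in $\Lambda^{\PP(V)}$ of $\Delta$ restricted to $\PP(\mathrm{span}(X\cap\Omega_0))=X$, hence in $X$. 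Setting up this comparison cleanly—keeping track of which convex set governs the limiting behaviour—is the heart of the argument; the rest is bookkeeping with the cocompact action of $\Delta$ on $X\cap\Omega_0$.
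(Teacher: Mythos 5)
Your first inclusion ($X \cap \partial \Omega_0 \subset \Lambdao_{\Omega_i}(\Delta)$, via \eqref{eqn:bound-divide}) is fine and matches the paper. The problems are in the reverse inclusion, which is the actual content. First, your reduction is circular. Fixing $q \in X \cap \Omega_0$ and choosing $\gamma_n \in \Delta$ with $\gamma_n\delta_n \cdot q \in \mathcal{K}$, proper discontinuity of the $\Delta$-action on $X \cap \Omega_0$ forces $\gamma_n\delta_n$ to be, up to a subsequence, a \emph{constant} element $g \in \Delta$ (not merely bounded). Then $\gamma_n^{-1} = \delta_n g^{-1}$ and $y_\infty = g\cdot p$, so $\gamma_n^{-1}\cdot y_\infty = \delta_n\cdot p$: the statement you propose to verify at the end (``accumulation points of $\gamma_n^{-1}\cdot y_\infty$ lie in $X$'') is literally the statement you started with, and the bookkeeping with $\mathcal{K}$ buys nothing.

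Second, and more seriously, the dynamical claim you invoke to finish --- that a divergent sequence in a group which preserves $\Omega_i$ and \emph{divides} the hyperplane section $X \cap \Omega_0$ must send points of $\Omega_i$ to limits in $\overline{X \cap \Omega_0}$ --- is false at this level of generality. Take $\Omega = \PP\big(\{(x,y,t) \in \RR^3 : x>0,\ y>0,\ |t|<x\}\big)$, which is properly convex and open, $X = \{t=0\}$, and $\Delta = \langle \mathrm{diag}(2,\,1/2,\,2)\rangle$. Then $\Delta$ preserves $\Omega$, divides the segment $X \cap \Omega$, and yet for $p = [1:1:\tfrac12] \in \Omega$ one has $\delta^n\cdot p \to [2:0:1] \notin X$, so $\Lambdao_{\Omega}(\Delta) \not\subset X$. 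The point is that $P_1$-divergence of the restriction of the sequence to $\mathrm{span}(X)$ says nothing about the singular value in the transverse direction (here the transverse eigenvalue equals the top eigenvalue on $\mathrm{span}(X)$), and neither Lemma~\ref{lem:even-more-pure} nor the base-point comparison of \cite[Cor.\,3.6]{dgk-proj-cc} localizes the limit to $X$. So Lemma~\ref{lem:gluing-2} genuinely needs the remaining hypotheses of Theorem~\ref{thm:gluing}, which your argument never uses. The paper's proof uses them through Lemma~\ref{lem:gluing-1}: $\Ccore_{\Omega_i}(\Gamma_i)$, hence $\Lambdao_{\Omega_i}(\Gamma_i) \supset \Lambdao_{\Omega_i}(\Delta)$, lies in the closure of one side of $X \cap \Omega_0$, while $\Delta$ preserves each side; this handles base points on the side opposite the core, and for base points on the core side one takes $q$ on the opposite side, $r = [p,q]\cap X$, concludes $q_\infty, r_\infty \in X$, and rules out $p_\infty \notin X$ because transversality of the limit line to $X$ would force $q_\infty = r_\infty$, contradicting invariance of the cross-ratio ($d_{(a,b)}(q,r) > 0$ persists in the limit). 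Some substitute for this positional and cross-ratio input is needed; transversality of the $\Delta$-dynamics on $X$ alone cannot do it.
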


\begin{proof}
By \eqref{eqn:bound-divide}, it is enough to prove the inclusion $\Lambdao_{\Omega_i}(\Delta) \subset X \cap \partial \Omega_0$.
For this, consider a point $p \in \Omega_i$ and a sequence $(\gamma_n) \in \Delta^{\NN}$ such that $(\gamma_n \cdot p)_{n\in\NN}$ converges to some $p_{\infty} \in \Lambdao_{\Omega_i}(\Delta) \subset \partial \Omega_i$.
Let us check that $p_{\infty} \in X$.

Note that $\Delta$, which preserves~$X$, also preserves each side of $X \cap \Omega_0$ in~$\overline{\Omega_i}$ since it preserves $\Ccore_{\Omega_i}(\Gamma_i)$ which lies entirely on one side of $X \cap \Omega_0$ by Lemma~\ref{lem:gluing-1}.
In particular, if $p$ lies on the side of $X \cap \Omega_0$ opposite to $\Ccore_{\Omega_i}(\Gamma_i)$, then we must have $p_{\infty} \in X$.

Suppose that $p$ lies on the same side of $X \cap \Omega_0$ as $\Ccore_{\Omega_i}(\Gamma_i)$.
Consider a point $q \in \Omega_i \smallsetminus X$ on the other side of $X \cap \Omega_0$.
The interval $[p,q]$ intersects $X$ in a point~$r$.
Up to passing to a subsequence, we may assume that $(\gamma_n \cdot q)_{n\in\NN}$ and $(\gamma_n \cdot r)_{n\in\NN}$ converge respectively to some $q_{\infty} \in \Lambdao_{\Omega_i}(\Delta)$ and $r_{\infty} \in \Lambdao_{\Omega_i}(\Delta)$, which belong to~$X$ by the above.

Suppose by contradiction that $p_\infty \notin X$.
Then $p_\infty \neq r_\infty$ and the line containing $p_\infty, r_\infty$, and $q_\infty$ is transverse to~$X$.
Since $q_\infty \in X$, we must have $q_\infty = r_\infty$.
This gives a contradiction by a standard cross-ratio argument as follows.
Let $a, b \in \partial \Omega_i$ be such that $a, p, r, q, b$ are lined up in that order.
Then $d_{\Omega_i}(\cdot,\cdot) = d_{(a,b)}(\cdot,\cdot)$ for any pair of points from the set $\{p,q,r\}$.
Up to passing again to a subsequence, we may assume that $(\gamma_n \cdot a)_{n\in\NN}$ and $(\gamma_n \cdot b)_{n\in\NN}$ converge respectively to some $a_{\infty}, b_{\infty} \in \partial\Omega_i$, with $a_{\infty}, p_{\infty}, r_{\infty}, q_{\infty}, b_{\infty}$ lined up in that order.
Since $p_\infty \neq q_\infty$, we must have that $a_\infty \neq p_\infty$ and $b_\infty \neq q_\infty$, hence $d_{(a_\infty, b_\infty)}(q_\infty, r_\infty) = d_{(a,b)}(q,r) \neq 0$.
This contradicts the fact that $q_\infty = r_\infty$. 
\end{proof}

\begin{corollary} \label{cor:cc-Omega-i-and-dual}
For $i\in\{0,1\}$, the group $\Delta$ acts convex cocompactly on both $\Omega_i$ and its dual~$\Omega_i^*$.
\end{corollary}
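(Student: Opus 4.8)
The plan is to deduce Corollary~\ref{cor:cc-Omega-i-and-dual} from Fact~\ref{fact:bisat-cc}: to see that $\Delta$ acts convex cocompactly on both $\Omega_i$ and $\Omega_i^*$, it suffices to exhibit a nonempty properly convex subset $\C$ of $\PP(V)$ \emph{with bisaturated boundary} on which $\Delta$ acts properly discontinuously and cocompactly, and then $\Int{\C}$ will be a properly convex open set on which $\Delta$ acts convex cocompactly together with its dual; finally Fact~\ref{fact:cc-subsets} will transfer this to~$\Omega_i$ itself since $\Ccore_{\Omega_i}(\Delta)$ will be contained in $\Int{\C}$. The natural candidate for $\C$ is built from the hyperplane slice: by hypothesis $\Delta$ divides $X\cap\Omega_0 = X\cap\Omega_1 =: S$, and by Lemma~\ref{lem:gluing-2} we have $\Lambdao_{\Omega_i}(\Delta) = X\cap\partial\Omega_0 = \partiali S$. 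The fixed point $z\in\PP(V)\smallsetminus(X\cup\overline{\Omega_i})$ (using the normalization \eqref{eq:zout}) then lets us cone off $S$ toward $z$: set $\widehat{S}_i$ to be the intersection with $\Omega_i$ of the union of all segments $[z,s]$ for $s\in S$, or rather its appropriate closed convex hull $\Conv{\{z\}\cup S}\cap\Omega_i$ — more precisely we want the double cone, taking segments in both directions from $S$ through~$z$, intersected with $\Omega_i$.

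First I would check that $\C := \overline{\Conv{\{z\}\cup S}}\cap\Omega_i$ (suitably interpreted so as to use both rays from $S$ through $z$ within $\Omega_i$) is a $\Delta$-invariant closed convex subset of $\Omega_i$ with nonempty interior: invariance is clear since $\Delta$ fixes $z$ and preserves $S$; nonempty interior holds because $S$ has nonempty interior in the hyperplane $X$ (as $\Delta$ divides it) and $z\notin X$, so the cone is full-dimensional. Next I would verify cocompactness of the $\Delta$-action on $\C$: since $\Delta$ acts cocompactly on $S$ with a compact fundamental domain $D\subset S$, the set $\overline{\Conv{\{z\}\cup D}}\cap\Omega_i$ is compact (it is closed in $\Omega_i$ and bounded away from $\partial\Omega_i$ except near $\partiali S$, but the cone over the \emph{compact} $D$ toward the fixed point $z\notin\overline{\Omega_i}$ stays in $\Omega_i$) and its $\Delta$-translates cover $\C$. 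Then I would check $\C$ is properly convex: it lies in $\Omega_i$, which is properly convex, so $\C$ is properly convex too.

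The main obstacle — the step requiring genuine care — is verifying that $\C$ has \emph{bisaturated boundary}, i.e.\ that every supporting hyperplane $Y$ of $\C$ meets $\overline{\C}$ in a set entirely inside $\partiali\C$ or entirely inside $\partialn\C$. The frontier of $\C$ decomposes into the ideal part, which (by construction and Lemma~\ref{lem:gluing-2}) equals $\partiali S = \Lambdao_{\Omega_i}(\Delta) \subset \partial\Omega_i$, and the nonideal part, which is the union of the two ``cone faces'' swept out by segments from $\partialn S$ through $z$, together with parts lying in $\partial\Omega_i$. One must rule out a supporting hyperplane that simultaneously touches a point of the cone face (in $\partialn\C$) and a point of $\partiali S$ (in $\partiali\C$). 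Here I would use that $\Delta$ divides $S$: a point of $\partiali S$ together with a supporting hyperplane of $S$ within $X$ is $\Delta$-translated all over $\partiali S$, and a supporting hyperplane to $\C$ at an interior point of a cone face cannot contain the apex-direction $z$ in a degenerate way — one can argue via the structure of the cone that a hyperplane supporting $\C$ along a cone face either contains $z$ (and then meets $\overline\C$ in a subset of $\partialn\C$, being the cone over a face of $\partialn S$) or separates $z$ from $\C$ (impossible, as $z$ is an extremal point of $\C$), whereas a hyperplane supporting $\C$ at a point of $\partiali S$ must be a supporting hyperplane of $\Omega_i$ there and hence cannot cut into the interior of the cone. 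Making this dichotomy precise is where the real work lies.

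Once bisaturated boundary is established, Fact~\ref{fact:bisat-cc} gives that $\Delta$ acts convex cocompactly on $\Omega':=\Int{\C}$ and on $(\Omega')^*$; since $\overline{\Omega'} \supset S$ and $\Lambdao_{\Omega'}(\Delta) = \partiali S = \Lambdao_{\Omega_i}(\Delta)$ by Fact~\ref{fact:ideal-bound-cc} (the action on $\C$ being cocompact), we get $\Ccore_{\Omega_i}(\Delta) = \Ccore_{\Omega'}(\Delta)\subset\Omega'\subset\Omega_i$, so Fact~\ref{fact:cc-subsets} upgrades convex cocompactness on $\Omega'$ to convex cocompactness on $\Omega_i$; dually, since $\Omega_i^*\subset(\Omega')^*$ with $\Delta$ acting convex cocompactly on the latter and $\Ccore_{(\Omega')^*}(\Delta)$ contained in $\Omega_i^*$ (this is the dual of the inclusion just used, together with Corollary~\ref{cor:cc-bisat-support}), we also get convex cocompactness on $\Omega_i^*$. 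This completes the proof of Corollary~\ref{cor:cc-Omega-i-and-dual}.
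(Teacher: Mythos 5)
There is a genuine gap, and it is located at the heart of your construction: the claimed cocompactness of the $\Delta$-action on the cone $\C$. You assert that ``the cone over the compact $D$ toward the fixed point $z\notin\overline{\Omega_i}$ stays in $\Omega_i$'', but the opposite is true: precisely because $z\notin\overline{\Omega_i}$ (by \eqref{eq:zout}), every segment from a point $d\in D\subset\Omega_i$ toward $z$ must cross $\partial\Omega_i$ before reaching $z$, so $\Conv{\{z\}\cup D}\cap\Omega_i$ accumulates on $\partial\Omega_i$ and is \emph{not} compact, and its $\Delta$-translates do not exhibit cocompactness of $\C$. In fact no repair is possible: the line through $z$ and a point $s\in X\cap\Omega_0$ meets $X$ only at $s$, so its two exit points from $\Omega_i$ lie in $\partial\Omega_i\smallsetminus X$; hence $\partiali\C$ contains points outside $X\cap\partial\Omega_0=\Lambdao_{\Omega_i}(\Delta)$ (Lemma~\ref{lem:gluing-2}), whereas if $\Delta$ acted cocompactly on the closed convex set $\C\subset\Omega_i$ then Fact~\ref{fact:ideal-bound-cc} would force $\partiali\C=\Lambdao_{\Omega_i}(\Delta)$. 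So your $\C$ is provably not cocompact, your identification of $\partiali\C$ with $\partiali(X\cap\Omega_0)$ fails, and the bisaturated-boundary discussion never gets off the ground. (Note also that the first half of the statement needs none of this: by Lemma~\ref{lem:gluing-2}, $\Ccore_{\Omega_i}(\Delta)=X\cap\Omega_0$, which is cocompact by the hypothesis that $\Delta$ divides it.)

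There is a second, independent gap in your concluding step for the dual: even granting a convex set with cocompact action and bisaturated boundary, passing from convex cocompactness on $(\Omega')^*\supset\Omega_i^*$ down to $\Omega_i^*$ requires $\Ccore_{(\Omega')^*}(\Delta)\subset\Omega_i^*$ (Fact~\ref{fact:cc-subsets}), and your appeal to ``the dual of the inclusion just used, together with Corollary~\ref{cor:cc-bisat-support}'' does not provide this: hyperplanes avoiding $\overline{\Omega'}$ need not avoid $\overline{\Omega_i}$. This inclusion is exactly where the fixed point $z$ must do its real work. The paper's proof runs as follows: convex cocompactness on $\Omega_i$ is immediate as above; then one takes a closed uniform neighborhood $\mathcal{O}_i$ of $\Ccore_{\Omega_i}(\Delta)$, which has bisaturated boundary and cocompact action (Fact~\ref{fact:cc-nbhd}), so $\Delta$ acts convex cocompactly on the dual superset $\mathcal{O}_i^*\supset\Omega_i^*$ (Fact~\ref{fact:bisat-cc}); since $z\notin\overline{\Omega_i}$, the dual hyperplane $z^*$ meets $\Omega_i^*$ in a nonempty closed $\Delta$-invariant convex set, so by minimality of the convex core (Fact~\ref{fact:ideal-bound-cc}) one gets $\Ccore_{\mathcal{O}_i^*}(\Delta)\subset z^*\cap\Omega_i^*\subset\Omega_i^*$, and Fact~\ref{fact:cc-subsets} concludes. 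You would need to replace your cone construction and your final dual step by an argument of this kind.
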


\begin{proof}
By Lemma~\ref{lem:gluing-2}, we have $\Ccore_{\Omega_i}(\Delta) = X \cap \Omega_0 \neq \varnothing$.
By assumption, $\Delta$ acts cocompactly on this set.
Therefore $\Delta$ acts convex cocompactly on~$\Omega_i$.

Since $z$ lies outside of $\overline{\Omega_i}$ by~\eqref{eq:zout}, the hyperplane $z^*$ dual to $z$ in the dual projective space $\PP(V^*)$ intersects $\Omega_i^*$ in a nonempty $\Delta$-invariant closed convex subset of~$\Omega_i^*$.
Note that $\Delta$ acts convex cocompactly on some properly convex superset $\mathcal{O}_i^* \supset \Omega_i^*$ (for instance, we can take $\mathcal{O}_i^*$ to be the dual of a uniform neighborhood $\mathcal{O}_i$ of $\Ccore_{\Omega_i}(\Delta)$ in $\Omega_i$, see Facts \ref{fact:bisat-cc} and~\ref{fact:cc-nbhd}).
By minimality of the convex core $\Ccore_{\mathcal{O}_i^*}(\Delta)$ (see Fact~\ref{fact:ideal-bound-cc}), we then have $\Ccore_{\mathcal{O}_i^*}(\Delta) \subset z^* \cap \Omega_i^* \subset \Omega_i^*$.
Thus $\Delta$ acts convex cocompactly on $\Omega_i^*$ by Fact~\ref{fact:cc-subsets}.
\end{proof}

Since it will be useful later in this proof, we note that $\Delta$ divides $z^* \cap \Omega_i^*$ by a standard cohomological dimension argument (see Remark~\ref{rem:vcd}).

For $i\in\{0,1\}$, let $\Omega_i^{\max}$ be the connected component of
$$\PP(V) \smallsetminus \bigcup_{X' \in \Lambdao_{\Omega_i^*}(\Gamma_i)} X'$$
containing~$\Omega_i$, where we see each $X' \in \Lambdao_{\Omega_i^*}(\Gamma_i) \subset \PP(V^*)$ as a projective hyperplane in $\PP(V)$.
We will use $\Omega_i^{\max}$ as an ``ambient'' convex set.

\begin{lemma}
For $i\in \{0,1\}$ and $\gamma \in \Gamma_i \smallsetminus \Delta = \Gamma_i \smallsetminus \mathrm{Stab}_{\Gamma_i}(X)$, 
the set $\gamma \cdot \Ccore_{\Omega_{1-i}}(\Gamma_{1-i})$ lies in the open half-space of $X$ opposite from $\Ccore_{\Omega_{1-i}}(\Gamma_{1-i})$ inside $\Omega_i^{\max}$.
\end{lemma}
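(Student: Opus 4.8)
The plan is to exploit the gluing hypothesis that $\Ccore_{\Omega_0}(\Gamma_0) \cap \Ccore_{\Omega_1}(\Gamma_1) = X \cap \Omega_0 = X \cap \Omega_1$, together with Lemmas~\ref{lem:gluing-1} and~\ref{lem:gluing-2} and Corollary~\ref{cor:cc-Omega-i-and-dual}. Fix $i \in \{0,1\}$ and $\gamma \in \Gamma_i \smallsetminus \Delta$. First I would record that $\gamma$ does not preserve $X$ (since $\Delta = \mathrm{Stab}_{\Gamma_i}(X)$), and that $\Ccore_{\Omega_i}(\Gamma_i)$, which is $\gamma$-invariant, lies on one side of $X$ by Lemma~\ref{lem:gluing-1}; write $H^+$ for that side and $H^-$ for the opposite closed side, both taken inside the ambient convex set $\Omega_i^{\max}$. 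Since $\Ccore_{\Omega_{1-i}}(\Gamma_{1-i}) \subset \Ccore_{\Omega_i}(\Gamma_i)$ is false in general, I should instead note that $X \cap \Omega_0 = \Ccore_{\Omega_0}(\Gamma_0) \cap \Ccore_{\Omega_1}(\Gamma_1) \subset \Ccore_{\Omega_i}(\Gamma_i)$, so $\Ccore_{\Omega_{1-i}}(\Gamma_{1-i})$ meets $X$ exactly in $X \cap \Omega_0$ and (by Lemma~\ref{lem:gluing-1} applied with $1-i$ in place of $i$, and the fact that both convex cores live in the same ambient set and share the hyperplane slice) lies on one closed side of $X$; I want to show $\Ccore_{\Omega_{1-i}}(\Gamma_{1-i}) \subset H^+$ and then that $\gamma$ moves it into the \emph{open} part of $H^-$.

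The key step is the inclusion $\Ccore_{\Omega_{1-i}}(\Gamma_{1-i}) \subset H^+$ up to its slice $X \cap \Omega_0$: this is where the shared hyperplane $X$ being a ``wall'' between the two cores enters. Concretely, $\Ccore_{\Omega_{1-i}}(\Gamma_{1-i})$ lies on one side of $X$ inside $\Omega_i^{\max}$; I must rule out that it lies on the side $H^-$ opposite to $\Ccore_{\Omega_i}(\Gamma_i)$. Here I would invoke that $z \in \PP(V) \smallsetminus (X \cup \Lambdao_{\Omega_0}(\Gamma_0) \cup \Lambdao_{\Omega_1}(\Gamma_1))$ is fixed by $\Delta$ and, by the cohomological-dimension remark just before the lemma, $\Delta$ divides $z^* \cap \Omega_i^*$; dualizing, $z$ together with $X$ pins down the relative position of the two cores. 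Alternatively, and more cleanly, since $X \cap \Omega_i \subset \partialn\Ccore_{\Omega_i}(\Gamma_i)$ is a genuine (nonideal) face of $\Ccore_{\Omega_i}(\Gamma_i)$ lying in $X$, and since $\Gamma_{1-i}$ also stabilizes $X$ on $\Delta$ and $\Ccore_{\Omega_{1-i}}(\Gamma_{1-i})$ has $X \cap \Omega_0$ in its nonideal boundary too, the two cores are glued along $X \cap \Omega_0$ from opposite sides --- this is exactly the geometric content of a gluing, and I would make it precise by noting that $\Ccore_{\Omega_i}(\Gamma_i) \cup \Ccore_{\Omega_{1-i}}(\Gamma_{1-i})$, if both lay on the same side, would force $\Ccore_{\Omega_i}(\Gamma_i) \cap \Ccore_{\Omega_{1-i}}(\Gamma_{1-i})$ to have interior points off $X$, contradicting the hypothesis that this intersection is contained in $X$. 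Hence $\Ccore_{\Omega_{1-i}}(\Gamma_{1-i}) \subset H^+$ with $\Ccore_{\Omega_{1-i}}(\Gamma_{1-i}) \cap X = X \cap \Omega_0$.

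Next I would apply $\gamma$: since $\gamma X \neq X$ but $\gamma$ preserves $\Omega_i^{\max}$ and $\Ccore_{\Omega_i}(\Gamma_i)$, and $\gamma(X \cap \Omega_0)$ is a face of $\Ccore_{\Omega_i}(\Gamma_i)$ lying in the hyperplane $\gamma X$, the set $\gamma X$ is disjoint from $X$ inside $\Omega_i^{\max}$ except possibly along $\partial$; more precisely $\gamma X \cap \Omega_i$ and $X \cap \Omega_i$ are two disjoint faces of the convex set $\Ccore_{\Omega_i}(\Gamma_i)$, so $\gamma X \cap \Omega_i$ lies strictly on the $H^+$-side of $X$ — wait, I need it on the $H^-$ side; the correct statement is that $\gamma \cdot \Ccore_{\Omega_{1-i}}(\Gamma_{1-i})$, being attached to $\Ccore_{\Omega_i}(\Gamma_i)$ along the face $\gamma(X \cap \Omega_0)$ which is distinct from $X \cap \Omega_0$, must lie on the far side of $X$ from $\Ccore_{\Omega_{1-i}}(\Gamma_{1-i})$. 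I would argue this by a separation/cross-ratio argument along a segment joining an interior point of $\Ccore_{\Omega_{1-i}}(\Gamma_{1-i})$ to an interior point of $\gamma \cdot \Ccore_{\Omega_{1-i}}(\Gamma_{1-i})$: such a segment passes through $\Ccore_{\Omega_i}(\Gamma_i)$ and must cross $X$, and it crosses $X$ transversally because the only way it could stay in $X$ is if $\gamma$ fixed $X$. Finally, to upgrade ``on the opposite closed side'' to ``in the \emph{open} half-space,'' I would note that $\gamma \cdot \Ccore_{\Omega_{1-i}}(\Gamma_{1-i})$ meets $X$ only in $\gamma(X \cap \Omega_0) \cap X$, and since $\gamma(X \cap \Omega_0)$ is a face of $\Ccore_{\Omega_i}(\Gamma_i)$ contained in the distinct hyperplane $\gamma X$, its intersection with $X$ lies in $\gamma X \cap X$, a proper subspace meeting $\overline{\Omega_i^{\max}}$; using that $\Delta$ divides $X \cap \Omega_0$ (so $\Lambdao$ fills its boundary) and $z \notin X$, this intersection avoids $\Omega_i$, so $\gamma \cdot \Ccore_{\Omega_{1-i}}(\Gamma_{1-i})$ is disjoint from $X$ inside $\Omega_i^{\max}$, i.e.\ it lies in the open half-space $H^- \smallsetminus X$.

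The main obstacle I anticipate is the bookkeeping in the last step: rigorously showing that $\gamma \cdot \Ccore_{\Omega_{1-i}}(\Gamma_{1-i})$ does not touch $X$ at all inside $\Omega_i^{\max}$, as opposed to merely lying weakly on one side. This requires using the ``nonideal boundary'' hypothesis $X \cap \Omega_i \subset \partialn\Ccore_{\Omega_i}(\Gamma_i)$ carefully — the point is that $X \cap \Omega_i$ and $\gamma \cdot (X \cap \Omega_i)$ are two distinct codimension-one faces of the properly convex set $\Ccore_{\Omega_i}(\Gamma_i)$ sitting in $\Omega_i$, and any two distinct faces of a convex set lying in the interior of the ambient open set are disjoint there. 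Combined with the fact that $\gamma \cdot \Ccore_{\Omega_{1-i}}(\Gamma_{1-i}) \cap \Omega_i^{\max} \cap X \subseteq \gamma \cdot (X \cap \Omega_0) \cap X = \varnothing$ in $\Omega_i$, this closes the argument, but it needs the earlier structural lemmas to guarantee that the relevant pieces really are faces and really lie in $\Omega_i$ rather than on its boundary.
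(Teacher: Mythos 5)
The central gap is that you never prove that $\Ccore_{\Omega_{1-i}}(\Gamma_{1-i})$ (hence its $\gamma$-translate) is actually contained in $\Omega_i^{\max}$; you simply posit that ``both convex cores live in the same ambient set.'' This containment is part of what the statement asserts --- the half-space in question is a component of $\Omega_i^{\max}\smallsetminus X$, and since $\Omega_i^{\max}$ is $\Gamma_i$-invariant, the claim that $\gamma\cdot\Ccore_{\Omega_{1-i}}(\Gamma_{1-i})$ lies in it is equivalent to $\Ccore_{\Omega_{1-i}}(\Gamma_{1-i})\subset\Omega_i^{\max}$ --- and it is where the paper's own proof spends most of its effort, because a priori $\Ccore_{\Omega_{1-i}}(\Gamma_{1-i})$ lives in $\Omega_{1-i}$, which need not sit inside $\Omega_i^{\max}$. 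The paper establishes it by a connectedness argument: it introduces the convex set $\Omega^{\max}_\Delta$ bounded by the hyperplanes of $\Lambdao_{\Omega_i^*}(\Delta)$ (independent of $i$, by Fact~\ref{fact:cc-subsets} applied to $\Omega_0\cap\Omega_1$ and its dual), and shows that $S:=\Ccore_{\Omega_{1-i}}(\Gamma_{1-i})\cap\Omega_i^{\max}$ is nonempty, open and closed in $\Ccore_{\Omega_{1-i}}(\Gamma_{1-i})$; closedness uses that a hyperplane $X'\in\Lambdao_{\Omega_i^*}(\Gamma_i)$ bounding $\Omega_i^{\max}$ through a point $p$ of the core also supports $\Omega_i$ at some $w\in\Lambdao_{\Omega_i}(\Gamma_i)$ on the other side of $X$ in $\Omega^{\max}_\Delta$, so the segment $[p,w]\subset X'$ would have to cross $X\cap\Omega_0\subset\Omega_i$, which $X'$ cannot meet. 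Nothing in your proposal plays this role, so the positioning of the two cores relative to $X$ ``inside $\Omega_i^{\max}$,'' on which every later step of yours relies, is unjustified.

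Your argument for strictness (landing in the \emph{open} half-space) also rests on two unsupported assertions. First, you claim that a segment joining an interior point of $\Ccore_{\Omega_{1-i}}(\Gamma_{1-i})$ to an interior point of $\gamma\cdot\Ccore_{\Omega_{1-i}}(\Gamma_{1-i})$ ``passes through $\Ccore_{\Omega_i}(\Gamma_i)$'': that is essentially the occultation property the gluing theorem is building towards, and it is not available at this stage. Second, you claim $\gamma\cdot\Ccore_{\Omega_{1-i}}(\Gamma_{1-i})$ meets $X$ only in $\gamma\cdot(X\cap\Omega_0)\cap X$; equivalently $\Ccore_{\Omega_{1-i}}(\Gamma_{1-i})\cap\gamma^{-1}\cdot X\subset X\cap\Omega_0$, which would follow if the core were contained in $\Omega_i$ (since $\gamma^{-1}\cdot X\cap\Omega_i=\gamma^{-1}\cdot(X\cap\Omega_0)$), but it is not, and nothing you invoke controls $\gamma^{-1}\cdot X$ on $\Omega_{1-i}\smallsetminus\Omega_i$. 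There is also a labelling slip: you set out to show $\Ccore_{\Omega_{1-i}}(\Gamma_{1-i})\subset H^+$, the side of $\Ccore_{\Omega_i}(\Gamma_i)$, whereas your own gluing argument (and Lemma~\ref{lem:gluing-1}) puts the two cores on opposite sides of $X$. You do share with the paper the correct key observation that $\gamma\cdot(X\cap\Omega_0)$ is disjoint from $X\cap\Omega_0$ because a transverse crossing would contradict $X\cap\Omega_0\subset\partialn\Ccore_{\Omega_i}(\Gamma_i)$, but the missing containment in $\Omega_i^{\max}$ and the two points above are genuine gaps.
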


\begin{proof}
We begin by showing that
\begin{equation} \label{eq:pangolin}
\Ccore_{\Omega_{1-i}}(\Gamma_{1-i}) \subset \Omega_i^{\max}.
\end{equation}
Observe first that $\Delta$ acts convex cocompactly both on $\Omega_0 \cap \Omega_1$, since it contains $X \cap \Omega_0 = \Ccore_{\Omega_0}(\Delta)$, and on $(\Omega_0 \cap \Omega_1)^*$ (using Fact~\ref{fact:cc-subsets} and that this set contains \eg $\Omega_0^*$). Hence, by Fact~\ref{fact:cc-subsets}, $\Lambdao_{\Omega_0^*}(\Delta) = \Lambdao_{\Omega_1^*}(\Delta) = \Lambdao_{(\Omega_0 \cap \Omega_1)^*}(\Delta)$. 

Consider the convex subset $\Omega^{\max}_\Delta$ of $\PP(V)$ which contains $X \cap \Omega_0$ and is bounded by the hyperplanes of $\Lambdao_{\Omega_i^*}(\Delta)$ (this is independent of $i \in \{0,1\}$).
It contains $\Omega_i^{\max}$ and hence $\Ccore_{\Omega_i}(\Gamma_i)$ for both $i \in \{0,1\}$.
Note that $\Omega^{\max}_\Delta$ is not properly convex. 

Now, let $S := \Ccore_{\Omega_{1-i}}(\Gamma_{1-i}) \cap \Omega_i^{\max}$.
Clearly, $S$ is an open subset of $\Ccore_{\Omega_{1-i}}(\Gamma_{1-i})$.
It is also nonempty, since $X \cap \Omega_0 = \Ccore_{\Omega_0}(\Gamma_0) \cap  \Ccore_{\Omega_1}(\Gamma_1) \subset S$. 
Let us show that $S$ is closed in $\Ccore_{\Omega_{1-i}}(\Gamma_{1-i})$. 
Let $p \in (\overline{S} \cap \Ccore_{\Omega_{1-i}}(\Gamma_{1-i})) \smallsetminus S$.
Then $p\in \overline{\Omega_i^{\max}}$ because $S \subset \Omega_i^{\max}$, but $p \notin \Omega_i^{\max}$ because that would imply $p\in S$.
Thus $p\in \partial \Omega_i^{\max}$, which by definition means that $p$ lies in some projective hyperplane $X' \in \Lambdao_{\Omega_i^*}(\Gamma_i)$.
The hyperplane $X'$ supports $\Omega_i$ at some point $w \in \Lambdao_{\Omega_i}(\Gamma_i)$.
Since $p \in \Ccore_{\Omega_{1-i}}(\Gamma_{1-i}) \smallsetminus X$, we can be sure $p \neq w$, since $p$ lies strictly on one side of $X$ in $\Omega_{\Delta}^{\max}$ and $w$ lies in the other side of $X$ in $\Omega_{\Delta}^{\max}$.
Further, the interval $[p,w]$ (which has at least $(p,w)$ inside of $\Omega_{\Delta}^{\max}$) must cross $X \cap \Omega_0$.
This is a contradiction because $X'$ contains $[p,w]$, but does not intersect $X \cap \Omega_0$.
Hence $S = \Ccore_{\Omega_{1-i}}(\Gamma_{1-i})$ and so $\Ccore_{\Omega_{1-i}}(\Gamma_{1-i}) \subset \Omega_i^{\max}$.
This proves~\eqref{eq:pangolin}

Inside of $\Omega_i^{\max}$, the set $\Ccore_{\Omega_i}(\Gamma_i)$ lies on one side of $X$ and $\Ccore_{\Omega_{1-i}}(\Gamma_{1-i})$ lies on the other. 
For any $\gamma \in \Gamma_i \smallsetminus \Delta = \Gamma_i \smallsetminus \mathrm{Stab}_{\Gamma_i}(X)$, the set $\gamma \cdot (X \cap \Omega_0)$ must be disjoint from $X \cap \Omega_0$, since otherwise $\gamma \cdot (X \cap \Omega_0)$ would transversely cross $X \cap \Omega_0$, contradicting that $X \cap \Omega_0$ is contained in $\partialn \Ccore_{\Omega_i}(\Gamma_i)$. Further, $\gamma \cdot \Ccore_{\Omega_{1-i}}(\Gamma_{1-i}) \cap \Ccore_{\Omega_{1-i}}(\Gamma_{1-i}) = \varnothing$;  more precisely, $\gamma \cdot \Ccore_{\Omega_{1-i}}(\Gamma_{1-i})$ lies in the open half-space of $X$ opposite from $\Ccore_{\Omega_{1-i}}(\Gamma_{1-i})$ in $\Omega_i^{\max}$.
\end{proof}

We define a $\Gamma_i$-invariant closed convex subset $\C_i$ of $\Omega_i$ as follows. Choose a point $p_i \in \Omega_i$ just on the other side of $X \cap \Omega_0$ from $\Ccore_{\Omega_i}(\Gamma_i)$. Choose $p_i$ close enough to $X \cap \Omega_0$ so that $p_i \in \Ccore_{\Omega_{1-i}}(\Gamma_{1-i})$. This is possible by Lemma~\ref{lem:gluing-1}.
Define $\C_i$ to be the convex hull of $\Gamma_i \cdot p_i$ in~$\Omega_i$.

The set $\C_i$ is a closed $\Gamma_i$-invariant convex subset of~$\Omega_i$, contained in some uniform neighborhood of $\Ccore_{\Omega_i}(\Gamma_i)$ in $(\Omega_i,d_{\Omega_i})$, hence $\C_i$ has compact quotient by~$\Gamma_i$.
Observe that $\C_0^\circ \cap \C_1^\circ$ is an open neighborhood of $X \cap \Omega_0$; it is the union of two halves, one contained in $\Ccore_{\Omega_0}(\Gamma_0)$ and the other in $\Ccore_{\Omega_1}(\Gamma_1)$. Further, by choosing each $p_i$ even closer to $X \cap \Omega_0$ we may arrange that $\C_i$ and $\gamma \cdot \C_i$ are disjoint for each $i \in \{0,1\}$ and each $\gamma \in \Gamma_{1-i}\smallsetminus \Delta$. For if not, then images of $X \cap \Omega_0$ under $\Gamma_{1-i} \smallsetminus \Delta$ come arbitrarily close to $X \cap \Omega_0$. However, the compact submanifold $\Delta \backslash X \cap \Omega_0$ embeds in the quotient $\Gamma_{1-i} \backslash \Omega_{1-i}$; it does not accumulate on itself.

\begin{lemma}
For each $i\in\{0,1\}$ and each $\gamma \in \Gamma_i \smallsetminus \Delta$, the triple of properly convex open sets $(\C_{1-i}^\circ, \C_i^\circ, \gamma \cdot \C_{1-i}^\circ)$ is in occultation position.
\end{lemma}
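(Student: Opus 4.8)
The goal is to verify the three conditions of Definition~\ref{def:occult} for the triple $(\C_{1-i}^\circ, \C_i^\circ, \gamma\cdot\C_{1-i}^\circ)$, where $\gamma\in\Gamma_i\smallsetminus\Delta$. Throughout I will work inside the ``ambient'' convex set $\Omega_i^{\max}$, whose key feature is that the hyperplane~$X$ separates it into two open half-spaces, one containing $\Ccore_{\Omega_i}(\Gamma_i)$ (hence a neighborhood of all of $\Gamma_i\cdot p_i$ that lies on that side) and the other containing $\Ccore_{\Omega_{1-i}}(\Gamma_{1-i})$ and all its $\gamma$-translates for $\gamma\in\Gamma_i\smallsetminus\Delta$ (by the previous lemma). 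Recall also from the construction that $\C_i\subset$ a uniform neighborhood of $\Ccore_{\Omega_i}(\Gamma_i)$ (so $\C_i\subset\Omega_i^{\max}$ by Lemma~\ref{lem:Omega-max}), that $\C_0^\circ\cap\C_1^\circ$ is a neighborhood of $X\cap\Omega_0$ which is the union of a half inside $\Ccore_{\Omega_0}(\Gamma_0)$ and a half inside $\Ccore_{\Omega_1}(\Gamma_1)$, and that $\C_i$ and $\gamma\cdot\C_i$ are disjoint for $\gamma\in\Gamma_{1-i}\smallsetminus\Delta$ — a fact I will use after swapping the roles of $i$ and $1-i$.

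First, the intersection conditions \ref{item:occult-2}: $\C_{1-i}^\circ\cap\C_i^\circ\neq\varnothing$ holds because this is precisely the neighborhood of $X\cap\Omega_0$ noted above; $\C_i^\circ\cap(\gamma\cdot\C_{1-i}^\circ)\neq\varnothing$ holds by applying the same fact to the pair $(\C_i,\gamma\cdot\C_{1-i})$, since $\gamma^{-1}\notin\Delta$ forces... more carefully, $\C_i^\circ\cap\gamma\cdot\C_{1-i}^\circ = \gamma\cdot(\gamma^{-1}\C_i^\circ\cap\C_{1-i}^\circ)$ and $\gamma^{-1}\C_i = \gamma^{-1}\C_i$ with $\gamma^{-1}\in\Gamma_i\smallsetminus\Delta$; since $\C_i$ contains the neighborhood of $X\cap\Omega_0$ just like $\C_{1-i}$ does, and $\gamma^{-1}$ maps $X\cap\Omega_0$ to $\gamma^{-1}(X\cap\Omega_0)\subset\partialn\Ccore_{\Omega_i}(\Gamma_i)$, a short argument (the chosen point $p_i$ near $X\cap\Omega_0$ lies in $\C_{1-i}$, and applying $\gamma^{-1}$) gives nonemptiness — I would in fact just observe directly that $p_i$ is interior to $\C_{1-i}$ and to $\C_i$, and that $\gamma^{-1}p_i$ can be taken interior to both after refining the choice of $p_i$, mirroring the disjointness argument in the construction. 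The disjointness $\C_{1-i}^\circ\cap\gamma\cdot\C_{1-i}^\circ=\varnothing$ is exactly the fact ``$\C_j$ and $\gamma'\cdot\C_j$ are disjoint for $\gamma'\in\Gamma_{1-j}\smallsetminus\Delta$'' applied with $j=1-i$, $\gamma'=\gamma\in\Gamma_i\smallsetminus\Delta$. For the non-containment conditions \ref{item:occult-1}, $\C_{1-i}^\circ\not\subset\C_i^\circ$ and $\gamma\cdot\C_{1-i}^\circ\not\subset\C_i^\circ$: each of $\C_{1-i}$, $\gamma\cdot\C_{1-i}$ meets the open half-space of $X$ on the side of $\Ccore_{\Omega_{1-i}}(\Gamma_{1-i})$ (or its $\gamma$-translate), while $\C_i$ lies in a uniform neighborhood of $\Ccore_{\Omega_i}(\Gamma_i)$ and $\C_i\smallsetminus X$ on the far side is pinched to the pyramid-tip sliver — more simply, $\C_{1-i}$ contains $\Ccore_{\Omega_{1-i}}(\Gamma_{1-i})$ which has points arbitrarily deep on its own side, whereas $\C_i\cap(\text{that side})$ is bounded, so the containment fails; I would make this precise using that $\Ccore_{\Omega_{1-i}}(\Gamma_{1-i})\not\subset\C_i$ since the former is unbounded in the Hilbert metric of $\Omega_i^{\max}$ along directions into $\Lambdao_{\Omega_{1-i}}(\Gamma_{1-i})\smallsetminus X$ while $\C_i$ has compact quotient by $\Gamma_i$ which fixes a different limit set.

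The main work, and the expected obstacle, is condition~\ref{item:occult-1}'s third clause $\overline{(\C_i^\circ)^*}\subset(\C_{1-i}^\circ)^*\cup(\gamma\cdot\C_{1-i}^\circ)^*$, equivalently (Lemma~\ref{lem:occult}) that every projective line meeting both $\overline{\C_{1-i}^\circ}$ and $\overline{\gamma\cdot\C_{1-i}^\circ}$ must meet $\C_i^\circ$. Here is the geometric picture I would exploit: $\overline{\C_{1-i}}$ lies on one side of $X$ in $\overline{\Omega_i^{\max}}$ together with $X\cap\partial\Omega_0$ on its boundary, while $\overline{\gamma\cdot\C_{1-i}}$ lies strictly on the opposite open side of $X$. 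Hence a segment joining a point of the first to a point of the second must cross $X$, and in fact must cross $X$ inside a controlled region. The subtle point is that the crossing point lands in $X\cap\Omega_0\subset\C_0^\circ\cap\C_1^\circ\subset\C_i^\circ$: this requires knowing that $\overline{\C_{1-i}}\cap X$ and $\overline{\gamma\cdot\C_{1-i}}\cap X$ are both contained in $\overline{X\cap\Omega_0}$ (clear, as both are $\subset\overline{\Omega_i}$ and $X\cap\overline{\Omega_0}=X\cap\overline{\Omega_1}$), and that the chord between them actually passes through the \emph{interior} $X\cap\Omega_0$ rather than through the boundary $X\cap\partial\Omega_0$. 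I would rule out the bad case using strict convexity / the cross-ratio argument from Lemma~\ref{lem:gluing-2}: if the chord's intersection with $X$ were a boundary point $r\in X\cap\partial\Omega_0=\Lambdao_{\Omega_i}(\Delta)$, then since $\Lambdao_{\Omega_i}(\Delta)$ is a union of open faces of $\partial\Omega_i$ (Fact~\ref{fact:cc-strata} applied to the convex cocompact action of $\Delta$ on $\Omega_i$, Corollary~\ref{cor:cc-Omega-i-and-dual}) and one endpoint of the chord is an interior point of $\overline{\C_{1-i}}$ or a genuine point of $\C_{1-i}^\circ\subset\Omega_i$, we would get a nontrivial segment of $\partial\Omega_i$ through an interior direction — contradiction. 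Wrapping up, once the chord meets $X\cap\Omega_0$, and $X\cap\Omega_0$ is an open subset of $\C_i^\circ$, the line meets $\C_i^\circ$, which is what we needed; combined with the boundary version $\overline{B^*}\subset A^*\cup C^*$ obtained by a limiting/openness argument (points of $\overline{\C_{1-i}^\circ}$ are limits of points of $\C_{1-i}^\circ$, and $X\cap\Omega_0$ is open so the crossing stays inside), this establishes occultation position. The genuinely delicate step is this last verification that the separating chord threads through $X\cap\Omega_0$ and not merely $X\cap\overline{\Omega_0}$, and handling the closure carefully there.
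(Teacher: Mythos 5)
You have set up the problem correctly and you have even located the genuine difficulty (the chord may meet $X$ only in $X\cap\partial\Omega_0$ rather than in $X\cap\Omega_0$), but your proposed resolution of that difficulty does not work, and that is where all the content of the lemma lies. In the bad configuration the line $\ell$ through $p\in\overline{\C_{1-i}}$ and $q\in\gamma\cdot\overline{\C_{1-i}}$ meets $X$ at some $r\in X\cap\partial\Omega_0=\Lambdao_{\Omega_i}(\Delta)$ and meets $\gamma\cdot X$ at some $s\in\gamma\cdot(X\cap\partial\Omega_0)$, and the whole segment $[r,s]$ lies in $\Fr(\C_i)$. Your contradiction is supposed to come from strict convexity together with Fact~\ref{fact:cc-strata} and the assertion that ``one endpoint of the chord is an interior point''. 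None of this is available: $\Omega_i$ is not strictly convex (its boundary contains $X\cap\partial\Omega_0$, which typically carries segments); the endpoints $p,q$ may perfectly well be ideal points of $\overline{\C_{1-i}}$ and $\gamma\cdot\overline{\C_{1-i}}$, since occultation is a condition on closures (and the inclusion $\C_{1-i}^\circ\subset\Omega_i$ you invoke is not justified by the construction --- only $\C_{1-i}\subset\Omega_{1-i}$ and $\Ccore_{\Omega_{1-i}}(\Gamma_{1-i})\subset\Omega_i^{\max}$ are known); and the fact that $\Lambdao_{\Omega_i}(\Delta)$ is a union of open faces of $\partial\Omega_i$ does not forbid a segment of $\Fr(\C_i)$ running from $r\in\Lambdao_{\Omega_i}(\Delta)$ to $s\in\gamma\cdot\Lambdao_{\Omega_i}(\Delta)$: such a segment could lie along other faces of $\partial\Omega_i$, or even cross $\Omega_i$ through the nonideal boundary $\partialn\C_i$. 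Nothing you cite excludes this configuration; a telling sign is that your argument never uses the hypothesis that $\Delta$ fixes the point $z$, which is essential in the theorem.

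The paper's proof of precisely this step is different and heavier. It runs an open--closed connectedness argument on the set $S$ of pairs $(p,q)\in(\overline{\C_{1-i}}\smallsetminus\C_i^\circ)\times(\gamma\cdot\overline{\C_{1-i}}\smallsetminus\C_i^\circ)$ whose line meets $\C_i^\circ$, and closedness is exactly where the bad segment $[r,s]\subset\Fr(\C_i)$ must be ruled out. The case $r=s$ is excluded because it would force $\gamma\cdot(X\cap\Omega_0)$ to accumulate on $X\cap\Omega_0$ (Lemma~\ref{lem:enter-nbhd}), contradicting that $\Delta\backslash(X\cap\Omega_0)$ is a compact embedded submanifold; for $r\neq s$ one takes a hyperplane $X'$ supporting $\Omega_i$ along $[r,s]$, uses that $\Delta$ acts convex cocompactly on both $\Omega_i$ and $\Omega_i^*$ (Corollary~\ref{cor:cc-Omega-i-and-dual}, which is where the fixed point $z$ enters) together with Corollary~\ref{cor:cc-bisat-support} to get $X'\in\Lambdao_{\Omega_i^*}(\Delta)\cap\Lambdao_{\Omega_i^*}(\gamma\Delta\gamma^{-1})$, and then a dual application of Lemma~\ref{lem:enter-nbhd} shows that $z^*\cap\Omega_i^*$ and $\gamma\cdot(z^*\cap\Omega_i^*)$ would come arbitrarily close in $(\Omega_i^*,d_{\Omega_i^*})$, contradicting compactness of $\Delta\backslash(z^*\cap\Omega_i^*)$. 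Your concluding ``limiting/openness argument'' is exactly this missing analysis, so the proposal has a genuine gap at its central step. (The routine conditions --- intersections, disjointness, non-containment --- are fine in substance; note simply that $\C_i^\circ\cap\gamma\cdot\C_{1-i}^\circ=\gamma\cdot(\C_i^\circ\cap\C_{1-i}^\circ)$ is nonempty because $\gamma$ preserves $\C_i$, no refinement of $p_i$ needed.)
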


\begin{proof}
Fix $\gamma \in \Gamma_i \smallsetminus \Delta$.
We need to show that for any points $p \in \overline{\C_{1-i}}$ and $q \in \gamma \cdot \overline{\C_{1-i}}$, the line $\ell$ through $p$ and $q$ intersects $\C_i^\circ$. This is clearly true if one of $p$ or $q$ is in $\C_i^\circ$.
So let us consider points $p$ and $q$ outside of $\C_i^\circ$; by construction of $\C_0, \C_1$ such $p$ and $q$ are on the opposite side of $X$ and $\gamma \cdot X$ respectively from $\Ccore_{\Omega_i}(\Gamma_i)$ in $\Omega^{\max}_i$.
Let $S \subset  (\overline{\C_{1-i}}\smallsetminus \C_i^\circ) \times (\gamma \cdot \overline{\C_{1-i}}\smallsetminus \C_i^\circ)$ denote the set of pairs $(p,q)$ for which the line $\ell$ through $p, q$ intersects $\C_i^\circ$. $S$ is clearly open. 
Let us show that it is closed. 
Suppose by contradiction $(p,q) \in \overline{S} \smallsetminus S$. 
The line $\ell$ through $p$ and $q$ must cross $\overline{X \cap \Omega_0}$ in some point $r$ and then must cross  $\gamma \cdot \overline{X \cap \Omega_0}$ in some point $s$. 
Since $X \cap \Omega_0 \subset \C_i^\circ$ and $(p,q) \notin S$, we must have $r \in X \cap \partial \Omega_0 = \Lambdao_{\Omega_i}(\Delta)$, and similarly we must have $s \in \gamma \cdot  (X \cap \partial \Omega_0) = \Lambdao_{\Omega_i}(\gamma \Delta \gamma^{-1})$.
The entire segment $[r,s]$ lies in $\Fr(\C_i) = \overline{\C_i} \smallsetminus \Int{\C_i}$.
To complete the proof, we must show that no such segment can exist.

Note that if $r = s$, then $\gamma \cdot (X \cap \Omega_0)$ comes arbitrarily close to $X \cap \Omega_0$ by Lemma~\ref{lem:enter-nbhd}, which is impossible since $\Delta \backslash (X \cap \Omega_0)$ is a compact submanifold of $\Gamma_i \backslash \Omega^{\max}_i$; it does not accumulate on itself. So $r \neq s$.  Let $X'$ be a hyperplane supporting $\Omega_i$ along the interval $[r,s]$. Since $\Delta$ acts convex cocompactly on both $\Omega_i$ and $\Omega_i^*$ (Corollary~\ref{cor:cc-Omega-i-and-dual}), and since $X'$ contains a point $r$ of $\Lambdao_{\Omega_i}(\Gamma_i)$, we have $X' \in \Lambdao_{\Omega_i^*}(\Delta)$ by Corollary~\ref{cor:cc-bisat-support}.
Similarly, $X' \in \Lambdao_{\Omega_i^*}(\gamma \Delta \gamma^{-1})$.
But this implies, again by Lemma~\ref{lem:enter-nbhd}, that $z^* \cap \Omega_i^*$ and $\gamma \cdot z^* \cap \Omega_i^*$ are arbitrarily close with respect to $d_{\Omega_i^*}$ (here $z^*$ still refers to the dual hyperplane to the fixed point $z$ of $\Delta$), which is again impossible since $\Delta \backslash (z^* \cap \Omega_i^*)$ is a compact submanifold of $\Delta \backslash \Omega_i^*$. 
This completes the proof that $S$ is closed and hence the triple $(\C_{1-i}^\circ, \C_i^\circ, \gamma \cdot \C_{1-i}^\circ)$ of convex sets is in occultation position.
\end{proof}

Finally, let $\C_i^\varepsilon$ denote the open uniform $\varepsilon$-neighborhood of $\C_i$ in $(\Omega_i,d_{\Omega_i})$. We may choose $\varepsilon> 0$ small enough that $\C_i^\varepsilon$ does not intersect $\gamma \cdot \C_i^\varepsilon$ for all $\gamma \in \Gamma_{1-i} \smallsetminus \Delta$
(this is again for the reason that $\Delta \backslash (X\cap \Omega_i)$ is a compact submanifold embedded in $\Gamma_{1-i} \backslash \Omega_{1-i}$).
Up to shrinking $\varepsilon > 0$ further, we may assume that the part of $\C_i^\varepsilon$ opposite $\Ccore_{\Omega_i}(\Gamma_i)$ with respect to $X$ is completely contained in $\Ccore_{\Omega_{1-i}}(\Gamma_{1-i})$.
The above paragraph then shows that the triple $((\C_{1-i}^\varepsilon)^\circ, (\C_i^\varepsilon)^\circ, \gamma \cdot (\C_{1-i}^\varepsilon)^\circ)$ is in occultation position.

We may now apply Theorem~\ref{thm:amalgam-cc} to the groups $\Gamma_0, \Gamma_1$ acting on the properly convex open sets ($\C_0^\varepsilon)^\circ, (\C_1^\varepsilon)^\circ$ and the closed convex subsets $\C_0, \C_1$.
This completes the proof of Theorem~\ref{thm:gluing}.

\subsection{Proof of Corollary~\ref{cor:double}}

Let $\sigma \in \PGL(V)$ denote the reflection fixing $X$ and~$z$.
Then $\Gamma_0 = \Gamma$ and $\Gamma_1 = \sigma \Gamma \sigma^{-1}$ satisfy the hypotheses of Theorem~\ref{thm:gluing}, giving the result.

\section{Virtual amalgamation over convex cocompact subgroups} \label{sec:virtual}

This section focusses on Theorem~\ref{thm:hopeful}.
Before proving it, let us observe that assumption~\ref{item:hopeful-cases} is satisfied in at least two important situations.
Recall that, given a nonempty properly convex open subset $\Omega$ of $\PP(V)$, a point $x\in\partial\Omega$ is said to be \emph{extremal} (\resp \emph{$C^1$}) in~$\overline{\Omega}$ if there is no open projective segment in $\partial\Omega$ containing~$x$ (\resp there is a unique supporting hyperplane to $\Omega$ at~$x$).

\begin{lemma} \label{lem:hopeful-cases-satisfied}
For $i=0,1$, let $\Gamma_i$ be a discrete subgroup of $\PGL(V)$ preserving a nonempty properly convex open subset $\Omega_i$ of $\PP(V)$; suppose that the action of $\Gamma_i$ on~$\Omega_i$ is convex cocompact, or that every point of $\Lambdao_{\Omega_i}(\Gamma_i)$ is extremal and $C^1$ in~$\overline{\Omega_i}$.
Then assumption \ref{item:hopeful-cases} of Theorem~\ref{thm:hopeful} holds.
\end{lemma}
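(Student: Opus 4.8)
Assumption~\ref{item:hopeful-cases} of Theorem~\ref{thm:hopeful} has two parts: first, that $\partiali\Ccore_{\Omega_i}(\Gamma_i)$ is a union of open faces of $\partial\Omega_i$; second, that any ray in $\Omega_i$ with endpoint in $\partiali\Ccore_{\Omega_i}(\Gamma_i)$ eventually enters every uniform neighborhood of $\Ccore_{\Omega_i}(\Gamma_i)$ in $(\Omega_i,d_{\Omega_i})$. I will treat the two stated hypotheses separately, since they behave quite differently.

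\textbf{Case 1: the action of $\Gamma_i$ on $\Omega_i$ is convex cocompact.} Here $\partiali\Ccore_{\Omega_i}(\Gamma_i) = \Lambdao_{\Omega_i}(\Gamma_i)$ by Fact~\ref{fact:ideal-bound-cc} (applied to $\C=\Ccore_{\Omega_i}(\Gamma_i)$, on which $\Gamma_i$ acts cocompactly). By Fact~\ref{fact:cc-strata}, for every open face $F$ of $\partial\Omega_i$ the intersection $\Lambdao_{\Omega_i}(\Gamma_i)\cap F$ is either empty or all of $F$; hence $\Lambdao_{\Omega_i}(\Gamma_i)$ is a union of open faces, giving the first part. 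The second part is precisely the content of Lemma~\ref{lem:enter-nbhd}: for any $x\in\Omega_i$ and any $\xi\in\Lambdao_{\Omega_i}(\Gamma_i)=\partiali\Ccore_{\Omega_i}(\Gamma_i)$, the ray $[x,\xi)$ eventually enters any uniform neighborhood of $\Ccore_{\Omega_i}(\Gamma_i)$ in $(\Omega_i,d_{\Omega_i})$. So in this case nothing new needs to be proved.

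\textbf{Case 2: every point of $\Lambdao_{\Omega_i}(\Gamma_i)$ is extremal and $C^1$ in $\overline{\Omega_i}$.} Since every point $z\in\Lambdao_{\Omega_i}(\Gamma_i)$ is extremal in $\overline{\Omega_i}$, the open face of $\partial\Omega_i$ at $z$ is the singleton $\{z\}$; therefore $\Lambdao_{\Omega_i}(\Gamma_i)$ is trivially a union of (one-point) open faces. The convex hull in $\overline{\Omega_i}$ of $\overline{\Lambdao_{\Omega_i}(\Gamma_i)}$ is $\overline{\Ccore_{\Omega_i}(\Gamma_i)}$, and its ideal boundary $\partiali\Ccore_{\Omega_i}(\Gamma_i) = \overline{\Ccore_{\Omega_i}(\Gamma_i)}\smallsetminus\Ccore_{\Omega_i}(\Gamma_i)$ is contained in $\partial\Omega_i$; I claim it equals $\overline{\Lambdao_{\Omega_i}(\Gamma_i)}$. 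Indeed, any point of $\partiali\Ccore_{\Omega_i}(\Gamma_i)$ is a limit of convex combinations of points of $\Lambdao_{\Omega_i}(\Gamma_i)$; if such a point $\xi$ were \emph{not} in $\overline{\Lambdao_{\Omega_i}(\Gamma_i)}$, then $\xi$ would lie on an open segment of $\partial\Omega_i$ with both endpoints in $\overline{\Lambdao_{\Omega_i}(\Gamma_i)}$, contradicting extremality of those endpoints. Hence $\partiali\Ccore_{\Omega_i}(\Gamma_i)=\overline{\Lambdao_{\Omega_i}(\Gamma_i)}$, and every point of it is extremal and $C^1$ in $\overline{\Omega_i}$ (the extremal and $C^1$ conditions pass to limits / closures for the orbital limit set, as one checks from the definitions; this is the only slightly technical point). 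For the ray condition: let $\xi\in\partiali\Ccore_{\Omega_i}(\Gamma_i)$ and let $[x,\xi)$ be a ray in $\Omega_i$. Fix $R>0$ and let $\C_R$ be the closed uniform $R$-neighborhood of $\Ccore_{\Omega_i}(\Gamma_i)$ in $(\Omega_i,d_{\Omega_i})$. Since $\Lambdao_{\Omega_i}(\Gamma_i)$ is nonempty, pick $x_0\in\Ccore_{\Omega_i}(\Gamma_i)$ and consider the ray $[x_0,\xi)\subset\overline{\Ccore_{\Omega_i}(\Gamma_i)}$, which lies in $\Ccore_{\Omega_i}(\Gamma_i)$ by convexity since $\xi\notin\Ccore_{\Omega_i}(\Gamma_i)$ and the face at $\xi$ is a singleton. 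Because $\xi$ is $C^1$ in $\overline{\Omega_i}$, Remark~\ref{rem:strict-conv-rays} says the two rays $[x,\xi)$ and $[x_0,\xi)$ are asymptotic, i.e.\ their $d_{\Omega_i}$-distance tends to $0$; in particular, a tail of $[x,\xi)$ lies within $d_{\Omega_i}$-distance $R$ of $[x_0,\xi)\subset\Ccore_{\Omega_i}(\Gamma_i)$, hence inside $\C_R$. This proves assumption~\ref{item:hopeful-cases}.

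\textbf{Main obstacle.} The bulk of the routine work, and the one place where care is needed, is the claim in Case~2 that extremality and the $C^1$ property at every point of $\Lambdao_{\Omega_i}(\Gamma_i)$ imply the same properties at every point of $\partiali\Ccore_{\Omega_i}(\Gamma_i)=\overline{\Lambdao_{\Omega_i}(\Gamma_i)}$, together with the identification $\partiali\Ccore_{\Omega_i}(\Gamma_i)=\overline{\Lambdao_{\Omega_i}(\Gamma_i)}$ itself. Once that structural statement about the ideal boundary of the convex core is in hand, the ray condition follows cleanly from Remark~\ref{rem:strict-conv-rays} in Case~2 and from Lemma~\ref{lem:enter-nbhd} in Case~1, and the union-of-faces condition is immediate in both cases.
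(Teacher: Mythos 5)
Your Case 1 is correct and is exactly the paper's argument (Fact~\ref{fact:cc-strata} together with Lemma~\ref{lem:enter-nbhd}, with Fact~\ref{fact:ideal-bound-cc} supplying the identification $\partiali\Ccore_{\Omega_i}(\Gamma_i)=\Lambdao_{\Omega_i}(\Gamma_i)$). Your Case 2 also follows the paper's intended route (extremality for the union-of-faces condition, Remark~\ref{rem:strict-conv-rays} for the ray condition), but the bridge you build from the hypothesis on $\Lambdao_{\Omega_i}(\Gamma_i)$ to the conclusion about $\partiali\Ccore_{\Omega_i}(\Gamma_i)$ --- which you yourself single out as the main obstacle --- has a genuine gap.

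Concretely, two steps fail as written. First, your proof that $\partiali\Ccore_{\Omega_i}(\Gamma_i)=\overline{\Lambdao_{\Omega_i}(\Gamma_i)}$ rests on the claim that a point $\xi$ of the ideal boundary not lying in $\overline{\Lambdao_{\Omega_i}(\Gamma_i)}$ would sit on an open segment of $\partial\Omega_i$ with endpoints in $\overline{\Lambdao_{\Omega_i}(\Gamma_i)}$, ``contradicting extremality of those endpoints''. There is no contradiction: extremality of a point, as defined in the paper, only forbids that point from lying in the \emph{interior} of a projective segment of $\partial\Omega_i$; it is perfectly compatible with the point being an \emph{endpoint} of such a segment (think of the two ends of a flat piece of an otherwise strictly convex boundary, which are extremal and even $C^1$). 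So extremality of the points of $\Lambdao_{\Omega_i}(\Gamma_i)$ does not by itself exclude a boundary segment joining two limit points, which is exactly the configuration producing points of $\Conv{\overline{\Lambdao_{\Omega_i}(\Gamma_i)}}\cap\partial\Omega_i$ outside $\overline{\Lambdao_{\Omega_i}(\Gamma_i)}$. Second, the parenthetical assertion that ``the extremal and $C^1$ conditions pass to limits / closures'' is stated without proof and is false for general properly convex sets: the set of extremal points of a compact convex body need not be closed (the classical example of the convex hull of a circle and a transverse segment), and likewise for $C^1$ points; if it holds here it must come from the group invariance of $\Lambdao_{\Omega_i}(\Gamma_i)$, which you do not use. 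Since both the statement that $\partiali\Ccore_{\Omega_i}(\Gamma_i)$ is a union of open faces and the applicability of Remark~\ref{rem:strict-conv-rays} to an arbitrary endpoint $\xi\in\partiali\Ccore_{\Omega_i}(\Gamma_i)$ depend on this bridge (the hypothesis gives extremality and $C^1$ only at points of $\Lambdao_{\Omega_i}(\Gamma_i)$, not a priori at all points of $\partiali\Ccore_{\Omega_i}(\Gamma_i)$), Case 2 is incomplete as it stands; the remaining pieces (the ray $[x_0,\xi)\subset\Ccore_{\Omega_i}(\Gamma_i)$ and the asymptotic-rays argument) are fine once that structural statement is actually established.
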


\begin{proof}[Proof of Lemma~\ref{lem:hopeful-cases-satisfied}]
If the action of $\Gamma_i$ on~$\Omega_i$ is convex cocompact, we use Fact~\ref{fact:cc-strata} and Lemma~\ref{lem:enter-nbhd}.
If every point of $\Lambdao_{\Omega_i}(\Gamma_i)$ is extremal and $C^1$ in~$\overline{\Omega_i}$, we use the definition of extremality and Remark~\ref{rem:strict-conv-rays}.
\end{proof}

We also state the following interesting immediate consequence of Theorem~\ref{thm:hopeful}, Proposition~\ref{prop:Ano-cc-P1-div}, and Lemma~\ref{lem:hopeful-cases-satisfied}.

\begin{corollary} \label{cor:virt-amalgam-Ano}
For $i=0,1$, let $\Gamma_i$ be an infinite Gromov hyperbolic subgroup of $\PGL(V)$ preserving a properly convex open subset $\Omega_i$ of $\PP(V)$, such that the natural inclusion $\Gamma_i \hookrightarrow \PGL(V)$ is $P_1$-Anosov.
Suppose that $\Omega_{01} := \Omega_0 \cap \Omega_1 \neq \varnothing$ and that
\begin{enumerate}[label=(\alph*)]
  \item\label{item:hopeful-separable} $\Delta := \Gamma_0 \cap \Gamma_1$ is separable in both $\Gamma_0$ and~$\Gamma_1$,
  \item\label{item:hopeful-CC-Omega01} if $\Delta$ is infinite, then it acts convex cocompactly on $\Omega_{01} = \Omega_0 \cap \Omega_1$,
  \item\label{item:hopeful-lim-set} $\varnothing \neq \overline{\Ccore_{\Omega_i}(\Gamma_i)} \smallsetminus \Lambdao_{\Omega_{01}}(\Delta) \subset \Omega_{1-i}$ for both $i=0,1$,
\end{enumerate} 
Then there exist finite-index subgroups $\Gamma'_0$ of~$\Gamma_0$ and $\Gamma'_1$ of~$\Gamma_1$, each containing~$\Delta$, such that the amalgamated free product $\Gamma'_0 *_{\Delta} \Gamma'_1$ is Gromov hyperbolic and the representation $\rho : \Gamma'_0 *_{\Delta} \Gamma'_1 \to \PGL(V)$ induced by the inclusions $\Gamma'_0, \Gamma'_1 \hookrightarrow \PGL(V)$ is $P_1$-Anosov and preserves a nonempty properly convex open subset of $\PP(V)$.
\end{corollary}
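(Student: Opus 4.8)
The statement is now an immediate corollary of Theorem~\ref{thm:hopeful} together with the characterizations of convex cocompactness already established. The plan is as follows. First I would note that the $P_1$-Anosov hypothesis on the inclusion $\Gamma_i\hookrightarrow\PGL(V)$, together with the hypothesis that $\Gamma_i$ preserves a nonempty properly convex open set $\Omega_i$ and is infinite Gromov hyperbolic, places us exactly in the situation of Fact~\ref{fact:Anosov}: the action of $\Gamma_i$ on $\Omega_i$ is convex cocompact, for both $i=0,1$. In particular, by Lemma~\ref{lem:hopeful-cases-satisfied}, assumption~\ref{item:hopeful-cases} of Theorem~\ref{thm:hopeful} is automatically satisfied. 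Assumptions~\ref{item:hopeful-separable}, \ref{item:hopeful-CC-Omega01}, and~\ref{item:hopeful-lim-set} of Theorem~\ref{thm:hopeful} are precisely hypotheses~\ref{item:hopeful-separable}, \ref{item:hopeful-CC-Omega01}, and~\ref{item:hopeful-lim-set} of the present corollary (note that in part~\ref{item:hopeful-CC-Omega01}, convex cocompactness of $\Delta$ on $\Omega_{01}$ is assumed when $\Delta$ is infinite, and there is nothing to check when $\Delta$ is finite). So all hypotheses of Theorem~\ref{thm:hopeful} hold.

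Next I would invoke Theorem~\ref{thm:hopeful}: it produces finite-index subgroups $\Gamma'_0\leq\Gamma_0$ and $\Gamma'_1\leq\Gamma_1$, each containing~$\Delta$, such that the representation $\rho:\Gamma'_0*_\Delta\Gamma'_1\to\PGL(V)$ induced by the inclusions is discrete and faithful and its image preserves a nonempty properly convex open set $\Omega$ (part~\eqref{item:hopeful-1}). Since each $\Gamma'_i$ is of finite index in the $P_1$-divergent group $\Gamma_i$ (recall $P_1$-Anosov implies $P_1$-divergent, as used in the proof of Proposition~\ref{prop:Ano-cc-P1-div}), each $\Gamma'_i$ is itself $P_1$-divergent; hence by part~\eqref{item:hopeful-2} of Theorem~\ref{thm:hopeful}, $\rho(\Gamma'_0*_\Delta\Gamma'_1)$ is $P_1$-divergent. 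Also, each $\Gamma'_i$ being a finite-index subgroup of $\Gamma_i$, its action on $\Omega_i$ remains convex cocompact (this is standard, or follows since the convex core is unchanged up to passing to a cocompact subset); so by part~\eqref{item:hopeful-3}, the action of $\Gamma'_0*_\Delta\Gamma'_1$ on $\Omega$ via $\rho$ is convex cocompact. Thus $\rho(\Gamma'_0*_\Delta\Gamma'_1)$ is a $P_1$-divergent, convex cocompact subgroup of $\PGL(V)$.

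To conclude, I would first argue that $\Gamma'_0*_\Delta\Gamma'_1$ is Gromov hyperbolic: this holds because $\Gamma'_0$ and $\Gamma'_1$ are Gromov hyperbolic (being finite-index in the Gromov hyperbolic groups $\Gamma_0,\Gamma_1$), the amalgamated subgroup $\Delta$ is either finite or, by hypothesis~\ref{item:hopeful-CC-Omega01} and Fact~\ref{fact:Anosov}/convex cocompactness, quasi-convex in each factor, so the Bestvina--Feighn combination theorem (cf.\ Fact~\ref{fact:amalgam-Gromov-hyp}) applies. Then, since $\rho$ is faithful with discrete image, $\Gamma'_0*_\Delta\Gamma'_1\cong\rho(\Gamma'_0*_\Delta\Gamma'_1)$ is an infinite discrete subgroup of $\PGL(V)$ which is Gromov hyperbolic, preserves the properly convex open set $\Omega$, and is convex cocompact; by Fact~\ref{fact:Anosov} (or directly by Proposition~\ref{prop:Ano-cc-P1-div}, using the $P_1$-divergence just obtained), the natural inclusion $\Gamma'_0*_\Delta\Gamma'_1\hookrightarrow\PGL(V)$ is $P_1$-Anosov. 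This gives all the conclusions.

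\textbf{Main obstacle.} None of the steps is genuinely hard: the corollary is essentially a bookkeeping assembly of Theorem~\ref{thm:hopeful}, Fact~\ref{fact:Anosov}, Proposition~\ref{prop:Ano-cc-P1-div}, and Lemma~\ref{lem:hopeful-cases-satisfied}. The one point requiring a little care is the verification that a finite-index subgroup of a $P_1$-Anosov (equivalently $P_1$-divergent, convex cocompact, Gromov hyperbolic) group inherits all three of these properties simultaneously, and that the amalgam $\Gamma'_0*_\Delta\Gamma'_1$ is Gromov hyperbolic (needed to feed back into Fact~\ref{fact:Anosov}); the latter is where one must be careful about the quasiconvexity of $\Delta$, which is why hypothesis~\ref{item:hopeful-CC-Omega01} is stated the way it is. Everything else is a direct citation.
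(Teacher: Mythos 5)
Your assembly is essentially the one the paper intends (the paper treats this corollary as an immediate consequence of Theorem~\ref{thm:hopeful}, Proposition~\ref{prop:Ano-cc-P1-div}, and Lemma~\ref{lem:hopeful-cases-satisfied}), but one step as you have written it does not hold up: the Gromov hyperbolicity of $\Gamma'_0 *_\Delta \Gamma'_1$ should \emph{not} be derived from the Bestvina--Feighn combination theorem. Fact~\ref{fact:amalgam-Gromov-hyp} requires $\Delta$ to be quasiconvex \emph{and malnormal} in each factor, and malnormality is nowhere among the hypotheses of the corollary (nor does it follow from them); moreover, quasiconvexity alone is genuinely insufficient for hyperbolicity of an amalgam in general (e.g.\ $\ZZ *_{2\ZZ} \ZZ$ is the Klein bottle group), and even the quasiconvexity of $\Delta$ in $\Gamma_i$ would itself need an argument rather than a citation. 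The correct route is the one you only mention parenthetically: Theorem~\ref{thm:hopeful}\eqref{item:hopeful-2}--\eqref{item:hopeful-3} gives that $\rho(\Gamma'_0 *_\Delta \Gamma'_1)$ is an infinite discrete subgroup of $\PGL(V)$ which is $P_1$-divergent, preserves a nonempty properly convex open set, and is convex cocompact; Proposition~\ref{prop:Ano-cc-P1-div} (implication (1)$\Rightarrow$(3)) then yields that this image is Gromov hyperbolic \emph{and} that its inclusion is $P_1$-Anosov, and since $\rho$ is faithful the abstract amalgam is isomorphic to its image, hence Gromov hyperbolic. With that substitution (promote your parenthetical to the actual argument and delete the Bestvina--Feighn step), your proof is complete and coincides with the paper's; the remaining citations (P_1-Anosov $\Rightarrow$ convex cocompact action on the given $\Omega_i$ via Fact~\ref{fact:Anosov} and \cite{dgk-proj-cc}, Lemma~\ref{lem:hopeful-cases-satisfied} for assumption~\ref{item:hopeful-cases}, and $P_1$-divergence of the $\Gamma_i$) are used exactly as the paper does.
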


We now give a proof of Theorem~\ref{thm:hopeful}.

For $i=0,1$, let us consider a discrete subgroup $\Gamma_i$ of $\PGL(V)$ preserving a nonempty properly convex open subset $\Omega_i$ of $\PP(V)$, with $\Omega_{01} := \Omega_0 \cap \Omega_1 \neq \varnothing$, such that conditions \ref{item:hopeful-separable}, \ref{item:hopeful-CC-Omega01}, \ref{item:hopeful-lim-set}, \ref{item:hopeful-cases} of Theorem~\ref{thm:hopeful} are satisfied.
We also fix a neighborhood $\mathcal{U}$ of $\Lambdao_{\Omega_0}(\Gamma_0) \cup \Lambdao_{\Omega_1}(\Gamma_1)$ in $\PP(V)$.
Our goal is to find finite-index subgroups $\Gamma'_0$ of~$\Gamma_0$ and $\Gamma'_1$ of~$\Gamma_1$, each containing $\Delta$, satisfying the conclusions of Theorem~\ref{thm:hopeful}.
We may assume that $\Delta$ has infinite index in both $\Gamma_0$ and~$\Gamma_1$ (if not, then Theorem~\ref{thm:hopeful} is trivial).

The proof will be broken into several steps.

\subsection{Step 1: find suitable convex sets}

We start with the following observation.

\begin{lemma} \label{lem:ias1}
The set $\overline{\Ccore_{\Omega_i}(\Gamma_i)} \smallsetminus \Lambdao_{\Omega_{01}}(\Delta)$ is a nonempty, closed convex subset of~$\Omega_{1-i}$ on which $\Delta$ acts properly discontinuously and cocompactly.
\end{lemma}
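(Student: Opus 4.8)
Fix $i \in \{0,1\}$ and write $\C_i := \Ccore_{\Omega_i}(\Gamma_i)$ for brevity, and $W := \overline{\C_i} \smallsetminus \Lambdao_{\Omega_{01}}(\Delta)$. The strategy is to first establish that $W$ is convex and contained in $\Omega_{1-i}$, then to use the known convex cocompactness of $\Delta$ acting on $\Omega_{01}$ (assumption~\ref{item:hopeful-CC-Omega01}) to transport the cocompactness and proper discontinuity across. The key observation is that $W$ should coincide with the portion of $\overline{\C_i}$ that lies inside $\Omega_{01}$, or at least sit between $\Ccore_{\Omega_{01}}(\Delta)$ and some uniform neighborhood of it.

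First I would show $W$ is convex: assumption~\ref{item:hopeful-cases} says $\partiali \C_i$ is a union of open faces of $\partial\Omega_i$, and $\Lambdao_{\Omega_{01}}(\Delta)$, being the ideal boundary of the convex core $\Ccore_{\Omega_{01}}(\Delta)$ (using~\ref{item:hopeful-CC-Omega01} and Fact~\ref{fact:ideal-bound-cc}), is likewise a union of open faces of $\partial\Omega_{01}$; one checks these faces lie in $\partial\Omega_i$ as well and are exactly the faces of $\partial\Omega_i$ met by $\Lambdao_{\Omega_{01}}(\Delta)$. Removing a union of whole open faces from a closed convex set $\overline{\C_i}$ leaves a convex set (any segment in $W$ with an endpoint on such a removed face would force that whole face into $\overline{\C_i}$, but then it would already be removed). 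Nonemptiness of $W$ is assumption~\ref{item:hopeful-lim-set}, which also gives directly $W \subset \Omega_{1-i}$; combined with $W \subset \overline{\C_i} \subset \overline{\Omega_i}$ and the fact that $W$ avoids $\Lambdao_{\Omega_{01}}(\Delta) \supset \partiali \Ccore_{\Omega_{01}}(\Delta)$, one deduces $W \subset \Omega_0 \cap \Omega_1 = \Omega_{01}$, and moreover $W$ is closed in $\Omega_{1-i}$ since it is the intersection of the closed set $\overline{\C_i}$ with $\Omega_{1-i}$ (as $\overline{\C_i} \cap \Omega_{1-i}$ avoids $\Lambdao_{\Omega_{01}}(\Delta)$ by~\ref{item:hopeful-lim-set}).

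Next, proper discontinuity: $\Delta \subset \Gamma_i$ acts properly discontinuously on $\Omega_{1-i}$ (it preserves $\Omega_{1-i}$ and is discrete, hence acts properly discontinuously by the Hilbert metric), so it acts properly discontinuously on the closed invariant subset $W$. For cocompactness I would argue $W$ is sandwiched between $\Ccore_{\Omega_{01}}(\Delta)$ and a uniform neighborhood of it in $(\Omega_{01}, d_{\Omega_{01}})$: the inclusion $\Ccore_{\Omega_{01}}(\Delta) \subset W$ follows because $\Ccore_{\Omega_{01}}(\Delta) = \Ccore_{\Omega_{01}}(\Delta) \cap \overline{\C_i}$ once one knows $\Lambdao_{\Omega_{01}}(\Delta) \subset \overline{\C_i}$ (the $\Delta$-orbital limit set sits in the $\Gamma_i$-convex core since $\Delta \leq \Gamma_i$, using Fact~\ref{fact:ideal-bound-cc} applied to a suitable $\Delta$-invariant convex subset), and because $W$ is $\Delta$-invariant, closed, convex, nonempty, hence by Fact~\ref{fact:ideal-bound-cc} contains $\Ccore_{\Omega_{01}}(\Delta)$. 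For the upper bound, any ray in $W$ has endpoint in $\partiali\C_i \setminus \Lambdao_{\Omega_{01}}(\Delta)$; but $\partiali \C_i$ is a union of open faces and $W \subset \Omega_{01}$ forces these endpoints to lie in $\partial\Omega_{01}$, and in fact in $\Lambdao_{\Omega_{01}}(\Delta)$ — so $W$ has no ideal boundary beyond $\Lambdao_{\Omega_{01}}(\Delta)$, giving $\partiali W = \Lambdao_{\Omega_{01}}(\Delta) = \partiali \Ccore_{\Omega_{01}}(\Delta)$; then an argument in the spirit of Lemma~\ref{lem:even-more-pure} or a direct Hilbert-metric compactness argument shows $W$ lies in a uniform neighborhood of $\Ccore_{\Omega_{01}}(\Delta)$, on which $\Delta$ acts cocompactly by~\ref{item:hopeful-CC-Omega01}, hence $\Delta$ acts cocompactly on the closed invariant set $W$.

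The main obstacle I anticipate is the careful verification that $\partiali W = \Lambdao_{\Omega_{01}}(\Delta)$ — i.e. that no ideal point of $\overline{\C_i}$ outside $\Lambdao_{\Omega_{01}}(\Delta)$ survives in $W$. The point is that such an ideal point would lie in $\partial\Omega_{1-i}$ (impossible, since $W \subset \Omega_{1-i}$) or in $\partial\Omega_{01}$; ruling out the latter requires using assumption~\ref{item:hopeful-lim-set} in the form $\overline{\C_i} \smallsetminus \Lambdao_{\Omega_{01}}(\Delta) \subset \Omega_{1-i}$ together with $W \subset \overline{\Omega_i}$ to conclude $W \subset \Omega_{01}$, whence $\partiali W \subset \partial\Omega_{01} \cap \overline{\C_i}$, and then identifying this with $\Lambdao_{\Omega_{01}}(\Delta)$ via the face structure in~\ref{item:hopeful-cases} and Fact~\ref{fact:cc-strata}. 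Once $\partiali W$ is pinned down, the cocompactness is a standard Hilbert-geometry argument.
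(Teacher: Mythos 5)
There is a genuine gap in your cocompactness argument, and it stems from the claim that $W := \overline{\Ccore_{\Omega_i}(\Gamma_i)} \smallsetminus \Lambdao_{\Omega_{01}}(\Delta)$ is contained in $\Omega_{01}$. This is false in general: $W$ contains $\partiali\Ccore_{\Omega_i}(\Gamma_i) \smallsetminus \Lambdao_{\Omega_{01}}(\Delta)$, a set of points of $\partial\Omega_i$ which is typically nonempty (e.g.\ it equals all of $\partiali\Ccore_{\Omega_i}(\Gamma_i)$ when $\Delta$ is finite), and points of $\partial\Omega_i$ cannot lie in $\Omega_{01} \subset \Omega_i$. Indeed, the whole point of hypothesis~\ref{item:hopeful-lim-set} is that these ideal points of~$\Omega_i$ are absorbed into the \emph{other} convex set $\Omega_{1-i}$; they are not points of $\Omega_{01}$, so the proposed sandwich of $W$ between $\Ccore_{\Omega_{01}}(\Delta)$ and a uniform $d_{\Omega_{01}}$-neighborhood of it makes no sense (the Hilbert metric of $\Omega_{01}$ is not even defined at those points), and the identification of $\partiali W$ via ``$W \subset \Omega_{01}$ forces these endpoints to lie in $\partial\Omega_{01}$'' collapses. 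Relatedly, your convexity argument via faces is more than is needed and rests on the unproved assertion that the open faces of $\partial\Omega_{01}$ making up $\Lambdao_{\Omega_{01}}(\Delta)$ are open faces of $\partial\Omega_i$; convexity (and closedness in $\Omega_{1-i}$) follows at once from the identity $W = \overline{\Ccore_{\Omega_i}(\Gamma_i)} \cap \Omega_{1-i}$, which holds because $W \subset \Omega_{1-i}$ by~\ref{item:hopeful-lim-set} while $\Lambdao_{\Omega_{01}}(\Delta)$ is disjoint from $\Omega_{1-i}$ (proper discontinuity of the $\Delta$-action on $\Omega_{1-i}$).

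The correct route for cocompactness is different from your sandwich: treat separately the case $\Delta$ finite (then $\Lambdao_{\Omega_{01}}(\Delta) = \varnothing$ and $W = \overline{\Ccore_{\Omega_i}(\Gamma_i)}$ is compact, so cocompactness is trivial — note your argument implicitly assumes $\Delta$ infinite when invoking~\ref{item:hopeful-CC-Omega01} and Fact~\ref{fact:ideal-bound-cc}); when $\Delta$ is infinite, use Fact~\ref{fact:cc-subsets} to upgrade the convex cocompactness of $\Delta$ on $\Omega_{01}$ to convex cocompactness of $\Delta$ on a properly convex open set $\Omega'$ whose closure contains all of $W$ — for instance a uniform neighborhood of $\Ccore_{\Omega_i}(\Gamma_i)$ in $(\Omega_i, d_{\Omega_i})$ — with $\Lambdao_{\Omega'}(\Delta) = \Lambdao_{\Omega_{01}}(\Delta)$. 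Then Lemma~\ref{lem:even-more-pure} says that $\Delta$ acts cocompactly on $\overline{\Omega'} \smallsetminus \Lambdao_{\Omega'}(\Delta)$, and $W$ is a closed $\Delta$-invariant subset of that set (since $\overline{W} \smallsetminus W \subset \Lambdao_{\Omega_{01}}(\Delta)$), hence has compact quotient. Your proposal gestures at ``an argument in the spirit of Lemma~\ref{lem:even-more-pure}'', but as written the concrete plan depends on the false containment $W \subset \Omega_{01}$, so the key step of the lemma is not established.
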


\begin{proof}
By assumption~\ref{item:hopeful-lim-set}, the $\Delta$-invariant set $\overline{\Ccore_{\Omega_i}(\Gamma_i)} \smallsetminus \Lambdao_{\Omega_{01}}(\Delta)$ is nonempty and contained in~$\Omega_{1-i}$.
Since the action of $\Delta$ on~$\Omega_{1-i}$ is properly discontinuous (see Section~\ref{subsec:remind-Hilbert}), we have $\Lambdao_{\Omega_{01}}(\Delta) \subset \partial \Omega_{1-i}$, hence $\overline{\Ccore_{\Omega_i}(\Gamma_i)} \smallsetminus \Lambdao_{\Omega_{01}}(\Delta)$ is closed in~$\Omega_{1-i}$ and the action of~$\Delta$ on it is properly discontinuous.
Let us check that this action is cocompact.

If $\Delta$ is finite, then $\Lambdao_{\Omega_{01}}(\Delta) = \varnothing$; since $\overline{\Ccore_{\Omega_i}(\Gamma_i)}$ is compact, it is cocompact under the action of the finite group~$\Delta$.

If $\Delta$ is infinite, then our assumption~\ref{item:hopeful-CC-Omega01} is that $\Delta$ acts convex cocompactly on~$\Omega_{01}$.
By Fact~\ref{fact:cc-subsets}, the group $\Delta$ also acts convex cocompactly on any open uniform neighborhood of $\Ccore_{\Omega_i}(\Gamma_i)$ in $(\Omega_i,d_{\Omega_i})$.
By Lemma~\ref{lem:even-more-pure}, this implies that $\overline{\Ccore_{\Omega_i}(\Gamma_i)} \smallsetminus \Lambdao_{\Omega_{01}}(\Delta)$ is cocompact under~$\Delta$.
\end{proof}

\begin{lemma} \label{lem:ias2-new}
If $\Delta$ is infinite, then for each $i = 0,1$, there exist $\Gamma_i$-invariant closed convex subsets $\C_i$ of~$\Omega_i$, for $i=0,1$, such that
\begin{itemize}
  \item $\C_i$ is contained in a uniform neighborhood of $\Ccore_{\Omega_i}(\Gamma_i)$ in $(\Omega_i,d_{\Omega_i})$, and contains a smaller such uniform neighborhood,
  \item  $\overline{\C_i} \smallsetminus \Lambdao_{\Omega_{01}}(\Delta) \subset \Omega_{1-i}$,
  \item the action of $\Delta$ on $\overline{\C_i} \smallsetminus \Lambdao_{\Omega_{01}}(\Delta)$ is properly discontinuous and cocompact,
 \item  $\partiali\C_i = \partiali\Ccore_{\Omega_i}(\Gamma_i)$,
  \item $\C_0^{\circ} \cap \C_1^{\circ}$ is nonempty and contains a neighborhood of $\Ccore_{\Omega_{01}}(\Delta)$ in~$\Omega_{01}$.
\end{itemize}
If $\Delta$ is finite, then the same conclusion holds after possibly replacing each $\Gamma_i$ by a finite-index subgroup.
\end{lemma}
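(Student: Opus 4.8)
\textbf{Proof proposal for Lemma~\ref{lem:ias2-new}.}

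The plan is to build each $\C_i$ as a suitable neighborhood of $\Ccore_{\Omega_i}(\Gamma_i)$ inside $\Omega_i$, chosen large enough that its intersection with $\Omega_{1-i}$ still contains $\Ccore_{\Omega_{01}}(\Delta)$ in its interior, but not so large that new ideal boundary faces appear. First I would treat the case that $\Delta$ is infinite. By assumption~\ref{item:hopeful-CC-Omega01}, $\Delta$ acts convex cocompactly on $\Omega_{01}$, and by assumption~\ref{item:hopeful-lim-set} together with Lemma~\ref{lem:ias1}, the set $\overline{\Ccore_{\Omega_i}(\Gamma_i)} \smallsetminus \Lambdao_{\Omega_{01}}(\Delta)$ is a nonempty closed convex subset of $\Omega_{1-i}$ on which $\Delta$ acts properly discontinuously and cocompactly. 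The first step is to fix, for each $i$, a small radius $r_i > 0$ and define $\C_i$ to be the closed uniform $r_i$-neighborhood of $\Ccore_{\Omega_i}(\Gamma_i)$ in $(\Omega_i, d_{\Omega_i})$. This set is automatically $\Gamma_i$-invariant, closed convex in $\Omega_i$ (uniform Hilbert neighborhoods of convex sets are convex), and sandwiched between two uniform neighborhoods of the convex core, so the first bullet is immediate. For the fourth bullet, $\partiali\C_i = \partiali\Ccore_{\Omega_i}(\Gamma_i)$, I would invoke Lemma~\ref{lem:partiali-Ccore-union-of-faces}: assumption~\ref{item:hopeful-cases} guarantees that $\partiali\Ccore_{\Omega_i}(\Gamma_i)$ is a union of open faces of $\partial\Omega_i$, which is exactly the hypothesis of that lemma, so passing to a uniform $r_i$-neighborhood does not change the ideal boundary.

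Next I would verify the second and third bullets. Since $\partiali\C_i = \partiali\Ccore_{\Omega_i}(\Gamma_i)$ by the previous step, we have $\overline{\C_i}\smallsetminus\Lambdao_{\Omega_{01}}(\Delta) = \big(\C_i \cup \partiali\Ccore_{\Omega_i}(\Gamma_i)\big)\smallsetminus\Lambdao_{\Omega_{01}}(\Delta)$; using assumption~\ref{item:hopeful-lim-set} and the fact that $\C_i\subset\Omega_i$ while $\Lambdao_{\Omega_{01}}(\Delta)\subset\partial\Omega_{1-i}$, I would argue that this set is contained in $\Omega_{1-i}$ — the ideal part is handled by~\ref{item:hopeful-lim-set}, and for the non-ideal part $\C_i$ I would use that $\C_i$ lies in a uniform neighborhood of $\Ccore_{\Omega_i}(\Gamma_i)$ and apply Fact~\ref{fact:cc-subsets} to deduce that $\Delta$ still acts convex cocompactly on that neighborhood, then use Lemma~\ref{lem:even-more-pure} (applied to the action of $\Delta$ on the neighborhood of $\Ccore_{\Omega_i}(\Gamma_i)$) together with the fact that $\Delta$-orbits in $\C_i$ can only accumulate inside $\Lambdao_{\Omega_{1-i}}(\Delta) = \Lambdao_{\Omega_{01}}(\Delta)$ to conclude both containment in $\Omega_{1-i}$ and proper discontinuity and cocompactness of the $\Delta$-action on $\overline{\C_i}\smallsetminus\Lambdao_{\Omega_{01}}(\Delta)$. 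This is essentially the same argument as in Lemma~\ref{lem:ias1} but now applied to the slightly larger convex set $\C_i$ in place of $\overline{\Ccore_{\Omega_i}(\Gamma_i)}$.

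For the last bullet, I need $\C_0^{\circ}\cap\C_1^{\circ}$ to contain a neighborhood of $\Ccore_{\Omega_{01}}(\Delta)$ in $\Omega_{01}$. Here I would note that $\Ccore_{\Omega_{01}}(\Delta)$ is the convex hull in $\overline{\Omega_{01}}$ of $\overline{\Lambdao_{\Omega_{01}}(\Delta)}$; by assumption~\ref{item:hopeful-lim-set}, $\Lambdao_{\Omega_{01}}(\Delta)$ is contained in the closure of $\Ccore_{\Omega_i}(\Gamma_i)$ for both $i$, so $\Ccore_{\Omega_{01}}(\Delta)\subset\overline{\Ccore_{\Omega_0}(\Gamma_0)}\cap\overline{\Ccore_{\Omega_1}(\Gamma_1)}\cap\Omega_{01}$, which lies in $\C_0\cap\C_1$; then since $\C_i$ contains a uniform neighborhood of $\Ccore_{\Omega_i}(\Gamma_i)$ and hence a neighborhood of $\Ccore_{\Omega_{01}}(\Delta)$ in $\Omega_{01}$ in its interior (the interior taken in $\PP(V)$, since $\C_i$ has nonempty interior), a small enough $d_{\Omega_{01}}$-neighborhood of $\Ccore_{\Omega_{01}}(\Delta)$ lies in $\C_0^{\circ}\cap\C_1^{\circ}$; shrinking $r_0, r_1$ if necessary keeps all the other bullets intact. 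Finally, for the case that $\Delta$ is finite: then $\Lambdao_{\Omega_{01}}(\Delta) = \varnothing$ and $\Ccore_{\Omega_{01}}(\Delta) = \varnothing$, so the issue is only that $\overline{\Ccore_{\Omega_i}(\Gamma_i)}$ (which is compact) must lie in $\Omega_{1-i}$ and more: I would pass to finite-index subgroups $\Gamma'_i\supset\Delta$ using separability (assumption~\ref{item:hopeful-separable}) to shrink the convex cores $\Ccore_{\Omega_i}(\Gamma'_i)$ — actually the convex cores can only grow under passing to finite-index subgroups, so instead the point is different: when $\Delta$ is finite, one must arrange that $\C_i$ can be taken $\Delta$-invariant with $\C_0^\circ\cap\C_1^\circ\neq\varnothing$, which may force enlarging $\Omega_i$ or shrinking $\Gamma_i$; I would take $\C_i$ to be a small uniform neighborhood of a single point-orbit or of the (compact) convex core and invoke finiteness of $\Delta$ to make everything automatic. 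The main obstacle I anticipate is the bullet $\partiali\C_i = \partiali\Ccore_{\Omega_i}(\Gamma_i)$ in combination with the requirement that $\C_0^\circ\cap\C_1^\circ$ be a genuine neighborhood of $\Ccore_{\Omega_{01}}(\Delta)$ — balancing "$\C_i$ large enough to contain a neighborhood of the smaller convex core of $\Delta$" against "$\C_i$ small enough that no new ideal faces appear" — but Lemma~\ref{lem:partiali-Ccore-union-of-faces} is precisely designed to resolve this, since it shows the ideal boundary is stable under uniform thickening of any size, so any choice of $r_i$ works for that bullet and the $r_i$ can be chosen freely to satisfy the last bullet.
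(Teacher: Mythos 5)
Your plan for the infinite-$\Delta$ case follows the paper's construction (take $\C_i$ to be a closed uniform $\varepsilon$-neighborhood of $\Ccore_{\Omega_i}(\Gamma_i)$ in $(\Omega_i,d_{\Omega_i})$, get $\partiali\C_i=\partiali\Ccore_{\Omega_i}(\Gamma_i)$ from assumption~\ref{item:hopeful-cases} and Lemma~\ref{lem:partiali-Ccore-union-of-faces}, cocompactness from Lemma~\ref{lem:even-more-pure}, and the last bullet from $\Ccore_{\Omega_{01}}(\Delta)\subset\Ccore_{\Omega_0}(\Gamma_0)\cap\Ccore_{\Omega_1}(\Gamma_1)$), but the one genuinely nontrivial step is left unjustified: the containment $\C_i\subset\Omega_{1-i}$, i.e.\ the non-ideal part of the second bullet. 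The tools you cite there (Fact~\ref{fact:cc-subsets}, Lemma~\ref{lem:even-more-pure}, and the fact that $\Delta$-orbits accumulate only on $\Lambdao_{\Omega_{01}}(\Delta)$) do give cocompactness of $\Delta$ on $\overline{\C_i}\smallsetminus\Lambdao_{\Omega_{01}}(\Delta)$, but they do not show that a Hilbert $r_i$-neighborhood of $\Ccore_{\Omega_i}(\Gamma_i)$ stays inside $\Omega_{1-i}$: a priori $\partial\Omega_{1-i}$ cuts into $\Omega_i$ near $\Lambdao_{\Omega_{01}}(\Delta)$ and an $r_i$-ball could cross it. This is exactly where ``$r_i$ small'' must be consumed, and the mechanism is the one from Lemma~\ref{lem:ias1}: take a compact fundamental domain $\mathcal{D}_i$ for the $\Delta$-action on $\overline{\Ccore_{\Omega_i}(\Gamma_i)}\smallsetminus\Lambdao_{\Omega_{01}}(\Delta)\subset\Omega_{1-i}$, choose $r_i$ small enough that the closed $r_i$-neighborhood of $\mathcal{D}_i\cap\Omega_i$ in $(\Omega_i,d_{\Omega_i})$ has closure in $\Omega_{1-i}$, and then use that $\Delta$ preserves both $\Omega_i$ and $\Omega_{1-i}$ to cover $\C_i$ by $\Delta$-translates of that thickened fundamental domain. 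You gesture at ``the same argument as Lemma~\ref{lem:ias1}'' but never supply this equivariant fundamental-domain step, and without it the containment is not established; that is the heart of the lemma and must be made explicit.

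The finite-$\Delta$ case is a genuine gap in your proposal. You correctly notice that passing to finite-index subgroups does not shrink convex cores, but then the argument trails off (``enlarging $\Omega_i$ or shrinking $\Gamma_i$ \dots invoke finiteness of $\Delta$ to make everything automatic'') without addressing the actual difficulty: the neighborhoods $\C_i^{\varepsilon}$ of the two convex cores need not intersect when $\Lambdao_{\Omega_{01}}(\Delta)=\varnothing$, so one must enlarge them while keeping $\overline{\C_i}\subset\Omega_{1-i}$ and keeping the first and fourth bullets. The paper's device, which uses the separability assumption~\ref{item:hopeful-separable} in an essential way, is: pick a $\Delta$-fixed point $x_0\in\Omega_{01}$; since $\Lambdao_{\Omega_i}(\Gamma_i)\subset\Omega_{1-i}$ by assumption~\ref{item:hopeful-lim-set}, only finitely many $\gamma\in\Gamma_i$ satisfy $\gamma\cdot x_0\notin\Omega_{1-i}$; by separability there is a finite-index subgroup $\Gamma_i'\supset\Delta$ avoiding these, so $\Gamma_i'\cdot x_0\subset\Omega_{1-i}$; replace $\Gamma_i$ by $\Gamma_i'$ and take $\C_i=\Conv{\C_i^{\varepsilon}\cup\Gamma_i'\cdot x_0}$, which lies in a larger uniform neighborhood of the core so that Lemma~\ref{lem:partiali-Ccore-union-of-faces} still gives $\partiali\C_i=\partiali\Ccore_{\Omega_i}(\Gamma_i)$, while $x_0\in\C_0\cap\C_1$ yields the last bullet. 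A small uniform neighborhood of a single $\Gamma_i$-orbit, as you suggest, fails the first bullet (it need not contain a uniform neighborhood of the core) and, for the full group $\Gamma_i$, may violate $\overline{\C_i}\subset\Omega_{1-i}$; so this case needs to be redone along the above lines.
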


\begin{proof}
By Lemma~\ref{lem:ias1}, the group $\Delta$ acts properly discontinuously and cocompactly on the closed convex subset $\overline{\Ccore_{\Omega_i}(\Gamma_i)} \smallsetminus \Lambdao_{\Omega_{01}}(\Delta)$ of~$\Omega_{1-i}$.
Let $\mathcal{D}_i$ be a compact fundamental domain for this action.
For $\varepsilon>0$ small enough, the closed uniform $\varepsilon$-neighborhood $\mathcal{D}_i^{\varepsilon}$ of $\mathcal{D}_i \cap \Omega_i$ in $(\Omega_i,d_{\Omega_i})$ satisfies $\overline{\mathcal{D}_i^{\varepsilon}} \subset \Omega_{1-i}$.
Let $\C_i^{\varepsilon}$ be the closed uniform $\varepsilon$-neighborhood of $\Ccore_{\Omega_i}(\Gamma_i)$ in $(\Omega_i,d_{\Omega_i})$.
The family $(\gamma\cdot\mathcal{D}_i^{\varepsilon})_{\gamma\in\Delta}$ covers~$\C_i^{\varepsilon}$, hence $\C_i^{\varepsilon} \subset \Omega_{1-i}$.

We claim that $\overline{\C_i^{\varepsilon}} \smallsetminus \Lambdao_{\Omega_{01}}(\Delta) \subset \Omega_{1-i}$.
Indeed, consider a point $x \in \overline{\C_i^{\varepsilon}}$.
We can write it as the limit of some sequence $(x_n)_{n\in\NN}$ of points of~$\C_i^{\varepsilon}$.
For each~$n$ there exists $\delta_n \in \Delta$ such that $y_n := \delta_n \cdot x_n$ belongs to $\mathcal{D}_i^{\varepsilon}$.
Up to passing to a subsequence, we may assume that either $(\delta_n)_{n\in\NN}$ is constant, or $\Delta$ is infinite and $\delta_n \to \infty$ in~$\Delta$.
In the first case, $x = \lim_n x_n$ lies in a $\Delta$-translate of $\overline{\mathcal D_i^\varepsilon}$, hence is contained in $\Omega_{1-i}$.
In the second case, since $\overline{\mathcal D_i^\varepsilon}$ is compact, up to passing to a subsequence, we may assume that $\delta_n \cdot \overline{\mathcal D_i^\varepsilon}$ converges to a compact subset of $\Lambdao_{\Omega_{1-i}}(\Delta)$, and so $x$ belongs to $\Lambdao_{\Omega_{1-i}}(\Delta)$, which is equal to $\Lambdao_{\Omega_{01}}(\Delta)$ by Fact~\ref{fact:cc-subsets} and assumption~\ref{item:hopeful-CC-Omega01}.
This proves the claim.
In particular, the action of $\Delta$ on $\overline{\C_i} \smallsetminus \Lambdao_{\Omega_{01}}(\Delta)$ is properly discontinuous (see Section~\ref{subsec:remind-Hilbert}).

If $\Delta$ is infinite, then our assumption~\ref{item:hopeful-CC-Omega01} is that $\Delta$ acts convex cocompactly on~$\Omega_{01}$.
Then $\Ccore_{\Omega_0}(\Gamma_0) \cap \Ccore_{\Omega_1}(\Gamma_1) \supset \Ccore_{\Omega_{01}}(\Delta)$ is nonempty.
By Lemma~\ref{lem:even-more-pure}, the action of $\Delta$ on $\overline{\C_i} \smallsetminus \Lambdao_{\Omega_{01}}(\Delta)$ is cocompact.
We have $\partiali \C_i = \partiali\Ccore_{\Omega_i}(\Gamma_i)$ by assumption~\ref{item:hopeful-cases} and Lemma~\ref{lem:partiali-Ccore-union-of-faces}.
We can take $\C_i = \C_i^{\varepsilon}$ as above.

If $\Delta$ is finite, then it admits a global fixed point $x_0$ in the nonempty properly convex open set~$\Omega_{01}$.
By assumption~\ref{item:hopeful-lim-set}, the set $\Lambdao_{\Omega_i}(\Gamma_i) \smallsetminus \Lambdao_{\Omega_{01}}(\Delta)$, which in this case is simply $\Lambdao_{\Omega_i}(\Gamma_i)$, is contained in $\Omega_{1-i}$, and so the set $F_i := \{ \gamma\in\Gamma_i \,|\, \gamma\cdot x_0 \notin \Omega_{1-i}\}$ is finite.
By our separability assumption~\ref{item:hopeful-separable}, there is a finite-index subgroup $\Gamma_i'$ of~$\Gamma_i$, containing $\Delta$, such that $\Gamma_i' \cap F_i = \varnothing$.
Thus the $\Gamma_i'$-orbit of~$x_0$ is entirely contained in~$\Omega_{1-i}$, and after replacing $\Gamma_i$ by~$\Gamma'_i$ we can take for $\C_i$ the convex hull of $\C_i^{\varepsilon}$ as above and of the $\Gamma'_i$-orbit of~$x_0$.
Note that $\C_i$ is contained in $\C_i^{\varepsilon'}$ for some $\varepsilon' \geq \varepsilon$, and so $\partiali \C_i = \partiali\Ccore_{\Omega_i}(\Gamma_i)$ again by assumption~\ref{item:hopeful-cases} and Lemma~\ref{lem:partiali-Ccore-union-of-faces}.
\end{proof}

\subsection{Step 2: cocompactness of the ``putty''}

We now fix some $\Gamma_i$-invariant closed convex neighborhoods $\C_i$ of $\Ccore_{\Omega_i}(\Gamma_i)$ in~$\Omega_i$ as in Lemma~\ref{lem:ias2-new}.
Note the action of $\Delta$ on $\overline{\C_i} \smallsetminus \Lambdao_{\Omega_{01}}(\Delta)$ is properly discontinuous and cocompact.

\begin{remark} \label{rem:C0-cap-C1-cocompact}
We have $\C_0 \cap \C_1 = (\overline{\C_0} \smallsetminus \Lambdao_{\Omega_{01}}(\Delta)) \cap (\overline{\C_1} \smallsetminus \Lambdao_{\Omega_{01}}(\Delta))$.
Indeed, the left-hand side is clearly contained in the right-hand side.
Conversely, since $\overline{\C_i} \smallsetminus \Lambdao_{\Omega_{01}}(\Delta) \subset \Omega_{1-i}$, the right-hand side is contained in $\overline{\C_i} \cap \Omega_i = \C_i$ for both $i=0,1$.
We deduce that the action of $\Delta$ on $\C_0 \cap \C_1$ is cocompact.
\end{remark}

The following lemma will be used twice in the proofs of Proposition~\ref{prop:hopeful-putty} and Lemma~\ref{lem:virtual-occultation} below.

\begin{lemma} \label{lem:ias3}
For $i\in\{0,1\}$, let $\xi \in \partiali\C_i$ and $y \in \overline{\C_{1-i}}$ satisfy $[\xi, y] \cap \C_i^\circ = \nolinebreak \varnothing$.
Then $[\xi, y] \subset \Lambdao_{\Omega_{01}}(\Delta)$.
\end{lemma}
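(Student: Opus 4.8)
The statement asserts that if a point $\xi$ of the ideal boundary of $\C_i$ and a point $y$ of $\overline{\C_{1-i}}$ are such that the segment $[\xi,y]$ avoids the interior of $\C_i$, then this segment lies inside the common $\Delta$-limit set $\Lambdao_{\Omega_{01}}(\Delta)$. The key conceptual point is that $\xi$ lies on $\partiali\C_i = \partiali\Ccore_{\Omega_i}(\Gamma_i)$ (by Lemma~\ref{lem:ias2-new}), which is a union of open faces of $\partial\Omega_i$; and a segment emanating from $\xi$ that does not immediately enter the interior $\C_i^\circ$ must be forced to stay in the ``thin'' part of $\overline{\C_i}$, which is exactly the piece that $\Delta$ acts on cocompactly.

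First I would argue that $y$ itself cannot lie in $\C_i$: indeed if $y \in \C_i$, then either $y\in\C_i^\circ$, contradicting $[\xi,y]\cap\C_i^\circ=\varnothing$ outright, or $y\in\partialn\C_i$; but in the latter case, since $\xi\in\partiali\C_i$ and $\C_i$ is convex, the open segment $(\xi,y)$ would be contained in $\C_i^\circ$ (a standard convexity fact: the open segment joining an ideal boundary point to any point of $\overline{\C_i}$ other than along a shared face lies in the interior — here one uses that $y\in\Omega_i$ while $\xi\notin\Omega_i$, so the segment is not contained in $\partial\Omega_i$ and hence enters $\Omega_i$, where convexity of $\C_i$ together with $y\in\C_i$ puts it in $\C_i$, and being an interior-of-$\Omega_i$ point of a convex set with a nearby interior point it is in $\C_i^\circ$). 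Either way we contradict the hypothesis. So $y\in\overline{\C_{1-i}}\smallsetminus\C_i$. Then the whole segment $[\xi,y]$ lies in $\overline{\C_i}\smallsetminus\C_i^\circ$: it starts at $\xi\in\partiali\C_i\subset\Fr(\C_i)$, avoids $\C_i^\circ$ by hypothesis, and I claim it stays in $\overline{\C_i}$ — for this one notes that $[\xi,y]$ starts tangent to $\C_i$ at $\xi$ and, since $\xi$ lies in an open face $F$ of $\partial\Omega_i$ contained in $\partiali\C_i$ while $y\in\overline{\C_{1-i}}$, using $\overline{\C_i}\smallsetminus\Lambdao_{\Omega_{01}}(\Delta)\subset\Omega_{1-i}$ and convexity, the segment cannot leave $\overline{\Omega_i}$; and within $\overline{\Omega_i}$, a segment from a boundary point of the convex set $\C_i$ that does not enter $\C_i^\circ$ must lie in $\Fr(\C_i)$.

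Next, having located $[\xi,y]\subset\Fr(\C_i)=\partiali\C_i\sqcup\partialn\C_i$, I would decompose: the point $\xi$ lies in $\partiali\C_i$, which equals the full orbital limit set $\Lambdao_{\Omega_i}(\Gamma_i)$ of the ambient group... but we want membership in the smaller set $\Lambdao_{\Omega_{01}}(\Delta)$. Here is where the hypothesis $[\xi,y]\cap\C_i^\circ=\varnothing$ is crucial together with $y\in\overline{\C_{1-i}}$. The idea is: the segment $[\xi,y]$ lies in $\overline{\C_i}$, and every point of $[\xi,y]\cap\Omega_i$ lies in $\partialn\C_i$ (it is in $\Fr(\C_i)$ but in $\Omega_i$, hence not ideal); but $\partialn\C_i \subset \overline{\C_i}\smallsetminus\Lambdao_{\Omega_{01}}(\Delta)$ would be contained in $\Omega_{1-i}$. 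So consider the portion of $[\xi,y]$ in $\Omega_i$: if nonempty, it consists of points $p$ with $p\in\partialn\C_i\subset\Omega_{1-i}$, i.e.\ $p\in\C_i\cap\Omega_{1-i}\subset\C_i\cap\C_{1-i}$ (using $y\in\overline{\C_{1-i}}$ and convexity of $\C_{1-i}$, $p\in\Conv{\{\xi\}\cup\{y\}}$ with $y\in\overline{\C_{1-i}}$, $\xi\in\partiali\C_i$...). I would then invoke the cocompactness of the $\Delta$-action on $\overline{\C_i}\smallsetminus\Lambdao_{\Omega_{01}}(\Delta)$: the segment $[\xi,y]$ minus its endpoint $\xi$ consists of points in $\overline{\C_i}\smallsetminus\Lambdao_{\Omega_{01}}(\Delta)$, so a sequence along the segment converging to $\xi$ can be translated by $\Delta$ into a compact fundamental domain; taking a limit and using properness forces $\xi$ to accumulate the $\Delta$-orbit of a point of $\overline{\C_i}$, whence $\xi\in\Lambdao_{\Omega_{01}}(\Delta)$ — but then by hypothesis $[\xi,y]\cap\C_i^\circ=\varnothing$ and $\xi\in\Lambdao_{\Omega_{01}}(\Delta)$, a symmetric argument (or directly, since $\Lambdao_{\Omega_{01}}(\Delta)$ is a union of open faces of $\partial\Omega_{1-i}$ by Fact~\ref{fact:cc-strata} applied to the convex cocompact $\Delta$-action) shows the whole segment $[\xi,y]$ stays in $\partial\Omega_{1-i}\cap\partial\Omega_i$, and lands in $\Lambdao_{\Omega_{01}}(\Delta)$.

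\textbf{Main obstacle.} The delicate step is establishing $\xi\in\Lambdao_{\Omega_{01}}(\Delta)$ rather than merely $\xi\in\Lambdao_{\Omega_i}(\Gamma_i)=\partiali\C_i$: a priori $\xi$ could be a ``generic'' ideal point of $\C_i$ not seen by the small group $\Delta$. The hypothesis that must be exploited is precisely that the segment $[\xi,y]$ runs along the nonideal boundary without entering the interior, which confines a whole neighborhood-in-the-segment of $\xi$ to the thin region $\overline{\C_i}\smallsetminus\Lambdao_{\Omega_{01}}(\Delta)\subset\Omega_{1-i}$ on which $\Delta$ acts cocompactly; the cocompactness then ``pumps'' $\xi$ into the $\Delta$-limit set. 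I expect the cleanest route is: pick $p_n\to\xi$ along $(\xi,y)$, find $\delta_n\in\Delta$ with $\delta_n\cdot p_n$ in a compact fundamental domain for $\Delta$ acting on $\overline{\C_i}\smallsetminus\Lambdao_{\Omega_{01}}(\Delta)$; since $p_n\to\xi\in\partial\Omega_i$, the sequence $\delta_n$ must diverge in $\Delta$, and then a Hilbert-metric / cross-ratio estimate (as in the proof of Lemma~\ref{lem:even-more-pure} or Lemma~\ref{lem:gluing-2}) shows the endpoint $\xi$ is an accumulation point of a $\Delta$-orbit, i.e.\ $\xi\in\Lambdao_{\Omega_{01}}(\Delta)$; finally convexity together with $y\in\overline{\C_{1-i}}$ and the fact that $\Lambdao_{\Omega_{01}}(\Delta)$ is a union of faces propagates this to all of $[\xi,y]$.
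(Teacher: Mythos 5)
There is a genuine gap, and it is exactly at the point where the paper's own proof uses hypothesis~\ref{item:hopeful-cases} of Theorem~\ref{thm:hopeful}. The paper first shows that $[y,\xi)$ contains \emph{no} point of $\Omega_i$ at all: if $x \in [y,\xi) \cap \Omega_i$, then since $\xi \in \partiali\C_i = \partiali\Ccore_{\Omega_i}(\Gamma_i)$ (Lemma~\ref{lem:ias2-new}), the second half of assumption~\ref{item:hopeful-cases} forces the ray $[x,\xi)$ to enter every uniform neighborhood of $\Ccore_{\Omega_i}(\Gamma_i)$, in particular $\C_i^\circ$ (which contains such a neighborhood by construction), contradicting $[\xi,y]\cap\C_i^\circ=\varnothing$. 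You never invoke this part of the hypothesis, and nothing in your argument excludes the configuration in which $(\xi,y]$ runs inside $\Omega_i$ along a flat piece of $\partialn\C_i$ asymptotic to $\xi$; since points of $\Omega_i$ cannot lie in $\Lambdao_{\Omega_{01}}(\Delta)\subset\partial\Omega_i$, an argument that ignores this hypothesis cannot establish the lemma. Moreover the two convexity claims you use to place $[\xi,y]$ in $\Fr(\C_i)$ are false as stated: the open segment from an ideal point $\xi\in\partiali\C_i$ to a point $y\in\partialn\C_i$ need not meet $\C_i^\circ$ (both may lie on a common face of $\overline{\C_i}$), and a segment issued from a frontier point that avoids $\C_i^\circ$ need not stay in $\Fr(\C_i)$ unless one already knows \emph{both} endpoints lie in $\overline{\C_i}$ --- which in the paper is only available after proving $y\in\Lambdao_{\Omega_{01}}(\Delta)\subset\overline{\C_i}$. (Your inclusion $\C_i\cap\Omega_{1-i}\subset\C_i\cap\C_{1-i}$ is likewise unjustified.)

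Your endgame also differs from the paper's and is itself shaky. The paper, once the segment is known to avoid $\Omega_i$, gets $y\in\Lambdao_{\Omega_{01}}(\Delta)$ for free from $y\in\overline{\C_{1-i}}$ and $\overline{\C_{1-i}}\smallsetminus\Lambdao_{\Omega_{01}}(\Delta)\subset\Omega_i$ (Lemma~\ref{lem:ias2-new}); then $[y,\xi]\subset\overline{\C_i}$ by convexity, and the conclusion follows from a bisaturated-boundary argument (Facts~\ref{fact:cc-subsets} and~\ref{fact:cc-nbhd} for the convex cocompact action of $\Delta$ on $\Omega_i$): a supporting hyperplane containing the segment meets the relevant $\Delta$-cocompact convex set either entirely in its ideal boundary or entirely in its nonideal boundary, and since $y$ is ideal the whole segment lies in $\Lambdao_{\Omega_{01}}(\Delta)$. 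You instead try to prove $\xi\in\Lambdao_{\Omega_{01}}(\Delta)$ by translating points $p_n\to\xi$ of $(\xi,y)$ into a compact fundamental domain for $\Delta$ acting on $\overline{\C_i}\smallsetminus\Lambdao_{\Omega_{01}}(\Delta)$, and then to propagate along the segment because $\Lambdao_{\Omega_{01}}(\Delta)$ is a union of faces. But the translating elements $\delta_n$ need not diverge unless you already know $\xi\notin\Omega_{1-i}$ (a priori $\xi$ could lie in $\Omega_{1-i}$, since $\overline{\C_i}\smallsetminus\Lambdao_{\Omega_{01}}(\Delta)\subset\Omega_{1-i}$), in which case the pumping yields nothing; and even granting $\xi\in\Lambdao_{\Omega_{01}}(\Delta)$, the open face of an interior point of $[\xi,y]$ need not contain $\xi$, so ``union of open faces'' does not by itself spread membership in the limit set along the segment --- this propagation is precisely what the bisaturated-boundary argument is designed to do.
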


\begin{proof}
We note that the segment $[y,\xi) \subset \overline{\Omega_i}$ cannot contain a point $x$ of~$\Omega_i$.
Indeed, otherwise, by assumption~\ref{item:hopeful-cases} and the fact that $\partiali \C_i = \partiali \Ccore_{\Omega_i}(\Gamma_i)$ (Lemma~\ref{lem:ias2-new}), the ray $[x,\xi)$ would eventually enter any uniform neighborhood of $\Ccore_{\Omega_i}(\Gamma_i)$ in $(\Omega_i,d_{\Omega_i})$, in particular $\C_i^\circ$, contradicting the assumption that $[\xi,y] \cap \C_i^\circ = \varnothing$.

Since $y\in\overline{\C_{1-i}}$ and since we have chosen $\C_{1-i}$ in such a way that $\overline{\C_{1-i}} \smallsetminus \Lambdao_{\Omega_{01}}(\Delta) \subset \Omega_i$, we deduce that $y \in \Lambdao_{\Omega_{01}}(\Delta)$.
In particular, $\Lambdao_{\Omega_{01}}(\Delta) \neq \varnothing$, hence $\Delta$ is infinite.
Moreover, $[y,\xi] \subset \overline{\C_i}$ since $\Lambdao_{\Omega_{01}}(\Delta) \subset \Lambdao_{\Omega_i}(\Gamma_i) \subset \overline{\C_i}$ by construction.

By Fact~\ref{fact:cc-subsets}, the group $\Delta$ acts convex cocompactly on~$\Omega_i$.
By Fact~\ref{fact:cc-nbhd}, the convex set $\C_i = \overline{\C_i} \smallsetminus \Lambdao_{\Omega_{01}}(\Delta)$ has bisaturated boundary.
Therefore $[y,\xi] \subset \Lambdao_{\Omega_{01}}(\Delta)$ by Definition~\ref{def:boundaries} of bisaturated boundary.
\end{proof}

The convex hull $\Conv{\C_0  \cup \C_1}$ (taken inside of $\overline{\Omega_0}$ or $\overline{\Omega_1}$) is the union of $\C_0 \cup \C_1$ together with some additional points.
We now investigate the behavior of those additional points.
For this we consider the set
$$P := \{ x_0 \times x_1 \in \partialn \C_0 \times \partialn \C_1 : \ [x_0,x_1]  \cap \C_0^\circ  =  [x_0,x_1]  \cap \C_1^\circ = \varnothing \}.$$
Here and in the rest of this section, we denote an ordered pair of points by $x_0 \times x_1$ in order to avoid any ambiguity with the notation for open segments $(x_0, x_1)$.
We introduce the set
\begin{equation} \label{eqn:putty}
\mathscr P := \bigcup_{x_0\times x_1 \in P} [x_0,x_1]
\end{equation}
(which we call the ``putty'').

\begin{proposition} \label{prop:hopeful-putty}
\begin{enumerate}
  \item\label{item:putty-1} We have $\Conv{\C_0  \cup \C_1} = \C_0 \cup \C_1 \cup \mathscr P$.
  \item\label{item:putty-2} The action of $\Delta$ on~$\mathscr P$ is cocompact.
  In particular, $\mathscr P$ is closed in~$\Omega_{01}$.
  \item\label{item:putty-3} We have $\Conv{\C_0  \cup \C_1}^\circ \subset \C_0^\circ \cup \C_1^\circ \cup \mathscr P$.
\end{enumerate}
\end{proposition}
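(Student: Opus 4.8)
The three parts are tightly linked: (1) identifies the convex hull set-theoretically, (2) gives the cocompactness and hence closedness, and (3) locates the interior. I would prove them in the order (1), (2), (3), using Lemma~\ref{lem:ias3} as the main workhorse and the bisaturated/convex-cocompact machinery recalled in Section~\ref{sec:remind}.

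\textbf{Part (1).} The inclusion $\C_0 \cup \C_1 \cup \mathscr P \subset \Conv{\C_0\cup\C_1}$ is immediate since each $[x_0,x_1]$ with $x_i\in\C_i$ lies in the convex hull. For the reverse inclusion, take $z \in \Conv{\C_0\cup\C_1}$, so $z \in [a_0,a_1]$ for some $a_0 \in \C_0$, $a_1 \in \C_1$; if $z \in \C_0 \cup \C_1$ we are done, so assume $z \notin \C_0 \cup \C_1$. Then the segment $[a_0,a_1]$ exits $\C_0$ through a point $x_0 \in \partialn\C_0$ and exits $\C_1$ through a point $x_1 \in \partialn\C_1$, with $z \in [x_0,x_1]$; the key point is that the open segment $(x_0,x_1)$ meets neither $\C_0^\circ$ nor $\C_1^\circ$ (by convexity of the $\C_i$ and the fact that $z$, hence a neighborhood of $z$ on $[a_0,a_1]$, lies outside both). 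Thus $x_0\times x_1 \in P$ and $z \in \mathscr P$. Here I would be careful that $x_0$ and $x_1$ are genuine nonideal boundary points (they lie on $[a_0,a_1]$, which is contained in $\Omega_0$ or $\Omega_1$), so that $x_0 \times x_1 \in P$ in the precise sense of the definition.

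\textbf{Part (2).} This is the heart of the argument. I would first show that for every $x_0 \times x_1 \in P$, the whole segment $[x_0,x_1]$ is contained in $\C_0 \cap \C_1$ (using Lemma~\ref{lem:ias2-new}: $\Lambdao_{\Omega_{01}}(\Delta) \subset \Lambdao_{\Omega_i}(\Gamma_i) \subset \overline{\C_i}$, and $\overline{\C_i}\smallsetminus\Lambdao_{\Omega_{01}}(\Delta)\subset\Omega_{1-i}$), so that $\mathscr P \subset \C_0 \cap \C_1$. Since by Remark~\ref{rem:C0-cap-C1-cocompact} the action of $\Delta$ on $\C_0 \cap \C_1$ is cocompact, it suffices to prove $\mathscr P$ is closed in $\C_0 \cap \C_1$ (a closed $\Delta$-invariant subset of a cocompact $\Delta$-space on which $\Delta$ acts properly is automatically cocompact — or, more carefully, I would argue directly that a compact fundamental domain for $\C_0\cap\C_1$ intersected with $\mathscr P$ is compact). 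To show $\mathscr P$ is closed: take $z_n \in \mathscr P$, $z_n \to z \in \C_0 \cap \C_1$, with $z_n \in [x_0^n, x_1^n]$, $x_i^n \in \partialn\C_i$. Passing to subsequences, $x_i^n \to x_i \in \Fr(\C_i)$ and $z \in [x_0, x_1]$. The subtlety is that a limit $x_i$ might land in $\partiali\C_i$ rather than $\partialn\C_i$; this is exactly where Lemma~\ref{lem:ias3} enters. If, say, $x_0 \in \partiali\C_0$, then since $(x_0,x_1) \cap \C_0^\circ = \varnothing$ (a limit of the conditions $(x_0^n,x_1^n)\cap\C_0^\circ=\varnothing$, using that $\C_0^\circ$ is open), Lemma~\ref{lem:ias3} gives $[x_0,x_1] \subset \Lambdao_{\Omega_{01}}(\Delta)$, so $z \in \Lambdao_{\Omega_{01}}(\Delta) \subset \partial\Omega_{01}$, contradicting $z \in \C_0 \cap \C_1 \subset \Omega_{01}$. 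Hence both $x_i \in \partialn\C_i$, so $x_0 \times x_1 \in P$ and $z \in \mathscr P$. This also yields that $\mathscr P$ is closed in $\Omega_{01}$ (since $\mathscr P \subset \C_0\cap\C_1$ which is closed in $\Omega_{01}$).

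\textbf{Part (3).} Suppose $z \in \Conv{\C_0\cup\C_1}^\circ$ but $z \notin \C_0^\circ \cup \C_1^\circ$. By Part (1), $z \in \C_0 \cup \C_1 \cup \mathscr P$. If $z \in \mathscr P$ we are done; otherwise $z \in (\C_0 \cup \C_1) \smallsetminus (\C_0^\circ \cup \C_1^\circ)$, say $z \in \partialn\C_0$ and $z\notin\C_1$. I would derive a contradiction with $z$ being interior to the convex hull: let $X$ be a supporting hyperplane of $\C_0$ at $z$. The plan is to show $X$ supports $\Conv{\C_0\cup\C_1}$ as well, i.e.\ $\C_1$ lies (weakly) on the same side of $X$ as $\C_0$; combined with $z \in X$ this forces $z \in \partialn\Conv{\C_0\cup\C_1}$, a contradiction. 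To see $\C_1$ is on the correct side: if some point $a_1 \in \C_1$ were strictly on the far side of $X$, then (since $z \in \C_0$ and $z$ is a nonideal boundary point with supporting hyperplane $X$) the segment $[z, a_1]$ would cross into $\C_0^\circ$ near $z$ on the near side, forcing $z \in \C_0^\circ$ — contradiction — unless the relevant configuration is blocked, which is where I would invoke that $\C_0$ has bisaturated boundary (Lemma~\ref{lem:ias2-new} plus Fact~\ref{fact:bisat-cc}, since $\Delta$ — or $\Gamma_i$ — acts convex cocompactly) to rule out the degenerate case where $X \cap \overline{\C_0}$ straddles ideal and nonideal boundary. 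The cleanest route is: $X\cap\overline{\C_0} \subset \partialn\C_0$ by bisaturation, so $X$ genuinely "cuts off" $\C_0$ on the nonideal side, and since $z \notin \C_1$, convexity together with $z\times(\text{any }x_1\in\partialn\C_1) \notin P$ forces $\C_1$ to the near side.

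\textbf{Main obstacle.} I expect Part (2) to be the crux — specifically, controlling the possible escape of limits $x_i^n$ of nonideal boundary points to the ideal boundary. Lemma~\ref{lem:ias3} is precisely designed for this, but one must be careful to check its hypotheses survive the limit (that $(x_0,x_1)\cap\C_i^\circ = \varnothing$ passes to the limit — true because $\C_i^\circ$ is open — and that $\xi := x_i \in \partiali\C_i$, $y := x_{1-i} \in \overline{\C_{1-i}}$). A secondary subtlety in Part (3) is making rigorous the claim that a supporting hyperplane of $\C_0$ at $z$ also supports the hull; the bisaturated-boundary hypothesis is what prevents pathologies, and invoking it correctly (via the convex cocompactness of $\Delta$ on $\Omega_i$ coming from Fact~\ref{fact:cc-subsets} and assumption~\ref{item:hopeful-CC-Omega01}, or directly from $\C_i$'s construction in Lemma~\ref{lem:ias2-new}) is the delicate bookkeeping step.
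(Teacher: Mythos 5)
Your Part (1) is essentially fine (your exit-point argument even bypasses Lemma~\ref{lem:ias3}, using instead that $\C_i\subset\Omega_{1-i}$ — which does need a word via Lemma~\ref{lem:ias2-new} — to know the exit points are nonideal). But Part (2), which you correctly identify as the crux, contains a genuine error at its very first step: the claim that $[x_0,x_1]\subset\C_0\cap\C_1$ for every $x_0\times x_1\in P$, hence $\mathscr P\subset\C_0\cap\C_1$, is false. The facts you cite from Lemma~\ref{lem:ias2-new} only give $[x_0,x_1]\subset\Omega_{01}$, not containment in the $\C_i$; and if $\mathscr P$ were contained in $\C_0\cap\C_1$, then by Part (1) the union $\C_0\cup\C_1$ would already be convex, which is exactly the situation the whole ``putty'' analysis is designed to handle when it fails (the hypotheses of Theorem~\ref{thm:amalgam-cc} and the proof of conclusion~\eqref{item:hopeful-3} of Theorem~\ref{thm:hopeful} explicitly deal with the generally nonempty set $\Conv{\C_0\cup\C_1}\smallsetminus(\C_0\cup\C_1)$, whose points lie in $\mathscr P$ but certainly not in $\C_0\cap\C_1$). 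Since $\mathscr P$ does not sit inside a known cocompact $\Delta$-set, your reduction ``closed subset of a cocompact $\Delta$-space is cocompact'' has nothing to apply to, and cocompactness is left unproven; closedness alone (your limit argument with Lemma~\ref{lem:ias3}, which is sound and is essentially the paper's Lemma~\ref{lem:ias4}) does not suffice. The paper's fix is to work with the endpoint-pair set $P\subset\partialn\C_0\times\partialn\C_1$: it is closed in $(\overline{\C_0}\smallsetminus\Lambdao_{\Omega_{01}}(\Delta))\times\overline{\C_1}$ by Lemma~\ref{lem:ias4}, $\Delta$ acts cocompactly on the first factor by Lemma~\ref{lem:ias2-new} with compact fundamental domain $\mathcal K_0$, the second factor $\overline{\C_1}$ is compact, so $(\mathcal K_0\times\overline{\C_1})\cap P$ is a compact set whose $\Delta$-translates cover $P$, and cocompactness on $\mathscr P$ follows by taking segments.

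Your Part (3) is also off track. If $z\in\partialn\C_0\cap\Conv{\C_0\cup\C_1}^\circ$ with $z\notin\C_1$, then \emph{no} hyperplane through $z$ can support the hull (that is what interior means), so your plan to show that a supporting hyperplane $X$ of $\C_0$ at $z$ supports $\Conv{\C_0\cup\C_1}$ cannot succeed; correspondingly, the step ``if $a_1\in\C_1$ were strictly on the far side of $X$ then $[z,a_1]$ would cross into $\C_0^\circ$'' is unjustified (a segment leaving the closed half-space containing $\C_0$ does not enter $\C_0^\circ$), and bisaturation does not rescue it. The point is not that such $z$ cannot exist, but that it must lie in $\mathscr P$; the paper shows this by choosing $w\in\C_0^\circ\cap\C_1^\circ$ (nonempty by Lemma~\ref{lem:ias2-new}), approximating $z$ from beyond along the ray from $w$ by points $z_n\notin\C_0\cup\C_1$ which lie in the hull for large $n$, hence in $\mathscr P$ by Part (1), and concluding $z\in\mathscr P$ from the closedness of $\mathscr P$ in $\Omega_{01}$ established in Part (2). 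So both (2) and (3) need to be reworked along these lines.
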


The proof of Proposition~\ref{prop:hopeful-putty}.\eqref{item:putty-2} will rely on the following lemma.

\begin{lemma} \label{lem:ias4}
The set $\overline P \smallsetminus P$ is contained in
$$\big\{ x_0 \times x_1 \subset \Lambdao_{\Omega_{01}}(\Delta) \times \Lambdao_{\Omega_{01}}(\Delta) : [x_0, x_1] \subset \Lambdao_{\Omega_{01}}(\Delta)\big\}.$$
\end{lemma}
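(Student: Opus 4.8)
The statement to prove is Lemma~\ref{lem:ias4}: every limit point $x_0 \times x_1$ of the set $P$ that is not itself in $P$ must satisfy $x_0, x_1 \in \Lambdao_{\Omega_{01}}(\Delta)$ and $[x_0,x_1] \subset \Lambdao_{\Omega_{01}}(\Delta)$. The plan is to take a convergent sequence $x_0^{(n)} \times x_1^{(n)} \in P$ with limit $x_0 \times x_1 \notin P$, and to extract from the failure of membership in $P$ the conclusion. By continuity, the limit $x_0 \times x_1$ lies in $\partialn\C_0 \times \partialn\C_1$ provided $\partialn\C_i$ is closed in the relevant sense --- but care is needed here, since $\partialn\C_i = \C_i \smallsetminus \Int\C_i$ need not be closed in $\PP(V)$; its closure can pick up ideal boundary points. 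So the first thing I would sort out is exactly which of the defining conditions of $P$ can fail in the limit: either $x_i$ slips out of $\partialn\C_i$ into $\partiali\C_i$, or one of the openness conditions $[x_0,x_1] \cap \C_i^\circ = \varnothing$ fails to persist. Since $\C_i^\circ$ is open, the condition $[x_0^{(n)},x_1^{(n)}] \cap \C_i^\circ = \varnothing$ does pass to the limit (a limit of segments missing an open set still misses it, using that the segments converge in the Hausdorff sense on the compact $\overline{\Omega_{01}}$); hence the only way to leave $P$ is for $x_0$ or $x_1$ to land in the ideal boundary, i.e. $x_i \in \partiali\C_i$ for at least one $i$.

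\textbf{Key steps.} Suppose, say, $x_0 \in \partiali\C_0$ (the case $x_1 \in \partiali\C_1$ is symmetric). We still have $x_1 \in \overline{\C_1}$ and $[x_0,x_1] \cap \C_0^\circ = \varnothing$ in the limit. Now apply Lemma~\ref{lem:ias3} with $i=0$, $\xi = x_0 \in \partiali\C_0$, $y = x_1 \in \overline{\C_1}$: its hypothesis $[\xi,y]\cap\C_0^\circ = \varnothing$ holds, so its conclusion gives $[x_0,x_1] \subset \Lambdao_{\Omega_{01}}(\Delta)$. In particular both endpoints $x_0$ and $x_1$ lie in $\Lambdao_{\Omega_{01}}(\Delta)$, which is exactly the asserted containment. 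If instead it were $x_1 \in \partiali\C_1$ that failed, the same argument with $i=1$, $\xi = x_1$, $y = x_0 \in \overline{\C_0}$, and the condition $[x_1,x_0]\cap\C_1^\circ = \varnothing$ again yields $[x_0,x_1] \subset \Lambdao_{\Omega_{01}}(\Delta)$. Thus in all cases where $x_0 \times x_1 \in \overline P \smallsetminus P$ we obtain the claimed conclusion.

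\textbf{Main obstacle.} The delicate point is the bookkeeping of which closure one works in and the justification that the two "$\cap\, \C_i^\circ = \varnothing$" conditions survive passage to the limit, while the "$\partialn\C_i$" memberships need not. Concretely I would argue: each segment $[x_0^{(n)},x_1^{(n)}]$ lies in $\overline{\Omega_{01}}$ (a compact set since $\Omega_{01}$ is properly convex), so after passing to a subsequence the segments converge in the Hausdorff metric to the segment $[x_0,x_1]$; if this limit segment met the open set $\C_0^\circ$ at a point $p$, then for large $n$ the segment $[x_0^{(n)},x_1^{(n)}]$ would also meet $\C_0^\circ$ near $p$, contradicting $x_0^{(n)}\times x_1^{(n)}\in P$. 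Hence $[x_0,x_1]\cap\C_0^\circ = \varnothing$ and likewise $[x_0,x_1]\cap\C_1^\circ=\varnothing$. On the other hand $x_i = \lim x_i^{(n)} \in \overline{\C_i}$, and since $x_0\times x_1 \notin P$ we cannot have both $x_i \in \partialn\C_i$; so at least one $x_i \in \overline{\C_i}\smallsetminus\partialn\C_i \subset \partiali\C_i$ (using that $x_i\in\Fr(\C_i) = \partiali\C_i \sqcup \partialn\C_i$, since $x_i$ is a limit of frontier points $x_i^{(n)}$ and lies in $\overline{\C_i}\smallsetminus\C_i^\circ$). Then Lemma~\ref{lem:ias3} applies as above. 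I expect essentially no further difficulty; the whole lemma is a short compactness-plus-invoke-Lemma~\ref{lem:ias3} argument, and the only thing to be careful about is not accidentally assuming $\partialn\C_i$ is closed.
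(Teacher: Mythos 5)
Your proof is correct and follows essentially the same route as the paper: the paper also observes that the two conditions $[x_0,x_1]\cap\C_i^\circ=\varnothing$ cut out a closed subset of $\overline{\C_0}\times\overline{\C_1}$ (you verify this by a Hausdorff-convergence-of-segments argument), so that a point of $\overline P\smallsetminus P$ must have some coordinate in $\partiali\C_i$ while the segment conditions persist, and then it invokes Lemma~\ref{lem:ias3} exactly as you do. No gap to report.
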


\begin{proof}[Proof of Lemma~\ref{lem:ias4}]
The set $P$ is the intersection of $\partialn \C_0 \times \partialn \C_1$ with the closed subset of $\overline{\C_0} \times \overline{\C_1}$ consisting of those pairs $x_0 \times x_1 \in \overline{\C_0} \times \overline{\C_1}$ such that $[x_0,x_1]  \cap \C_0^\circ  = [x_0,x_1]  \cap \C_1^\circ = \varnothing$.
Therefore, if $x_0 \times x_1$ lies in $\overline P \smallsetminus P$, then $x_i \in \partiali \C_i$ for some $i \in \{0,1\}$ and we still have $[x_0,x_1]  \cap \C_0^\circ  = [x_0,x_1]  \cap \C_1^\circ = \varnothing$.
Lemma~\ref{lem:ias3} with $(\xi,y) = (x_0,x_1)$ then implies $[x_0, x_1] \subset \Lambdao_{\Omega_{01}}(\Delta)$.
\end{proof}

\begin{proof}[Proof of Proposition~\ref{prop:hopeful-putty}]
\eqref{item:putty-1} Clearly $\C_0 \cup \C_1 \cup \mathscr P \subset \Conv{\C_0  \cup \C_1}$.
Let us prove the reverse inclusion.
Consider a point $z \in \Conv{\C_0  \cup \C_1} \smallsetminus (\C_0\cup\C_1)$.
Since $\C_0$ and $\C_1$ are convex, $z$ lies in the convex hull $[x_0, x_1]$ (taken inside of $\overline{\Omega_0}$ or $\overline{\Omega_1}$) of two points $x_0 \in \C_0$ and $x_1 \in \C_1$.
Since $z \notin \C_0\cup\C_1$, there exist $x_0' \in \overline{\C_0}$ and $x_1' \in \overline{\C_1}$ such that $z \in [x_0', x_1'] \subset [x_0, x_1]$ and $(x_0',x_1')$ is disjoint from both $\C_0$ and~$\C_1$.
In order to show that $z \in \mathscr P$, it is sufficient to check that $x_i' \in \partialn \C_i$ for both $i=0,1$.
Suppose by contradiction that this is not the case: there exists $i\in\{0,1\}$ such that $x_i' \in \partiali \C_i$.
If $x_0' = x_1'$, then $z = x_0' = x_1' \in \partiali\C_i \cap \overline{\C_{1-i}} \subset \partial\Omega_i \cap \overline{\C_{1-i}}$; since by construction $\overline{\C_{1-i}} \smallsetminus \Lambdao_{\Omega_{01}}(\Delta) \subset \Omega_{i}$ (Lemma~\ref{lem:ias2-new}), we deduce $z \in \Lambdao_{\Omega_{01}}(\Delta)$.
If $x_0' \neq x_1'$, then Lemma~\ref{lem:ias3} with $(\xi,y) = (x_0',x_1')$ implies $[x_0', x_1'] \subset \Lambdao_{\Omega_{01}}(\Delta)$, hence $z \in \Lambdao_{\Omega_{01}}(\Delta)$.
In either case, this is impossible since $\Conv{\C_0 \cup \C_1} \subset \Omega_{01}$, while $\Lambdao_{\Omega_{01}}(\Delta) \subset \partial \Omega_{01}$. 
So in fact $x_i' \in \partialn \C_i$ for both $i=0,1$, and therefore $z \in \mathscr P$.

\eqref{item:putty-2} In order to prove that the action of $\Delta$ on $\mathscr P$ is cocompact, it is sufficient to show that the action of $\Delta$ on $P$ is cocompact.
Let us prove this.
By Lemma~\ref{lem:ias2-new}, the action of $\Delta$ on $\overline{\C_0} \smallsetminus \Lambdao_{\Omega_{01}}(\Delta)$ is properly discontinuous and cocompact.
Let $\mathcal{K}_0$ be a compact fundamental domain for this action. 
Then $P \subset \bigcup_{\gamma\in\Delta} \gamma\cdot (\mathcal{K}_0 \times \overline{\C_1})$.
Observe that $\mathcal{K}_0 \times \overline{\C_1}$ is compact and $P$ is closed in $(\overline{\C_0} \smallsetminus \Lambdao_{\Omega_{01}}(\Delta)) \times \overline{\C_1}$ by Lemma~\ref{lem:ias4}.
Hence $(\mathcal{K}_0 \times \overline{\C_1}) \cap P$ is compact.
This shows that the action of $\Delta$ on~$P$ is cocompact, hence the action of $\Delta$ on~$\mathscr P$ is too.

\eqref{item:putty-3} Consider a point $z \in \Conv{\C_0  \cup \C_1}^\circ \smallsetminus (\C_0^\circ \cup \C_1^\circ)$.
If $z \notin \C_0 \cup \C_1$, then $z \in \mathscr{P}$ by~\eqref{item:putty-1}.
Otherwise, $z \in \partialn \C_0 \cup \partialn \C_1$.
Consider a point $w$ in the set $\C_0^\circ \cap \C_1^\circ$, which is nonempty by Lemma~\ref{lem:ias2-new}.
Then the ray starting from $w$ and passing through $z$ exits both $\C_0$ and $\C_1$ at~$z$.
By approaching $z$ along this ray on the side opposite~$w$, we may write $z$ as a limit of points $z_n \notin (\C_0 \cup \C_1)$.
Since $z \in \Conv{\C_0  \cup \C_1}^\circ$, we have $z_n \in \Conv{\C_0  \cup \C_1}$ for all large enough~$n$, hence $z_n \in \mathscr{P}$ by~\eqref{item:putty-1}.
Since $\mathscr{P}$ is closed in~$\Omega_{01}$ by~\eqref{item:putty-2}, we deduce $z \in \mathscr{P}$.
\end{proof}

\subsection{Step~3: use separability to get into occultation position}

By Remark~\ref{rem:C0-cap-C1-cocompact} and Proposition~\ref{prop:hopeful-putty}, the sets $\C_0 \cap \C_1$ and $\mathscr P$ are both cocompact under the action of~$\Delta$.
We now use a standard separability argument to remove the elements of $\Gamma_0 \smallsetminus \Delta$ and $\Gamma_1 \smallsetminus \Delta$ which take $(\C_0 \cap \C_1) \cup \mathscr P$ back to itself. 

\begin{lemma} \label{lem:precise}
For each $i \in \{0,1\}$, there exists a finite-index subgroup $\Gamma_i'$ of~$\Gamma_i$, containing~$\Delta$, such that $((\C_0 \cap \C_1) \cup \mathscr P) \cap \gamma \cdot ((\C_0 \cap \C_1) \cup \mathscr P) = \varnothing$ for all $\gamma \in \Gamma_i' \smallsetminus \Delta$.
\end{lemma}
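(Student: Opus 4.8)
\textbf{Proof plan for Lemma~\ref{lem:precise}.}
The plan is to set up a standard separability argument exploiting that the ``troublesome'' set $K := (\C_0 \cap \C_1) \cup \mathscr P$ is cocompact under~$\Delta$, while the quotient $\Delta \backslash \Omega_i$ is a (non-compact) convex projective manifold into which the image of $K$ embeds as a compact piece. First I would fix, for each $i \in \{0,1\}$, a compact fundamental domain $\mathcal{K}_i$ for the action of $\Delta$ on $K$ (this exists by Remark~\ref{rem:C0-cap-C1-cocompact} and Proposition~\ref{prop:hopeful-putty}.\eqref{item:putty-2}). The key point is that $K$ is contained in $\Omega_i$ and the action of $\Gamma_i$ on $\Omega_i$ is properly discontinuous (Section~\ref{subsec:remind-Hilbert}), so only finitely many $\Gamma_i$-translates of $\mathcal{K}_i$ meet $\mathcal{K}_i$. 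Since $K$ itself need not be $\Gamma_i$-invariant, I would phrase this as: the set
$$ F_i := \{ \gamma \in \Gamma_i ~|~ \gamma \cdot K \cap K \neq \varnothing \} $$
is a union of finitely many double cosets $\Delta \gamma_1 \Delta, \dots, \Delta \gamma_m \Delta$ with $\gamma_j \notin \Delta$, together with $\Delta$ itself. Indeed if $\gamma \cdot K \cap K \neq \varnothing$ then some $\Delta$-translate of $\gamma \cdot \mathcal{K}_i$ meets some $\Delta$-translate of $\mathcal{K}_i$; by proper discontinuity of $\Gamma_i$ on $\Omega_i$, the set $\{\delta \gamma \delta' : \delta,\delta'\in\Delta,\ \delta\gamma\delta' \cdot \mathcal{K}_i \cap \mathcal{K}_i \neq \varnothing\}$ lies in a fixed finite subset of $\Gamma_i$, and therefore $\gamma$ lies in one of finitely many double cosets $\Delta \gamma_j \Delta$.

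Next I would invoke the separability assumption~\ref{item:hopeful-separable}: $\Delta$ is separable in~$\Gamma_i$, so for each of the finitely many elements $\gamma_1, \dots, \gamma_m \in \Gamma_i \smallsetminus \Delta$ appearing above, there is a finite-index subgroup $H_{i,j}$ of~$\Gamma_i$ containing~$\Delta$ but not~$\gamma_j$. Separability gives more: I would use the standard consequence that a separable subgroup is closed in the profinite topology, so for each $\gamma_j$ one can find a finite-index subgroup $H_{i,j} \supset \Delta$ with $\Delta \gamma_j \Delta \cap H_{i,j} = \varnothing$ --- this follows because $\Delta$ separable means $\Delta$ is an intersection of finite-index subgroups, and $\Delta \gamma_j \Delta$ is a finite union of right cosets $\Delta(\gamma_j \delta')$, none of which lies in $\Delta$, so one can separate all of them simultaneously from $\Delta$ inside one finite-index subgroup (intersect finitely many). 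Taking $\Gamma_i' := \bigcap_{j=1}^m H_{i,j}$, a finite-index subgroup of $\Gamma_i$ containing $\Delta$, we get $\Gamma_i' \cap F_i = \Delta$, which is precisely the assertion $K \cap \gamma \cdot K = \varnothing$ for all $\gamma \in \Gamma_i' \smallsetminus \Delta$.

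The main obstacle I anticipate is the bookkeeping around double cosets rather than single cosets: because $K$ is only $\Delta$-invariant and not $\Gamma_i$-invariant, the set $F_i$ is a union of $\Delta$-double cosets, and I must be careful that separating $\Delta$ from finitely many \emph{double cosets} $\Delta\gamma_j\Delta$ (not just elements) can still be achieved in a single finite-index subgroup. This is where the profinite-closedness reformulation of separability is essential: each double coset $\Delta \gamma_j \Delta = \bigcup_{\delta'} \Delta (\gamma_j \delta')$ is a finite union of right $\Delta$-cosets since $[\Delta : \Delta \cap \gamma_j \Delta \gamma_j^{-1}]$ need not be finite --- wait, it \emph{is} finite here because $\Delta$ acts cocompactly on a $\Delta\cap\gamma_j\Delta\gamma_j^{-1}$-cocompact piece, but to avoid relying on that I would instead argue more robustly: it suffices that for each fixed $\gamma_j$, we separate the \emph{single} element $\gamma_j$ together with enough of its coset from $\Delta$; since $\gamma \cdot K \cap K \neq \varnothing$ with $\gamma = \delta \gamma_j \delta'$ forces $\gamma_j \cdot (\delta'^{-1}\cdot\mathcal{K}_i) \cap (\delta^{-1}\cdot\mathcal{K}_i) \neq \varnothing$ and hence $\gamma_j$ ranges over a \emph{finite} set $E_i \subset \Gamma_i \smallsetminus \Delta$ of ``representative'' elements, it is enough to pick $\Gamma_i'$ of finite index containing $\Delta$ with $\Gamma_i' \cap E_i = \varnothing$; but then I must re-examine whether $\Gamma_i' \cap F_i = \Delta$, since $\Gamma_i'$ could still contain some $\delta \gamma_j \delta' \in F_i$ with $\delta, \delta' \in \Delta$. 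This is resolved by noting $\delta \gamma_j \delta' \in \Gamma_i'$ and $\delta, \delta' \in \Delta \subset \Gamma_i'$ would force $\gamma_j \in \Gamma_i'$, contradiction. So the clean statement is: let $E_i$ be the finite set of all $\gamma \in \Gamma_i \smallsetminus \Delta$ with $\gamma \cdot \mathcal{K}_i \cap (\Delta\cdot\mathcal{K}_i) \neq \varnothing$ — which is finite by proper discontinuity — then separability yields $\Gamma_i' \supset \Delta$ of finite index avoiding $E_i$, and one checks directly that $\Gamma_i' \cap F_i = \Delta$. I would present this last verification carefully, as it is the only place where the interplay between $\Delta$-cocompactness and $\Gamma_i$-proper-discontinuity is genuinely used.
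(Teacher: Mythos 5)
Your proposal follows essentially the same route as the paper: take a compact fundamental domain $\mathcal{K}$ for the (cocompact, by Remark~\ref{rem:C0-cap-C1-cocompact} and Proposition~\ref{prop:hopeful-putty}.\eqref{item:putty-2}) action of $\Delta$ on $(\C_0\cap\C_1)\cup\mathscr{P}$, use proper discontinuity of $\Gamma_i$ on~$\Omega_i$ to get a finite set of bad elements, remove it by separability (assumption~\ref{item:hopeful-separable}), and then pass from $\mathcal{K}$ to its $\Delta$-orbit via the observation that $\delta\gamma\delta'\in\Gamma_i'$ with $\delta,\delta'\in\Delta\subset\Gamma_i'$ forces $\gamma\in\Gamma_i'$; this last verification, which you spell out, is exactly the (implicit) final step of the paper's proof.

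One local misstatement needs fixing, though your own earlier reasoning already contains the fix. In your final ``clean statement'' you define $E_i:=\{\gamma\in\Gamma_i\smallsetminus\Delta : \gamma\cdot\mathcal{K}_i\cap(\Delta\cdot\mathcal{K}_i)\neq\varnothing\}$ and assert it is finite by proper discontinuity. It is not, in general: proper discontinuity controls intersections of two \emph{compact} sets, whereas $\Delta\cdot\mathcal{K}_i$ is noncompact, and indeed if $\gamma_0\notin\Delta$ satisfies $\gamma_0\cdot\mathcal{K}_i\cap\mathcal{K}_i\neq\varnothing$ then $\delta\gamma_0\in E_i$ for every $\delta\in\Delta$, so $E_i=\Delta\cdot F_i$ where $F_i:=\{\gamma\in\Gamma_i\smallsetminus\Delta : \gamma\cdot\mathcal{K}_i\cap\mathcal{K}_i\neq\varnothing\}$, and $E_i$ is infinite whenever $\Delta$ is infinite and $F_i\neq\varnothing$. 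Consequently separability cannot be applied to $E_i$ as written; it should be applied to the genuinely finite set $F_i$ (both factors equal to the compact fundamental domain), which is what the paper does. With $\Gamma_i'\supset\Delta$ of finite index and $\Gamma_i'\cap F_i=\varnothing$, your double-coset verification then gives the conclusion, and avoiding the infinite set $E_i=\Delta\cdot F_i$ is automatic since $\Delta\subset\Gamma_i'$. (Your hesitation about double cosets was also warranted: $\Delta\gamma_j\Delta$ need not be a finite union of right $\Delta$-cosets, so the profinite-closure shortcut you first sketched does not work as stated; the route just described sidesteps it.)
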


\begin{proof}
By Remark~\ref{rem:C0-cap-C1-cocompact} and Proposition~\ref{prop:hopeful-putty}, the action of $\Delta$ on $(\C_0 \cap \C_1) \cup \mathscr P$ is cocompact.
Let $\mathcal{K} \subset (\C_0 \cap \C_1) \cup \mathscr P$ be a compact fundamental domain for this action.
By proper discontinuity of the action of $\Gamma_i$ on $\Omega_i$, the set $F_i := \{\gamma \in \Gamma_i \smallsetminus \Delta \,|\, \mathcal{K} \cap \gamma\cdot\mathcal{K} \neq \varnothing\}$ is finite.
By our separability assumption~\ref{item:hopeful-separable}, there exists a finite-index subgroup $\Gamma_i'$ of~$\Gamma_i$, containing $\Delta$, such that $\Gamma_i' \cap F_i = \varnothing$.
For $\gamma \in \Gamma_i'$, we then have $((\C_0 \cap \C_1) \cup \mathscr P) \cap \gamma \cdot ((\C_0 \cap \C_1) \cup \mathscr P) \neq \varnothing$ if and only $\gamma \in \Delta$. 
\end{proof}

We now fix $\Gamma_i'$ as in Lemma~\ref{lem:precise}.
It is an immediate consequence of Definition~\ref{def:limcore} (see \cite[Lem.\,4.7]{dgk-proj-cc}) that $\Lambdao_{\Omega_i}(\Gamma_i') = \Lambdao_{\Omega_i}(\Gamma_i)$, hence $\Ccore_{\Omega_i}(\Gamma_i') = \Ccore_{\Omega_i}(\Gamma_i)$.

\begin{lemma}\label{lem:precise-LambdaoDelta}
For each $i\in\{0,1\}$ and each $\gamma \in \Gamma_i' \smallsetminus \Delta$, we have $\Lambdao_{\Omega_{01}}(\Delta) \cap \gamma \cdot \Lambdao_{\Omega_{01}}(\Delta) = \varnothing$.
\end{lemma}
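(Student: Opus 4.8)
The statement to prove is Lemma~\ref{lem:precise-LambdaoDelta}: for each $i\in\{0,1\}$ and each $\gamma\in\Gamma_i'\smallsetminus\Delta$, the sets $\Lambdao_{\Omega_{01}}(\Delta)$ and $\gamma\cdot\Lambdao_{\Omega_{01}}(\Delta)$ are disjoint. The key point is that by Lemma~\ref{lem:precise}, the set $(\C_0\cap\C_1)\cup\mathscr{P}$ is already disjoint from its $\gamma$-translate for $\gamma\in\Gamma_i'\smallsetminus\Delta$, so the plan is to relate $\Lambdao_{\Omega_{01}}(\Delta)$ to the closure of this set, and then use a limiting argument. Since $\Lambdao_{\Omega_{01}}(\Delta)\subset\partial\Omega_{01}$ and the sets involved lie in $\overline{\Omega_{01}}$, a direct disjointness at the level of $\C_0\cap\C_1$ is not enough — one must pass to closures, which a priori could intersect even when the original sets are disjoint. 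So the argument has to exploit convex cocompactness of $\Delta$ on $\Omega_{01}$ more carefully.

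\textbf{First steps.} I would first record that when $\Delta$ is finite, $\Lambdao_{\Omega_{01}}(\Delta)=\varnothing$ and there is nothing to prove, so assume $\Delta$ infinite; then by assumption~\ref{item:hopeful-CC-Omega01} the action of $\Delta$ on $\Omega_{01}$ is convex cocompact, and $\Lambdao_{\Omega_{01}}(\Delta)=\partiali\Ccore_{\Omega_{01}}(\Delta)$ by Fact~\ref{fact:ideal-bound-cc}. Next, I would observe that $\Ccore_{\Omega_{01}}(\Delta)\subset\C_0^\circ\cap\C_1^\circ$ (by Lemma~\ref{lem:ias2-new}, $\C_0^\circ\cap\C_1^\circ$ contains a neighborhood of $\Ccore_{\Omega_{01}}(\Delta)$ in $\Omega_{01}$), so $\Ccore_{\Omega_{01}}(\Delta)\subset\C_0\cap\C_1$, a $\Delta$-cocompact closed convex subset of $\Omega_{01}$ (Remark~\ref{rem:C0-cap-C1-cocompact}). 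Therefore $\Lambdao_{\Omega_{01}}(\Delta)=\partiali\Ccore_{\Omega_{01}}(\Delta)\subset\overline{\C_0\cap\C_1}\smallsetminus(\C_0\cap\C_1)\subset\overline{(\C_0\cap\C_1)\cup\mathscr{P}}$.

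\textbf{The limiting argument.} Suppose for contradiction there is $\xi\in\Lambdao_{\Omega_{01}}(\Delta)\cap\gamma\cdot\Lambdao_{\Omega_{01}}(\Delta)$ for some $\gamma\in\Gamma_i'\smallsetminus\Delta$. Write $\xi=\lim_n x_n$ with $x_n\in\Ccore_{\Omega_{01}}(\Delta)\subset\C_0\cap\C_1$ and $\xi=\lim_n y_n$ with $y_n\in\gamma\cdot\Ccore_{\Omega_{01}}(\Delta)\subset\gamma\cdot(\C_0\cap\C_1)$. The issue is that $x_n,y_n$ escape every compact set; I would use Lemma~\ref{lem:even-more-pure} (or the cocompactness of the $\Delta$-action on $\overline{\C_i}\smallsetminus\Lambdao_{\Omega_{01}}(\Delta)$ from Lemma~\ref{lem:ias2-new}) together with the fact that $\Delta$ acts convex cocompactly on $\Omega_{01}$. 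Concretely: pick a fixed point $p\in\Ccore_{\Omega_{01}}(\Delta)$; the rays $[p,\xi)$ and $[\gamma\cdot p,\xi)$ in $\Omega_{01}$ both have endpoint $\xi\in\Lambdao_{\Omega_{01}}(\Delta)$, so by Lemma~\ref{lem:enter-nbhd} they eventually enter any uniform neighborhood of $\Ccore_{\Omega_{01}}(\Delta)$. Since the $\Delta$-action on such a uniform neighborhood is cocompact, I can choose $\delta_n\in\Delta$ with $\delta_n\cdot x_n$ lying in a fixed compact fundamental domain, and then (after a subsequence) $\delta_n\cdot\xi$ converges to a point $\xi_\infty$ of $\overline{\Omega_{01}}$ with $\delta_n\cdot x_n\to$ an interior point, forcing $\xi_\infty\in\Lambdao_{\Omega_{01}}(\Delta)$, and $\delta_n\cdot y_n\to$ a point of $\overline{\Omega_{01}}$ as well. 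Tracking these, the compact fundamental domain $\mathcal{K}$ for $\Delta$ on $(\C_0\cap\C_1)\cup\mathscr{P}$ meets both $\delta_n\cdot(\C_0\cap\C_1)$ near $\xi_\infty$ and — since $\delta_n\gamma^{-1}\cdot y_n$ lives in $\delta_n\gamma^{-1}\cdot(\C_0\cap\C_1)$ — one deduces, via a second application of the cocompactness of $(\C_0\cap\C_1)\cup\mathscr{P}$ modulo $\Delta$, that some translate $\delta_n\gamma\delta_n'^{-1}\cdot\mathcal{K}$ meets $\mathcal{K}$ for arbitrarily large $n$, where $\delta_n\gamma\delta_n'^{-1}\in\Gamma_i'$. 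By proper discontinuity of $\Gamma_i'$ on $\Omega_i$ and the fact that the two translates meet near the boundary point $\xi_\infty$, combined with Lemma~\ref{lem:precise}, this forces $\delta_n\gamma\delta_n'^{-1}\in\Delta$ for infinitely many $n$, hence $\gamma\in\Delta$, the desired contradiction.

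\textbf{Main obstacle.} The delicate part is the double limiting argument: because everything lives on $\partial\Omega_{01}$, proper discontinuity of the $\Gamma_i'$-action is not directly applicable there, and I must carefully renormalize by $\Delta$-elements on \emph{both} sides (the $\Delta$-orbit of $\xi$ \emph{and} of $\gamma^{-1}\cdot\xi$) so that the relevant configuration is pulled into the compact fundamental domain $\mathcal{K}$, where Lemma~\ref{lem:precise} can bite. A cleaner route, which I would try first, is to avoid renormalization altogether: note $\Lambdao_{\Omega_{01}}(\Delta)\subset\overline{\Ccore_{\Omega_{01}}(\Delta)}$ and $\Ccore_{\Omega_{01}}(\Delta)$ is $\Delta$-invariant and contained in $\C_0\cap\C_1$; since $\gamma\notin\Delta$, the sets $\Ccore_{\Omega_{01}}(\Delta)$ and $\gamma\cdot\Ccore_{\Omega_{01}}(\Delta)$ have disjoint closures in $\overline{\Omega_{01}}$ — this would follow if $\Delta\backslash\Ccore_{\Omega_{01}}(\Delta)$ embeds as a compact subset in $\Gamma_i'\backslash\Omega_i$ that does not accumulate on itself, exactly the kind of argument used in the proof of Theorem~\ref{thm:gluing}. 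Making that ``does not accumulate on itself'' precise, using Lemma~\ref{lem:precise} and the convex cocompactness of $\Delta$, is the crux; I expect it to occupy the bulk of the proof, but the ingredients (Facts~\ref{fact:cc-subsets}, \ref{fact:ideal-bound-cc}, Lemmas~\ref{lem:even-more-pure}, \ref{lem:precise}) are all in hand.
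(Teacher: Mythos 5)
Your overall plan points at the right ingredients (the finite-$\Delta$ reduction, $\Lambdao_{\Omega_{01}}(\Delta)=\partiali\Ccore_{\Omega_{01}}(\Delta)$, Lemma~\ref{lem:enter-nbhd}, and Lemma~\ref{lem:precise}), but the argument you actually sketch has a genuine gap at its central step. In the ``double limiting argument'' you choose $\delta_n\in\Delta$ so that $\delta_n\cdot x_n$ lands in a fixed compact fundamental domain, and then assert that ``one deduces \dots that some translate $\delta_n\gamma\delta_n'^{-1}\cdot\mathcal{K}$ meets $\mathcal{K}$''. Nothing forces this: the renormalization controls $x_n$ but not $y_n$, and two sequences of $\Omega_i$ converging to the same boundary point $\xi$ need not stay at bounded Hilbert distance from one another, so $\delta_n\cdot y_n$ may escape to $\partial\Omega_i$ and no intersection of compact translates inside $\Omega_i$ is produced. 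Likewise, invoking ``proper discontinuity of $\Gamma_i'$ on $\Omega_i$ and the fact that the two translates meet near the boundary point $\xi_\infty$'' is not legitimate: proper discontinuity says nothing about configurations that degenerate to $\partial\Omega_i$, which is exactly the regime you are in. Your alternative ``cleaner route'' (disjointness of the closures of $\Ccore_{\Omega_{01}}(\Delta)$ and $\gamma\cdot\Ccore_{\Omega_{01}}(\Delta)$ via a non-self-accumulation argument) is only announced, and you yourself flag its crux as not yet done, so as submitted the proof is incomplete. A further small slip: you apply Lemma~\ref{lem:enter-nbhd} to the ray $[\gamma\cdot p,\xi)$ ``in $\Omega_{01}$'', but $\gamma\cdot p$ need not lie in $\Omega_{01}$ for $\gamma\in\Gamma_i'\smallsetminus\Delta$; one must work in $\Omega_i$, where $\Delta$ acts convex cocompactly by Fact~\ref{fact:cc-subsets}.

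The frustrating part is that you already hold all the pieces of the short argument (which is the paper's): writing $\xi\in\Lambdao_{\Omega_{01}}(\Delta)\cap\gamma\cdot\Lambdao_{\Omega_{01}}(\Delta)$ and setting $x:=\gamma^{-1}\cdot\xi\in\Lambdao_{\Omega_{01}}(\Delta)$, take a ray $[a,x)\subset\Ccore_{\Omega_{01}}(\Delta)\subset\C_0\cap\C_1$. Its image $\gamma\cdot[a,x)=[\gamma\cdot a,\xi)$ is a ray in $\Omega_i$ contained in $\gamma\cdot(\C_0\cap\C_1)$, with endpoint $\xi\in\Lambdao_{\Omega_i}(\Delta)$ (Fact~\ref{fact:cc-subsets}); by Lemma~\ref{lem:enter-nbhd} it eventually enters the uniform neighborhood of $\Ccore_{\Omega_i}(\Delta)$ contained in $\C_0\cap\C_1$ (arranged in Lemma~\ref{lem:ias2-new}). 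Hence $(\C_0\cap\C_1)\cap\gamma\cdot(\C_0\cap\C_1)\neq\varnothing$, and Lemma~\ref{lem:precise} forces $\gamma\in\Delta$. No boundary renormalization, and no non-accumulation claim, is needed.
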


\begin{proof}
If $\Delta$ is finite, then $\Lambdao_{\Omega_{01}}(\Delta) = \varnothing$ and there is nothing to prove.
So we assume that $\Delta$ is infinite, in which case the assumption~\ref{item:hopeful-CC-Omega01} of Theorem~\ref{thm:hopeful} is that $\Delta$ acts convex cocompactly on~$\Omega_{01}$.
Then $\Delta$ also acts convex cocompactly on~$\Omega_i$ by Fact~\ref{fact:cc-subsets}.
Consider an element $\gamma \in \Gamma'_i$ and a point $x \in \PP(V)$ such that $x$ and $\gamma\cdot x$ both belong to~$\Lambdao_{\Omega_{01}}(\Delta)$.
Let $[a,x)$ be a ray in $\Ccore_{\Omega_{01}}(\Delta)$.
By Lemma~\ref{lem:enter-nbhd}, the ray $\gamma\cdot [a,x) = [\gamma\cdot a, \gamma\cdot x)$ eventually enters any uniform neighborhood of $\Ccore_{\Omega_i}(\Delta)$ in $(\Omega_i,d_{\Omega_i})$.
Since $\C_0 \cap \C_1$ contains such a uniform neighborhood (as arranged by Lemma~\ref{lem:ias2-new}), we have that $\gamma\cdot [a,x)$ enters $\C_0 \cap \C_1$.
By Lemma~\ref{lem:precise}, this implies $\gamma \in \Delta$. 
\end{proof}

\begin{lemma} \label{lem:virtual-occultation}
The groups $\Gamma_i'$ acting on the interiors $\C_i^\circ$ satisfy the conditions of Theorem~\ref{thm:amalgam}: for each $i \in \{0,1\}$ and each $\gamma \in \Gamma_i' \smallsetminus \Delta$, the triple $(\C_{1-i}^\circ, \C_i^\circ, \gamma \cdot \C_{1-i}^\circ)$ is in occultation position. 
\end{lemma}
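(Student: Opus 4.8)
The goal is to verify the three defining conditions of occultation position (Definition~\ref{def:occult}) for the triple $(\C_{1-i}^\circ, \C_i^\circ, \gamma\cdot\C_{1-i}^\circ)$, where $\gamma\in\Gamma_i'\smallsetminus\Delta$. By Lemma~\ref{lem:occult}, for the third condition it suffices to check that every projective \emph{line} meeting both $\overline{\C_{1-i}^\circ}=\overline{\C_{1-i}}$ and $\overline{\gamma\cdot\C_{1-i}}$ must meet $\C_i^\circ$. All of these sets live in $\Omega_{01}$ (or its closure), so we work inside an affine chart containing $\Conv{\C_0\cup\C_1}$ and use the convex-cocompactness facts from Section~\ref{sec:remind}.

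\textbf{Steps.} First I would dispose of the non-inclusion and non-empty-intersection conditions. The inclusions $\C_{1-i}^\circ\not\subset\C_i^\circ$ and $\gamma\cdot\C_{1-i}^\circ\not\subset\C_i^\circ$ follow because each $\C_j$ contains a uniform neighborhood of $\Ccore_{\Omega_j}(\Gamma_j)$ (Lemma~\ref{lem:ias2-new}) while being contained in $\Omega_j$, so $\C_{1-i}^\circ$ contains points of $\partial\Omega_i$ in its closure that $\C_i^\circ$ cannot reach; more directly, if $\C_{1-i}^\circ\subset\C_i^\circ$ then $\overline{\C_{1-i}}\subset\overline{\C_i}$, but $\partiali\C_{1-i}=\partiali\Ccore_{\Omega_{1-i}}(\Gamma_{1-i})$ is not contained in $\overline{\Omega_i}$ by assumption~\ref{item:hopeful-lim-set} unless it lies in $\Lambdao_{\Omega_{01}}(\Delta)$, and genericity of $\Gamma_{1-i}$ beyond $\Delta$ rules that out. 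For the intersection conditions: $\C_{1-i}^\circ\cap\C_i^\circ\neq\varnothing$ is exactly the last bullet of Lemma~\ref{lem:ias2-new}, and applying $\gamma$ gives $\gamma\cdot\C_{1-i}^\circ\cap\gamma\cdot\C_i^\circ\neq\varnothing$; since $\gamma\in\Gamma_i$ preserves $\C_i^\circ$, this reads $\gamma\cdot\C_{1-i}^\circ\cap\C_i^\circ\neq\varnothing$. The disjointness $\C_{1-i}^\circ\cap\gamma\cdot\C_{1-i}^\circ=\varnothing$ follows from Lemma~\ref{lem:precise}: both $\C_0\cap\C_1$ and (hence) $\C_{1-i}^\circ\cap\C_i^\circ$ are part of $(\C_0\cap\C_1)\cup\mathscr P$, but more to the point, $\C_{1-i}^\circ\cap\gamma\cdot\C_{1-i}^\circ\neq\varnothing$ would contradict proper discontinuity after noting $\gamma\notin\Delta=\mathrm{stab}$ of the relevant data — here I would invoke that $\C_{1-i}$ sits in $\Omega_{1-i}$ where $\Gamma_{1-i}$ acts properly discontinuously, together with Lemma~\ref{lem:precise} applied with roles chosen appropriately.

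\textbf{The occultation (blocking) condition.} This is the heart of the matter, and I expect it to be the main obstacle. Suppose a projective line $\ell$ meets $\overline{\C_{1-i}}$ at a point $p$ and $\gamma\cdot\overline{\C_{1-i}}$ at a point $q$, and suppose for contradiction that $\ell$ (equivalently the segment $[p,q]$, by convexity of $\C_i$) misses $\C_i^\circ$. The segment $[p,q]$ must then pass through the nonideal boundary $\partialn\C_i$ on both ends of its intersection with $\overline{\C_i}$, so there exist $x_0\times x_1\in\partialn\C_i\times\partialn\C_i$ with $[x_0,x_1]\subset[p,q]\subset\overline{\C_i}$ and $(x_0,x_1)$ disjoint from $\C_i^\circ$; in fact the structure is that of a segment in $\Fr(\C_i)$. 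Now I would argue: the segment from $p\in\overline{\C_{1-i}}$ running to the first exit point from $\C_i^\circ$ applies Lemma~\ref{lem:ias3} (with the roles $\xi\in\partiali\C_i$, $y\in\overline{\C_{1-i}}$) if $p\in\partiali\C_i$, forcing a sub-segment into $\Lambdao_{\Omega_{01}}(\Delta)$; similarly at the $q$ end we get a sub-segment in $\gamma\cdot\Lambdao_{\Omega_{01}}(\Delta)$. The case analysis splits on whether $p,q$ lie in the ideal or nonideal boundaries of $\C_i$: the key point is that since $\C_0\cap\C_1$ contains a uniform neighborhood of $\Ccore_{\Omega_{01}}(\Delta)$ and since $\C_i$ has bisaturated boundary on the portion $\overline{\C_i}\smallsetminus\Lambdao_{\Omega_{01}}(\Delta)$ (via Facts~\ref{fact:cc-subsets}, \ref{fact:cc-nbhd} as in the proof of Lemma~\ref{lem:ias3}), any line segment in $\Fr(\C_i)$ joining two boundary points that avoids $\C_i^\circ$ is forced into $\Lambdao_{\Omega_{01}}(\Delta)$. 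Then the endpoints force $p\in\Lambdao_{\Omega_{01}}(\Delta)$ (so $p$ is a limit point fed to $\C_{1-i}$) and $q\in\gamma\cdot\Lambdao_{\Omega_{01}}(\Delta)$, and a supporting hyperplane $X'$ of $\C_i$ along $[p,q]$ lies in $\Lambdao_{\Omega_i^*}(\Delta)\cap\Lambdao_{\Omega_i^*}(\gamma\Delta\gamma^{-1})$ by Corollary~\ref{cor:cc-bisat-support} — but this contradicts Lemma~\ref{lem:precise-LambdaoDelta} (equivalently, a dual application of Lemma~\ref{lem:precise}), since $\gamma\notin\Delta$. Finally the degenerate cases $p=q$ or $[p,q]$ a point are handled directly: $p=q\in\partiali\C_i\cap\overline{\C_{1-i}}$ forces $p\in\Lambdao_{\Omega_{01}}(\Delta)$ by construction of $\C_{1-i}$, and then $p$ lies in both $\Lambdao_{\Omega_{01}}(\Delta)$ and $\gamma\cdot\Lambdao_{\Omega_{01}}(\Delta)$, again contradicting Lemma~\ref{lem:precise-LambdaoDelta}. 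This establishes the blocking condition and completes the verification that the triple is in occultation position.
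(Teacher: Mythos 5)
There are genuine gaps, concentrated in exactly the two places where the paper has to work hardest. First, the disjointness $\C_{1-i}^\circ \cap \gamma\cdot\C_{1-i}^\circ = \varnothing$ is essentially unproved in your sketch: you invoke "proper discontinuity" of $\Gamma_{1-i}$ on $\Omega_{1-i}$, but $\gamma$ lies in $\Gamma_i'$, not in $\Gamma_{1-i}$, so that action is irrelevant; and Lemma~\ref{lem:precise} by itself does not apply, since $\C_{1-i}^\circ$ is far larger than the $\Delta$-cocompact set $(\C_0\cap\C_1)\cup\mathscr P$ that Lemma~\ref{lem:precise} controls — a priori $\gamma$ could move a distant part of $\C_{1-i}^\circ$ back onto $\C_{1-i}^\circ$ without touching that set. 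The paper's route is to prove the nontrivial identity $\Conv{\C_i\cup\C_{1-i}}^\circ \cap \gamma\cdot\Conv{\C_i\cup\C_{1-i}}^\circ = \C_i^\circ$, which forces any intersection point into $\C_i^\circ$ and hence into $(\C_0^\circ\cap\C_1^\circ)\cap\gamma\cdot(\C_0^\circ\cap\C_1^\circ)$; this identity rests on Proposition~\ref{prop:hopeful-putty}.\eqref{item:putty-3} (frontier points of $\C_i$ inside the hull lie in $\C_{1-i}^\circ\cup\mathscr P$), which your argument never uses at this step. (Your treatment of the non-inclusion $\C_{1-i}^\circ\not\subset\C_i^\circ$ is also off: assumption~\ref{item:hopeful-lim-set} puts $\partiali\Ccore_{\Omega_{1-i}}(\Gamma_{1-i})\smallsetminus\Lambdao_{\Omega_{01}}(\Delta)$ \emph{inside} $\Omega_i\subset\overline{\Omega_i}$, contrary to what you claim, and "genericity" is not an argument; the paper instead derives a contradiction with Lemma~\ref{lem:precise} from the $\Gamma_{1-i}'$-invariance of $\C_0^\circ\cap\C_1^\circ$ that would follow.)

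Second, in the blocking condition your reduction and your final contradiction both have problems. You assume the segment $[p,q]$ meets $\overline{\C_i}$ (indeed you write $[p,q]\subset\overline{\C_i}$), which is not given — the paper first slides the line $\ell$ toward $\C_i^\circ$ (possible because both $\overline{\C_{1-i}}$ and $\gamma\cdot\overline{\C_{1-i}}$ meet $\C_i^\circ$) until it is tangent to $\Fr(\C_i)$ at a single point $x$, and the whole case analysis is on that one point. Your blanket claim that "any line segment in $\Fr(\C_i)$ joining two boundary points that avoids $\C_i^\circ$ is forced into $\Lambdao_{\Omega_{01}}(\Delta)$" is false: $\partialn\C_i$ is not strictly convex here and may contain segments; Lemma~\ref{lem:ias3} needs one endpoint in $\partiali\C_i$ and the other in $\overline{\C_{1-i}}$. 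Finally, your concluding contradiction via a supporting hyperplane $X'\in\Lambdao_{\Omega_i^*}(\Delta)\cap\Lambdao_{\Omega_i^*}(\gamma\Delta\gamma^{-1})$ and Corollary~\ref{cor:cc-bisat-support} imports the argument from the gluing theorem, where $\Delta$ was shown to act convex cocompactly on $\Omega_i^*$; in the setting of Theorem~\ref{thm:hopeful} no such dual convex cocompactness is available, and there is no dual analogue of Lemma~\ref{lem:precise-LambdaoDelta} to contradict. In the paper, the ideal case ends with the single tangency point $x\in\Lambdao_{\Omega_{01}}(\Delta)\cap\gamma\cdot\Lambdao_{\Omega_{01}}(\Delta)$, directly contradicting Lemma~\ref{lem:precise-LambdaoDelta}, while the nonideal case places $x$ in $(\C_0\cap\C_1)\cup\mathscr P$ and in $\gamma\cdot((\C_0\cap\C_1)\cup\mathscr P)$ (using Lemma~\ref{lem:ias3} to rule out an ideal exit point), contradicting Lemma~\ref{lem:precise}; you would need to carry out these steps rather than the dual argument.
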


\begin{proof}
Fix $i \in \{0,1\}$ and $\gamma \in \Gamma_i' \smallsetminus \Delta$, and let us check that the triple $(\C_{1-i}^\circ, \C_i^\circ, \gamma \cdot \C_{1-i}^\circ)$ is in occultation position.

We first observe that $\C_{1-i}^\circ \not\subset \C_i^\circ$:
indeed, otherwise we would have $\C_0^\circ \cap \C_1^\circ = \C_{1-i}^{\circ}$, hence $\Gamma_{1-i}' \cdot (\C_0^\circ \cap \C_1^\circ) = \C_0^\circ \cap \C_1^\circ$, contradicting the definition of $\Gamma_{1-i}'$ from Lemma~\ref{lem:precise}.
(Note that $\Gamma_{1-i}' \smallsetminus \Delta \neq \varnothing$ by our assumption that $\Delta$ has infinite index in~$\Gamma_{1-i}$.)
Applying $\gamma$, we further find $\gamma \cdot \C_{1-i}^\circ \not\subset \gamma \cdot \C_i^\circ = \C_i^\circ$.

By construction (see Lemma~\ref{lem:ias2-new}) we have $\C_{1-i}^\circ \cap \C_i^\circ \neq \varnothing$.
Therefore $\gamma \cdot \C_{1-i}^\circ \cap \C_i^\circ =\linebreak \gamma \cdot (\C_{1-i}^\circ \cap \C_i^\circ) \neq \varnothing$.

Let us check that $\C_{1-i}^\circ \cap \gamma \cdot \C_{1-i}^\circ = \varnothing$.
To this end, consider the sets $\Conv{\C_i \cup \C_{1-i}}^\circ$ and $\gamma \cdot \Conv{\C_i \cup \C_{1-i}}^\circ = \Conv{\C_i \cup \gamma \cdot \C_{1-i}}^\circ$.
The intersection of these two sets clearly contains $\C_i^\circ$; we claim the intersection is equal to $\C_i^\circ$.
Indeed, suppose not.
Then the intersection contains a point $x \in \Fr(\C_i)$.
In fact, since both sets $\Conv{\C_i \cup \C_{1-i}}^\circ$ and $\gamma \cdot \Conv{\C_i \cup \C_{1-i}}^\circ = \Conv{\C_i \cup \gamma \cdot \C_{1-i}}^\circ$ are contained in $\Omega_i$ by construction (see Lemma~\ref{lem:ias2-new}), we must have $x \in \partialn \C_i$.
Then, either $x \in \C_{1-i}^\circ$, in which case $x \in \C_0 \cap \C_1$, or $x \notin \C_{1-i}^\circ$, in which case $x \in \mathscr P$ by Proposition~\ref{prop:hopeful-putty}.\eqref{item:putty-3}.
Either way, $x \in (\C_0 \cap \C_1) \cup \mathscr P$.
Similarly, $x \in \gamma \cdot ((\C_0 \cap \C_1) \cup \mathscr P)$.
This contradicts the definition of $\Gamma_i'$ in Lemma~\ref{lem:precise}.
Therefore 
\begin{align}\label{eqn:key-thing}
\Conv{\C_i \cup \C_{1-i}}^\circ \cap \Conv{\C_i \cup \gamma \cdot \C_{1-i}}^\circ = \C_i^\circ.
\end{align}
This implies in particular that $\C_{1-i}^\circ \cap \gamma \cdot \C_{1-i}^\circ \subset \C_i^\circ$.
Then $\C_{1-i}^\circ \cap \gamma \cdot \C_{1-i}^\circ =\linebreak (\C_{1-i}^\circ \cap \C_i^\circ) \cap \gamma \cdot (\C_{1-i}^\circ \cap \C_i^\circ) = \varnothing$, again by Lemma~\ref{lem:precise}. 
 
Finally, let us check that 
\begin{equation}
\label{eq:aardvaark}
\overline{(\C_i^\circ)^*} \subset (\C_{1-i}^\circ)^* \cup (\gamma \cdot \C_{1-i}^\circ)^*.
\end{equation}
Suppose by contradiction that this is not the case.
Then there is a projective line $\ell$ passing through both $\overline{\C_{1-i}^\circ} = \overline{\C_{1-i}}$ and $\overline{\gamma \cdot \C_{1-i}^\circ} = \gamma \cdot \overline{\C_{1-i}}$ but not intersecting $\C_i^\circ$.
Since both sets $\overline{\C_{1-i}}$ and $\gamma \cdot \overline{\C_{1-i}}$ intersect $\C_i^\circ$, we may move $\ell$ closer and closer to $\C_i^\circ$ until it just touches $\Fr(\C_i)$ at a point~$x$.
Let $z \in \ell \cap \overline{\C_{1-i}}$ and $y \in  \ell \cap \gamma \cdot \overline{\C_{1-i}}$. 

First consider the case $x \in \partiali \C_i$.
Since $[x,z] \cap \C_i^\circ = \varnothing$, Lemma~\ref{lem:ias3} implies $[x,z] \subset \Lambdao_{\Omega_{01}}(\Delta)$.
Similarly, since $\gamma^{-1}\cdot x \in \partiali \C_i$ and $[\gamma^{-1}\cdot x,\gamma^{-1}\cdot y] \cap \C_i^\circ = \varnothing$, Lemma~\ref{lem:ias3} implies $[\gamma^{-1}\cdot x,\gamma^{-1}\cdot y] \subset \Lambdao_{\Omega_{01}}(\Delta)$.
Thus $x \in \Lambdao_{\Omega_{01}}(\Delta) \cap \gamma \cdot \Lambdao_{\Omega_{01}}(\Delta)$: contradiction with Lemma~\ref{lem:precise-LambdaoDelta}.

Next consider the case $x \in \partialn \C_i$. 
Possibly, $x \in \C_{1-i}$, in which case $x \in \C_0 \cap \C_1$.
Otherwise, $x \notin \C_{1-i}$.
Then, let $z' \in [x,z]$ be such that $[x,z'] \subset [x,z]$ is maximal with the property $[x,z'] \cap \C_{1-i}^\circ = \varnothing$; we have $z' \in \Fr(\C_{1-i}) = \partiali \C_{1-i} \cup \partialn \C_{1-i}$.
We cannot have $z' \in \partiali \C_{1-i}$, otherwise Lemma~\ref{lem:ias3} with $(i,\xi,y)$ replaced by $(1-i,z',x)$ would imply $x \in \Lambdao_{\Omega_{01}}(\Delta) \subset \partiali \C_i$, which we have assumed is not the case; therefore $z' \in \partialn \C_{1-i}$, showing that $[x, z'] \subset \mathscr P$.
Either way, $x \in (\C_0 \cap \C_1) \cup \mathscr P$.
Similarly, we have $x \in \gamma \cdot ((\C_0 \cap \C_1) \cup \mathscr P)$.
This contradicts the definition of $\Gamma_i'$ from Lemma~\ref{lem:precise}.
Thus~\eqref{eq:aardvaark} holds, finishing the proof.
\end{proof}

\subsection{Proof of conclusions \eqref{item:hopeful-1} and~\eqref{item:hopeful-3} of Theorem~\ref{thm:hopeful}} \label{subsec:proof-hopeful-1-3}
 
\begin{proof}[Proof of conclusion~\eqref{item:hopeful-1}]
Let $\C_0 \subset \Omega_0$ and $\C_1 \subset \Omega_1$ be as in Lemma~\ref{lem:ias2-new}, and let $\Gamma_0'$ and~$\Gamma_1'$ be finite-index subgroups of $\Gamma_0$ and~$\Gamma_1$ as in Lemma~\ref{lem:precise}.
By Lemma~\ref{lem:virtual-occultation}, for each $i \in \{0,1\}$ and $\gamma \in \Gamma_i' \smallsetminus \Delta$, the triple $(\C_{1-i}^\circ, \C_i^\circ, \gamma\cdot\C_{1-i}^\circ)$ is in occultation position.
Applying Theorem~\ref{thm:amalgam} with $(\Gamma_0,\Omega_0,\Gamma_1,\Omega_1)$ replaced by $(\Gamma'_0,\C_0^{\circ},\Gamma'_1,\C_1^{\circ})$, we obtain that the representation $\rho : \Gamma'_0 *_{\Delta} \Gamma'_1 \to \PGL(V)$ induced by the inclusions of $\Gamma'_0$ and $\Gamma'_1$ is discrete and faithful, and that its image preserves a nonempty properly convex open subset of $\PP(V)$, namely
\begin{equation} \label{eqn:Omega-hopeful}
\Omega := \bigcup_{\gamma \in \Gamma'_0 *_{\Delta} \Gamma'_1} \rho(\gamma) \cdot \Conv{\C_0^{\circ} \cup \C_1^{\circ}}.
\end{equation}
The set $\Omega$ contains $\Omega_{0,1}\cup \Ccore_{\Omega_0}(\Gamma_0) \cup \Ccore_{\Omega_1}(\Gamma_1)$ by construction. Note that the action of $\Delta$ on~$\Omega$ via~$\rho$ is still convex cocompact by Fact~\ref{fact:cc-subsets}.
\end{proof}

\begin{remark} \label{rem:adj-convex-hopeful}
Recall from the proof of Theorem~\ref{thm:amalgam} in Section~\ref{subsec:proof-amalgam-HNN} that the sets $\rho(\gamma) \cdot \Conv{\C_0^{\circ} \cup \C_1^{\circ}}$ in the definition \eqref{eqn:Omega-hopeful} of $\Omega$ are the convex sets $\Omega(e)$ associated to the edges $e$ of the Bass--Serre tree for $\Gamma'_0 *_{\Delta} \Gamma'_1$ in a tree-of-convex-sets construction as in Theorem~\ref{thm:tree}.
Conclusion~\eqref{treethm:3} of Theorem~\ref{thm:tree} states that two such convex sets $\Omega(e),\Omega(e')$ overlap if and only if the corresponding edges $e,e'$ in the Bass-Serre tree share a vertex; this means that $\Conv{\C_0^{\circ} \cup \C_1^{\circ}} \cap \rho(\gamma) \cdot \Conv{\C_0^{\circ} \cup \C_1^{\circ}} \neq \varnothing$ if and only if $\gamma$ belongs to $\Gamma'_0$ or to~$\Gamma'_1$.
\end{remark}
 
\begin{proof}[Proof of conclusion~\eqref{item:hopeful-3}]
Suppose that the action of $\Gamma_i$ on~$\Omega_i$ is convex cocompact for both $i=0,1$.
By Lemma~\ref{lem:ias2-new} and Fact~\ref{fact:strict-C1-nbhd}, up to possibly replacing each $\Gamma_i$ by a finite-index subgroup, there exist closed $\Gamma_i$-invariant convex subsets $\C'_i \subset \C_i$ of~$\Omega_i$, for $i=0,1$, such that
\begin{itemize}
  \item $\C_i$ is contained in a uniform neighborhood of $\Ccore_{\Omega_i}(\Gamma_i)$ in $(\Omega_i,d_{\Omega_i})$,
  \item $\C'_i$ is a closed $\Gamma_i$-invariant convex subset of~$\C_i^{\circ}$ with strictly convex nonideal boundary,
  \item $\overline{\C'_i} \smallsetminus \Lambdao_{\Omega_{01}}(\Delta) \subset \overline{\C_i} \smallsetminus \Lambdao_{\Omega_{01}}(\Delta) \subset \Omega_{1-i}$,
  \item ${\C'_0}^{\circ} \cap {\C'_1}^{\circ}$ is nonempty and contains a neighborhood of $\Ccore_{\Omega_{01}}(\Delta)$ in~$\Omega_{01}$.
\end{itemize}
Our assumption that the action of $\Gamma_i$ on~$\Omega_i$ is convex cocompact means that $\Ccore_{\Omega_i}(\Gamma_i)$ is cocompact under~$\Gamma_i$; since $\C'_i$ is closed in~$\Omega_i$ and contained in a uniform neighborhood of $\Ccore_{\Omega_i}(\Gamma_i)$ in $(\Omega_i,d_{\Omega_i})$, it is still cocompact under~$\Gamma_i$.
  
Let $\mathscr P$ be the ``putty'' of \eqref{eqn:putty} for $\C_0$ and~$\C_1$, and let $\mathscr P'$ be defined similarly with respect to $\C'_0$ and~$\C'_1$.
By Lemma~\ref{lem:precise}, for each $i \in \{0,1\}$, there exists a finite-index subgroup $\Gamma_i'$ of~$\Gamma_i$, containing~$\Delta$, such that
$$((\C_0 \cap \C_1) \cup \mathscr P) \cap \gamma \cdot ((\C_0 \cap \C_1) \cup \mathscr P) = ((\C_0' \cap \C_1') \cup \mathscr P') \cap \gamma \cdot ((\C_0' \cap \C_1') \cup \mathscr P') = \varnothing$$
for all $\gamma \in \Gamma_i' \smallsetminus \Delta$.
By Lemma~\ref{lem:virtual-occultation}, for each $i \in \{0,1\}$ and each $\gamma \in \Gamma_i' \smallsetminus \Delta$, the triples $(\C_{1-i}^\circ, \C_i^\circ, \gamma\cdot\C_{1-i}^\circ)$ and $({\C^{\prime \, \circ}_{1-i}}, {\C^{\prime \, \circ}_i}, \gamma\cdot{\C^{\prime \, \circ}_{1-i}})$ are in occultation position.
By Proposition~\ref{prop:hopeful-putty}.\eqref{item:putty-1}, the closure of $\Conv{\C'_0\cup\C'_1} \smallsetminus (\C'_0\cup\C'_1)$ in $\Conv{\C_0^\circ \cup \C_1^\circ}$ is a closed subset of $\mathscr P'$; since $\mathscr P'$ is cocompact under the action of~$\Delta$ (Proposition~\ref{prop:hopeful-putty}.\eqref{item:putty-2}), so is the closure of $\Conv{\C'_0 \cup \C'_1} \smallsetminus (\C'_0 \cup \C'_1)$ in $\Conv{\C_0^\circ \cup \C_1^\circ}$.
We then apply Theorem~\ref{thm:amalgam-cc} with $(\Gamma_0,\Omega_0,\C_0,\Gamma_1,\Omega_1,\C_1)$ replaced by $(\Gamma'_0,\C_0^{\circ},\C'_0,\Gamma'_1,\C_1^{\circ},\C'_1)$.
\end{proof}

\subsection{Step~4: control the full orbital limit sets of $\Gamma'_0$ and~$\Gamma'_1$}

\begin{lemma} \label{lem:Ci-unif-neighb-Ccore-Omega}
For both $i=0,1$, the convex set $\C_i$ contains a uniform neighborhood of $\Ccore_{\Omega_i}(\Gamma_i)$ in $(\Omega,d_{\Omega})$.
\end{lemma}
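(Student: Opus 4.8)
The statement to prove is Lemma~\ref{lem:Ci-unif-neighb-Ccore-Omega}: the convex set $\C_i$ contains a uniform neighborhood of $\Ccore_{\Omega_i}(\Gamma_i)$ in $(\Omega, d_\Omega)$, where $\Omega$ is the big convex set of~\eqref{eqn:Omega-hopeful}. The key point is that by construction (Lemma~\ref{lem:ias2-new}) the set $\C_i$ already contains a uniform neighborhood $\C_i^{R}$ of $\Ccore_{\Omega_i}(\Gamma_i)$ \emph{for the metric $d_{\Omega_i}$}, so the only thing to check is that the metric $d_\Omega$ is, near $\Ccore_{\Omega_i}(\Gamma_i)$, comparable to $d_{\Omega_i}$ in the relevant direction, i.e. that a small $d_\Omega$-ball around a point of $\Ccore_{\Omega_i}(\Gamma_i)$ sits inside a slightly larger $d_{\Omega_i}$-ball. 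First I would recall from Remark~\ref{rem:Hilb-metric-include} that since $\C_i^\circ \subset \Omega$ (indeed $\Omega_i \supset \C_i^\circ$ and $\Omega$ contains all the relevant translates, with $\Omega \cap (\text{affine chart around } \C_i^\circ)$ being a convex set containing $\C_i^\circ$), we have $d_\Omega \leq d_{\C_i^\circ}$ on~$\C_i^\circ$; but this inequality goes the wrong way, so the real content is a reverse comparison valid only on a neighborhood of $\Ccore_{\Omega_i}(\Gamma_i)$.

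\textbf{Main steps.} The cleanest route is to use the adjacency structure of $\Omega$ recorded in Remark~\ref{rem:adj-convex-hopeful}. By the tree-of-convex-sets description, $\Omega$ is covered by the translates $\rho(\gamma)\cdot\Conv{\C_0^\circ \cup \C_1^\circ}$ indexed by edges of the Bass--Serre tree, and by Theorem~\ref{thm:tree}.\eqref{treethm:2}--\eqref{treethm:3} two translates of the vertex sets $\C_0^\circ$ or $\C_1^\circ$ intersect only when the corresponding vertices are adjacent. Consequently a point $x \in \Ccore_{\Omega_i}(\Gamma_i) \subset \C_i^\circ$ lies in $\C_i^\circ$ and in the finitely many (up to the action of the stabilizer $\Gamma_i$) edge-convex-sets $\rho(\gamma)\cdot\Conv{\C_0^\circ\cup\C_1^\circ}$ incident to the vertex corresponding to~$\C_i^\circ$. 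A geodesic segment in $(\Omega,d_\Omega)$ starting at such $x$ either stays in $\C_i^\circ$ — in which case we are comparing $d_\Omega$ with $d_{\Omega_i}$ inside $\C_i^\circ$, and a uniform-neighborhood argument works since a short $d_\Omega$-path cannot leave a fixed $d_{\Omega_i}$-ball — or it exits $\C_i^\circ$ through $\partialn\C_i$. Here I would invoke that $\C_i$ has bisaturated boundary when $\Delta$ is infinite (shown inside the proof of Lemma~\ref{lem:ias3}) and more directly that the nonideal boundary $\partialn\C_i$ is, away from $\Lambdao_{\Omega_{01}}(\Delta)$, a genuine ``wall'' separating $\C_i$ from the rest of $\Omega$. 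Since near $\Ccore_{\Omega_i}(\Gamma_i)$ the set $\C_i$ is a $d_{\Omega_i}$-neighborhood of thickness $\geq R$ of $\Ccore_{\Omega_i}(\Gamma_i)$, any $d_\Omega$-geodesic of length $< r$ (for $r$ small, chosen uniformly using cocompactness of the $\Gamma_i$-action on $\Ccore_{\Omega_i}(\Gamma_i)$) starting on $\Ccore_{\Omega_i}(\Gamma_i)$ cannot reach $\partialn\C_i$, because to do so it would have to travel $d_{\Omega_i}$-distance $\geq R$, and on the region where both metrics are defined and $\C_i^\circ$ is "thick", $d_{\Omega_i} \leq C\, d_\Omega$ for a uniform constant~$C$ (by compactness, after quotienting by $\Gamma_i$). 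Hence the $d_\Omega$-ball of radius $r$ around any point of $\Ccore_{\Omega_i}(\Gamma_i)$ is contained in $\C_i$.

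\textbf{Carrying it out.} Concretely: let $\mathcal D$ be a compact fundamental domain for $\Gamma_i$ acting on $\Ccore_{\Omega_i}(\Gamma_i)$. For $x \in \mathcal D$, the $d_{\Omega_i}$-ball of radius $R$ around~$x$ lies in $\C_i^\circ$ (by Lemma~\ref{lem:ias2-new}), and this $d_{\Omega_i}$-ball, being a compact subset of $\C_i^\circ \subset \Omega$, has positive $d_\Omega$-distance to $\partial\Omega$; by compactness of $\mathcal D$ there is a uniform $r>0$ with $B_{d_\Omega}(x,r) \subset B_{d_{\Omega_i}}(x, R) \subset \C_i$ for all $x\in\mathcal D$. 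Then $\Gamma_i$-invariance of both $\C_i$, $\Ccore_{\Omega_i}(\Gamma_i)$, $d_{\Omega_i}$ and of $d_\Omega$ (note $\rho(\Gamma_i)$ preserves $\Omega$) propagates this to all of $\Ccore_{\Omega_i}(\Gamma_i)$: $B_{d_\Omega}(x,r) \subset \C_i$ for every $x \in \Ccore_{\Omega_i}(\Gamma_i)$. That gives exactly the claimed uniform $d_\Omega$-neighborhood. \textbf{The main obstacle} I anticipate is making the comparison ``$d_{\Omega_i} \leq C\,d_\Omega$ on a neighborhood of $\Ccore_{\Omega_i}(\Gamma_i)$'' rigorous without circularity: $d_\Omega \leq d_{\Omega_i}$ is automatic, but the reverse bound fails globally (points of $\Omega_i$ far from $\Ccore_{\Omega_i}(\Gamma_i)$ but close to $\partial\Omega_i\smallsetminus\partial\Omega$ can be $d_{\Omega_i}$-far yet $d_\Omega$-near). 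The fix is precisely to localize: work only with $d_\Omega$-balls of uniformly bounded small radius centered on the cocompact core, where the two balls are squeezed between each other by a compactness argument on the quotient — this is where cocompactness of the $\Gamma_i$-action on $\Ccore_{\Omega_i}(\Gamma_i)$ and the fact that the $d_{\Omega_i}$-$R$-ball stays a definite $d_\Omega$-distance from $\partial\Omega$ are both essential.
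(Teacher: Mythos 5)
Your argument has a genuine gap: it rests on choosing a compact fundamental domain $\mathcal D$ for the action of $\Gamma_i$ on $\Ccore_{\Omega_i}(\Gamma_i)$, i.e.\ on cocompactness of $\Gamma_i$ on its convex core. That is precisely the convex cocompactness hypothesis which is \emph{not} part of the setting here: Lemma~\ref{lem:Ci-unif-neighb-Ccore-Omega} is proved under assumptions (a)--(d) of Theorem~\ref{thm:hopeful} only (those give cocompactness of $\Delta$, on $\Omega_{01}$ and, via Lemma~\ref{lem:ias1}, on $\overline{\Ccore_{\Omega_i}(\Gamma_i)}\smallsetminus\Lambdao_{\Omega_{01}}(\Delta)$, but nothing about $\Gamma_i$), and the lemma is needed for conclusions (1), (2), (4) and Corollary~\ref{cor:Lambda-orb-Gamma-i}, which hold without assuming the $\Gamma_i$ act convex cocompactly; that extra hypothesis enters only in conclusion (3). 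The same unavailable cocompactness is hidden in your phrases ``finitely many up to the action of the stabilizer'' and ``$d_{\Omega_i}\leq C\,d_\Omega$ \dots\ by compactness, after quotienting by $\Gamma_i$''. Without it, your local comparison yields for each $x$ in the core some radius $r(x)>0$ with $B_{d_\Omega}(x,r(x))\subset B_{d_{\Omega_i}}(x,R)\subset\C_i$, but no uniform $r$: as $x$ drifts along the (possibly non-cocompact) core towards its ideal boundary $\partiali\Ccore_{\Omega_i}(\Gamma_i)\subset\partial\Omega_i$, the closed $d_{\Omega_i}$-ball of radius $R$ around $x$ shrinks to a point in the ambient topology while the $d_\Omega$-geometry need not degenerate, so the admissible radii can tend to $0$. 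Relatedly, the justification ``the $d_{\Omega_i}$-ball has positive $d_\Omega$-distance to $\partial\Omega$'' is not the relevant point: the danger is not proximity to $\partial\Omega$ but points of \emph{other} pieces of $\Omega$ (translates $\rho(\gamma)\cdot\Conv{\C_0^\circ\cup\C_1^\circ}$, or $\C_{1-i}$, or the putty) coming $d_\Omega$-close to the core while lying outside $B_{d_{\Omega_i}}(x,R)$ --- indeed possibly outside $\Omega_i$ altogether, where $d_{\Omega_i}$ is not even defined.

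This near-ideal-boundary scenario is exactly what the paper's proof is designed to exclude, and it does so globally rather than by a local metric comparison: assuming points of $\partialn\C_i$ come $d_\Omega$-close to $\Ccore_{\Omega_i}(\Gamma_i)$, it uses the definition \eqref{eqn:Omega-hopeful} of $\Omega$ and the adjacency structure of Remark~\ref{rem:adj-convex-hopeful} to see that such points are moved into $\Conv{\C_0^\circ\cup\C_1^\circ}$ by elements of $\Gamma'_i$ only, then invokes the $\Delta$-cocompactness of $\overline{\C_i}\smallsetminus\Lambdao_{\Omega_{01}}(\Delta)$ (Lemma~\ref{lem:ias2-new}) and of the putty $\mathscr P$ (Proposition~\ref{prop:hopeful-putty}), together with assumption~(c) and Lemma~\ref{lem:ias3}, to reach a contradiction in the limit. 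Your outline uses none of this machinery in an essential way, so as written it proves the lemma only under the additional hypothesis that $\Gamma_i$ acts cocompactly on its convex core, which is not the generality in which the lemma is stated and used.
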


\begin{proof}
Fix $i\in\{0,1\}$.
Suppose by contradiction that $\C_i$ does \emph{not} contain a uniform neighborhood of $\Ccore_{\Omega_i}(\Gamma_i)$ in $(\Omega,d_{\Omega})$: this means that for any $n\geq 1$ there exists $x_n \in \partialn\C_i$ such that $d_{\Omega}(x_n,\Ccore_{\Omega_i}(\Gamma_i)) \leq 1/n$.
By definition \eqref{eqn:Omega-hopeful} of~$\Omega$, for each~$n$ there exists $\gamma_n \in \Gamma'_0 *_{\Delta} \Gamma'_1$ such that $y_n := \rho(\gamma_n) \cdot x_n$ belongs to $\Conv{\C_0^{\circ} \cup \C_1^{\circ}}$.

We claim that $\gamma_n \in \Gamma'_i$ for all~$n$.
Indeed, by perturbing slightly $x_n$ we see that\linebreak $\C_i^{\circ} \cap \rho(\gamma_n)^{-1} \cdot \Conv{\C_0^{\circ} \cup \C_1^{\circ}} \neq \varnothing$, hence
$$\gamma \cdot \Conv{\C_0^{\circ} \cup \C_1^{\circ}} \cap \rho(\gamma_n)^{-1} \cdot \Conv{\C_0^{\circ} \cup \C_1^{\circ}} \neq \varnothing$$
for all $\gamma \in \Gamma'_i$.
By Remark~\ref{rem:adj-convex-hopeful}, for any $g \in \Gamma'_0 *_{\Delta} \Gamma'_1$, we have $\Conv{\C_0^{\circ} \cup \C_1^{\circ}} \cap \rho(g) \cdot \Conv{\C_0^{\circ} \cup \C_1^{\circ}} \neq \varnothing$ if and only if $g$ belongs to $\Gamma'_0$ or to~$\Gamma'_1$.
Therefore, the fact that $\gamma \cdot \Conv{\C_0^{\circ} \cup \C_1^{\circ}} \cap \rho(\gamma_n)^{-1} \cdot \Conv{\C_0^{\circ} \cup \C_1^{\circ}} \neq \varnothing$ for all $\gamma \in \Gamma'_i$ implies that $\gamma_n$ belongs to~$\Gamma'_i$.

Thus for any~$n$ we have found a point $y_n = \gamma_n \cdot x_n \in \partialn\C_i \cap \Conv{\C_0^{\circ} \cup \C_1^{\circ}}$ such that $d_{\Omega}(y_n,\Ccore_{\Omega_i}(\Gamma_i)) \leq 1/n$.
Since the action of $\Delta$ on $\overline{\C_i} \smallsetminus \Lambdao_{\Omega_{01}}(\Delta)$ is cocompact (Lemma~\ref{lem:ias2-new}), up to replacing each $y_n$ by $\delta_n \cdot y_n$ for some $\delta_n \in \Delta$, and up to passing to a subsequence, we may assume that $(y_n)_{n\in\NN}$ converges to some point $z \in \overline{\C_i} \smallsetminus \Lambdao_{\Omega_{01}}(\Delta)$.
This point $z$ cannot belong to~$\C_i$ because $d_{\Omega}(y_n,\Ccore_{\Omega_i}(\Gamma_i)) \to 0$ while $\C_i$ contains a neighborhood of $\Ccore_{\Omega_i}(\Gamma_i)$.
Thus $z \in \partiali \C_i \smallsetminus \Lambdao_{\Omega_{01}}(\Delta)$.

By Proposition~\ref{prop:hopeful-putty}.\eqref{item:putty-3}, we have $y_n \in \C_{1-i}^\circ \cup \mathscr P$, where $\mathscr P$ is the ``putty'' of \eqref{eqn:putty}.
Therefore, up to passing to a subsequence, we may assume that $y_n \in \C_{1-i}^{\circ}$ for all~$n$, or $y_n \in \mathscr P$ for all~$n$.
It is not possible that $y_n \in \C_{1-i}^{\circ}$ for all~$n$, because then $z$ would belong to the set $\overline{\C_{1-i}} \cap ( \partiali \C_i \smallsetminus \Lambdao_{\Omega_{01}}(\Delta)) \subset  (\overline{\C_{1-i}} \smallsetminus \Lambdao_{\Omega_{01}}(\Delta)) \cap \partial \Omega_i$, which is empty by assumption~\ref{item:hopeful-lim-set}. 
Therefore we may assume that $y_n \in \mathscr P$ for all~$n$.
Since the action of $\Delta$ on $\mathscr P$ is cocompact (Proposition~\ref{prop:hopeful-putty}.\eqref{item:putty-2}), up to replacing each $y_n$ by $\delta_n \cdot y_n$ for some $\delta_n \in \Delta$, and up to passing to a subsequence, we may assume that $z$ belongs to~$\mathscr P$: impossible since $\partiali \C_i \cap \mathscr P = \varnothing$ by Lemma~\ref{lem:ias3}.
\end{proof}

\begin{corollary} \label{cor:Lambda-orb-Gamma-i}
For both $i=0,1$, we have $\Lambdao_{\Omega}(\Gamma'_i) \subset \partiali\Ccore_{\Omega_i}(\Gamma_i)$.
\end{corollary}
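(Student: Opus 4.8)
\textbf{Proof proposal for Corollary~\ref{cor:Lambda-orb-Gamma-i}.}
The plan is to take a point $\xi \in \Lambdao_{\Omega}(\Gamma'_i)$ and show it lies in $\partiali \Ccore_{\Omega_i}(\Gamma_i)$, by tracing an orbit that realizes $\xi$ through the tree-of-convex-sets structure of $\Omega$ and using the cocompactness and adjacency facts already established. First I would write $\xi = \lim_n \rho(\gamma_n)\cdot x$ for some $x \in \Omega$ and some sequence $(\gamma_n)$ of pairwise distinct elements of $\Gamma'_i$; without loss of generality $x \in \C_i^{\circ}$, since $\Omega$ is the union of the $\Gamma$-translates of $\Conv{\C_0^{\circ}\cup\C_1^{\circ}}$ and we may move $x$ into $\C_i^{\circ}$ at the cost of a bounded $d_{\Omega}$-distance. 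The key point is that $\C_i$ contains a uniform neighborhood of $\Ccore_{\Omega_i}(\Gamma_i)$ in $(\Omega,d_\Omega)$ (Lemma~\ref{lem:Ci-unif-neighb-Ccore-Omega}) and is cocompact under $\Gamma_i$ modulo the part of the frontier lying in $\Lambdao_{\Omega_{01}}(\Delta)$ (Lemma~\ref{lem:ias2-new}); combined with Lemma~\ref{lem:even-more-pure} applied to the convex cocompact action of $\Gamma_i$ on $\Omega_i$, this should force any accumulation point of a $\Gamma'_i$-orbit starting in $\C_i^{\circ}$ to land in $\partiali\Ccore_{\Omega_i}(\Gamma_i) = \Lambdao_{\Omega_i}(\Gamma_i)$.

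More precisely, I would argue as follows. Since the $\gamma_n$ are pairwise distinct elements of $\Gamma'_i \subset \Gamma_i$ and $\Gamma_i$ acts properly discontinuously on $\Omega_i$, the sequence $\gamma_n$ diverges in $\Gamma_i$. The point $x \in \C_i^{\circ}$ lies in a uniform neighborhood of $\Ccore_{\Omega_i}(\Gamma_i)$, hence so does each $\gamma_n\cdot x$, with uniform constant. By Lemma~\ref{lem:even-more-pure} (or directly by the proof of Lemma~\ref{lem:enter-nbhd}), since $\Ccore_{\Omega_i}(\Gamma_i)$ is $\Gamma_i$-cocompact, after passing to a subsequence we can find $\delta_n \in \Gamma_i$ with $\delta_n\gamma_n\cdot x$ staying in a compact subset of $\Omega_i$; as $\gamma_n \to \infty$ we have $\delta_n \gamma_n$ bounded only if $\delta_n \to \infty$ suitably, and the limit $\xi = \lim_n \gamma_n\cdot x$ must be an accumulation point of the $\Gamma_i$-orbit of a point in $\overline{\Ccore_{\Omega_i}(\Gamma_i)}$, hence lies in $\Lambdao_{\Omega_i}(\Gamma_i) = \partiali\Ccore_{\Omega_i}(\Gamma_i)$. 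The one subtlety is that the limit $\xi$ is taken in $\partial\Omega$, not in $\partial\Omega_i$, so I must check $\xi \in \overline{\Omega_i}$ — but this follows because $\gamma_n\cdot x$ lies in $\C_i \subset \Omega_i$ for all $n$, so $\xi \in \overline{\C_i} \subset \overline{\Omega_i}$.

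The main obstacle I anticipate is bookkeeping the relationship between the Hilbert metric $d_\Omega$ on the large combined convex set and the metric $d_{\Omega_i}$ on the smaller piece: a priori a uniform $d_\Omega$-neighborhood of $\Ccore_{\Omega_i}(\Gamma_i)$ need not be a uniform $d_{\Omega_i}$-neighborhood. However, by Remark~\ref{rem:Hilb-metric-include} we have $d_{\Omega_i} \geq d_\Omega$ on $\C_i$, so a set that is $d_{\Omega_i}$-uniformly close to $\Ccore_{\Omega_i}(\Gamma_i)$ is automatically $d_\Omega$-uniformly close, which is the direction Lemma~\ref{lem:Ci-unif-neighb-Ccore-Omega} already supplies; for the argument above I only need that $\C_i$ \emph{is contained in} a uniform $d_{\Omega_i}$-neighborhood of $\Ccore_{\Omega_i}(\Gamma_i)$, which is part of the construction in Lemma~\ref{lem:ias2-new}. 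So the proof reduces to: every point of $\C_i$ is within bounded $d_{\Omega_i}$-distance of $\Ccore_{\Omega_i}(\Gamma_i)$; the $\gamma_n\cdot x$ stay within that bounded distance; hence $\xi = \lim \gamma_n\cdot x \in \Lambdao_{\Omega_i}(\Gamma_i)$ by the definition of the full orbital limit set together with $\Gamma_i$-cocompactness of the convex core (Fact~\ref{fact:ideal-bound-cc}). This closes the argument, giving $\Lambdao_{\Omega}(\Gamma'_i) \subset \Lambdao_{\Omega_i}(\Gamma_i) = \partiali\Ccore_{\Omega_i}(\Gamma_i)$.
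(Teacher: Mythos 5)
Your opening reduction is where the argument breaks. Replacing the basepoint $x\in\Omega$ by a point $x'\in\C_i^{\circ}$ at bounded $d_{\Omega}$-distance does not preserve the limit point: since the elements of $\Gamma'_i$ act as $d_{\Omega}$-isometries, Fact~\ref{fact:unif-neighb-face}.\eqref{item:distance-goes-down} only guarantees that $\lim_n\gamma_n\cdot x'$ (after extraction) lies in the \emph{same open face} $F$ of $\partial\Omega$ as $\xi=\lim_n\gamma_n\cdot x$, not that it equals~$\xi$. In the setting of Theorem~\ref{thm:hopeful} the groups $\Gamma_i$ are not assumed Gromov hyperbolic or $P_1$-divergent, so faces of $\partial\Omega$ can be positive-dimensional and limits of orbits genuinely depend on the basepoint (compare Example~\ref{ex:not-irreducible}). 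Hence your argument only shows that $F$ meets $\Lambdao_{\Omega_i}(\Gamma'_i)\subset\partiali\Ccore_{\Omega_i}(\Gamma_i)$; it does not show $\xi\in\partiali\Ccore_{\Omega_i}(\Gamma_i)$. Closing precisely this gap is the content of the paper's proof: after the same first move (which yields $F\cap\Lambdao_{\Omega_i}(\Gamma'_i)\neq\varnothing$), it combines assumption~\ref{item:hopeful-cases} (that $\partiali\Ccore_{\Omega_i}(\Gamma_i)$ is a union of open faces of $\partial\Omega_i$), Lemma~\ref{lem:Ci-unif-neighb-Ccore-Omega} --- the nontrivial statement that $\C_i$ contains a uniform neighborhood of $\Ccore_{\Omega_i}(\Gamma_i)$ for the Hilbert metric $d_{\Omega}$ of the \emph{big} convex set, proved via the putty and the tree structure --- and Fact~\ref{fact:unif-neighb-face}.\eqref{item:R-nbhd-faces}, to conclude that $\partiali\Ccore_{\Omega_i}(\Gamma_i)\cap F$ contains its own uniform $R$-neighborhood in $(F,d_F)$, hence equals all of~$F$, so that $\xi\in F\subset\partiali\Ccore_{\Omega_i}(\Gamma_i)$. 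Your proposal contains no substitute for this face-saturation step.

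Two further problems. First, you invoke cocompactness of the $\Gamma_i$-action on $\Ccore_{\Omega_i}(\Gamma_i)$, Lemma~\ref{lem:even-more-pure}, and Fact~\ref{fact:ideal-bound-cc} (for the equality $\Lambdao_{\Omega_i}(\Gamma_i)=\partiali\Ccore_{\Omega_i}(\Gamma_i)$): all of these require the action of $\Gamma_i$ on $\Omega_i$ to be convex cocompact, which is \emph{not} among the hypotheses \ref{item:hopeful-separable}--\ref{item:hopeful-cases} of Theorem~\ref{thm:hopeful} (only conclusion~\eqref{item:hopeful-3} assumes it), and the corollary is needed in the general case to prove conclusions \eqref{item:hopeful-1}, \eqref{item:hopeful-2}, \eqref{item:hopeful-4}. (These invocations are also superfluous: once the basepoint lies in $\C_i^{\circ}\subset\Omega_i$, membership of the new limit point in $\Lambdao_{\Omega_i}(\Gamma'_i)$ follows directly from the definition of the full orbital limit set and proper discontinuity, and the one inclusion you need, $\Lambdao_{\Omega_i}(\Gamma_i)\subset\partiali\Ccore_{\Omega_i}(\Gamma_i)$, holds by definition of the convex core.) Second, the inequality $d_{\Omega_i}\geq d_{\Omega}$ you extract from Remark~\ref{rem:Hilb-metric-include} would require $\Omega_i\subset\Omega$, which fails in general: $\Omega$ is the union of the translates of $\Conv{\C_0^{\circ}\cup\C_1^{\circ}}$, and neither of $\Omega$, $\Omega_i$ contains the other.
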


\begin{proof}
Each point $\xi \in \Lambdao_{\Omega}(\Gamma'_i)$ is contained in some open face $F$ of $\partial \Omega$.
By Fact~\ref{fact:unif-neighb-face}.\eqref{item:distance-goes-down}, the set $F \cap \Lambdao_{\Omega_i}(\Gamma'_i)$ is nonempty, and by assumption~\ref{item:hopeful-cases} it is an open face of $\partial \Omega_i$.
Since $\Ccore_{\Omega_i}(\Gamma_i) \subset \C_i \subset \Omega_i$, we deduce $\partiali \C_i \cap F = \partiali \Ccore_{\Omega_i}(\Gamma_i) \cap F$.
On the other hand, by Lemma~\ref{lem:Ci-unif-neighb-Ccore-Omega}, the set $\C_i$ contains a uniform neighborhood of $\Ccore_{\Omega_i}(\Gamma_i)$ in $(\Omega,d_{\Omega})$, and so Fact~\ref{fact:unif-neighb-face}.\eqref{item:R-nbhd-faces} implies that $\partiali \C_i \cap F$ contains a uniform neighborhood of $\partiali \Ccore_{\Omega_i}(\Gamma_i) \cap F$ in $(F,d_F)$.
Thus the $R$-neighborhood of $\partiali \Ccore_{\Omega_i}(\Gamma_i) \cap F$ in $F$ is equal to itself, which is only possible if $F = \partiali \Ccore_{\Omega_i}(\Gamma_i) \cap F$. Hence $\xi \in \partiali \Ccore_{\Omega_i}(\Gamma_i)$. 
\end{proof}

\subsection{Step~5: use separability to get into a small neighborhood of $\partiali\Ccore_{\Omega_0}(\Gamma_0) \cup \partiali\Ccore_{\Omega_1}(\Gamma_1)$}

Let us fix an open neighborhood $\mathcal{U}$ of $\partiali\Ccore_{\Omega_0}(\Gamma_0) \cup \partiali\Ccore_{\Omega_1}(\Gamma_1)$ as in the ``moreover'' statement of Theorem~\ref{thm:hopeful}.
By Lemma~\ref{lem:ias1}, the group $\Delta$ acts cocompactly on $\overline{\Ccore_{\Omega_i}(\Gamma_i)} \smallsetminus \Lambdao_{\Omega_{01}}(\Delta)$, hence also cocompactly on $\partiali\Ccore_{\Omega_i}(\Gamma_i) \smallsetminus \Lambdao_{\Omega_{01}}(\Delta)$.
Let $\mathcal{U}_i \subset \Omega_{1-i}$ be an open neighborhood of a compact fundamental domain for the action of $\Delta$ on $\partiali\Ccore_{\Omega_i}(\Gamma_i) \smallsetminus \Lambdao_{\Omega_{01}}(\Delta)$, such that $\overline{\mathcal{U}_i} \subset \Omega_{1-i}$.
Note that the accumulation set of the orbit $\Delta \cdot \overline{\mathcal{U}_i}$ is contained in $\Lambdao_{\Omega_{1-i}}(\Delta)$, which is equal to $\Lambdao_{\Omega_{01}}(\Delta)$ by Fact~\ref{fact:cc-subsets}.
Hence by taking $\mathcal{U}_i$ sufficiently small for each $i \in \{0,1\}$, we may arrange that the union of orbits $\Delta \cdot \overline{\mathcal{U}_0} \cup \Delta \cdot \overline{\mathcal{U}_{1}}$ is contained in $\mathcal{U}$.

\begin{lemma} \label{lem:Si}
For each $i \in \{0,1\}$ and each compact subset $\mathcal{K}$ of~$\Omega_i$, there exists a finite-index subgroup $\Gamma_i''$ of~$\Gamma_i'$, containing~$\Delta$, such that $\gamma \cdot \mathcal{K} \subset \Delta\cdot \mathcal{U}_i$ for all $\gamma \in \Gamma_i'' \smallsetminus \Delta$.
\end{lemma}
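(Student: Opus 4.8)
\textbf{Plan of proof for Lemma~\ref{lem:Si}.} The statement concerns the convex cocompact action of $\Gamma_i$ on $\Omega_i$ (which holds after the reductions above, since $\Delta$ acts convex cocompactly on $\Omega_{01}$, hence on $\Omega_i$ by Fact~\ref{fact:cc-subsets}). The key point is that a compact subset $\mathcal{K} \subset \Omega_i$, when translated by ``most'' elements $\gamma \in \Gamma_i'$, gets carried near $\Lambdao_{\Omega_i}(\Gamma_i) = \partiali\Ccore_{\Omega_i}(\Gamma_i)$ (using assumption~\ref{item:hopeful-cases} so that this limit set equals the ideal boundary of the convex core), and in fact, after correcting by an element of $\Delta$, it can be brought into a neighborhood of a compact fundamental domain for the $\Delta$-action on $\partiali\Ccore_{\Omega_i}(\Gamma_i) \smallsetminus \Lambdao_{\Omega_{01}}(\Delta)$. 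The finitely many ``bad'' elements of $\Gamma_i'$ that fail this are then killed off by separability of $\Delta$, exactly as in Lemma~\ref{lem:precise} and the finite-$\Delta$ case of Lemma~\ref{lem:ias2-new}.

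\textbf{First step: a compactness statement modulo $\Delta$.} Fix $i$ and $\mathcal{K} \subset \Omega_i$ compact. I would first show: the set $\Lambda := \{\gamma \in \Gamma_i' \,|\, \gamma\cdot\mathcal{K} \not\subset \Delta\cdot\mathcal{U}_i\}$ is a union of finitely many left cosets $\Delta\gamma_1,\dots,\Delta\gamma_m$ of $\Delta$ in $\Gamma_i'$. Suppose not; then there is a sequence $(\gamma_n)$ in $\Gamma_i'$ lying in pairwise distinct $\Delta$-cosets with $\gamma_n\cdot\mathcal{K}\not\subset\Delta\cdot\mathcal{U}_i$, so we may pick $x_n \in \mathcal{K}$ with $\gamma_n\cdot x_n \notin \Delta\cdot\mathcal{U}_i$. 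Since the action of $\Gamma_i$ on $\Omega_i$ is convex cocompact and $\mathcal{K}\subset\Omega_i$ is compact, after replacing $\gamma_n$ by $\delta_n\gamma_n$ for suitable $\delta_n \in \Delta$ — this is where I invoke Lemma~\ref{lem:even-more-pure}, which gives that the action of $\Delta$ on $\overline{\Omega_i}\smallsetminus\Lambdao_{\Omega_i}(\Delta)$ is cocompact, together with the fact that $\gamma_n\cdot x_n$ must accumulate on $\Lambdao_{\Omega_i}(\Gamma_i')=\Lambdao_{\Omega_i}(\Gamma_i)$ because the $\gamma_n$ lie in distinct $\Delta$-cosets and $\Delta\backslash\Gamma_i$ is infinite — we may assume $(\delta_n\gamma_n\cdot x_n)$ converges to a point $\xi$ of $\overline{\Omega_i}\smallsetminus\Lambdao_{\Omega_{01}}(\Delta)$. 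Since $d_{\Omega_i}(\gamma_n\cdot x_n, \Ccore_{\Omega_i}(\Gamma_i)) \to \infty$ (the $\gamma_n$ leave every $\Delta$-coset, so $\gamma_n\cdot\mathcal{K}$ exits every uniform neighborhood of the core in $\Delta\backslash\Omega_i$), the limit $\xi$ cannot lie in $\Omega_i$; so $\xi \in \partiali\Ccore_{\Omega_i}(\Gamma_i)\smallsetminus\Lambdao_{\Omega_{01}}(\Delta)$, which by construction of $\mathcal{U}_i$ is contained in a compact fundamental domain, so for $n$ large $\delta_n\gamma_n\cdot x_n \in \mathcal{U}_i$, i.e.\ $\gamma_n\cdot x_n \in \Delta\cdot\mathcal{U}_i$: contradiction. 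Hence $\Lambda$ is $\Delta$-finite on the left. Replacing $\mathcal{K}$ by $\Delta$-translates handled uniformly, one checks $\Lambda$ is also a union of finitely many $\Delta$-cosets on the right; in any case, a finite union of double cosets, so in particular $\Lambda\smallsetminus\Delta$ meets only finitely many cosets $\Delta\gamma\Delta$ with $\gamma\notin\Delta$.

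\textbf{Second step: separability.} Now $\Lambda\smallsetminus\Delta$ meets finitely many $\Delta$-double cosets; pick coset representatives $\gamma_1,\dots,\gamma_\ell \in \Gamma_i'\smallsetminus\Delta$. By separability of $\Delta$ in $\Gamma_i$ (assumption~\ref{item:hopeful-separable}), hence in the finite-index subgroup $\Gamma_i'$, for each $j$ there is a finite-index subgroup of $\Gamma_i'$ containing $\Delta$ but not $\gamma_j$; intersecting these finitely many subgroups gives a finite-index $\Gamma_i'' \leq \Gamma_i'$ with $\Delta \leq \Gamma_i''$ and $\gamma_j \notin \Gamma_i''$ for all $j$. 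Since $\Gamma_i''$ is a union of $\Delta$-double cosets (it contains $\Delta$ on both sides), $\Gamma_i'' \cap (\Lambda\smallsetminus\Delta) = \varnothing$, i.e.\ $\gamma\cdot\mathcal{K}\subset\Delta\cdot\mathcal{U}_i$ for every $\gamma\in\Gamma_i''\smallsetminus\Delta$, as desired.

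\textbf{Main obstacle.} The delicate point is the first step: proving that a sequence of translates $\gamma_n\cdot\mathcal{K}$ with $\gamma_n$ in distinct $\Delta$-cosets, after correcting by $\Delta$, converges into the prescribed fundamental-domain neighborhood of $\partiali\Ccore_{\Omega_i}(\Gamma_i)\smallsetminus\Lambdao_{\Omega_{01}}(\Delta)$. This needs (i) that the corrected translates stay in the ``cocompact part'' $\overline{\Omega_i}\smallsetminus\Lambdao_{\Omega_i}(\Delta)$ — which is Lemma~\ref{lem:even-more-pure}, already applied in this spirit in Lemma~\ref{lem:ias2-new} — and (ii) that the limit genuinely lands on $\partiali\Ccore_{\Omega_i}(\Gamma_i)$ and not somewhere in $\partial\Omega_i$ outside the core, which is controlled by the convex cocompactness of $\Gamma_i$ on $\Omega_i$ (so the translates of the compact $\mathcal{K}$ track the core) together with assumption~\ref{item:hopeful-cases} identifying $\Lambdao_{\Omega_i}(\Gamma_i)$ with $\partiali\Ccore_{\Omega_i}(\Gamma_i)$ and guaranteeing rays toward that set enter uniform neighborhoods of the core. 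Everything else is a routine repackaging of the separability argument already used for Lemma~\ref{lem:precise}.
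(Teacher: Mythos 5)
Your overall strategy coincides with the paper's: show that the set of ``bad'' elements $\{\gamma\in\Gamma_i' \,:\, \gamma\cdot\mathcal K\not\subset\Delta\cdot\mathcal U_i\}$ meets only finitely many left $\Delta$-cosets, then use separability (assumption~\ref{item:hopeful-separable}) to pass to a finite-index $\Gamma_i''\supset\Delta$ avoiding finitely many coset representatives. (The paper phrases the first step directly, via a compact fundamental domain $\mathcal D$ for $\Delta$ acting on $\overline{\C_i^R}\smallsetminus\Lambdao_{\Omega_{01}}(\Delta)$, where $\C_i^R$ is a uniform neighborhood of $\Ccore_{\Omega_i}(\Gamma_i)$ containing $\mathcal K$, and the proper discontinuity of $\Gamma_i$ on $\Omega_i$; your sequential contradiction argument is the same idea. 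Your double-coset digression is unnecessary: since $\Gamma_i''\supset\Delta$, excluding one representative of each bad left coset already excludes the whole coset.)

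However, the justification of your pivotal step is flawed. You assert $d_{\Omega_i}(\gamma_n\cdot x_n,\Ccore_{\Omega_i}(\Gamma_i))\to\infty$; this is false, since the Hilbert metric and the convex core are $\Gamma_i$-invariant, so this distance equals $d_{\Omega_i}(x_n,\Ccore_{\Omega_i}(\Gamma_i))$, which is bounded on the compact set $\mathcal K$. Worse, if the distance did diverge, your conclusion would not follow: you could only place the limit $\xi$ somewhere in $\partial\Omega_i\smallsetminus\Lambdao_{\Omega_{01}}(\Delta)$, and such a point need not lie in $\Delta\cdot\mathcal U_i$, so no contradiction arises. The correct argument is the opposite one, and it is exactly what the paper uses: $\mathcal K\subset\C_i^R$ and $\C_i^R$ is $\Gamma_i$-invariant, so the corrected translates $\delta_n\gamma_n\cdot x_n$ stay in $\overline{\C_i^R}\smallsetminus\Lambdao_{\Omega_{01}}(\Delta)$; the limit $\xi$ lies in $\partial\Omega_i$ by proper discontinuity of $\Gamma_i$ on $\Omega_i$ (the $\delta_n\gamma_n$ are pairwise distinct since the $\gamma_n$ lie in distinct cosets), hence $\xi\in\partiali\C_i^R=\partiali\Ccore_{\Omega_i}(\Gamma_i)$ by assumption~\ref{item:hopeful-cases} and Lemma~\ref{lem:partiali-Ccore-union-of-faces}, and then $\xi\in\partiali\Ccore_{\Omega_i}(\Gamma_i)\smallsetminus\Lambdao_{\Omega_{01}}(\Delta)\subset\Delta\cdot\mathcal U_i$, which is open, giving the contradiction. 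Relatedly, you repeatedly invoke convex cocompactness of $\Gamma_i$ on $\Omega_i$; your justification (``$\Delta$ acts convex cocompactly on $\Omega_{01}$, hence on $\Omega_i$ by Fact~\ref{fact:cc-subsets}'') only gives convex cocompactness of $\Delta$, not of $\Gamma_i$, and convex cocompactness of $\Gamma_i$ is not a hypothesis of Theorem~\ref{thm:hopeful} (it only enters conclusion~\eqref{item:hopeful-3}), whereas Lemma~\ref{lem:Si} must hold in the general setting since it feeds into conclusions \eqref{item:hopeful-1}, \eqref{item:hopeful-2}, \eqref{item:hopeful-4}. All that is actually needed is proper discontinuity of $\Gamma_i$ on~$\Omega_i$, the cocompactness of $\Delta$ on $\overline{\C_i^R}\smallsetminus\Lambdao_{\Omega_{01}}(\Delta)$ (as in Lemma~\ref{lem:ias2-new}, covering also the case of finite~$\Delta$), and the identification $\partiali\C_i^R=\partiali\Ccore_{\Omega_i}(\Gamma_i)$. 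With these corrections your argument goes through and is essentially the paper's proof.
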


\begin{proof}
The compact set $\mathcal{K}$ is contained in some closed uniform neighborhood $\C_i^R$ of $\Ccore_{\Omega_i}(\Gamma_i)$ in $(\Omega_i,d_{\Omega_i})$.
We have $\partiali \C_i^R = \partiali\Ccore_{\Omega_i}(\Gamma_i)$ by assumption~\ref{item:hopeful-cases} and Lemma~\ref{lem:partiali-Ccore-union-of-faces}.
As in the proof of Lemma~\ref{lem:ias2-new}, the action of $\Delta$ on $\overline{\C_i^R} \smallsetminus \Lambdao_{\Omega_{01}}(\Delta)$ is properly discontinuous and cocompact.
Let $\mathcal{D}$ be a compact fundamental domain for this action.
The set $\mathcal{D} \smallsetminus (\Delta\cdot \mathcal{U}_i)$ is compact.
It is also contained in $\Omega_i$, since $\mathcal{D} \cap \partial \Omega_i$ is contained in $\overline{\C_i^R} \cap \partial \Omega_i \smallsetminus \Lambdao_{\Omega_{01}}(\Delta) = \partiali \C_i^R \smallsetminus \Lambdao_{\Omega_{01}}(\Delta)$, which is equal to $\partiali \Ccore_{\Omega_i}(\Gamma_i) \smallsetminus \Lambdao_{\Omega_{01}}(\Delta)$, hence contained in $\Delta \cdot \mathcal{U}_i$. 

Let $\gamma \in \Gamma_i$ satisfy $\gamma \cdot \mathcal{K} \not\subset \Delta \cdot \mathcal{U}_i$: there exists $p \in \mathcal{K}$ such that $\gamma \cdot p \notin \Delta \cdot \mathcal{U}_i$.
Let $\delta \in \Delta$ be such that $\delta \gamma \cdot p \in \mathcal{D}$.
We have $\delta \gamma \cdot p \notin \Delta\cdot \mathcal{U}_i$, hence $\delta \gamma$ belongs to the set 
$$F := \{ \gamma' \in \Gamma_i \,|\, (\mathcal{K} \cup (\mathcal{D} \smallsetminus (\Delta\cdot \mathcal{U}_i))) \cap \gamma' \cdot (\mathcal{K} \cup  (\mathcal{D} \smallsetminus (\Delta\cdot \mathcal{U}_i))) \neq \varnothing\},$$ 
which is finite since $\mathcal{K} \cup  (\mathcal{D} \smallsetminus (\Delta\cdot \mathcal{U}_i))$ is a compact subset of $\Omega_i$ and the action of $\Gamma_i$ on $\Omega_i$ is properly discontinuous.

By our separability assumption~\ref{item:hopeful-separable}, there exists a finite-index subgroup $\Gamma_i''$ of~$\Gamma_i'$, containing $\Delta$, such that $\Gamma_i'' \cap F = \varnothing$.
It follows that for $\gamma \in \Gamma_i'' \smallsetminus \Delta$ we have $\gamma \cdot \mathcal{K} \subset \Delta \cdot \mathcal{U}_i$. 
\end{proof}

\subsection{Step~6: use normal forms} \label{subsec:normal-forms}

Let us review some basic group theory for amalgamated free products (see \eg \cite[\S\,VII.A]{mas88}).
Any element $\gamma \in \Gamma_0' *_\Delta \Gamma_1'$ is either in $\Delta$ or otherwise may be written as a product
\begin{align}\label{eqn:normal-form}
\gamma &= a_m \ldots a_1
\end{align}
of $m \geq 1$ elements where each $a_k$ belongs to $\Gamma_0' \smallsetminus \Delta$ or $\Gamma_1' \smallsetminus \Delta$, and $a_k \in \Gamma_i'$ if and only if $a_{k+1} \in \Gamma_{1-i}'$.
This decomposition is not unique; however, any two such decompositions differ by a sequence of basic moves of the following type: replace $a_k$ with $a_k \delta$ and $a_{k-1}$ with $\delta^{-1} a_{k-1}$ for some $k \in \{2,\ldots, m\}$ and $\delta \in \Delta$. In particular, the length $m = m(\gamma)$ of the decomposition of $\gamma$ is well defined.
We refer to the decomposition~\eqref{eqn:normal-form} as a \emph{normal form} for~$\gamma$.

Now, fix a point $p \in \Omega_{01}$.
For each $i \in \{0,1\}$, let $\mathcal{V}_i$ be a convex open neighborhood of $\overline{\Conv{\mathcal{U}_i}}$ in~$\Omega_{1-i}$, such that $\overline{\mathcal{V}_i} \subset \Omega_{1-i}$.
By Lemma~\ref{lem:Si}, up to replacing $\Gamma_i'$ by a finite-index subgroup containing~$\Delta$, we may assume that
\begin{equation} \label{eqn:V->U}
\gamma \cdot (\overline{\mathcal{V}_{1-i}} \cup \{p\}) \subset \Delta\cdot \mathcal{U}_i \quad\quad \text{for all }\gamma \in \Gamma_i' \smallsetminus \Delta.
\end{equation}
A simple ping-pong argument then yields the following.

\begin{lemma} \label{lem:ping-pong}
Let $\gamma = a_m\ldots a_1 \in (\Gamma_0' *_\Delta \Gamma_1') \smallsetminus \Delta$ be in normal form as in \eqref{eqn:normal-form}, where each $a_k$ belongs to $\Gamma'_{\epsilon(k)} \smallsetminus \Delta$ for some $\epsilon(k) \in \{0,1\}$, and $\epsilon(k+1) = 1-\epsilon(k)$ for all $1\leq k<m$.
Then $\gamma \Delta \cdot (\overline{\mathcal{V}_{1-\epsilon(1)}} \cup \{p\}) \subset \Delta \cdot \mathcal{U}_{\epsilon(m)} \subset \mathcal{U}$.
\end{lemma}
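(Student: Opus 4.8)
The statement is a standard ping-pong argument, carried out by induction on the syllable length $m$ of the normal form, using the ``tables'' $\overline{\mathcal{V}_0}$ and $\overline{\mathcal{V}_1}$ together with the containment \eqref{eqn:V->U}. The key structural facts I would invoke are: (i) $\mathcal{U}_i \subset \Omega_{1-i}$ and $\mathcal{V}_i$ is a convex open neighborhood of $\overline{\Conv{\mathcal{U}_i}}$ in $\Omega_{1-i}$, so in particular $\Delta\cdot\mathcal{U}_{\epsilon(m)} \subset \Delta\cdot\overline{\Conv{\mathcal{U}_{\epsilon(m)}}} \subset \mathcal{V}_{\epsilon(m)}$, where we note $\mathcal V_{\epsilon(m)} \subset \Omega_{1-\epsilon(m)}$; (ii) $\Delta$ normalizes nothing problematic here — since $\Delta\subset\Gamma'_i$ for both $i$, a product $\gamma'\delta$ with $\gamma'\in\Gamma'_i\smallsetminus\Delta$ and $\delta\in\Delta$ still lies in $\Gamma'_i\smallsetminus\Delta$, and $\Delta$ preserves each $\mathcal{V}_i$ and fixes or at least permits the orbit $\Delta\cdot(\overline{\mathcal V_{1-i}}\cup\{p\})$ to be absorbed into \eqref{eqn:V->U}; and (iii) the final inclusion $\Delta\cdot\mathcal{U}_{\epsilon(m)}\subset\mathcal{U}$, which is exactly how $\mathcal{U}_0,\mathcal{U}_1$ were chosen in Step~5 (``the union of orbits $\Delta\cdot\overline{\mathcal{U}_0}\cup\Delta\cdot\overline{\mathcal{U}_1}$ is contained in $\mathcal{U}$'').

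\textbf{Base case.} For $m=1$ we have $\gamma=a_1\in\Gamma'_{\epsilon(1)}\smallsetminus\Delta$. Since $\Delta$ is a subgroup, $\gamma\Delta = a_1\Delta$ consists of elements of $\Gamma'_{\epsilon(1)}\smallsetminus\Delta$, so \eqref{eqn:V->U} applied to each such element (with $i=\epsilon(1)$) gives $\gamma\Delta\cdot(\overline{\mathcal{V}_{1-\epsilon(1)}}\cup\{p\})\subset\Delta\cdot\mathcal{U}_{\epsilon(1)}$. Here $\epsilon(m)=\epsilon(1)$, so this is the desired conclusion, and $\Delta\cdot\mathcal{U}_{\epsilon(1)}\subset\mathcal{U}$ by the choice of neighborhoods.

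\textbf{Inductive step.} Suppose the claim holds for all normal forms of length $<m$, and let $\gamma=a_m\ldots a_1$ be as in the statement. Write $\gamma' := a_{m-1}\ldots a_1$, which (when $m\geq 2$) is a normal form of length $m-1$ with first syllable in $\Gamma'_{\epsilon(1)}\smallsetminus\Delta$ and last syllable in $\Gamma'_{\epsilon(m-1)}\smallsetminus\Delta = \Gamma'_{1-\epsilon(m)}\smallsetminus\Delta$. By induction (or, if $m=2$, by the base case, and if $m-1$ is to be handled by convention when $\gamma'\in\Delta$, note this cannot happen since a normal form has no $\Delta$ syllables), we have
$$\gamma'\Delta\cdot(\overline{\mathcal{V}_{1-\epsilon(1)}}\cup\{p\})\subset\Delta\cdot\mathcal{U}_{\epsilon(m-1)} = \Delta\cdot\mathcal{U}_{1-\epsilon(m)}\subset\mathcal{V}_{1-\epsilon(m)},$$
the last inclusion because $\mathcal{V}_{1-\epsilon(m)}$ is a $\Delta$-invariant (being a convex open set whose closure sits in $\Omega_{\epsilon(m)}$ — wait, I must be careful: $\mathcal V_j\subset\Omega_{1-j}$, so $\mathcal V_{1-\epsilon(m)}\subset\Omega_{\epsilon(m)}$) open neighborhood of $\overline{\Conv{\mathcal{U}_{1-\epsilon(m)}}}$, and $\Delta$ preserves $\mathcal{U}_{1-\epsilon(m)}$ hence its $\Delta$-orbit, which lies in $\Delta\cdot\overline{\Conv{\mathcal{U}_{1-\epsilon(m)}}}\subset\mathcal V_{1-\epsilon(m)}$ by $\Delta$-invariance of $\mathcal V_{1-\epsilon(m)}$. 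Now apply $a_m\in\Gamma'_{\epsilon(m)}\smallsetminus\Delta$: since $a_m\Delta$ consists of elements of $\Gamma'_{\epsilon(m)}\smallsetminus\Delta$, \eqref{eqn:V->U} with $i=\epsilon(m)$ gives $a_m\Delta\cdot\bigl(\overline{\mathcal{V}_{1-\epsilon(m)}}\cup\{p\}\bigr)\subset\Delta\cdot\mathcal{U}_{\epsilon(m)}$. Chaining, and absorbing the trailing $\Delta$ into the leftmost $a_m$ by a basic move (i.e.\ $\gamma\Delta = a_m(a_{m-1}\ldots a_1)\Delta = a_m\Delta(\delta^{-1}a_{m-1}\ldots a_1)$ — more simply, $\gamma\Delta\cdot(\overline{\mathcal V_{1-\epsilon(1)}}\cup\{p\}) = a_m\bigl(\gamma'\Delta\cdot(\overline{\mathcal V_{1-\epsilon(1)}}\cup\{p\})\bigr)\subset a_m\cdot\overline{\mathcal V_{1-\epsilon(m)}}$, and then $a_m\Delta\cdot\overline{\mathcal V_{1-\epsilon(m)}}\subset\Delta\cdot\mathcal U_{\epsilon(m)}$ so in particular $a_m\cdot\overline{\mathcal V_{1-\epsilon(m)}}\subset\Delta\cdot\mathcal U_{\epsilon(m)}$), we obtain $\gamma\Delta\cdot(\overline{\mathcal{V}_{1-\epsilon(1)}}\cup\{p\})\subset\Delta\cdot\mathcal{U}_{\epsilon(m)}$, and $\Delta\cdot\mathcal{U}_{\epsilon(m)}\subset\mathcal{U}$ by the choice of $\mathcal{U}_0,\mathcal{U}_1$. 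This completes the induction.

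\textbf{Main obstacle.} The only delicate bookkeeping is handling the interior $\Delta$-cosets: one must verify that at each stage the trailing $\Delta$-factor produced by the previous step can be absorbed so that \eqref{eqn:V->U} applies to the next syllable. This is where the basic-move description of normal forms (reviewed in Section~\ref{subsec:normal-forms}) and the fact that $\Delta\subset\Gamma'_0\cap\Gamma'_1$ with $\mathcal{V}_i$ and $\mathcal{U}_i$ all $\Delta$-invariant are used; once these are in place the argument is a routine ping-pong induction. I would also double-check the index conventions ($\mathcal V_j\subset\Omega_{1-j}$ versus $\mathcal U_j\subset\Omega_{1-j}$) to make sure the chain of inclusions $\Delta\cdot\mathcal U_{1-\epsilon(m)}\subset\mathcal V_{1-\epsilon(m)}$ and then $a_m\cdot\overline{\mathcal V_{1-\epsilon(m)}}\subset\Delta\cdot\mathcal U_{\epsilon(m)}$ is correctly aligned with which group acts on which table.
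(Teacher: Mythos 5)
Your overall strategy --- induction on the syllable length, repeated use of \eqref{eqn:V->U}, with the final inclusion $\Delta\cdot\mathcal{U}_{\epsilon(m)}\subset\mathcal{U}$ coming from the choice of the $\mathcal{U}_i$ in Step~5 --- is the same as the paper's, which phrases the induction over the partial products $a_k\cdots a_1$. But one step in your inductive argument is false as written: you claim $\Delta\cdot\mathcal{U}_{1-\epsilon(m)}\subset\mathcal{V}_{1-\epsilon(m)}$ (and, in your plan, $\Delta\cdot\overline{\Conv{\mathcal{U}_i}}\subset\mathcal{V}_i$) on the grounds that $\mathcal{V}_{1-\epsilon(m)}$ is $\Delta$-invariant. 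It is not: in the construction, $\mathcal{U}_i$ is an open neighborhood of a \emph{compact fundamental domain} for the $\Delta$-action on $\partiali\Ccore_{\Omega_i}(\Gamma_i)\smallsetminus\Lambdao_{\Omega_{01}}(\Delta)$, and $\mathcal{V}_i$ is a convex open neighborhood of $\overline{\Conv{\mathcal{U}_i}}$ with $\overline{\mathcal{V}_i}\subset\Omega_{1-i}$; neither is $\Delta$-invariant, and when $\Delta$ is infinite they cannot be made so, since a $\Delta$-invariant $\mathcal{V}_{1-i}$ containing $\Delta\cdot\mathcal{U}_{1-i}$ would have closure meeting $\Lambdao_{\Omega_i}(\Delta)\subset\partial\Omega_i$, so $\overline{\mathcal{V}_{1-i}}\cup\{p\}$ would no longer be a compact subset of~$\Omega_i$ and Lemma~\ref{lem:Si} --- hence \eqref{eqn:V->U} itself --- would be unavailable. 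Consequently the containment $\gamma'\Delta\cdot(\overline{\mathcal{V}_{1-\epsilon(1)}}\cup\{p\})\subset \overline{\mathcal{V}_{1-\epsilon(m)}}$, on which your chaining via $a_m\cdot\overline{\mathcal{V}_{1-\epsilon(m)}}\subset\Delta\cdot\mathcal{U}_{\epsilon(m)}$ rests, does not hold in general.

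The repair is small, and you essentially have the ingredients already: do not try to absorb the trailing $\Delta$-factor into the set $\mathcal{V}$; absorb it into the next letter. By the inductive hypothesis, every point of $\gamma'\Delta\cdot(\overline{\mathcal{V}_{1-\epsilon(1)}}\cup\{p\})$ has the form $\delta'\cdot x'$ with $\delta'\in\Delta$ and $x'\in\mathcal{U}_{1-\epsilon(m)}\subset\overline{\mathcal{V}_{1-\epsilon(m)}}$; then $a_m\delta'\in\Gamma'_{\epsilon(m)}\smallsetminus\Delta$, and \eqref{eqn:V->U} applied to the element $a_m\delta'$ gives $a_m\delta'\cdot x'\in\Delta\cdot\mathcal{U}_{\epsilon(m)}$. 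This is exactly the paper's inductive step, and it is precisely what your (correct) observation that ``$a_m\Delta$ consists of elements of $\Gamma'_{\epsilon(m)}\smallsetminus\Delta$'' is good for. With that substitution your base case, induction, and the final inclusion $\Delta\cdot\mathcal{U}_{\epsilon(m)}\subset\mathcal{U}$ all go through.
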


\begin{proof}
Let us check by induction that $a_k \ldots a_1 \Delta \cdot (\overline{\mathcal{V}_{1-\epsilon(1)}} \cup \{p\}) \subset \Delta \cdot \mathcal{U}_{\epsilon(k)}$ for all $1\leq k\leq m$.
The case $k=1$ follows immediately from \eqref{eqn:V->U} applied to $\gamma \delta_1$ instead of $\gamma$, where $\delta_1 \in \Delta$.
Suppose that $a_{k-1} \ldots a_1 \Delta \cdot (\overline{\mathcal{V}_{\epsilon(1)}} \cup \{p\}) \subset \Delta \cdot \mathcal{U}_{\epsilon(k-1)}$, and let us check that $a_k \ldots a_1 \Delta \cdot (\overline{\mathcal{V}_{\epsilon(1)}} \cup \{p\}) \subset \Delta \cdot \mathcal{U}_{\epsilon(k)}$.
Consider an element $\delta \in \Delta$ and a point $x \in \overline{\mathcal{V}_{\epsilon(1)}} \cup \{p\}$.
By assumption, $a_{k-1} \ldots a_1 \delta \cdot x = \delta' \cdot x'$ for some $\delta' \in \Delta$ and some $x' \in \mathcal{U}_{\epsilon(k-1)} = \mathcal{U}_{1-\epsilon(k)}$.
Then $a_k \ldots a_1 \delta \cdot x = a_k \delta' \cdot x' \in \Delta \cdot \mathcal{U}_{\epsilon(k)}$ by \eqref{eqn:V->U} applied to $a_k \delta'$ instead~of~$\gamma$.
\end{proof}

Recall from Section~\ref{subsec:P1-div} that a sequence $(g_n)_{n\in\NN}$ of elements of $\PGL(V)$ is \emph{$P_1$-divergent} if the ratio of the first and second singular values of (a lift to $\GL(V)$ of) $g_n$ tends to $+\infty$ as $n\to +\infty$.

\begin{lemma}[$P_1$-divergence]\label{lem:P1-divergence}
Consider a sequence $(\gamma_n)_{n\in\NN}$ of elements of $\Gamma_0' *_\Delta \Gamma_1' \smallsetminus \Delta$, such that the length $m(n) := m(\gamma_n)$ of the normal form of $\gamma_n$ tends to infinity.
Then $(\gamma_n)$ is $P_1$-divergent.
\end{lemma}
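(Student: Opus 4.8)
The statement to prove is Lemma~\ref{lem:P1-divergence}: if $(\gamma_n)$ is a sequence in $\Gamma_0' *_\Delta \Gamma_1' \smallsetminus \Delta$ whose normal-form lengths $m(n)$ tend to infinity, then $(\gamma_n)$ is $P_1$-divergent. The plan is to use the ping-pong setup built in Lemma~\ref{lem:ping-pong}, together with the characterizations of $P_1$-divergence in Fact~\ref{fact:P1-div}: it suffices to produce, for each such sequence, a point $x^+ \in \PP(V)$ and a nonempty open set $\mathcal{U}$ of $\PP(V)$ with $\rho(\gamma_n)\cdot x \to x^+$ for all $x\in\mathcal{U}$ (possibly after passing to a subsequence; the conclusion for the full sequence then follows because a sequence all of whose subsequences have a $P_1$-divergent subsequence is itself $P_1$-divergent).

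First I would fix the open neighborhoods $\mathcal{V}_0,\mathcal{V}_1$ of $\overline{\Conv{\mathcal{U}_i}}$ and the point $p\in\Omega_{01}$ as in Section~\ref{subsec:normal-forms}, so that \eqref{eqn:V->U} holds, and I would write a normal form $\gamma_n = a_{m(n)}^{(n)}\cdots a_1^{(n)}$ for each~$n$, with $a_k^{(n)}\in\Gamma'_{\epsilon_n(k)}\smallsetminus\Delta$ and $\epsilon_n(k+1)=1-\epsilon_n(k)$. Up to passing to a subsequence, I may assume $\epsilon_n(1)$ is a constant value $\epsilon\in\{0,1\}$, and also $\epsilon_n(m(n))$ is a constant value $\epsilon'$. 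Set $\mathcal{W} := \mathcal{V}_{1-\epsilon}$, an open subset of $\Omega_{\epsilon}$ whose closure lies in $\Omega_{\epsilon}$; this will be the "source" open set. By Lemma~\ref{lem:ping-pong}, $\rho(\gamma_n)\cdot\overline{\mathcal{W}} \subset \rho(\gamma_n)\Delta\cdot(\overline{\mathcal{V}_{1-\epsilon}}\cup\{p\}) \subset \Delta\cdot\mathcal{U}_{\epsilon'} \subset \mathcal{U}$ for all~$n$; in particular the sets $\rho(\gamma_n)\cdot\overline{\mathcal{W}}$ all lie in the fixed compact-closure region $\overline{\mathcal{U}}$ (and one can even localize them: after a further subsequence they lie in a fixed translate $\delta_n^{-1}\Delta$... more carefully, $\rho(\gamma_n)\cdot\overline{\mathcal{W}}\subset \delta_n\cdot\mathcal{U}_{\epsilon'}$ for some $\delta_n\in\Delta$, and replacing $\gamma_n$ by $\delta_n^{-1}\gamma_n$ — which changes neither membership in $\Delta$ nor, up to the bounded ambiguity $\Delta$, the singular-value ratios relevant to $P_1$-divergence — we may assume $\rho(\gamma_n)\cdot\overline{\mathcal{W}}\subset\overline{\mathcal{U}_{\epsilon'}}$, a fixed compact subset of $\Omega_{1-\epsilon'}$).

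The key contraction step: I claim the diameters $\operatorname{diam}_{\Omega}\big(\rho(\gamma_n)\cdot\mathcal{W}\big)$, measured in the Hilbert metric $d_{\Omega}$ of the global properly convex set $\Omega$ from \eqref{eqn:Omega-hopeful}, tend to zero. This is where the length $m(n)\to\infty$ is used. The point is that $\rho(\gamma_n)$ factors as a product of $m(n)$ maps, $\rho(a_k^{(n)})$ alternately in $\rho(\Gamma'_0)$ and $\rho(\Gamma'_1)$; each such map sends the "far" set $\overline{\mathcal{V}_{1-\epsilon_n(k)}}\cup\{p\}$ into $\Delta\cdot\mathcal{U}_{\epsilon_n(k)}$, and the iterated image at stage $k$ lands in a region which is $d_\Omega$-distant from the complementary convex piece. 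Concretely, after the first letter the image lies in $\Delta\cdot\mathcal{U}_{\epsilon_n(1)}\subset\mathcal{U}$ which is a bounded-$d_{\Omega_{1-\epsilon_n(1)}}$-distance set inside $\Omega_{1-\epsilon_n(1)}$; applying $a_2^{(n)}\in\Gamma'_{\epsilon_n(2)}=\Gamma'_{1-\epsilon_n(1)}$, which acts by $d_{\Omega_{\epsilon_n(1)}}$-isometries after pushing the set further into the $\mathcal{U}_{\epsilon_n(2)}$ region, and invoking Remark~\ref{rem:Hilb-metric-include} (Hilbert metrics decrease under inclusion of convex sets) together with the fact that nested convex pieces $\Conv{\C_0^\circ\cup\C_1^\circ}$ and their $\rho(\gamma)$-translates overlap only in a controlled "two at a time" pattern (Theorem~\ref{thm:tree}.\eqref{treethm:3}, Remark~\ref{rem:adj-convex-hopeful}), one gets a uniform contraction factor $\lambda<1$ per pair of letters: $\operatorname{diam}_{\Omega}\big(a_{k+1}^{(n)}a_k^{(n)}\cdots a_1^{(n)}\cdot\mathcal{W}\big) \le \lambda\cdot\operatorname{diam}_{\Omega}\big(a_{k-1}^{(n)}\cdots a_1^{(n)}\cdot\mathcal{W}\big)$. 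Iterating $\lfloor m(n)/2\rfloor$ times gives $\operatorname{diam}_\Omega(\rho(\gamma_n)\cdot\mathcal{W})\le \lambda^{\lfloor m(n)/2\rfloor}\operatorname{diam}_\Omega(\mathcal{W})\to 0$. I expect establishing this uniform per-letter contraction to be \textbf{the main obstacle}: it requires arguing that there is a definite gap (in the Hilbert metric $d_\Omega$, or equivalently a definite "occultation depth") between the source region $\mathcal{U}_{1-i}$ and the target region $\mathcal{U}_i$ inside the relevant convex piece, uniformly over all finitely many pairs $(i, a)$ with $a\in\Gamma'_i$ ranging over a compact fundamental-domain-sized set — which is where finiteness of the graph / Step~5 separability and the compactness $\overline{\mathcal{U}_i}\subset\Omega_{1-i}$, $\overline{\mathcal{V}_i}\subset\Omega_{1-i}$ enter. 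An alternative, possibly cleaner route is to avoid an explicit contraction constant and instead argue by contradiction/compactness: if $\operatorname{diam}_\Omega(\rho(\gamma_n)\cdot\mathcal{W})$ did not go to $0$, extract points $x_n,y_n\in\mathcal{W}$ with $d_\Omega(\rho(\gamma_n)x_n,\rho(\gamma_n)y_n)\ge c>0$; read off normal forms and use that the Bass–Serre tree distance from the base vertex to $\gamma_n\cdot(\text{base})$ equals $m(n)\to\infty$, together with the fact that $\Omega$ restricted to each edge-convex-set $\Omega(e)$ is the image of a fixed compact family, to derive that $\rho(\gamma_n)x_n$ and $\rho(\gamma_n)y_n$ lie in convex pieces an unbounded tree-distance apart, forcing $d_\Omega(\rho(\gamma_n)x_n,\rho(\gamma_n)y_n)\to\infty$ unless $x_n,y_n$ are $d_\Omega$-close — contradiction.

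Finally, once $\operatorname{diam}_\Omega(\rho(\gamma_n)\cdot\mathcal{W})\to 0$ and $\rho(\gamma_n)\cdot\overline{\mathcal{W}}$ stays in the fixed compact set $\overline{\Omega}$ (so the images cannot escape to the boundary), I pass to a subsequence so that $\rho(\gamma_n)\cdot\mathcal{W}$ Hausdorff-converges to a single point $x^+\in\overline{\Omega}$. Thus $\rho(\gamma_n)\cdot x \to x^+$ for every $x\in\mathcal{W}$, and $\mathcal{W}$ is a nonempty open subset of $\PP(V)$. By the implication \eqref{item:P1-div-2}$\Rightarrow$\eqref{item:P1-div-3} of Fact~\ref{fact:P1-div}, the subsequence $(\rho(\gamma_n))$ is $P_1$-divergent. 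Since every subsequence of the original sequence admits, by the same argument, a $P_1$-divergent sub-subsequence, the whole sequence $(\rho(\gamma_n))_{n\in\NN}$ is $P_1$-divergent; note also that the passage from $\gamma_n$ to $\delta_n^{-1}\gamma_n$ at the start multiplies $\rho(\gamma_n)$ on the left by $\rho(\delta_n)$, and since we may in fact choose the $\delta_n$ to vary in a compact fundamental domain for $\Delta$ acting on $\mathcal{U}_{\epsilon'}$ (replacing it by a bounded set of $\Delta$-representatives) this left multiplication does not affect $P_1$-divergence. This completes the proof.
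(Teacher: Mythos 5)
Your overall strategy is the same as the paper's (normal forms, the ping-pong nesting coming from \eqref{eqn:V->U} and Lemma~\ref{lem:ping-pong}, shrinking of the images of a fixed open set, then Fact~\ref{fact:P1-div}\,\eqref{item:P1-div-2}$\Rightarrow$\eqref{item:P1-div-3} and a subsequence argument), but the step you yourself flag as the main obstacle is carried out in a way that cannot work. You claim that $\mathrm{diam}_{\Omega}\bigl(\rho(\gamma_n)\cdot\mathcal{W}\bigr)\to 0$, where $d_\Omega$ is the Hilbert metric of the global properly convex set $\Omega$ of \eqref{eqn:Omega-hopeful}. But $\Omega$ is invariant under $\rho(\Gamma_0'*_\Delta\Gamma_1')$, so every $\rho(\gamma_n)$ is an isometry of $d_\Omega$; hence $\mathrm{diam}_{\Omega}\bigl(\rho(\gamma_n)\cdot\mathcal{W}\bigr)=\mathrm{diam}_{\Omega}(\mathcal{W})$ is constant (or the expression is ill-posed if $\mathcal{W}\not\subset\Omega$), and no contraction factor $\lambda<1$ per letter can exist in this metric. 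The same isometry observation kills your alternative contradiction route: $d_\Omega(\rho(\gamma_n)x_n,\rho(\gamma_n)y_n)=d_\Omega(x_n,y_n)$ is bounded, so it can neither tend to infinity nor produce the desired contradiction. A secondary issue: when you renormalize by $\delta_n^{-1}$ you assert the $\delta_n$ may be taken in a bounded set of representatives; this is not available when $\Delta$ is infinite (the $\delta_n$ genuinely must be unbounded to bring the images back to $\mathcal{U}_{\epsilon'}$), so left multiplication by $\rho(\delta_n)$ cannot be dismissed as harmless for singular-value ratios.

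The correct implementation — and what the paper does — is to measure the shrinking in the Hilbert metric of a \emph{fixed convex set that is not preserved by the whole group}. Inserting elements of $\Delta$ into the normal form, one arranges that each letter maps $\mathcal{V}_{\epsilon(k)}$ into $\Conv{\mathcal{U}_{1-\epsilon(k)}}\subset\mathcal{V}_{1-\epsilon(k)}$, so that $\gamma_n\cdot\mathcal{V}_i$ sits inside a chain of $m(n)-1$ nested convex sets alternating between translates of $\Conv{\mathcal{U}_j}$ and of $\mathcal{V}_j$ (by the \emph{same} group element, since $\overline{\Conv{\mathcal{U}_j}}\subset\mathcal{V}_j$). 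Because each such inclusion has bounded relative Hilbert diameter, the inclusions contract Hilbert metrics by a definite factor, giving a diameter for $\gamma_n\cdot\Conv{\mathcal{U}_i}$ inside $\mathcal{V}_{1-i}$ that is exponentially small in $m(n)$. The element $\delta_n\in\Delta$ is then used only as an isometry of $d_{\Omega_{1-i}}$ (which $\Delta$ does preserve) to transfer the estimate, via Remark~\ref{rem:Hilb-metric-include}, to: $\mathrm{diam}_{\Omega_{1-i}}(\gamma_n\cdot\mathcal{V}_i)\to 0$, after which the images subconverge to a point and Fact~\ref{fact:P1-div} applies directly to $(\gamma_n)$, with no need to compare singular values of $\gamma_n$ and $\delta_n\gamma_n$. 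Your occultation-depth heuristic points at the right phenomenon, but until the contraction is set up in a non-invariant metric as above, the central estimate in your proof is false as stated.
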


\begin{proof}
For $n\in\NN$, write $\gamma_n = a_{m(n)}^{(n)}\ldots a_1^{(n)}$ in normal form as in \eqref{eqn:normal-form}, where $a_k^{(n)}$ belongs to $\Gamma_0' \smallsetminus \Delta$ or $\Gamma_1' \smallsetminus \Delta$, and $a_k^{(n)} \in \Gamma_i'$ if and only if $a_{k+1}^{(n)} \in \Gamma_{1-i}'$ for all $k \in \{1, \ldots, m(n)\}$.

We first assume that there exists $i\in\{0,1\}$ such that $a_1^{(n)} \in \Gamma_i'$ for all~$n$ and that $m(n)$ is even for all~$n$, so that the last term $a_{m(n)}^{(n)}$ always lies in~$\Gamma_{1-i}'$.
Let $\epsilon(k)$ be defined to be $i$ if $k$ is odd, and to be $1-i$ if $k$ is even.
We follow the ping-pong argument from the proof of Lemma~\ref{lem:ping-pong}: since by construction $\gamma \cdot \overline{\mathcal{V}_{1-i}} \subset \Delta\cdot \mathcal{U}_i$ for all $\gamma \in \Gamma_i' \smallsetminus \Delta$, we may adjust the normal form of $\gamma_n$ by inserting an expression of the form $\delta \delta^{-1}$, for $\delta \in \Delta$, between consecutive terms, and then regrouping so that the $k$-th term $a^{(n)}_k$ now maps $\mathcal{V}_{\epsilon(k)}$ into $\mathcal{U}_{\epsilon(k+1)} = \mathcal{U}_{1-\epsilon(k)} \subset \Conv{\mathcal{U}_{1-\epsilon(k)}}$ for all $1 \leq k \leq m(n) - 1$.
By induction, the image of $\mathcal{V}_i$ under $\gamma_n$ is contained in $m(n)-1$ nested convex sets:
\begin{align*}
a_n^{(m(n))}\ldots a_n^{(1)} \cdot \mathcal{V}_i & \subset a_n^{(m(n))} \ldots a_n^{(2)} \cdot \Conv{\mathcal{U}_{1-i}} \subset a_n^{(m(n))} \ldots a_n^{(2)} \cdot \mathcal{V}_{1-i} \\
& \subset \dots \subset a_n^{(m(n))} \cdot \Conv{\mathcal{U}_{1-i}} \subset a_n^{(m(n))} \cdot \mathcal{V}_{1-i}.
\end{align*}
The right-hand set is contained in $\Delta\cdot \mathcal{U}_i$, hence we may apply some element $\delta_n \in \Delta$ so that
\begin{align*}
\delta_n a_n^{(m(n))}\ldots a_n^{(1)} \cdot \mathcal{V}_i & \subset \delta_n a_n^{(m(n))} \ldots a_n^{(2)} \cdot \Conv{\mathcal{U}_{1-i}} \subset \delta_n a_n^{(m(n))} \ldots a_n^{(2)} \cdot \mathcal{V}_{1-i} \\
& \subset \dots \subset \delta_n a_n^{(m(n))} \cdot \Conv{\mathcal{U}_{1-i}} \subset \delta_n a_n^{(m(n))} \cdot \mathcal{V}_{1-i} \subset \Conv{\mathcal{U}_i} .
\end{align*}
Note that for every other consecutive pair of nested convex sets in the sequence above, the Hilbert diameter of the inner one with respect to the outer one is bounded below by\linebreak $\min(\mathrm{diam}_{\mathcal{V}_0}\Conv{\mathcal{U}_0}, \mathrm{diam}_{\mathcal{V}_1}\Conv{\mathcal{U}_1})$, where $\mathrm{diam}_{\mathcal{V}_i}\Conv{\mathcal{U}_i}$ denotes the diameter of $\Conv{\mathcal{U}_i}$ in $(\mathcal{V}_i,d_{\mathcal{V}_i})$. It follows that the Hilbert diameter of $\gamma_n \cdot \Conv{\mathcal{U}_i}$ in $\mathcal{V}_{\epsilon(m(n))} = \mathcal{V}_{1-i}$ is exponentially small in $m(n)$.
Since $m(n) \to +\infty$ by assumption, we obtain that the diameter of $\delta_n \gamma_n \cdot \mathcal{V}_i$ goes to zero with respect to the Hilbert metric for $\Conv{\mathcal{U}_i}$, and hence for the Hilbert metric for the larger set $\Omega_{1-i}$ as well (see Remark~\ref{rem:Hilb-metric-include}).
Hence the diameter of $\gamma_n \cdot \mathcal{V}_i$ with respect to $d_{\Omega_{1-i}}$ goes to zero and it follows that, after passing to a subsequence, $\gamma_n \cdot \mathcal{V}_i$ converges to a point.
By Fact~\ref{fact:P1-div}, the sequence $(\gamma_n)_{n\in\NN}$ is $P_1$-divergent.

Similarly, if there exists $i\in\{0,1\}$ such that $a_1^{(n)} \in \Gamma_i'$ for all~$n$ and that $m(n)$ is odd for all~$n$, then $(\gamma_n)_{n\in\NN}$ is $P_1$-divergent.

In general, we consider four subsequences of $(\gamma_n)_{n\in\NN}$, determined by the parity of $m(n)$ and by the integer $i\in\{0,1\}$ such that $a_1^{(n)} \in \Gamma_i' \smallsetminus \Delta$; the reasoning above shows that each of the four subsequences is $P_1$-divergent, hence $(\gamma_n)_{n\in\NN}$ is $P_1$-divergent too.
\end{proof}

\subsection{Proof of conclusion \eqref{item:hopeful-4} of Theorem~\ref{thm:hopeful}} \label{subsec:proof-hopeful-2}

By conclusion~\eqref{item:hopeful-1} proved in Section~\ref{subsec:proof-hopeful-1-3} above, the group $\Gamma'_0 *_{\Delta} \Gamma'_1$ preserves a nonempty properly convex open subset $\Omega$ of $\PP(V)$.
Consider a point $p' \in \Omega$ and a sequence $(\gamma_n)_{n\in\NN}$ of pairwise distinct elements of $\Gamma_0' *_\Delta \Gamma_1'$ such that $(\gamma_n \cdot p')_{n\in\NN}$ converges to some $p'' \in \partial\Omega$, and let us prove that $p'' \in \mathcal{U}$.

Up to passing to a subsequence, we may assume that either $\gamma_n \in \Delta$ for all $n\in\NN$ or otherwise $\gamma_n \notin \Delta$ for all $n\in\NN$.
If $\gamma_n \in \Delta$ for all $n$, then $p'' = \lim_n \gamma_n\cdot p'$ belongs to $\Lambda_\Omega(\Delta)$, which is equal to $\Lambda_{\Omega_{01}}(\Delta)$ by Fact~\ref{fact:cc-subsets}, hence contained in~$\mathcal{U}$ by assumption.
So we now assume that $\gamma_n \notin \Delta$ for all~$n$.
There are two cases to consider.

\smallskip

\emph{First case:} The length $m(\gamma_n)$ of the normal form $\gamma_n = a_{m(\gamma_n)}^{(n)} \dots a_1^{(n)}$ does not remain bounded.
Up to passing to a subsequence, we may assume that $m(\gamma_n) \to +\infty$.
Up to passing to a further subsequence, we may assume that the sequence $(\gamma_n\cdot p)_{n\in\NN}$ converges to some $p_{\infty} \in \partial\Omega$, where $p \in \Omega_{01}$ is the point that we have fixed 
before Lemma~\ref{lem:ping-pong}.
We have $p_{\infty} \in \Delta\cdot\overline{\mathcal{U}_0} \cup \Delta\cdot\overline{\mathcal{U}_1} \subset \mathcal{U}$ by Lemma~\ref{lem:ping-pong}.
By Lemma~\ref{lem:P1-divergence}, the sequence $(\gamma_n)_{n\in\NN}$ is $P_1$-divergent.
By Fact~\ref{fact:P1-div}, the point $p'' = \lim_n \gamma_n\cdot p'$ is equal to~$p_{\infty}$, hence belongs to~$\mathcal{U}$.

\smallskip

\emph{Second case:} The length $m(\gamma_n)$ of the normal form $\gamma_n = a_{m(\gamma_n)}^{(n)} \dots a_1^{(n)}$ remains bounded.
Up to passing to a subsequence, we may assume that $m(\gamma_n) = m$ is constant and that there exists $i\in\{0,1\}$ such that $a_1^{(n)} \in \Gamma_i' \smallsetminus \Delta$ for all~$n$.
For $1\leq k\leq m$, let $\epsilon(k)$ be equal to $i$ if $k$ is odd, and to $1-i$ if $k$ is even, so that $a_k^{(n)} \in \Gamma'_{\epsilon(k)} \smallsetminus \Delta$ for all~$n$.
Consider the sequence $(a_1^{(n)})_n$.
If it is bounded in $\Gamma_{\epsilon(1)}' \smallsetminus \Delta$, then pass to a subsequence so that $a_1^{(n)} = a_1$ is constant.
More generally, if the sequence of cosets $\Delta a_1^{(n)}$ is bounded, then replace each $a_1^{(n)}$ by some $\delta_1^{(n)} a_1^{(n)}$, for $\delta_1^{(n)} \in \Delta$, so that this new sequence remains bounded, then pass to a subsequence so that $a_1^{(n)} = a_1$ is constant.
To compensate, replace $a_2^{(n)}$ with $a_2^{(n)} (\delta_1^{(n)})^{-1}$. Now move to the second term of the normal form and repeat this process.
We may continue inductively term by term of the normal form in the same way until one of two possibilities is reached.

\smallskip\noindent
$\bullet$ Possibility one: $a_1^{(n)},\ldots, a_{m-1}^{(n)}$ are all constant and $a_m^{(n)} = \delta_m^{(n)} a_m$ for some $a_m \in \Gamma_{\epsilon(m)}' \smallsetminus \Delta$, with $\delta_{m}^{(n)}$ going to infinity in $\Delta$.
In this case, the limit $p''$ of $\gamma_n \cdot p' = \delta_m^{(n)}\cdot(a_m \dots a_1 \cdot p')$ belongs to $\Lambdao_{\Omega}(\Delta)$, which is equal to $\Lambdao_{\Omega_{01}}(\Delta)$ by Fact~\ref{fact:cc-subsets}, hence contained in~$\mathcal{U}$ by assumption.

\smallskip\noindent
$\bullet$ Possibility two: there exists $1\leq k_0 \leq m$ such that $a_1^{(n)} = a_1, \ldots, a_{k_0-1}^{(n)} = a_{k_0-1}$ are all constant, and the coset $\Delta a_{k_0}^{(n)}$ is unbounded.
After passing to a subsequence, $\Delta a_{k_0}^{(n)}$ goes to infinity in $\Delta \backslash \Gamma_{\epsilon(k_0)}'$, and $a_{k_0}^{(n)}\cdot (a_{k_0-1} \dots a_1 \cdot p')$ converges to a point $\xi \in \Lambdao_{\Omega}(\Gamma'_{\epsilon(k_0)})$.
By Lemma~\ref{lem:even-more-pure}, up to replacing each $a_{k_0}^{(n)}$ by $\delta_{k_0}^{(n)} a_{k_0}^{(n)}$ for some $\delta_{k_0}^{(n)} \in \Delta$ (making the compensating adjustment in the $(k_0+1)$-th term), we may assume that $\xi \notin \Lambdao_{\Omega}(\Delta) = \Lambdao_{\Omega_{01}}(\Delta)$.
Thus $\xi \in \partiali\Ccore_{\Omega_{\epsilon(k_0)}}(\Gamma_{\epsilon(k_0)}) \smallsetminus \Lambdao_{\Omega_{01}}(\Delta)$ by Corollary~\ref{cor:Lambda-orb-Gamma-i}, and so for all sufficiently large~$n$ the point $q_{k_0}^n := a_{k_0}^n \cdot (a_{k_0-1}\dots a_1 \cdot p')$ lies in $\Delta \cdot \mathcal{U}_{\epsilon(k_0)}$.
By Lemma~\ref{lem:ping-pong}, for all sufficiently large~$n$, the point $\gamma_n \cdot p' = a_m^{(n)} \dots a_{k_0+1}^{(n)} \cdot q_{k_0}^n$ lies in $\Delta \cdot \mathcal{U}_{\epsilon(m)}$, and so $p'' = \lim_n \gamma_n \cdot p'$ lies in $\Delta \cdot \overline{\mathcal{U}_{\epsilon(m)}} \subset \mathcal{U}$.

This completes the proof of conclusion \eqref{item:hopeful-4} of Theorem~\ref{thm:hopeful}.

\subsection{Proof of conclusion~\eqref{item:hopeful-2} of Theorem~\ref{thm:hopeful}}
 
We now check the $P_1$-divergence of $\Gamma_0' *_\Delta \Gamma_1'$, assuming $\Gamma_0, \Gamma_1$ (and hence $\Gamma'_0,\Gamma'_1$) are $P_1$-divergent.
Following the proof in the previous Section~\ref{subsec:proof-hopeful-2}, we consider a sequence $(\gamma_n)$ of pairwise distinct elements and consider the two possible cases. In the first case, for which the length of the normal form goes to infinity, we have $P_1$-divergence by Lemma~\ref{lem:P1-divergence}. Examining the second case, for which the length of normal form is constant, we obtain $P_1$-divergence of $(\gamma_n)$ from the ping-pong argument of Lemma~\ref{lem:ping-pong} and $P_1$-divergence of the leftmost divergent letter in the normal form.

\section{Applications of the virtual amalgamation theorem} \label{sec:virt-amalg-applic}

In this section we derive a consequence of Theorem~\ref{thm:hopeful} for discrete groups $\Gamma_i$ which are not necessarily Gromov hyperbolic (Proposition~\ref{prop:amalgam-cc-general}).
We deduce Proposition~\ref{prop:amalgam-word-hyperbolic}, and apply it to Hitchin representations into $\SL(3,\RR)$ (Example~\ref{ex:Hitchin}).

\subsection{A consequence of Theorem~\ref{thm:hopeful}}

Given a properly convex open subset $\mathcal{O}$ of $\PP(V)$ which is invariant under an infinite discrete subgroup $\Gamma$ of $\PGL(V)$, we denote by $\mathcal{O}^{\max}$ the connected component of 
$$\PP(V) \smallsetminus \bigcup_{H' \in \Lambdao_{\mathcal{O}^*}(\Gamma)} H'$$
that contains $\mathcal{O}$; it is a convex (but not necessarily properly convex) open subset of $\PP(V)$.
The following is a consequence of Theorem~\ref{thm:hopeful} and Propositions \ref{prop:thicken-add-dim}--\ref{prop:thicken-not-proper}.

\begin{proposition} \label{prop:amalgam-cc-general}
Suppose $V$ is the direct sum of three nonzero linear subspaces $V_0, W, V_1$.
For $i\in\{0,1\}$, let $\Gamma_i$ be an infinite discrete subgroup of $\GL(V)$.
Suppose that
\begin{itemize}
  \item for each $i\in\{0,1\}$, the group $\Gamma_i$ acts trivially on~$V_{1-i}$ and acts convex cocompactly on some properly convex open subset $\mathcal{O}_i$ of $\PP(V_i\oplus W)$;
  \item the set $\mathcal{O} := \mathcal{O}_0 \cap \mathcal{O}_1$ is a nonempty properly convex open subset of $\PP(W)$ and $\Delta := \Gamma_0\cap\Gamma_1$ is finite or acts convex cocompactly on~$\mathcal O$;
  \item $\Delta$ is separable in both $\Gamma_0$ and~$\Gamma_1$,
   \item for each $i\in\{0,1\}$, we have $\overline{\Ccore_{\mathcal{O}_i}(\Gamma_i)} \cap \PP(V_i) = \varnothing$ and the image of $\overline{\Ccore_{\mathcal{O}_i}(\Gamma_i)} \smallsetminus \Lambdao_{\mathcal{O}}(\Delta)$ under the projection $\PP(V_i \oplus W) \smallsetminus \PP(V_i) \to \PP(W)$ lies inside the $\Gamma_{1-i}$-invariant convex set $\mathcal{O}_{1-i}^{\max} \subset \PP(V_{1-i} \oplus W)$.
\end{itemize}
Then there exist finite-index subgroups $\Gamma'_0$ of~$\Gamma_0$ and $\Gamma'_1$ of~$\Gamma_1$, each containing~$\Delta$, such that the representation $\Gamma'_0 *_\Delta \Gamma'_1 \to \PGL(V)$ induced by the inclusions $\Gamma'_0, \Gamma'_1 \hookrightarrow \PGL(V)$ is discrete and faithful, and its image is convex cocompact in $\PP(V)$.
\end{proposition}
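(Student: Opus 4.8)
\textbf{Proof strategy for Proposition~\ref{prop:amalgam-cc-general}.}
The plan is to reduce to Theorem~\ref{thm:hopeful} by thickening the given convex sets $\mathcal{O}_i \subset \PP(V_i \oplus W)$ into $\Gamma_i$-invariant properly convex open subsets $\Omega_i$ of the full projective space $\PP(V)$, in such a way that the various hypotheses \ref{item:hopeful-separable}--\ref{item:hopeful-cases} of Theorem~\ref{thm:hopeful} become visible. First I would record that $\Gamma_i$, acting on $V = V_{1-i} \oplus (V_i \oplus W)$, acts trivially on $V_{1-i}$ and convex cocompactly on $\mathcal{O}_i \subset \PP(V_i \oplus W)$; by Proposition~\ref{prop:thicken-add-dim} applied with $(U, U') = (V_i \oplus W, V_{1-i})$, there is a $\Gamma_i$-invariant properly convex open subset of $\PP(V)$ containing $\mathcal{O}_i$ on which $\Gamma_i$ still acts convex cocompactly, with full orbital limit set unchanged and contained in $\PP(V_i \oplus W)$. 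However, this naive thickening produces $\Omega_0$ and $\Omega_1$ whose intersection need not be $\mathcal{O}$; the point of the last bullet hypothesis is precisely that the relevant parts of $\Ccore_{\mathcal{O}_i}(\Gamma_i)$ project, after forgetting the $V_i$-direction, into the ``ambient'' convex set $\mathcal{O}_{1-i}^{\max}$ associated to $\Gamma_{1-i}$. So the real work is to choose the thickenings compatibly, using Proposition~\ref{prop:thicken-not-proper} to fatten $\mathcal{O}$ inside a suitable convex (not necessarily properly convex) intermediate set, so that the enlarged $\Omega_i$ satisfy $\Omega_0 \cap \Omega_1 = \Omega_{01}$ with $\Delta$ acting convex cocompactly on $\Omega_{01}$ (when $\Delta$ is infinite), and so that $\overline{\Ccore_{\Omega_i}(\Gamma_i)} \smallsetminus \Lambdao_{\Omega_{01}}(\Delta) \subset \Omega_{1-i}$.

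Concretely, the middle steps would go as follows. Let $\mathcal{O}_{1-i}^{\max}$ be the convex open subset of $\PP(V_{1-i} \oplus W)$ defined in the statement, and form its preimage $\mathbb{O}_{1-i}$ under the projection $\PP(V) \smallsetminus \PP(V_i) \dashrightarrow \PP(V_{1-i} \oplus W)$ — or rather a $\Gamma_{1-i}$-invariant convex open subset of $\PP(V)$ playing the role of an ambient set, analogous to the $\Omega_i^{\max}$ in Lemma~\ref{lem:Omega-max}. Using Lemma~\ref{lem:Omega-max} (applied to $\Gamma_{1-i}$ acting on its thickened $\Omega_{1-i}$) one sees that any $\Gamma_{1-i}$-invariant properly convex open set meeting $\Omega_{1-i}$ lies in $\Omega_{1-i}^{\max}$, which is the obstruction-free arena in which we may freely enlarge. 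The hypothesis $\overline{\Ccore_{\mathcal{O}_i}(\Gamma_i)} \cap \PP(V_i) = \varnothing$ guarantees that the projection is well defined on the convex core, and the hypothesis about the image lying in $\mathcal{O}_{1-i}^{\max}$ says exactly that (modulo translating along $V_{1-i}$) the convex core of $\Gamma_i$ sits inside the ambient set of $\Gamma_{1-i}$. Applying Proposition~\ref{prop:thicken-not-proper} to the pair $(\mathcal{O}, \mathbb{O})$ with $\mathbb{O}$ a common convex open set containing both thickened cores, we obtain a $\Delta$-invariant properly convex open subset $\Omega_{01}$ with $\overline{\Omega_{01}} \cap \vec{\mathbb{O}} = \varnothing$; then we define $\Omega_i$ to be a $\Gamma_i$-invariant properly convex open set containing $\Omega_{01} \cup \Ccore_{\Omega_i}(\Gamma_i)$ inside $\Omega_i^{\max}$, which exists by Proposition~\ref{prop:thicken-not-proper} again (or Proposition~\ref{prop:thicken-add-dim-general}), on which $\Gamma_i$ acts convex cocompactly by Fact~\ref{fact:cc-subsets}. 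One checks $\Omega_0 \cap \Omega_1 = \Omega_{01}$ by the ambient-set argument, and $\Lambdao_{\Omega_{01}}(\Delta)$ is related to $\Lambdao_{\mathcal{O}}(\Delta)$ via Proposition~\ref{prop:thicken-add-dim}. The finiteness-or-convex-cocompactness of $\Delta$ on $\mathcal{O}$ then transfers to $\Omega_{01}$, giving hypothesis \ref{item:hopeful-CC-Omega01}; separability of $\Delta$ is hypothesis \ref{item:hopeful-separable} verbatim; hypothesis \ref{item:hopeful-lim-set} is the last bullet after unwinding the thickenings; and hypothesis \ref{item:hopeful-cases} holds by Lemma~\ref{lem:hopeful-cases-satisfied}, since the action of each $\Gamma_i$ on $\Omega_i$ is convex cocompact.

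With all four hypotheses of Theorem~\ref{thm:hopeful} in place, its conclusions \eqref{item:hopeful-1} and \eqref{item:hopeful-3} immediately give finite-index subgroups $\Gamma'_0 \supset \Delta$ of $\Gamma_0$ and $\Gamma'_1 \supset \Delta$ of $\Gamma_1$ such that $\rho : \Gamma'_0 *_\Delta \Gamma'_1 \to \PGL(V)$ is discrete and faithful and the action of $\rho(\Gamma'_0 *_\Delta \Gamma'_1)$ on the resulting $\Omega$ is convex cocompact, which is the assertion. I expect the main obstacle to be the bookkeeping in the intermediate step: namely verifying, from the hypothesis that the projection of $\overline{\Ccore_{\mathcal{O}_i}(\Gamma_i)} \smallsetminus \Lambdao_{\mathcal{O}}(\Delta)$ lands in $\mathcal{O}_{1-i}^{\max}$, that one can genuinely pick the thickenings $\Omega_0, \Omega_1 \subset \PP(V)$ with $\Omega_0 \cap \Omega_1$ equal to a single properly convex set $\Omega_{01}$ that serves simultaneously as a neighborhood of $\Ccore_{\mathcal{O}}(\Delta)$ and whose orbital limit set is $\Lambdao_{\mathcal{O}}(\Delta)$ — this is where the ambient-set machinery of Lemma~\ref{lem:Omega-max}, the non-properly-convex thickening of Proposition~\ref{prop:thicken-not-proper}, and careful attention to which hyperplanes support which set all have to be combined correctly. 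Once the convex sets are arranged, the rest is a direct citation of Theorem~\ref{thm:hopeful}.
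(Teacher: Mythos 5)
Your overall strategy---thicken the $\mathcal{O}_i$ into $\Gamma_i$-invariant properly convex open subsets $\Omega_i$ of $\PP(V)$ and apply Theorem~\ref{thm:hopeful}, with Lemma~\ref{lem:hopeful-cases-satisfied} supplying hypothesis~\ref{item:hopeful-cases}---is the paper's, and most of the tools you name are the right ones. But there is a genuine gap exactly at the step where the paper has to work: establishing hypothesis~\ref{item:hopeful-lim-set}, i.e.\ that $\Omega_{1-i}$ can be chosen to contain $\overline{\Ccore_{\mathcal{O}_i}(\Gamma_i)} \smallsetminus \Lambdao_{\mathcal{O}}(\Delta)$. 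The last bullet of the hypotheses only places this set inside the convex but \emph{not properly convex} ambient set $\mathcal{O}_{1-i}^{\max}$ (equivalently inside $\mathcal{O}_{1-i}^{\max} \times V_i$); promoting that to containment in a single $\Gamma_{1-i}$-invariant \emph{properly} convex open set is the nontrivial point. The paper does this by exhausting $\mathcal{O}_{1-i}^{\max} \times V_i$ by an increasing sequence of $\Gamma_{1-i}$-invariant properly convex open sets (Proposition~\ref{prop:thicken-not-proper}, started from the Proposition~\ref{prop:thicken-add-dim} thickening of $\mathcal{O}_{1-i}$) and then using that $\Delta$ acts cocompactly on $\overline{\Ccore_{\mathcal{O}_i}(\Gamma_i)} \smallsetminus \Lambdao_{\mathcal{O}}(\Delta)$ (Lemma~\ref{lem:even-more-pure}; compactness when $\Delta$ is finite), so that a compact fundamental domain, and hence the whole $\Delta$-invariant set, lies in one term of the exhaustion. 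Your proposal never invokes this cocompactness, and your substitute does not work: if your $\Omega_{01}$ is not required to contain the sets $\overline{\Ccore_{\mathcal{O}_i}(\Gamma_i)}\smallsetminus\Lambdao_{\mathcal{O}}(\Delta)$, then ``hypothesis~\ref{item:hopeful-lim-set} is the last bullet after unwinding the thickenings'' remains an unproved assertion; if $\Omega_{01}$ is required to contain both of them, the construction is inconsistent, since then $\Omega_i \supset \Omega_{01}$ would contain points of $\partiali\Ccore_{\mathcal{O}_i}(\Gamma_i) = \Lambdao_{\mathcal{O}_i}(\Gamma_i) = \Lambdao_{\Omega_i}(\Gamma_i) \subset \partial\Omega_i$ (Fact~\ref{fact:ideal-bound-cc}), which is impossible whenever $\Lambdao_{\mathcal{O}_i}(\Gamma_i) \not\subset \Lambdao_{\mathcal{O}}(\Delta)$.

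Moreover, the step you single out as the main obstacle---arranging $\Omega_0 \cap \Omega_1 = \Omega_{01}$ for a prescribed $\Omega_{01}$---is both unverifiable in general (the intersection of two independently chosen thickenings has no reason to equal a prescribed set) and unnecessary. Theorem~\ref{thm:hopeful} defines $\Omega_{01}$ as $\Omega_0 \cap \Omega_1$, whatever it is; since $\Delta$ acts trivially on $V_0 \oplus V_1$ and convex cocompactly on $\mathcal{O} \subset \PP(W)$, the ``moreover'' clause of Proposition~\ref{prop:thicken-add-dim} applied to the $\Delta$-invariant properly convex open set $\Omega_0 \cap \Omega_1 \supset \mathcal{O}$ gives $\Lambdao_{\Omega_0\cap\Omega_1}(\Delta) = \Lambdao_{\mathcal{O}}(\Delta)$ and transfers convex cocompactness, which is all that hypotheses \ref{item:hopeful-CC-Omega01} and \ref{item:hopeful-lim-set} require on the $\Delta$ side. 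So the repair is to drop the prescribed $\Omega_{01}$ entirely and insert the exhaustion-plus-Lemma~\ref{lem:even-more-pure} argument for hypothesis~\ref{item:hopeful-lim-set}; with that added, your outline coincides with the paper's proof.
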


\begin{remark} \label{rem:O-1-i-max}
By Lemma~\ref{lem:Omega-max}, any $\Gamma_{1-i}$-invariant properly convex open set of $\PP(V_{1-i} \oplus W)$ is contained in $\mathcal{O}_{1-i}^{\max}$, hence the fourth hypothesis in Proposition~\ref{prop:amalgam-cc-general} may be replaced with the stronger (but in fact equivalent --- see the proof below) condition that the projection of $\overline{\Ccore_{\mathcal{O}_i}(\Gamma_i)} \smallsetminus \Lambdao_{\mathcal{O}}(\Delta)$ lie inside some $\Gamma_{1-i}$-invariant properly convex open subset of $\PP(V_{1-i} \oplus W)$.
\end{remark}

\begin{proof}[Proof of Proposition~\ref{prop:amalgam-cc-general}]
We first observe that it is sufficient to find, for each $i \in \{0,1\}$, a properly convex open subset $\Omega_i$ of $\PP(V)$ containing~$\mathcal{O}_i$ such that $\overline{\Ccore_{\mathcal{O}_{1-i}}(\Gamma_{1-i})} \smallsetminus \Lambdao_{\mathcal{O}}(\Delta) \subset \Omega_i$.
Indeed, Proposition~\ref{prop:thicken-add-dim} then implies that $\Ccore_{\Omega_{1-i}}(\Gamma_{1-i}) = \Ccore_{\mathcal{O}_{1-i}}(\Gamma_{1-i})$, that $\Ccore_{\Omega_0\cap\Omega_1}(\Delta) = \Ccore_{\mathcal{O}}(\Delta)$, and that the actions of $\Gamma_i$ on~$\Omega_i$ and of $\Delta$ on $\Omega_0 \cap \Omega_1$ (if $\Delta$ is infinite) are convex cocompact.
Using also Lemma~\ref{lem:hopeful-cases-satisfied}, we see that the assumptions \ref{item:hopeful-separable}, \ref{item:hopeful-CC-Omega01}, \ref{item:hopeful-lim-set}, \ref{item:hopeful-cases} of Theorem~\ref{thm:hopeful} are satisfied, and we can conclude by applying Theorem~\ref{thm:hopeful}.

Fix $i\in\{0,1\}$, and let us construct a properly convex open subset $\Omega_i$ of $\PP(V)$ containing~$\mathcal{O}_i$ such that $\overline{\Ccore_{\mathcal{O}_{1-i}}(\Gamma_{1-i})} \smallsetminus \Ccore_{\mathcal{O}}(\Delta) \subset \Omega_i$.
The set $\Omega_i^{\max} := \mathcal{O}_i^{\max} \times V_{1-i}$ (Notation~\ref{notn:fat}) is a $\Gamma_i$-invariant convex (but not properly convex) open subset of $\PP(V)$.
By assumption, we have $\overline{\Ccore_{\mathcal{O}_{1-i}}(\Gamma_{1-i})} \smallsetminus \Lambdao_{\mathcal{O}}(\Delta) \subset \mathcal{O}_i^{\max} \subset \Omega_i^{\max}$.
By Proposition~\ref{prop:thicken-add-dim}, there is a $\Gamma_i$-invariant properly convex open subset $\hat{\Omega}_i$ of $\PP(V)$ such that $\mathcal{O}_i \subset \hat{\Omega}_i \subset \mathcal{O}_i \times V_{1-i} \subset \Omega_i^{\max}$.
By Proposition~\ref{prop:thicken-not-proper}, there is an increasing sequence of $\Gamma_i$-invariant properly convex open subsets $\hat{\Omega}_i = \Omega_i^{(1)} \subset \Omega_i^{(2)} \subset \dots \subset \Omega_i^{(n)} \subset \dots$ of $\PP(V)$ such that the union over all $n$ of the $\Omega_i^{(n)}$ is $\Omega_i^{\max}$.
Our goal is to show that for any large enough~$n$, the set $\Omega_i^{(n)}$ contains $\overline{\Ccore_{\mathcal{O}_{1-i}}(\Gamma_{1-i})} \smallsetminus \Lambdao_{\mathcal{O}}(\Delta)$, so that we can take $\Omega_i = \Omega_i^{(n)}$.
First, this is true if $\Delta$ is finite, since then $\Lambdao_{\mathcal O}(\Delta)$ is empty, so $\overline{\Ccore_{\mathcal{O}_{1-i}}(\Gamma_{1-i})} \smallsetminus \Lambdao_{\mathcal{O}}(\Delta)$ is compact. 
In the case that $\Delta$ is infinite, the action of $\Delta$ on $\mathcal{O}_{1-i}$ is convex cocompact, and hence it follows from Lemma~\ref{lem:even-more-pure}, that $\Delta$ acts cocompactly on  $\overline{\Ccore_{\mathcal{O}_{1-i}}(\Gamma_{1-i})} \smallsetminus \Lambdao_{\mathcal{O}}(\Delta)$. A compact fundamental domain for the action of $\Delta$ on $\overline{\Ccore_{\mathcal{O}_{1-i}}(\Gamma_{1-i})} \smallsetminus \Lambdao_{\mathcal{O}}(\Delta)$ must be contained in $\Omega_i^{(n)}$ for some $n$, and so it follows that the entire set $\overline{\Ccore_{\mathcal{O}_{1-i}}(\Gamma_{1-i})} \smallsetminus \Lambdao_{\mathcal{O}}(\Delta)$ is contained in $\Omega_i^{(n)}$ as well.
This completes the proof.
\end{proof}

\subsection{A consequence of Proposition~\ref{prop:amalgam-cc-general}}

We use Proposition~\ref{prop:amalgam-cc-general} to prove Proposition~\ref{prop:amalgam-word-hyperbolic}.

\begin{proof}[Proof of Proposition~\ref{prop:amalgam-word-hyperbolic}]
For each $i \in \{0,1\}$, let $\mathcal{O}_i$ be a properly convex open subset of $\PP(\RR^{d_i})$ on which $\Gamma_i$ acts convex cocompactly.
We may take $\mathcal{O}_i$ to be strictly convex by \cite[Th.\,1.15]{dgk-proj-cc}. 
Since $\Delta_i$ divides some nonempty properly convex open subset of $\PP(W_i)$ and acts trivially on the complement $V_i$ of $W_i$ in $\RR^{d_i}$, the full orbital limit set $\Lambdao_{\mathcal{O}_i}(\Delta_i)$ is contained in $\PP(W_i)$. 
Since $\mathcal{O}_i$ is strictly convex, it follows that $\mathcal{O}_{W_i} := \mathcal{O}_i \cap \PP(W_i)$ is nonempty and hence, by a standard cohomological dimension argument (see Remark~\ref{rem:vcd}), it is divided by $\Delta_i$.
We may arrange that the linear isomorphism $\varphi: V_0 \to V_1$ takes $\mathcal{O}_0 \cap \PP(V_0)$ to $\mathcal{O}_1 \cap \PP(V_1)$: if $\Delta_0$ acts irreducibly on~$V_0$, this happens automatically, but even if not there is a linear automorphism of $V_i$ taking any properly convex open set divided by $\Delta_i$ to any other (see \cite[Th.\,5]{vey70} and \cite[Prop.\,3.1]{ben00}). 

Now, since $\Delta_i$ divides $\mathcal{O}_{W_i}$ and since $\Delta_i$ acts trivially on $V_i$, it follows that every supporting hyperplane to $\mathcal{O}_i$ at a point of $\partial \mathcal{O}_{W_i} = \partial \mathcal{O}_i \cap \PP(W_i)$ contains $\PP(V_i)$. It then follows from strict convexity that $\partial \mathcal{O}_i \cap \PP(V_i) = \varnothing$, and that the projection from $\PP(\RR^{d_i}) \smallsetminus \PP(V_i)$ to $\PP(W_i)$ maps $\partial \mathcal{O}_i \smallsetminus \PP(W_i)$ into $\mathcal{O}_{W_i}$.
In particular, the set $\Lambdao_{\mathcal{O}_i}(\Gamma_i) \smallsetminus \Lambdao_{\mathcal{O}_i}(\Delta_i)$ maps into $\mathcal{O}_{W_i}$, and so does $\overline{\Ccore_{\mathcal{O}_i}(\Gamma_i)} \smallsetminus  \Lambdao_{\mathcal{O}_i}(\Delta_i)$.

Now, using the isomorphism $\varphi$ to identify $W_0 = W_1 =: W$, defining $V = V_0 \oplus W \oplus V_1$, and extending the action of $\Gamma_i$ to be trivial on $V_{1-i}$, the hypotheses of Proposition~\ref{prop:amalgam-cc-general} are satisfied, and the conclusion gives the desired result.
\end{proof}

\begin{example} \label{ex:Hitchin}
For $i\in\{ 0,1\}$, let $S_i$ be a closed orientable surface of negative Euler characteristic and $\rho_i : \pi_1(S_i) \to \SL(3,\RR)$ a Hitchin representation.
By \cite{cg93}, the group $\rho_i(\pi_1(S_i))$ divides a properly convex open subset $\Omega_i$ of $\PP(\RR^3)$.
Suppose $\alpha_0 \in \pi_1(S_0)$ and $\alpha_1 \in \pi_1(S_1)$ are each primitive elements in their respective groups such that $\rho_1(\alpha_1) = \rho_2(\alpha_2) =: \delta$.
Further assume that the eigenvalues of this element are symmetric, equal to $\lambda, 1, \lambda^{-1}$ for some $\lambda > 1$.
Let $(e^+, e^0, e^-)$ be the corresponding eigenbasis of~$\RR^3$.
Observe that the cyclic group generated by $\delta$ divides an open interval of $\PP(\RR e^+ + \RR e^-)$ and acts trivially on $\RR e_0$.
Note that primitive cyclic subgroups of surface subgroups are separable (see \eg \cite[Lem.\,princ.]{ber00} or \cite[Prop.\,3.2]{tt25}).

Let $\Gamma_i := \rho(\pi_1(S_i))$ and $\Delta_0 = \Delta_1 := \langle \delta \rangle$.
Let $\tau_0, \tau_1 : \SL(3,\RR)\hookrightarrow\SL(4,\RR)$ be defined by $\tau_0(e^+,e^0,e^-) := (e_2,e_1,e_3)$ and $\tau_0(e^+,e^0,e^-) := (e_2,e_4,e_3)$ where $(e_1,e_2,e_3,e_4)$ is the canonical basis of~$\RR^4$.
By Proposition~\ref{prop:amalgam-word-hyperbolic} and its proof, there exist finite-index subgroups $\Gamma'_0$ of~$\Gamma_0$ and $\Gamma'_1$ of~$\Gamma_1$, both containing~$\Delta$, such that the representation $\Gamma'_0 *_\Delta \Gamma'_1 \to \PSL(4,\RR)$ induced by the inclusions $\tau_0 : \Gamma'_0 \hookrightarrow \PSL(4,\RR)$ and $\tau_1 : \Gamma'_1 \hookrightarrow \PSL(4,\RR)$ is faithful with image a discrete subgroup $\Gamma$ of $\PSL(4,\RR)$ which is convex cocompact in $\PP(\RR^4)$. 

The group $\Gamma_0' *_\Delta \Gamma_1'$ is the free product of two surface groups amalgamated over a copy of~$\ZZ$.
In particular, it is not virtually a surface group nor a free group. 
We remark that if $\rho_0$ and $\rho_1$ have image in $\SO(2,1)$, then the resulting subgroup $\Gamma$ of $\PSL(4,\RR)$ lies in a copy of $\SO(3,1)$.
Such examples are already known from the combination theorem by Baker--Cooper~\cite{bc08}.
However, we may choose the Hitchin representations $\rho_0$ and $\rho_1$ so that the groups $\Gamma_0$ and $\Gamma_1$ are far from any copy of $\SO(2,1)$; indeed, taking $\alpha_i$ to be a simple curve on $S_i$, it follows from~\cite{gol90} that we may choose the eigenvalues of $\rho_0(\alpha_0) = \rho_1(\alpha_1)$ to be symmetric as above while also choosing independently the eigenvalues of the curves of complementary pants decompositions on $S_0$ and $S_1$ to be as asymmetric as we wish.
In this case, the combined group $\Gamma$ is far from any copy of $\SO(3,1)$.
Such examples do not come from any small deformation of known Kleinian combination examples in $\SO(3,1)$.
\end{example}

We note that in Example~\ref{ex:Hitchin} the amalgamated free product $\Gamma_0' *_\Delta \Gamma_1'$ is Gromov hyperbolic by the following combination theorem for Gromov hyperbolic groups, due to Bestvina--Feighn (see \cite[Th.\,2.6]{tt23}).

\begin{fact}[\cite{bf92}] \label{fact:amalgam-Gromov-hyp}
For $i\in\{0,1\}$, let $\Gamma_i$ be a Gromov hyperbolic group and $\Delta_i$ be a quasiconvex subgroup which is malnormal in~$\Gamma_i$ (\ie $\gamma\Delta_i\gamma^{-1} \cap \Delta_i = \{ 1\}$ for all $\gamma\in\Gamma_i\smallsetminus\Delta_i$).
Let $\varphi : \Delta_0\overset{\sim}{\longrightarrow}\Delta_1$ be a group isomorphism.
Then the amalgamated free product $\Gamma_0 *_{\varphi} \Gamma_1$ is Gromov hyperbolic.
\end{fact}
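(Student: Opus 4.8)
The final statement to prove is Fact~\ref{fact:amalgam-Gromov-hyp}, the Bestvina--Feighn combination theorem for Gromov hyperbolic groups amalgamated over a malnormal quasiconvex subgroup. Since this is cited as a Fact from~\cite{bf92}, the intended role here is a short pointer rather than a full reproof. Nonetheless, let me sketch the proof one would write if asked to make it self-contained.

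\medskip

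\textbf{Plan.} The strategy is to verify the Bestvina--Feighn \emph{annuli flare condition} for the graph of groups $\mathsf{Y}$ consisting of two vertices with groups $\Gamma_0,\Gamma_1$, one edge with group $\Delta_0\cong_\varphi\Delta_1$, and edge maps the inclusions. The Bestvina--Feighn criterion states: a finite graph of groups with finitely generated vertex groups, each of which is Gromov hyperbolic and \emph{quasi-isometrically embedded} via the edge maps, has Gromov hyperbolic fundamental group provided the associated graph of spaces satisfies the flare condition (exponential divergence of thin annuli). First I would record the two standard ingredients: (i) by quasiconvexity of $\Delta_i$ in $\Gamma_i$, the edge inclusions $\Delta_i \hookrightarrow \Gamma_i$ are quasi-isometric embeddings, so the ``qi-embedded edge groups'' hypothesis holds; (ii) one forms the graph of spaces $X$ by gluing Cayley-graph-like pieces $X_{\Gamma_i}$ along mapping cylinders of $X_{\Delta_i}$, and the Bass--Serre tree $T$ is the underlying tree. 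The fundamental group $\Gamma_0 *_\varphi \Gamma_1$ acts geometrically on $X$.

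\medskip

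Next I would verify the flare condition, which is where malnormality enters. A \emph{$k$-thin annulus} of girth $r$ is (roughly) a path of length $2r+1$ in $T$ together with a $k$-quasigeodesic in $X$ that projects to it and whose midpoint stays within $k$ of the central vertex. The flare condition asks: there exist $\lambda>1$ and, for each $k$, constants so that for $r$ large every such annulus has the property that one of its two half-lengths is at least $\lambda$ times the midpoint-width. The key point is that an obstruction to flaring would be a bi-infinite quasigeodesic in $X$ that ``fellow-travels the edge spaces'' for arbitrarily long stretches on both sides of a vertex; projecting into $\Gamma_i$, this forces a long element of $\Gamma_i$ that coarsely commutes with a long element of a conjugate of $\Delta_i$ on one side and with (the image under the edge map of) another conjugate of $\Delta_i$ on the other side. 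Malnormality of $\Delta_i$ in $\Gamma_i$, together with quasiconvexity, prevents two distinct cosets $g\Delta_i$ and $g'\Delta_i$ from fellow-travelling for long: the coarse intersection $g\Delta_i g^{-1} \cap g'\Delta_i g'^{-1}$ is finite unless $g^{-1}g'\in\Delta_i$. This bounded-packing consequence of malnormality is exactly what yields the required exponential separation, hence the flare inequality, after the usual bookkeeping.

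\medskip

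\textbf{Main obstacle.} The hard part is the quantitative flare estimate itself — turning the qualitative statement ``distinct cosets of a malnormal quasiconvex subgroup diverge'' into the uniform exponential bound over all thinness parameters $k$. One cleanly packages this using the fact (Bestvina--Feighn, or the later treatments by Mj--Reeves, Kapovich, or Hruska--Wise on bounded packing) that a quasiconvex malnormal subgroup has \emph{bounded packing}: for each $D$ there is $N=N(D)$ bounding the number of distinct cosets pairwise $D$-close. Feeding bounded packing into the annulus-counting argument gives that $k$-thin annuli of fixed small girth are finite in number up to the group action, which bootstraps (via the ``ladder'' / subdivision argument) to genuine exponential flaring. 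Once flaring is established, invoking \cite[Main Thm]{bf92} (or \cite[Th.\,2.6]{tt23} for the form we cite) concludes that $\Gamma_0*_\varphi\Gamma_1$ is Gromov hyperbolic. In the present paper, however, the honest thing is simply to cite \cite{bf92} and \cite{tt23}, as is done; a full reproof is neither needed nor appropriate here, so the ``proof'' reduces to checking the two hypotheses (quasiconvexity $\Rightarrow$ qi-embedded edge groups; malnormality $\Rightarrow$ acylindricity/flaring input) and quoting the theorem.
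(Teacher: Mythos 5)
Your proposal matches the paper's treatment: Fact~\ref{fact:amalgam-Gromov-hyp} is not proved in the paper but simply cited to \cite{bf92} (with \cite[Th.\,2.6]{tt23} as a reference for this formulation), which is exactly what you conclude is the appropriate thing to do. Your sketch of the underlying Bestvina--Feighn argument (quasiconvexity giving qi-embedded edge groups, malnormality giving the acylindricity/flaring input) is a correct outline of the standard proof, so there is nothing to fault.
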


\section{Anosov representations} \label{sec:Anosov}

We finish the paper by giving proofs of Theorem~\ref{thm:Anosov-free-product-G/P} and Corollaries \ref{cor:free-prod-Ano}, \ref{cor:free-prod-Ano-general}, and~\ref{cor:amalgam-cyclic-Ano}.

\subsection{Proof of Corollaries \ref{cor:free-prod-Ano}, \ref{cor:free-prod-Ano-general}, and~\ref{cor:amalgam-cyclic-Ano}}

\begin{proof}[Proof of Corollary~\ref{cor:free-prod-Ano}]
By Fact~\ref{fact:Anosov}, each $\Gamma_i$ acts convex cocompactly on some properly convex open subset $\Omega_i$ of $\PP(V)$, but does not divide it by assumption.
By Corollary~\ref{cor:free-prod-cc}, there exists $g\in\PGL(V)$ such that the group generated by $\Gamma_0$ and $g\Gamma_1g^{-1}$ is isomorphic to the free product $\Gamma_0\ast\Gamma_1$ and is convex cocompact in $\PP(V)$.
On the other hand, a free product of two Gromov hyperbolic groups is Gromov hyperbolic (see \eg \cite[Ch.\,1, Ex.\,34]{gh90}).
By Fact~\ref{fact:Anosov}, the representation $\rho:  \Gamma_0 * g\Gamma_1 g^{-1} \to \PGL(V)$ induced by the natural inclusions of $\Gamma_0$ and $g\Gamma_1 g^{-1}$ is $P_1$-Anosov.
\end{proof}

\begin{proof}[Proof of Corollary~\ref{cor:free-prod-Ano-general}]
By \cite[Lem.\,3.2 \& Prop.\,3.3]{ggkw17}, for each $i\in\{0,1\}$ there exist a finite-dimensional real vector space $V_i$ and a representation $\sigma_i : G_i\to\GL(V_i)$ with the following property: for any Gromov hyperbolic group $\Gamma_i$ and any $P_{\theta_i}$-Anosov representation $\rho_i : \Gamma\to G_i$, the composed representation $\sigma_i\circ\rho_i : \Gamma_i\to\GL(V_i)$ is $P_1$-Anosov.

Let $\sigma'_i : \GL(V_i)\to\GL(S^2V_i)$ be the second symmetric power of the standard representation of $\GL(V_i)$ on~$V_i$.
Let $V := S^2V_0 \oplus S^2V_1$, and let $\tau_0 := \sigma'_0\circ\sigma_0 \oplus \mathbf{1}_{G_0} : G_0\to\GL(V)$ and $\tau_1 := \mathbf{1}_{G_1} \oplus \sigma'_1\circ\sigma_1 : G_1\to\GL(V)$ where $\mathbf{1}_{G_i} : G_i\to\GL(S^2V_{1-i})$ is the constant representation.

We claim that $V$, $\tau_0$, and $\tau_1$ satisfy the conclusions of Corollary~\ref{cor:free-prod-Ano-general}.
Indeed, consider Gromov hyperbolic groups $\Gamma_i$ and $P_{\theta_i}$-Anosov representations $\rho_i : \Gamma\to G_i$.
By Fact~\ref{fact:Omega-sym}, the image of $\sigma'_i\circ\sigma_i$ preserves a nonempty properly convex open subset of $\PP(S^2V_i)$.
Therefore, by Fact~\ref{fact:Anosov}, each $\sigma'_i\circ\sigma_i\circ\rho_i(\Gamma_i)$ is convex cocompact in $\PP(S^2V_i)$.
By \cite[Th.\,1.16.(E)]{dgk-proj-cc} (or Proposition~\ref{prop:thicken-add-dim}), each $\tau_i\circ\rho_i(\Gamma_i)$ is convex cocompact in $\PP(V)$.
By Remark~\ref{rem:vcd}, we have $\mathrm{vcd}(\Gamma_i) \leq \dim \PP(S^2V_i) < \dim \PP(V)$, hence $\tau_i\circ\rho_i(\Gamma_i)$ does not divide any properly convex open set in $\PP(V)$.
By Corollary~\ref{cor:free-prod-cc}, there exists $g\in\PGL(V)$ such that the group generated by $\tau_0\circ\rho_0(\Gamma_0)$ and $g(\tau_1\circ\rho_1(\Gamma_1))g^{-1}$ is isomorphic to the free product $\Gamma_0\ast\Gamma_1$ and is convex cocompact in $\PP(V)$.
On the other hand, a free product of two Gromov hyperbolic groups is Gromov hyperbolic (see \eg \cite[Ch.\,1, Ex.\,34]{gh90}).
By Fact~\ref{fact:Anosov}, the representation $\rho:  \Gamma_0 * g\Gamma_1 g^{-1} \to \PGL(V)$ induced by $\tau_0\circ\rho_0$ and $\tau_1\circ\rho_1$ is $P_1$-Anosov.
\end{proof}

\begin{proof}[Proof of Corollary~\ref{cor:amalgam-cyclic-Ano}]
Since the maximal cyclic subgroup $\langle\gamma_i\rangle$ of the Gromov hyperbolic group $\Gamma_i$ is its own centralizer, it is separable in~$\Gamma_i$ (see \eg \cite[Prop.\,3.2]{tt25}).
By Fact~\ref{fact:Anosov}, the group $\Gamma_i$ acts convex cocompactly on some properly convex open subset $\Omega_i$ of $\PP(\RR^{d_i})$.
By Proposition~\ref{prop:amalgam-word-hyperbolic}, there exist, for each $i \in \{0,1\}$, a finite-index subgroup $\Gamma'_i$ of~$\Gamma_i$, containing $\langle\gamma_i\rangle$, and a discrete and faithful representation $\rho : \Gamma'_0 *_{\langle\gamma_0\rangle = \langle\gamma_1\rangle} \Gamma'_1 \to \SL(d_0+d_1-2,\RR)$ whose image is convex cocompact in $\PP(\RR^{d_0+d_1-2})$.
This representation is $P_1$-Anosov by Fact~\ref{fact:Anosov}.
\end{proof}

\subsection{A preparatory lemma on proximal elements}

Before proving Theorem~\ref{thm:Anosov-free-product-G/P} in Section~\ref{sec:appli-anosov}, we establish the following Lie-theoretic lemma.

\begin{lemma} \label{lem:prox-move-transverse}
Let $G$ be a connected real linear semisimple Lie group, $P$ a self-opposite parabolic subgroup of~$G$, and $g$ an element of~$G$.
If $g$ is proximal in $G/P$, then the set $\mathcal{U}_g$ of points $x\in G/P$ transverse to $g\cdot x$ contains a nonempty Zariski-open subset of $G/P$.
\end{lemma}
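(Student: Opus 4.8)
The statement is a Zariski-density assertion: since the condition ``$x$ transverse to $g\cdot x$'' is visibly given by the nonvanishing of a regular function on $G/P$ (the transversality locus in $G/P \times G/P$ is Zariski-open, and we pull it back along $x \mapsto (x, g\cdot x)$), the set $\mathcal{U}_g$ is automatically Zariski-open. So the only content is to show $\mathcal{U}_g \neq \varnothing$; once that is known, a Zariski-open nonempty subset is all of $\mathcal{U}_g$ minus a lower-dimensional subvariety, and in particular $\mathcal{U}_g$ itself contains (indeed equals) a nonempty Zariski-open set. The plan is therefore to exhibit a single point $x$ transverse to $g\cdot x$.

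\textbf{Key steps.} First I would recall the structure of a proximal element. Since $g$ is proximal in $G/P$, it has an attracting fixed point $x_g^+ \in G/P$ and a repelling fixed point $x_g^- \in G/P$, these two are transverse to one another, and for every $x \in G/P$ transverse to $x_g^-$ one has $g^n \cdot x \to x_g^+$ as $n \to +\infty$ (this is exactly the proximality statement recalled in Section~\ref{subsec:remind-prox}, citing \cite[\S\,2.4]{ggkw17}). Dually, applying the same to $g^{-1}$ viewed via the opposition, or directly, one gets $g^{-n}\cdot x \to x_g^-$ for $x$ transverse to $x_g^+$. Second, I would choose $x$ in a suitable neighborhood of $x_g^-$: the set of points transverse to $x_g^-$ is open and dense, so pick $x$ in that set and also transverse to $x_g^+$ (still an open dense condition). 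Then $g\cdot x$ is close to $x_g^+$ if $x$ is close enough to $x_g^-$, since $g$ acts continuously; more precisely, $g \cdot x_g^- = x_g^-$ would be the wrong target, so instead I use the contraction: take $x$ very near $x_g^+$ but distinct, so that $g^{-1}\cdot x$ is near $x_g^-$. Then the pair $(g^{-1}\cdot x, x)$ — equivalently, after renaming, the pair $(y, g\cdot y)$ with $y = g^{-1}\cdot x$ — has $y$ near $x_g^-$ and $g\cdot y = x$ near $x_g^+$. Since transversality is an open condition and $(x_g^-, x_g^+)$ is a transverse pair, the pair $(y, g\cdot y)$ is transverse for $y$ in a neighborhood of $x_g^-$. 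Hence $\mathcal{U}_g$ contains a nonempty open set (in the real-analytic topology), and being Zariski-open and nonempty, it contains a nonempty Zariski-open subset of $G/P$.

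\textbf{Expected main obstacle.} There is no serious obstacle; the one point to be careful about is making the limiting/continuity argument clean. The cleanest route is: the map $\phi \colon G/P \to G/P \times G/P$, $y \mapsto (y, g\cdot y)$ is a regular (in particular continuous) map, the transversal locus $T \subset G/P \times G/P$ is nonempty Zariski-open, $\phi(x_g^-) = (x_g^-, x_g^+) \in T$ because attracting and repelling fixed points of a proximal element are transverse, hence $\phi^{-1}(T) = \mathcal{U}_g$ is a nonempty Zariski-open subset of $G/P$. Stated this way the proof needs only (i) Zariski-openness of the transversal locus in $G/P \times G/P$, which is standard, and (ii) transversality of $x_g^+$ and $x_g^-$, which is part of the definition of proximality recalled in Section~\ref{subsec:remind-prox}. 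I would present it in exactly that order.
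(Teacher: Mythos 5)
There is a genuine gap, and it is exactly at the one point where the lemma has content. Your reduction is fine: transversality is a Zariski-open condition on pairs, so $\mathcal{U}_g$, being the preimage of the open $G$-orbit in $G/P\times G/P$ under the regular map $y\mapsto (y,g\cdot y)$, is Zariski-open, and the whole problem is to show it is nonempty. But your witness does not exist as described. Since $x_g^-$ is a \emph{fixed} point of~$g$, the map $\phi(y)=(y,g\cdot y)$ sends $x_g^-$ to $(x_g^-,x_g^-)$, not to $(x_g^-,x_g^+)$, and a point of $G/P$ is never transverse to itself (for $P\neq G$). The earlier version of the argument fails for the same reason: by continuity, if $x$ is close to $x_g^+$ then $g^{-1}\cdot x$ is close to $g^{-1}\cdot x_g^+=x_g^+$, not to $x_g^-$; convergence to the repelling point requires iterating $g^{-1}$ (and a proximality statement for $g^{-1}$), and iterating only proves that $\mathcal{U}_{g^n}\neq\varnothing$ for large $n$, not the statement for $g$ itself. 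So no point transverse to its single $g$-image has been produced, and soft dynamics near the fixed points cannot produce one: for $y$ near a fixed point, $y$ and $g\cdot y$ are two nearby points, and whether they are transverse depends delicately on the direction of approach, since the non-transversality locus passes through the fixed point.

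This is precisely what the paper's proof has to work for. It reformulates nonemptiness of $\mathcal{U}_g$ as $C_g\cap Pw_0P\neq\varnothing$ (the conjugacy class meets the big Bruhat cell), reduces to the semisimple part $g_eg_h$ via $C_{g_eg_h}\subset\overline{C_g}$ and openness of $Pw_0P$, conjugates $g$ into $L=P\cap P^-$, and then uses proximality in an essential way: all eigenvalues of $\Ad(g_h)$ on $\uu^-$ have modulus $<1$, so $\mathrm{id}-\Ad(g)$ is invertible on $\uu^-$, which allows one to choose a direction $y_0$ such that the first-order displacement $(\mathrm{id}-\Ad(g))\,t\,y_0$ of $\exp(ty_0)\cdot P$ exits the non-transversality variety near the fixed point; then $\exp(ty_0)\,g\,\exp(ty_0)^{-1}\in Pw_0P$ for small $t$. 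If you want to salvage your approach, you must replace the continuity argument by some version of this infinitesimal analysis at the fixed point (or another genuine construction); as written, the nonemptiness step is unproved.
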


\begin{proof}
Let $\aaa$ be a Cartan subspace of the Lie algebra $\g$ of~$G$, and let $\Sigma$ be the set of restricted roots of $\aaa$ in~$\g$.
For $\alpha\in\Sigma$, let $\g_\alpha := \{ x\in\g \,|\, \mathrm{ad}(a)x = \alpha(a)x ~~ \forall a\in\aaa \}$ be the root space associated to~$\alpha$.
Let $\Delta \subset \Sigma$ be a choice of simple restricted roots, and $\Sigma^+ = \Sigma \cap (\RR_{\geq 0}\text{-span}(\Delta))$ the corresponding set of positive restricted roots.
There exists a subset $\theta$ of~$\Delta$ such that, up to conjugation, $P = P_{\theta}$ is the closed algebraic subgroup of~$G$ with Lie algebra
$$\g_0 \oplus \bigoplus_{\alpha\in\Sigma^+} \g_{\alpha} \oplus \bigoplus_{\alpha\in\Sigma^+\smallsetminus \Sigma_\theta^+} \g_{-\alpha},$$
where $\Sigma_{\theta}^+ := \Sigma^+ \smallsetminus \mathrm{span}(\Delta \smallsetminus \theta)$.
We also consider the opposite parabolic subgroup $P^-$ to~$P$ with Lie algebra
$$\g_0 \oplus \bigoplus_{\alpha\in\Sigma^+} \g_{-\alpha} \oplus \bigoplus_{\alpha\in\Sigma^+\smallsetminus \Sigma_\theta^+} \g_{\alpha}.$$
Its unipotent radical has Lie algebra
\begin{equation} \label{eqn:Lie-alg-u}
\uu^- = \bigoplus_{\alpha\in\Sigma_{\theta}^+} \g_{-\alpha}.
\end{equation}
We view $P^-$ as an element of $G/P$ via the identification of $G/P$ with the set of $G$-conjugates of~$P$ (since we have assumed $P$ to be self-opposite, $\theta$ is invariant under the opposition involution).

Let $W = N_G(\mathfrak{a})/Z_G(\mathfrak{a})$ be the restricted Weyl group, and let $W_{\theta}$ be the subgroup of~$W$ generated by the reflections in the simple restricted roots in $\Delta\smallsetminus\theta$.
Then the generalized Bruhat decomposition
$$G = \bigsqcup_{[w] \in W_{\theta}\backslash W/W_{\theta}} P w P$$
holds (see \cite[Th.\,5.15 \& Cor.\,5.20]{bt65}).
There is a unique double class $P w P$, or ``big Bruhat cell'', which is a nonempty Zariski-open subset of~$G$; it corresponds to the longest element $w_0$ of~$W$.
The elements of $G/P$ transverse to $P$ are exactly the cosets in $P w_0 P$ (\eg the coset $w_0 P$ corresponding to~$P^-$).

For $g\in G$, let $C_g$ be the conjugacy class of $g$ in~$G$ and $\overline{C_g}$ its Zariski closure in~$G$.
We would like to prove that if $g$ is proximal in $G/P$, then the set $\mathcal{U}_g$ is nonempty, \ie there exists $h\in G$ such that $ghP$ is transverse to $hP$ in $G/P$; equivalently, $h^{-1}ghP$ is transverse to~$P$; equivalently, $C_g \cap P w_0 P \neq \varnothing$.
If this is the case, then $C_g \cap P w_0 P$ is a nonempty Zariski-open subset of~$C_g$, and so $\mathcal{U}_g$ contains a nonempty Zariski-open subset of $G/P$.

Let us now check that $C_g \cap P w_0 P \neq \varnothing$.
Since $P w_0 P$ is open, we only need to check that $\overline{C_g} \cap P w_0 P \neq \varnothing$.
If we write the Jordan decomposition of~$g$ as $g = g_e g_h g_u$ where $g_e,g_h,g_u$ commute and are respectively elliptic, hyperbolic, and unipotent, then $C_{g_e g_h} \subset \overline{C_g}$; moreover, $g$ is proximal in $G/P$ if and only if $g_e g_h$ is.
Therefore, it is sufficient to prove that if $g$ is proximal in $G/P$ and $g = g_e g_h$ (\ie $g$ is semisimple), then $C_g \cap P w_0 P \neq \varnothing$.

Up to replacing $g$ by a conjugate in~$G$ (which does not change $C_g$), we may assume that $g$ has attracting fixed point $P$ and repelling fixed point~$P^-$.
In particular, $g$ belongs to the stabilizer of $P$ and~$P^-$ in~$G$, which is the reductive group $L := P\cap P^-$.
Up to further conjugating in~$L$, we may additionally assume $g_h \in \exp(\aaa)$.
For any $y\in \g$, we then have 
\begin{equation} \label{eq:conjtrans}
\exp(y) g \exp(y)^{-1} \in P w_0 P \iff \exp(y) \exp(-\mathrm{Ad}(g)y) \in P w_0 P.
\end{equation}

We now consider the embedding of $\varphi: \uu^- \to G/P$ given by $\varphi(y) = \exp(y)P$; its image (the set of flags transverse to~$P^-$) is Zariski-open in $G/P$, hence dense.
Let $Y$ be the set of elements $y \in\uu^-$ such that $\varphi(y)$ is nontransverse to~$P$, \ie such that $\exp(y) \notin P w_0 P$: it is a proper closed algebraic subvariety of~$\uu^-$ passing through (and usually singular at) the origin $0\in \uu^-$.

Since $g$ is proximal in $G/P$ with attracting fixed point~$P$ and repelling fixed point~$P^-$, we have $\langle -\alpha,\log(g_h)\rangle < 0$ for all $\alpha\in\theta$ (see \cite[Prop.\,3.3.(c)]{ggkw17}), hence for all $\alpha\in\Sigma_{\theta}^+$.
By \eqref{eqn:Lie-alg-u}, the Lie algebra $\uu^-$ is a sum of eigenspaces of $\mathrm{ad}(\log(g_h))$ for eigenvalues which are all~$<0$.
Therefore, $\uu^-$ is a sum of eigenspaces of $\mathrm{Ad}(g_h)$ for (positive) eigenvalues which are all~$<1$.
Let $\Vert\cdot\Vert$ be a norm on~$\uu^-$ for which the eigenspaces of $\mathrm{Ad}(g_h)$ are pairwise orthogonal.
We may assume that $\Vert\cdot\Vert$ is invariant under $\mathrm{Ad}(g_e)$ since $g_e$ is elliptic and commutes with~$g_h$.
For all nonzero $y\in\uu^-$ we have $\Vert\Ad(g_h)y\Vert < \Vert y\Vert$, and therefore $\Vert\Ad(g)y\Vert = \Vert\Ad(g_e g_h)y\Vert < \Vert y\Vert$.
Hence, $\mathrm{id}-\mathrm{Ad}(g)$ is an invertible linear transformation of~$\uu^-$.
In particular, there exists $y_0 \in \uu^- \smallsetminus \{0\}$ such that $(\mathrm{id}-\Ad(g))y_0$ is \emph{not} tangent to the closed algebraic variety $Y$ at~$0$, in the sense that there exists an open cone $\mathcal{O} \subset \uu^-$ containing the open ray through $(\mathrm{id}-\Ad(g))y_0$, and disjoint from $Y$ in a neighborhood of the origin.
As $t\to 0$, we have
$$\exp(ty_0)\exp(-\mathrm{Ad}(g)ty_0) = \exp\big(t (\mathrm{id}-\Ad(g))y_0 + O(t^2)\big) \in \exp(\mathcal{O}).$$
The right-hand side lies in $P w_0 P$  by definition of $Y$.
By~\eqref{eq:conjtrans}, this shows $\exp(ty_0) g \exp(ty_0)^{-1} \in P w_0 P$, hence\ $C_g \cap P w_0 P \neq\nolinebreak\varnothing$ as desired.
\end{proof}

\subsection{Application to Anosov representations} \label{sec:appli-anosov}

\begin{proposition} \label{prop:Ano-transv}
Let $G$ be a connected real linear semisimple Lie group, $P$ a self-opposite parabolic subgroup of~$G$, and $\Gamma_0$ a torsion-free $P$-Anosov subgroup of~$G$.
Let $\Gamma$ be a Zariski-dense subgroup of~$G$.
Suppose that there is a point $x \in \Lambda_{\Gamma}^{G/P}$ which is transverse to all points of $\Lambda_{\Gamma_0}^{G/P}$.
Then any neighborhood of $x$ in $G/P$ contains a point $y \in \Lambda_{\Gamma}^{G/P}$ which is uniformly transverse to $(\Gamma_0 \smallsetminus \{1\}) \cdot y$.
\end{proposition}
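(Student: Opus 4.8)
The plan is to use Lemma~\ref{lem:prox-move-transverse} together with the density Fact~\ref{fact:double-lim-set-G/P} to move a transverse point slightly so that it becomes \emph{uniformly} transverse, and then to propagate this to the whole orbit using the Anosov contraction dynamics of $\Gamma_0$. First I would note that since $\Gamma_0$ is $P$-Anosov and torsion-free, every nontrivial $\gamma\in\Gamma_0$ is proximal in $G/P$ with attracting point $x_\gamma^+\in\Lambda_{\Gamma_0}^{G/P}$ and repelling point $x_\gamma^-\in\Lambda_{\Gamma_0}^{G/P}$; moreover for any $z\in G/P$ transverse to $x_\gamma^-$ we have $\gamma^n\cdot z\to x_\gamma^+$ (see Section~\ref{subsec:remind-prox}). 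Since $x$ is transverse to all of $\Lambda_{\Gamma_0}^{G/P}$, it is in particular transverse to every $x_\gamma^-$, so $\gamma\cdot x$ is close to $x_\gamma^+$ once $\gamma$ is large, which already prevents $\gamma\cdot x$ from equalling $x$ for large $\gamma$ — but the issue is (a) small $\gamma$, (b) transversality being an open but not uniform condition, and (c) the target point must lie in $\Lambda_{\Gamma}^{G/P}$.

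The key steps, in order, would be: (1) Using $P$-Anosov-ness, show the map $\gamma\mapsto(x_\gamma^+,x_\gamma^-)$ from $\Gamma_0\smallsetminus\{1\}$ to pairs of transverse flags has the property that the set of accumulation points of $\{x_\gamma^+ : \gamma\in\Gamma_0\smallsetminus\{1\}\}$ and of $\{x_\gamma^- : \gamma\in\Gamma_0\smallsetminus\{1\}\}$ is contained in $\Lambda_{\Gamma_0}^{G/P}$ (limit points of attracting/repelling fixed points of proximal elements lie in the limit set). (2) Since $x$ is transverse to the \emph{closure} $\Lambda_{\Gamma_0}^{G/P}$, which is compact, uniform transversality of $x$ to $\Lambda_{\Gamma_0}^{G/P}$ holds automatically; combined with the contraction estimate of $P$-Anosov representations (the ratio of relevant singular values of $\rho_0(\gamma)$ grows with the word length of $\gamma$, uniformly), deduce that there are only finitely many $\gamma\in\Gamma_0\smallsetminus\{1\}$ for which $\gamma\cdot x$ fails to be close to $\Lambda_{\Gamma_0}^{G/P}$; for these finitely many exceptional $\gamma$, the point $\gamma\cdot x$ is transverse to $x$ (by Lemma~\ref{lem:prox-move-transverse} applied to each such $\gamma$, or directly from the hypothesis that $x$ is transverse to $x_\gamma^-$ and $\gamma$ proximal, together with an open-condition argument), so there is a neighborhood $\mathcal{W}$ of $x$ such that every $y\in\mathcal{W}$ is transverse to $\gamma\cdot y$ for all such exceptional $\gamma$, while automatically $y$ transverse to $\gamma\cdot y$ for the cofinite set of non-exceptional $\gamma$ follows from $y$ near $x$ and $\gamma\cdot y$ near $\Lambda_{\Gamma_0}^{G/P}$ (uniform transversality of $x$ to $\Lambda_{\Gamma_0}^{G/P}$ survives small perturbations). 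This gives uniform transversality of every $y\in\mathcal{W}$ to $(\Gamma_0\smallsetminus\{1\})\cdot y$. (3) Finally, invoke Fact~\ref{fact:double-lim-set-G/P}: since $x\in\Lambda_{\Gamma}^{G/P}$ and $\Gamma$ is Zariski-dense, the attracting fixed points of elements of $\Gamma$ proximal in $G/P$ are dense in $\Lambda_{\Gamma}^{G/P}$, so $\mathcal{W}$ contains a point $y\in\Lambda_{\Gamma}^{G/P}$, completing the proof.

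The care needed in step (2) is to make the ``cofinitely many $\gamma$'' argument genuinely uniform: one uses that $P$-Anosov implies $\rho_0(\gamma_n)\cdot z\to x_{\gamma_n}^+$ \emph{uniformly} for $z$ ranging over compact sets of flags transverse to a neighborhood of $\Lambda_{\Gamma_0}^{G/P}$ (this is the standard uniform north–south dynamics for Anosov groups, as in \cite{ggkw17}), together with the fact that $x$ has a neighborhood all of whose points are transverse to $\Lambda_{\Gamma_0}^{G/P}$. Then for $\gamma$ of large enough word length, $\gamma\cdot y$ lands in a small neighborhood of $\Lambda_{\Gamma_0}^{G/P}$ for all $y$ in a fixed neighborhood of $x$, hence is transverse to $y$ (again since $y$ is transverse to a neighborhood of $\Lambda_{\Gamma_0}^{G/P}$); this handles all but finitely many $\gamma$ uniformly in $y$. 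The finitely many remaining $\gamma$ are dealt with one at a time by openness of transversality.

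The main obstacle I expect is \textbf{getting uniformity over the whole infinite orbit $(\Gamma_0\smallsetminus\{1\})\cdot y$ simultaneously}, as opposed to pointwise transversality for each $\gamma$ separately: this is exactly where $P$-Anosov-ness (rather than mere discreteness) is essential, via the uniform expansion/contraction estimates, and it is the reason the statement restricts to $\Gamma_0$ Anosov rather than an arbitrary discrete subgroup (cf.\ the analogous but harder Step~2 in the proof of Theorem~\ref{thm:free-product-in-G}, where uniform transversality had to be proved from scratch using \cite[Cor.\,3.6]{dgk-proj-cc}). Here, by contrast, the Anosov hypothesis gives this essentially for free, and Lemma~\ref{lem:prox-move-transverse} is what reconciles the \emph{existence} of a transverse point $x$ in the hypothesis with the need for transversality of $\gamma\cdot x$ to $x$ for the finitely many small $\gamma$.
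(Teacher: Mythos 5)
Your overall architecture (a finite exceptional set coming from the Anosov dynamics, a separate treatment of the finitely many small elements, then density of $\Lambda_{\Gamma}^{G/P}$ near $x$) parallels the paper's, but the treatment of the exceptional elements has a genuine gap that the rest of the argument cannot absorb. You assert that for each of the finitely many exceptional $\gamma\in\Gamma_0\smallsetminus\{1\}$ the point $\gamma\cdot x$ is transverse to $x$, ``by Lemma~\ref{lem:prox-move-transverse} applied to each such $\gamma$, or directly from the hypothesis that $x$ is transverse to $x_\gamma^-$''. Neither justification works. Lemma~\ref{lem:prox-move-transverse} only says that the set $\mathcal{U}_\gamma$ of points $y$ transverse to $\gamma\cdot y$ contains a nonempty Zariski-open subset of $G/P$; it gives no information about whether the \emph{particular} point $x$ lies in $\mathcal{U}_\gamma$. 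And transversality of $x$ to $x_\gamma^{\pm}$ only controls the asymptotics $\gamma^n\cdot x\to x_\gamma^+$; for a fixed small power it does not prevent $\gamma\cdot x$ from landing in the non-transverse locus of $x$, which in a higher flag manifold is a positive-dimensional subvariety rather than just the fixed points. If $x$ were in fact transverse to its whole exceptional orbit, the proposition would be essentially immediate; the statement is precisely designed to handle the case where $x$ fails this and must be moved.

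Because of this, your step (3) is also too weak. Once one only knows (as Lemma~\ref{lem:prox-move-transverse} actually gives) that each good set $\mathcal{U}_\gamma$ is Zariski-generic, the density of attracting fixed points in $\Lambda_{\Gamma}^{G/P}$ (Fact~\ref{fact:double-lim-set-G/P}) does not suffice: a priori $\Lambda_{\Gamma}^{G/P}\cap\mathcal{W}$ could be entirely contained in the positive-codimension subvariety $G/P\smallsetminus\bigcap_{\gamma\in F_0\smallsetminus\{1\}}\mathcal{U}_\gamma$. The paper's proof closes exactly this hole: it intersects the Zariski-open sets $\mathcal{U}_\gamma$ over the finite exceptional set and invokes the fact that the limit set of a Zariski-dense subgroup is not locally contained in a proper algebraic subvariety of $G/P$ (\cite[Lem.\,2.11]{elo23}), so every neighborhood of $x$ meets $\Lambda_{\Gamma}^{G/P}\cap\bigcap_{\gamma}\mathcal{U}_\gamma$, and such a point $y$ is the one required. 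Your handling of the non-exceptional (large) elements, via transversality of a whole neighborhood of $x$ to a neighborhood of $\Lambda_{\Gamma_0}^{G/P}$ together with the uniform Anosov dynamics, is fine and matches the paper's opening step; what is missing is this Zariski-genericity plus non-degeneracy-of-the-limit-set argument for the finitely many small elements.
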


\begin{proof}
Since $\Gamma_0$ is $P$-Anosov and $x$ is transverse to $\Lambda_{\Gamma_0}^{G/P}$, the set of accumulation points of the $\Gamma_0$-orbit of~$x$ is $\Lambda_{\Gamma_0}^{G/P}$ and there is a finite subset $F_0$ of~$\Gamma_0$ such that $x$ is uniformly transverse to $\gamma_0\cdot x$ for all $\gamma_0 \in \Gamma_0 \smallsetminus F_0$.
Since $\Gamma_0$ is $P$-Anosov and torsion-free, every nontrivial element $\gamma_0$ of~$\Gamma_0$ is proximal in $G/P$; by Lemma~\ref{lem:prox-move-transverse}, the set $\mathcal{U}_{\gamma_0}$ of points $y\in G/P$ transverse to $\gamma_0\cdot y$ contains a nonempty Zariski-open subset of $G/P$.
In particular, $\mathcal{U} := \bigcap_{\gamma_0 \in F_0 \smallsetminus \{ 1\}} \mathcal{U}_{\gamma_0}$ contains a nonempty Zariski-open subset of $G/P$, and so $G/P \smallsetminus \mathcal{U}$ is contained in an algebraic subvariety of $G/P$ of positive codimension.
Since $\Lambda_{\Gamma}^{G/P}$ is not locally contained in an algebraic subvariety of $G/P$ of positive codimension (see \cite[Lem.\,2.11]{elo23}), any neighborhood of $x$ in $G/P$ meets $\Lambda_{\Gamma}^{G/P} \cap \mathcal{U}$.
\end{proof}

\begin{proof}[Proof of Theorem~\ref{thm:Anosov-free-product-G/P}]
We argue as in the proof of Theorem~\ref{thm:free-product-G/P} in Section~\ref{subsec:free-product-in-G}.
By \cite[Lem.\,3.2 \& Prop.\,3.3--3.5]{ggkw17}, there is a finite-dimensional irreducible linear representation $\tau : G\to\GL(V)$ of~$G$ such that
\begin{itemize}
  \item for any $g\in G$, the element $g$ is proximal in $G/P$ if and only if $\tau(g)$ is biproximal in $\PP(V)$,
  \item for any subgroup $\Gamma'$ of~$G$ which is Gromov hyperbolic, the natural inclusion $\Gamma' \hookrightarrow G$ is $P$-Anosov if and only if the restriction of $\tau$ to~$\Gamma'$ is $P_1$-Anosov;
  \item there is a $\tau$-equivariant embedding $\iota : G/P \hookrightarrow \mathcal{F}$, with image $\Lambda_G^{\mathcal{F}}$, such that a pair of points of $G/P$ is transverse if and only if its image under~$\iota$ is transverse in~$\mathcal{F}$,
\end{itemize}
where $\mathcal{F}$ is the space of partial flags $(x,X)$ with $x \in \PP(V)$ and $X \in \PP(V^*)$.
Up to replacing $\tau$ by $\mathrm{Sym}^2(\tau)$ and $V$ by $\mathrm{Sym}^2(V)$ (see Fact~\ref{fact:Omega-sym}), we may furthermore assume that $\tau(G)$ preserves a nonempty properly convex open subset $\Omega$ of $\PP(V)$.

By Proposition~\ref{prop:Ano-transv}, for each $i\in\{0,1\}$ there is a point $y_i \in G/P$ which is uniformly transverse to $(\Gamma_i \smallsetminus \{1\}) \cdot y_i$; moreover, if $x_i$ belongs to $\Lambda_{\Gamma}^{G/P}$ for some Zariski-dense subgroup $\Gamma$ of~$G$, then we can also take $y_i$ in~$\Lambda_{\Gamma}^{G/P}$.
Uniform transversality ensures that $\Gamma_i$ acts faithfully on $G/P$; therefore $\Gamma_i$ also acts faithfully on $\Lambda_G^{\PP(V)} = \iota(G/P)$ via~$\tau$, and the restriction of $\tau$ to~$\Gamma_i$ is injective.
Moreover, $\iota(y_i) \in \Lambda_{\tau(\Gamma)}^{\mathcal{F}}$ is uniformly transverse to $(\tau(\Gamma_i)\smallsetminus\{1\}) \cdot y_i$.

By Fact~\ref{fact:Anosov}, for each $i\in\{0,1\}$ the group $\Gamma_i$ acts convex cocompactly via~$\tau$ on some nonempty properly convex open subset $\Omega'_i$ of $\PP(V)$.
All $\Gamma_i$-invariant properly convex open subsets of $\PP(V)$ are contained in
$$\mathcal{U} := \PP(V) \smallsetminus \bigcup_{H\in\Lambda_{\Gamma_i}^{\PP(V^*)}} H.$$
Since the restriction of $\tau$ to~$\Gamma_i$ is $P_1$-Anosov, the action of $\Gamma_i$ on $\Lambda_{\Gamma_i}^{\PP(V^*)}$ is minimal, and so $\mathcal{U}$ has only one $\Gamma_i$-invariant connected component.
From this we deduce that $\Omega \cap \Omega'_i$ is nonempty (it contains the convex hull of $\Lambda_{\Gamma_i}^{\PP(V)}$ in~$\mathcal{U}$).
We conclude by applying Theorem~\ref{thm:free-product-cc-in-G}, and then Fact~\ref{fact:Anosov} again.
\end{proof}

\subsection{Ensuring the existence of a point transverse to the limit set}

Recall that a \emph{proper domain} of $G/P$ is a connected nonempty open subset $\Omega$ such that there is a point of $G/P$ which is transverse to all points of~$\overline{\Omega}$.

\begin{lemma} \label{lem:proper-domain-transv}
Let $G$ be a connected real linear semisimple Lie group, $P$ a self-opposite parabolic subgroup of~$G$, and $\Gamma_0$ a subgroup of~$G$ preserving a proper domain $\Omega$ of $G/P$.
Then any point of~$\Omega$ is transverse to any point of the proximal limit set $\Lambda_{\Gamma_0}^{G/P}$.
\end{lemma}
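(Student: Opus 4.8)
The plan is to use the definition of a proper domain together with the dynamical properties of proximal elements. Let $z \in G/P$ be a point witnessing that $\Omega$ is a proper domain, i.e.\ $z$ is transverse to every point of~$\overline{\Omega}$. First I would reduce the statement to understanding the orbit of $z$ under $\Gamma_0$: since $\Gamma_0$ preserves~$\Omega$, it preserves $\overline{\Omega}$, and hence for every $\gamma \in \Gamma_0$ the point $\gamma^{-1}\cdot z$ is transverse to every point of~$\overline{\Omega}$ as well (transversality is $G$-invariant). Equivalently, $z$ is transverse to $\gamma \cdot \overline{\Omega}$, but more usefully: every point of $\overline{\Omega}$ is transverse to $\gamma \cdot z$ for all $\gamma \in \Gamma_0$. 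So it suffices to show that the closure of the $\Gamma_0$-orbit $\overline{\Gamma_0 \cdot z}$ contains $\Lambda_{\Gamma_0}^{G/P}$: then any $x \in \Omega$, being transverse to each $\gamma \cdot z$, is transverse to each point of $\overline{\Gamma_0 \cdot z} \supset \Lambda_{\Gamma_0}^{G/P}$, because the set of points transverse to a fixed point of $G/P$ is open, so the set of points transverse to~$x$ is closed and contains $\Gamma_0 \cdot z$.

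The key step is therefore: $\Lambda_{\Gamma_0}^{G/P} \subset \overline{\Gamma_0 \cdot z}$. Recall $\Lambda_{\Gamma_0}^{G/P}$ is by definition the closure of the set of attracting fixed points $x_\gamma^+$ of elements $\gamma \in \Gamma_0$ that are proximal in $G/P$. Fix such a proximal $\gamma$; it has a unique attracting fixed point $x_\gamma^+$ and a unique repelling fixed point $x_\gamma^-$, and $\gamma^n \cdot w \to x_\gamma^+$ as $n \to +\infty$ for every $w \in G/P$ transverse to~$x_\gamma^-$ (as recalled in Section~\ref{subsec:remind-prox}). Now I claim $z$ is transverse to $x_\gamma^-$: indeed $x_\gamma^- \in \overline{\Omega}$, since $\Omega$ is nonempty and $\gamma$-invariant, so picking any $w_0 \in \Omega$ we have $\gamma^{-n}\cdot w_0 \to x_\gamma^-$ (applying the proximality of $\gamma^{-1}$? — more simply, $x_\gamma^- \in \overline{\Omega}$ follows because $\overline{\Omega}$ is a nonempty closed $\gamma$-invariant set and $x_\gamma^-$ is the limit of $\gamma^{-n} \cdot w_0$ for $w_0$ transverse to $x_\gamma^+$; since $\Omega$ is open and $\gamma$-invariant it contains such a $w_0$, and then $\gamma^{-n}\cdot w_0 \in \Omega$ for all $n$, so the limit $x_\gamma^-$ lies in $\overline{\Omega}$). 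Hence $z$, being transverse to all of $\overline{\Omega}$, is transverse to $x_\gamma^-$. Therefore $\gamma^n \cdot z \to x_\gamma^+$, showing $x_\gamma^+ \in \overline{\Gamma_0 \cdot z}$. Taking closures, $\Lambda_{\Gamma_0}^{G/P} \subset \overline{\Gamma_0 \cdot z}$, which completes the argument.

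I expect the only genuinely delicate point to be the verification that $x_\gamma^- \in \overline{\Omega}$ (equivalently $x_\gamma^+ \in \overline{\Omega}$, by symmetry applied to $\gamma^{-1}$ — note $\gamma^{-1}$ is proximal too since $\gamma$ acts on the proper domain). The clean way is: $\Omega$ is a nonempty open set invariant under the infinite cyclic group $\langle \gamma \rangle$ with $\gamma$ proximal, so $\Omega$ must contain a point $w$ transverse to $x_\gamma^-$ — because the non-transverse locus of $x_\gamma^-$ is a proper Zariski-closed (hence nowhere dense) subset of $G/P$, while $\Omega$ is open and nonempty. Then $\gamma^n \cdot w \in \Omega$ for all $n$ and $\gamma^n \cdot w \to x_\gamma^+$, so $x_\gamma^+ \in \overline{\Omega}$; applying the same to $\gamma^{-1}$ gives $x_\gamma^- \in \overline{\Omega}$. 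Everything else is a routine chase through the definition of transversality and the openness/closedness of the relevant loci; no computation is required.
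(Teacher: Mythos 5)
Your overall strategy is sound, but the final deduction contains a genuine error. You pass from ``$x\in\Omega$ is transverse to each $\gamma\cdot z$'' to ``$x$ is transverse to each point of $\overline{\Gamma_0\cdot z}\supset\Lambda_{\Gamma_0}^{G/P}$'' by asserting that ``the set of points transverse to $x$ is closed''. That assertion is false, and it certainly does not follow from openness: the set of points of $G/P$ transverse to a fixed $x$ is the big Bruhat cell, which is open and dense with complement a nonempty proper closed subvariety (whenever $P\neq G$), so it is not closed. Transversality is an open condition, and limits of points transverse to $x$ need not be transverse to $x$; pointwise transversality to the orbit $\Gamma_0\cdot z$ alone does not pass to the orbit closure. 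As written, this step fails.

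The step is nevertheless true, but for a reason that uses the full strength of what you established earlier, namely that $\Gamma_0\cdot z\subset\Omega^*$ (each $\gamma\cdot z$ is transverse to \emph{all} of $\overline{\Omega}$) together with the openness of $\Omega$: if $\xi=\lim_n\gamma_n\cdot z$ were non-transverse to some $x\in\Omega$, choose via a local section of $G\to G/P$ near $\xi$ elements $g_n\to e$ in $G$ with $\gamma_n\cdot z=g_n\cdot\xi$; then $g_n\cdot x$ is non-transverse to $\gamma_n\cdot z$, and $g_n\cdot x\to x\in\Omega$, so for large $n$ the point $g_n\cdot x$ lies in $\Omega\subset\overline{\Omega}$, contradicting $\gamma_n\cdot z\in\Omega^*$. (The correct general statement is that the set of points transverse to every point of the \emph{open} set $\Omega$ is closed, not that the transversality locus of a single point is closed.) With this repair your argument is essentially the paper's proof: the paper likewise works with the dual $\Omega^*$, shows $x_\gamma^+\in\partial\Omega^*$ by flowing a point of $\Omega^*$ transverse to $x_\gamma^-$ under positive powers of $\gamma$, and concludes by the same duality; your extra verification that $x_\gamma^{\pm}\in\overline{\Omega}$, so that $z$ itself can be used, is a fine variant. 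A minor side remark: $\gamma^{-1}$ is proximal in $G/P$ not ``because $\gamma$ acts on the proper domain'' but because $P$ is self-opposite (the opposition involution preserves the defining subset of simple roots), which is exactly what makes $x_\gamma^-$ an honest repelling fixed point.
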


\begin{proof}
Since $\Omega$ is a proper domain, it has a nonempty dual $\Omega^*$, which is by definition the set of elements of $G/P$ which are transverse to all elements of~$\overline{\Omega}$.
Let $\gamma \in \Gamma_0$ be proximal in $G/P$.
Since $P$ is self-opposite, $\gamma$ has both an attracting fixed point $x_{\gamma}^+$ and a repelling fixed point $x_{\gamma}^-$ in $G/P$, and any point of $G/P$ which is transverse to $x_{\gamma}^-$ is attracted towards $x_{\gamma}^+$ by positive powers of~$\gamma$; in particular, there exists $y \in \Omega^*$ such that $\gamma^n\cdot y \to x_{\gamma}^+$, and so $x_{\gamma}^+ \in \partial \Omega^*$.
By taking the closure, we have $\Lambda_{\Gamma_0}^{G/P} \subset \partial\Omega^*$, hence any point of~$\Omega$ is transverse to $\Lambda_{\Gamma_0}^{G/P}$.
\end{proof}

Here is an immediate consequence of Theorem~\ref{thm:Anosov-free-product-G/P} and Lemma~\ref{lem:proper-domain-transv}.

\begin{corollary} \label{cor:Anosov-free-product-proper-domain}
Let $G$ be a noncompact connected real linear semisimple Lie group and $P$ a self-opposite parabolic subgroup of~$G$.
For $i\in\{0,1\}$, let $\Gamma_i$ be a torsion-free discrete subgroup of~$G$ which is Gromov hyperbolic and such that the natural inclusion $\Gamma_i \hookrightarrow G$ is $P$-Anosov; suppose that $\Gamma_i$ preserves a proper domain $\Omega_i$ in $G/P$.
Then there exists $g\in\Gamma$ such that the representation $\rho : \Gamma_0 * g\Gamma_1 g^{-1} \to G$ induced by the inclusions $\Gamma_0, g\Gamma_1 g^{-1} \hookrightarrow G$ is $P$-Anosov.
\end{corollary}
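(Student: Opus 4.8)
The plan is to obtain Corollary~\ref{cor:Anosov-free-product-proper-domain} as an immediate consequence of Theorem~\ref{thm:Anosov-free-product-G/P}, the point being that the proper-domain hypothesis automatically supplies the transversality condition required by that theorem.

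First I would fix, for each $i\in\{0,1\}$, an arbitrary point $x_i\in\Omega_i$. By hypothesis $\Omega_i$ is a proper domain of $G/P$ which is preserved by~$\Gamma_i$, and $P$ is self-opposite; hence Lemma~\ref{lem:proper-domain-transv} applies and shows that $x_i$ is transverse to every point of the proximal limit set $\Lambda_{\Gamma_i}^{G/P}$. (Note that it is crucial here that Lemma~\ref{lem:proper-domain-transv} allows $x_i$ to be taken in the open domain $\Omega_i$ itself, not merely in its dual: any point of $\Omega_i$ lies in $G/P\smallsetminus\partial\Omega_i^{*}$ and is therefore transverse to $\overline{\Omega_i^{*}}$, which contains $\Lambda_{\Gamma_i}^{G/P}$.)

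With this in hand I would apply Theorem~\ref{thm:Anosov-free-product-G/P} directly. Its hypotheses---that $G$ is a noncompact connected real linear semisimple Lie group, that $P$ is a self-opposite parabolic subgroup of~$G$, and that $\Gamma_0,\Gamma_1$ are torsion-free discrete Gromov hyperbolic subgroups of~$G$ with $P$-Anosov inclusion into~$G$---are exactly what is assumed in the corollary, while its remaining hypothesis, namely the existence for each $i$ of a point of $G/P$ transverse to all of $\Lambda_{\Gamma_i}^{G/P}$, is provided by the points $x_i$ constructed above. Theorem~\ref{thm:Anosov-free-product-G/P} then produces an element $g\in G$ such that the representation $\rho:\Gamma_0*g\Gamma_1 g^{-1}\to G$ induced by the inclusions $\Gamma_0,g\Gamma_1 g^{-1}\hookrightarrow G$ is $P$-Anosov, which is the asserted conclusion. (If one wants $g$ in a prescribed Zariski-dense subgroup $\Gamma$ of~$G$, one additionally needs the $x_i$ to lie in $\Lambda_{\Gamma}^{G/P}$, which is not available just from the proper-domain assumption; so here we simply take $g\in G$.)

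There is no real obstacle: the corollary is a formal combination of Lemma~\ref{lem:proper-domain-transv}, which converts ``$\Gamma_i$ preserves a proper domain'' into ``$\Lambda_{\Gamma_i}^{G/P}$ admits a transverse point'', and Theorem~\ref{thm:Anosov-free-product-G/P}, which carries out the actual combination. The only points worth verifying are bookkeeping ones---that the standing hypotheses ($G$ noncompact connected linear semisimple, $P$ self-opposite, $\Gamma_i$ torsion-free Gromov hyperbolic and $P$-Anosov) transfer verbatim, and that one may legitimately feed a point of the open domain to Lemma~\ref{lem:proper-domain-transv}---both of which are routine.
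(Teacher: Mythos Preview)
Your proposal is correct and follows exactly the paper's approach: the corollary is stated in the paper as ``an immediate consequence of Theorem~\ref{thm:Anosov-free-product-G/P} and Lemma~\ref{lem:proper-domain-transv}'', which is precisely the two-step combination you describe. Your observation about $g\in G$ versus $g\in\Gamma$ is apt: the $\Gamma$ in the statement appears to be a typo (no such subgroup is introduced in the corollary's hypotheses), and your reading that one simply obtains $g\in G$ from the first part of Theorem~\ref{thm:Anosov-free-product-G/P} is the right one.
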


We note that the set of $P$-Anosov representations preserving a proper domain in $G/P$ is open in $\mathrm{Hom}(\Gamma_i,G)$: see \cite[Cor.\,4.4.1]{gal-PhD}.

Corollary~\ref{cor:Anosov-free-product-proper-domain} applies for instance in the setting of \emph{Nagano spaces} (also known as \emph{extrinsic symmetric spaces} or \emph{symmetric R-spaces}).
By definition, these are the flags manifolds $G/P$ that identify with a Riemannian symmetric space of a maximal compact subgroup of~$G$.
Nagano \cite{nag65} proved that in that case the proper parabolic subgroup $P$ of~$G$ is necessarily maximal, given by one simple restricted root~$\alpha$, and that there is a noncompact reductive subgroup $H$ of~$G$ preserving a proper domain $\Omega_H$ in $G/P$, which identifies with the Riemannian symmetric space of~$H$.
The full list of triples $(G,H,\alpha)$ with $G$ simple and $\alpha$ invariant under the opposition involution (\ie $P$ self-opposite) is given in Table~\ref{table1} below.
In all these cases the group $H$ contains elements which are proximal in $G/P$, hence (see Theorem~\ref{thm:Anosov-free-product-G/P}) nonabelian free groups which are $P$-Anosov.
In some cases there exist $P$-Anosov representations of closed surface groups with image in~$H$ which can be continuously deformed into $P$-Anosov representations whose image is Zariski-dense in~$G$ and still preserves a proper domain in $G/P$: in particular, this is proved by Galiay \cite[Prop.\,4.4.2]{gal-PhD} for cases (iii), (ix), (x) of Table~\ref{table1} with even~$n$, and for case~(vii) with $q=1$.
By Corollary~\ref{cor:Anosov-free-product-proper-domain}, we can combine the images of these Anosov representations into a $P$-Anosov free product. 

\begin{center}
\begin{table}[!h]
\centering
\begin{tabular}{|p{0.8cm}|p{2.6cm}|p{3.3cm}|p{3.5cm}|p{2.2cm}|}
\hline
& \centering $G$ & \centering $H$ & \centering restricted root system of $G$ & \centering $\alpha$\tabularnewline
\hline
\centering (i) & \centering $\SO(2n,2n)$ & \centering $\SO(2n,\CC)$ & \centering $D_{2n}$ & \centering $\alpha_{2n-1}$ or $\alpha_{2n}$\tabularnewline
\centering (ii) & \centering $\Sp(n,n)$ & \centering $\Sp(2n,\CC)$ & \centering $C_n$ & \centering $\alpha_n$\tabularnewline
\centering (iii) & \centering $\SU(n,n)$ & \centering $\SL(n,\CC) \times \RR$ & \centering $C_n$ & \centering $\alpha_n$\tabularnewline
\centering (iv) & \centering $\SL(2n,\RR)$ & \centering $\SO(n,n)$ & \centering $A_{2n-1}$ & \centering $\alpha_n$\tabularnewline
\centering (v) & \centering $\SL(2n,\CC)$ & \centering $\SU(n,n)$ & \centering $A_{2n-1}$ & \centering $\alpha_n$\tabularnewline
\centering (vi) & \centering $\SL(2n,\HH)$ & \centering $\Sp(n,n)$ & \centering $A_{2n-1}$ & \centering $\alpha_n$\tabularnewline
\centering (vii) & \centering $\SO(p+1,q+1)$ & \centering $\SO(p,1) \times \SO(1,q)$ & \centering $\left \{ \!\!\! \begin{array}{l} B_{q+1} \text{ if } p>q \\ D_{q+1} \text{ if } p=q>1\end{array} \right .$
& \centering $\alpha_1$\tabularnewline
\centering (viii) & \centering $\SO(n,1)$ & \centering $\SO(n-1,1)$ & \centering $A_1$ & \centering $\alpha_1$\tabularnewline
\centering (ix) & \centering $\SO^*(4n)$ & \centering $\SL(n,\HH) \times \RR$ & \centering $C_n$ & \centering $\alpha_n$\tabularnewline
\centering (x) & \centering $\Sp(2n,\RR)$ & \centering $\SL(n,\RR) \times \RR$ & \centering $C_n$ & \centering $\alpha_n$\tabularnewline
\centering (xi) & \centering $E_{7(-25)}$ & \centering $E_{6(-26)} \times \RR$ & \centering $C_3$ & \centering $\alpha_3$\tabularnewline
\centering (xii) & \centering $\SO(n+2,\CC)$ & \centering $\SO(n,2)$ & \centering $\left \{ \!\!\! \begin{array}{ll} B_{(n+1)/2} & \text{if $n$ odd} \\ D_{(n+2)/2} &\text{if $n$ even} \end{array} \right .$
& \centering $\alpha_1$\tabularnewline
\centering (xiii) & \centering $\Sp(2n,\CC)$ & \centering $\Sp(2n,\RR)$ & \centering $C_n$ & \centering $\alpha_n$\tabularnewline
\centering (xiv) & \centering $\SO(4n,\CC)$ & \centering $\SO^*(4n)$ & \centering $D_{2n}$ & \centering $\alpha_{2n-1}$ or $\alpha_{2n}$\tabularnewline
\centering (xv) & \centering $E_{7,\CC}$ & \centering $E_{7(-25)}$ & \centering $E_7$ & \centering $\alpha_7$\tabularnewline
\centering (xvi) & \centering $E_{7(7)}$ & \centering $\SL(4,\HH)$ & \centering $E_7$ & \centering $\alpha_7$\tabularnewline
\hline
\end{tabular}
\vspace{0.15cm}
\caption{Complete list (up to local isomorphism) of triples $(G,H,\alpha)$ such that $G$ is simple, $\alpha$ is a simple restricted root determining a maximal proper parabolic subgroup $P$ of~$G$ such that $G/P$ is a self-opposite Nagano space, and $H$ is a noncompact reductive subgroup of~$G$ whose Riemannian symmetric space embeds as a proper domain in $G/P$. Here $n,p,q$ are any positive integers with $p\geq q$. In cases (i) and~(xiv) we take $n\geq 2$, in case (vii) we take $\max(p,q)\geq 2$, and in case (xii) we take $n\geq 3$. This list is obtained from \cite{nag65,mak73}.}
\label{table1}
\end{table}
\end{center}



\begin{thebibliography}{DGKLM}

\bibitem[BC]{bc08}
\textsc{M. Baker, D. Cooper}, \textit{A combination theorem for convex hyperbolic manifolds, with applications to surfaces in 3-manifolds}, J. Topol.~1 (2008), p.~603--642.

\bibitem[BDL]{bdl18}
\textsc{S. Ballas, J. Danciger, G.-S. Lee}, \textit{Convex projective structures on non-hyperbolic three-manifolds},
Geom. Topol.~22 (2018), p.~1593--1646.

\bibitem[B1]{ben96}
\textsc{Y. Benoist}, \textit{Actions propres sur les espaces homog\`enes r\'eductifs}, Ann. of Math.~144 (1996), p.~315--347.

\bibitem[B2]{ben97}
\textsc{Y. Benoist}, \textit{Propri\'et\'es asymptotiques des groupes lin\'eaires}, Geom. Funct. Anal.~7 (1997), p.~1--47.

\bibitem[B3]{ben00}
\textsc{Y. Benoist}, \textit{Automorphismes des c\^ones convexes}, Invent. Math.~141 (2000), p.~149--193.

\bibitem[B4]{ben05}
\textsc{Y. Benoist}, \textit{Convexes divisibles III}, Ann. Sci. \'Ec. Norm. Sup\'er.~38 (2005), p.~793--832.

\bibitem[Be]{ber00}
\textsc{N. Bergeron}, \textit{Premier nombre de Betti et spectre du laplacien de certaines vari\'et\'es hyperboliques}, Enseign. Math.~46 (2000), p.~109--137.

\bibitem[BN]{bn22}
\textsc{F. Berlai, J. de la Nuez}, \textit{Linearity of graph products}, Proc. Lond. Math. Soc.~124 (2022), p.~587--600.

\bibitem[BF]{bf92}
\textsc{M. Bestvina, Feighn}, \textit{A combination theorem for negatively curved groups}, J. Differential Geom.~35 (1992), p.~85--101.

\bibitem[Bl]{bla-PhD}
\textsc{P.-L. Blayac}, \textit{Aspects dynamiques des structures projectives convexes}, PhD thesis, Universit\'e Paris-Saclay, 2021, see \url{https://theses.hal.science/tel-03484026}.

\bibitem[BV]{bv}
\textsc{P.-L. Blayac, G. Viaggi}, \textit{Divisible convex sets with properly embedded cones}, to appear in Publ. Math. Inst. Hautes \'Etudes Sci.

\bibitem[BF]{bf24}
\textsc{M. D. Bobb, J. Farre}, \textit{Affine laminations and coaffine representations}, arXiv:2404.14282.

\bibitem[BPS]{bps19}
\textsc{J. Bochi, R. Potrie, A. Sambarino}, \textit{Anosov representations and dominated splittings}, J. Eur. Math. Soc.~21 (2019), p.~3343--3414.

\bibitem[Bo]{bor60}
\textsc{A. Borel}, \textit{Density properties for certain subgroups of semi-simple groups without compact components}, Ann. of Math.~72 (1960), 179--188.

\bibitem[BT]{bt65}
\textsc{A. Borel, J. Tits}, \textit{Groupes r\'eductifs}, Publ. Math. Inst. Hautes \'Etudes Sci.~27 (1965), p.~55--151.

\bibitem[Bu]{bus55}
\textsc{H. Busemann}, \textit{The geometry of geodesics}, Academic Press Inc., New York, 1955.

\bibitem[CV]{cv-Nori}
\textsc{I. Chatterji, T. N. Venkataramana}, \textit{Discrete linear groups containing arithmetic groups}, arXiv:0905.1813.

\bibitem[CG]{cg93}
\textsc{S. Choi, W. M. Goldman},  \textit{Convex real projective structures on closed surfaces are closed}, Proc. Amer. Math. Soc.~118 (1993), p.~657--661.

\bibitem[DGK1]{dgk-Hpq-cc}
\textsc{J. Danciger, F. Gu\'eritaud, F. Kassel}, \textit{Convex cocompactness in pseudo-Riemannian hyperbolic spaces}, Geom. Dedicata~192 (2018), p.~87--126, special issue \textit{Geometries: A Celebration of Bill Goldman's 60th Birthday}.

\bibitem[DGK2]{dgk-proj-cc}
\textsc{J. Danciger, F. Gu\'eritaud, F. Kassel}, \textit{Convex cocompact actions in real projective geometry}, Ann. Sci. \'Ec. Norm. Sup\'er.~57 (2024), p.~1753--1843.

\bibitem[DGKLM]{dgklm}
\textsc{J. Danciger, F. Gu\'eritaud, F. Kassel, G.-S. Lee, L. Marquis}, \textit{Convex cocompactness for Coxeter groups}, J. Eur. Math. Soc.~27 (2025), p.~119--181.

\bibitem[DH]{dh}
\textsc{S. Dey, S. Hurtado}, \textit{Remarks on discrete subgroups with full limit sets in higher rank Lie groups}, arXiv:2405.10209.

\bibitem[DK1]{dk23}
\textsc{S. Dey, M. Kapovich}, \textit{Klein--Maskit combination theorem for Anosov subgroups: Free products}, Math. Z.~305 (2023), article number~35.

\bibitem[DK2]{dk25}
\textsc{S. Dey, M. Kapovich}, \textit{Klein--Maskit combination theorem for Anosov subgroups: Amalgams}, J. Reine Angew. Math.~819 (2025), p.~1--43.

\bibitem[DKL]{dkl19}
\textsc{S. Dey, M. Kapovich, B. Leeb}, \textit{A combination theorem for Anosov subgroups}, Math. Z.~293 (2019), p.~551--578.

\bibitem[ELO]{elo23}
\textsc{S. Edwards, M. Lee, H. Oh}, \textit{Anosov groups: local mixing, counting and equidistribution}, Geom. Topol.~27 (2023), p.~513--573.

\bibitem[F]{fis-survey-margulis}
\textsc{D. Fisher}, \textit{Superrigidity, arithmeticity, normal subgroups: results, ramifications and directions}, in \textit{Dynamics, geometry, number theory --- The impact of Margulis on modern mathematics}, p.~9--46, Chicago University Press, Chicago, IL, 2022.

\bibitem[Ga]{gal-PhD}
\textsc{B. Galiay}, \textit{Geometry of proper domains in flag manifolds}, PhD thesis, Universit\'e Paris-Saclay, 2025.

\bibitem[GH]{gh90}
\textsc{\'E. Ghys, P. de la Harpe}, \textit{Sur les groupes hyperboliques d'apr\`es Mikhael Gromov}, Birkh\"{a}user, Boston, MA, 1990. 

\bibitem[GM]{gm21}
\textsc{O. Glorieux, D. Monclair}, \textit{Critical exponent and Hausdorff dimension in pseudo-Riemannian hyperbolic geometry}, Int. Math. Res. Not.~2021 (2021), p.~12463--12531.

\bibitem[Go]{gol90}
\textsc{W. M. Goldman}, \textit{Convex real projective structures on compact surfaces}, J. Differential Geom.~31 (1990), p.~791--845.

\bibitem[GGKW]{ggkw17}
\textsc{F. Gu\'eritaud, O. Guichard, F. Kassel, A. Wienhard}, \textit{Anosov representations and proper actions}, Geom. Topol.~21 (2017), p.~485--584.

\bibitem[GW]{gw12}
\textsc{O. Guichard, A. Wienhard}, \textit{Anosov representations: Domains of discontinuity and applications}, Invent. Math.~190 (2012), p.~357--438.

\bibitem[IZ]{iz23}
\textsc{M. Islam, A. Zimmer}, \textit{Convex cocompact actions of relatively hyperbolic groups}, Geom. Topol.~27 (2023), p.~417--511.

\bibitem[KLP1]{klp17}
\textsc{M. Kapovich, B. Leeb, J. Porti}, \textit{Anosov subgroups: Dynamical and geometric characterizations}, Eur. J. Math.~3 (2017), p.~808--898.

\bibitem[KLP2]{klp-survey}
\textsc{M. Kapovich, B. Leeb, J. Porti}, \textit{Some recent results on Anosov representations}, Transform. Groups~21 (2016), p.~1105--1121.

\bibitem[K1]{kas08}
\textsc{F. Kassel}, \textit{Proper actions on corank-one reductive homogeneous spaces}, J. Lie Theory~18 (2008), p.~961--978.

\bibitem[K2]{kas-notes}
\textsc{F. Kassel}, \textit{Discrete subgroups of semisimple Lie groups, beyond lattices}, in \textit{Groups St Andrews 2022 in Newcastle}, p.~118--190, London Mathematical Society Lecture Notes Series, vol.~496, Cambridge University Press, 2024.

\bibitem[KP]{kp15}
\textsc{I. Kim, P. Pansu}, \textit{Flexibility of surface groups in classical simple Lie groups}, J. Eur. Math. Soc.~17 (2015), p.~2209--2242.

\bibitem[Kl]{kle83}
\textsc{F. Klein}, \textit{Neue Beitr\"age zur Riemannschen Functionentheorie}, Math. Ann.~21 (1883), p.~141--218.

\bibitem[Ko]{koe57}
\textsc{M. Koecher}, \textit{Positivit\"{a}tsbereiche im $R^n$}, Amer. J. Math.~79 (1957), p.~575--596.

\bibitem[L]{lab06}
\textsc{F. Labourie}, \textit{Anosov flows, surface groups and curves in projective space}, Invent. Math.~165 (2006), p.~51--114.

\bibitem[Ma]{mak73}
\textsc{B. O. Makarevich}, \textit{Open symmetric orbits of reductive groups in symmetric R-spaces}, Math. Sb.~20 (1973), p.~406--418.

\bibitem[M1]{mas88}
\textsc{B. Maskit}, \textit{Kleinian Groups}, Grundlehren der mathematischen Wissenschaften, vol.~287, Springer-Verlag, Berlin, 1988.
 
\bibitem[M2]{mas93}
\textsc{B. Maskit}, \textit{On Klein's combination theorem IV}, Trans. Amer. Math. Soc.~336 (1993), p.~265--294.
 
\bibitem[MM]{mm22}
\textsc{M. Mj, S. Mukherjee}, \textit{Combination theorems in groups, geometry and dynamics}, in \textit{In the tradition of Thurston II. Geometry and groups}, p.~331--383, Cham, Springer, 2022.

\bibitem[Mo]{mos73}
\textsc{G. D. Mostow}, \textit{Strong rigidity of locally symmetric spaces}, Annals of Mathematics Studies, vol.~78, Princeton University Press, Princeton, NJ, 1973.

\bibitem[Na]{nag65}
\textsc{T. Nagano}, \textit{Transformation groups on compact symmetric spaces}, Trans. Amer. Math. Soc.~118 (1965), p.~428--453.

\bibitem[Ni]{nis40}
\textsc{V. L. Nisnevich}, \textit{On groups that can be represented isomorphically by matrices over a commutative field} (in Russian), Math. Sb.~8 (1940), p.~395--403.

\bibitem[Sel]{sel60}
\textsc{A. Selberg}, \textit{On discontinuous groups in higher-dimensional symmetric spaces} (1960), in \emph{Collected papers}, vol.~1, p.~475--492, Springer-Verlag, Berlin, 1989.

\bibitem[Ser]{ser80}
\textsc{J.-P. Serre}, \textit{Trees}, Springer-Verlag, Berlin, 1980.

\bibitem[Sh]{sha79}
\textsc{P. B. Shalen}, \textit{Linear representations of certain amalgamated products}, J. Pure Appl. Algebra~15 (1979), p.~187--197.

\bibitem[So]{soi12}
\textsc{G. A. Soifer}, \textit{Embedding $\ZZ^2 * \ZZ$ as a discrete subgroup into small linear groups}, Internat. J. Algebra Comput.~22 (2012).

\bibitem[TT1]{tt23}
\textsc{N. Tholozan, K. Tsouvalas}, \textit{Linearity and indiscreteness of amalgamated products of hyperbolic groups}, Int. Math. Res. Not.~2023 (2023), p.~21290--21319.

\bibitem[TT2]{tt25}
\textsc{N. Tholozan, K. Tsouvalas}, \textit{Residually finite non linear hyperbolic groups}, Comment. Math. Helv.~100 (2025), p.~1--9.

\bibitem[TW]{tw23}
\textsc{A. Traaseth, T. Weisman}, \textit{Combination theorems for geometrically finite convergence groups}, to appear in Algebr. Geom. Topol.

\bibitem[Ve]{vey70}
\textsc{J. Vey}, \textit{Sur les automorphismes affines des ouverts convexes saillants}, Ann. Sc. Norm. Super.
Pisa Cl. Sci.~24 (1970), p.~641--665.

\bibitem[Vi]{vin63}
\textsc{E. B. Vinberg}, \textit{The theory of homogeneous convex cones}, Trudy Moskov. Mat. Ob\v{s}\v{c}.~12 (1963), p.~303--358.

\bibitem[Weh]{weh73}
\textsc{B. A. F. Wehrfritz}, \textit{Generalized free products of linear groups}, Proc. Lond. Math. Soc.~s3-27 (1973), p.~402--424.

\bibitem[Wei]{wei23}
\textsc{T. Weisman}, \textit{Dynamical properties of convex cocompact actions in projective space}, J. Topol.~16 (2023), p.~990--1047.

\end{thebibliography}
\end{document}